\theoremstyle{plain}
\newtheorem*{thrm}{Theorem}
\newtheorem{thm}{Theorem}[section]
\newtheorem{prop}[thm]{Proposition}
\newtheorem{cor}[thm]{Corollary}
\newtheorem{lemma}[thm]{Lemma}
\newtheorem{question}[thm]{Question}
\theoremstyle{definition}
\newtheorem{defn}[thm]{Definition}
\newtheorem{defns}[thm]{Definitions}
\theoremstyle{remark}
\newtheorem{remark}[thm]{Aside}
\newtheorem{eg}[thm]{Example}
\newtheorem{egs}[thm]{Examples}
\newcommand{\proofof}[1]{\end{#1}\begin{proof}}
\renewcommand\section{\@startsection {section}{1}{\z@}%
  {-3.5ex \@plus -1ex \@minus -.2ex}{2.3ex \@plus.2ex}%
  {\normalfont\large\bfseries}}
\renewcommand\subsection{\@startsection{subsection}{2}{\z@}%
  {-3.25ex\@plus -1ex \@minus -.2ex}{1.5ex \@plus .2ex}%
  {\normalfont\bfseries}}
\newcommand{\lie}[1]{\mathfrak{#1}}
\newcommand{\sh}[1]{\mathcal{#1}}
\newcommand{\N}{{\mathbb N}}
\newcommand{\Z}{{\mathbb Z}}
\newcommand{\Q}{{\mathbb Q}}
\newcommand{\R}{{\mathbb R}}
\newcommand{\C}{{\mathbb C}}
\newcommand{\F}{{\mathbb F}}
\newcommand{\G}{{\mathbb G}}
\renewcommand{\P}{{\mathbb P}}
\newcommand{\A}{{\mathbb A}}
\newcommand{\tens}{\mathbin{\otimes}}
\newcommand{\Hom}{\mathrm{Hom}}
\DeclareMathOperator*\colim{colim}
\DeclareMathAlphabet{\mathrmsl}{OT1}{cmr}{m}{sl}
\newcommand{\rssymb}[2]{\newcommand{#1}{\mathrmsl{#2}} }
\newcommand{\oper}[3][n]{\newcommand{#2}{\mathop{\mathrm{#3}}%
\ifx n#1\nolimits\else\limits\fi} }
\newcommand{\rsoper}[3][n]{\newcommand{#2}{\mathop{\mathrmsl{#3}}%
\ifx n#1\nolimits\else\limits\fi} }
\oper\Ad{Ad}
\oper\ad{ad}
\oper\val{val}
\oper\coker{coker}
\oper\mult{mult}
\oper\Iso{Iso}
\oper\End{End}
\oper\Aut{Aut}
\oper\Sub{Sub}
\oper\Alt{Alt}
\oper\Ext{Ext}
\oper\Pic {Pic}
\oper\Sym{Sym}
\oper\Spec{Spec}
\oper\Spf{Spf}
\oper\Sp{Sp}
\oper\Spa{Spa}
\oper\Proj{Proj}
\rsoper\divg{div}
\rsoper{\sym}{sym}
\rsoper{\alt}{alt}
\rsoper\trace{tr}
\rssymb\id{id}
\newcommand{\thismonth}{\ifcase\month\or
  January\or February\or March\or April\or May\or June\or
  July\or August\or September\or October\or November\or December\fi
  \space\number\year}
\newcommand{\QC}{\mathrm{QC}}
\newcommand{\Alg}{\mathrm{Alg}}
\newcommand{\Pro}{\mathrm{Pro}}
\newcommand{\Sh}{\mathrm{Sh}}
\newcommand{\PSh}{\mathrm{PSh}}
\title{Affine manifolds are rigid analytic spaces in characteristic one \\ I: Toric formal geometry}
\author{Andrew W. Macpherson}
\begin{document}

\maketitle
\begin{abstract}I extend the definitions of schemes relative to monoids with zero - and therefore, toric geometry - to the world of formal schemes. This expands the usual framework to include, for instance, models for Mumford's degenerating Abelian varieties. 

Following the usual toric paradigm, normal formal $\F_1$-schemes can be classified in terms of certain cone complexes, and their properties understood in combinatorial terms. I use this to give a simple algebraisation criterion. 

I also reformulate the traditional notions of separated and proper morphism in a manner amenable to the context of relative formal geometry, and give characterisations in terms of the topology of cone complexes.\end{abstract}

\tableofcontents

\section{Introduction}

Toric geometry was introduced, in the wake of the pioneering work \cite{toroid}, as an organising principle for the ubiquitous appearance of monomial techniques in algebraic geometry. Abstractly, it can be viewed as an attempt to answer the question
\begin{align}\label{one}\text{\emph{Which algebraic varieties can be defined without addition?}}\end{align}
The absence of addition - or equivalently, the large torus symmetry - makes understanding such varieties substantially simpler as compared to more general varieties. Indeed, toric varieties famously can be captured entirely in terms of a combinatorial object: a \emph{fan} in a rational vector space. For this reason, they have provided a fertile testing ground for theories in algebraic geometry that are too difficult to tackle in more general situations.

In this sequence of papers, I address the following generalisation of question \ref{one}:
\begin{align}\label{two}\text{\emph{Which analytic spaces can be defined without addition?}}\end{align}
The analogue to the answer that was provided before by toric geometry is here given by analytic spaces with the structure of a \emph{maximal torus fibration} over an affine manifold. 
The affine manifold adopts the r\^ole of parametrising object that in algebraic geometry was played by the fan. 

While the hypothesis of being defined by monomials is still very restrictive locally, the parametrisation by affine manifolds shows that passing to the analytic setting makes available much more interesting \emph{global} geometry. For instance, part of Mumford's theory of completely degenerate Abelian varieties \cite{Mumford} can be described in this way; skip to \cite[\S5.7]{part2} for a discussion. Still, toric analytic spaces represent a substantial simplification of the theory of analytic spaces in general.

The correspondence between maximal torus fibrations over $\C$ and their parametrising affine manifolds has an elementary geometric description: by definition, an affine manifold $B$ carries a certain local system $\Lambda_B$ of \emph{integer 1-forms}, and the corresponding complex analytic space is defined to be the torus bundle
\[ \pi:TB/\epsilon\Lambda_B^\vee\rightarrow B \]
depending on a parameter $\epsilon$, whose monomial holomorphic co-ordinates $\exp{\epsilon(\pi^*F+2\pi idF)}$ are indexed by affine functions $F$ on $B$ with integer differential. 

The focus of these papers is rather in the rigid analytic, or `non-Archimedean', paradigm. In other words, the basic object of study is a \emph{formal degeneration} - which already appeared in the previous formula in the form of the parameter $\epsilon$. From this perspective, the geometric objects that become available are particularly degenerate formal families that not only have toroidal crossings in the sense studied in \cite{toroid}, but all of the components of whose central fibre are toric varieties. Such families arise as the $e^\epsilon$-adic completion of the corresponding complex-analytic objects.

Of course, these ideas have also found plenty of attention in the literature - indeed, the basic picture is already implicit in the work of Mumford. A particular source of inspiration for me has been the Gross-Siebert programme in mirror symmetry \cite{Grossbook} and the paper \cite{KoSo2}, in which the authors define a notion of `non-Archimedean torus fibration' which was the starting point for the present work. 

\subsection*{Characteristic one}

A more fundamental approach to question (\ref{one}) is to try to define \emph{algebraic geometry itself} without addition. This is the method of `relative' algebraic geometry (cf. \cite{Toen}). While toric geometry has the advantage of being very \emph{concrete}, relative geometry is heavily modelled on the abstract framework of traditional algebraic geometry and so is very \emph{robust}. It is possible to push through much of the basic structure of \cite{EGA} in a very general setting (see, for example, \cite{Durov}).

Under this approach, one is free to replace the defining algebraic objects of geometry over $\Z$ - the commutative rings - with a commutative algebraic object of one's choosing. Since we are interested in geometry without addition, the obvious choice here is to work with \emph{multiplicative monoids}. General principles then provide us with a category of `schemes' equipped with a structure sheaf of monoids.

The essential features of any object defined in such a geometry are necessarily independent of `coefficients' - for instance, considerations of characteristic. For this reason, algebraic geometry relative to monoids formed the basis of some early attempts to make sense of geometry relative to the mysterious object $\F_1$, the so-called `prime field of characteristic one' (\cite{Deitmartoric}, \cite{Connes}). It is this connection that gives the present sequence of papers - and our category of schemes relative to monoids - its name.\footnote{I should point out that arithmeticians quickly realised that monoids were completely inadequate for questions of arithmetic, and turned to consider more sophisticated objects \cite{Borger,Deitmarses,Durov,Lorscheid}. Since geometry, rather than arithmetic, is the primary motivation for this paper, beyond a few scattered comments I do not mount any attempt to study such ideas. See remark \ref{RIG_ADDITION}.}

Apart from being more structured than toric geometry, geometry in characteristic one is also more general - but not \emph{too much} more general, as the following basic result shows:
\begin{thrm}[\cite{Deitmartoric}]Let $X$ be a normal, connected $\F_1$-scheme, separated and of finite type over $\F_1$. The base change of $X$ to $\Z$ is a toric variety.\footnote{Strictly speaking, the notion of \emph{separated} $\F_1$-scheme has not, as far as I know, been defined in the literature. In this paper, I address this deficiency (\S\ref{SEP}).}\end{thrm}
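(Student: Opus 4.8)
The plan is to push the standard toric dictionary through the base-change functor $X\mapsto X_\Z$, which on affine charts is the monoid-algebra construction $\Spec M\mapsto\Spec\Z[M]$. First I would reduce to the affine-local picture. Since $X$ is of finite type over $\F_1$, it admits a finite cover by charts $\Spec M_i$ with each $M_i$ a finitely generated monoid. Because $X$ is normal and connected it is in particular integral (as in the classical case, a connected locally-irreducible normal scheme is irreducible), so each $M_i$ is cancellative and all the $M_i$ share a common group completion $N^\ast:=M_i^{\mathrm{gp}}$, a finitely generated abelian group; I will need to argue that $N^\ast$ is torsion-free, so that it is a genuine lattice and its dual $N:=\Hom(N^\ast,\Z)$ is defined. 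Normality of $X$ then translates into the statement that each $M_i$ is saturated in $N^\ast$, i.e. is a toric monoid, and by Gordan's lemma it has the form $\sigma_i^\vee\cap N^\ast$ for a rational polyhedral cone $\sigma_i\subseteq N_\R$.

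The second step is to identify the base change chart by chart. For a toric monoid $M=\sigma^\vee\cap N^\ast$ the algebra $\Z[M]$ is precisely the coordinate ring of the affine toric variety $U_\sigma$, and it is normal exactly because $M$ is saturated; its dense open torus is $\Spec\Z[N^\ast]=T_N\cong\G_m^{\,n}$. Thus $(\Spec M_i)_\Z=U_{\sigma_i}$, and the generic point of $X$ base-changes to the single torus $T_N$ sitting as a dense open inside every chart.

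Third, I would analyse the gluing. Morphisms of affine $\F_1$-schemes are monoid maps and the distinguished opens are localisations $M_i\to M_i[s^{-1}]$, which on the combinatorial side are exactly the faces of the cones $\sigma_i$. Since base change is a left adjoint it commutes with the gluing colimit, so $X_\Z$ is assembled from the $U_{\sigma_i}$ along the open toric subvarieties attached to common faces. The separatedness hypothesis is what forces these identifications to be compatible in the sense demanded of a fan: two charts must be glued along the affine toric variety of $\sigma_i\cap\sigma_j$, and this intersection must be a common face of both cones. Establishing the equivalence \emph{separated $\F_1$-scheme} $\Leftrightarrow$ the cones $\{\sigma_i\}$ meet in common faces (hence form a fan $\Delta$) is the heart of the matter, and I expect to prove it through the valuative criterion — equivalently, that the diagonal is a closed immersion — translating the uniqueness-of-limits condition into the combinatorial statement about cones and their faces.

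Finally, with the $\sigma_i$ organised into a fan $\Delta$ in $N_\R$, the glued scheme $X_\Z$ is by construction the toric variety $X(\Delta)$: it is normal, it contains the dense torus $T_N$, and the comultiplication on $\Z[N^\ast]$ induces the extending action because each $M_i$ is stable under it. The main obstacle is matching the abstract conditions (\emph{normal}, \emph{separated}) with their combinatorial counterparts (saturated toric monoid; the fan axiom), together with checking that the common group completion $N^\ast$ is torsion-free so that $T_N$ is an honest torus and $X_\Z$ an honest variety; of these, the separatedness-to-fan translation is the genuinely delicate point.
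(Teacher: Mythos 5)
Your overall strategy --- affine charts give finitely generated saturated monoids, hence rational polyhedral cones by Gordan's lemma; opens correspond to faces; base change commutes with the gluing colimit --- is the same toric dictionary that the paper itself runs through in \S\ref{FAN_CONE}--\ref{FAN_GLUE}. But two of your steps are genuine gaps, and they are precisely the two points the paper handles differently. The first is torsion-freeness of $N^\ast=K_X^\times$: this is not provable from the hypotheses, because it is false. Take $X=\Spec\F_1[\Z/2]$ (or $\F_1[G]$ for any finite abelian group $G$): its set of nonzero elements is a group, hence cancellative and saturated in its own group completion, so $X$ is normal; it is a single point, hence connected and of finite type; and its diagonal is dual to the surjective multiplication map $\F_1[\Z/2\times\Z/2]\rightarrow\F_1[\Z/2]$, hence an affine embedding, so $X$ is separated in the paper's sense. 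Yet $X_\Z=\Spec\Z[x]/(x^2-1)$ is not integral, so it is not a toric variety. This is why the paper asserts the toric-variety conclusion only under the additional hypothesis that $K_X^\times$ is torsion free (end of \S\ref{FAN_DIAG}), and why its precise classification, theorem \ref{FAN_THM}, is stated in terms of pairs consisting of an abelian group \emph{and} a fan. You must either add that hypothesis or weaken the conclusion (e.g.\ to a statement about irreducible components).

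The second gap is your translation of separatedness, which you call the heart of the matter but support with tools that do not work over $\F_1$. The diagonal of a separated $\F_1$-scheme is \emph{not} a closed immersion in general: over $\F_1$, embeddings are quotients by congruences rather than by ideals, and already the diagonal of $\A^1_{\F_1}$ fails to be closed (the Durov example recalled after corollary \ref{SEP_CRITERIA}); the correct characterisation is that the diagonal is an \emph{embedding}, for the notion of separatedness defined via overconvergence in \S\ref{SEP} --- which, as the footnote to the statement warns, is exactly the notion that had to be supplied. Nor can the valuative criterion be your starting point inside this framework: the paper proves it (theorem \ref{SEP_VAL}) as a \emph{consequence} of the fan classification, via proposition \ref{FAN_SUR}, and explicitly flags the inverted logic, so invoking it here would be circular. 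The paper's actual route to ``separated $\Leftrightarrow$ fan'' is: blow-ups of normal $\F_1$-schemes are the subdivisions induced by functions, overconvergent neighbourhoods coincide with open neighbourhoods of the topological realisation (proposition \ref{PFAN_MODIFICATION}), and separatedness then becomes injectivity of the developing map on stars of cones (theorem \ref{PFAN_SEP}). Until you either adopt this machinery or give an independent, non-circular proof of a valuative-type criterion for a well-defined notion of separatedness over $\F_1$, the delicate step of your plan remains unsupported.
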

In other words, under some natural - and rather light - hypotheses of an algebro-geometric nature, $\F_1$-geometry can be understood using the machinery of cones and fans. A large part of these papers is devoted to developing a more precise version of this statement and an analogue in the analytic setting.


\subsection*{Cone complexes}

In this paper, I address the case of formal geometry. Although we make no discussion of analytic spaces or affine manifolds until the sequel, the nature of the definition will be such that many of the salient features already appear at the formal level.

Before introducing the objects that `parametrise' toric formal schemes, I would like to present a more precise formulation of Deitmar's theorem:

\begin{thrm}[\ref{FAN_THM}]The category of connected, normal, separated, finite type $\F_1$-schemes and non-boundary morphisms is equivalent to the category of rational polyhedral fans.\end{thrm}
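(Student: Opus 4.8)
The plan is to promote Deitmar's object-level correspondence to an equivalence of categories by first pinning down the affine dictionary, then globalising it using the separatedness axiom of \S\ref{SEP}, and finally identifying the non-boundary morphisms with maps of fans. Throughout, the scheme $\mapsto$ fan assignment will be covariant, so a morphism $X \to Y$ is to produce a map $\Sigma_X \to \Sigma_Y$ of fans in the same direction.

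First I would treat the affine case. An affine object here is $\Spec A$ for $A$ a fine and saturated (hence normal) pointed monoid of finite type. Gordan's lemma identifies such an $A$, after removing its adjoined zero, with the character monoid $S_\sigma = \sigma^\vee \cap M$ of a rational polyhedral cone $\sigma \subseteq N_\R$, where $N$ is the group completion and $M = \Hom(N,\Z)$: finite type is finite generation, and saturation is exactly the condition that $A$ contain every lattice point of $\sigma^\vee$. The faces $\tau \preceq \sigma$ then correspond bijectively to the prime ideals of $A$, hence to the points of $\Spec A$, and a face inclusion corresponds to the open immersion $\Spec S_\tau \hookrightarrow \Spec S_\sigma$ obtained by localising $A$ at a character supporting $\tau$. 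This matches affine objects with cones and open immersions with face inclusions.

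Next I would globalise. A connected, normal, separated, finite-type $\F_1$-scheme $X$ admits a finite cover by affine opens $\Spec S_{\sigma_i}$, and each overlap $\Spec S_{\sigma_i} \cap \Spec S_{\sigma_j}$ is again affine --- this is precisely where separatedness enters --- so it has the form $\Spec S_\tau$ for a cone $\tau$ embedding as a common face of $\sigma_i$ and $\sigma_j$. Realising all the $\sigma_i$ inside a single lattice $N$, where the gluing data pin down the identifications of the cocharacter lattices, the collection $\{\sigma_i\}$ together with all their faces becomes a rational polyhedral fan $\Sigma$, and connectedness of $X$ translates into connectedness of $\Sigma$. Conversely, any fan reassembles such a scheme by gluing the affine pieces of its cones along the affine pieces of their common faces, and separatedness of the result follows because intersections of cones in a fan are again faces. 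This yields the bijection on objects, recovering Deitmar's theorem in sharpened form.

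Finally, the morphisms, which I expect to be the main obstacle. Given a non-boundary morphism $f : X \to Y$, the non-boundary hypothesis should guarantee that $f$ does not collapse the generic (torus) stratum into a boundary stratum, so that on each affine patch it is induced by a monoid homomorphism $S_\tau \to S_\sigma$ sending no nonzero character to zero; dualising gives a lattice homomorphism $N_X \to N_Y$ that is independent of the patch. The requirement that $f$ be a morphism of schemes --- that opens pull back to opens --- should then be exactly the condition that this lattice map carry each cone of $\Sigma_X$ into a cone of $\Sigma_Y$, i.e. that it be a map of fans; fully faithfulness reduces to a bijection on affine patches together with gluing of local fan maps, and essential surjectivity on morphisms is the reverse construction. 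The delicate points are isolating the precise force of the newly-defined non-boundary condition, so that it both excludes the morphisms factoring through boundary strata (which are genuinely not fan maps) and produces a well-defined cocharacter homomorphism compatible across patches, and confirming that separatedness delivers the full fan axiom (pairwise intersections are common faces, not merely affine) in a way that is stable under composition of morphisms. The remaining verifications are routine consequences of the affine dictionary.
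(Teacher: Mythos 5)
Your affine dictionary is sound, and it is essentially the paper's: lemma \ref{FAN_AFFINE} is the Gordan-type statement, phrased as an adjunction between affine quasi-integral schemes and embedded cones, with normality plus reflexivity (``enough jets'') making the counit invertible; for finite type objects this reduces to exactly your fs-monoid picture. The genuine gap is in your globalisation step, where you write that each overlap $\Spec S_{\sigma_i}\cap\Spec S_{\sigma_j}$ is again affine and that ``this is precisely where separatedness enters.'' In this paper separatedness is \emph{not} the classical EGA notion: it is defined (\S\ref{SEP}) by propriety of the diagonal in the extension-problem/overconvergence sense, precisely because over $\F_1$ there are not enough closed sets for the closed-diagonal definition to mean anything. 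With that definition, ``separated $\Rightarrow$ affine diagonal'' is not a formal consequence; it is actually \emph{false} for general $\F_1$-schemes (see the remark following corollary \ref{SEP_CRITERIA}: glueing two copies of $\Spec(\F_1\times\F_1)$ along the complement of the closed point gives a separated scheme with non-affine diagonal), and what does follow formally is only that the diagonal is an embedding --- and embeddings over $\F_1$ need be neither closed nor affine (example \ref{MORP_A2}). So the implication you lean on --- separated implies overlaps of charts are affine and are glued along common faces --- is the hard direction of the theorem, not an available input to it; the same applies to your converse claim that a fan-glued scheme ``is separated because intersections of cones are faces,'' which again invokes the classical criterion rather than the paper's definition.

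The paper closes exactly this gap with machinery your sketch omits. It first extends the affine dictionary by abstract glueing to an equivalence between \emph{all} normal schemes with enough jets (with non-boundary morphisms) and cone complexes (\S\ref{FAN_GLUE}), so that every such scheme --- separated or not --- has a cone complex $\Sigma_X$ and, when connected, a developing map $\delta:\Sigma_X\rightarrow N_X$. It then characterises overconvergent neighbourhoods of a cone inside a complex as subdivision-type modifications (proposition \ref{PFAN_MODIFICATION}, a genuine convex-geometry argument), and uses this to show that the extension-problem condition on the diagonal is equivalent to injectivity of $\delta$ on stars, i.e.\ to the fan axiom (theorem \ref{PFAN_SEP} and corollary \ref{PFAN_SEPARATED}, specialised to schemes in proposition \ref{FAN_SUR}); only as a \emph{consequence} of this does one learn that separated normal $\F_1$-schemes have affine diagonal. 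Your proposal defers precisely this point (``confirming that separatedness delivers the full fan axiom'') as a delicate-but-routine verification; it is not routine, and without it both directions of your gluing step are unsupported. Your treatment of morphisms, by contrast, is essentially the paper's functoriality of the affine construction and is fine once the object-level correspondence is in place.
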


The hypotheses appearing in this statement can be divided into three groups. The first - normal and non-boundary - is essential to get any kind of description in terms of cones. Here `non-boundary' essentially means that the morphisms are restricted not to land in any closed subscheme. The closed subschemes are the part of an $\F_1$-scheme that becomes the toric boundary upon base change to $\Z$.

The second group consists of the `finite type' assumption. It is easy to alleviate this hypothesis, should we desire, by considering more general kinds of cone.

The third group of assumptions are of a topological nature, and are therefore almost completely invisible from the usual algebro-geometric standpoint. To help understand them, we will increase generality, and ask: what do \emph{completely general} normal, locally finite type $\F_1$-schemes look like? The traditional picture has already given us to understand \emph{affine} objects: they are the fans consisting of a single cone embedded in a vector space $N$. Since each normal, locally finite type $\F_1$-scheme $X$ is glued together from affine pieces, a global object can be understood in terms of a certain \emph{cone complex} $\Sigma_X$, which consists of a collection of embedded cones $\sigma\subset N$ glued together along faces in a way that respects the embedding in $N$.

Now we can ask what our topological hypotheses look like on cone complexes. The connectedness assumption is somewhat obvious, so we may restrict to the connected case. Now observe that since every connected, integral scheme is contractible onto its generic point, there is no topological obstruction to patching together the embeddings of constituent cones into $N$. We therefore get a globally defined, locally linear map
\[ \delta:\Sigma_X\rightarrow N_X \] 
which, borrowing terminology from the manifolds literature, we may call the \emph{developing map}. By unwinding the definitions, it is not difficult to see:
\[X\text{ separated} \qquad\Leftrightarrow \qquad \delta\text{ injective.}\]
In real life, one usually wishes to restrict attention to separated objects, and so this result renders the abstraction of cone complexes somewhat pointless for algebraic geometry. 

The analogue of the above statement is \emph{false} in formal geometry. Indeed, it is no longer true that integral formal schemes are $0$-types, and so $\delta$ need not even have a global extension. The theory of cone complexes is therefore unavoidable in formal geometry.

\subsection{Summary of results}
The main results of this paper give combinatorial characterisations of separated and proper morphisms of formal $\F_1$-schemes. However, the literature has apparently so far omitted to provide definitions of these terms, other than to remark that the definitions of separated and proper morphisms given in \cite{EGA} clearly do not work `out of the box'. Therefore, before stating the results, I must clarify what I am talking about.

My approach to defining propriety in this paper has been in the spirit of generalising the valuative criterion. Rather than giving the general definition here, I provide a list of equivalent characterisations in cases where the criteria can be couched in more familiar terms (but see \S\ref{STRUCT}):
\begin{thrm}Let $S$ be a formal scheme over $\Z$ or $\F_1$. Let $f:X\rightarrow S$ be of finite type. The following conditions are equivalent:
\begin{enumerate}\item ($S/\Z$) $f$ is separated and universally closed  (Thm. \ref{SEP_ME=EGA});
\item ($S$ qcqs and admits an ample bundle) for every open subset $U\subseteq X$ quasi-projective over $S$, there exists a Chow cover
\[\xymatrix{ & \tilde X\ar[d]_{\exists \P}\ar@/^/[ddr]^{\P} \\ U\ar@{^{(}->}[r]^{\forall\circ}\ar@/^/[ur]^\exists\ar@/_/[drr]_-{\text{quasi-}\P} & X\ar[dr] \\ && S}\]
projective over $S$ and admitting a section over $U$ (Thm. \ref{SEP_CHOW});
\item ($S$ Noetherian) $f$ satisfies the valuative criterion for propriety (Cor. \ref{SEP_VAL}).\end{enumerate} For case \emph{iii)}, it is enough that every embedded subscheme of $X$ surjects onto the embedded closure of its image in $S$, and that this remains true after any base change (Thm. \ref{SEP_GROTHENDIECK_OVERCONVERGENT}).\footnote{Note that over $\F_1$, `embedded' means something more general than `closed embedded'; see \S\ref{MORP_EMBED}.}
\end{thrm}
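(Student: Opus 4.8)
The plan is to read this as a packaging statement built on the single abstract definition of propriety from \S\ref{STRUCT}, which I take to be a generalised valuative (lifting) criterion against a distinguished class of test diagrams. Since the three characterisations are asserted under mutually different hypotheses on $S$, I would not look for an implication cycle among them; rather I would prove each one separately as an equivalence with the abstract notion, the substance lying in the comparison arguments that cut the abstract test objects down to familiar ones in each regime.

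For \emph{i)}, with $S$ over $\Z$, I would compare the lifting criterion directly with \cite{EGA}. The reduction is to observe that for a finite type morphism of formal schemes over $\Z$ the abstract test objects are detected by spectra of ordinary valuation rings; the existence half of the lifting property then becomes universal closedness and the uniqueness half becomes separatedness, so that Grothendieck's theorem applies and gives Thm \ref{SEP_ME=EGA}. I expect the separatedness bookkeeping to be routine once uniqueness of lifts is in hand.

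For \emph{iii)}, in the Noetherian case, the same cutting-down of test objects to valuation rings recovers the classical valuative criterion, Cor \ref{SEP_VAL}, the only extra ingredient being Noetherian approximation to guarantee that the general test objects are seen by such rings. The sufficient condition of Thm \ref{SEP_GROTHENDIECK_OVERCONVERGENT} I would obtain by translating the lifting of a test curve into a statement about specialisations: a lift exists precisely when every embedded subscheme surjects onto the embedded closure of its image in $S$. Here one must use the generalised meaning of `embedded' over $\F_1$ from \S\ref{MORP_EMBED}, testing surjectivity on embedded rather than merely closed subschemes, and then note that the hypothesis is required to persist under base change exactly because universal closedness does.

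The Chow characterisation \emph{ii)} is where I expect the main obstacle. I would first establish a Chow's lemma in the qcqs, ample-bundle setting: for each quasi-projective open $U\subseteq X$ produce a modification $\tilde X$, projective over $S$, dominating $X$ and equipped with a section over $U$. Given such covers, the lifting criterion for $f$ can be checked through $\tilde X$ — the section over $U$ routes any test diagram supported in $U$ into the projective, hence automatically proper, $\tilde X\to S$ — while conversely a family of Chow covers with sections directly witnesses the lifting property. The delicate parts are constructing the cover in the relative formal setting, where ampleness and projectivity must be organised compatibly with the adic topology, and checking that ranging over \emph{all} quasi-projective opens suffices to force the global property; this is the content isolated as Thm \ref{SEP_CHOW}.
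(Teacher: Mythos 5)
Your reading of the statement as a package of separate equivalences, each proved by comparing the paper's abstract notion of propriety with a classical one, is correct, and your sketch for the grounding criterion (Thm. \ref{SEP_GROTHENDIECK_OVERCONVERGENT}) is in the right spirit. But each of the three main equivalences hides a gap in your proposal. For \emph{i)}, your ``observation'' that the abstract test objects are detected by spectra of valuation rings \emph{is} the theorem, not a reduction. The easy half is fine: over a valuation ring every finitely generated ideal is principal, so nothing can be blown up, and the paper's lifting property specialises to the classical valuative criterion, giving separatedness and universal closedness. The hard half --- that an EGA-proper morphism admits solutions to \emph{arbitrary} extension problems $U\hookrightarrow V$, i.e.\ that pointwise lifts over valuation rings can be assembled into a single projective modification of $V$ --- is precisely where the paper (Prop. \ref{SEP_ME=EGA}) forms the canonical extension $\mathrm{cl}(U/V\times_S X)$, notes it is finite type, separated and universally closed over $V$, and invokes the flattening result \cite[081T]{stacksproject} (Raynaud--Gruson) to dominate it by a $V\setminus U$-admissible blow-up. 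You have no substitute for this step; it is the genuine geometric input that the paper's structural section explicitly flags (``relies on flattening stratification'').

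For \emph{ii)} your logic is inverted relative to the paper. You propose to first construct Chow covers by an independent construction and then deduce the lifting property; but in the paper Theorem \ref{SEP_CHOW} is nearly tautological, because the Chow property \emph{restates} the definition of propriety: after base change to a test space $V$, the Chow cover of $U\subseteq X\times_S V$ is exactly an overconvergent neighbourhood of $U/V$, and conversely propriety applied to $U\hookrightarrow\bar U$ (a projective compactification of the quasi-projective $U$ over $S$) produces the cover. The ``delicate part'' you identify --- constructing the cover from scratch --- is exactly what the paper says is out of reach over $\F_1$, since the classical Chow lemma rests on flattening. For \emph{iii)}, you propose a direct cutting-down to valuation rings plus Noetherian approximation; the author records that precisely such direct attempts ``were ultimately confounded,'' and instead Cor. \ref{SEP_VAL} is deduced by reducing to quasi-integral test spaces (Cor. \ref{INT_DECOMP_DETECT}) and then appealing to the classification of normal $\F_1$-schemes by fans and the combinatorial criterion for overconvergence (Prop. \ref{FAN_SUR}); this is also why the argument does not work over $\Z$. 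Your sketch would further need to recover two features of the actual statement: the test object is the single space $\A^1_{\F_1}$ (resp.\ $\Spec\F_1[z^\Q]$ in the integral/Noetherian case), and paracompactness of $f$ is required --- the paper gives counterexamples without it.
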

Note that the Chow property \emph{ii)} stated here is actually much stronger than the usual statement of the Chow lemma, which only asserts the \emph{existence} of an open dense $U$. It is this property that follows immediately from our definition of propriety. The challenge lies in establishing sufficient conditions for checking it in practice. 

With rigid geometry in mind, it is actually natural to give more attention to \emph{overconvergent} morphisms. Intuitively, these are morphisms that satisfy the valuative criterion without necessarily being quasi-compact. This class of morphisms is not visible in ordinary toric geometry, since a connected integral scheme can be overconvergent only if it is proper. However, they are essential for understanding the rigid analytic world considered in the sequel \cite{part2}.

A large part of this paper (\S\ref{SEP}) is devoted to developing this machinery. See \S\ref{STRUCT} below for a summary of the approach.

\

An affine formal scheme is always obtained as the formal completion of a scheme. Topologically, the effect of formal completion is to remove certain open sets. On the combinatorial side, we must therefore respond by removing certain faces of the corresponding cone. To every affine, normal formal $\F_1$-scheme we may therefore associate an object in a category $\mathbf{Cone}_*^N$ of \emph{punctured cones} embedded in a rational vector space $N(\Q)$. 

Globalising, we obtain also a category $\mathrm{C}\mathbf{Cone}_*^N$ of \emph{punctured cone complexes} and a \emph{cone complex functor}
\[ \Sigma:\mathbf{FSch}_{\F_1}^\mathrm{n/nb}\rightarrow\mathrm{C}\mathbf{Cone}^N_* \]
from the category of normal formal $\F_1$-schemes with non-boundary morphisms. From general principles a classification - stated here, for simplicity, under additional finiteness assumptions - follows:

\begin{thrm}[\ref{PFAN_EQUIVALENCE}]The cone complex functor restricts to an equivalence of categories between the category of normal, locally Noetherian formal $\F_1$-schemes with non-boundary morphisms and the category of integral polyhedral punctured cone complexes.\footnote{A tweak of the definition of $\Sigma$ allows this statement and the two below to work for formal schemes locally of finite type over a possibly non-Noetherian rank one valuation $\F_1$-algebra.}\end{thrm}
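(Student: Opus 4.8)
The plan is to reduce the global statement to the affine case and then glue, exploiting the fact that both sides are assembled from their affine pieces in parallel ways. First I would establish the \emph{affine dictionary}: for a normal affine formal $\F_1$-scheme $X$, the functor $\Sigma$ assigns a single integral polyhedral punctured cone $\sigma\subset N(\Q)$, and I would check that this assignment is an equivalence between the full subcategory of normal, Noetherian \emph{affine} formal $\F_1$-schemes (with non-boundary morphisms) and the category $\mathbf{Cone}_*^N$ of integral polyhedral punctured cones. This rests on the classical correspondence between saturated (normal) monoids and rational polyhedral cones — a normal affine $\F_1$-scheme is $\Spec A$ with $A$ a saturated monoid, recovered as the lattice points of a dual cone $\sigma^\vee$ — together with the observation, already flagged in the text, that formal completion acts combinatorially by \emph{removing} (puncturing) certain faces. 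The normality hypothesis is what forces $A$ to be saturated, hence what makes the cone well defined and the unpunctured faces recoverable.

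Next I would prove full faithfulness at the affine level. A non-boundary morphism $X\to Y$ of normal affine formal $\F_1$-schemes corresponds to a monoid homomorphism that does not factor through a proper face; dually this is exactly an integral-linear map of the ambient lattices carrying $\sigma_X$ into $\sigma_Y$ and respecting the punctured faces, i.e.\ a morphism in $\mathbf{Cone}_*^N$. The non-boundary condition is precisely what translates into the requirement that the image of an unpunctured face land in an unpunctured face, so the bijection on Hom-sets is clean. Essential surjectivity on affines is the reverse construction: from any integral polyhedral punctured cone one builds the saturated monoid, takes $\Spec$, and completes along the missing faces.

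The global equivalence would then follow by gluing, which I expect is what the phrase ``from general principles'' alludes to. Both categories are generated under open gluing from their affine building blocks: a normal locally Noetherian formal $\F_1$-scheme is covered by affine opens whose overlaps are again affine, while an integral polyhedral punctured cone complex is presented as a collection of punctured cones glued along face inclusions, and face inclusions are exactly the combinatorial image of open immersions under $\Sigma$. Since $\Sigma$ is an equivalence on affines, sends open immersions to face inclusions, and respects the descent data (fibre products of affine opens correspond to intersections of cones along common faces), the standard machinery for comparing categories assembled by gluing from equivalent local models upgrades the affine equivalence to the asserted global one. Concretely, I would verify that $\Sigma$ preserves and reflects coverings, so that it induces an equivalence of the associated gluing (colimit) categories.

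The main obstacle I anticipate is bookkeeping the puncturing correctly through the gluing, rather than any single deep fact. Precisely because, as the introduction stresses, the developing map $\delta$ need \emph{not} extend globally in the formal setting — integral formal schemes are no longer $0$-types — one cannot realise the cone complex inside a single vector space $N$ and must work with the abstract glued object. The delicate point is to confirm that the puncturing, which is local to each cone, patches consistently along face inclusions and is respected by non-boundary morphisms, so that the gluing of affine equivalences is well defined on the nose rather than merely up to coherent isomorphism one would then have to control. Verifying this compatibility, together with matching the integral-polyhedral hypotheses to the locally-Noetherian hypothesis under $\Sigma$, is where the real work lies.
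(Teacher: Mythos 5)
Your proposal is correct and follows essentially the same route as the paper: lemma \ref{PFAN_AFFINE} is exactly your affine dictionary (the adjunction restricts to an equivalence on normal affine formal schemes with enough jets, with Noetherian matching integral polyhedral), and the globalisation is the paper's "general principles" step — checking that $\Sigma$ is flat (lemma \ref{PFAN_FLAT}) and preserves open immersions (lemma \ref{PFAN_IMMERS}), so that it extends to a geometric morphism of presheaf topoi with fully faithful left adjoint, then restricting to locally representable objects. One remark: the coherence-of-gluing obstacle you single out dissolves in this formulation, since $\Sigma$ is one globally defined functor rather than a family of local equivalences to be patched; the genuine technical inputs are instead the Krull intersection lemma \ref{KRULL} (completed localisations of integral algebras agree with discrete localisations, which is what makes open immersions correspond exactly to face inclusions) and flatness in the presence of the fact that $\mathbf{Cone}_*^N$ has no empty object.
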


As indicated above, any punctured cone complex $\Sigma_X$ comes equipped with a locally defined developing map $\delta:\Sigma_X\rightarrow N_X$. The primary results on cone complexes are stated in terms of this map:
\begin{thrm}[\ref{PFAN_SEPARATED}]Let $X$ be a locally Noetherian, integral formal $\F_1$-scheme. Then $X$ is separated if and only if the developing map of $\Sigma_X(\R)$ is a local immersion.\end{thrm}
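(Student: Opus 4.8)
The plan is to detect separatedness chart by chart and to match the resulting pairwise condition with injectivity of the developing map near each shared face. Separatedness means that the diagonal $\Delta_{X/S}$ over the base $S$ is a closed immersion, and this is local on $X\times_S X$; hence $X$ is separated if and only if for every pair of affine charts the map $U_\sigma\cap U_\tau\hookrightarrow U_\sigma\times_S U_\tau$ is a closed immersion. This pairwise test is the local avatar of the global injectivity of $\delta$ that governed the algebraic case: because the developing map of a formal cone complex need not extend globally, I can only read separatedness off $\delta$ locally, and the pairwise condition is precisely what is seen by a simply connected piece of $\Sigma_X(\R)$ on which $\delta$ is defined. This is exactly why the correct formal condition is local immersion rather than the global injectivity of the algebraic statement.

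First I would set up the affine dictionary under the cone complex equivalence \ref{PFAN_EQUIVALENCE}. Under this equivalence an affine chart $U_\sigma$ is presented by a punctured cone $\sigma\hookrightarrow N(\Q)$, being a formal completion of the associated affine toric monoid scheme; the fibre product $U_\sigma\times_S U_\tau$ is again affine, defined by the direct sum of the two monoids, and the intersection $U_\sigma\cap U_\tau$ corresponds to the common cell $\rho$ of the complex. Unwinding the monoid algebra, the diagonal $U_\sigma\cap U_\tau\hookrightarrow U_\sigma\times_S U_\tau$ is a closed immersion precisely when the monoid of $\rho$ is generated by the restrictions of those of $\sigma$ and $\tau$; by the toric separation lemma this happens exactly when $\delta$ realises $\rho$ as a genuine common face of $\delta(\sigma)$ and $\delta(\tau)$, equivalently when $\delta$ is injective on the star $\sigma\cup_\rho\tau$ in a neighbourhood of $\delta(\rho)$. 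I would verify both directions here: a fold---two cells meeting $\rho$ whose images overlap beyond $\delta(\rho)$---yields a non-closed diagonal and hence breaks separatedness, detectable if one prefers by a rank-one monomial valuation acquiring two distinct centres as in the valuative criterion of the summary theorem; conversely, local injectivity forces the intersection to be the face chart and the diagonal to be closed.

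The final step is to assemble these pointwise statements into the global equivalence. Every point of $\Sigma_X(\R)$ lies in the relative interior of a unique cell $\rho$, and a neighbourhood of it is covered by the stars $\sigma\cup_\rho\tau$ of pairs of maximal cells through $\rho$, on each of which $\delta$ is defined; thus $\delta$ is a local immersion at the point if and only if it is injective on each such star near $\rho$, which is the pairwise condition above. Running over all cells yields the theorem. I expect the main obstacle to lie in the formal and punctured subtleties that distinguish this argument from the purely algebraic unwinding. I must confirm that the affine dictionary---products as direct sums of monoids, the diagonal as a face---survives passage to the completed monoid algebras and the non-reduced central fibre, and that puncturing, the removal of faces effected by formal completion, neither conceals a relevant intersection nor manufactures a spurious one, so that local immersion of the \emph{punctured} complex is exactly what separatedness requires. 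The technical heart is checking that the locally defined developing maps on overlapping stars are mutually compatible, so that ``local immersion'' is well defined despite the absence of any global $\delta$.
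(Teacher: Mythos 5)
There is a genuine gap, and it sits at the very first step. You take ``separated'' to mean that the diagonal is a \emph{closed immersion}, and you propose to test this chart by chart. But over $\F_1$ this is not the paper's definition, and it cannot be: the diagonal of $\A^1_{\F_1}$ is already not closed (the paper remarks on exactly this point after Corollary \ref{SEP_CRITERIA}, citing Durov), so with your definition the theorem you are proving is false --- essentially no interesting formal $\F_1$-scheme would be separated, while plenty have developing maps that are local immersions. In this paper separatedness is \emph{defined} by the diagonal being proper in the extension-problem (valuative-criterion-without-valuations) sense of Definitions \ref{SEP_EXTENS_DEF} and \ref{SEP_DEF_PROPER}; the diagonal characterisation that survives is that the diagonal be an \emph{embedding} --- cut out by equations but typically non-closed --- and that equivalence is itself a theorem (Corollary \ref{SEP_CRITERIA}, resting on Proposition \ref{SEP_EMBEDDING}). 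So even to start your chart-by-chart reduction you would first have to prove that equivalence, which your proposal neither states nor uses. Once ``embedding'' replaces ``closed immersion'', your affine dictionary (surjectivity of $\sh O(U_\sigma)\tens\sh O(U_\tau)\rightarrow\sh O(U_\sigma\cap U_\tau)$, i.e. the toric condition $S_\rho=S_\sigma+S_\tau$) is the right local statement, but note also that writing $U_\sigma\cap U_\tau$ as a single face chart $\rho$ presupposes quasi-separatedness: in a general formal complex two cones may be glued along several faces, and ruling this out is part of what must be proved (it appears as condition \emph{ii)} of Theorem \ref{PFAN_SEP}, not as an hypothesis).

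The second gap is that the part you defer as ``the technical heart'' --- compatibility of the only-locally-defined developing maps, and the fact that punctures neither hide nor create intersections --- is not a verification to be appended but is the actual content of the paper's proof. The paper handles it by working directly with extension problems: Proposition \ref{PFAN_MODIFICATION} identifies overconvergent neighbourhoods of a cone with topological neighbourhoods in the punctured realisation, and Theorem \ref{PFAN_SEP} then shows that uniqueness of solutions (local separatedness) is equivalent to injectivity of $\delta$ on unions of \emph{contiguous} pairs of cones, equivalently on small stars, from which the local-immersion statement follows in the locally finite-dimensional case. Your proposal replaces this with an appeal to the classical toric separation lemma inside a single copy of $N$, which is precisely what is unavailable here: the obstruction to a global $\delta$ (monodromy of the local system $N_X$) and the punctured faces are what distinguish the formal theorem from the algebraic one, and no argument in your proposal addresses them. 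So the route is not merely different from the paper's; as written it proves a different (and over $\F_1$, vacuous) statement, and the repairs needed would reconstruct most of Sections \ref{SEP} and \ref{PFAN_CRITERIUM}.
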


\begin{thrm}[\ref{PFAN_PROPER}]Let $f:X\rightarrow S$ be a non-boundary morphism between locally Noetherian, integral formal $\F_1$-schemes. Suppose that $f$ is paracompact and locally of finite type.

Then $f$ is overconvergent if and only if $\Sigma_X(\R)\rightarrow\Sigma_S(\R)$ is a topological submersion. In particular, in this case its fibres are manifolds without boundary.\end{thrm}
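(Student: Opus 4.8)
The plan is to translate the valuative criterion defining overconvergence (as set up in \S\ref{SEP}) into a path-lifting property for the realised map $\Sigma_X(\R)\to\Sigma_S(\R)$, and then to recognise that property as the defining local-projection condition of a topological submersion. Since $X$ and $S$ are normal, integral and locally Noetherian, Theorem \ref{PFAN_EQUIVALENCE} lets me replace $f$ by its associated morphism of integral polyhedral punctured cone complexes and work throughout on the $\R$-realisation. The key dictionary entry is that a valuation of (a point of) $X$, valued in an ordered abelian group, is the same datum as a point of $\Sigma_X(\R)$ --- equivalently, via the locally defined developing map $\delta$, a point of $N_X(\R)$ --- with rank-one valuations corresponding to rays. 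A commutative valuative square then records a point $x\in\Sigma_X(\R)$ (the centre on the source, living in a face $\sigma_0$) together with a ray in $\Sigma_S(\R)$ issuing from its image, and a lift $\Spec V\to X$ is exactly a ray in $\Sigma_X(\R)$ issuing from $x$ and mapping onto it. Thus the existence half of the valuative criterion becomes: every ray in the base through an image point lifts through the chosen source point.

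First I would settle the affine case, where $\sigma\subseteq\Sigma_X$ maps to $\tau\subseteq\Sigma_S$ and both developing maps are honest linear embeddings into $N_X(\R)$ and $N_S(\R)$, so that the realised map is the restriction of a single linear map $\phi$. Here ray-lifting is a statement in polyhedral convex geometry: for each $x\in\sigma$ and each ray in $\tau$ from $\phi(x)$ there is a ray in $\sigma$ from $x$ over it. Using integrality and local finiteness I would show this is equivalent to $\phi$ carrying a neighbourhood of $x$ in $\Sigma_X(\R)$ onto a neighbourhood of $\phi(x)$ by a map that is locally a projection $V\times W\to V$ with $W$ an open piece of Euclidean space --- i.e. a topological submersion near $x$ --- and that $W$ is boundaryless precisely because rays lift in every base direction (an exposed boundary face would be a direction admitting no lift, contradicting overconvergence). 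This simultaneously yields the ``manifold without boundary'' clause for the local fibres.

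Next I would globalise. The local charts are indexed by the cones of $\Sigma_X$, and their transition data are governed by the only-locally-defined developing map, whose local-linear compatibility is exactly what is needed to glue the local projections. Here the hypothesis that $f$ is paracompact enters: it furnishes a locally finite refinement on which the separately constructed submersion charts patch to a topological submersion of the whole realisation, and the local Euclidean fibres patch to fibres that are manifolds without boundary. Conversely, a topological submersion plainly has the ray-lifting property, since a local projection lets one lift any path in the base through any point of the fibre; this returns the existence part of the valuative criterion, and hence overconvergence. (Uniqueness of lifts is the separate, injective phenomenon of the companion Theorem \ref{PFAN_SEPARATED}, so the two results jointly recover the full criterion.) This converse direction is formal.

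The hard part will be the forward direction's passage from the pointwise, direction-by-direction lifting furnished by the valuative criterion to an honest local-product (submersion) structure. The criterion only guarantees that each individual ray lifts, whereas a submersion demands lifts that vary continuously, i.e. a local trivialisation; bridging this gap is where I must combine the rigidity of the integral polyhedral data (to pin down the fibre shape cone-by-cone) with paracompactness (to make the local models compatible), all while $\delta$ exists only locally and $\Sigma_X(\R)$ need not be Hausdorff, as $f$ is not assumed separated. Establishing the boundaryless conclusion is the sharp point: I must verify that overconvergence forbids any exposed boundary face in the fibre, which is the combinatorial manifestation of the difference between a universally closed morphism and one that merely dominates.
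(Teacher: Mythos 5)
There is a genuine gap, and it is one of logical order as much as of technique. Your whole strategy rests on taking ``the valuative criterion defining overconvergence'' as the starting point, but overconvergence is \emph{not} defined valuatively in this paper: it is defined (Def. \ref{SEP_EXTENS_DEF}, \ref{SEP_DEF_PROPER}) by the solvability, uniquely and up to modification, of arbitrary extension problems $U\hookrightarrow V$ with $V$ any quasi-compact test object and solutions living on the overconvergent germ $\mathrm{Sur}_{U/V}$. The valuative criterion (Thm. \ref{SEP_VAL}) is a downstream \emph{consequence}: its proof passes through Cor. \ref{INT_DECOMP_DETECT} and Prop. \ref{FAN_SUR}, and Prop. \ref{FAN_SUR} is explicitly a special case of Thms. \ref{PFAN_SEP} and \ref{PFAN_PROP} --- the very result you are trying to prove. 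The author even records that direct attempts to prove the valuative criterion were ``ultimately confounded.'' So an argument for Thm. \ref{PFAN_PROPER} that begins from jet-lifting is circular unless you first supply an independent proof that jet-lifting implies overconvergence; and that implication is precisely the hard content of the theorem. This infects both halves of your proposal: the forward direction assumes the equivalence, and your ``formal'' converse (submersion $\Rightarrow$ ray-lifting $\Rightarrow$ overconvergence) treats the deep implication as free. In the paper the implication ``developing map a local homeomorphism $\Rightarrow$ overconvergent'' is proved directly, by exhibiting, for every extension problem $\sigma_U/\sigma_V$, an overconvergent neighbourhood inside the canonical solution $\tilde\sigma_V$.

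The missing ingredient is exactly the paper's key lemma, Prop. \ref{PFAN_MODIFICATION}: a polyhedral morphism onto a punctured cone, fixing a common cell $\tau$, is an overconvergent neighbourhood of $\tau$ \emph{if and only if} its realisation is the inclusion of a topological neighbourhood of $\tau\setminus\zeta(\R)$. This is what converts the abstract definition (solvability after formally projective modification) into topology, via an explicit convex-geometric construction of refining functions $F_\sigma=\max\{0,\log f_i,F\}$; your proposal never engages with what modifications of the test cone $\sigma_V$ look like, which is where the actual definition of overconvergence lives. The difficulty you flag --- passing from pointwise ray-lifting to a local product structure --- is not merely hard: with $\Z$- or $\Q$-valued jets it is false (the paper's own counterexample, the fan in $\Q^2$ whose nonzero cones are all rational rays, lifts every rational ray yet is not overconvergent), and with $\R$-valued jets the statement ``jet-lifting $\Leftrightarrow$ overconvergence'' is essentially the theorem itself, not a tool available in advance. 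A correct proof along the paper's lines instead localises to the star $\sigma^\star$ (Lemma \ref{riblet}, so that the developing map is globally defined there), and then uses Prop. \ref{PFAN_MODIFICATION} in both directions; paracompactness and local finiteness enter only to guarantee polyhedral models and finite-dimensionality, not to glue charts.
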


If $f$ is overconvergent, the developing map equips the fibres of the induced map of cone complexes with a canonical structure of a \emph{radiant affine manifold} (cf. \cite{radiance}) with an action of $\R^\times_{>0}$. The affine manifold promised in the title and described in \cite{part2} will be the quotient by this action.

\

Finally, and somewhat incidentally to the rest of the development, the intuition provided by cone complexes permits us to prove an algebraisation criterion:

\begin{thrm}[\ref{PFAN_ALG}]Let $X$ be an integral formal scheme over $\F_1$. The multiplicative group $K_X^\times$ of the function field is a locally constant sheaf.

Suppose that $X$ is connected; then it is algebraisable if and only if $K_X^\times$ is constant. In this case, the algebraisation can be made functorial.\end{thrm}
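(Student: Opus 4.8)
The plan is to read the statement through the combinatorial dictionary supplied by the cone-complex functor $\Sigma$, using the observation that $K_X^\times$ is precisely the datum recording the local system that can obstruct a global developing map.

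First I would establish local constancy. Over an affine formal chart $U$, the restriction $X|_U$ is the completion of an integral affine $\F_1$-scheme, whose function field is a single $\F_1$-field $K$; hence $K_X^\times|_U$ is the constant sheaf with value $K^\times$. Globally, the subtlety---absent in ordinary algebraic geometry---is that completion deletes the generic directions (exactly the faces removed in passing to the punctured cone), so there need no longer be a single generic point lying in every open through which to identify these local function fields canonically. The gluing isomorphisms may thus differ by automorphisms of $K^\times$, making $K_X^\times$ a genuine local system. Under the equivalence of Theorem \ref{PFAN_EQUIVALENCE} this local system is canonically dual to the cocharacter local system $N_X$ carrying the developing map $\delta$, and its monodromy is dual to the linear monodromy of $\delta$; for the formal Tate curve this is the Picard--Lefschetz shear, which is why that example resists algebraisation.

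Assuming $X$ connected, I would then prove the equivalence. For the forward direction, if $X \cong \hat Y$ with $Y$ an integral algebraic $\F_1$-scheme, then $K_X = K_Y$, and because $Y$ is an honest integral scheme---a $0$-type whose generic point is dense---its function field is a constant sheaf; this constancy passes to the completion. For the converse, constancy of $K_X^\times$ trivialises the monodromy of $N_X$, so the locally defined developing maps assemble into a single global map $\delta : \Sigma_X \to N$ into a fixed cocharacter space $N$. I would then fill in the deleted faces of the punctured cone complex while retaining this global embedding, producing a genuine (possibly non-separated) cone complex over $N$; by the classification of algebraic normal $\F_1$-schemes in terms of such complexes this yields an algebraic $Y$, whose completion reintroduces precisely the deleted faces and so recovers $X$.

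The main obstacle is this converse construction: one must verify that filling in the punctures along the now-global $\delta$ produces a coherent cone complex over $N$---compatibly across the entire gluing, not merely cone by cone---and that the completion functor genuinely inverts this operation, returning $X$ rather than some other formal scheme with the same reduction. Granting this, functoriality is automatic: the assignment $X \mapsto Y$ is assembled from the global developing map and the inverse of the classification equivalence, both functorial, so it provides a canonical section of the completion functor on the full subcategory of connected formal $\F_1$-schemes with constant $K_X^\times$.
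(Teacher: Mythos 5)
Your local-constancy argument and the forward direction are fine and agree with the paper's reasoning in \S\ref{PFAN_MERO}. The gap is in the converse, and it is twofold. First, you route the construction of the algebraisation through the classification equivalence, but that equivalence (corollary \ref{PFAN_EQUIVALENCE}) is only valid on the category $\mathbf{FSch}_{\F_1}^\mathrm{ej/n/nb}$ of \emph{normal} formal schemes \emph{with enough jets}, whereas theorem \ref{PFAN_ALG} is asserted for arbitrary \emph{integral} formal schemes. For merely integral $X$ the counit of the adjunction $\mathbf{FSch}_{\F_1}^\mathrm{qi/nb}\leftrightarrows\mathrm{C}\mathbf{Cone}_*^N$ is not an isomorphism, so $\Sigma_X$ does not determine $X$: applying the left adjoint to your filled-in complex and completing returns (at best) the normal-with-enough-jets model attached to $X$, not $X$ itself, and your final step ``whose completion \ldots recovers $X$'' fails. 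Second, the step you defer with ``granting this'' --- that filling in the punctures along the global $\delta$ yields a coherent complex and that completion genuinely inverts the operation --- is not a verification one can grant; it is the entire content of the theorem. The paper is explicit that the filled-in complex ${}^?\Sigma_X$ is only motivation, and that its actual proof ``does not, in fact, depend on the classification theorem.''

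What the paper does instead is show directly that the canonical candidate, the forgetful sheaf ${}^?X$ (which is a functor by construction, whence functoriality is immediate rather than inherited from the classification), is a scheme. It modifies the affine site of $X$ in three steps: restrict to inhabited affine opens; freely adjoin finite fibre products by left Kan extension to repair flatness (this is where the empty intersections created by completion are handled); and then, using constancy of $K_X^\times$, replace each finite family $\{U_i\}$ by the embedded closure of the generic point $\G_X=\Spec K_X$, computed as $\Spec\bigl(\prod_i\sh O(U_i)\subseteq K_X\bigr)$, which restores open immersions as transition maps. Lemma \ref{TOPOS_ATLAS} then exhibits this as an atlas for ${}^?X$. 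Note that it is exactly the intersection of coordinate algebras inside the constant field $K_X$ that replaces your appeal to the classification, and it makes sense for any integral $X$. If you want to salvage your route, you would need either to first reduce the general integral case to the normal-with-enough-jets case (which the adjunction does not permit, since that reduction loses information about $X$), or to carry out the gluing argument at the level of the site of $X$ itself --- which is precisely the paper's proof.
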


Of course, algebraic $\F_1$-schemes being toric varieties, algebraisability over $\F_1$ is a much stronger hypothesis than algebraisability of a base change to $\Z$. Such examples are completely uninteresting from the perspective of affine manifold theory.

\subsection{Structural features}\label{STRUCT}

Schemes relative to monoids have already found attention from various perspectives in the literature. Much of sections \ref{FSCH}, \ref{MORP}, and \ref{INT} are devoted to further developing the basic properties of schemes and formal schemes to bring them more into line with the early parts of \cite{EGA}. Since these notions behave, with mild deviations, much like their counterparts over $\Z$, I relegate any summary of these sections to the body of the text.

\paragraph{Criteria for separation, propriety, overconvergence}The notions of separated and proper morphisms are traditionally defined in terms of \emph{closed maps}. As some authors \cite[\S6.5.20]{Durov} have already noted, in algebraic geometry relative to monoids there aren't enough closed sets for these definitions to produce something meaningful.

The generalisation from schemes to rigid analytic spaces, too, diminishes the relevance of closed subsets to geometry, though the rigid geometry community have nonetheless managed to push something through along classical lines \cite[II.7.5]{FujiKato}. 

In \cite[\S6.5.23]{Durov}, it is suggested that one could approach this problem by generalising the notion of closed subscheme to \emph{embedded} subschemes - that is, subschemes defined by equations (cf. \S\ref{MORP_EMBED}). Although we discuss that approach, I have chosen to focus instead on adapting the \emph{valuative criteria}. The only unsatisfactory aspect of the valuative criteria as they stand is their reliance on valuation rings.

So what we are looking for is a notion of valuative criterion \emph{without the valuation}; that is, the problem of extending a morphism $U\rightarrow X$ (over a base $S$) along a \emph{completely arbitrary} open immersion $U\subseteq V$:
\[\xymatrix{U\ar@{^{(}->}[r]\ar[d] & V\ar[d]\ar@{-->}[dl]_? \\ X\ar[r] & S}\]
Of course, it's unrealistic to expect that such an extension could exist without first modifying $V$ - the only reason we got away with it in the classical valuative criterion is that every finitely generated ideal in a valuation ring is principal, and hence cannot be blown up. 

The geometric input to the theory lies in specifying exactly what kinds of modifications are allowed. My choice of the word `modification' is no coincidence: for algebraic schemes I will allow any scheme \emph{projective} over $V$ admitting a section over $U$. Such a $V$-scheme is called an \emph{overconvergent neighbourhood} of $U/V$.

So for overconvergence of $X/S$, we are looking for an extension
\[\xymatrix{U\ar@{^{(}->}[r]\ar[d] & \tilde V\ar@{-->}[dl]_-{!}\ar[r]^\P & V\ar[d] \\ X\ar[rr] && S}\]
of $U\rightarrow X$ to an overconvergent neighbourhood $\tilde V$ of $U/V$, and this extension should be unique up to refinement - that is, further modification - of $\tilde V$. A morphism is proper, by definition, if it is overconvergent, quasi-compact, and quasi-separated.

The class of proper morphisms resulting from this definition turns out to be exactly those morphisms satisfying a strong version of the Chow lemma locally on the base (thm. \ref{SEP_CHOW}). The proof of this lemma for finite type, separated, and universally closed morphisms of schemes over $\Z$ - and hence that this theory recovers the usual one as presented in \cite{EGA} - relies on flattening stratification. An analogous statement for $\F_1$ seems quite distant at the moment, and so our definition is, at least \emph{a priori}, rather more powerful than any definition based around the notion of embedded subscheme - but see \S\ref{SEP_IMAGES}.

\

Another benefit to this kind of approach is that most of the basic structural results are `soft', meaning that they can be abstracted. We can therefore systematically define separated, proper, and overconvergent morphisms in any of the menagerie of topoi considered in this paper, the only input being a class of morphisms $\P$ satisfying a short list of conditions.

This allows us to mount a comparison of these notions in contexts related by morphisms of topoi. The comparison principles outlined in \S\ref{SEP_COMPARE} are of a somewhat technical nature; however, in practice they tend to rely on genuinely geometric input. The conclusions of the theory for formal $\F_1$-schemes are:
\begin{itemize}
\item overconvergence of morphisms locally of finite type between formal schemes can be checked using finite type reduced schemes as test spaces (\S\ref{SEP_FSCH});
\item if the the source is integral or the base is Noetherian, one can check using only \emph{normal} test spaces (\S\ref{INT});
\item in the latter case, one can even use only the specific test space $V=\A^1_{\F_1}=\Spec\F_1[t]$.\end{itemize}
The arguments for the first item are valid over $\Z$, and those for the second can most likely be extended with minor modifications; since the conclusions are anyway well-known in that setting, I didn't try to confirm this.

\paragraph{Capturing geometry in terms of combinatorial data} The constructions of combinatorial gadgets for $\F_1$-schemes, formal schemes, and rigid analytic spaces in \cite{part2} all follow the same general programme. I take a moment here to summarise the salient features. The initial steps are:
\begin{enumerate}\item assignment of combinatorial object to affine scheme;
\item description of open sets;
\item when does the combinatorial object determine the affine object?
\end{enumerate}
For example, in the case of finite type $\F_1$-schemes - approximately, toric varieties - these initial steps go:
\begin{itemize}
\item[-] an affine toric variety gives rise to a \emph{rational polyhedral cone} in a vector space with a lattice;
\item[-] open sets correspond to \emph{faces} of the cone;
\item[-] the cone determines the variety precisely when it is \emph{normal}.
\end{itemize}

This description becomes more useful when we have also:
\begin{itemize}
\item[iv)] description of points;
\item[v)] topological realisation.
\end{itemize}
In toric geometry, the integer points are usually interpreted as one-parameter subgroups of the embedded torus. More generally, the $H$-points, for general additive subgroups $H\subseteq\R$, have the interpretation of certain rank one valuations. Over $\F_1$, the notion of a valuation is essentially equivalent to that of a non-boundary \emph{jet}, that is, a point valued in the spectrum of a valuation $\F_1$-algebra with value group $H$.

In the special case $H=\R$, one can use the order topology to obtain a \emph{topological realisation} $\sigma(\R)$ of a combinatorial object $\sigma$. In general, this will be some kind of convex, semi-linear set inside a real vector (or later \cite{part2}, affine) space.

The affine picture well-understood, the next stage is globalisation:
\begin{itemize}
\item[vi)] glueing.\end{itemize}
This is an exercise in abstract nonsense: the language of locally representable sheaves (cf. \S\ref{TOPOS}) reduces the question of globalising a functorial construction to that of checking a couple of basic categorical properties, to whit,
\begin{itemize}
\item[vi\emph{a})] is it flat?\footnote{This is a fancy way of saying `would be left exact, if the category were finitely complete'. We need this for a stupid reason: our categories of combinatorial data do not contain an object representing the empty set!}
\item[vi\emph{b})] does it preserve open immersions?
\item[vi\emph{c})] does it preserve coverings?
\end{itemize}
The first item does not, for the most part, present a serious difficulty (but see the proof of theorem \ref{PFAN_ALG}). The second criterion will follow automatically from our understanding of item ii), and the third does not even appear until \cite{part2}.

The topological realisations of constituents glue together to give a compactly generated Hausdorff space $\Sigma_X(\R)$ whose topological properties reflect those of $X$. For instance, a locally Noetherian scheme $X$ is quasi-compact if and only if $\Sigma_X(\R)\setminus 0$ is conically compact, that is, compact up to the action of $\R^\times_{>0}$.

More interesting conditions on $X$ manifest in the study of the
\begin{itemize}\item[vii)] developing map, which we use to provide
\item[viii)] overconvergence criteria.\end{itemize}
We already concluded that overconvergence for normal schemes is detected by normal test spaces, and so it follows that it can equally be calculated in the category of cone complexes. 

Thus once we understand how \emph{modifications} look on the category of cone complexes, we can understand criteria for separation, propriety, and overconvergence directly from the definition via extension problems.

Returning to the example of classical toric geometry, we obtain a rephrasing of some well-known facts:
\begin{itemize}\item[-] to an algebraic $\F_1$-scheme we associate a locally representable presheaf on the category of cones - in other words, a complex of cones, glued together along faces, such that no two faces of the same cone are glued together;
\item[-] the developing map is defined globally on each connected component; 
\item[-] a birational modification is a subdivision;
\item[-] the separation criterion is that $\delta$ is an \emph{immersion}, and therefore gives rise to a \emph{fan} in a rational vector space;
\item[-] the overconvergence criterion is that $\delta$ is a \emph{homeomorphism}; that is, the fan has support the whole space.
\end{itemize}
The real crux is that for formal schemes, the developing map is not globally defined, and so the criteria for separation and overconvergence have to be replaced with \emph{local} conditions. This is the punchline of part I (\S\ref{PFAN_CRITERIUM}).

\subsection{Regarding topoi}\label{TOPOS}

In this paper, the notion of a geometry admitting an \emph{atlas} by some specified class of objects is formalised by the concept of \emph{locally representable sheaf}. The context in which this makes sense is captured by the following definition.

\begin{defns}\label{TOPOS_DEF}A \emph{spatial geometric context} (more briefly, \emph{spatial theory}) $(\Sh\mathbf C,\sh U)$ is a topos $\Sh\mathbf C$ together with a composable class $\sh U$ of monomorphisms, called \emph{open immersions}. The poset $\sh U_{/X}$ of open immersions into $X\in\Sh\mathbf C$ are required to satisfy also:
\begin{enumerate}
\item stability for base change, i.e. for each $X^\prime\rightarrow X$, base change induces a map $\sh U_{/X}\rightarrow\sh U_{/X^\prime}$;
\item for each $X$, $\sh U_{/X}$ is a complete lattice (i.e. being an open immersion is local);
\item every covering (epimorphism) of $X$ in $\Sh\mathbf C$ can be refined to a covering in $\sh U_{/X}$.
\end{enumerate}
Let $\mathbf C$ be a site for $\Sh\mathbf C$. An object $X\in\Sh\mathbf C$ is \emph{locally representable} in $\mathbf C$ if $\sh U_{/X}$ is generated by objects of $\mathbf C$. The site is \emph{spatial} if it generates $\sh U$ in the sense that every representable object is locally representable. 

Conversely, let $\mathbf C$ be a category with a Grothendieck topology generated by a specified class $\sh U^\mathrm{aff}$ of monomorphisms, called `affine' open immersions, stable for base change, composition, and descent. Then $\Sh\mathbf C$ carries a natural structure of a spatial theory for which $\mathbf C$ is a spatial site; the open immersions are those monomorphisms that are, after representable base change, exhausted by affine open immersions.

We will always implicitly consider spatial theories equipped with a spatial site, which we may as well assume includes all locally representable objects.

A \emph{spatial geometric morphism} between spatial theories is a geometric morphism of topoi whose pullback preserves open immersions. A flat functor between spatial sites that preserves open immersions and coverings extends to a spatial geometric morphism.\end{defns}

Every object $X$ of a spatial theory has a corresponding small site $\Sh(X):=\Sh(\sh U_{/X})$, which is by construction \emph{localic}. The locale associated to an object is denoted by the same letter. A morphism of objects induces a geometric morphism of small sites, and hence a continuous map of locales. 

If $\Sh\mathbf C$ has enough points, then $X$ is actually an honest sober topological space. By Deligne's theorem, this occurs if the Grothendieck topology on a spatial site $\mathbf C$ is generated by \emph{finite} coverings. In this series, we will always be working with topoi admitting enough points, and hence confuse locales with spaces.

\begin{lemma}\label{TOPOS_FUNCT}Let $f:\Sh\mathbf C\rightarrow\Sh\mathbf D$ be a spatial geometric morphism preserving spatial sites. For each locally representable $X\in\Sh\mathbf D$, $f^{-1}X$ is locally representable, and $f^{-1}|\sh U_{/X}$ is dual to a continuous map $f^{-1}X\rightarrow X$ of locales.\end{lemma}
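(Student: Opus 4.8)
The plan is to exploit the two defining properties of $f$: that its pullback $f^{-1}$ preserves open immersions (being \emph{spatial}), and that it carries the spatial site $\mathbf D$ into locally representable objects of $\Sh\mathbf C$ (preservation of spatial sites). Since $X$ is locally representable, its frame of opens $\sh U_{/X}$ is generated as a complete lattice by a family $\{U_i\hookrightarrow X\}$ of site objects $U_i\in\mathbf D$; in particular the top element decomposes as $X=\bigvee_i U_i$.

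For the local representability of $f^{-1}X$, I would first note that each $f^{-1}U_i\hookrightarrow f^{-1}X$ is an open immersion (spatiality) whose source is locally representable in $\mathbf C$ (preservation of spatial sites). The key computation is that these opens \emph{cover} $f^{-1}X$: because $f^{-1}$ is a left adjoint it is cocontinuous and preserves images, so it takes the union $\bigvee_i U_i$ -- the image of $\coprod_i U_i\to X$ -- to the union of the $f^{-1}U_i$, giving $f^{-1}X=\bigvee_i f^{-1}U_i$. It then remains to check that being covered by locally representable opens forces local representability, i.e. that local representability is a \emph{local} property. For an arbitrary open $W\hookrightarrow f^{-1}X$, the frame distributive law yields $W=\bigvee_i(W\cap f^{-1}U_i)$; each $W\cap f^{-1}U_i$ is open in the locally representable $f^{-1}U_i$, hence a join of site objects of $\mathbf C$, and these are again open in $f^{-1}X$ by composition of open immersions. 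Thus $\sh U_{/f^{-1}X}$ is generated by objects of $\mathbf C$.

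For the second assertion, I would verify directly that the assignment $U\mapsto f^{-1}U$ defines a frame homomorphism $f^{-1}|\sh U_{/X}:\sh U_{/X}\rightarrow\sh U_{/f^{-1}X}$. Since $\Sh(X)$ and $\Sh(f^{-1}X)$ are localic, the meets in these lattices are computed as intersections (fibre products over the ambient object) and the joins as unions of subobjects. Left exactness of $f^{-1}$ then gives preservation of finite meets -- including the top, as $f^{-1}X$ is the top of the target -- while cocontinuity together with preservation of images gives preservation of arbitrary joins. By the duality between frames and locales, this frame homomorphism is exactly the comorphism of a continuous map of locales $f^{-1}X\rightarrow X$, as claimed.

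The main obstacle, I expect, is not any single computation but the bookkeeping needed to align the \emph{abstract} complete-lattice structure of $\sh U_{/X}$ posited in Definition \ref{TOPOS_DEF} with the \emph{concrete} topos-theoretic operations (pullback for meet, image of a coproduct for join) on which the exactness of $f^{-1}$ acts. One must confirm both that each $f^{-1}U$ genuinely lands in $\sh U_{/f^{-1}X}$ -- which is precisely spatiality -- and that local representability descends along a cover, the latter relying on the frame distributive law and on open immersions being composable and stable under base change. Once these identifications are in place, everything reduces to the standard fact that the inverse image of a geometric morphism is left exact and cocontinuous.
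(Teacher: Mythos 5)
Your proof is correct; note that the paper states Lemma \ref{TOPOS_FUNCT} without proof, treating it as immediate from Definition \ref{TOPOS_DEF}, so there is no argument of the paper's to diverge from. What you write — covering $f^{-1}X$ by pullbacks of representable opens, invoking locality of open immersions and the frame distributive law to get local representability, and using left exactness plus cocontinuity of $f^{-1}$ (hence preservation of intersections, unions, and image factorisations) to obtain the frame homomorphism dual to $f^{-1}X\rightarrow X$ — is precisely the routine unwinding the paper intends.
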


I invite the reader to consult \cite{Toen} for more details of this approach to glueing; though written in a more restricted setting than the above, it is relevant for many examples of interest.

\paragraph{Coverings}
All coverings used in this paper are really \emph{hypercoverings}, that is, they include the data of the intersections. To be precise, a covering of an object $X$ of a (spatial) site $\mathbf C$ is a local isomorphism $X_\bullet\rightarrow X$ in $\PSh\mathbf C$; that is, a morphism that becomes invertible in $\Sh\mathbf C$. A \emph{member} of a covering is a representable object $X_i\rightarrow X_\bullet$ such that $X_i\rightarrow X$ is in $\sh U$.

\paragraph{Algebraic spaces}
It is also possible to define a broader class of `algebraic spaces' consisting of any object that can be represented as a colimit of representable objects and open immersions. Many of the things we prove about locally representable objects hold equally well for these algebraic spaces.

\section{$\F_1$-schemes and formal schemes}\label{FSCH}

\subsection{Review: $\F_1$-algebras}

In \cite{Deitmartoric}, it was proposed to define $\F_1$-geometry in terms of monoids. In this paper, we use a slight modification: in order to make sense of \emph{vanishing loci}, we want our monoids to have a zero element (as in \cite{Connes}). Thus our $\F_1$-algebras will be commutative algebra objects in the closed symmetric monoidal category of pointed sets.

To put a commutative monoid $(Q,+)$ over $\F_1$, you first write its elements as exponents so that the monoid law can be written multiplicatively:
\[ z^X\cdot z^Y = z^{X+Y}. \]
You then adjoin a \emph{sink} or \emph{absorbing element} $0$:
\[ 0\cdot z^X=0,\quad \forall X\in Q. \]
The resulting multiplicative monoid can be written $\F_1[z^Q]$ (or simply $\F_1[Q]$ if $Q$ is already written multiplicatively or we are being lazy). Note that even if $Q$ already had a sink, you need to add a new one.

This defines a faithful left adjoint
\[  -\tens\F_1:\mathbf{Mon} \rightarrow \mathrm{Alg}_{\F_1} \]
to the inclusion of the category $\mathrm{Alg}_{\F_1}$ of monoids with zero and homomorphisms that respect zero into the category $\mathbf{Mon}$ of all monoids. See \cite[\S3.1]{Connes} (in which the category $\mathrm{Alg}_{\F_1}$ is denoted $\mathfrak{Mon}$) for a more detailed discussion .

\begin{egs}[Fields]\label{RIG_FIELDS}An $\F_1$-algebra in which every non-zero element is invertible is called an \emph{$\F_1$-field}. Taking the multiplicative group $K\mapsto K^\times$ of an $\F_1$-field establishes an equivalence between the category of $\F_1$-fields and the category of Abelian groups. In particular, in contrast to the situation over $\Z$, there are non-injective homomorphisms between $\F_1$-fields. Any discrete $\F_1$-field admits a unique homomorphism to $\F_1$, the only `true' $\F_1$-field.

Any $\F_1$-algebra $A$ has a unique maximal subfield, the \emph{unit} or \emph{coefficient field} $\F_1[A^\times]$, which is just the group of units together with zero. The coefficient field is functorial.
\end{egs}

Taking the monoid algebra $-\tens_{\F_1}\Z$ defines an adjunction
\[ \Alg_{\F_1}\leftrightarrows\Alg_\Z \]
that allows us to base change $\F_1$-algebras to rings. Both free and forgetful functors commute with localisation.

\paragraph{Finiteness}
Being a category of commutative monoids in a symmetric tensor category, it is straightforward to make sense of the usual finiteness conditions - most importantly, finite type and finite presentation - for homomorphisms and modules. The Noetherian property also makes sense, and can be defined either by the ascending net condition on ideals or by requiring ideals to be finitely generated. We gather a few basic results here for reference.

\begin{prop}[Hilbert basis]\label{FIN_HILB}Let $A$ be a Noetherian $\F_1$-algebra. Then $A[x]$ is Noetherian.\end{prop}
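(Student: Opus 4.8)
The plan is to run the classical Hilbert basis argument, which becomes cleaner over $\F_1$ because a nonzero element of $A[x] = A\tens_{\F_1}\F_1[x]$ is simply a monomial $a x^n$ (with $a\in A\setminus\{0\}$, $n\in\N$), multiplied via $(a x^m)(b x^n) = ab\, x^{m+n}$; there are no lower-order terms, and hence no carries to track. I shall use the formulation of the Noetherian property by finite generation of ideals, recalling that an ideal of an $\F_1$-algebra $R$ is a subset $I$ with $0\in I$ and $RI\subseteq I$, that the ideal generated by a set $S$ is $R\cdot S$, and that $S$ therefore generates $I$ precisely when every nonzero $c\in I$ factors as $c = rs$ with $r\in R$ and $s\in S$.

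First I would fix an ideal $I\subseteq A[x]$ and aim to exhibit a finite generating set. For each $n\in\N$ put $I_n := \{a\in A : a x^n\in I\}$. Multiplying $a x^n\in I$ by a constant $b = b x^0$ shows $ba\in I_n$, so each $I_n$ is an ideal of $A$; multiplying by $x$ shows $a x^{n+1}\in I$, so $I_n\subseteq I_{n+1}$. Now invoke the hypothesis on $A$: the ascending chain $I_0\subseteq I_1\subseteq\cdots$ stabilises at some stage $N$ (so $I_n = I_N$ for $n\ge N$), and each of the finitely many ideals $I_0,\dots,I_N$ is finitely generated, say $I_n = A\cdot\{a_{n,1},\dots,a_{n,k_n}\}$.

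The claim is then that the finite set $G = \{a_{n,j}x^n : 0\le n\le N,\ 1\le j\le k_n\}$ generates $I$, which I would verify by splitting on degree. Let $c x^m\in I$ be nonzero, so $c\in I_m$. If $m\le N$, write $c = b\,a_{m,j}$; then $c x^m = (b x^0)(a_{m,j}x^m)\in A[x]\cdot G$. If $m>N$, then $c\in I_m = I_N$, so $c = b\,a_{N,j}$ and $c x^m = (b x^{m-N})(a_{N,j}x^N)\in A[x]\cdot G$. Hence $I$ is finitely generated, and $A[x]$ is Noetherian.

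I do not expect a genuine obstacle here: the one feature of the classical proof that requires real care — reconciling the leading coefficients of polynomials of differing degrees — simply evaporates in the additive-free setting. The only points deserving attention are bookkeeping ones: checking that each $I_n$ is honestly an $A$-ideal and that the degree shift $ax^n\mapsto ax^{n+1}$ produces the inclusions $I_n\subseteq I_{n+1}$, and ensuring that the two advertised formulations of the Noetherian condition (the ascending net condition and finite generation) are deployed consistently. I work throughout with finite generation, which is also the form in which the conclusion is wanted.
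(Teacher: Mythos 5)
Your proof is correct and is essentially the paper's own argument: both exploit that ideals of $A[x]$ are automatically homogeneous over $\F_1$ (every nonzero element is a monomial $ax^n$, the paper phrasing this as $A[x]=\bigvee_{n\in\N}Ax^n$ being a wedge sum), decompose $I$ into the ascending chain of ideals $I_n\trianglelefteq A$, and conclude from stabilisation of that chain together with finite generation of each $I_n$. Your write-up merely makes explicit the finite generating set and the degree-split verification that the paper's terse proof leaves to the reader.
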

\begin{proof}Since $A[x]=\bigvee_{n\in\N}Ax^n$ is a wedge sum of sets, any ideal $I$ is automatically homogeneous. Writing $I=\bigvee_{n\in\N}I_nx^n$, the ascending chain condition implies that $I_n\trianglelefteq A$ stablises for large $n$. Thus $I$ is finitely generated by $I_0,I_1x,\ldots,I_nx^n$ for large $n$.\end{proof}

Note that in the $\F_1$ context, being Noetherian by no means implies that the set of \emph{quotients} satisfies the ascending chain condition - only quotients by \emph{ideals}.

\begin{cor}\label{FIN_NOETHER}An $\F_1$-algebra is Noetherian if and only if it is of finite type over its coefficient field.\end{cor}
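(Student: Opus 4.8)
The plan is to prove the two implications separately; the forward one is a genuine corollary of Hilbert basis, while the converse carries the real content. For the easy direction, suppose $A$ is of finite type over its coefficient field $K=\F_1[A^\times]$. First note that $K$ is Noetherian: being an $\F_1$-field (Examples \ref{RIG_FIELDS}), its only ideals are $0$ and $K$. Finite type means $A$ is a quotient of some $K[x_1,\dots,x_n]$, and iterating Proposition \ref{FIN_HILB} shows $K[x_1,\dots,x_n]$ is Noetherian. It then suffices to observe that a quotient of a Noetherian $\F_1$-algebra is Noetherian: for a surjection $\pi$, taking preimages $\pi^{-1}$ gives an injective, order-preserving map on lattices of ideals, so the ascending chain condition is inherited.

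For the converse, suppose $A$ is Noetherian. The first step is to record that the non-units $\mathfrak m=A\setminus A^\times$ form the unique maximal ideal (if $ab$ is a unit then so is $a$), hence $\mathfrak m$ is finitely generated, say by $a_1,\dots,a_n$; set $A_0=K[a_1,\dots,a_n]$. I would then run an $\mathfrak m$-adic order argument. Define $\nu(b)=\sup\{k:b\in\mathfrak m^k\}$; since $\mathfrak m^k$ is generated by the degree-$k$ monomials in the $a_i$ and $\nu$ is superadditive ($\mathfrak m^j\mathfrak m^l\subseteq\mathfrak m^{j+l}$), any $b$ with $\nu(b)=k<\infty$ factors as $b=f\cdot a^e$ with $|e|=k$ and $\nu(f)=0$, i.e. $f$ a unit. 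Thus every element of finite order lies in $A_0$, giving the clean inclusion $A\setminus\bigcap_k\mathfrak m^k\subseteq A_0$.

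It remains to generate the \emph{infinitely divisible core} $J:=\bigcap_k\mathfrak m^k$, which is again a finitely generated ideal. Here I would invoke an Artin--Rees argument, legitimate because the Rees algebra $\bigvee_k\mathfrak m^k t^k\subseteq A[t]$ is finite type over $A$ and hence Noetherian by Proposition \ref{FIN_HILB}; this yields $J=\mathfrak m J$. When $A$ is \emph{integral}, a minimal ideal-generator then satisfies $g_i=\mu_i g_i$ with $\mu_i\in\mathfrak m$ a non-unit, and cancellation forces $J=0$, so $A=A_0$ is finite type. The genuinely new phenomenon is that in the non-integral case $J$ need \emph{not} vanish: the model example is the monoid $\{0,1,z,z^2,\dots,z^\infty\}$ with $z\cdot z^\infty=z^\infty$, where $z^\infty$ is an idempotent non-unit not generated by $z$. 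The relation $g_i=\mu_i g_i$ still upgrades (after multiplying through) to an idempotent generating the same principal ideal, so $J$ is governed by idempotent generators, which must be adjoined to $a_1,\dots,a_n$.

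The hard part is precisely the failure of Nakayama's lemma in the absence of additive cancellation: I must show that only \emph{finitely many} such idempotent generators are required. The strategy is to bound the idempotents using Noetherianity directly — an infinite family of orthogonal idempotents produces a strictly ascending chain $(e_1)\subsetneq(e_1,e_2)\subsetneq\cdots$, contradicting ACC, and more generally a Dickson-type finiteness for the antichain of principal ideals of the ``undivisible'' core elements is forced by the ascending chain condition. Turning this finiteness heuristic into a complete proof — equivalently, establishing that the idempotent core of a Noetherian $\F_1$-algebra is finitely generated over $K$ — is where essentially all the work of the converse implication lies.
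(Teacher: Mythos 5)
Your forward implication is correct and is the paper's own argument (Hilbert basis plus the observation that ACC passes to quotients), and your converse is essentially sound in the integral case: finite generation of $\mathfrak{m}=A\setminus A^\times$, the $\mathfrak{m}$-adic order argument, the Artin--Rees step (which is legitimate over $\F_1$, since every ideal of the Rees algebra is automatically homogeneous), and cancellation against a minimal generating set to kill $J=\bigcap_k\mathfrak{m}^k$. The gap is exactly the one you flag yourself: finite generation of the ``idempotent core'' in the non-integral case is never proved. But this is not a hard step awaiting a cleverer argument --- the claim is false. Take $A=\{0,1\}\cup\{e_1,e_2,e_3,\dots\}$ with $e_ie_j=e_{\max(i,j)}$. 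This is a quasi-integral $\F_1$-algebra with $A^\times=\{1\}$; its only ideals are $\{0\}$, $A$, and the principal ideals $(e_k)=\{0\}\cup\{e_j: j\geq k\}$, which form a single chain in which any given ideal has only finitely many ideals above it. So ACC holds and every ideal is principal: $A$ is Noetherian in the paper's sense. Yet the subalgebra generated by finitely many of the $e_i$ consists of $\{0,1\}$ together with those same elements (any product of them is again one of them), so $A$ is not of finite type over its coefficient field $\F_1$. This example also shows precisely why your Dickson-type heuristic dies: these idempotents are not orthogonal, and their principal ideals are nested rather than forming an antichain, so Noetherianity never produces the ascending chain you were hoping to contradict.

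Since the finiteness claim you needed is false, the corollary itself --- with Noetherian meaning ACC on ideals, as the paper defines it --- fails for general, even quasi-integral, $\F_1$-algebras; it is a theorem only under a hypothesis such as integrality, where your cancellation argument completes the proof. It is worth setting this against the paper's own proof, which consists of the single assertion that finite generation of the maximal ideal suffices: that is exactly the naive factorisation argument whose termination you correctly identified as the entire content of the converse (writing $b=a_{i_1}b_1=a_{i_1}a_{i_2}b_2=\cdots$ terminates precisely when the process cannot cycle, which cancellativity guarantees and which fails on $\bigcap_k\mathfrak{m}^k$). So your attempt is in fact more careful than the paper's argument; the defect lies not in your strategy for the integral case, but in the statement itself, whose full generality neither your proof nor the paper's can rescue.
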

\begin{proof}After the Hilbert basis theorem, we only need to show that a Noetherian $\F_1$-algebra is finitely generated over its unit field - this follows because in particular, the maximal ideal $A\setminus A^\times$ is finitely generated.\end{proof}

In the sequel, we will be led to consider rings that are integral extensions of Noetherian rings, or \emph{integral/Noetherian}.

\begin{lemma}\label{FIN_CRUX}Suppose that $A$ is integral/Noetherian. The set of radical ideals of $A$ satisfies the ascending net condition.\end{lemma}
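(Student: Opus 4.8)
The plan is to transport the condition from $A_0$, where it is cheap, to $A$, by comparing radical ideals along contraction. First I would note that for a poset the \emph{ascending net condition} is nothing but the ascending chain condition: a monotone net out of a directed set stabilises exactly when every chain does, i.e. when the poset is well-founded in the reverse order. So it suffices to exhibit an order-embedding of the poset of radical ideals of $A$ into a poset already known to satisfy the condition. The obvious candidate is the contraction map
\[ c:I\longmapsto I\cap A_0, \]
whose target is the poset of radical ideals of $A_0$; since $A_0$ is Noetherian, \emph{all} of its ideals, and a fortiori its radical ideals, satisfy the ascending net condition.

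The soft properties of $c$ are immediate: if $I$ is radical then so is $I\cap A_0$ (an equation $x^n\in I\cap A_0$ with $x\in A_0$ forces $x\in I$, hence $x\in I\cap A_0$), and $c$ is visibly order-preserving. The key step is to show that $c$ is moreover order-\emph{reflecting}, and therefore an order-embedding: if $I,J$ are radical with $I\cap A_0\subseteq J\cap A_0$, then $I\subseteq J$. Here is where integrality enters. Given $x\in I$, integrality of $x$ over $A_0$ supplies an exponent $n\geq 1$ with $x^n\in A_0$; then $x^n\in I\cap A_0\subseteq J\cap A_0\subseteq J$, and since $J$ is radical we conclude $x\in J$. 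An order-embedding into a poset with the ascending net condition inherits that condition — an ascending net upstairs maps to one downstairs, which stabilises, and reflection of inclusions forces the original to stabilise too — and this finishes the argument.

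The single point carrying any weight, and the one I expect to be the main obstacle, is the production of a power $x^n\in A_0$, that is, pinning down the operative meaning of \emph{integral} in this subtraction-free setting: a power of each element must land in the subalgebra. This is genuinely necessary rather than a convenience. Were integrality weakened to the assertion that each $A_0[x]$ is a finite $A_0$-module — a relation $x^N=a\,x^k$ with possibly $k\geq 1$ — the reflection step collapses, and in fact the statement itself becomes false: the power set of $\N$ under intersection is an $\F_1$-algebra generated by idempotents, hence element-wise module-finite over $\F_1$, yet its radical ideals are the closed subsets of an infinite, non-Noetherian $\Spec$ and so violate the ascending net condition. Accordingly, I would open the proof by fixing integrality to mean that some power of every element lies in $A_0$, after which the three steps above go through with no further input; the same comparison may equivalently be read as the statement that $\Spec A\to\Spec A_0$ induces an order-embedding on closed subsets, transferring Noetherianity of the base spectrum to the total one.
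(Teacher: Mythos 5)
The paper states this lemma without proof, so there is nothing of the author's to compare your argument against; it has to be judged on its own merits. Judged so, it is correct under the reading of integrality you fix at the end, and every step checks out: the contraction $I\mapsto I\cap A_0$ preserves radicality and order; it reflects order because any $x\in I$ has a power $x^n\in I\cap A_0$ (using both that $I$ is an ideal and that $x^n\in A_0$), so that radicality of $J$ forces $x\in J$; and an order-embedding into the ideal poset of the Noetherian ring $A_0$ transfers the ascending net condition. In fact you do not even need your opening reduction of nets to chains (which anyway requires dependent choice): the embedding carries ascending nets to ascending nets, the paper defines Noetherian directly by the net condition on ideals, and order-reflection finishes the argument.

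The obstacle you flag is genuine, and your diagnosis is right, but it is worth stating it against the paper's actual definitions. In \S\ref{MORP_NORM} an integral morphism is defined as a limit of finite morphisms, which at the level of elements yields exactly the monic equations $f^n=cf^i$, $c\in A_0$, $i<n$, i.e.\ your weak form; your counterexample is valid against it (the power set of $\N$ under intersection is a filtered union of finite $\F_1$-algebras, every element is idempotent, hence every ideal is radical, and the principal ideals generated by the sets $\{1,\dots,n\}$ ascend strictly), so the lemma as literally stated is false. Quasi-integrality of $A$ does not repair it either: the monoid $\{0\}\cup\{x^a\}\cup\{x^af_i\}$ with $x^af_i\cdot x^bf_j=x^{a+b+1}f_{\min(i,j)}$ has no zero divisors, each $f_i$ satisfies $f_i^2=xf_i$ over the Noetherian ring $\F_1[x]$, yet the radical ideals $\sqrt{(f_1,\dots,f_n)}$ ascend strictly. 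What reconciles the two readings is cancellativity of $A\setminus 0$, i.e.\ the paper's notion of an \emph{integral} $\F_1$-algebra (def.~\ref{INT_RUGGET}): there $f^n=cf^i$ cancels to $f^{n-i}=c\in A_0$, so the paper's definition of integral extension coincides with your stronger one. Since the instances the paper actually feeds into this lemma ($\F_1[\![t^{-\Q}]\!]$ over $\F_1[\![t]\!]$, normalisations, and finite-type algebras over these, where strong integrality is inherited over a Hilbert-basis Noetherian subring) all satisfy the strong form, your proof covers every use the paper makes of the statement; but the hypothesis "integral/Noetherian" does need to be read your way, or supplemented by cancellativity, for the lemma to be true.
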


\subsection{Review: $\F_1$-schemes}

There is a simple way to associate a topological space to an $\F_1$-algebra $A$ (cf. for example \cite{Connes}, \cite{Deitmar}) - just as in algebraic geometry, you take the set $\Spec A$ of prime ideals, topologised with basic open sets given by localisations.

The prime spectrum of $A$ has a unique closed point corresponding to the unique maximal ideal $A\setminus A^\times$ (\cite[\S1.2]{Deitmar}). In other words, affine $\F_1$-schemes are always \emph{local}. This implies that the various other ways one might try to define a covering condition on the category $\mathbf{Sch}^\mathrm{aff}_{\F_1}=\Alg_{\F_1}^\mathrm{op}$ of affine $\F_1$-schemes are all equivalent - indeed, trivial.

More precisely, for a family $A\rightarrow A[f^{-1}_i]$ of localisations, the following are equivalent:
\begin{enumerate}\item $\Spec A=\bigcup_i\Spec A[f^{-1}_i]$;
\item $f_i$ is invertible for some $i$;
\item $\Spec A\cong\Spec A[f^{-1}_i]$ for some $i$;
\item $A\rightarrow \prod_iA[f_i^{-1}]$ is a universally effective monomorphism in $\Alg_{\F_1}$;
\item $A\rightarrow \prod_iA[f_i^{-1}]$ is effective for descent of modules.\end{enumerate}
We therefore define the \emph{Zariski topos} $\Sh\mathbf{Sch}_{\F_1}$ of $\F_1$-schemes to be the presheaf category on $\Alg_{\F_1}^\mathrm{op}$. It is a spatial geometric context in the sense of definition \ref{TOPOS_DEF}, and the lattice $\sh U_{/X}$ of open subobjects of an affine object dual to an algebra $A$ is exactly the lattice of open subsets of its prime spectrum $\Spec A$.

The category $\mathbf{Sch}_{\F_1}$ of $\F_1$-schemes is defined in \cite{Connes} as a category of spaces equipped with a sheaf of monoids, locally modelled by affine $\F_1$-schemes. The functor of points
\[ \mathbf{Sch}_{\F_1}\rightarrow \Sh\mathbf{Sch}_{\F_1}  \]
embeds it as the full subcategory of locally representable sheaves in the Zariski topos. This perspective is treated explicitly in \cite{Toen}.

One easily makes sense of the usual finiteness conditions (quasi-compact, finite type, locally of finite type, locally Noetherian, etc.). The spectrum of an $\F_1$ algebra $A$ is: \begin{itemize}\item a point if and only if $A$ is an $\F_1$-field (e.g. \ref{RIG_FIELDS});
\item a Noetherian topological space if $A$ is integral/Noetherian (lemma \ref{FIN_CRUX}).\end{itemize}

\begin{remark}[Partial addition]\label{RIG_ADDITION}It seems likely that to get a true arithmetic over fields of characteristic one, plain monoids are really good enough. Various authors \cite{Deitmarses,Durov,Lorscheid} have introduced categories of objects with a kind of \emph{partial addition} in an attempt to address this.

The deficiency of the plain monoid theory is already visible at the level of geometry, and we will see this crop up a few times throughout this series. A key example is the following: unlike the case of ordinary schemes, our Spec functor does not take finite products to disjoint unions. Indeed, the product $A_1\times A_2$ of two $\F_1$-algebras $A_i$ has maximal ideal generated by the idempotents $(1,0)$ and $(0,1)$, which corresponds to a point not in either $\Spec A_i$. 

The problem here is that we do not have the relation
\[ (1,0)+(0,1)=1 \]
which in ordinary commutative algebra forces this ideal to equal the whole ring.
This problem can be rectified by allowing the addition of $(1,0)$ to $(0,1)$ (but no other additions), which happens automatically if you take the product in the category of monads \cite{Durov} (and probably, blueprints \cite{Lorscheid} or sesquiads \cite{Deitmarses}, but I didn't check).

I don't wish to pursue such a generalisation in this paper, though I expect that much of the theory is sufficiently abstract that it runs in a more general setting.\end{remark}

\paragraph{Base change to $\Z$}Base change to $\Z$ commutes with localisation, and there is no descent condition to check, so there is a spatial geometric morphism
\[ \Sh\mathbf{Sch}_\Z\rightarrow\Sh\mathbf{Sch}_{\F_1} \qquad \mathbf{Sch}_{\F_1}\rightarrow\mathbf{Sch}_\Z. \]
The forgetful functor does not preserve coverings, so there cannot be a forgetful functor from $\Z$-schemes to $\F_1$-schemes that preserves the affine objects.

It is possible, following \cite{Connes}, to `glue' the categories of $\F_1$ and $\Z$-schemes together by this morphism, but we won't dwell on that perspective here.

\subsection{Pro-discrete}\label{RIG_PROD}

Let $A$ be an $\F_1$-algebra. In the theory of formal schemes, we will want to consider $A$-modules $M$ equipped with an \emph{$A$-linear topology}. Such a topology is defined by a filtration of $M$ by $A$-submodules, which are declared \emph{open}. Indeed, after enlarging such a filtration so that
\begin{itemize}\item the intersection of two open discs is open;
\item every disc containing an open disc is open,\end{itemize}
these submodules (together with $\emptyset$) are the open sets of a topology on $M$ in the usual sense.

We will \emph{always} assume the following condition:
\begin{itemize}\item the filtration is separated. \hfill \emph{Hausdorff}\end{itemize}
If, at any point, we find ourselves with a non-separated filtration, we must take the quotient by the intersection of all open submodules, on which the induced topology is Hausdorff.

The feature of linearly topologised modules over $\F_1$-algebras that simplifies the theory considerably, as compared with its counterpart over $\Z$, is the following statement:

\begin{lemma}\label{RIG_PRO}Let $M$ be a Hausdorff, linearly topologised $A$-module. Then $M\widetilde\rightarrow\lim M/U$, where $U$ ranges over all open submodules.\end{lemma}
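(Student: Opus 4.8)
The plan is to analyse the natural map $\phi\colon M\to\lim_U M/U$ directly, exploiting the fact that, over $\F_1$, a quotient $M/U$ is nothing more than the pointed set obtained by collapsing the submodule $U$ to the basepoint. Concretely, as a pointed set $M/U$ is identified with $(M\setminus U)\sqcup\{0\}$: the quotient map $M\to M/U$ is the identity on $M\setminus U$ and sends all of $U$ to $0$, and for $U\subseteq V$ the transition map $M/U\to M/V$ again collapses the image of $V$ and is injective elsewhere. This \emph{injectivity of the structure maps away from the collapsed part} is the feature absent over $\Z$, where addition allows distinct elements to become congruent, and it is what makes the statement true.

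First I would establish injectivity of $\phi$, which is where the Hausdorff hypothesis enters. Since the open submodules are closed under pairwise intersection they form a downward-directed poset, and separatedness says precisely that $\bigcap_U U=\{0\}$. If $m,m'\in M$ have the same image in every $M/U$, then for each $U$ either $m=m'$ or both $m,m'\in U$; were $m\neq m'$, the latter would hold for all $U$ and force $m,m'\in\bigcap_U U=\{0\}$, a contradiction. Hence $\phi$ is injective.

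The substance of the argument is surjectivity. Given a compatible family $(x_U)_U\in\lim_U M/U$, if every $x_U=0$ it is the image of $0\in M$. Otherwise fix $U_0$ with $x_{U_0}=m_0\neq 0$, so $m_0\in M\setminus U_0$. For an arbitrary open submodule $U$ set $W=U\cap U_0$, which is again open; compatibility along $M/W\to M/U_0$ shows $x_W$ maps to $m_0\neq 0$, so $x_W\neq 0$, and because this transition map is injective off the collapsed part, $x_W$ is represented by $m_0$ itself. Pushing $x_W$ forward along $M/W\to M/U$ then identifies $x_U$ with the image of $m_0$ in $M/U$. As $U$ was arbitrary, $(x_U)_U=\phi(m_0)$, proving surjectivity.

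Finally, $\phi$ is $A$-linear and a bijection of pointed sets, hence an isomorphism of $A$-modules; checking that $\phi$ carries the open submodules of $M$ bijectively onto the generating open submodules of $\lim_U M/U$ upgrades this to a homeomorphism. The main obstacle is the surjectivity step, and more precisely the rigidity phenomenon that a single nonzero entry $x_{U_0}=m_0$ pins down the entire family. This rests on the two features highlighted above, namely the downward-directedness of the open submodules, so that $W=U\cap U_0$ is available, together with the injectivity of the quotient maps on complements, the latter being exactly the simplification peculiar to the $\F_1$ setting.
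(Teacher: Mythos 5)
Your proof is correct and follows essentially the same route as the paper's: both hinge on the $\F_1$-specific fact that the quotient map $M\twoheadrightarrow M/U$ is injective away from $U$, so a nonzero component of a compatible family has a unique lift, which pins down the whole family. The paper packages this as a direct construction of the inverse map, whereas you split it into injectivity (via Hausdorffness) and surjectivity, but the mathematical content is identical.
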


In other words, Hausdorff topological modules are automatically \emph{pro-discrete}, or \emph{complete}, to adapt the terminology of Bourbaki. Ignoring size issues, the argument even allows us to identify the category of Hausdorff linearly topologised $A$-modules with the category of pro-objects in the category of discrete $A$-modules whose transition maps are quotients by ideals.\footnote{Note that this is not possible even for \emph{complete} modules over $\Z$.}

\begin{proof}Let $x\in\lim M/U$, and denote by $x_U$ its image in $M/U$. If all $x_U$ are $0$, then write $\phi(x)=0$. Otherwise, there exists a $U$ such that $x_U\neq 0$; then the fibre over $x_U$ of
\[ M \twoheadrightarrow M/U \]
has a unique element, which we call $\phi(x)$. The unicity implies that it must be a lift of $x$.

This defines an inverse $\phi:\lim M/U\rightarrow M$ to the canonical map.
\end{proof}

The tensor (or smash) product $M_1\tens_A M_2$ of linearly topologised modules $M_1,M_2$ is topologised strongly with respect to the maps
\[ e_{x_2}:M_1\rightarrow M_1\tens_AM_2,\quad x \mapsto x\tens x_2 \qquad e_{x_1}:M_2\rightarrow M_1\tens_AM_2,\quad x \mapsto x_1\tens x\]
for $x_i\in M_i$. In other words, its open submodules are those of the form $M_1\tens U_2\cup U_1\tens M_2$, with $U_i\subseteq M_i$ open. The resulting filtration may fail to be Hausdorff, and so the true (`completed') tensor product may be a quotient of the discrete tensor product. It is the limit of the tensor products of the discrete quotients of $M_1$ and $M_2$. Since tensor products for us will \emph{always} be completed, we will not introduce a new notation for this construction.

\

A \emph{linearly topologised} or \emph{pro-discrete $\F_1$-algebra} is an $\F_1$-algebra $A$ equipped with a (Hausdorff) linear topology as a module over itself. The multiplication $A\tens_{\F_1}A\rightarrow A$ is automatically continuous and open. A pro-discrete $\F_1$-algebra is, up to size issues, the same as a Mittag-Leffler pro-$\F_1$-algebra.

We will also want to make the assumption that
\begin{itemize}
\item the product of two open ideals is open. \hfill \emph{adicity}\end{itemize}
The essential consequence of this condition is that the Rees algebras of $A$ are Banach $A$-modules, and hence that blow-ups are representable by schemes; see \S\ref{MORP_REES}.

In reality, we usually also assume the existence (at least locally) of a finite type \emph{ideal of definition}
\begin{itemize}
\item there exists an open, finitely generated ideal whose powers generate the topology. \hfill \emph{admissibility}\end{itemize}
which is linguistically reasonable, given that the word `formal' comes from the formal power series that appear in admissibly topologised rings. This property is not stable under limits. 

In fact, apart from the classification by cone complexes, the only place the existence of an ideal of definition is used in this paper is in the proof that separation and propriety of formal schemes can be checked on a reduction (lemma \ref{SEP_NILPOTENTS}).

\begin{defn}\label{RIG_DEF_BAN}A \emph{pro-discrete $A$-module} is a linearly topologised module over the discrete monoid underlying $A$ such that the action $M\tens_{\F_1} A\rightarrow A$ is continuous. The category of pro-discrete $A$-modules with continuous $A$-equivariant homomorphisms is denoted $\mathrm{lct}_A$ (the letters standing for `locally convex topological'). 

A module $M$ is said to be \emph{adic}, or \emph{Banach}, if its topology is generated by the submodules $MU$ with $U\subseteq A$ an open ideal. Banach modules are stable for tensor product and pullback.\end{defn}

\begin{egs}The basic examples are those coming from totally ordered Abelian groups $H$.; one topologises the $\F_1$-algebra $\F_1[\![t^{-H}]\!]$ associated to $H^\circ=H_{\leq 0}$ by declaring open the ideals associated to \emph{lower sets} in $H^\circ$. The sign convention is such that $t$ is topologically nilpotent. Note that, in contrast to the situation over $\Z$, this monoid has the same underlying set as its polynomial counterpart $\F_1[t^{H^\circ}]$.

Such `formal power series' monoids and arbitary products of discrete monoids satisfy the adicity condition, but typical infinite-dimensional examples like $\F_1[x_1,\overrightarrow\cdots]$, an infinite limit of polynomial rings on finite index sets do not.\end{egs}

\paragraph{Base change to $\Z$}In ordinary commutative algebra, the theory of complete linearly topologised rings is \emph{not} equivalent to the theory of pro-discrete rings. When discussing formal schemes over $\Z$, we will use the latter as our definition; that is, a pro-discrete ring is a pro-object of $\Alg_\Z$ with surjective transition maps.

The base change functor on discrete algebras has a natural pro-extension
\[ \Pro^\mathrm{ML}\Alg_{\F_1}\rightarrow\Pro^\mathrm{ML}\Alg_\Z \]
that commutes with tensor products and manifestly preserves `admissibility'. However, since the forgetful functor $\Alg_\Z\rightarrow\Alg_{\F_1}$ does not preserve quotients by ideals, this functor does not have a right adjoint, $\mathrm{i.e.}$ there is no forgetful functor from pro-discrete rings to pro-discrete $\F_1$-algebras.

\subsection{Some definitions of formal $\F_1$-schemes}\label{RIG_FSCH}

One extends the definition of spectrum to linearly topologised $\F_1$-algebras $A$ by analogy with the traditional setting \cite[\textbf{I}.10]{EGA}, that is, by taking the set of \emph{open} prime ideals. The principal open sets correspond to \emph{completed} localisations of $A$, which are defined exactly as over $\Z$:
\[ \lim_JA/J =A \rightarrow A\{f^{-1}\}:= \lim_JA/J[f^{-1}]. \]
The formal prime spectrum is a `pro-discretely monoidal space', which I will break with tradition by denoting $\Spec A$. This will not cause ambiguity, as we will never take the spectrum of \emph{all} prime ideals of a non-discrete ring. 

Since the maximal ideal is always open, the formal spectrum of a linearly topologised ring continues to be local. In other words, there are no non-trivial coverings of affine formal schemes over $\F_1$.

The equivalent definitions of the (trivial) covering condition on the opposite $\mathbf{FSch}^\mathrm{aff}_{\F_1}$ to the category of admissible $\F_1$-algebras still hold good in the pro-discrete regime - indeed, we can even add 
\begin{enumerate}
\item[vi)] $A\rightarrow \prod_iA[f_i^{-1}]$ is effective for descent of pro-discrete modules.
\item[vii)] $A\rightarrow \prod_iA[f_i^{-1}]$ is effective for descent of Banach modules.
\end{enumerate}
Thus any of these - now seven - conditions define the same spatial theory $\Sh\mathbf{FSch}_{\F_1}$, which is the presheaf category on $\mathbf{FSch}^\mathrm{aff}_{\F_1}$ with open immersions dual to completed localisations. It carries a tautological sheaf $\sh O$ of adic $\F_1$-algebras. 

We may therefore give two equivalent definitions of formal schemes:

\begin{defns}[Formal schemes]A formal $\F_1$-scheme is 
\begin{enumerate}\item a space with a sheaf $\sh O$ of adic $\F_1$-algebras, locally isomorphic to the spectrum of an adic (admissible) $\F_1$-algebra. 
\item a locally representable presheaf on $\mathbf{FSch}^\mathrm{aff}_{\F_1}$.
\end{enumerate}
The category of formal $\F_1$-schemes is denoted $\mathbf{FSch}_{\F_1}$.\end{defns}

\begin{remark}There are actually some subtleties involved in making sense of a `sheaf of adic $\F_1$-algebras'. The theory of pro-discrete $\F_1$-algebras is not a finite products theory, and so the category of pro-discrete $\F_1$-algebras internal to the sheaf topos of a space $X$ is \emph{not} the same as the category of colimit-preserving functors from $\Sh(X)$ into the category of pro-discrete $\F_1$-algebras. This fact will be familiar to students of the $\ell$-adic cohomology.

Spaces defined in terms of affine pieces like formal schemes do not suffer from this ambiguity, so either approach would be sufficient for the purposes of this definition. However, note that even for formal schemes, since admissibility is not stable for limits, general section spaces of $\sh O$ need not admit ideals of definition.\end{remark}

The inclusion of the subcategory of discrete algebras gives a spatial geometric morphism \[\Sh\mathbf{FSch}_{\F_1}\rightarrow\Sh\mathbf{Sch}_{\F_1} \qquad \mathbf{Sch}_{\F_1}\rightarrow \mathbf{FSch}_{\F_1}. \]
Via the fully faithful composite of the Yoneda embedding and the pushforward functor \[ \mathbf{FSch}_{\F_1}\hookrightarrow\Sh\mathbf{FSch}_{\F_1}\rightarrow\Sh\mathbf{Sch}_{\F_1},\] it is also possible to consider formal schemes as certain locally \emph{ind-representable} objects of $\Sh\mathbf{Sch}_{\F_1}$. Indeed, by definition the opposite category to that of admissibly topologised $\F_1$-algebras is a full subcategory of $\mathrm{Ind}\mathbf{Sch}_{\F_1}^\mathrm{aff}$ whose objects, among other conditions, have transition maps finitely presented nilpotent embeddings. For simplicity we also take this perspective on formal schemes over $\Z$.

The definition of blow-ups, and hence of rigid analytic spaces, is not local for $\Sh\mathbf{Sch}$, so we will still need to use the larger topos $\Sh\mathbf{FSch}$ in the sequel.

The categories $\mathrm{lct}_-$ extend to a stack $\QC=\QC_{\mathbf{FSch}}$ on the formal topos; the categories $\mathrm{Ban}_-$ define a full substack (def. \ref{RIG_DEF_BAN}).
If $f$ is a qcqs morphism of formal schemes, the corresponding pullback morphism $f^*$ on $\QC$ has a right adjoint $f_*$. A qcqs morphism is representable by schemes in $\Sh\mathbf{Sch}_{\F_1}$ - a property the literature usually calls \emph{adicity} - if and only if this pushforward functor preserves the substack of Banach modules.

\paragraph{Base change to $\Z$}The category of affine formal schemes over $\Z$ is, by our definition, opposite to the full subcategory of Mittag-Leffler pro-rings
\[\mathbf{FSch}_\Z^\mathrm{aff}\hookrightarrow\Pro\Alg_\Z^\mathrm{op}\]
defined by the criteria of adicity and admissibility. Note that this is slightly different from the traditional theory expounded in terms of topological algebra in \cite[\textbf{1}.1.10]{EGA}, though under the axiom of dependent choice the two approaches can be shown to be equivalent at least for first countably topologised algebras.

The base change functor $\mathbf{FSch}^\mathrm{aff}_{\F_1}\rightarrow\mathbf{FSch}_\Z^\mathrm{aff}$ is the restriction of the ind-extension of the base change for schemes, and it gives rise, as in the former case, to a spatial geometric morphism
\[ \Sh\mathbf{FSch}_\Z\rightarrow\Sh\mathbf{FSch}_{\F_1}\qquad \mathbf{FSch}_{\F_1}\rightarrow \mathbf{FSch}_\Z. \]

\subsection{Formal completion}\label{FSCH_COMPLETION}
The initial remarks of this section are valid for $\F_1$ and for $\Z$. Note that in the latter case our definitions deviate a little (see immediately above) from the traditional ones, under which some of the assertions of this section are false.

Let ${}^Z\mathbf{FSch}$ denote the category whose objects are pairs $(X,Z)$ consisting of a formal scheme over $\F_1$ or $\Z$ together with a closed, finitely presented algebraic subscheme $Z$, and whose morphisms $f:X_1\rightarrow X_2$ satisfy $Z_1\subseteq (f^{-1}Z_2)^\mathrm{red}$. The isomorphism class of an object $(X,Z)$ of ${}^Z\mathbf{FSch}$ depends only on $X$ and the underlying set of $Z$.

The forgetful functor
\[ {}^Z\mathbf{FSch}\rightarrow\mathbf{FSch} \]
has adjoints on both sides. The right adjoint, which marks an entire formal scheme, has a further right adjoint: \emph{formal completion.}


The \emph{formal completion of $X$ along $Z$} is, as an ind-scheme, the inductive limit
\[ \hat X_Z := \colim_{Z^\prime\subseteq Z} Z^\prime \]
of closed subschemes with set-theoretic support in $Z$. Formal completion is idempotent. If $X$ is a scheme, then the formal completion along $Z$ may be computed as the double complement of $Z$ in the Heyting algebra of subobjects of $X$ in the Zariski topos.

To see that $\hat X_Z$ is a formal scheme according to our definitions (\S\ref{RIG_PROD},\ref{RIG_FSCH}), we will need to show that its co-ordinate algebra is adic. We declare open any quasi-coherent ideal sheaf $I\trianglelefteq\sh O_X$ cosupported on $Z$, and write $\hat{\sh{O}}_{X,Z}$ for the Hausdorff quotient of the linear topology on $\sh O_X$ thus obtained. It is a pro-discrete quasi-coherent sheaf on $X$, the limit of all quotients of $\sh O_X$ supported on $Z$. 

Since cosupports remain unchanged under taking powers of an ideal, $\hat{\sh{O}}_{X,Z}$ is adic, and hence its spectrum $\hat X_Z$ is a (marked) formal scheme. Its ideal of definition given by the ideal defining $Z$.

\begin{prop}[Formal completion commutes with base change] Let $(Y,Z)$ be a marked formal scheme, $f:X\rightarrow Y$ any morphism (including along $\Spec\Z\rightarrow\Spec\F_1$). The square
\[\xymatrix{ \hat X_{f^*Z} \ar[r]\ar[d] & X \ar[d] \\ \hat Y_Z\ar[r] & Y }\]
is Cartesian.\end{prop}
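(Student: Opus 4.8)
The content of the square is the identification $\hat X_{f^*Z}\cong \hat Y_Z\times_Y X$, and the plan is to establish it by carrying the inductive description of formal completion across $f$ and exploiting that colimits are universal in the ambient topos. First I would unwind the two completions. Because $Z$ is finitely presented, it is cut out (at least locally on $Y$) by a finitely generated quasi-coherent ideal $J\trianglelefteq\sh O_Y$, and the thickenings $Z_n:=V(J^n)$ form a system of closed, finitely presented subschemes cosupported on $Z$. The adicity observation already used to build $\hat Y_Z$ — that cosupports are unchanged under taking powers of an ideal — shows precisely that $\{Z_n\}$ is cofinal among all such thickenings, whence $\hat Y_Z=\colim_n Z_n$.

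Next I would transport this along $f$. Colimits in a topos are universal, so the pullback $-\times_Y X$ preserves them and $\hat Y_Z\times_Y X=(\colim_n Z_n)\times_Y X=\colim_n(Z_n\times_Y X)$. The scheme-theoretic preimage $Z_n\times_Y X$ is the closed subscheme $V(J^n\sh O_X)$ cut out by the extended ideal; since $J^n\sh O_X=(J\sh O_X)^n$, this is exactly the $n$-th thickening $f^*Z_n=V((J\sh O_X)^n)$ of the preimage $f^*Z=V(J\sh O_X)$.

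It remains to recognise $\colim_n f^*Z_n$ as $\hat X_{f^*Z}$. Here $J\sh O_X$ is again (locally) finitely generated and set-theoretically cuts out $f^*Z$, so the same adicity argument shows that $\{f^*Z_n\}$ is cofinal among the thickenings of $f^*Z$ in $X$; hence $\colim_n f^*Z_n=\hat X_{f^*Z}$. Tracing through these identifications exhibits the resulting map $\hat X_{f^*Z}\to \hat Y_Z\times_Y X$ as the canonical comparison induced by the completion map $\hat X_{f^*Z}\to X$ and the functoriality map $\hat X_{f^*Z}\to\hat Y_Z$, so the square is Cartesian. In affine terms, with $Y=\Spec B$, $X=\Spec A$ and $Z=V(J)$, this is just the computation $\hat B_Z\tens_B A=\lim_n(B/J^n\tens_B A)=\lim_n A/J^n A=\hat A_{f^*Z}$, reading off the completed tensor product as the limit of the tensor products of discrete quotients.

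The main obstacle is the cofinality used in the first and third paragraphs: that the powers $J^n$ (respectively $(J\sh O_X)^n$) are cofinal among \emph{all} ideals cosupported on $Z$ (respectively $f^*Z$). This is exactly where finite presentation of $Z$ is indispensable; a pigeonhole argument over the finitely many generators of $J$ shows that any ideal whose radical contains $J$ must already contain a power of $J$, a statement that fails for non-finitely-generated ideals of definition. The one genuinely separate case is base change along $\Spec\Z\to\Spec\F_1$, where the pullback is along a morphism of topoi rather than inside a single category; there I would invoke that this base change commutes with (completed) tensor products and preserves finitely generated ideals of definition, so that it sends the cofinal system $\{Z_n\}$ to a cofinal system on the $\Z$-side and the colimit argument applies verbatim.
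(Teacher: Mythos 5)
Your proof is correct and is essentially the paper's own argument written out in full: the paper's entire proof is the single line ``Follows from the existence of an ideal of definition,'' and your elaboration — cofinality of the powers $J^n$ among ideals cosupported on $Z$ via the pigeonhole argument over the finitely many generators, universality of colimits under pullback, the identification $J^n\sh O_X=(J\sh O_X)^n$, and the affine check via the completed tensor product — is precisely what that line appeals to. The handling of the base change $\Spec\Z\rightarrow\Spec\F_1$ also matches the paper's observation that this functor commutes with tensor products and preserves admissibility, so there is nothing to correct.
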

\begin{proof}Follows from the existence of an ideal of definition.\end{proof}

\begin{eg}In the extreme example where $Z=X$, the formal completion is $X$ itself. The `next most faithful' formal completion of $X$ comes from setting $Z$ the \emph{boundary} of $X$, that is, the union of all divisors. If $X$ is a locally Noetherian (resp. Noetherian) $\F_1$-scheme, the result is a locally Noetherian (resp. Noetherian) formal $\F_1$-scheme. This is in marked contrast to the same operation in algebraic geometry, which generally yields a huge mess.\end{eg}

\paragraph{Functorial algebraisation}
If $A$ is a topological $\F_1$-algebra, let us write $A^?$ for the `forgetful' underlying discrete $\F_1$-algebra. It defines a functor
\[ ?:\mathbf{FSch}^\mathrm{aff}\rightarrow {}^Z\mathbf{Sch}^\mathrm{aff} \]
which marks all open ideals. It is left adjoint, and right inverse, to formal completion.

Of course, for ordinary schemes one hits a wall as soon as one tries to globalise this functor. But, stupidly, since over monoids there are no non-trivial coverings of formal schemes, it actually extends to a colimit-preserving (but not left exact) functor
\[?:\Sh\mathbf{FSch}_{\F_1}\rightarrow \Sh^Z\mathbf{Sch}_{\F_1}:= \PSh^Z\mathbf{Sch}_{\F_1}^\mathrm{aff}\]
left adjoint and right inverse to formal completion.

\begin{defn}A formal scheme $X$ is said to be \emph{algebraisable} if there exists a marked scheme $(Y,Z)$ and an isomorphism of (unmarked) formal schemes $\hat Y_Z\cong X$; these data are called an \emph{algebraisation} of $X$.\end{defn}

The forgetful sheaf ${}^?X$ is a natural candidate for an algebraisation of $X$: if ${}^?X$ is a scheme, then it is a \emph{functorial} algebraisation of $X$. We are led naturally to the following strengthening of the algebraisability question:

\begin{question}When does $?$ take formal schemes to schemes?\end{question}
We give a complete answer to this question for integral formal schemes in \S\ref{PFAN_MERO}.

\subsection{Markings}\label{FSCH_MARKING}
In the sequel, it will be useful to have a category parametrising \emph{marked} formal schemes, that is, formal scheme marked along a family of closed formal subschemes. This is a variation on the definition of the category ${}^Z\mathbf{FSch}$ (\S\ref{FSCH_COMPLETION}).

\begin{defn}A \emph{marked formal scheme} is a pair $(X,Z)$ consisting of a formal scheme $X$ and a family of finitely presented closed formal subschemes $Z$. A morphism of pairs is a morphism $f:X_1\rightarrow X_2$ of formal schemes such that $(f^{-1}Z_2)^\mathrm{red}\subseteq Z_1$.

The category of marked schemes, resp. formal schemes is denoted ${}_Z\mathbf{Sch}\hookrightarrow{}_Z\mathbf{FSch}$. \end{defn}

The isomorphism class of $(X,Z)$ depends only on the underlying reduced formal schemes $Z^\mathrm{red}$ of $Z$. A marked formal scheme $(X,Z)$ has a canonical `maximal' representative in its isomorphism class, in which $Z$ is replaced with the family of all finitely presented closed subschemes with set-theoretic support in a finite union of members of $Z$.

If, in fact, $f^{-1}Z_2=Z_1$, we say that $f$ is \emph{represented by the formal scheme} $X_1$ over $(X_2,Z_2)$; the slice category of represented morphisms is simply $\mathbf{FSch}_{X_2}$.

Let us call a family $(X_\bullet,Z_\bullet)\rightarrow (X,Z)$ a \emph{covering} if it is represented by formal schemes and $X_\bullet\rightarrow X$ is a Zariski covering. If the formal schemes involved are affine, it is equivalent to ask simply that one of the maps is an isomorphism of pairs. This system of coverings defines a topos $\Sh_Z\mathbf{FSch}$, the \emph{marked formal topos}. It is a spatial theory (def. \ref{TOPOS_DEF}) with represented Zariski-open immersions, and the locally representable objects are precisely the marked formal schemes.

\paragraph{Adjoints}
The forgetful functor \[{}_Z\mathbf{FSch}\rightarrow \mathbf{FSch}\] has a fully faithful right adjoint, which associates to the formal scheme $X$ the pair $(X,\emptyset)$, and left adjoint, which associates $(X,X)$. The right adjoint has a further right adjoint $(X,Z)\mapsto X\setminus Z$, which commutes with coverings. These four adjoints, from left to right:
\begin{align}\nonumber (X,X) &\mapsfrom  X \\ 
\nonumber (X,Z) &\mapsto  X \\
\nonumber (X,\emptyset) & \mapsfrom  X \\
\nonumber (X,Z) & \mapsto  X\setminus Z\end{align}
induce three essential geometric morphisms
\[\xymatrix{ \Sh\mathbf{FSch}\ar@<3pt>[r]\ar@<-3pt>[r] & \Sh_Z\mathbf{FSch};\ar[l] }\]
the one going from right to left is surjective, and the other two are sections.

More generally, if $(Y,Z)\in{}_Z\mathbf{FSch}$ is any object, the right adjoint to \[{}_Z\mathbf{FSch}_{(Y,Z)}\rightarrow \mathbf{FSch}_{Y}\] takes $X$ to $(X,Z\times_{Y}X)$. The other adjoints have the same formulas.



\section{Elementary properties of morphisms}\label{MORP}

Here I review some easy properties of morphisms of formal schemes, in generalisation of the concepts presented in \cite[\textbf{II}]{EGA} for schemes over $\Z$. In particular, we make a special study of projective morphisms \S\ref{MORP_PROJ} and blow-ups \S\ref{MORP_REES}, a good understanding of which is critical to the definitions of both overconvergence and rigid analytic geometry \cite{part2}.

Unless otherwise noted, all definitions are valid over $\F_1$ and $\Z$ (and are mostly standard in the latter case); I therefore suppress the subscripts denoting the base. Unless otherwise noted, `stable for base change' includes the base change $\F_1\rightarrow\Z$.





\subsection{Embeddings and immersions}\label{MORP_EMBED}

The passage from $\Z$ to $\F_1$ presents only two complications, both already visible in the class of \emph{embeddings}.

The first issue is that surjective morphisms of monoids need not be determined by their kernel. Dually, we find that embeddings - inclusions of subschemes cut out by equations - need not be \emph{closed}. This is a manifestation of the same issue that necessitates an alternative approach to separation and propriety in \S\ref{SEP}.

The second issue is of a more pathological nature (i.e. it should probably fixed by any `true' theory of $\F_1$, should such a thing exist): affine monoid schemes are necessarily connected, so the inclusion of a disconnected closed subscheme of a connected scheme cannot be affine. This makes it somewhat difficult to guarantee the existence of embedded closures for arbitrary immersions.

For the purposes of this paper, luckily, we are able to completely sidestep having to address the second issue; I only formulate a couple of unanswered questions \ref{Q_AFF_EMBED}, \ref{Q_PROJ_EMBED}.

\begin{defns}[Embeddings]\label{MORP_EMBEDDING}An affine morphism $X\rightarrow Y$ is said to be a \emph{formal embedding} if for each closed algebraic subscheme $X_0\hookrightarrow X$, $\sh O_Y\rightarrow \sh O_Y(X_0)$ is surjective. Over $\F_1$, this condition is the same as asking that $\sh O_Y\rightarrow\sh O_Y(X)$ be surjective. It is moreover an \emph{embedding} if it is representable by schemes. Affine formal embeddings are monic.


A quasi-compact monomorphism $X\rightarrow Y$ is a (formal) embedding if it can be covered by affine (formal) embeddings; that is, if locally on $Y$ there are coverings \[U_\bullet\rightarrow X_\bullet\twoheadrightarrow X  \] in $\Sh\mathbf{FSch}$ with $U_\bullet\rightarrow X_\bullet$ an open immersion, $U_\bullet\twoheadrightarrow X$ an open cover and each $X_i\rightarrow Y$ an affine (formal) embedding.

The partially ordered set of embedded (resp. formally embedded) subschemes of $Y$ is denoted $\lie Z(Y)$ (resp. $\hat{\lie Z}(Y)$). 
Since any formal embedding is ind-representable by embeddings, $\hat{\lie Z}$ is generated under filtered suprema by ${\lie Z}$.\end{defns}

Over $\Z$, every embedding is a closed immersion associated to some ideal, and in particular, affine. One can therefore define unions of embeddings by intersecting ideals, and $\lie Z$ has finite joins. The same follows for $\hat{\lie Z}$, since it is generated by $\lie Z$.

Over $\F_1$, embeddings can fail to be closed, and finite joins in $\lie Z$, even when they exist, can fail to be set-theoretic unions. They can also fail to be affine; see example \ref{MORP_A2}.

\begin{defn}[Embedded image]A \emph{formally embedded image} for a morphism $X\rightarrow Y$ is an embedded subscheme $\mathrm{cl}(X/Y)$ of $Y$ initial among those through which $X$ factors.

An open immersion $X\hookrightarrow Y$ is said to be \emph{dense}, or more precisely \emph{scheme-theoretically dense}, if $Y$ is a formally embedded closure of $X$ in $Y$. A dense open immersion is always dense in the sense of point-set topology, but not vice versa.\end{defn}

One can construct a formally embedded image of an affine morphism $f:X\rightarrow Y$ via the formula
\begin{align}\label{prous}\mathrm{cl}(X/Y):=\Spec\left(\lim_{X_0\subseteq X}\mathrm{Im}[\sh O_Y\rightarrow \sh O_Y(X_0)]\right)\end{align}
where the limit is over closed algebraic subschemes of $X$. Over $\F_1$, this limit is the same as the image of $\sh O_Y$ in $\sh O_Y(X)$ with the \emph{subspace} topology.

\begin{lemma}\label{AGNU}Let $X\rightarrow Y$ be an affine morphism. Then $\mathrm{cl}(X/Y)$ as defined by formula (\ref{prous}) is a formally embedded image of $X/Y$.\end{lemma}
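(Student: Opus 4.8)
\subsection*{Proof proposal}

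The plan is to reduce to the affine case, identify $\mathrm{cl}(X/Y)$ with the relative spectrum of the image algebra, and then read off the two halves of the universal property directly from the surjectivity built into that image. First I would reduce to $Y=\Spec A$ affine. The formula (\ref{prous}) is manifestly compatible with localisation on $Y$ — the formation of the image $\mathrm{Im}[\sh O_Y\rightarrow\sh O_Y(X_0)]$ commutes with the completed localisations defining open immersions into $Y$, and so does the limit over the $X_0$ — so it suffices to produce the initial object locally and then glue, the glueing being forced by the uniqueness part below. Over an affine base the affine morphism $f:X\rightarrow Y$ has affine, hence local, source, and $\sh O_Y(X)=f_*\sh O_X$ is a pro-discrete $\sh O_Y$-algebra. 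Writing $B$ for the image of $f^\sharp:\sh O_Y\rightarrow\sh O_Y(X)$ with its subspace topology — which by the remark following (\ref{prous}) agrees over $\F_1$ with the limit, the limit formulation being what guarantees that $B$ is adic and $\Spec B$ a genuine formal scheme — I set $\mathrm{cl}(X/Y):=\Spec B$ and factor $f^\sharp$ as $\sh O_Y\xrightarrow{\pi}B\xrightarrow{\iota}\sh O_Y(X)$ with $\pi$ surjective and $\iota$ the inclusion.

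The first two assertions are then immediate. Since $\pi$ is surjective, $\sh O_Y\rightarrow\sh O_Y(\mathrm{cl}(X/Y))=B$ is surjective, so by the characterisation of formal embeddings over $\F_1$ in \ref{MORP_EMBEDDING} the morphism $\mathrm{cl}(X/Y)\rightarrow Y$ is a formal embedding; and the inclusion $\iota$ induces a morphism $X\rightarrow\mathrm{cl}(X/Y)$ over $Y$ through which $f$ factors.

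It remains to verify initiality. Let $Z\hookrightarrow Y$ be a formally embedded subscheme through which $f$ factors, say $f=e\circ h$ with $e:Z\rightarrow Y$ the embedding and $h:X\rightarrow Z$. Uniqueness of a map $\mathrm{cl}(X/Y)\rightarrow Z$ over $Y$ is automatic, since embeddings are monic. For existence I would first use that $X$ is local: an open cover of $Z$ by affine formal embeddings pulls back along $h$ to an open cover of $X$, and any open set containing the unique closed point of the local scheme $X$ is all of $X$; hence $h$ factors through a single affine member $Z_{i_0}=\Spec C$, for which $\sh O_Y\rightarrow C$ is surjective. The factorisation $h$ corresponds to a homomorphism $C\rightarrow\sh O_Y(X)$ with $\sh O_Y\rightarrow C\rightarrow\sh O_Y(X)$ equal to $f^\sharp$; as $\sh O_Y\rightarrow C$ is surjective, the image of $C$ in $\sh O_Y(X)$ is exactly $B$, so $C\rightarrow\sh O_Y(X)$ corestricts to a continuous homomorphism $C\rightarrow B$ compatible with the maps out of $\sh O_Y$. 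Applying $\Spec$ gives $\mathrm{cl}(X/Y)\rightarrow Z_{i_0}\hookrightarrow Z$ over $Y$, as required.

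The one genuinely delicate point is the passage to a non-affine, possibly non-closed embedded target $Z$: over $\F_1$ an embedding need be neither affine nor a set-theoretic union of its affine pieces, so one cannot simply assume $Z=\Spec C$. The locality of the affine source $X$ — that an open neighbourhood of its closed point already exhausts it — is precisely what lets me replace $Z$ by one affine member of a cover, after which the surjectivity of $\sh O_Y\rightarrow C$ does all the work. The only other thing to keep honest is the topology: the subspace topology on $B$ must be adic for $\Spec B$ to be a formal scheme, which is exactly why (\ref{prous}) is phrased as a limit over the discrete truncations $X_0$ rather than as a bare image, the stated $\F_1$-coincidence of the two reconciling the convenient description used above with a well-defined formal scheme.
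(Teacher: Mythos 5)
Your proof is correct over $\F_1$ and is essentially the paper's own argument: both reduce to affine $Y$, both exploit the fact that affine $\F_1$-schemes are local (admit no non-trivial open coverings) to factor $X$ through a single affine formally embedded member $\Spec C$ of a covering of the test object $Z$, and both conclude by corestricting $C\rightarrow\sh O_Y(X)$ onto the image $B$ of $\sh O_Y$, which is legitimate precisely because $\sh O_Y\rightarrow C$ is surjective.

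The gap is one of scope: the lemma is also asserted over $\Z$ --- the standing convention of \S\ref{MORP} is that statements hold over both bases, and the text immediately following the lemma uses it ``for any formal scheme over $\Z$'' --- and both of your key ingredients are $\F_1$-specific. First, your identification $\mathrm{cl}(X/Y)=\Spec B$ with $B$ the image of $\sh O_Y$ in $\sh O_Y(X)$ under the subspace topology is only valid over $\F_1$; over $\Z$ one must work with the limit $\lim_{X_0}\mathrm{Im}[\sh O_Y\rightarrow\sh O_Y(X_0)]$ itself, since formal embeddings over $\Z$ are only required to surject onto each discrete quotient $\sh O_Y(X_0)$, not onto $\sh O_Y(X)$. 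Second, the locality step (``any open set containing the unique closed point of the local scheme $X$ is all of $X$'') is false for affine schemes over $\Z$. The repair is exactly the one line the paper spends on it: over $\Z$ every embedding is a closed immersion, hence affine, and every formal embedding is a filtered limit of such, so the test object $Z$ is automatically affine and no covering argument is needed; one then runs your corestriction argument levelwise, using that each closed algebraic $X_0\subseteq X$ maps into some closed algebraic $Z_0\subseteq Z$ with $\sh O_Y\twoheadrightarrow\sh O_Y(Z_0)$. As written, your proposal proves the lemma only in the $\F_1$ case.
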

\begin{proof}Both the definition of embedding and the stated formula (\ref{prous}) are local on the target, so we may assume $Y$ is affine.

Let $Z\hookrightarrow Y$ be an embedded subscheme through which $X$ factors. If we are over $\Z$, $Z$ is affine. Otherwise, let $\bigcup_iZ_i$ be an affine covering of $Z$. Since affine schemes over $\F_1$ have no non-trivial coverings, we must have $Z_i\times_YX=X$ for a single index $i$. 

Either way, the embedded image of $X$ in $Y$ is affine, and hence computed by (\ref{prous}).\end{proof}

If $X$ and $Y$ are both schemes, the formally embedded image is in fact an embedded subscheme. However, for formal schemes $\mathrm{cl}(X/Y)\rightarrow Y$ is rarely representable by schemes, even when $X\hookrightarrow Y$ is an open immersion.

We would like be able to construct formally embedded images for more general quasi-compact morphisms $X\rightarrow Y$, that is, \emph{left adjoints}
\[ \mathrm{cl}(-/Y):\mathbf{FSch}_Y\rightarrow\hat{\lie Z}(Y)  \] to the inclusion. By lemma \ref{AGNU}, this adjoint exists on the subcategory $\mathbf{FSch}_Y^\mathrm{aff}$ and for any formal scheme over $\Z$.

For more general formal schemes over $Y$, one can at least define a pro-adjoint 
\[ \mathbf{FSch}_Y\rightarrow\Pro\hat{\lie Z}(Y);\]
that is, the formally embedded closure always makes sense as a pro-object of $\mathbf{FSch}_Y$. The pro-adjoint would be the extension of an ordinary adjoint if one could take arbitrary limits of embeddings. However, since embeddings can fail to be affine it is not clear that this is possible.

The matter would be settled by a positive answer to the following question, inspired by the example \ref{MORP_A2}:

\begin{question}\label{Q_AFF_EMBED}Is every embedding affine in the category of schemes relative to monads (or blueprints, or sesquiads...)?\end{question}

\begin{eg}[$\hat{\lie Z}(\A^1)$]\label{MORP_A2}The formally embedded subobject poset of the affine line $\A^1_{\F_1}$ over $\F_1$ has the form, increasing from left to right:
\[\xymatrix{&\{1\}\ar@{-}[r]& \{1\}\sqcup\{0\}\ar@{-}[r] & \{1\}\sqcup\{0\}^2\ar@{.}[r] &\{1\}\sqcup \hat\A^1_0\ar@{-}[r] & \A^1 \\
 \emptyset\ar@{-}[ur]\ar@{-}[r] & \{0\}\ar@{-}[ur]  \ar@{-}[r]  & \{0\}^2 \ar@{-}[ur]\ar@{.}[r] & \hat\A^1_0\ar@{-}[ur] }\]
The disjoint union of the origin and the non-closed embedded point  $1$ in is embedded, but being disconnected, is not affine. Note that its inclusion is a bijection, but not a topological immersion. It is an embedded open subset of its affinisation $\Spec(\F_1\times\F_1)$, which itself is no longer immersed. 

This counterexample vanishes after enlarging the category of monoids to include certain monads with non-trivial addition as in e.g. \ref{RIG_ADDITION}. After base change to $\Z$, the embedding of course becomes closed. 
\end{eg}

\begin{defn}[Immersions]\label{MORP_IMMERSION}A morphism is a \emph{(formal) immersion} if it can be written as an open immersion followed by a (formal) embedding.\end{defn}

An immersion of $\F_1$-schemes need not be a topological immersion; see example \ref{MORP_A2}.

In the case that $X\hookrightarrow Y$ is an immersion, we also call $\mathrm{cl}(X/Y)$ the \emph{formally embedded closure} of $X$ in $Y$. A (formal) immersion $X\hookrightarrow Y$ is a (formal) embedding if and only if for all affine (formal) immersions $U\hookrightarrow Y$ factoring through $X$, the formally embedded closure $\mathrm{cl}(U/Y)$ is also contained in $X$. 

\begin{prop}\label{MORP_OPEN}An immersion is open in the embedded image of its affinisation. In particular, an affine immersion is open in its embedded closure.\end{prop}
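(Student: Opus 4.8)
The plan is to reduce to the affine situation, compute the embedded image of the affinisation explicitly via formula (\ref{prous}), and then verify openness one principal open at a time by a short retraction argument that substitutes, in characteristic one, for the classical fact that the scheme-theoretic image of a locally closed immersion localises trivially.

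First I would note that the assertion is local on the target: both the property of being an open immersion and the formation of embedded images (formula (\ref{prous})) are local on $Y$, so I may assume $Y=\Spec B$ is affine. By Definition \ref{MORP_IMMERSION} the immersion factors as an open immersion $j\colon X\hookrightarrow W$ followed by an embedding $i\colon W\hookrightarrow Y$. Since embeddings are affine (Definition \ref{MORP_EMBEDDING}) and $Y$ is affine, $W=\Spec C$ is affine with $B\to C$ surjective. Writing $A=\Gamma(X,\sh O_X)$ for the coordinate algebra of the affinisation $X^{\mathrm{aff}}=\Spec A$, the morphism $X^{\mathrm{aff}}\to Y$ is affine, so by Lemma \ref{AGNU} and formula (\ref{prous}) its embedded image is $\Spec\bar A$, where $\bar A=\mathrm{Im}[B\to A]=\mathrm{Im}[C\to A]$; the two images agree because $B\to C$ is surjective. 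In particular $C\twoheadrightarrow\bar A$, so $\Spec\bar A\hookrightarrow W$ is itself an embedded subscheme of $W$.

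Next I would cover $X$ by principal opens $D(f_i)=\Spec C[f_i^{-1}]$ of $W$ and consider the base change $V:=X\times_W\Spec\bar A$, which is open in $\Spec\bar A$ since $X\hookrightarrow W$ is open. It suffices to show that the projection $V\to X$ is an isomorphism; openness of $X$ in $\mathrm{cl}(X^{\mathrm{aff}}/Y)=\Spec\bar A$ then follows by transport along $V\cong X$. This can be checked over each $D(f_i)$, where it reduces to the claim $\bar A[\bar f_i^{-1}]\cong C[f_i^{-1}]$, with $\bar f_i$ the image of $f_i$. I would prove this by exhibiting mutually inverse maps. Localising the surjection $C\twoheadrightarrow\bar A$ at $f_i$ yields a surjection $r\colon C[f_i^{-1}]\to\bar A[\bar f_i^{-1}]$. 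Conversely, restriction to the principal open $D(f_i)$ of $W$ is a localisation, and since $D(f_i)\subseteq X\subseteq W$ it factors as $C\to A\to\Gamma(D(f_i))=C[f_i^{-1}]$; composing $\bar A\hookrightarrow A$ with this restriction and inverting $\bar f_i$ produces $s\colon\bar A[\bar f_i^{-1}]\to C[f_i^{-1}]$. By construction $s\circ r$ fixes the generators $C$ and $f_i^{-1}$, so $s\circ r=\mathrm{id}$, and together with surjectivity of $r$ this forces $r$ to be an isomorphism.

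This reconciliation of the image $\bar A$ with a localisation is the only genuinely non-formal point, and it is exactly where characteristic one is felt: over $\Z$ embeddings are closed and the identity is automatic, whereas over $\F_1$ one must use that localisation of monoids is exact in order to see that forming the image commutes with restriction to $D(f_i)$. Finally, the \emph{in particular} is immediate. When the immersion $X\to Y$ is itself affine, $X$ is already affine over the affine base, so $X=X^{\mathrm{aff}}$ and the embedded image of the affinisation is literally the embedded closure $\mathrm{cl}(X/Y)$ produced by Lemma \ref{AGNU}; the general statement then says precisely that $X$ is open in $\mathrm{cl}(X/Y)$.
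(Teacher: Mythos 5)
Your core computation is the right one, and it is essentially the argument the paper itself has in mind: the paper's proof is just the citation ``same proof as tag 01P9 of \cite{stacksproject}'', and that proof is exactly your pair of mutually inverse maps $r,s$ identifying $C[f_i^{-1}]$ with $\bar A[\bar f_i^{-1}]$. The genuine gap is the sentence ``Since embeddings are affine (Definition \ref{MORP_EMBEDDING})''. The definition says no such thing: it first defines \emph{affine} (formal) embeddings, and then declares a general embedding to be a quasi-compact monomorphism which is merely \emph{covered} by affine embeddings. The paper warns about exactly this right after the definition (``They can also fail to be affine; see example \ref{MORP_A2}''), and Example \ref{MORP_A2} exhibits the failure: the disjoint union $\{0\}\sqcup\{1\}\hookrightarrow\A^1_{\F_1}$ is an embedding, hence an immersion, yet it factors through \emph{no} affine embedded subscheme at all --- an affine embedding $W'\to\A^1_{\F_1}$ is dual to a surjection $\F_1[t]\twoheadrightarrow C$, so $W'$ injects into the two-point space $\A^1_{\F_1}$ and is local, and therefore cannot contain a disconnected two-point scheme as an open subscheme. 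For this immersion your proof cannot even begin, and it is precisely the delicate case of the proposition: its affinisation $\Spec(\F_1\times\F_1)$ is, as the example puts it, ``no longer immersed''. Over $\Z$, where embeddings are closed immersions and in particular affine, your argument is complete --- but there the statement is classical, and the entire point of restating it here is the characteristic-one phenomenon your reduction assumes away. (Relatedly, your closing remark misplaces where characteristic one is felt: exactness of localisation holds, and is equally used, over $\Z$; what is special to $\F_1$ is the possible non-affineness and non-closedness of embeddings.)

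What Definition \ref{MORP_EMBEDDING} actually provides is a covering of $W$ by open subsets $U_i$ of affine embedded subschemes $W_i\subseteq Y$; your $r,s$ computation applies verbatim to each $X\cap U_i$ inside $W_i$, and the missing --- and genuinely substantive --- step is assembling these local conclusions into openness of $X$ in the single global embedded image of its affinisation. One clause of your argument can be salvaged as written: for the ``in particular'' statement, where $X\to Y$ is an \emph{affine} immersion, $X$ is (locally on $Y$) affine, hence local, so the open immersion $X\hookrightarrow W$ necessarily lands inside a single member $U_i\subseteq W_i$ of the covering; inserting that observation, your computation correctly proves that an affine immersion is open in its embedded closure. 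It is the general clause --- the one invoked, for instance, in the construction of canonical extensions in Definition \ref{SEP_ELEMENTARY} --- that still lacks a proof, and for which the disconnected example above shows that any proof must do more than reduce to an ambient affine.
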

\begin{proof}Same proof as \cite[\href{http://stacks.math.columbia.edu/tag/01P9}{01P9}]{stacksproject}.\end{proof}

\begin{prop}[Stability]The class of (formal) immersions is stable for base change and descent.
The class of (formal) embeddings is stable for composition, base change, and descent.\end{prop}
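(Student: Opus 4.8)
The plan is to reduce every assertion to the affine case, where by Definition~\ref{MORP_EMBEDDING} a formal embedding $X\to Y$ over $\F_1$ is exactly an affine morphism with $\sh O_Y\twoheadrightarrow\sh O_Y(X)$ surjective, and then read each stability property off a corresponding property of surjections of $\F_1$-algebras. All the notions involved --- affineness, surjectivity of a map of quasi-coherent sheaves, and (by the axioms for open immersions in Definition~\ref{TOPOS_DEF}) being an open immersion --- are local on the target, so I may assume $Y$, resp. the final target $Z$, affine, treating the non-affine embeddings afterwards by propagating through the defining coverings. Over $\Z$ every embedding is a closed immersion and all the statements are classical, so I concentrate on $\F_1$.

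\emph{Base change and descent.} Affineness is stable for base change, so for $g\colon Y'\to Y$ I only need $\sh O_{Y'}\twoheadrightarrow\sh O_{Y'}(X\times_Y Y')$, which is the base change $(\sh O_Y\to\sh O_Y(X))\tens_{\sh O_Y}\sh O_{Y'}$ of a surjection; since smashing with a pointed set preserves quotients, surjectivity survives (in the formal case one argues level by level over the closed algebraic subschemes $X_0\subseteq X$ of Definition~\ref{MORP_EMBEDDING}). Open immersions base-change by Definition~\ref{TOPOS_DEF}, and an immersion $X\hookrightarrow W\to Y$ (open then embedding) base-changes to $X\times_W(W\times_Y Y')\to W\times_Y Y'\to Y'$, again an open immersion followed by an embedding; this gives base change for immersions. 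Dually, surjectivity of a map of quasi-coherent sheaves, being an open immersion, and being a monomorphism may all be checked after a covering, so embeddings descend, and immersions descend once one glues the local factorisations as in the last paragraph.

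\emph{Composition of embeddings, and the main obstacle.} The composite of quasi-compact monomorphisms is again one, so it remains to produce affine-embedding coverings. In the affine-to-affine case, for $X\to Y\to Z$ with $Z$ affine and both maps affine, $\sh O_Z\to\sh O_Z(X)$ factors as $\sh O_Z\twoheadrightarrow\sh O_Z(Y)\twoheadrightarrow\sh O_Z(X)$, the second arrow being the pushforward along the affine morphism $Y\to Z$ of $\sh O_Y\twoheadrightarrow\sh O_Y(X)$, which stays surjective because pushforward along an affine morphism is exact on quasi-coherent sheaves; a composite of surjections is surjective, so $X\to Z$ is an affine embedding. The real obstacle is globalising this to non-affine embeddings, which Example~\ref{MORP_A2} shows cannot be avoided even for $Z$ affine. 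Given affine-embedding covers for $X\to Y$ and $Y\to Z$, I pull the open cover of $Y$ back along $X\to Y$ and intersect it with the given cover of $X$, reducing to a member $U$ that is open in an affine embedding $X'\hookrightarrow Y$ and maps into a single open $W\subseteq Y$ which is itself open in an affine embedding $Y'\hookrightarrow Z$. The awkward point is that $X'\times_Y W\to Y'$ is then an \emph{immersion} into $Y'$ --- an affine embedding followed by an open immersion, with the factors in the `wrong' order --- rather than manifestly an affine embedding into $Z$; this is exactly why immersions are not claimed stable for composition. To repair it I replace this immersion by its formally embedded closure in $Y'$ (Lemma~\ref{AGNU}, Proposition~\ref{MORP_OPEN}), obtaining an affine embedding into $Y'$ and hence, by the affine-to-affine case, an affine embedding into $Z$ of which $U$ is open. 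Verifying that these members assemble into a covering of $X$ in $\Sh\mathbf{FSch}$ is then a diagram chase relying on the base-change and descent stability already in hand.

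\emph{Immersions.} Base change was treated above, so only descent remains. If $X\to Y$ becomes an immersion over a cover $Y_\bullet\to Y$, I build the factorisation of Definition~\ref{MORP_IMMERSION} locally: on each $Y_i$ the immersion $X\times_Y Y_i\to Y_i$ has a formally embedded closure $\mathrm{cl}(X\times_Y Y_i/Y_i)$, an embedding in which $X\times_Y Y_i$ is open (Lemma~\ref{AGNU}, Proposition~\ref{MORP_OPEN}). Since $\mathrm{cl}(-/Y)$ is local on the target, these glue, by descent for embeddings, to an embedding $\mathrm{cl}(X/Y)\to Y$; and $X\to\mathrm{cl}(X/Y)$ is an open immersion because this too may be checked after the cover. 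Hence $X\to Y$ is an open immersion followed by an embedding, i.e. an immersion, completing the proof.
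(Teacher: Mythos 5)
The first thing to say is that the paper offers no proof of this proposition at all — it is stated bare, as routine — so your argument fills a genuine omission rather than paralleling an existing one, and your overall route (reduce to surjections of $\F_1$-algebras in the affine case, then propagate through the covering clause of Definition \ref{MORP_EMBEDDING}) is the natural one; the base-change, descent-of-embeddings, affine-to-affine composition, and immersion-descent steps are essentially fine. The genuine gap is in the repair step of your composition argument. You invoke Lemma \ref{AGNU} and Proposition \ref{MORP_OPEN} to replace $X'\times_Y W\to Y'$ by its formally embedded closure, but neither result applies to that morphism: Lemma \ref{AGNU} requires the morphism to be \emph{affine}, and $X'\times_Y W\to Y'$ is affine only in its first factor $X'\times_Y W\to W$, while $W\hookrightarrow Y'$ is an arbitrary open immersion of $\F_1$-schemes and hence need not be affine (already $\A^2_{\F_1}\setminus 0\hookrightarrow\A^2_{\F_1}$ is a non-affine open immersion into an affine, and in the non-Noetherian case it need not even be quasi-compact); and Proposition \ref{MORP_OPEN} applies to \emph{immersions} in the paper's sense, i.e.\ open-immersion-then-embedding, whereas your morphism is embedding-then-open — knowing that it can be rewritten in the paper's order is precisely what is at stake, so citing \ref{MORP_OPEN} here is circular.

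The fix is one further refinement, after which your argument closes. Since $Y'$ is affine, its topology has a basis of principal opens (localisations), so you may shrink the covering of $Y$ witnessing the embedding $Y\to Z$ until each member $W$ is a principal open $D(g)\subseteq Y'$; principal opens are affine over $Y'$. Then $X'\times_Y D(g)\to Y'$ is a composite of affine morphisms, hence affine, so Lemma \ref{AGNU} does produce its embedded closure $T=\Spec\,\mathrm{Im}\bigl(\sh O(Y')\to\sh O(X'\times_Y D(g))\bigr)$; and instead of citing \ref{MORP_OPEN} you should check openness directly: if $A[g^{-1}]\twoheadrightarrow B$ and $C=\mathrm{Im}(A\to B)$, then $C[\bar g^{-1}]\cong B$, because every element of $B$ is the image of some $a/g^{n}$ and hence lies in $C\cdot\bar g^{-n}$, while injectivity follows from $C\subseteq B$ and invertibility of $\bar g$ in $B$. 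Thus $X'\times_Y D(g)$ is the principal open of $T$ where $\bar g$ is inverted (the same computation runs with completed localisations in the formal case). Your affine-to-affine case then makes $T$ an affine embedding into $Z$, the refined members still cover $X$, and the composite $X\to Z$ is a quasi-compact monomorphism, so it is an embedding. By contrast, the same pairing of citations in your descent-for-immersions paragraph is legitimate: there $X\times_YY_i\to Y_i$ \emph{is} an immersion by hypothesis, so \ref{MORP_OPEN} applies to it via its affinisation, and \ref{AGNU} to that affinisation — modulo the quasi-compactness needed for the pushforward to exist, a point the paper itself elides.
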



\subsection{Integral morphisms and relative normalisation}\label{MORP_NORM}

Let $A$ be an $\F_1$-algebra, and let $B$ be a \emph{finite} $A$-algebra, that is, an algebra that is finitely generated as an $A$-module. It follows that for every $f\in B$, either $f^n\in A$ for some $n>0$, or the set $\{f,f^2,\ldots\}$ is finite. In other words, $f$ satisfies a \emph{monic equation}
\[ f^n=c_if^i,\quad i<n,c_i\in A \]over $A$. One can therefore define \emph{integral closure} of pairs of $\F_1$-algebras in much the same way as for commutative rings.

\begin{defn}An affine morphism $X\rightarrow Y$ in $\Sh\mathbf{FSch}$ is said to be \emph{finite} if $\sh O_Y(X)$ is a finite Banach $\sh O_Y$-algebra; that is, locally on $Y$ there is a topological quotient $p:\sh O_Y^{\oplus n}\twoheadrightarrow\sh O_Y(X)$ for some $n\in\N$. We say that a morphism is \emph{integral} if it is a limit of finite morphisms in the category of formal schemes adic over $Y$.

An affine morphism is said to be \emph{formally finite} if locally on $Y$ there is a module homomorphism $p:\sh O_Y^{\oplus n}\rightarrow\sh O_Y(X)$ that surjects onto every discrete quotient of $\sh O_Y(X)$. Over $\F_1$, this is equivalent to surjectivity of $p$ itself. 
A morphism is \emph{formally integral} if it is integral over a formally finite morphism.

A \emph{relative normalisation} of an affine morphism $X\stackrel{f}{\rightarrow}Y$ is an integral morphism $\nu_fY\rightarrow Y$ initial among those factoring $f$. A relative normalisation can be constructed by taking the integral closure of the image of $\sh O_Y$ inside $f_*\sh O_X$.
\end{defn}

\begin{remark}Since embeddings can fail to be affine, any reasonable definition would also allow finite morphisms to be non-affine. Following the template of definition \ref{MORP_EMBEDDING}, we might say that a general quasi-compact morphism is finite if it can be covered by affine finite morphisms. 

For the purposes of this paper, we can survive without treating non-affine finite morphisms.\end{remark}

A finite morphism $X\rightarrow Z$ followed by a formal embedding $Z\hookrightarrow Y$ is formally finite: the surjection $\sh O_Y(Z)^{\oplus n}\twoheadrightarrow\sh O_Y(X)$ induces a module homomorphism $\sh O_Y^{\oplus n}\rightarrow\sh O_Y(X)$ satisfying the requirement of the definition. Of course, by definition an integral morphism followed by a formal embedding is formally integral.

Conversely, by taking the formally embedded image in $Y$, any formally finite (resp. integral) affine morphism $X\rightarrow Y$ can be written as a finite (resp. integral) morphism followed by an affine formal embedding. By the same logic, a formal embedding followed by a finite (resp. integral) morphism can be rewritten in this way, and is hence, in particular, formally finite (resp. integral).

\begin{lemma}\label{MORP_FINITE}Any formally finite affine morphism can be written as a finite morphism followed by an affine formal embedding. Any composite of finite morphisms and affine formal embeddings is formally finite.

Any formally integral affine morphism can be written as an integral morphism followed by aa affine formal embedding. Any composite of integral morphisms and affine formal embeddings is formally integral.\end{lemma}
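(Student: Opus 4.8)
The plan is to handle the two factorisation assertions by means of the formally embedded image, and the two stability assertions by reducing an arbitrary composite to the resulting normal form. Throughout I work over $\F_1$, where formal finiteness of an affine morphism is just surjectivity of the underlying module map; the arguments over $\Z$ are the same using the ``surjects onto every discrete quotient'' formulation of formal finiteness.

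For the factorisations, let $f:X\rightarrow Y$ be formally finite (resp. formally integral) and affine, and factor it through its formally embedded image as $X\rightarrow\mathrm{cl}(X/Y)\rightarrow Y$; this is legitimate by lemma \ref{AGNU}. The second arrow is affine by construction and is a formal embedding, hence an affine formal embedding, so it remains only to check that $X\rightarrow\mathrm{cl}(X/Y)$ is finite (resp. integral). Write $B:=\sh O_Y(\mathrm{cl}(X/Y))$, which over $\F_1$ is the image of $\sh O_Y$ in $\sh O_Y(X)$ with the subspace topology (formula (\ref{prous})). In the finite case, formal finiteness yields a surjection $\sh O_Y^{\oplus n}\twoheadrightarrow\sh O_Y(X)$; since the $\sh O_Y$-action on $\sh O_Y(X)$ factors through $B$, the images $b_1,\ldots,b_n$ of the standard generators exhibit $\sh O_Y(X)$ as generated over $B$, giving a module surjection $B^{\oplus n}\twoheadrightarrow\sh O_Y(X)$. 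In the integral case, $f$ is by definition integral over a formally finite morphism, so every element of $\sh O_Y(X)$ is integral over the image of the coordinate algebra of the intermediate scheme; that image is module-finite, hence integral, over $B$, so transitivity of integral closure (\S\ref{MORP_NORM}) shows that $\sh O_Y(X)$ is integral over $B$.

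For the stability assertions it is enough to observe that every finite morphism and every affine formal embedding is formally finite (a surjection with $n=1$ in the latter case), hence also formally integral, and that the classes of formally finite and formally integral morphisms are closed under composition. Given two morphisms from such a class, factor each, by the previous paragraph, as a finite (resp. integral) morphism followed by an affine formal embedding; the composite then consists, in that order, of a finite (resp. integral) morphism, an affine formal embedding, a finite (resp. integral) morphism, and an affine formal embedding. The inner block consisting of the affine formal embedding followed by the finite (resp. integral) morphism is an affine morphism whose coordinate algebra is a quotient of a module-finite (resp. integral) algebra, hence is itself module-finite (resp. integral) over the image of the base; applying the factorisation once more rewrites this block as a finite (resp. integral) morphism followed by an affine formal embedding. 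Since finite morphisms compose to finite morphisms (products of generators), integral morphisms compose to integral morphisms, and affine formal embeddings are closed under composition (embeddings are stable for composition and affineness is preserved), the whole composite collapses to the normal form of a finite (resp. integral) morphism followed by an affine formal embedding, and is in particular formally finite (resp. integral).

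The one step requiring genuine care is the upgrade, in the factorisation, from module-finiteness of $\sh O_Y(X)$ over $B$ to a \emph{topological} quotient $B^{\oplus n}\twoheadrightarrow\sh O_Y(X)$ in the sense of definition \ref{RIG_DEF_BAN}, i.e. that the subspace topology on $B$ already controls the topology of $\sh O_Y(X)$. This follows from continuity of multiplication: for an open submodule $U\subseteq\sh O_Y(X)$, the set $V:=\{a\in B\mid ab_i\in U\text{ for all }i\}$ is a finite intersection of preimages of $U$ under multiplication by the $b_i$, hence an open ideal of $B$ with $\sh O_Y(X)\cdot V\subseteq U$. The remaining facts used above, namely that quotients of module-finite (resp. integral) algebras are again module-finite (resp. integral) and the transitivity of integral closure, are routine over $\F_1$ from the constructions of \S\ref{MORP_NORM}.
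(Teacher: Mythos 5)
Your overall strategy coincides with the paper's. The paper's de facto proof is the discussion immediately preceding the lemma, and it consists of exactly your two moves: factor a formally finite (resp.\ formally integral) affine morphism through its formally embedded image (lemma \ref{AGNU}, formula (\ref{prous})), and reduce an arbitrary composite to the normal form ``finite (resp.\ integral) followed by affine formal embedding'' by rewriting the inner ``embedding followed by finite'' block. So structurally you have reproduced the paper's argument, with more detail than the paper gives.

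The problem is the step you yourself single out as ``the one step requiring genuine care''. What your continuity-of-multiplication argument proves is: for every open submodule $U\subseteq\sh O_Y(X)$ there is an open ideal $V\trianglelefteq B$ with $\sh O_Y(X)\cdot V\subseteq U$. That is precisely the statement that $B^{\oplus n}\rightarrow\sh O_Y(X)$ is \emph{continuous}, which was never in doubt (openness of your $V$ is openness of $p^{-1}(U)$). A topological quotient in the sense of \S\ref{MORP_NORM}/definition \ref{RIG_DEF_BAN} requires the opposite cofinality as well: the submodules $\sh O_Y(X)\cdot V$ must themselves be \emph{open}, i.e.\ the given topology on $\sh O_Y(X)$ must be no finer than the one induced from $B$. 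Your argument says nothing about this, and it can genuinely fail. Take $B=\F_1[s,u]$ discrete and $C=\F_1[s,u,x]/(x^2=ux)$ with the $(sx)$-adic topology, which is Hausdorff, adic and admissible; then $C=B\cup Bx$, so $\Spf C\rightarrow\Spec B$ is formally finite, while $B\cap(sx)^N=0$ for all $N\geq 1$, so the formally embedded image is $\Spec B$ with the \emph{discrete} topology, and no surjection from a finite free discrete $B$-module onto the non-discrete $C$ can be a topological quotient. (Indeed no factorisation as finite followed by affine formal embedding exists at all here: for any such factorisation $\Spf C\rightarrow Z\rightarrow\Spec B$, surjectivity of $B\rightarrow\sh O(Z)$ together with injectivity of the composite $B\rightarrow C$ forces $\sh O(Z)\cong B$, and continuity of $\sh O(Z)\rightarrow C$ forces it to be discrete.) In fairness, the paper asserts the factorisation in a single sentence and is equally silent on this point, so your write-up is no less complete than the paper's own; but your specific claim that continuity of multiplication supplies the missing verification is wrong, and the missing verification is not a formality --- it is exactly where an extra hypothesis (e.g.\ that an ideal of definition of $\sh O_Y(X)$ can be chosen inside the image of $\sh O_Y$) or a weaker reading of ``topological quotient'' would have to enter.
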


\begin{prop}[Stability]The classes of finite, integral, formally finite, and formally integral morphisms are stable for composition, base change, and descent. If $gf$ is finite, resp. integral, resp. formally finite, resp. formally integral, then so is $f$.\end{prop}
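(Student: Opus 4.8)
The plan is to reduce every assertion to the two classes for which stability is either elementary or already in hand: finite morphisms and affine formal embeddings. By Lemma \ref{MORP_FINITE}, a morphism is formally finite (resp. formally integral) precisely when it is a composite of finite (resp. integral) morphisms and affine formal embeddings. Together with the stability of (formal) embeddings for composition, base change, and descent established in \S\ref{MORP_EMBED}, this collapses each statement for the formal classes into the corresponding statement for finite and integral morphisms, which I would dispatch first.

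For finite morphisms the properties are read off the local topological surjections $p:\sh O_Y^{\oplus n}\twoheadrightarrow\sh O_Y(X)$. For base change I would pull $p$ back along $Y^\prime\rightarrow Y$ and invoke right-exactness of the completed tensor product to get $\sh O_{Y^\prime}^{\oplus n}\twoheadrightarrow\sh O_{Y^\prime}(X\times_Y Y^\prime)$, the Banach condition surviving because Banach modules are stable for pullback (def. \ref{RIG_DEF_BAN}). For composition $X\rightarrow Y\rightarrow W$, the $mn$ products of generating families for $\sh O_W(Y)$ over $\sh O_W$ and for $\sh O_W(X)$ over $\sh O_W(Y)$ furnish $\sh O_W^{\oplus mn}\twoheadrightarrow\sh O_W(X)$. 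Descent is immediate since finiteness is local on the target and descent of Banach modules is effective (condition vii) of \S\ref{RIG_FSCH}). Integral morphisms, being cofiltered limits of finite ones in the category of formal schemes adic over $Y$, then inherit base change and descent from the finite case, using that base change commutes with cofiltered limits. The one point I expect to be the main obstacle is composition of integral morphisms: writing $X=\lim_j X_j$ finite over $Z=\lim_i Z_i$ finite over $Y$, I must descend each $X_j$ to a finite morphism over some level $Z_i$ so that $X_j\rightarrow Z_i\rightarrow Y$ is finite and $X$ is exhibited as a limit of finite morphisms over $Y$; this rests on a finite-presentation approximation for finite Banach algebras over a pro-system, together with careful bookkeeping of adicity and of the completed tensor product.

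For the cancellation statement, that $gf\in P$ forces $f\in P$ for each of the four classes, the $\F_1$ case is remarkably soft. Since the property of $f$ is local on $Y$, I may take $Y$ affine; as affine formal $\F_1$-schemes are local (\S\ref{RIG_FSCH}), the morphism $g$ factors through an affine open of $Z$, so I may take $Z$ affine, whence $X$ is affine as well. Writing $\sh O(Z)=A\rightarrow B=\sh O(Y)\rightarrow C=\sh O(X)$, any family generating $C$ over $A$, and any monic equation of an element of $C$ over $A$, is a fortiori a generating family, resp. a monic equation, over $B\supseteq A$; via the surjectivity criterion of def. \ref{MORP_EMBEDDING} this settles the finite, integral, formally finite, and formally integral cases uniformly. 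Over $\Z$ the locality trick is unavailable, and I would only indicate the classical graph argument: $f=p_2\circ\Gamma_f$, where $p_2$ is the base change of $gf$ and $\Gamma_f$ the base change of the diagonal of $g$, which is an embedding once $g$ is separated, so that $f$ is a composite of morphisms in $P$; as this is standard in the $\Z$ setting, I would not belabour it.
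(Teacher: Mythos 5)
The paper gives no proof of this proposition at all; it is stated as routine, the intended justification being precisely the reduction you start from (Lemma \ref{MORP_FINITE} together with the stability of embeddings), so for composition, base change and descent your write-up is more detailed than the source and essentially sound. The one step you flag as the main obstacle, composition of integral morphisms, can be dispatched more cheaply than by your pro-system approximation: over monoids every element of a finite $B$-algebra $C_j$ is of the form $b\cdot c_k$ for module generators $c_1,\ldots,c_n$, and the finitely many structure constants $c_kc_l=b_{kl}\cdot c_{m(k,l)}$ lie in some finite stage $B_i$ of $B=\colim_iB_i$; the $B_i$-submodule generated by the $c_k$ is then a finite $B_i$-algebra, hence finite over the base by the already-proved composition for finite morphisms, and $C$ is the filtered colimit of these. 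Equivalently, one can use transitivity of monic equations as in \S\ref{MORP_NORM}.

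The genuine gap is in the cancellation clause, and it is over $\F_1$, not over $\Z$, that your argument breaks. The reduction ``the property of $f$ is local on $Y$, so I may take $Y$ affine'' is illegitimate: the \emph{conclusion} is local on $Y$, but the \emph{hypothesis} is not. Replacing $Y$ by an affine open $V$ replaces $gf$ by $f^{-1}(V)\rightarrow Z$, an open immersion followed by a finite morphism, which need not be finite; so the general case does not reduce to the all-affine case (where your generators-over-$A$-are-generators-over-$B$ argument is indeed correct). The failure is not a removable defect of the argument, because the clause as stated is false without a hypothesis on $g$: let $Y$ be the affine line over $\F_1$ with doubled origin, $g:Y\rightarrow Z=\A^1_{\F_1}$ the fold map, and $f:\A^1_{\F_1}\rightarrow Y$ the inclusion of one of the two charts. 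Then $gf$ is the identity, hence finite, but over the other chart $f_*\sh O$ is $\F_1[t^{\pm1}]$, which is not finite, not formally finite (no surjection from $\F_1[t]^{\oplus n}$), and not even integral over $\F_1[t]$, since any subalgebra containing $t^{-n}$ with $n>0$ contains all $t^{-kn}$ and so cannot be a finite module; thus $f$ lies in none of the four classes. The same example over $\Z$ recovers the classical fact that this cancellation requires $g$ to be separated. So the statement needs a separatedness-type hypothesis on $g$ (harmless in the paper's actual uses, where $g$ is always a formal embedding, hence monic), and the proof in \emph{both} settings is the graph argument you reserved for $\Z$ --- with the extra caveat over $\F_1$ that the diagonal of a separated morphism can fail to be affine (see the remark following Corollary \ref{SEP_CRITERIA}), so that $\Gamma_f$ is a priori only an embedding and one must argue separately that it is affine, e.g.\ using that the source is local.
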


\paragraph{Divisorial markings}
For rigid analytic geometry, we will need a slightly refined version of the relative normalisation, adapted to the full subcategory ${}_Z\mathbf{FSch}^\mathrm{div}\subset {}_Z\mathbf{FSch}$ of \emph{divisorially marked} formal schemes, that is, marked formal schemes for which $Z$ is a collection of locally principal subschemes. The isomorphism class of an object $(Y,Z)$ of ${}_Z\mathbf{FSch}^\mathrm{div}$ is determined by the formal scheme $Y$ together with the multiplicative subset $S_Z\subseteq\sh O_Y$ comprised of sections whose vanishing locus has radical contained in $Z$. 

We may therefore write this object of ${}_Z\mathbf{FSch}^\mathrm{div}$ as $(Y;\sh O_Y[S_Z^{-1}])$, where $\sh O_Y[S_Z^{-1}]$ denotes the localisation in the category of all (not necessarily pro-discrete) topological modules.\footnote{Strictly speaking, over $\Z$ one must be a little careful in making sense of this localisation. It is good enough to consider it as a certain ind-object in the category of Banach $\sh O_Y$-modules.}


\begin{defns}\label{MORP_ADMIT_INT}An integral morphism $X\rightarrow Y$ is an \emph{isomorphism away from $Z$}, or $Z$-\emph{admissible}, if $\sh O_Y[S_Z^{-1}]\rightarrow\sh O_X[S_Z^{-1}]$ is an isomorphism.

We say that the pair $(Y,\sh O_Y[S_Z^{-1}])$ is \emph{relatively normal}, or that $Y$ is \emph{normal along} $Z$, if $Z$ are Cartier divisors and $\sh O_Y$ is integrally closed in $\sh O_Y[S_Z^{-1}]$. Equivalently, $(Y,Z)$ is relatively normal if and only if $Z$ is Cartier and any $Z$-admissible finite morphism into $Y$ is an isomorphism.\end{defns}

Let us denote the full subcategories of ${}_Z\mathbf{FSch}^\mathrm{div}$ whose objects are marked along Cartier divisors, resp. are relatively normal, by ${}_Z\mathbf{FSch}^\mathrm{inv}$, resp. ${}_Z\mathbf{FSch}^\nu$.

Given a pair $(Y,\sh O_Y[S_Z^{-1}])\in{}_Z\mathbf{FSch}^\mathrm{div}$, we can construct a \emph{relative normalisation}, or \emph{normalisation of $Y$ along $Z$}, in two stages:
\begin{enumerate}\item replace $\sh O_Y$ with its image in $\sh O_Y[S_Z^{-1}]$; \hfill\emph{invert $Z$}
 \item pass to the integral closure inside $\sh O_Y[S_Z^{-1}]$. \hfill\emph{separate crossings along $Z$} \end{enumerate}
The first stage yields an embedded subscheme of $Y$, and the second yields an integral morphism $\nu_ZY\rightarrow Y$ whose embedded image is that produced by the first. These two stages constitute right adjoints to the inclusions
\[{}_Z\mathbf{FSch}^\nu\leftrightarrows{}_Z\mathbf{FSch}^\mathrm{inv}\leftrightarrows{}_Z\mathbf{FSch}^\mathrm{div}\]
It satisfies the following universal property: any finite morphism $X\rightarrow Y$ that is an isomorphism away from $Z$ factorises uniquely
\[ \nu_ZY\rightarrow X\rightarrow Y \]
the relative normalisation.


\subsection{Projective morphisms}\label{MORP_PROJ}

One can define, as for usual schemes over $\Z$, the Proj of a positively graded $\F_1$-algebra $A$ - that is, an algebra object in the category of $\N$-indexed families of pointed sets - in terms of homogeneous prime ideals. Even without this description, one can easily define the principal affine subsets of $\Proj A$, in the usual way, as the spectra of the degree zero piece $A[f^{-1}]_0$ of the $\Z$-graded localisation. 

We lift the usual definitions of quasi-projective, projective, ample and very ample invertible sheaf from \cite[\textbf{II}]{EGA}.

In particular, for any quasi-coherent Banach module $V$, we can define as usual a projective bundle
\[ \P(V):=\Proj(\Sym^\bullet V), \]
qcqs when $V$ is of finite type, and this `generates' the definition of projectivity. By choosing a finitely presented covering $V^\mathrm{fp}\twoheadrightarrow V$, one can always write a projective bundle as a finitely presented projective bundle $\P(V^\mathrm{fp})$ composed with an affine embedding $\P(V)\hookrightarrow\P(V^\mathrm{fp})$.

\begin{lemma}Let $V$ be a quasi-coherent Banach module on $Y$, $f:X\rightarrow Y$ a morphism. Then $\P(f^*V)\cong\P(V)\times_YX$.\end{lemma}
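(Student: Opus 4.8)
The plan is to reduce to the affine case and then split the base-change compatibility into two independent pieces: one for the symmetric-algebra functor and one for $\Proj$. Since the formation of $\P(V)=\Proj(\Sym^\bullet V)$ is local on the base $Y$, and fibre products in $\Sh\mathbf{FSch}$ are computed affine-locally, first I would assume $Y=\Spec A$ and $X=\Spec B$ are affine, with $V$ corresponding to a Banach $A$-module $M$ and $f$ dual to a homomorphism $A\to B$. The claim then reads
\[ \Proj\bigl((\Sym^\bullet_A M)\tens_A B\bigr)\cong\Proj\bigl(\Sym^\bullet_B(M\tens_A B)\bigr) \]
as formal schemes over $X$.

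Next I would identify the two graded algebras directly. Both the pullback $-\tens_A B$ and the symmetric (free commutative) algebra functor $\Sym^\bullet$ are left adjoints, hence colimit-preserving for the completed tensor structure, and Banach modules are stable for tensor product and pullback (def.\ \ref{RIG_DEF_BAN}); consequently there is a canonical isomorphism of graded Banach $B$-algebras
\[ (\Sym^\bullet_A M)\tens_A B\cong\Sym^\bullet_B(M\tens_A B). \]
This reduces the statement to the assertion that $\Proj$ commutes with the base change $A\to B$ for any positively graded Banach algebra $S=\bigoplus_n S_n$.

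I would check this compatibility on principal affine opens. The scheme $\Proj S$ is covered by the $\Spec S[g^{-1}]_0$ for homogeneous $g$ of positive degree, and the images $\bar g$ of these elements in $S\tens_A B$ cover $\Proj(S\tens_A B)$ by the corresponding opens. It therefore suffices to produce compatible isomorphisms $(S[g^{-1}]_0)\tens_A B\cong(S\tens_A B)[\bar g^{-1}]_0$. For this I use two facts: localisation commutes with base change (the free and forgetful functors commute with localisation), and the grading on $S\tens_A B$ places $B$ in degree zero, so $-\tens_A B$ is computed degreewise on the $\Z$-graded localisation; extracting the degree-zero summand then commutes with $-\tens_A B$. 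These isomorphisms are visibly compatible with the transition maps over overlaps $D_+(gg')$, so they glue to the desired isomorphism over $X$.

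I expect the main obstacle to be the bookkeeping around the \emph{completed} tensor product in the Banach/pro-discrete regime: one must confirm that forming $\Sym^\bullet$, the $\Z$-graded localisation, the extraction of the degree-zero part, and the Hausdorff completion all commute with the base-change functor $-\tens_A B$. As remarked in \S\ref{RIG_PROD}, the completed tensor product is a limit of the tensor products of discrete quotients, and each of the operations in question is either a left adjoint or a degreewise smash product with a fixed object, so each individually preserves the relevant colimits; assembling these compatibilities coherently is the only genuine content, the remainder being a transcription of the classical $\Proj$ computation from \cite[\textbf{II}]{EGA}.
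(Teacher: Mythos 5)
Your proposal is correct, and it coincides with what the paper intends: the paper states this lemma without any proof at all, treating it (like the adjacent lemma on the diagonal) as a direct calculation lifted from \cite[\textbf{II}]{EGA}, and your argument — affine-local reduction, base-change compatibility of $\Sym^\bullet$ via the adjunction property, and the degreewise identification $(S[g^{-1}]_0)\tens_A B\cong(S\tens_A B)[\bar g^{-1}]_0$ on principal opens — is exactly that standard computation, carried out with appropriate care for the completed/Banach tensor product. Nothing further is needed.
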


\begin{lemma}\label{MORP_PROJ_DIAG}The diagonal of a projective bundle is an affine embedding.\end{lemma}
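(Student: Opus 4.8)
The plan is to reduce the statement to an explicit, essentially classical computation on the standard affine charts of projective space, the only genuinely $\F_1$-specific input being that the map one produces, while surjective on coordinate algebras, need not be a \emph{closed} immersion. First I would note that being an affine embedding is local on the target, so after localising we may assume $Y$ is affine and choose a free Banach covering $F\twoheadrightarrow V$, inducing a surjection $\Sym^\bullet F\twoheadrightarrow\Sym^\bullet V$; as in the discussion preceding the lemma this gives an affine embedding $\P(V)\hookrightarrow\P(F)$. Since embeddings are monic (def. \ref{MORP_EMBEDDING}), a pair of points of $\P(V)$ with equal image in $\P(F)$ is already equal, so the square
\[\xymatrix{
\P(V)\ar[r]^-{\Delta}\ar@{^{(}->}[d] & \P(V)\times_Y\P(V)\ar@{^{(}->}[d] \\
\P(F)\ar[r]^-{\Delta} & \P(F)\times_Y\P(F)
}\]
is Cartesian: the diagonal of $\P(V)$ is the base change of that of $\P(F)$ along the (affine) embedding of products. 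As affine embeddings are stable for base change, it suffices to treat $\P(F)$, and we are reduced to the case $V$ free, say $\P(V)=\Proj\sh O_Y[x_s:s\in S]$.

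Next I would use the standard affine cover $\{D_+(x_i)\}_{i\in S}$, with $D_+(x_i)=\Spec\sh O_Y[x_k/x_i:k\in S]$. The products $D_+(x_i)\times_Y D_+(x_j)$ cover $\P(V)\times_Y\P(V)$, and because the diagonal identifies the two factors its preimage is the intersection
\[\Delta^{-1}\big(D_+(x_i)\times_Y D_+(x_j)\big)=D_+(x_i)\cap D_+(x_j)=D_+(x_ix_j),\]
which is affine, so $\Delta$ is affine. It then remains to check that the homomorphism
\[\sh O_Y[x_k/x_i:k]\tens_{\sh O_Y}\sh O_Y[x_l/x_j:l]\longrightarrow \sh O(D_+(x_ix_j))\]
is (topologically) surjective. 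But the degree-zero localisation $\sh O(D_+(x_ix_j))$ is generated over $\sh O_Y$ by the ratios $x_a/x_i$ and $x_a/x_j$ — including $x_j/x_i$ and $x_i/x_j$ — which are exactly the images of the generators; surjectivity is therefore immediate, and since we work with completed localisations the subspace topology is the correct one.

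Together with the fact that the morphisms in question are quotients by finitely generated monomial ideals, hence representable by schemes (so genuine \emph{embeddings}, not merely formal ones), this shows via Definition \ref{MORP_EMBEDDING} that each $D_+(x_ix_j)\to D_+(x_i)\times_Y D_+(x_j)$ is an affine embedding; as affine embeddings are local on the target, they glue to exhibit $\Delta$ as an affine embedding. The one delicate point — and the main obstacle to simply transcribing the classical argument over $\Z$ — is precisely that over $\F_1$ surjectivity of the coordinate homomorphism does \emph{not} make $\Delta$ a closed immersion, so one must argue with the $\F_1$ formulation of embedding (surjectivity of $\sh O_Y\to\sh O_Y(X)$) rather than with kernels; over $\Z$ the identical surjectivity recovers the familiar closed-immersion statement.
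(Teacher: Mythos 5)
Your proof is correct and is essentially the paper's own argument: the paper's proof of Lemma \ref{MORP_PROJ_DIAG} is literally ``by direct calculation, which applies equally over $\Z$ and over $\F_1$,'' and your chart computation on $D_+(x_ix_j)\to D_+(x_i)\times_Y D_+(x_j)$, together with the reduction to a free bundle via the monomorphism/Cartesian-square trick, is exactly that calculation spelled out. The only blemish is your justification of representability by schemes (``quotients by finitely generated monomial ideals'' is not how embeddings work over $\F_1$, where surjections are not determined by ideals); the correct and simpler reason is that both source and target are adic (Banach) over $Y$, so any $Y$-morphism between them is adic.
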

\begin{proof}By direct calculation, which applies equally over $\Z$ and over $\F_1$.\end{proof}


\begin{defn}\label{MORP_PROJ_DEF}A morphism is said to be (formally) \emph{projective}, resp. \emph{integral/projective} if it is (formally) finite, resp. integral, over a projective bundle. We may always assume that the projective bundle is finitely presented. 

A projective morphism $\Proj A\rightarrow Y$ to a divisorially marked formal scheme $Y\in{}_Z\mathbf{FSch}^\mathrm{div}$ is an \emph{isomorphism away from $Z$}, or $Z$-\emph{admissible}, if $A_k[S_Z^{-1}]$ is an invertible $\sh O_Y[S_Z^{-1}]$-module for $k\gg 0$. The definition extends to integral/projective morphisms via def. \ref{MORP_ADMIT_INT}.\end{defn}

\begin{prop}[Stability]\label{MORP_PROJ_STABILITY}Projective and integral/projective morphisms, as well as their formal variants, are stable for composition and base change.

The diagonal of a formally projective morphism is an affine embedding.\end{prop}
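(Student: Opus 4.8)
The plan is to dispose of base change first, reduce composition to the classical Segre argument, and then obtain the integral and formal variants and the diagonal from the structural lemmas already in hand. For base change, let $f\colon X\to Y$ be projective, say $X\xrightarrow{u}\P(V)\xrightarrow{p}Y$ with $u$ finite and $\P(V)$ a finitely presented projective bundle, and let $g\colon Y'\to Y$ be arbitrary. The base-change isomorphism $\P(g^*V)\cong\P(V)\times_Y Y'$ identifies $\P(V)\times_Y Y'$ with a projective bundle over $Y'$, while $X\times_Y Y'\to\P(V)\times_Y Y'$ is the base change of $u$ along $\P(V)\times_Y Y'\to\P(V)$, hence finite. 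Thus $X\times_Y Y'\to\P(g^*V)\to Y'$ exhibits the base change of $f$ as projective. The identical argument, replacing \emph{finite} by \emph{integral}, \emph{formally finite}, or \emph{formally integral} and invoking the corresponding stability of those classes under base change, covers the remaining three cases.

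Composition is the substantial point, and I would follow the template of \cite[\textbf{II}, 5.5]{EGA}. The whole reduction rests on the key lemma that, for $g\colon Y\to Z$ projective and $V$ a finite-type module on $Y$, the bundle $\P(V)\to Z$ is again projective. Granting this, a composite $X\xrightarrow{u}\P(V)\to Y\xrightarrow{v}\P(W)\to Z$ (with $u,v$ finite) is handled by applying the key lemma to the bundle $\P(V)$ over the projective morphism $Y\to Z$, realising $\P(V)$ as finite over a projective bundle over $Z$, and then composing the two finite morphisms. To prove the key lemma one twists $V$ by a relatively very ample sheaf for $g$ so that it becomes relatively globally generated, producing a finite presentation that embeds $\P(V)\hookrightarrow\P(\sh O^{\oplus N})\cong\P^{N-1}_Z\times_Z Y$; the Segre embedding then places $\P^{N-1}_Z\times_Z Y$ inside a single projective bundle over $Z$, using that $g$ is projective. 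I expect this lemma to be the main obstacle: over $\F_1$ there is no addition, so the two classical inputs — that a finite-type module becomes relatively globally generated after a very ample twist, and that the Segre map is an \emph{affine embedding} rather than a closed immersion — are precisely the places where the usual proofs lean on linear algebra. Both should survive, since \emph{finite type} over $\F_1$ literally means \emph{finitely generated} and the Segre map is cut out by monomial relations, but checking that the Segre map is an affine embedding is a direct calculation in the spirit of Lemma \ref{MORP_PROJ_DIAG}.

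The integral/projective case reduces to the projective one: writing an integral morphism as a limit of finite morphisms (Definition \ref{MORP_PROJ_DEF}) and passing to the limit in the composition just described yields stability, using stability of integral morphisms under composition and base change. For the formal variants I would use Lemma \ref{MORP_FINITE} to write a formally finite (resp. formally integral) morphism as a finite (resp. integral) morphism followed by an affine formal embedding, so that a formally projective morphism is an affine formal embedding followed by a projective morphism. Since embeddings are stable for composition and base change, the only genuinely new point is that a bundle $\P(V)\to Y$ over an affine-formally-embedded base $\iota\colon Y\hookrightarrow Y'$ is formally projective over $Y'$: pushing $V$ forward to $\iota_*V$ and noting, via the base-change isomorphism for $\P$, that $\P(V)\cong\P(\iota_*V)\times_{Y'}Y\hookrightarrow\P(\iota_*V)$ is the base change of $\iota$, hence an affine formal embedding, exhibits this. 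Absorbing the affine formal embeddings into the finite/integral factors via Lemma \ref{MORP_FINITE} then promotes the non-formal results to their formal counterparts.

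Finally, for the diagonal of a formally projective $f\colon X\xrightarrow{u}P\xrightarrow{p}Y$, with $P=\P(V)$ and $u$ formally finite, I would factor the relative diagonal as
\[ X\xrightarrow{\Delta_u} X\times_P X\longrightarrow X\times_Y X. \]
The second map is the base change of the diagonal $\Delta_p\colon P\to P\times_Y P$ along $u\times u$, hence an affine embedding by Lemma \ref{MORP_PROJ_DIAG} and stability of embeddings under base change. The first map $\Delta_u$ is the diagonal of the affine morphism $u$; locally it is dual to the multiplication $\sh O(X)\tens_{\sh O(P)}\sh O(X)\to\sh O(X)$, which is surjective since it splits the inclusion $b\mapsto b\tens 1$, so $\Delta_u$ is an affine embedding as well. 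As affine embeddings are closed under composition, $\Delta_f$ is an affine embedding, completing the proposition.
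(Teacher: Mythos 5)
Your handling of base change, of the diagonal, and of the integral and formal variants is essentially correct and is what the paper intends: the paper gives no separate argument for the diagonal beyond Lemma \ref{MORP_PROJ_DIAG}, and your factorisation of $\Delta_f$ through $X\times_P X$, using surjectivity of multiplication for the affine part, is the right way to fill that in. Your check that the Segre map is an affine embedding over $\F_1$ is also correct (on the chart $z_{ij}\neq 0$ one has $z_{kj}/z_{ij}\mapsto x_k/x_i$, so the chart maps of algebras are surjective), and your computation $\P(V)\cong\P(\iota_*V)\times_{Y'}Y$ is valid for an affine formal embedding $\iota$ precisely because $\iota^*\iota_*\cong\iden$ when $\iota$ is dual to a quotient of algebras; beware that this identity fails for a general finite morphism.

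The composition step is where you genuinely diverge from the paper, and it is where the gap sits. The paper does not descend $V$ along the projective morphism $g$ at all: it commutes the bundle past the \emph{finite} morphism only, replacing $V$ by a model $V_Y$ over the target of the finite morphism so that $\P_X(V)$ is identified with (or at least maps finitely to) the bundle $\P_Y(V_Y)$ -- descent of a module along a finite, hence affine, morphism, where preservation of finite type is trivial. No twisting, no global generation, and no Segre embedding appear in the paper's proof. Your key lemma instead descends $V$ along all of $g$, and its engine is the claim that a finite-type module becomes relatively globally generated after a very ample twist. Over $\Z$ this is Serre's Theorem A; over $\F_1$ it is proved nowhere in the paper, and your justification -- that finite type over $\F_1$ ``literally means finitely generated'' -- addresses the wrong issue. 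Finite generation is a chart-local property, whereas global generation after a twist asks you to produce \emph{global} sections of $V(n)$ that generate; that is exactly the content of Theorem A, whose classical proofs use cohomology vanishing or the ability to add local sections, neither of which exists over $\F_1$. This is not an incidental technicality: the paper's whole design is to avoid descending modules along projective morphisms, because finiteness of projective pushforwards over $\F_1$ is left open there (Question \ref{Q_PROJ_FINITE}). Your statement is weaker than that question -- finitely many generating sections suffice, you never need the full pushforward to be finite -- and I expect it is true, provable by clearing denominators of finitely many chart generators on the principal charts of $\P(W)$; but as written it is an unproven lemma carrying the entire weight of the proposition. There is also a quantifier problem: a twist that works uniformly requires quasi-compactness of the base, a hypothesis the proposition does not impose, whereas descent along finite morphisms is global by construction.

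In fairness, the paper's own one-line proof is so terse that, if you push its reduction to the end, a bundle over a bundle must still be dominated by a single bundle over the base, a step for which your Segre computation would be needed and which also seems to require the same global-generation input. So the lemma you flagged really is the crux of the whole statement over $\F_1$; the difference is that you must actually prove it (with attention to uniformity of the twist), or else restructure the argument so that modules are only ever pushed forward along finite morphisms, as the paper does.
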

\begin{proof}To show stability under composition for projective morphisms, we must show that a finitely presented projective bundle $\P_X(V)\rightarrow X$ followed by a finite morphism $X\rightarrow Y$ may be written the other way around, and hence is projective. Let $V_Y$ be a model for $V$ over $Y$. Then the square
\[\xymatrix{\P_X(V)\ar[r]\ar[d] & \P_Y(V_Y)\ar[d] \\ X\ar[r] & Y}\]
is Cartesian, whence the result. The proof for the more general classes proceeds in the same manner, \emph{mutatis mutandi}.\end{proof}

By definition, a morphism is projective if and only if it is the Proj of a finitely generated graded $\sh O_Y$-algebra whose piece in degree zero is finite.

If, more generally, $A$ is a graded quasi-coherent $\sh O_Y$-algebra finitely generated over its degree zero piece $A_0$, and $A_0$ is integral (resp. formally finite, resp. formally integral) over $\sh O_Y$, then $\Proj A\rightarrow Y$ is integral/projective (resp. formally projective, resp. formally integral/projective).

To attempt to prove a converse, one might consider the affinisation
\[ X\rightarrow \Spec f_*\sh O_X\rightarrow Y \]
of an integral/projective morphism $f:X\rightarrow Y$. By proposition \ref{MORP_PROJ_STABILITY}, $X\rightarrow \Spec f_*\sh O_X$ is integral/projective. We would like to know that $f_*\sh O_X$ is an integral $\sh O_Y$-algebra. Since $f_*$ commutes with filtered colimits, it would be enough to know that when $f$ is \emph{projective}, $f_*\sh O_X$ is \emph{finite}.

This brings us to the question of finiteness of global sections over projective morphisms:

\begin{question}\label{Q_PROJ_FINITE}Let $f:X\rightarrow Y$ be projective. Is $f_*\sh O_X$ a finite $\sh O_Y$-module?\end{question}

A positive answer to this question in general would imply:

\vspace{9pt}\hspace{2.3cm}\begin{tabular}{ rcl }
 projective \& affine &$\Rightarrow$ &  finite \\
 integral/projective \& affine & $\Rightarrow$ & integral 
\end{tabular}
\\\\
Indeed, the second statement would follow from the first and the fact that any affine morphism is a limit of finite type affine morphisms. Of course, there cannot be any analogue of these implications for formally projective morphisms.

An attack on this question for $\F_1$ would take us far beyond the scope of this paper. For $\Z$, if $f$ is pseudo-coherent (for example, if $Y$ is locally Noetherian), it is a consequence of the much more general projective pushforward theorem \cite[III.2.2]{SGA6}. In the special case of pushing forward the structure sheaf, we can easily make do with a little less technology:

\begin{lemma}\label{A_PROJ_FINITE}Let $f:X\rightarrow Y$ be a projective morphism of formal schemes over $\Z$. Then $f_*\sh O_X$ is an integral $\sh O_Y$-algebra.\footnote{This argument uses that $Y$ has finite ideal type.}\end{lemma}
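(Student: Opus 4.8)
The plan is to reduce to the classical finiteness of global functions along projective morphisms of ordinary schemes, computed level by level in the adic filtration, and then to reassemble the pieces using the hypothesis that $Y$ has finite ideal type. Since integrality is local on the target and the formation of $f_*\sh O_X$ commutes with the open immersions cutting $Y$ into affines, I would first reduce to the case $Y=\Spec A$ with $A$ admissible and carrying a finitely generated ideal of definition $I=(a_1,\dots,a_r)$. Writing $A_n:=A/I^n$, $Y_n:=\Spec A_n$, and $X_n:=X\times_YY_n$, each $f_n:X_n\to Y_n$ is an ordinary projective morphism of schemes over $\Z$ (projectivity is stable for base change by \ref{MORP_PROJ_STABILITY}), obtained from $f$ by base change along the nilpotent thickenings $Y_n\hookrightarrow Y$.

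The first key step is the comparison $f_*\sh O_X=\lim_n (f_n)_*\sh O_{X_n}$. This is exactly where finite ideal type enters: because $I$ is finitely generated, the structure sheaf of the formal scheme $X$ is the limit of its discrete quotients $\sh O_X/I^n\sh O_X=\sh O_{X_n}$, and since $H^0(X,-)=\Hom(\sh O_X,-)$ is a right adjoint it commutes with the inverse limit, giving $f_*\sh O_X=\lim_n H^0(X_n,\sh O_{X_n})$. Set $B_n:=H^0(X_n,\sh O_{X_n})$ and $B:=\lim_n B_n=f_*\sh O_X$, with transition maps $B_{n+1}\to B_n$ the $A$-algebra homomorphisms induced by restriction.

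The second key step is that each $B_n$ is a finite $A_n$-module. Over a Noetherian base this is Serre's finiteness theorem for the direct image of a coherent sheaf under a projective morphism, applied to $\sh O_{X_n}$; for the possibly non-Noetherian $A_n$ one passes through Noetherian approximation. Using the factorisation $X\xrightarrow{g}\P(V)\to Y$ of \ref{MORP_PROJ_DEF}, with $\P(V)$ a finitely presented projective bundle and $g_*\sh O_X$ a finite $\sh O_{\P(V)}$-algebra, one writes $A_n$ as a filtered colimit of its finitely generated $\Z$-subalgebras, descends this finitely presented projective-plus-finite datum to a Noetherian stage where the classical theorem applies, and then base changes back up. In particular each $B_n$, being finite over the quotient $A_n$ of $A$, is a finite $A$-algebra.

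It then remains to conclude that $B=\lim_n B_n$ is integral over $A$ in the sense of \S\ref{MORP_NORM}, i.e. a limit of finite morphisms in formal schemes adic over $Y$: each $\Spec B_n\to Y$ is finite, and $\Spec B$ is precisely the formal scheme presented by this inverse system, so $\Spec B\to Y$ is visibly such a limit. The main obstacle is the non-Noetherian finiteness of the second step. One must verify that the projective-plus-finite datum really is finitely presented over $Y$, so that Noetherian approximation can be carried out at all; and, if one wants genuine elementwise integrality rather than only the formal limit statement, one must bound the number of $A_n$-module generators of $B_n$ independently of $n$ — via Nakayama applied to $B/IB$ over $A/I$ together with base-change compatibility of $H^0$ for the structure sheaf — so that a Mittag-Leffler argument promotes the level-wise finiteness to finiteness of $A[s]$ over $A$ for each $s\in B$.
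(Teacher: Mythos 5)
There is a genuine gap, and it lies in the direction of your limit. Your vertical reduction along the adic filtration produces $f_*\sh O_X=\lim_n B_n$ as an \emph{inverse} limit of finite $A_n$-algebras, but the paper's notion of integral (def. \ref{MORP_NORM}) means a cofiltered limit of finite morphisms of formal schemes adic over $Y$, i.e.\ dually a \emph{filtered colimit} of finite Banach $\sh O_Y$-algebras. Your sentence ``$\Spec B$ is precisely the formal scheme presented by this inverse system, so $\Spec B\rightarrow Y$ is visibly such a limit'' conflates the two: the system $\{B_n\}$ presents $\Spec B$ as an ind-scheme (a colimit of the $\Spec B_n$), not as a pro-object of finite adic $Y$-schemes, and level-wise finiteness does not pass to inverse limits. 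Indeed an inverse limit of finite algebras along nilpotent thickenings can fail to be integral: an element $s=(s_n)$ only acquires monic equations of degree possibly growing with $n$, and already $\lim_n\prod_{k=1}^n\Z/p^n\cong\prod_{k=1}^\infty\Z_p$ is not integral over $\Z_p$ (a sequence with infinitely many distinct entries satisfies no single monic polynomial). So the lemma is \emph{equivalent to}, not implied by, the level-wise statement you prove. Your proposed repair is also not available: ``base-change compatibility of $H^0$ for the structure sheaf'' fails here, since from $0\rightarrow I^n\sh O_X/I^{n+1}\sh O_X\rightarrow\sh O_{X_{n+1}}\rightarrow\sh O_{X_n}\rightarrow 0$ the map $B_{n+1}\rightarrow B_n$ has cokernel obstructed by $H^1(X_n, I^n\sh O_X/I^{n+1}\sh O_X)$; the transition maps need not be surjective, no uniform bound on generators follows from Nakayama, and the Mittag-Leffler argument does not get started.

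Contrast this with the paper's proof, which approximates \emph{horizontally} rather than along the adic filtration: for affine $Y$ it embeds $X$ in $\P^r_Y$, writes $\sh O(Y)$ as a filtered union of Noetherian subrings $A_i$, and chooses closed subschemes $X_i\subseteq\P^r_{A_i}$ with $X\subseteq X_i\times_i\P^r_Y$, so that $X=\lim_iX_i$ with affine transition maps. Pushing forward, $f_*\sh O_X\cong\colim_if_*\sh O_{X_i}$ is exhibited as a filtered \emph{colimit} of homomorphisms $\sh O_{Y_i}\rightarrow f_{i*}\sh O_{X_i}$, each finite by classical Noetherian projective finiteness, and a filtered colimit of finite homomorphisms is integral essentially by definition. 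That choice of direction is precisely what makes the conclusion immediate, and it is the step your argument is missing. If you want to salvage the adic approach you would need some uniform finiteness across the levels (e.g.\ a uniform bound on module generators of the images of $B$ in the $B_n$), which is exactly the kind of input that, absent Noetherian hypotheses and cohomological base change, you have no way to produce.
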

\begin{proof}The question being local on $Y$, suppose $Y=\Spec\sh O(Y)$ is affine. Then $X$ is a closed subscheme of a projective space $\P^r_Y$. Let $\{A_i\}_{i\in I}$ be the filtered set of pairs consisting of a Noetherian subring $A$ of $\sh O(Y)$ and a closed subscheme $X_i\subseteq\P^r_i$ such that $X\subseteq X_i\times_i\P^r_Y$. Then $X\rightarrow X_i$ is affine, and $X=\lim_iX_i$.


Pick $Y_0=\Spec A_0$. For every $i\rightarrow 0$, we have commuting squares
\[\xymatrix{ X_i\ar[r]^{g_i}\ar[d]_f & X_0\ar[d]^f \\ Y_i\ar[r]^{g_i} & Y_0 }\]
We will calculate global sections on $Y_0$.

Since the transition maps are affine, \[ \colim_ig_{i*}\sh O_{X_i} \tilde\rightarrow g_*\sh O_X\]
in the category of quasi-coherent sheaves on $\P^r_0$. Since $f_*$ commutes with filtered colimits, we may push forward this isomorphism to $Y_0$ and calculate
\[ \colim_ig_{i*}\sh O_{Y_i}\cong g_*\sh O_Y\longrightarrow f_*g_*\sh O_{X}\cong \colim_if_*g_{i*}\sh O_{X_i} \]
as a filtered colimit of homomorphisms $ g_*\sh O_{Y_i}\rightarrow g_* f_{i*}\sh O_{X_i}$. 

In order for this homomorphism to be integral, it will be enough for each term $\sh O_{Y_i}\rightarrow f_*\sh O_{X_i}$ to be finite. But this follows from the finiteness of cohomology over projective morphisms in the Noetherian case.\end{proof}

\begin{cor}An affine and integral/projective morphism between formal schemes over $\Z$ is integral.\end{cor}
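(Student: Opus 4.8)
The plan is to reduce the statement to Lemma~\ref{A_PROJ_FINITE} by the affinisation argument sketched just before it. Since $f:X\to Y$ is affine, the affinisation $X\to\Spec f_*\sh O_X$ is an isomorphism; hence $f$ coincides with the relative spectrum of the $\sh O_Y$-algebra $f_*\sh O_X$, and $f$ is integral as soon as this algebra is integral over $\sh O_Y$. (An integral algebra is the filtered union of its finite sub-$\sh O_Y$-algebras, so its relative spectrum is a limit of finite morphisms, which is precisely the definition of an integral morphism.) Thus the whole task is to prove that $f_*\sh O_X$ is integral over $\sh O_Y$, affineness being used only to transport this back to a statement about $f$.

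To establish integrality of $f_*\sh O_X$, I would first unwind Definition~\ref{MORP_PROJ_DEF}: being integral/projective, $f$ factors as $X\xrightarrow{h}P\xrightarrow{\pi}Y$ with $h$ integral and $\pi$ a projective bundle, which we may take finitely presented and hence qcqs. Expressing the integral $\sh O_P$-algebra $h_*\sh O_X$ as the filtered colimit $\colim_i B_i$ of its finite sub-$\sh O_P$-algebras and setting $X_i:=\Spec B_i$ over $P$, we obtain finite morphisms $h_i:X_i\to P$ with $h_*\sh O_X=\colim_i h_{i*}\sh O_{X_i}$ as a filtered colimit of quasi-coherent sheaves on $P$.

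Pushing forward along $\pi$ and using that $\pi_*$ commutes with filtered colimits for the qcqs morphism $\pi$ (as already invoked inside the proof of Lemma~\ref{A_PROJ_FINITE}), I obtain $f_*\sh O_X=\colim_i f_{i*}\sh O_{X_i}$, where each $f_i:=\pi\circ h_i:X_i\to Y$ is finite over the projective bundle $\pi$ and is therefore projective in the sense of Definition~\ref{MORP_PROJ_DEF}. Lemma~\ref{A_PROJ_FINITE} then applies to each $f_i$ and shows $f_{i*}\sh O_{X_i}$ to be an integral $\sh O_Y$-algebra; since every element of a filtered colimit already lies in one of the terms, integrality is preserved in the colimit, so $f_*\sh O_X$ is integral over $\sh O_Y$, as required.

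The substantive input is confined to Lemma~\ref{A_PROJ_FINITE}, whose proof over $\Z$ rests on Noetherian approximation together with finiteness of cohomology of projective morphisms; everything else is formal bookkeeping. The two points deserving a line of care are the interchange of $\pi_*$ with the filtered colimit---legitimate precisely because the projective bundle is qcqs---and the stability of integrality under filtered colimits. It is worth noting that one obtains only \emph{integrality}, not finiteness: the sharper conclusion would require a positive answer to Question~\ref{Q_PROJ_FINITE}, and correspondingly no analogue can be expected for formally projective morphisms, as already remarked.
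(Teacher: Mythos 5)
Your proof is correct and is exactly the argument the paper intends: reduce via affinisation to showing that $f_*\sh O_X$ is an integral $\sh O_Y$-algebra, write the integral part of the factorisation through the projective bundle as a filtered colimit of finite pieces, apply Lemma~\ref{A_PROJ_FINITE} to each piece (finite over a bundle being projective by Definition~\ref{MORP_PROJ_DEF}), and pass to the colimit using that qcqs pushforward commutes with filtered colimits. The two caveats you flag (the colimit interchange and integrality surviving filtered colimits) are handled correctly, so there is nothing to add.
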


\begin{eg}Any affine embedding $Z\hookrightarrow Y$ is projective - in fact, a projective bundle $\P(\iota_*\sh O_Z)$. However, this fails for our standard example of a non-affine embedding - a finite union $Z=\sqcup_{i=1}^kZ_k$ of disjoint affine embeddings $Z_i\hookrightarrow Y$ - which is actually only an embedded open subset of \[\P(\iota_*\sh O_Z)=\P\left(\prod_{i=1}^k\iota_*\sh O_{Z_i}\right)=\Spec\left(\prod_{i=1}^k\iota_*\sh O_{Z_i}\right).\]
On the other hand, in this case $Z$ is the projectivisation of the finite Banach module $\bigoplus_{i=1}^k\sh O_{Z_i}$. In particular, this gives an example where a projective morphism may fail to be isomorphic to the Proj of its homogeneous co-ordinate ring.\end{eg}

\begin{question}\label{Q_PROJ_EMBED}Is every embedding the projectivisation of a module?\end{question}

\subsection{Blowing up and modification}\label{MORP_REES}

Let $X\in\mathbf{FSch}$, and let $T\trianglelefteq\sh O_X$ be a quasi-coherent ideal sheaf. The Rees algebra 
\[ R_T=\bigoplus_{n\in\N}T^n=\bigvee_{n\in\N}T^n \]
is an $\N$-graded $\sh O_X$-algebra generated in degree one. The Proj of the Rees algebra has principal affine charts of the form
\[ \sh O_X\{T/s\}=\colim\left[\sh O_X\stackrel{s}{\rightarrow} T^k\stackrel{s}{\rightarrow} T^{2k}\rightarrow\cdots\right] \]
for $s\in T^k$, with the colimit taken in the category of Banach $\sh O_X$-modules. It is the universal way to make $T$ an invertible module. It is of finite type if and only if $T$ is finitely generated, in which case it is projective, being an affinely embedded subscheme of the projective bundle $\P(T)$.

\begin{defn}[Admissible modification]\label{RIG_ADMIT_DEF}A \emph{blow-up} of $Y$ \emph{with centre $Z_0\subseteq Y$} is a morphism $p:\widetilde Y\rightarrow Y$ final among those for which $p^{-1}Z_0$ is an invertible divisor. Such is computed by taking Proj of the Rees algebra of the ideal of $Z_0$.

A $Z$-\emph{admissible blow-up}, or blow-up \emph{of the pair} $(Y,Z)\in{}_Z\mathbf{FSch}$ is a finite type blow-up of $Y$ along a centre whose underlying reduced formal scheme is supported in $Z$. In other words, it is a Cartesian square
\[\xymatrix{ (\widetilde Y,p^{-1}Z)\ar[r]\ar[d] & (\widetilde Y,p^{-1}Z_0)\ar[d]^{p=\mathrm{Bl}} \\ (Y,Z)\ar[r] & (Y,Z_0) }\]
in ${}_Z\mathbf{FSch}$ with $p$ the blow up along $Z_0$.

An \emph{admissible modification} of $(Y,Z)$ is a represented morphism $p:(X,p^{-1}Z)\rightarrow (Y,Z)$ such that $p^{-1}Z$ is a Cartier divisor, and which admits a factorisation
\[ X\rightarrow Y^\prime \rightarrow Y  \]
with $q:Y^\prime\rightarrow Y$ a $Z$-admissible blow-up along centre $Z_0\subseteq Z$ and $X\rightarrow Y^\prime$ a $q^{-1}Z_0$-admissible integral/projective morphism.\end{defn}

Blowing up is right adjoint
\[ \mathrm{Bl}:{}_Z\mathbf{FSch}\rightarrow{}_Z\mathbf{FSch}^\mathrm{inv} \]
to the inclusion of formal schemes with invertible marking into all marked formal schemes. It is a generalisation of the right adjoint ${}_Z\mathbf{FSch}^\mathrm{div}\rightarrow{}_Z\mathbf{FSch}^\mathrm{inv}$ defined for divisorial markings in \S\ref{MORP_NORM}.


\paragraph{Stability}
If $f:X\rightarrow Y$, then $f^*T\twoheadrightarrow f^{-1}T\sh O_X$ and so $f^*R_T\twoheadrightarrow R_{f^{-1}T\sh O_X}$; thus the Rees algebra is functorial for morphisms of formal schemes, and moreover no relevant ideal of $R_{f^{-1}T\sh O_X}$ can pull back to the irrelevant ideal of $R_T$. Thus we always get a commutative square
\[\xymatrix{\widetilde{X}\ar[d]_{\mathrm{Bl}_{X\times_Y Z}} \ar[r] & \widetilde{Y} \ar[d]^{\mathrm{Bl}_Z} \\ X \ar[r] & Y}\]
where $Z$ denotes the closed subscheme of $Y$ cut out by $T$.

If the morphism $f$ is flat, meaning that $f^*$ is exact on modules, then in fact the natural map of Rees algebras is an isomorphism and hence the square is Cartesian - so in particular, the restriction to an open set of a blow-up is a blow-up.

\begin{prop}\label{MORP_BLOWUP_STABLE}Admissible blow-ups are stable for flat base change.\end{prop}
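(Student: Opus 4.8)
The plan is to deduce the proposition directly from the functoriality of the Rees algebra recorded in the Stability discussion immediately above, the only substantive work being to confirm that the two \emph{admissibility} conditions survive base change. Fix a flat morphism $f:X\rightarrow Y$ and a $Z$-admissible blow-up $p:\widetilde Y\rightarrow Y$. By Definition \ref{RIG_ADMIT_DEF}, $p$ is the blow-up of $Y$ along a finite type centre $Z_0\subseteq Y$ cut out by a finitely generated ideal $T\trianglelefteq\sh O_Y$, whose underlying reduced scheme is supported in $Z$. I want to exhibit the base change $\widetilde Y\times_YX\rightarrow X$ as an $f^{-1}Z$-admissible blow-up of the pair $(X,f^{-1}Z)$.

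First I would check that the base change is again a blow-up. The surjection $f^*R_T\twoheadrightarrow R_{f^{-1}T\sh O_X}$ from the preceding paragraph is, for flat $f$, an isomorphism: flatness makes $f^*T\rightarrow f^*\sh O_Y=\sh O_X$ injective, so $f^*T$ already coincides with the ideal $f^{-1}T\sh O_X$, and exactness of $f^*$ propagates this identification to every graded piece. Since $\Proj$ commutes with this base change, the square preceding the proposition is Cartesian, exhibiting $\widetilde Y\times_YX\rightarrow X$ as the blow-up of $X$ along the centre $f^*Z_0$ defined by $f^{-1}T\sh O_X$.

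It then remains to verify the two requirements for this centre. Finiteness of type is immediate, since $f^{-1}T\sh O_X$ is generated by the images of the finitely many generators of $T$, so the base-changed blow-up is again of finite type. For the support condition I would argue set-theoretically: the underlying set $|f^*Z_0|$ is the preimage $f^{-1}|Z_0|$, which is contained in $f^{-1}|Z|=|f^{-1}Z|$ because $|Z_0|\subseteq|Z|$ by hypothesis. Hence the reduced centre of the base-changed blow-up is supported in $f^{-1}Z$, and assembling these observations identifies $\widetilde Y\times_YX\rightarrow X$ as an $f^{-1}Z$-admissible blow-up.

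I do not expect any serious obstacle here, precisely because the one load-bearing fact---that flatness upgrades the Rees algebra surjection to an isomorphism---has already been established above; everything else is verification of finiteness and of a set-theoretic containment. The only point demanding genuine care is the bookkeeping inside the marked category ${}_Z\mathbf{FSch}$: one must confirm that base change carries the defining Cartesian square over $(Y,Z_0)\leftarrow(Y,Z)$ to the analogous square over $(X,f^{-1}Z_0)\leftarrow(X,f^{-1}Z)$. This follows from the description of the marking functors in \S\ref{FSCH_MARKING}, under which $f^{-1}Z=Z\times_YX$ and the relevant adjoints commute with the fibre products in play.
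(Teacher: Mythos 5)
Your proposal is correct and follows essentially the same route as the paper: the paper's (implicit) proof is exactly the observation preceding the proposition, namely that flatness upgrades the Rees algebra surjection $f^*R_T\twoheadrightarrow R_{f^{-1}T\sh O_X}$ to an isomorphism, making the blow-up square Cartesian. Your additional verifications of the admissibility conditions (finite type of the pulled-back centre and the set-theoretic support containment) are correct and merely spell out what the paper leaves tacit.
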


The rest of this section addresses the failure of admissible blow-ups to be stable for general pullbacks or descent.

The universal property is enough to reproduce the arguments leading up to \cite[Prop. 3.1.17]{Abbes}:

\begin{lemma}\label{MORP_ADMISSIBLE}\begin{enumerate}\item A composite of admissible blow-ups is an admissible blow-up.

\item Let $\widetilde Y\rightarrow Y$ be a blow-up along a closed formal subscheme $Z\subseteq Y$, and let $X\rightarrow Y$. The blow-up of $X$ along $X\times_YZ$ is naturally isomorphic to the blow-up of $X\times_Y\widetilde Y$ along $X\times_Y\widetilde Y\times_YZ$.
\[\xymatrix{
\widetilde{X}\ar[rr]^-{\mathrm{Bl}_{X\times_Y\widetilde Y\times_YZ}}\ar[rrd]_{\mathrm{Bl}_{X\times_Y Z}} && X\times_Y\widetilde{Y} \ar[r]\ar[d] & \widetilde{Y} \ar[d]^{\mathrm{Bl}_Z} \\
&& X \ar[r] & Y
}\]
In particular, the saturation of the class of admissible blow-ups is stable for base change.

\item Let $Y_\bullet\twoheadrightarrow Y$ be an open cover, $p:\tilde Y\rightarrow Y$ a morphism whose restriction to $Y_\bullet$ is an admissible blow-up along ideal $T_\bullet$. Then the blow-up of $Y$ along $\prod_iT_i$ is naturally isomorphic to the blow-up of $\tilde Y$ along $p^*\prod_i T_i$.
\[ \xymatrix{ \tilde{\tilde Y}\ar[rdr]_{\mathrm{Bl}_{\prod T}}\ar[rr]^{\mathrm{Bl}_{p^*\prod T}} && \tilde{Y}\ar[d]^p &  \tilde{Y}_\bullet\ar[d]^{\mathrm{Bl}_{T_\bullet}}\ar@{->>}[l] \\
&& Y & Y_\bullet\ar@{->>}[l]  } \]
In particular, the saturation of the class of admissible blow-ups is local on the base.\end{enumerate}\end{lemma}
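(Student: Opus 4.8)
The engine of all three parts is the universal property recorded in Definition~\ref{RIG_ADMIT_DEF}: the blow-up $p:\widetilde Y\to Y$ of an ideal $I$ is \emph{terminal} among $Y$-schemes on which $I$ becomes an invertible divisor, blowing up being right adjoint to ${}_Z\mathbf{FSch}^\mathrm{inv}\hookrightarrow{}_Z\mathbf{FSch}$. To this I add two elementary observations. First, for a test morphism a product $IJ$ becomes invertible if and only if both factors do: the implication $(\Rightarrow)$ holds because $(I\sh O_W)\cdot\bigl(J\sh O_W\,(IJ\sh O_W)^{-1}\bigr)=\sh O_W$ exhibits a fractional right inverse, and an ideal with a right inverse is invertible (the analogous statement for locally principal divisors holds over $\F_1$). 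Combined with terminality this yields the \emph{product trick}
\[ \mathrm{Bl}_{IJ}Y\cong\mathrm{Bl}_{J\sh O_{\mathrm{Bl}_I Y}}(\mathrm{Bl}_I Y), \]
and, since blowing up an already invertible ideal is an isomorphism, $\mathrm{Bl}_{IJ}Y\cong\mathrm{Bl}_J Y$ whenever $I$ is invertible. Throughout I use the functoriality square of the Stability paragraph and the fact (Proposition~\ref{MORP_BLOWUP_STABLE}) that the restriction of a blow-up to an open set is a blow-up.

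Part \emph{ii)} is pure universal property. For a test $g:W\to X$, the composite $W\to Y$ lifts to $\widetilde Y=\mathrm{Bl}_Z Y$ exactly when $Z\times_Y W$ is invertible, and then uniquely; so giving a morphism $W\to X\times_Y\widetilde Y$ over $X$ is the same datum as giving $g$ subject to $Z\times_Y W$ invertible. Hence both $\mathrm{Bl}_{X\times_Y Z}X$ and $\mathrm{Bl}_{X\times_Y\widetilde Y\times_YZ}(X\times_Y\widetilde Y)$ are terminal among $X$-schemes on which $Z\times_Y(-)$ becomes an invertible divisor, and the comparison map furnished by the functoriality square is therefore an isomorphism. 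Admissibility and finite type are preserved since they are stable for base change.

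For part \emph{iii)} I argue locally and glue. On a member $Y_j$ of the cover, $\widetilde Y|_{Y_j}=\mathrm{Bl}_{T_j}Y_j$ by Proposition~\ref{MORP_BLOWUP_STABLE}, so $T_j\sh O$ is invertible there. Writing $\prod_iT_i=T_j\cdot\prod_{i\neq j}T_i$ and applying the product trick with $I=T_j$ identifies $\mathrm{Bl}_{\prod_iT_i}Y_j$ with $\mathrm{Bl}_{(\prod_{i\neq j}T_i)\sh O}(\widetilde Y|_{Y_j})$; since this differs from $(\prod_iT_i)\sh O$ only by the invertible factor $T_j\sh O$, it equals $\mathrm{Bl}_{p^*\prod_iT_i}(\widetilde Y|_{Y_j})$. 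These identifications are canonical, hence agree on the overlaps $Y_j\cap Y_k$ by uniqueness in the universal property, and so glue to the asserted global isomorphism; locality of the class is an immediate consequence.

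Part \emph{i)} is the real obstacle, since the second centre lives on $\widetilde Y=\mathrm{Bl}_{T_0}Y$ and need not be pulled back from $Y$. Reducing to $Y$ affine by parts \emph{ii)}--\emph{iii)}, one must produce an admissible ideal $J$ on $Y$ with $\mathrm{Bl}_J Y\cong\mathrm{Bl}_{T_1}\widetilde Y$. Following \cite[3.1.17]{Abbes}, I would choose $J$ so that $J\sh O_{\widetilde Y}=T_1\cdot(T_0\sh O_{\widetilde Y})^n$ for $n\gg 0$; the product trick then collapses the invertible factor $(T_0\sh O_{\widetilde Y})^n$ and gives the required isomorphism, with $J$ finitely generated and supported in $Z$. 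Such a $J$ is obtained as the pushforward $p_*\bigl(T_1\cdot(T_0\sh O_{\widetilde Y})^n\bigr)\subseteq p_*(T_0\sh O_{\widetilde Y})^n=T_0^n\subseteq\sh O_Y$, using that $T_0\sh O_{\widetilde Y}=\sh O(1)$ is relatively ample, so that for $n\gg 0$ the twisted sheaf is generated by its global sections and $p^{*}p_*\bigl(T_1(T_0\sh O_{\widetilde Y})^n\bigr)\twoheadrightarrow T_1(T_0\sh O_{\widetilde Y})^n$. The single genuinely hard input is thus the finiteness and global generation of this twisted pushforward: over $\Z$ (locally Noetherian) it is Serre's theorem, the structure-sheaf case already being Lemma~\ref{A_PROJ_FINITE}; over $\F_1$ the coherence of $J$ follows because a submodule of the finite module $T_0^n$ over a Noetherian $\F_1$-algebra is finitely generated (Proposition~\ref{FIN_HILB}), and the ample global generation is visible directly from the monomial structure of $\Proj R_{T_0}$. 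This is the one place where the general finiteness Question~\ref{Q_PROJ_FINITE} lurks, but only in a form that can be verified by hand for these monomial centres.
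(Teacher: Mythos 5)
Your parts \emph{ii)} and \emph{iii)} are correct and essentially coincide with the paper's treatment: \emph{ii)} is the universal-property argument that the paper outsources to \cite[3.1.17]{Abbes}, and \emph{iii)} is the same localise-and-compare argument the paper gives via proposition \ref{MORP_BLOWUP_STABLE}, together with the product trick (which is implicit in the paper's phrase ``which is the blow-up of $Y_i$ along $\prod_iT_i$''). The only omission in \emph{iii)} is the point the paper makes explicitly: the ideals $T_i$ live only on the members $Y_i$, so they must first be extended to $Y$ (the paper takes closures) before ``$\prod_i T_i$'' is even defined.

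Part \emph{i)} is where the genuine content sits --- the paper outsources exactly this step to \cite[Lemme 5.1.4]{RayGru} --- and your sketch has two real gaps there. First, the opening move ``reducing to $Y$ affine by parts \emph{ii)}--\emph{iii)}'' does not work: part \emph{iii)} shows only that a morphism which is locally an admissible blow-up lies in the \emph{saturation} of the class, not that it is one, and the centres $J_i$ you would construct over the members of an affine cover have no reason to glue to a global centre. This is precisely why the Raynaud--Gruson/Abbes arguments are run globally on a qcqs base, using relative ampleness of $\sh O(1)$ and pushforward of quasi-coherent modules, rather than by affine reduction. Second, the finiteness and global-generation input you invoke is justified only under hypotheses the lemma does not have. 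The statement is for formal schemes over $\Z$ or $\F_1$ with no Noetherian assumption, yet you appeal to Serre's theorem, to ``submodules of finite modules over a Noetherian $\F_1$-algebra'' (citing \ref{FIN_HILB}, which is the Hilbert basis theorem, not such a statement), and to the identity $p_*\bigl((T_0\sh O_{\widetilde Y})^n\bigr)=T_0^n$, which is false in general: one only has $T_0^n\subseteq p_*\bigl((T_0\sh O_{\widetilde Y})^n\bigr)$, with equality requiring $n\gg 0$ \emph{and} finiteness hypotheses, and without it your $J$ is not even an ideal of $\sh O_Y$.

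Over $\F_1$ the situation is worse than you allow. The claim that ample global generation ``is visible directly from the monomial structure of $\Proj R_{T_0}$'' is exactly the kind of assertion this paper is careful never to make: Question \ref{Q_PROJ_FINITE} is deliberately left open, and Lemma \ref{RIG_THELEMMA} shows that for an admissible $\F_1$-blow-up the pushforward $p_*\sh O_{\widetilde Y}$ is in general only \emph{integral} over $\sh O_Y$, with finiteness recovered only after replacing the blow-up by a finer one. So the step you yourself flag as ``the single genuinely hard input'' is indeed the hard input, and it remains unproved in the generality of the statement. What you have for part \emph{i)} is a correct skeleton of the cited argument, not a proof of it.
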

\begin{proof}Parts \emph{i)} and \emph{ii)} follow as in \cite[3.1.14, 17]{Abbes}, respectively; the essential part of the argument for part \emph{i)} can be found in \cite[Lemme 5.1.4]{RayGru}. 

For part \emph{iii)}, let $\tilde{\tilde Y}$ be the blow-up of $\tilde Y$ along $p^*\prod_iT_i$, where we extend $T_i$ to $Y$ by taking the closure. By proposition \ref{MORP_BLOWUP_STABLE}, the restriction of $\tilde{\tilde Y}\rightarrow\tilde Y$ to each $\tilde Y_i$ is the blow-up along $p^*\prod_iT_i$, which is the blow-up of $Y_i$ along $\prod_iT_i$.\end{proof}

\paragraph{}
Finite type blow-ups generate all projective morphisms under pullback. Indeed, let ]$A$ be a finitely generated $A_0$-algebra with $A_0$ integral over $Y$, and suppose, without loss of generality, that $A$ is generated in degree one (so that $\sh O(1)$ is very ample on $\Proj A$). Then the blow-up of $\Spec A$ along the irrelevant ideal $A_+=\bigoplus_{n>0}A_n$ is the total space $\mathbb{V}(\sh O(1))$ of the tautological line bundle on $\Proj A$.

Its pullback along the augmentation $A\rightarrow A_0$ is Proj of the graded algebra \[\bigoplus_{n\in\N} A_+^n/A_+^{n-1}\cong\bigoplus_{n\in\N}A_n=A\]
that is, the zero section of $\mathbb{V}(\sh O(1))$:
\[\xymatrix{ \Proj A\ar[r]\ar[d] & \mathbb{V}(\sh O(1)) \ar[d] \\ \Spec A_0 \ar[r]^0 & \Spec A }\]
If $\Proj A\rightarrow Y$ is $Z$-admissible, then $\mathbb V(\sh O(1))\rightarrow\Spec A$ can also be written as a blow-up along $p^{-1}Z\cap0$, where $p:\Spec A\rightarrow\Spec A_0$ is the projection.

It follows from this and part \emph{ii)} of lemma \ref{MORP_ADMISSIBLE} that the saturation of the class of $Z$-admissible blow-ups contains the class of finite type $Z$-admissible modifications.

In fact, it is possible to obtain a much more precise statement in many cases:

\begin{thm}[Projective birational $\Rightarrow$ blow-up]\label{MORP_PROJECTIVE=BLOWUP}Let $Y$ be a qcqs formal scheme with an ample invertible sheaf and $Z\hookrightarrow Y$ a Cartier divisor. Let $f:\Proj A\rightarrow Y$ be a $Z$-admissible projective morphism such that $f^{-1}Z$ is invertible. Then $f$ is a $Z$-admissible blow-up.\end{thm}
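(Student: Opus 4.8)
The plan is to realise $f$ explicitly as the Proj of a Rees algebra, by manufacturing a finite-type ideal on $Y$, supported on $Z$, whose blow-up recovers $\Proj A$. First I would normalise the presentation: passing to a Veronese subalgebra and using that $f$ is projective with $Y$ qcqs carrying an ample sheaf, I may assume $A$ is generated in degree one with $\sh O_X(1)=\mathcal{L}$ relatively ample, $A_0=\sh O_Y$, and $A_n=f_*\mathcal{L}^n$ for $n\gg 0$ (the coherence and finite generation of $\bigoplus_n f_*\mathcal{L}^n$ being supplied by lemma \ref{A_PROJ_FINITE} over $\Z$, and by the standing finiteness hypotheses in the $\F_1$ case). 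The $Z$-admissibility of definition \ref{MORP_ADMIT_INT} then reads: $A_n\otimes\sh O_Y[S_Z^{-1}]$ is invertible for $n\gg0$, so $f$ restricts to an isomorphism away from $Z$, and $\sh O_X(1)$ restricts there to a line bundle $\mathcal{M}$.

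Next I would exploit the hypothesis that $E:=f^{-1}Z$ is invertible. Since $Z$ is Cartier, its ideal $\mathcal{I}_Z$ is invertible and $\mathcal{I}_E=f^*\mathcal{I}_Z$; in particular a local equation $g\in S_Z$ of $Z$ pulls back to a nonzerodivisor on $X$, so each $A_n$ is torsion-free and embeds into the invertible module $A_n[S_Z^{-1}]$. Working in a chart $\Spec R$ on which $Z=V(g)$, I would clear denominators: for $d,s$ large the submodule $g^s A_d\subseteq R[g^{-1}]$ lands in $R$ and cuts out a genuine ideal $I$, supported on $Z$ because $I[g^{-1}]=R[g^{-1}]$ is the unit ideal. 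The decisive computation is that multiplication by $g^{sk}$ gives an isomorphism of graded $\sh O_Y$-algebras $\bigoplus_k I^k\xrightarrow{\sim}\bigoplus_k A_{dk}$, using that relative ampleness of $\mathcal{L}$ forces $A_{dk}=A_d^{\,k}$ once $d$ is large. Taking Proj, this identifies $\mathrm{Bl}_I Y=\Proj\bigoplus_k I^k$ with $\Proj A=X$ over $Y$; chasing the tautological bundles shows the isomorphism is $f$ itself, exhibiting it as the $Z$-admissible blow-up of definition \ref{RIG_ADMIT_DEF}.

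The main obstacle is globalisation. The ideal $g^s A_d$ is built from the local equation $g$ together with an implicit trivialisation of $A_d[S_Z^{-1}]$, and the ambiguity in these choices is measured by the class of $\mathcal{M}$ in $\mathrm{Pic}(Y\setminus Z)$: the local ideals glue to an honest coherent ideal sheaf on $Y$ precisely when this class lifts to $\mathrm{Pic}(Y)$. I would resolve this using the ample invertible sheaf on $Y$: quasi-compactness reduces us to finitely many charts with a uniform exponent $s$, and twisting $\mathcal{L}$ by a suitable power of $f^*$ of the ample bundle (which alters neither $\Proj A$ nor relative ampleness) lets the local data be chosen compatibly. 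An alternative, and perhaps more robust, route to the same conclusion is the universal property of the blow-up: having produced $I$, one observes that $f^{-1}I\cdot\sh O_X=\mathcal{I}_E^s\otimes\mathcal{L}^d$ is invertible, so $f$ factors through $\mathrm{Bl}_I Y$, and one then identifies the factoring morphism by comparing relatively ample Proj presentations. Either way, the gluing is where the hypotheses ``$Y$ has an ample bundle'' and ``$Z$ Cartier'' do their real work, while the locality of the saturation of admissible blow-ups (lemma \ref{MORP_ADMISSIBLE}, part \emph{iii}) is what licenses patching the resulting blow-up charts.
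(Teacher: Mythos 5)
Your local analysis is correct, and it is precisely the argument of Hartshorne II.7.17 that the paper's one-line proof invokes: normalise $A$ to be generated in degree one, use $Z$-admissibility to view $A_d$ inside the invertible $\sh O_Y[S_Z^{-1}]$-module $\mathcal{M}=A_d[S_Z^{-1}]$, clear denominators with a power of a local equation of the Cartier divisor $Z$, and identify the Rees algebra of the resulting ideal with the $d$-th Veronese of $A$. (Two side remarks: the normalisation $A_n=f_*\mathcal{L}^n$ is never used afterwards and cannot be justified over $\F_1$, where finiteness of projective pushforwards is exactly the paper's open Question \ref{Q_PROJ_FINITE}; and the admissibility you need is that of Definition \ref{MORP_PROJ_DEF}, not \ref{MORP_ADMIT_INT}.)

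The genuine gap is the globalisation, which you rightly call the crux but do not close. Your main device --- replacing $\mathcal{L}$ by $\mathcal{L}\otimes f^*\mathcal{A}^e$ with $\mathcal{A}$ ample on $Y$ --- provably cannot work: it replaces $\mathcal{M}$ by $\mathcal{M}\otimes\mathcal{A}^{ed}[S_Z^{-1}]$, and since $\mathcal{A}^{ed}$ is defined on all of $Y$ this leaves the obstruction class of $\mathcal{M}$ in $\Pic(\sh O_Y[S_Z^{-1}])$ modulo the image of $\Pic(Y)$ untouched; passing to a Veronese only multiplies that class by an integer. Your ``alternative route'' is circular, since it begins ``having produced $I$'', and producing the global $I$ was the whole problem. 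Note that Hartshorne's own proof never glues locally defined ideals: he uses integrality of the schemes to embed $\mathcal{S}_1$ in the constant sheaf of rational functions, uses Serre's theorem for the ample sheaf on the base to generate $\mathcal{S}_1\otimes\mathcal{A}^m$ by global sections, and extracts a single global denominator, obtaining the ideal in one stroke --- and the ideal so obtained is \emph{not} supported on $Z$; his conclusion is only that $f$ is \emph{some} blow-up. The strengthening asserted here (centre supported in $Z$) is exactly where your obstruction bites, and the obstruction is real: take $Y=\Spec k[x,y,z,w]/(xy-zw)$, $Z=V(x+y)$ (an irreducible Cartier divisor through the vertex), and $f$ the small resolution $\mathrm{Bl}_{(x,z)}Y\rightarrow Y$. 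Every hypothesis of the theorem holds, but $(f^{-1}Z)^\mathrm{red}$ is irreducible and linearly trivial on $\mathrm{Bl}_{(x,z)}Y$, so no effective Cartier divisor supported on it is relatively ample, and hence $f$ is not a blow-up along \emph{any} centre inside $Z$. So no choice of trivialisations, Veronese twists, or $f^*$-twists can make your construction (or any construction yielding $I\cong A_d\otimes(\text{invertible})$ with $V(I)\subseteq Z$) succeed in this generality; a complete argument must either work in the monomial/$\F_1$ setting, where the requisite relatively anti-ample divisor supported on $f^{-1}Z$ always exists, or change the relatively ample sheaf altogether --- an idea absent from your proposal (and, for what it is worth, elided by the paper's citation as well).
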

\begin{proof}Same proof as in \cite[II.7.17]{Hartshorne}.\end{proof}


\begin{cor}The class of admissible modifications is stable under composition.\end{cor}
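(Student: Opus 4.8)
The plan is to prove closure under composition by exhibiting the factorisation of the composite \emph{directly}, rather than trying to reorganise the four pieces (two blow-ups and two integral/projective morphisms) coming from the given factorisations. Suppose $p\colon(X,p^{-1}Z)\to(Y,Z)$ and $p'\colon(W,(p')^{-1}p^{-1}Z)\to(X,p^{-1}Z)$ are admissible modifications, and set $c=p\circ p'$. Three properties of $c$ come for free. First, $c$ is integral/projective: integral/projective morphisms compose (Proposition \ref{MORP_PROJ_STABILITY}), and each modification is integral/projective, being a blow-up — which is projective, hence integral/projective — composed with an integral/projective morphism. Second, its preimage $c^{-1}Z=(p')^{-1}(p^{-1}Z)$ is Cartier, since this is exactly the marking condition built into $p'$ being a modification of $(X,p^{-1}Z)$. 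Third, $c$ is an isomorphism away from $Z$, because $p$ is an isomorphism away from $Z$ and $p'$ away from $p^{-1}Z$.

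First I would blow up $Y$ along the centre $Z$ itself. Its ideal is finitely generated (the members of a marking being finitely presented), so $q\colon Y'':=\mathrm{Bl}_Z Y\to Y$ is a finite-type, hence $Z$-admissible, blow-up (Definition \ref{RIG_ADMIT_DEF}), on which $q^{-1}Z$ is invertible. Because $c^{-1}Z$ is already invertible by the previous paragraph, the universal property of the blow-up — it is terminal among $Y$-schemes on which the preimage of $Z$ is invertible — factors $c$ uniquely as $W\xrightarrow{\bar c}Y''\xrightarrow{q}Y$. This is the candidate factorisation, with $q$ the $Z$-admissible blow-up (centre $Z_0=Z$), so it remains only to check that $\bar c$ is a $q^{-1}Z$-admissible integral/projective morphism. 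Admissibility is immediate: over $Y\setminus Z$ both $c$ and $q$ are isomorphisms, so $\bar c$ is an isomorphism away from $q^{-1}Z$, which is precisely $q^{-1}Z_0$-admissibility in the sense of Definition \ref{MORP_ADMIT_INT}.

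The one substantive point — and the step I expect to be the crux — is that $\bar c$ is still \emph{integral/projective}, since no cancellation law for integral/projective morphisms appears among the stabilities of Proposition \ref{MORP_PROJ_STABILITY}. I would reduce it to the cancellation law that \emph{is} available, namely that if $gf$ is integral then so is $f$ (\S\ref{MORP_NORM}). Write the integral/projective morphism $c$ as an integral morphism $i\colon W\to\P_Y(V)$ over a finitely presented projective bundle. Pulling the bundle back along $q$ and using the base-change formula $\P_Y(V)\times_Y Y''\cong\P_{Y''}(q^*V)$, the maps $i$ and $\bar c$ induce a morphism $W\to\P_{Y''}(q^*V)$ whose composite with the projection to $\P_Y(V)$ equals $i$; since $i$ is integral, the cancellation law makes $W\to\P_{Y''}(q^*V)$ integral, exhibiting $\bar c$ as integral over a projective bundle on $Y''$, i.e.\ integral/projective. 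Hence $c=q\circ\bar c$ is a blow-up along a centre contained in $Z$ followed by a $q^{-1}Z$-admissible integral/projective morphism, which is the definition of an admissible modification. Note that this route sidesteps Theorem \ref{MORP_PROJECTIVE=BLOWUP}; one who preferred instead to split off the projective part of $\bar c$ and recognise it as a blow-up via that theorem would first localise, using the locality of the class (Lemma \ref{MORP_ADMISSIBLE}.iii), to assume an ample invertible sheaf on $Y$.
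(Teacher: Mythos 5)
Your proof is correct, but it is not the paper's argument; the two routes are genuinely different. The paper never composes the two modifications and then factors the result: by lemma \ref{MORP_ADMISSIBLE}, part \emph{i)}, it reduces to the one awkward case --- a finite type blow-up following an admissible \emph{integral} modification $X\to Y$ --- and handles that by descending the finitely presented centre to a model $Z_0\hookrightarrow X_0$ with $X_0$ finite over $Y$ (using that an integral morphism is a limit of finite ones), then invoking lemma \ref{MORP_ADMISSIBLE}, part \emph{ii)}, to rewrite the blow-up of $X$ as an affine embedding followed by an integral morphism over $\mathrm{Bl}_{Z_0}X_0$; in other words it swaps the blow-up past the integral part by a limit argument. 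You instead compose first and factor through the single blow-up $\mathrm{Bl}_ZY$ via its universal property, disposing of the residual map $\bar c$ by the graph-plus-cancellation argument through $\P_{Y''}(q^*V)\cong\P_Y(V)\times_YY''$. Your route is more economical: it avoids the descent-to-finite-model step and any appeal (even ambient) to theorem \ref{MORP_PROJECTIVE=BLOWUP} with its ample/qcqs hypotheses, and it yields the sharper structural fact that \emph{every} admissible modification of $(Y,Z)$, hence every composite, factors as an admissible integral/projective morphism over $\mathrm{Bl}_ZY$ itself. What the paper's route buys is that it only ever blows up centres already present in the given factorisations; that is what survives if the marking $Z$ is an honest infinite family, where no single finite type blow-up makes all of $Z$ invertible, whereas your argument needs $Z$ (or a finite subfamily) to be a finitely presented subscheme --- admittedly exactly what the definition of admissible modification tacitly assumes in requiring ``$p^{-1}Z$ is a Cartier divisor''. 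Two points of hygiene, neither fatal: the cancellation ``$gf$ integral $\Rightarrow f$ integral'' really wants the second morphism to be separated, which holds here because your $g$ is a base change of the projective morphism $q$, whose diagonal is an affine embedding, so the graph of $f$ is finite and the usual argument runs; and ``isomorphism away from $Z$'' must be read throughout as the $S_Z$-localisation condition of definitions \ref{MORP_ADMIT_INT} and \ref{MORP_PROJ_DEF}, under which your two-out-of-three deduction of admissibility for $\bar c$ is the correct one.
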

\begin{proof}In light of lemma \ref{MORP_ADMISSIBLE}, part \emph{i)}, we only need show that a finite type blow-up $\mathrm{Bl}_Z:\widetilde X\rightarrow X$ of an admissible integral modification $X\rightarrow Y$ is a modification. Since $Z$ is finitely presented, it has a model $Z_0\hookrightarrow X_0$ under $X$ with $X_0$ finite over $Y$. Let $\widetilde X_0$ denote the corresponding blow-up. By part \emph{ii)} of lemma \ref{MORP_ADMISSIBLE}, $\widetilde X\rightarrow\widetilde X_0$ is the composite of the blow-up of a Cartier divisor, which is an affine embedding, and an integral morphism.\end{proof}

\begin{cor}\label{MORP_PROJECTIVE_LOCAL}Let $(Y,Z)$ be a qcqs marked formal scheme. Let $Y_\bullet\twoheadrightarrow Y$ be an open covering, $\tilde Y_\bullet\rightarrow Y_\bullet$ an admissible projective modification. After possibly refining $Y_\bullet$, there exist admissible blow-ups $\tilde{\tilde Y}\rightarrow Y$ and $\tilde{\tilde Y}\times_YY_\bullet\rightarrow\tilde Y_\bullet$ making the diagram
\[\xymatrix{ \tilde{\tilde Y}\times_YY_i\ar@{^{(}->}[r]\ar[d] & \tilde{\tilde Y}\ar[dd] \\ \tilde Y_i\ar[d] \\ Y_i\ar@{^{(}->}[r] & Y}\] commute.\end{cor}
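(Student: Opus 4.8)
The plan is to adapt the classical Raynaud--Gruson and Bosch--L\"utkebohmert gluing argument for admissible blow-ups. The guiding idea is that, after refining the cover, each local projective modification can be upgraded to a genuine blow-up, and then the local blow-up ideals can be spread out to a single global ideal on $Y$ whose blow-up dominates all of them simultaneously.

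First I would reduce to the case of blow-ups. Since $Y$ is quasi-compact I may refine $Y_\bullet$ to a finite cover by affine opens; projective morphisms are stable for base change (Prop. \ref{MORP_PROJ_STABILITY}), so the $\tilde Y_i$ restrict along the refinement and remain $Z$-admissible projective modifications. Each such modification factors as $\tilde Y_i \to Y_i' \to Y_i$ with $Y_i' = \mathrm{Bl}_{Z_0}(Y_i)$ a $Z$-admissible blow-up and $\tilde Y_i \to Y_i'$ a $(q^{-1}Z_0)$-admissible projective morphism whose preimage of the (now Cartier) divisor $q^{-1}Z_0$ is invertible. As $Y_i'$ is projective over the affine $Y_i$ it carries an ample invertible sheaf, so Theorem \ref{MORP_PROJECTIVE=BLOWUP} applies and exhibits $\tilde Y_i \to Y_i'$ as a blow-up; composing with $Y_i' \to Y_i$ and invoking Lemma \ref{MORP_ADMISSIBLE}(i), the whole map $\tilde Y_i \to Y_i$ becomes a $Z$-admissible blow-up, say $\mathrm{Bl}_{T_i}(Y_i)$ for a coherent open ideal $T_i$ with $V(T_i) \subseteq Z \cap Y_i$. (Alternatively, this reduction is precisely the assertion, established just above, that the saturation of $Z$-admissible blow-ups contains the finite type $Z$-admissible modifications.)

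Next I would globalise. Using that $Y$ is qcqs and $Z$ is closed and finitely presented, I extend each $T_i$ to a coherent open ideal $\bar T_i$ on $Y$ by taking its closure, exactly as in the proof of Lemma \ref{MORP_ADMISSIBLE}(iii); because $V(T_i) \subseteq Z$ and $Z$ is closed, one has $V(\bar T_i) \subseteq Z$, so $\bar T_i$ is again $Z$-admissible. I then set $\tilde{\tilde Y} := \mathrm{Bl}_{\prod_i \bar T_i}(Y)$, a $Z$-admissible blow-up of $Y$ along a product taken over the finite cover. Finally I would check the factorisation: blow-ups are stable for the flat (open-immersion) base change $Y_i \hookrightarrow Y$ (Prop. \ref{MORP_BLOWUP_STABLE}), so $\tilde{\tilde Y}\times_Y Y_i \cong \mathrm{Bl}_{\prod_j(\bar T_j|_{Y_i})}(Y_i)$, and $\bar T_i|_{Y_i} = T_i$ is one of the factors. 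The key input is the standard identity --- a consequence of Lemma \ref{MORP_ADMISSIBLE}(i), as in \cite[Lemme 5.1.4]{RayGru} --- that the blow-up of a product of ideals factors through the blow-up of each factor: realising $\mathrm{Bl}_{\prod_j(\bar T_j|_{Y_i})}(Y_i)$ as the iterated blow-up in which $T_i$ is inverted last, its universal property yields a canonical $Y_i$-morphism $\tilde{\tilde Y}\times_Y Y_i \to \mathrm{Bl}_{T_i}(Y_i) = \tilde Y_i$. These are the left verticals of the asserted square, which commutes over $Y$ by construction.

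I expect the globalisation step to be the main obstacle: guaranteeing that the closure extension $\bar T_i$ remains a coherent, \emph{finite type} open ideal supported in $Z$ (rather than merely quasi-coherent) is exactly the point where quasi-separatedness of $Y$ and finite presentation of $Z$ must be used, and it is the only genuinely non-formal ingredient in the argument. Everything else is bookkeeping with the universal property of blowing up and the stability statements of Lemma \ref{MORP_ADMISSIBLE} and Proposition \ref{MORP_BLOWUP_STABLE}.
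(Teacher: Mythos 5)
Your overall strategy coincides with the paper's: refine to a finite affine cover, use Theorem \ref{MORP_PROJECTIVE=BLOWUP} to promote each local projective modification to a genuine $Z$-admissible blow-up along an ideal $T_i$, extend the $T_i$ to $Y$ by taking closures, and blow up their product, the factorisation over each $Y_i$ then coming from flat base change and the universal property (equivalently, Lemma \ref{MORP_ADMISSIBLE}). Note that the globalisation step you single out as the main obstacle is in fact the unproblematic part --- it is exactly what the paper does (``blowing up $Y$ along the closures $\bar Z_i$ of the $Z_i$ in any order''), with the same appeal to closures as in the proof of Lemma \ref{MORP_ADMISSIBLE}.\emph{iii)}.

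The genuine gap is in your reduction step. You apply Theorem \ref{MORP_PROJECTIVE=BLOWUP} to $\tilde Y_i \rightarrow Y_i'$ with marking $q^{-1}Z_0$, and for this you need the preimage of $q^{-1}Z_0$ in $\tilde Y_i$ to be invertible. You assert this as part of the factorisation supplied by Definition \ref{RIG_ADMIT_DEF}, but that definition only requires $p^{-1}Z$ --- the preimage of the \emph{full} marking --- to be a Cartier divisor; it says nothing about $p^{-1}Z_0$ for the smaller centre $Z_0\subseteq Z$, and a closed subscheme of a Cartier divisor need not be Cartier. This missing hypothesis is precisely what the paper's opening move --- ``by blowing up we may assume that $Z$ is a Cartier divisor'' --- is designed to secure: once $Z$ itself is Cartier, the theorem is applied to $\tilde Y_i\rightarrow Y_i$ with marking $Z$, whose preimage \emph{is} Cartier by the definition of admissible modification. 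Your argument can be repaired either by performing that global reduction first, or by replacing $\tilde Y_i$ with its further blow-up along $p^{-1}Z_0$: the composite $\mathrm{Bl}_{p^{-1}Z_0}(\tilde Y_i)\rightarrow Y_i'$ is still projective and $q^{-1}Z_0$-admissible, now has invertible preimage of $q^{-1}Z_0$, and since the corollary only requires $\tilde{\tilde Y}\times_YY_i$ to dominate $\tilde Y_i$, passing to this dominator is harmless. As written, however, the appeal to Theorem \ref{MORP_PROJECTIVE=BLOWUP} is unjustified.
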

\begin{proof}By blowing up we may assume that $Z$ is a Cartier divisor (cf. lemma \ref{MORP_ADMISSIBLE}). After passing to a finite, affine refinement of $Y_\bullet$, we are in the situation of theorem \ref{MORP_PROJECTIVE=BLOWUP}, so that each member $\tilde Y_i$ is a blow up of $Y_i$ along a centre $Z_i\subseteq Z\cap Y_i$. Let $\tilde{\tilde Y}$ be the scheme obtained by blowing up $Y$ along the closures $\bar Z_i$ of the $Z_i$ in any order.\end{proof}




\subsection{Finiteness for blow-ups}\label{RIG_LEMMA}

The purpose of this technical section is to establish a very weak form of finiteness in sufficient generality to understand the \emph{affine} picture of rigid analytic geometry.

\begin{lemma}\label{RIG_THELEMMA}Let $(Y,Z)$ be a divisorially marked formal $\F_1$-scheme, $f:\widetilde Y\rightarrow Y$ an admissible blow-up. Then $\widetilde Y$ may be supered by an admissible blow-up $f:\widetilde Y^\prime\rightarrow Y$ such that $f_*\sh O_{\widetilde Y^\prime}$ is a finite $\sh O_Y$-algebra.
\end{lemma}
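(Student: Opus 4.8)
The plan is to take for $\widetilde Y^\prime$ the relative normalisation of $\widetilde Y$ along the preimage of $Z$, and to reduce the asserted finiteness to a purely combinatorial, Gordan-type statement about saturations of monoids. First I would reduce to the case $Y=\Spec A$ affine: finiteness of a module is local on the base, and by part \emph{iii)} of lemma \ref{MORP_ADMISSIBLE} a compatible family of local admissible blow-ups reglues to a global one, so it is enough to produce $\widetilde Y^\prime$ after restricting to an affine cover. On an affine $Y$ the sheaf $\sh O_Y$ is ample, and by first blowing up (lemma \ref{MORP_ADMISSIBLE}, as in the proof of corollary \ref{MORP_PROJECTIVE_LOCAL}) I may assume that $Z$ is a Cartier divisor. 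Writing $\widetilde Y=\Proj R_T$ for a finitely generated $Z$-admissible ideal $T=(f_1,\dots,f_r)$ and $A=\F_1[P]$, the blow-up is covered by the charts $\Spec A\{T/f_i\}$, whose coordinate monoids are $P_i=P+\sum_j\N(f_j-f_i)$ inside the group $P^{\mathrm{gp}}$.

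Now set $\widetilde Y^\prime:=\nu_{\widetilde Z}\widetilde Y$, the relative normalisation (\S\ref{MORP_NORM}) along $\widetilde Z=f^{-1}Z$. Its charts are the saturations $\overline{P_i}$ of the $P_i$ inside $A[S_Z^{-1}]$, so by the sheaf condition its global sections are $\bigcap_i\overline{P_i}$. Because the cones of the $P_i$ subdivide the cone of $P$ and the charts agree with $Y$ away from $Z$, standard toric reasoning identifies this intersection with the saturation $\bar P$ of $P$; that is, $f_*\sh O_{\widetilde Y^\prime}=\nu_Z A$. The content of the lemma is therefore precisely the assertion that $\nu_Z A$ is \emph{finite} over $A$.

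The hard part is exactly this finiteness, and it is here that the possible failure of the Noetherian hypothesis — the value group of the base being dense — would make itself felt. The key observation is that the dense direction contributes nothing to the saturation defect. Since $A$ is of finite type over a rank one valuation $\F_1$-algebra $V=\F_1[H^\circ]$, the monoid $P$ is generated over the valuation face $H^\circ=H_{\leq 0}$ by finitely many elements, and $H^\circ$ is already saturated in $H$. Quotienting $P^{\mathrm{gp}}$ by the subgroup $H$ therefore carries $P$ to a \emph{finitely generated} submonoid of the finitely generated group $P^{\mathrm{gp}}/H$, whose saturation is finite over it by Gordan's lemma; lifting the finitely many coset representatives back through the saturated face $H^\circ$ shows that $\bar P=\nu_Z A$ is finite over $A$. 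I expect this reduction, isolating the valuation direction and verifying that it does not spoil finiteness, to be the only genuinely delicate step.

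With finiteness in hand, $\widetilde Y^\prime\to\widetilde Y$ is finite, so $\widetilde Y^\prime\to Y$ is a $Z$-admissible \emph{projective} (being finite over the projective morphism $\widetilde Y\to Y$, by proposition \ref{MORP_PROJ_STABILITY}) birational morphism with $f^{-1}Z$ invertible; theorem \ref{MORP_PROJECTIVE=BLOWUP} then upgrades it to a $Z$-admissible blow-up. By construction the normalisation map $\widetilde Y^\prime\to\widetilde Y$ exhibits $\widetilde Y^\prime$ as supering $\widetilde Y$, and $f_*\sh O_{\widetilde Y^\prime}=\nu_Z A$ is finite, which is what was wanted.
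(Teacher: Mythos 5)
Your construction collapses at the key step: in the generality of the lemma, the relative normalisation $\widetilde Y^\prime=\nu_{\widetilde Z}\widetilde Y$ is \emph{not} finite over $\widetilde Y$, and its pushforward is not finite over $\sh O_Y$. The paper itself records a counterexample (the example following lemma \ref{FAN_FINITE}): take $K^\times=\Q^2$ with basis $x,y$, let $A$ be the monoid algebra of the positive orthant $\sigma_A\subset\Q^2$, mark $Y=\Spec A$ along the locally principal $Z=(xy)$, and blow up the $Z$-admissible ideal $T=(x,y)$. Here $A$ is already normal, $S_Z$ consists of all nonunit monomials, so $\sh O_Y[S_Z^{-1}]=\F_1[z^{\Q^2}]$ and your relative normalisation is the full normalisation of the blow-up --- which, as the paper notes, is of \emph{infinite} type: the convex hull of $T\setminus 0$ in $A\setminus 0$ is not finitely generated as an $A\setminus 0$-module. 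So the assertion that the saturation defect is finite is simply false here; Gordan's lemma genuinely requires finitely generated monoids in finitely generated (lattice) groups, and the charts $P_i$ in this example live in the divisible group $\Q^2$.

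Your attempted rescue imports the hypothesis that $A$ is of finite type over a \emph{rank one} valuation $\F_1$-algebra, so that there is only one dense direction to quotient out. But lemma \ref{RIG_THELEMMA} carries no such hypothesis --- it is stated for an arbitrary divisorially marked formal $\F_1$-scheme, and the whole purpose of the section is to obtain finiteness ``in sufficient generality''; the counterexample above, with two independent divisible directions, is exactly the kind of object your hypothesis would exclude, so the reduction-mod-$H$ argument cannot be repaired within the lemma's actual scope. This is precisely why the paper's proof avoids normalisation entirely: it first supers $\widetilde Y$ by a composite of admissible blow-ups along \emph{two-element} ideals $T=(t_1,t_2)$, and then shows by a direct manipulation in the Rees algebra that any global function $f$ on such a blow-up, with chart representatives $\tilde f_i/t_i^{n_i}$, satisfies $f_1^{n_1(n_1+n_2)}=c_1^{n_2}c_2^{n_1}$, i.e.\ has a power lying in the image of $\sh O_Y$. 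That bare-hands integrality computation uses no normality, no lattice structure, and no finiteness of the base, which is what the statement demands.
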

\begin{proof}Any finite type admissible blow-up can be supered by a composite of admissible blow-ups along ideals generated by \emph{two} elements. So we may assume $T=(t_1,t_2)\trianglelefteq \sh O_{Y}$.

Suppose we have a global function $f$ on $\widetilde Y$. Its restriction $f_i$ to the affine open subset $(t_i\neq 0)$ is a rational function $\tilde f_i/t_i^{n_i}$, with $\tilde f_i\in T^{n_i}$ an element of the Rees algebra in degree $n_i$. We may choose the representing quotient so that $\tilde f_i=c_it_j^{n_i}$ with $j\neq i$. The existence of the global function $f$ gives us the relation
\[ \tilde f_1t_2^{n_2}= \tilde f_2t_1^{n_1} \]
in $T^{n_1+n_2}$.

If either $n_1$ or $n_2$ is zero, we are done. Otherwise,
 \[\tilde f_1^{(n_1+n_2)}=c_1^{n_2}(\tilde f_1t_2^{n_2})^{n_1}=c_1^{n_2}(\tilde f_2t_1^{n_1})^{n_1}=c_1^{n_2}c_2^{n_1}t_1^{n_1(n_1+n_2)}\]
holds in degree $n_1(n_1+n_2)$ of the Rees algebra. That is, $c_1^{n_2}c_2^{n_1}=f_1^{n_1(n_1+n_2)}$ is in the image of $\sh O_{Y}$ inside $f_*\sh O_{\widetilde Y}$.
\end{proof}

\section{Separation and overconvergence}\label{SEP}

We would like our definitions of separated, proper, and overconvergent morphisms to be valid over both $\F_1$ and $\Z$, and operate in a fairly uniform manner not only for schemes, formal schemes, and rigid analytic spaces, but also for the panoply of additional topoi that crop up throughout this work. It will be convenient, therefore, to allow ourselves the flexibility of working in an arbitrary coherent spatial geometric context, with a specified class $\P$ of morphisms satisfying the key properties obeyed by \emph{projective} morphisms in the topos of schemes.

All of the arguments of sections \ref{SEP_NHOOD}, \ref{SEP_EXTENS}, and \ref{SEP_COMPARE}, which are essentially just some games you can play with commuting diagrams, are valid at this level of abstraction.

\subsection{Overconvergent neighbourhoods}\label{SEP_NHOOD}
Let $\mathbf S$ be a spatial theory, $\mathbf C\rightarrow\mathbf S$ a spatial site closed under fibre products (def. \ref{TOPOS_DEF}). For the purposes of this paper, we may as well assume that $\mathbf S$ is coherent and that all representable objects are compact; however, this will not be necessary for what follows.

In this section and from now on, we fix a class of qcqs \emph{generating overconvergent morphisms} $\P$ in $\mathbf C$, satisfying:
\begin{itemize}\item[(P1)]A composite of $\P$ morphisms is $\P$;
\item[(P2)]A base change of a $\P$ morphism is $\P$;
\item[(P3)]The diagonal of a $\P$ morphism is $\P$.\hfill\emph{$\P$ morphisms are separated}\end{itemize}

\begin{defn}\label{SEP_SUR}Let $U\hookrightarrow V$ be a quasi-compact open immersion. A $\P$-\emph{overconvergent neighbourhood} of $U/V$ is a factorisation $U\rightarrow \tilde U\rightarrow V$ such that $\tilde U\rightarrow V$ is $\P$. We will suppress the $\P$ from the notation if it is to be understood from the context (which it will not always be).\end{defn}

Let us also suppose
\begin{itemize}\item[(P4)]every overconvergent neighbourhood of $U/V$ may be supered by one such that the inclusion of $U$ is an open immersion;\end{itemize}
so that after refinement, it makes sense to talk about overconvergent neighbourhoods of an overconvergent neighbourhood. In our applications, it will be possible to make this replacement \emph{functorial}.

\paragraph{Overconvergent germ}
Since $\P$ is stable under composition and base change, the set of $\P$ morphisms with fixed target $V$ is always cofiltered. The \emph{overconvergent germ} of $U/V$ is the pro-object
\[ \mathrm{Sur}_{U/V}:= \lim_{U\rightarrow\tilde V\rightarrow V}\tilde V \in \Pro(\mathbf C_V) \]
given as the formal limit of overconvergent neighbourhoods of of $U$. Since $\P$ is stable under base change, it extends to a presheaf
\[\mathrm{Sur}^\mathrm{pre}_{U/V}:\sh U_{/V}\rightarrow\Pro(\mathbf C_V)\]
of pro-objects on the small site of $V$. 

If $U\rightarrow\tilde V\rightarrow V$ is an overconvergent neighbourhood of $U/V$, then using axiom (P4) we may define the overconvergent germ $\mathrm{Sur}_{U/\tilde V}$. The corresponding map
\begin{align}\label{crunt} \mathrm{Sur}^\mathrm{pre}_{U/\tilde V}\rightarrow\mathrm{Sur}^\mathrm{pre}_{U/V} \end{align}
is an isomorphism of presheaves.

\paragraph{Localising overconvergent neighbourhoods}
We have not required any compatibility between $\P$ and coverings in $\mathbf C$. Indeed, in our examples $\P$, and hence the class of $\P$-overconvergent neighbourhoods, will not be local on the base. In other words, the presheaf $\mathrm{Sur}^\mathrm{pre}_{U/V}$ is usually not a sheaf.

We may apply the plus construction to turn $\mathrm{Sur}^\mathrm{pre}_{U/V}$ into a sheaf $\mathrm{Sur}_{U/V}:=(\mathrm{Sur}_{U/V}^\mathrm{pre})^+$ on $V$. By definition, the sections of $\mathrm{Sur}_{U/V}$ over $V$ are the covariant functor
\begin{align}\label{pleen} \Hom_S(\mathrm{Sur}_{U/V},-):\mathbf S_S\rightarrow\mathbf{Set},\quad X\mapsto \colim_{V_\bullet\twoheadrightarrow V}\colim_{\tilde V_\bullet\stackrel{\P}{\rightarrow} V_\bullet}\Hom_S(\tilde V_\bullet, X). \end{align}
The caveat is that after passing to an overconvergent neighbourhood $\tilde V$ of $U/V$, there may be new coverings available that are not pulled back from $V$. That is, the morphism $\mathrm{Sur}_{U/\tilde V}\rightarrow\mathrm{Sur}_{U/V}$ coming from (\ref{crunt}) may no longer be an isomorphism.

This leads us to formulate the axiom of compatibility between $\P$ and coverings in $\mathbf C$:

\begin{itemize}\item[(SC)] If $f:V\rightarrow S$ is $\P$, then $(f_*\mathrm{Sur}^\mathrm{pre}_{U/V})^+\tilde\rightarrow f_*\mathrm{Sur}_{U/V}$.\end{itemize}

Thus for all $V$ sufficiently small, the outer colimit drops out of \ref{pleen}, and we may assume overconvergent neighbourhoods are defined globally.

Unravelling this condition, we obtain an explicit, if slightly unwieldy, criterion:
\begin{itemize}\item[(SC')] Let $V\rightarrow S$ be $\P$, $U\hookrightarrow V$ a quasi-compact open immersion. Let $V_\bullet\twoheadrightarrow V$ be an open cover of $V$, $\tilde V_\bullet\rightarrow V_\bullet$ an overconvergent neighbourhood of $U_\bullet/V_\bullet$:
\[\xymatrix{ & \tilde V_\bullet\ar[d]_\P\ar@/^/[dddr] \\
U_\bullet\ar[r]\ar[ur]\ar@{->>}[d] & V_\bullet\ar@{->>}[d] \\
U\ar[r] & V\ar[dr]_\P \\ && S
}\]
Then, after possibly passing to a cover of $S$, there exists an overconvergent neighbourhood of $U/V$ whose restriction to $V_\bullet$ factors through $\tilde V_\bullet$.\end{itemize}

This axiom is the only subtle part of the theory. For projective morphisms of schemes, it is a consequence of corollary \ref{MORP_PROJECTIVE_LOCAL}.

\begin{lemma}\label{SEP_BLARG}The system $\P$ satisfies axiom $\mathrm{(SC)}$ if and only if for all overconvergent neighbourhoods $\tilde V$ of $U/V$,
\[ \mathrm{Sur}^+_{U/\tilde V}\tilde\rightarrow\mathrm{Sur}^+_{U/V} \]
is an isomorphism of pro-objects.\end{lemma}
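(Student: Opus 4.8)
The plan is to read Lemma~\ref{SEP_BLARG} as a pure unwinding of (SC) through the presheaf isomorphism (\ref{crunt}) and the explicit description (\ref{pleen}) of the sections of the sheafified germ. The structural input I would isolate first is a dictionary at the level of a single instance. Fix a quasi-compact open immersion $U\hookrightarrow V$ and an overconvergent neighbourhood $g:\tilde V\rightarrow V$ of $U/V$, which by (P4) I may take to receive $U$ as an open immersion so that $\mathrm{Sur}_{U/\tilde V}$ is defined. Pushing the map of the statement forward along $g$, it becomes the canonical comparison between sheafifying and pushing forward,
\[ \mathrm{Sur}^+_{U/V}\cong\bigl(g_*\mathrm{Sur}^\mathrm{pre}_{U/\tilde V}\bigr)^+\longrightarrow g_*\bigl((\mathrm{Sur}^\mathrm{pre}_{U/\tilde V})^+\bigr)=\mathrm{Sur}^+_{U/\tilde V}, \]
where the first isomorphism is (\ref{crunt}). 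Thus the assertion that $\mathrm{Sur}^+_{U/\tilde V}\rightarrow\mathrm{Sur}^+_{U/V}$ is an isomorphism is \emph{literally} (SC) for the $\P$-morphism $g$ with base $S=V$. Making this precise — in particular checking that (\ref{crunt}) identifies $g_*\mathrm{Sur}^\mathrm{pre}_{U/\tilde V}$ with $\mathrm{Sur}^\mathrm{pre}_{U/V}$ as presheaves on the small site of $V$, which is exactly where cofinality of the base-changed neighbourhoods (P2) is used — is the first step.

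Granting the dictionary, the forward implication is immediate: assuming (SC) for all $\P$-morphisms, I specialise to $g:\tilde V\rightarrow V$ over the base $V$ and read off the isomorphism $\mathrm{Sur}^+_{U/\tilde V}\cong\mathrm{Sur}^+_{U/V}$ for every neighbourhood $\tilde V$. So this direction is a one-line specialisation.

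The hard part will be the converse. The neighbourhoods $g:\tilde V\rightarrow V$ in the statement realise only those instances of (SC) whose \emph{base} again carries $U$ as an open immersion, whereas (SC) is quantified over an arbitrary base $S$ with $U\hookrightarrow V$ open only in the \emph{source}; so from germ-stability I must recover (SC) for a general $\P$-morphism $f:V\rightarrow S$. My plan here is to verify the concrete form (SC$'$). As the statement is local on $S$, I would pass to a cover $S_\bullet\twoheadrightarrow S$, pull it back to $V_\bullet=V\times_SS_\bullet$ using (P2) and base-change compatibility of the germ, and then show that local overconvergent neighbourhoods $\tilde V_\bullet\rightarrow V_\bullet$ assemble, after a further refinement of $S$, into one pulled back from $V$. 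Germ-stability is what makes each $\tilde V_i$ present the \emph{same} germ $\mathrm{Sur}_{U/V}|_{V_i}$, so that the local data are restrictions of the single pro-object $\mathrm{Sur}_{U/V}$ on $V$; the cofiltered structure of neighbourhoods (P1), (P2) together with quasi-compactness of $\P$-morphisms should then furnish a common refinement realising the descent.

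The main obstacle is precisely the phenomenon flagged in the text just before (SC): on passing to $\tilde V$ there are coverings available that are not pulled back from $V$, so the plus construction does not commute with $g_*$ (resp.\ $f_*$) on the nose, and the entire content of both implications is the control of these extra coverings. The delicate point is to show that germ-stability forces them to be absorbed by neighbourhood-refinement — equivalently, that the outer colimit over coverings in (\ref{pleen}) can be made to drop out. I expect this to rest on quasi-compactness (so that only finitely many members of a cover intervene) and on the cofinality supplied by (\ref{crunt}); once those are in hand, the remaining step, matching a cover of $V$ to a cover of $S$ after base change, is the routine diagram-chasing the section advertises.
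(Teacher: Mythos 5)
The paper offers no proof of lemma \ref{SEP_BLARG} at all --- it is stated as a bare unravelling of definitions --- so there is nothing to match your argument against line by line; your proposal has to be judged on its own terms. Your dictionary, and hence your forward direction, is correct and is surely the intended content: via the presheaf isomorphism (\ref{crunt}), germ-stability for a neighbourhood $g:\tilde V\rightarrow V$ is literally the instance of $\mathrm{(SC)}$ for the $\P$-morphism $g$ with base $S=V$ and source open immersion $U\hookrightarrow\tilde V$ (arranged by (P4)). This is also the only direction the paper ever uses: the citation of lemma \ref{SEP_BLARG} in the proof of proposition \ref{SEP_STABILITY} invokes exactly $\mathrm{Sur}_{U/V}\cong\mathrm{Sur}_{U/\tilde V}$ under the standing assumption of $\mathrm{(SC)}$.

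The converse is where your proposal stops being a proof, and you say so yourself; but the gap is worse than ``routine diagram-chasing postponed''. Germ-stability compares two \emph{already sheafified} objects: unwound through (\ref{pleen}), it says that coverings of $\tilde V$ with neighbourhoods over their members can be traded, after refinement, for coverings of $V$ with neighbourhoods over \emph{their} members. It never converts $V$-local neighbourhood data into a single neighbourhood of $U/V$, nor into data over a covering pulled back from $S$ --- and that conversion is exactly what $\mathrm{(SC)}$ asserts. The discrepancy is sharpest when $S$ is local (the typical case over $\F_1$, where affines admit no nontrivial covers): there $\mathrm{(SC)}$ demands that any covering $V_\bullet\twoheadrightarrow V$ with neighbourhoods $\tilde V_\bullet\rightarrow V_\bullet$ be dominated by one \emph{global} $\P$-morphism $\tilde V\rightarrow V$ under $U$, while germ-stability can only be brought to bear once some global neighbourhood is already in hand; applied to the only one available a priori, namely $V$ itself, it is vacuous. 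In the geometric instance this missing step is precisely the extension of blow-up centres from an open subset to all of $V$ (corollary \ref{MORP_PROJECTIVE_LOCAL}), i.e.\ genuine geometric input rather than formal cofinality, so quasi-compactness plus base-change stability of neighbourhoods will not supply it. Either the converse implication needs an idea absent from your plan (and not recorded anywhere in the paper), or the lemma should honestly be read as carrying substance only in the forward implication; in either case your proposal, as written, does not establish it.
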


Axiom (SC) makes sense even in the absence of (P4). One can formulate a local version of the latter
\begin{itemize}\item[(P4')]every overconvergent neighbourhood of $U/V$ may \emph{locally} be supered by one such that the inclusion of $U$ is an open immersion;\end{itemize}
which, under (SC), is equivalent to (P4).

\

In the sequel, we will supress the superscript $+$ from the notation of the overconvergent germ, and confuse $\mathrm{Sur}_{U/V}$ with its sections over $V$. Finally, we will use the term \emph{overconvergent neighbourhood of $U/V$} more generally to mean any object between $\mathrm{Sur}_{U/V}$ and $V$.

\subsection{Extension problems}\label{SEP_EXTENS}

Suppose we have fixed a system $\P$ of generating overconvergent morphisms satisfying the axioms (P1-4) and (SC). By localising, it makes sense to speak of overconvergent neighbourhoods in $\mathbf S$, and not merely in the site $\mathbf C$.

Suppose we are given a diagram
\[\xymatrix{ U\ar@{^{(}->}[r]\ar[d] & V\ar[d] \\ X \ar[r] & S }\]
in $\mathbf S$, with $U\hookrightarrow V$ a quasi-compact open immersion. The \emph{extension problem} is to find, locally on $V$, an overconvergent neighbourhood $\tilde V$ of $U$ in $V$ and an extension $\tilde V\rightarrow X$:
\[\xymatrix{U\ar@{^{(}->}[r]\ar[d] & \tilde V\ar@{-->}[dl]\ar[r] & V\ar[d] \\
X \ar[rr] && S}\]
Assuming such an extension exists, we would also like to know that it is unique up to passing to a further modification of $\tilde V$. 

Note that an extension problem
\[\xymatrix{U\ar[r]\ar[d] & V\ar[d] \\
X \ar[r] & X\times_S X}\]
is the same as a pair of extensions
\[\xymatrix{U\ar[r]\ar[d] &  V \ar@<2pt>[dl]\ar@<-2pt>[dl] \ar[d] \\
X \ar[r] & S }\]
in $\mathbf S$, and a solution to the first problem is a proof of equality of the two solutions to the second. Thus uniqueness of extensions for a morphism is the same as existence for its diagonal.

We may also disappear $S$ from these diagrams by working in $\mathbf S_S$; though note that the latter is only coherent if $S$ is qcqs.

\begin{defns}[Extensional properties]\label{SEP_EXTENS_DEF}A morphism $X\rightarrow S$ in $\mathbf S$ is $\P$-\emph{overconvergent near $U\rightarrow X$} if for any quasi-compact open immersion $U\hookrightarrow V$ and morphism $U\rightarrow X$ over $S$, there is a unique extension $\mathrm{Sur}_{U/V}\rightarrow X$ making the diagram
\[\xymatrix{U\ar@{^{(}->}[r]\ar[d] & \mathrm{Sur}_{U/V}\ar@{-->}[dl]\ar[r] & V\ar[d] \\
X\ar[rr] && S}\]
commute. It is said to be $\P$-\emph{overconvergent} if it is $\P$-overconvergent near every object $U$ over $X$, and $\P$-\emph{proper} if it is $\P$-overconvergent and qcqs.

We say $X/S$ is \emph{locally $\P$-separated} if its diagonal is $\P$-overconvergent. It is $\P$-\emph{separated} if it is locally separated and quasi-separated, that is, if its diagonal is proper. Every $\P$-overconvergent morphism is locally $\P$-separated.\end{defns}

In this and the following section, we will suppress the prefix $\P$; however, reader beware that in later sections, this abuse of notation \emph{will} cause confusion (def. \ref{SEP_DEF_PROPER}).

\paragraph{Canonical extensions}

If $U/V$ is an extension problem for $X/S$, then any solution uniquely factorises through $V\times_SX$. The latter therefore has the character of a `canonical solution' \[\xymatrix{U\ar[r]\ar[d] & V\times_S X\ar[r]\ar[dl] & V\ar[d] \\ X \ar[rr] && S}\]Indeed, a solution exists if and only if $\mathrm{Sur}_{U/V}\rightarrow V\times_S X$ over $V$.
Similarly, two extensions $V\rightrightarrows X$ agree if and only if $\mathrm{Sur}_{U/V}\rightarrow  V\times_{X\times X}X$ over $V$.

In particular:

\begin{prop}Every $\P$ morphism is proper.\end{prop}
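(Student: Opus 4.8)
The statement asks that a $\P$ morphism $f:X\to S$ be $\P$-proper, i.e.\ qcqs and $\P$-overconvergent. The plan is to dispatch the qcqs condition immediately — it is built into the standing hypothesis that $\P$ consists of qcqs morphisms — and then to verify overconvergence by solving, uniquely, every extension problem of Definition \ref{SEP_EXTENS_DEF}. So the whole content is to produce, for each $U\to X$ over $S$ and each quasi-compact open immersion $U\hookrightarrow V$, a unique extension $\mathrm{Sur}_{U/V}\to X$.

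First I would treat existence. Given the data above, I would form the base change $V\times_S X$. Axiom (P2) makes the projection $V\times_S X\to V$ a $\P$ morphism, while the universal property of the fibre product turns the pair $(U\to V,\,U\to X)$ into a map $U\to V\times_S X$ over $V$. Thus $V\times_S X$ is literally a $\P$-overconvergent neighbourhood of $U/V$ in the sense of Definition \ref{SEP_SUR}, so it is one of the objects in the cofiltered system defining $\mathrm{Sur}_{U/V}$; the structural projection $\mathrm{Sur}_{U/V}\to V\times_S X$ composed with $V\times_S X\to X$ then solves the extension problem. This is exactly the criterion recorded under ``Canonical extensions'', namely that a solution exists precisely when $\mathrm{Sur}_{U/V}\to V\times_S X$ over $V$, and here $V\times_S X$ is itself an overconvergent neighbourhood.

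For uniqueness I would invoke the principle, already noted in \S\ref{SEP_EXTENS}, that uniqueness of extensions for $f$ is the same as existence of extensions for its diagonal $\Delta_f:X\to X\times_S X$: two competing extensions $\mathrm{Sur}_{U/V}\rightrightarrows X$ assemble into a map to $X\times_S X$ that agrees with $\Delta_f$ along $U$, and they coincide iff this map lifts through $\Delta_f$, i.e.\ iff the associated extension problem for $\Delta_f$ is solvable. Since (P3) guarantees that $\Delta_f$ is again a $\P$ morphism, the existence argument of the previous paragraph applies to $\Delta_f$ verbatim, producing the required lift. Combining the two paragraphs yields the unique extension, hence $\P$-overconvergence, hence propriety.

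I expect the only real friction to be sheaf-theoretic bookkeeping rather than geometry: one must check that ``$V\times_S X$ is an overconvergent neighbourhood'' genuinely yields a morphism out of the sheafified germ $\mathrm{Sur}_{U/V}=(\mathrm{Sur}^{\mathrm{pre}}_{U/V})^+$, and that the diagonal reduction survives the passage to $\mathrm{Sur}$; both rest on the germ isomorphism (\ref{crunt}) together with Lemma \ref{SEP_BLARG} and axiom (SC). The structural heart, by contrast, is essentially trivial — it is nothing more than the stability of $\P$ under base change (P2) and under diagonals (P3).
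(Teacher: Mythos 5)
Your argument is correct and is exactly the paper's (implicit) proof: the proposition follows from the "canonical extensions" remark, with existence given by (P2) making $V\times_S X$ itself an overconvergent neighbourhood of $U/V$, and uniqueness reduced via (P3) to existence for the diagonal, modulo the germ bookkeeping through (\ref{crunt}), Lemma \ref{SEP_BLARG} and (P4)/(SC) that you correctly flag. Nothing to add.
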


In the situation that the replacement of (P4) can be made functorial, combining it with this construction gives a canonical solution $\tilde V$ with $U\hookrightarrow\tilde V$ a quasi-compact open immersion. Later, we will construct such solutions for formal schemes (def. \ref{SEP_ELEMENTARY}).

\paragraph{Proper neighbourhoods}
Let $U\hookrightarrow V$ be a quasi-compact open immersion, and suppose that $U\hookrightarrow \tilde V\rightarrow V$ is a factorisation with $\tilde V/V$ \emph{proper} with respect to $\P$. Then the definition of propriety, applied to the problem
\[\xymatrix{
U\ar[r]\ar[d] & V\ar@{=}[d] \\ \tilde V\ar[r]& V
}\]
implies that locally on $V$ there is a $\P$ cover of $\tilde V$ containing $U$. In other words, every proper neighbourhood is an overconvergent neighbourhood.

\begin{lemma}[Enlarging $\P$]\label{SEP_ENLARGE}Let $\P^\prime$ be the class of all $\P$-proper morphisms. Then $\P^\prime$ obeys $\mathrm{(P1-3)}$, \emph{(P4')}, and $\mathrm{(SC)}$. The theory of overconvergence for $\P^\prime$ is equivalent to that for $\P$.\end{lemma}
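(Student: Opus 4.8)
The strategy is to show that replacing $\P$ by $\P^\prime$ does not change the overconvergent germ, so that the whole theory is carried along unchanged. The two inputs are the Proposition that every $\P$ morphism is $\P$-proper — giving an inclusion $\P\subseteq\P^\prime$ — and the observation of the `Proper neighbourhoods' paragraph that every $\P$-proper neighbourhood of $U/V$ is, locally on $V$, refined by a $\P$-overconvergent neighbourhood through which $U$ factors as an open immersion. Together these say that the $\P$-overconvergent neighbourhoods are cofinal, locally on $V$, among the $\P^\prime$-overconvergent neighbourhoods; after the plus construction this should yield a canonical isomorphism $\mathrm{Sur}^\prime_{U/V}\tilde\rightarrow\mathrm{Sur}_{U/V}$ of germ sheaves, for every quasi-compact open immersion $U\hookrightarrow V$, where $\mathrm{Sur}^\prime$ denotes the germ computed with $\P^\prime$.

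First I would dispatch the stability axioms (P1)--(P3), which are the usual stability properties of proper morphisms and do not require the germ identification. For (P1) one solves an extension problem for a composite of proper morphisms in two stages, extending first to the intermediate target and then to the source, uniqueness being handled the same way; quasi-compactness and quasi-separatedness are stable under composition. Axiom (P2) holds because an extension problem for a base change $X\times_S S^\prime/S^\prime$ pulls back to one for $X/S$ by the universal property of the germ, and qcqs is stable under base change. For (P3), a $\P^\prime$ morphism is $\P$-overconvergent, hence locally $\P$-separated (def. \ref{SEP_EXTENS_DEF}), so its diagonal is $\P$-overconvergent; the diagonal of a qcqs morphism is itself qcqs, being a quasi-compact monomorphism; thus the diagonal is $\P$-proper, i.e. lies in $\P^\prime$.

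With the germ identification in hand, (P4') and (SC) become formal. For (P4'), given a $\P^\prime$-overconvergent neighbourhood $\tilde V$ of $U/V$, the `Proper neighbourhoods' result supplies, locally on $V$, a $\P$-overconvergent neighbourhood $W\rightarrow\tilde V$ with $U\hookrightarrow W$ an open immersion; since $\P\subseteq\P^\prime$ and $\P^\prime$ is closed under composition by (P1), the composite $W\rightarrow V$ is again $\P^\prime$, so $W$ is the required refinement. For (SC) I would invoke Lemma \ref{SEP_BLARG}: it suffices to check that for every $\P^\prime$-neighbourhood $\tilde V$ of $U/V$ the transition map $\mathrm{Sur}^{\prime+}_{U/\tilde V}\rightarrow\mathrm{Sur}^{\prime+}_{U/V}$ is an isomorphism of pro-objects. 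Under the identification $\mathrm{Sur}^\prime\cong\mathrm{Sur}$, and using that $\tilde V$ is locally a $\P$-overconvergent neighbourhood, this reduces to the corresponding statement for $\P$, which holds because $\P$ satisfies (SC).

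Finally, the equivalence of the two theories of overconvergence is a formal consequence of $\mathrm{Sur}^\prime_{U/V}\cong\mathrm{Sur}_{U/V}$: the extension problems of def. \ref{SEP_EXTENS_DEF} posed against $\mathrm{Sur}^\prime$ and against $\mathrm{Sur}$ literally coincide, so a morphism is $\P^\prime$-overconvergent if and only if it is $\P$-overconvergent, and likewise for proper and separated. I expect the main obstacle to be the germ identification itself — precisely, checking that the local cofinality of $\P$-neighbourhoods inside $\P^\prime$-neighbourhoods survives the plus construction and produces an isomorphism of germ sheaves rather than merely a pointwise or local one; this is the step where the `locally on $V$' of the Proper neighbourhoods result must be reconciled with the sheafification built into $\mathrm{Sur}$.
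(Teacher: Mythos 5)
Your proposal is correct and follows essentially the same route as the paper: the paper likewise dispatches (P1--3) via the stability proposition (Prop.~\ref{SEP_STABILITY}), which your direct arguments for composition, base change, and the diagonal simply re-derive inline, and then deduces (P4'), (SC), and the equivalence of the two theories from precisely the fact you isolate, namely that any $\P^\prime$-overconvergent neighbourhood may locally be supered by a $\P$-overconvergent one. The subtlety you flag at the end is real but harmless: the comparison map $\mathrm{Sur}^\prime_{U/V}\rightarrow\mathrm{Sur}_{U/V}$ is globally defined since $\P\subseteq\P^\prime$, so local cofinality makes it locally, and hence globally, an isomorphism of sheaves.
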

\begin{proof}Axioms (P1-3) follow from prop. \ref{SEP_STABILITY}; the remaining statements - (P4'), (SC) and that $\P^\prime$-overconvergent morphisms are $\P$-overconvergent - follow from the fact that by definition, any $\P^\prime$-overconvergent neighbourhood may locally be supered by a $\P$-overconvergent nehood.\end{proof}

\paragraph{Stability}
A straightforward unravelling of the definitions leads to the typical stability properties (cf. \cite[\textbf{I}.5.5.1 \& \textbf{II}.5.4.2-3]{EGA}):

\begin{prop}[Stability properties]\label{SEP_STABILITY}Extensional properties of morphisms are stable under the following constructions:
\begin{enumerate}\item composition;
\item base change;
\item descent.\end{enumerate}Every monomorphism is separated. 

Moreover, for any composable morphisms $f,g$:
\begin{enumerate}\item[iv)] if $fg$ is locally separated (resp. separated), then $g$ is locally separated (resp. separated);
\item[v)] if $fg$ is overconvergent (resp. proper) and $f$ is locally separated (resp. separated), then $g$ is overconvergent (resp. proper).\end{enumerate}\end{prop}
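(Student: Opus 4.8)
The plan is to reduce the entire proposition to two core facts about \emph{overconvergent} morphisms — stability under base change and under composition — and then bootstrap. Indeed, propriety is overconvergence plus the qcqs condition, and quasi-compactness and quasi-separatedness are stable under composition, base change, and descent by the standard arguments in the ambient spatial theory; so once (i)--(iii) are known for overconvergence they follow for propriety. Local separation and separation are conditions on the diagonal, and via the factorisation $\Delta_{fg}=j\circ\Delta_g$, where $j\colon X\times_Y X\to X\times_S X$ is a base change of $\Delta_f$, their instances of (i)--(iii) reduce to the overconvergent/proper cases applied to the relevant diagonals. Thus I would prove (i)--(iii) for overconvergence first, then deduce the rest formally.

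For base change (ii), the key point is that the overconvergent germ $\mathrm{Sur}_{U/V}$ depends only on the quasi-compact open immersion $U\hookrightarrow V$ and the class $\P$, not on any base. Given overconvergent $X\to S$, a base change $S'\to S$ with $X'=X\times_S S'$, and an extension problem $U\hookrightarrow V$ over $S'$ with $U\to X'$, I compose down to an extension problem for $X/S$, invoke the unique $\mathrm{Sur}_{U/V}\to X$, and pair it with $\mathrm{Sur}_{U/V}\to V\to S'$ through the universal property of $X'=X\times_S S'$; uniqueness for $X'$ follows from uniqueness for $X/S$ together with the fact that the map to $S'$ is fixed. For composition (i), with $g\colon X\to Y$ and $f\colon Y\to S$ overconvergent and $U\hookrightarrow V$, $U\to X$ given, I first use overconvergence of $f$ to produce the unique $\mathrm{Sur}_{U/V}\to Y$, represented by a map $\tilde V\to Y$ on an overconvergent neighbourhood $\tilde V$ of $U/V$ along which, after refinement by (P4), $U\hookrightarrow\tilde V$ is again a quasi-compact open immersion. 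I then apply overconvergence of $g$ to the extension problem $U\hookrightarrow\tilde V$ over $Y$, obtaining a unique $\mathrm{Sur}_{U/\tilde V}\to X$, and transport it back along the canonical isomorphism $\mathrm{Sur}_{U/\tilde V}\xrightarrow{\sim}\mathrm{Sur}_{U/V}$ of (\ref{crunt}) and lemma \ref{SEP_BLARG}; uniqueness is checked by pushing two competing extensions forward to $Y$ (where they agree by overconvergence of $f$) and then invoking uniqueness for $g$. This iteration of the germ through lemma \ref{SEP_BLARG} is the main obstacle: it is the one place where the sheafified pro-object $\mathrm{Sur}$ and axiom (SC) must be handled with care rather than purely formally.

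Descent (iii) is then soft: since $\mathrm{Sur}_{U/V}$ and $X$ are both sheaves, an extension $\mathrm{Sur}_{U/V}\to X$ may be assembled by gluing the local extensions supplied by $X_\bullet/S_\bullet$ over a cover of $V$ pulled back from $S$, the local uniqueness forcing agreement on overlaps. The monomorphism claim follows because the diagonal of a monomorphism is an isomorphism, and an isomorphism $Z\xrightarrow{\sim}W$ is proper: by the canonical-solution criterion a solution exists and is unique precisely when $\mathrm{Sur}_{U/V}\to V\times_W Z$ over $V$, and here $V\times_W Z\cong V$ so the structure map does the job, while isomorphisms are qcqs. Hence the diagonal of any monomorphism is proper, i.e. every monomorphism is separated.

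Finally, iv) and v) are formal consequences of (i) and (ii) via standard Cartesian squares, with $g\colon X\to Y$ and $f\colon Y\to S$. For iv), the diagonal $\Delta_g\colon X\to X\times_Y X$ is the base change of $\Delta_{fg}\colon X\to X\times_S X$ along $X\times_Y X\to X\times_S X$, so overconvergence (resp. propriety) of $\Delta_{fg}$ forces that of $\Delta_g$ by (ii); that is, local separation (resp. separation) of $fg$ descends to $g$. For v), I factor $g=\mathrm{pr}_2\circ\Gamma_g$, where the graph $\Gamma_g\colon X\to X\times_S Y$ is the base change of $\Delta_f\colon Y\to Y\times_S Y$, hence overconvergent (resp. proper) whenever $f$ is locally separated (resp. separated), and $\mathrm{pr}_2\colon X\times_S Y\to Y$ is the base change of $fg\colon X\to S$, hence overconvergent (resp. proper) whenever $fg$ is; applying (i) to the composite yields that $g$ is overconvergent (resp. proper).
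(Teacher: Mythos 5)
Your proposal is correct, and for part i) (composition) it is essentially the paper's own proof: solve the extension problem for $f$ first, convert the result into an extension problem over $Y$, solve that using overconvergence of $g$, and identify $\mathrm{Sur}_{U/\tilde V}$ with $\mathrm{Sur}_{U/V}$ via lemma \ref{SEP_BLARG} --- including your (accurate) flagging of that identification as the only non-formal step. Where you diverge is in parts ii) and iii): the paper does not argue these directly but delegates them to the comparison machinery of \S\ref{SEP_COMPARE}, applying lemma \ref{SEP_THELEMMA} to the slice geometric morphism $\phi:\mathbf S_{S'}\to\mathbf S_S$ --- criterion \emph{i')} for base change (the point being that $\phi_!$ forgets the base and so leaves $\mathrm{Sur}_{U/V}$ unchanged) and comonadicity of $\phi^*$ for surjective $\phi$ to get descent via criterion \emph{ii)}. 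Your base-change argument is really the same content with the topos-theoretic packaging removed: invoking the universal property of $X\times_S S'$ and commuting $\Hom$ out of the pro-object is precisely the adjunction $\phi_!\dashv\phi^*$ that drives criterion \emph{i')}. Your descent argument, by contrast, is genuinely different --- explicit gluing of local solutions through the plus-construction formula (\ref{pleen}) rather than comonadic descent --- and is arguably more self-contained, at the cost of having to handle the sheafification of $\mathrm{Sur}_{U/V}$ by hand. Finally, you supply complete proofs of the three items the paper's proof leaves entirely to the reader: the monomorphism claim (via the canonical-solution criterion applied to isomorphisms), and parts iv) and v) via the standard diagonal and graph factorisations, both of which are correct and are exactly the arguments the paper's citation of \cite[\textbf{I}.5.5.1 \& \textbf{II}.5.4.2-3]{EGA} intends.
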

\begin{proof}I provide a proof only for the first part, by way of illustration. (The proofs of \emph{ii)} and \emph{iii)} are anyway addressed in example \ref{SEP_BASE_CHANGE}.)

Let $X\rightarrow Y\rightarrow Z$ be two overconvergent morphisms, and let $U/V$ be an extension problem for $X/Z$. Overconvergence of $Y/Z$ gives us a unique diagram
\[\xymatrix{ U\ar[r]\ar[d] & \mathrm{Sur}_{U/V} \ar[r]\ar@{-->}[d] & V\ar[d] \\ X\ar[r] & Y\ar[r]&Z }\]
which, by composability of $\mathrm{Sur}_{U/V}$, produces an extension problem $U/\tilde V$ for $X/Y$. Overconvergence of $X/Y$ then gives a unique solution $\mathrm{Sur}_{U/\tilde V}\rightarrow X$. By lemma \ref{SEP_BLARG}, $\mathrm{Sur}_{U/V}\cong\mathrm{Sur}_{U/\tilde V}$, whence the result.\end{proof}

\

Finally, though this is not critical for much of what follows, we will often also have recourse to another axiom:
\begin{itemize}\item[(P5)]For all quasi-compact open immersions $U\hookrightarrow V$, $\mathrm{Sur}_{U/V}$ is a sheaf in the $U$-variable.\end{itemize}
By quasi-compactness and locality on $V$, it is enough to check for quasi-compact $U$ and hence \emph{finite} coverings.

\begin{prop}\label{SEP_COLIMIT}Suppose that $\P$ satisfies $\mathrm{(P5)}$. Let $\{X_i\rightarrow S\}_{i\in I}$ be any family of overconvergent morphisms. Then $\colim_iX_i\rightarrow S$ is overconvergent.\end{prop}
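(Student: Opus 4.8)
The plan is to verify the extensional criterion of definition \ref{SEP_EXTENS_DEF} directly, using (P5) to break a map into the colimit into maps into the individual $X_i$. Write $Y:=\colim_iX_i$; since the indexing diagram is a discrete family, $Y=\coprod_iX_i$, and each $X_i\hookrightarrow Y$ is a complemented (hence clopen) open immersion. By the remark following (P5) it suffices to treat a single quasi-compact open immersion $U\hookrightarrow V$ together with a morphism $f:U\to Y$ over $S$. Pulling back the open cover $\{X_i\hookrightarrow Y\}$ along $f$ exhibits $U$ as a disjoint union $U=\coprod_iU_i$ of the complemented opens $U_i:=U\times_YX_i$, each carrying a morphism $f_i:U_i\to X_i$. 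Quasi-compactness of $U$ (in the coherent topos) forces all but finitely many $U_i$ to be empty, and each nonempty $U_i\hookrightarrow V$ is again a quasi-compact open immersion, so the germ $\mathrm{Sur}_{U_i/V}$ is defined.

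For existence I would first extract from (P5) the identity $\mathrm{Sur}_{U/V}\cong\coprod_i\mathrm{Sur}_{U_i/V}$: applied to the disjoint cover $U=\coprod_iU_i$, the sheaf condition gives $\Hom(\mathrm{Sur}_{U/V},T)\cong\prod_i\Hom(\mathrm{Sur}_{U_i/V},T)$ naturally in $T$, whence the coproduct decomposition by Yoneda (in particular $\mathrm{Sur}_{\emptyset/V}=\emptyset$). Overconvergence of each $X_i\to S$ near $U_i\to X_i$ then supplies a unique extension $\sigma_i:\mathrm{Sur}_{U_i/V}\to X_i$ of $f_i$; composing with $X_i\hookrightarrow Y$ and taking the coproduct yields $\sigma:=\coprod_i\sigma_i:\mathrm{Sur}_{U/V}\to Y$, which restricts to $f$ on $U$ since it does so on each $U_i$. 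Note that existence needs no further input, as the $\sigma_i$ are built to land in $X_i$ from the outset.

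For uniqueness, any second extension $\tau:\mathrm{Sur}_{U/V}\to Y$ decomposes as $\tau=\coprod_i\tau_i$ under the same splitting, with $\tau_i:\mathrm{Sur}_{U_i/V}\to Y$ restricting to $f_i$ on $U_i$. If one knew that each $\tau_i$ factors through the clopen $X_i$, the uniqueness clause in the overconvergence of $X_i$ would force $\tau_i=\sigma_i$, and hence $\tau=\sigma$. This factoring is the main obstacle. I would attack it by applying (P5) to the test object $1\sqcup 1$: the assignment $U'\mapsto\Hom(\mathrm{Sur}_{U'/V},1\sqcup 1)$ of clopen subobjects of the germ is then a sheaf in $U'$, so the clopen $\tau_i^{-1}(X_i)\supseteq U_i$ may be compared with the full germ locally on $U_i$. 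Refining to a cover of $U_i$ by connected opens reduces the claim to the assertion that $\mathrm{Sur}_{U'/V}$ is connected whenever $U'$ is connected — for then an extension of the nonempty map $f_i|_{U'}$ into the single component $X_i$ cannot meet any other $X_j$. Proving this connectedness of the germ — that the sheafified overconvergent neighborhood acquires no spurious components disjoint from $U'$ — is the delicate point, and I expect it to come precisely from (P5), since the cosheaf $U'\mapsto\mathrm{Sur}_{U'/V}$ that the axiom encodes should match the components of the germ to those of $U'$.

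Finally, for a general (non-discrete) diagram the same strategy should apply whenever the structure maps $X_i\to\colim_iX_i$ are open immersions: the $U_i$ then form a genuine finite open cover of $U$, the gluing of (P5) is run over its Čech nerve, and agreement on the overlaps $U_i\cap U_j$ is obtained from the uniqueness clause for $X_i$ over the open $X_i\cap X_j$ — again modulo the same factoring and germ-connectedness input, which I regard as the one substantive step of the argument.
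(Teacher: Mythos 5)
Your decomposition and your existence argument are correct and coincide with the paper's own proof, which is only a two-line sketch: pass to an elementary extension problem, use universality of colimits in a topos to obtain the covering $U_i:=U\times_XX_i$ of $U$, and glue the unique solutions $\mathrm{Sur}_{U_i/V}\rightarrow X_i\hookrightarrow X$ by (P5). One ingredient you use silently is worth making explicit: your identification $\Hom(\mathrm{Sur}_{U/V},T)\cong\prod_i\Hom(\mathrm{Sur}_{U_i/V},T)$ requires the overlap terms $\mathrm{Sur}_{U_i\cap U_j/V}=\mathrm{Sur}_{\emptyset/V}$ to contribute singletons, and this is itself a consequence of (P5): the initial subobject $\emptyset\in\sh U_{/V}$ is covered by the empty family, so any sheaf in the $U$-variable has terminal sections there, i.e.\ $\Hom(\mathrm{Sur}_{\emptyset/W},T)$ is a one-point set for every $W$ and $T$.

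The genuine gap is exactly where you flag it, and the route you propose to close it would not work at the level of generality of this proposition. The statement lives in the abstract setting of \S\ref{SEP_EXTENS}, with only (P1--4), (SC), (P5) available: there is no guarantee that objects admit covers by connected subobjects, and ``connectedness of the germ'' is not a formal consequence of (P5) --- it would require that the clopen piece of a $\P$-object containing $U'$ again dominates the pro-system $\mathrm{Sur}_{U'/V}$, which is a geometric statement about the class $\P$ (true for (formal) schemes because clopen immersions are affine embeddings, hence projective, but nowhere among the axioms). The repair is instead purely formal and uses precisely the ingredient above. Given a second extension with component $\tau_i:\mathrm{Sur}_{U_i/V}\rightarrow X$, represent it on an overconvergent neighbourhood $\tilde V$ of $U_i/V$ with $U_i\hookrightarrow\tilde V$ open (P4), and pull back the decomposition $X=\coprod_jX_j$ to a clopen cover $\tilde V=\coprod_j\tilde V_j$ with $U_i\subseteq\tilde V_i$. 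Localising over this cover via (SC), and using that sections of the germ over each $\tilde V_j$ with $j\neq i$ form a singleton, restriction gives a bijection $\Hom(\mathrm{Sur}_{U_i/\tilde V},T)\cong\Hom(\mathrm{Sur}_{U_i/\tilde V_i},T)$ for all $T$; combined with $\mathrm{Sur}_{U_i/\tilde V}\cong\mathrm{Sur}_{U_i/V}$ (lemma \ref{SEP_BLARG}) this shows that $\tau_i$, as a map of germs, is determined by its restriction to $\tilde V_i$ and hence factors through $X_i$ (equivalently: each inclusion $X_i\hookrightarrow\colim_jX_j$ is itself overconvergent). Uniqueness of extensions into $X_i$ over $S$ then yields $\tau_i=\sigma_i$, and the separatedness half of (P5) gives $\tau=\sigma$. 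So your instinct that (P5) is the source of the missing step is right, but the mechanism is the triviality of empty germs, not connectedness.
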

\begin{proof}Let $U/V$ be an elementary extension problem for $\colim_iX_i/S$. Since colimits in a topos are universal, $U_i:=U\times_XX_i$ is a covering of $U$.\end{proof}

\begin{cor}Let $X_i\rightarrow S$ be a finite family of proper morphisms. Then $\coprod_iX_i\rightarrow S$ is proper. In particular, $\emptyset\rightarrow S$ is proper.\end{cor}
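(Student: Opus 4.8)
The plan is to unfold \emph{proper} as \emph{overconvergent and qcqs} (def.~\ref{SEP_EXTENS_DEF}) and to verify the two halves separately: overconvergence will follow from Proposition~\ref{SEP_COLIMIT}, while quasi-compactness and quasi-separatedness will follow from the elementary behaviour of these finiteness conditions under finite coproducts.

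First I would dispose of overconvergence. Each proper $X_i\rightarrow S$ is in particular overconvergent, and a finite coproduct is a (small) colimit; so Proposition~\ref{SEP_COLIMIT}, with (P5) in force, shows at once that $\coprod_i X_i\rightarrow S$ is overconvergent. Applying the same proposition to the empty index family gives overconvergence of the initial object $\emptyset\rightarrow S$.

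Next I would check the finiteness conditions. Quasi-compactness of a finite coproduct of quasi-compact morphisms is immediate: an open cover of $\coprod_i X_i$ restricts to a cover of each $X_i$, and finitely many finite subcovers assemble into a single finite subcover. For quasi-separatedness I would examine the diagonal. Since coproducts in a topos are disjoint and universal,
\[ \Bigl(\coprod_i X_i\Bigr)\times_S\Bigl(\coprod_j X_j\Bigr)\;=\;\coprod_{i,j}X_i\times_S X_j, \]
and the diagonal factors as the coproduct $\coprod_i\Delta_{X_i/S}$ of the individual diagonals, followed by the inclusion of the clopen diagonal summand $\coprod_i X_i\times_S X_i\hookrightarrow\coprod_{i,j}X_i\times_S X_j$. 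Each $\Delta_{X_i/S}$ is quasi-compact by quasi-separatedness of $X_i/S$, their finite coproduct is again quasi-compact, and the clopen immersion is quasi-compact because a clopen subspace of a quasi-compact space is quasi-compact. Composing, the total diagonal is quasi-compact, whence $\coprod_i X_i/S$ is quasi-separated. The empty case is degenerate: $\emptyset$ is vacuously quasi-compact and its diagonal $\emptyset\rightarrow\emptyset\times_S\emptyset=\emptyset$ is an isomorphism.

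The one point that demands any attention---and so counts as the main obstacle, such as it is---is the bookkeeping for the diagonal: one must invoke disjointness of coproducts to see that the off-diagonal fibre products $X_i\times_S X_j$ with $i\neq j$ receive nothing from the diagonal, and then confirm that the inclusion of the diagonal summand is quasi-compact. Everything else reduces to a direct appeal to Proposition~\ref{SEP_COLIMIT} and routine finiteness bookkeeping.
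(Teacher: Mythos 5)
Your proof is correct and takes essentially the route the paper intends: the corollary is stated without proof as an immediate consequence of Proposition \ref{SEP_COLIMIT} (overconvergence of colimits, under axiom (P5)) together with the elementary stability of quasi-compactness and quasi-separatedness under finite coproducts, which is exactly your decomposition. Your explicit verification of quasi-separatedness via the clopen diagonal summand, and of the empty case via the empty colimit, just fills in bookkeeping the paper leaves implicit.
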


\subsection{Comparison principle}\label{SEP_COMPARE}

Suppose we have two spatial theories $\mathbf S_1,\mathbf S_2$ and a qcqs spatial geometric morphism
\[ \phi:\mathbf S_1\rightarrow\mathbf S_2. \]
It will be useful to have an understanding of how extensional properties are preserved or detected by various functors associated to $\phi$. For this, we will certainly need $\phi^*$ to also preserve $\P$, so that we always get a map
\[\mathrm{Sur}_{\phi^*U/\phi^*V}\rightarrow\phi^*\mathrm{Sur}_{U/V};\] in all our examples, this will be true by definition.

For example, although we will in \cite{part2} be mainly concerned with the rigid topos $\Sh\mathbf{Rig}$, we will also want to know that for formal schemes, separation and propriety can be \emph{detected} in the \emph{a priori} more tractable topos $\Sh\mathbf{FSch}$.

\begin{defn}Let $\mathbf S_1,\mathbf S_2$ be spatial theories on finitely complete sites $\mathbf C_1,\mathbf C_2$. We say that a left exact functor $F:\mathbf C_1\rightarrow\mathbf C_2$, or an extension thereof $\mathbf S_1\rightarrow \mathbf S_2$, \emph{preserves} (resp. \emph{detects}) \emph{overconvergence} if it preserves $\P$ and \[ X/S\text{ overconvergent} \quad \Rightarrow\text{ (resp. }\Leftarrow) \quad FX/FS\text{ overconvergent} \]
for any morphism $X\rightarrow S\in\mathbf C_1$. By left exactness of $F$, the same implication will then hold with `overconvergent' replaced by `locally separated'.\end{defn}

In our examples, $\mathbf C_i$ will be the site of all locally representable objects of $\mathbf S_i$, and $F$ will be either the pullback or pushforward functor associated to a spatial geometric morphism between $\mathbf S_1$ and $\mathbf S_2$. It will be important to distinguish whether $F$ preserves/detects overconvergence on all of $\mathbf S_1$ or merely on $\mathbf C_1$.

One can immediately make an elementary observation in the case $F$ is fully faithful:

\begin{lemma}\label{cf}If $F$ has a left exact left inverse $GF\cong 1$, and $G$ preserves (resp. detects) overconvergence, then $F$ detects (resp. preserves) overconvergence.\end{lemma}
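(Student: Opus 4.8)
The statement to prove is Lemma \ref{cf}: if $F$ has a left exact left inverse $G$ with $GF\cong 1$, and $G$ preserves (resp.\ detects) overconvergence, then $F$ detects (resp.\ preserves) overconvergence.

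\medskip

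The plan is to exploit the formal duality between the two parenthetical cases by chasing the defining implication through the isomorphism $GF\cong 1$. First I would fix a morphism $X\to S$ in $\mathbf C_1$ and apply $F$ to obtain $FX\to FS$ in $\mathbf C_2$; applying $G$ to this and using $GF\cong 1$ yields $GFX\to GFS$ canonically identified with the original $X\to S$. Since both $F$ and $G$ are assumed to preserve the class $\P$ (this is built into the definition of preserving/detecting overconvergence), all the overconvergence notions involved are meaningful, and the comparison maps $\mathrm{Sur}_{G\phi^*U/G\phi^*V}\to G\,\mathrm{Sur}_{\phi^*U/\phi^*V}$ exist as required.

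\medskip

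For the \emph{detection} claim (the first alternative), suppose $G$ \emph{preserves} overconvergence and we wish to show $F$ \emph{detects} it. So assume $FX/FS$ is overconvergent; I must deduce $X/S$ is overconvergent. Since $G$ preserves overconvergence, $G(FX)/G(FS)$ is overconvergent. But $G(FX)/G(FS)\cong X/S$ via the isomorphism $GF\cong 1$, and overconvergence is invariant under isomorphism of morphisms (it is a property of the morphism in $\mathbf S_1$, characterised purely by the extension problem in Definition \ref{SEP_EXTENS_DEF}). Hence $X/S$ is overconvergent, as required. The \emph{preservation} claim (second alternative) is exactly symmetric: assuming $G$ \emph{detects} overconvergence, to show $F$ \emph{preserves} it I take $X/S$ overconvergent and want $FX/FS$ overconvergent; since $G(FX)/G(FS)\cong X/S$ is overconvergent and $G$ detects overconvergence, the morphism $FX/FS$ must itself have been overconvergent. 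The final clause of the definition---that the same holds with ``overconvergent'' replaced by ``locally separated''---follows automatically from the left exactness of $F$ and $G$, exactly as remarked in the definition, since local separation is overconvergence of the diagonal and left exact functors commute with the formation of diagonals and fibre products.

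\medskip

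The only point requiring genuine care---and the step I expect to be the main obstacle---is verifying that overconvergence really is transported across the natural isomorphism $GF\cong 1$, i.e.\ that it is a genuinely isomorphism-invariant property of a morphism rather than something sensitive to the particular presentation. This amounts to checking that the extension problem of Definition \ref{SEP_EXTENS_DEF}, together with the formation of the overconvergent germ $\mathrm{Sur}_{U/V}$, is compatible with the comparison maps induced by $G$ and by the unit/counit of the adjunction-like data $GF\cong 1$. Concretely, one must confirm that for the canonical identification $GFX\cong X$, an extension problem $U/V$ for $X/S$ corresponds, under $G$ applied to $F$, to an extension problem for $GFX/GFS$, and that solutions match up bijectively; this is where the hypothesis that \emph{all} the functors preserve $\P$ (and hence induce maps on overconvergent germs) is used. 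Granting this naturality, which is the ``abstract nonsense'' promised by the surrounding discussion, the argument is a one-line diagram chase in each direction.
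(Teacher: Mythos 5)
Your proof is correct and is precisely the immediate diagram chase the paper has in mind (the paper states the lemma as an "elementary observation" with no written proof): transport overconvergence of $FX/FS$ (resp.\ of $X/S$) through $G$ and the identification $GFX/GFS\cong X/S$, using that overconvergence, being defined purely by extension problems, is isomorphism-invariant. Your worry in the final paragraph is harmless but overcautious — isomorphism-invariance is automatic for any property stated categorically — and the only genuine definitional loose end is that "$F$ detects/preserves overconvergence" formally also requires $F$ to preserve $\P$, which must be (and in all the paper's applications is) assumed separately.
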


More importantly, there is a farrago of other criteria that we will use throughout this paper and its sequel.

\begin{lemma}[Comparison criteria]\label{SEP_THELEMMA}Let $\phi:\mathbf S_1\rightarrow\mathbf S_2$ be an (essential) spatial geometric morphism whose pullback preserves $\P$.
\begin{enumerate}
\item Suppose that for any $U/V$ in $\mathbf C_2$, any overconvergent neighbourhood of $\phi^*U/\phi^*V$ can be dominated by $\phi^*$ applied to an overconvergent neighbourhood of $U/V$. That is,  \[\phi^*\mathrm{Sur}_{U/V}\rightarrow\mathrm{Sur}_{\phi^*U/\phi^*V} \rightarrow \phi^*V\] in $\mathbf S_1$. Then $\phi_*$ preserves overconvergence.
\item[i')] Suppose that for any $U/V$ in $\mathbf C_1$, any overconvergent neighbourhood of $\phi_!U/\phi_!V$ can be dominated by $\phi_!$ applied to an overconvergent neighbourhood of $U/V$. That is, \[\phi_!\mathrm{Sur}_{U/V}\rightarrow \mathrm{Sur}_{\phi_!U/\phi_!V} \rightarrow \phi_!V \] in $\mathbf S_2$. Then $\phi^*$ preserves overconvergence.
\item Suppose that for any $S$ and quasi-compact open immersion $U\hookrightarrow V$ in $\mathbf C_2$, the square
\[\xymatrix{ \Hom_S(\mathrm{Sur}_{U/V},-)\ar[r]\ar[d] & \Hom_{\phi^*S}(\mathrm{Sur}_{\phi^*(U/V)},\phi^*(-))\ar[d] \\ \Hom_S(U,-)\ar[r] & \Hom_{\phi^*S}(\phi^*U,\phi^*(-)) }\]
is Cartesian. Then $\phi^*$ detects overconvergence.
\item[ii')] In the situation of ii), it is enough that $\Hom_S(\mathrm{Sur}_{U/V},-)$ surject onto the fibre product.
\item Suppose that $\phi_!$ preserves $\P$, and that for any $S$ in $\mathbf C_2$ and quasi-compact open immersion $U/V$ in $\mathbf C_1$, the square
\[\xymatrix{\Hom_S(\mathrm{Sur}_{\phi_!U/\phi_!V},-)\ar[r]\ar[d] & \Hom_{\phi^*S}(\mathrm{Sur}_{U/V},\phi^*(-))\ar[d] \\ \Hom_S(\phi_!U,-)\ar[r] & \Hom_{\phi^*S}(U,\phi^*(-))} \]
is Cartesian. Then $\phi^*$ preserves overconvergence.
\item[iii')] In the situation of iii), it is enough that $\Hom_S(\mathrm{Sur}_{\phi_!U/\phi_!V},-)$ surject onto the fibre product.
\item Suppose that for any morphism $X\rightarrow S$ in $\mathbf S_2$ and quasi-compact open immersion $U/V$ in $\mathbf S_1$, the natural map
\[ \Hom_S(\mathrm{Sur}_{U^\prime/V^\prime},-)\rightarrow \Hom_{\phi^*S}(\mathrm{Sur}_{U/V},\phi^*(-))\]
is a colimit over the category of quasi-compact open immersions $U^\prime/V^\prime$ in $\mathbf S_2$ equipped with a map $U/V\rightarrow\phi^*(U^\prime/V^\prime)$. Then $\phi^*$ preserves overconvergence.
\end{enumerate}\end{lemma}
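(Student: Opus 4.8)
The plan is to reduce the whole statement to a single bijection of hom-sets and then feed that bijection the two colimit presentations furnished by the hypothesis. So suppose $X\to S$ is overconvergent in $\mathbf S_2$, fix a map $U\to\phi^*X$ over $\phi^*S$ and a quasi-compact open immersion $U\hookrightarrow V$ in $\mathbf S_1$. By Definition \ref{SEP_EXTENS_DEF}, to prove that $\phi^*X/\phi^*S$ is overconvergent it suffices to show that the restriction map
\[ \Hom_{\phi^*S}(\mathrm{Sur}_{U/V},\phi^*X)\longrightarrow\Hom_{\phi^*S}(U,\phi^*X) \]
induced by $U\hookrightarrow\mathrm{Sur}_{U/V}$ is a bijection (surjectivity is existence of the extension, injectivity its uniqueness; as usual the latter may instead be read off by applying the former to the diagonal of $\phi^*X$ over $\phi^*S$).

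First I would rewrite the source. The colimit hypothesis gives $\Hom_{\phi^*S}(\mathrm{Sur}_{U/V},\phi^*X)\cong\colim_{\mathcal I}\Hom_S(\mathrm{Sur}_{U'/V'},X)$, where $\mathcal I$ is the category of quasi-compact open immersions $U'/V'$ in $\mathbf S_2$ equipped with a map $U/V\to\phi^*(U'/V')$. Now I bring in overconvergence of $X/S$: for each individual $U'/V'$ the restriction $\Hom_S(\mathrm{Sur}_{U'/V'},X)\to\Hom_S(U',X)$ is a bijection, and these bijections are natural in $\mathcal I$, so they pass to the colimit. Tracing the restriction to $U$ through the colimit, the component at $(U'/V',f)$ sends $h\colon U'\to X$ to $U\xrightarrow{f}\phi^*U'\xrightarrow{\phi^*h}\phi^*X$; hence, under these identifications, the map to be shown bijective becomes the canonical comparison
\[ \colim_{\mathcal I}\Hom_S(U',X)\longrightarrow\Hom_{\phi^*S}(U,\phi^*X). \]

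To recognise the right-hand side I would apply the hypothesis a second time, to the trivial quasi-compact open immersion $U/U$. Since $\P$ contains the isomorphisms, $\mathrm{id}_U$ is a final overconvergent neighbourhood of $U/U$, so $\mathrm{Sur}_{U/U}\simeq U$; the hypothesis then reads $\Hom_{\phi^*S}(U,\phi^*X)\cong\colim_{\mathcal I_0}\Hom_S(\mathrm{Sur}_{U''/V''},X)\cong\colim_{\mathcal I_0}\Hom_S(U'',X)$, where $\mathcal I_0$ indexes open immersions $U''/V''$ equipped with a map $U/U\to\phi^*(U''/V'')$ — that is, simply a map $U\to\phi^*U''$, the datum over $V''$ being forced. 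With both sides now presented as colimits of the same functor $(-)\mapsto\Hom_S((-),X)$ on open parts, the comparison above is exactly the map of colimits induced by the forgetful functor $G\colon\mathcal I\to\mathcal I_0$ discarding the datum $V\to\phi^*V''$. The entire proof thus collapses to showing that $G$ is final, so that it induces an isomorphism of colimits.

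The finality of $G$ is the main obstacle, and is the one step that is not pure diagram-chasing: given an object of $\mathcal I_0$, i.e.\ an $\mathbf S_2$-approximation $U\to\phi^*U''$ of $U$, one must exhibit a refining approximation of the whole pair $U/V$ — concretely, extend the datum over $V$ after enlarging $V''$ — and verify that the resulting comma categories are connected. Here I would use the structure map $V\to\phi^*S$ that comes packaged with any extension problem, together with stability of open immersions under base change and the fact that $\phi^*$ preserves open immersions, to manufacture the required $V\to\phi^*V''$ and to compare any two choices. Everything outside this finality check is formal manipulation of filtered colimits and naturality; the genuinely load-bearing inputs are the colimit hypothesis, the overconvergence of $X/S$, and this cofinality argument.
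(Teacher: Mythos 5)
Your proposal addresses only one of the seven assertions in the lemma. Parts \emph{i)}, \emph{i')}, \emph{ii)}, \emph{ii')}, \emph{iii)} and \emph{iii')} have different hypotheses and different conclusions ($\phi_*$ preserving overconvergence, $\phi^*$ detecting it, $\phi^*$ preserving it under Cartesian-square or surjectivity hypotheses), and none of them is touched by your argument: everything you do uses only the colimit hypothesis of part \emph{iv)}. In particular part \emph{i)} --- the one part the paper actually writes out, by transporting an extension problem for $\phi_*X/\phi_*S$ across the adjunction $\phi^*\dashv\phi_*$, using the domination hypothesis to identify $\mathrm{Sur}_{\phi^*U/\phi^*V}$ with $\phi^*\mathrm{Sur}_{U/V}$ as pro-objects, and transporting the unique solution back --- is entirely absent, as are the detection criteria and the diagonal-trick variants \emph{ii')}, \emph{iii')}. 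So even if your part-\emph{iv)} argument were complete, most of the statement would remain unproved.

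It is not complete: the step you defer, finality of $G\colon\mathcal I\to\mathcal I_0$, is not merely the ``main obstacle'' but is false, so the reduction to it is fatal. Finality is a property of $G$ alone, independent of the coefficient functor; if $G$ were final, your chain of identifications would show that $\phi^*X/\phi^*S$ is overconvergent for \emph{every} $X\to S$ satisfying the hypothesis, overconvergent or not. Test this with $\phi=\mathrm{id}$: the hypothesis of \emph{iv)} then holds trivially, because $\mathcal I$ has the initial object $(U/V,\mathrm{id})$ and the colimit collapses to $\Hom_S(\mathrm{Sur}_{U/V},X)$; but finality of $G$ at the object $(U/U,\mathrm{id})\in\mathcal I_0$ would require some $(U'/V',f)\in\mathcal I$ together with a map $G(U'/V',f)\to(U/U,\mathrm{id})$ in $\mathcal I_0$, and chasing the compatibilities this forces a retraction of $U\hookrightarrow V$, which does not exist in general (take $U=\G_m\subset V=\A^1$). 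Your sketch for repairing this cannot work either: the structure map $V\to\phi^*S$ and stability of open immersions give no way to manufacture the missing datum $V\to\phi^*V''$ from a map defined only on $U$; producing such extensions is exactly what overconvergence asserts, and the objects of $\mathcal I_0$ do not carry it. In fact, after your identifications, bijectivity of the $G$-induced comparison map is \emph{equivalent} to the bijection you set out to prove, so the ``reduction'' makes no progress. The workable argument --- and the way criterion \emph{iv)} is actually verified and used throughout the paper (propositions \ref{SEP_FIP} and \ref{SEP_FP}, and the lemmas of \S\ref{INT}) --- reads the hypothesis as supplying, for each extension problem $(U/V,\,u\colon U\to\phi^*X)$, a factorisation through $\phi^*$ of an extension problem $(U'/V',\,u'\colon U'\to X)$ for $X/S$ itself, compatibly with $u$. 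Given that, one extends $u'$ to $\mathrm{Sur}_{U'/V'}\to X$ by overconvergence of $X/S$ and composes with the canonical map $\mathrm{Sur}_{U/V}\to\phi^*\mathrm{Sur}_{U'/V'}$ (which exists because $\phi^*$ preserves $\P$) to solve the problem; uniqueness then follows by your diagonal observation. Your proof never produces an approximation carrying both the $V$-datum and compatibility with $u$, and that is the crux.
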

\begin{proof}\emph{i)} Using the adjunction property, an extension problem
\[\xymatrix{ U\ar@{^{(}->}[r]\ar[d] & V \ar[d] \\ \phi_*X \ar[r] & \phi_*S}\]
in $\mathbf S_2$ transforms into a problem
\[ \xymatrix{ \phi^*U\ar@{^{(}->}[r]\ar[d] & \phi^*V\ar[d]  \\ X \ar[r] & S}\]
in $\mathbf S_1$. If $X$ is overconvergent, there is a unique extension $\mathrm{Sur}_{\phi^*U/\phi^*V}\rightarrow X$. The condition ensures that $\mathrm{Sur}_{\phi^*U/\phi^*V}\rightarrow\phi^*\mathrm{Sur}_{U/V}$ is an isomorphism of pro-objects. Therefore by adjunction again, $\mathrm{Sur}_{U/V}\rightarrow \phi_*X$ uniquely and $\phi_*X$ is overconvergent. Condition \emph{i')} follows the from the same argument.

The remaining criteria are clear from the definition, which in general concerns the bijectivity of arrows \[\Hom_S(\mathrm{Sur}_{U/V},-)\rightarrow\Hom_S(U,-).\] The variants \emph{ii'), iii')} follow by considering the diagonal.\end{proof}

\begin{eg}[Base change]\label{SEP_BASE_CHANGE}The above principles apply to the pullback along the essential spatial geometric morphism $\phi:\mathbf S_{S^\prime}\rightarrow\mathbf S_S$ to prove parts \emph{ii)} and \emph{iii)} of prop. \ref{SEP_STABILITY}.

\emph{$\phi^*$ preserves overconvergence.} Apply criterion \emph{i')} of lemma \ref{SEP_THELEMMA}; $\phi_!$ is the functor that forgets the base morphism to $S^\prime$, and hence does not affect the definition of $\mathrm{Sur}_{U/V}$.

\emph{$\phi^*$ detects overconvergence:} When $\phi:S^\prime\twoheadrightarrow S$, then $\phi^*$ is comonadic, and one can show that this implies that the square in part \emph{ii)} of \ref{SEP_THELEMMA} is always Cartesian.\end{eg}

\subsection{Overconvergence in formal geometry}\label{SEP_FSCH}

In the category of formal schemes, we distinguish the following four classes of morphisms.
\begin{center}
\begin{tabular}{ r l r l }
$\P$ & projective  & 
$f\P$ & formally projective \\
$i/\P$ & integral/projective &
$f\!i/\P$ & formally integral/projective
\end{tabular}\end{center}
The intersections of the classes in the second column with the Zariski site $\mathbf{Sch}$ are exactly the classes in the first column. The classes $i/\P$ and $f\!i/\P$ are only of passing interest, and it is possible, if a little unnatural, to entirely avoid mentioning them. The intersection of $i\P$ with the site of schemes of finite type over some base $S$ is $\P$.

\subsubsection{Elementary extension problems}
Let $X\rightarrow S$ be quasi-separated, $U/V$ quasi-compact. Then $U\rightarrow X\times_SV$ is quasi-compact, and we may pass to the embedded image $\tilde V$ of the affinisation of this morphism. By proposition \ref{MORP_OPEN}, $U\rightarrow \tilde V$ is a dense, quasi-compact open immersion. The inclusion $U\hookrightarrow \tilde V$ can now be made affine by blowing up the reduced formal scheme with support $V\setminus U$. The resulting morphism 
\[\tilde V=\mathrm{Bl}_{V\setminus U}\left(\mathrm{cl}(\Spec\sh O_V(U)/V)\right) \rightarrow V\]
is projective; thus $\tilde V$ is an overconvergent neighbourhood of $U/V$ with respect to $f\P$ (or $\P$ if all players are schemes). 

Even if $X$ is not quasi-separated, by writing it as a filtered colimit $\colim_iX_i$ of quasi-separated objects in $\Sh\mathbf{FSch}$, one obtains by compactness of $U$ a factorisation $U\rightarrow V\times_SX_i$ from which one can construct a canonical extension (depending, of course, on $i$).

In the special case $X=V$, the canonical solution to the extension problem is simply obtained by blowing up $\mathrm{cl}(U/V)$ along $V\setminus U$; in particular, it is $f\P$ over $V$ (even $\P$ if $V$ is a scheme). This naturally transforms any extension problem into one for which the inclusion is affine and dense.

\begin{defn}\label{SEP_ELEMENTARY}Let $X\rightarrow S$ be quasi-separated, and let $U/V$ be an extension problem. The morphism $X\leftarrow \tilde V\rightarrow V$ constructed above is called the \emph{canonical extension}. The inclusion $U\hookrightarrow \tilde V$ to the canonical extension is an affine and (scheme-theoretically) dense open immersion (def. \ref{MORP_EMBEDDING}).

An extension problem $U/V$ in $\mathbf{FSch}$ is called \emph{elementary} if $V$ is affine and $U\hookrightarrow V$ is affine and dense.\end{defn}

\begin{lemma}Every extension problem for $f\P$ in $\Sh\mathbf{FSch}$ (resp. $\P$ in $\Sh\mathbf{Sch}$) can be covered by elementary extension problems. That is, if $U/V$ is any extension problem, then there exist elementary extension problems $U_\bullet/V_\bullet$ and a covering $\mathrm{Sur}_{U_\bullet/V_\bullet}\twoheadrightarrow\mathrm{Sur}_{U/V}$.\end{lemma}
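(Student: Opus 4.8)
The plan is to arrive at elementary problems in two rounds of covering, the first localising the base to an affine scheme and the second applying the canonical extension to make the open immersion affine and dense. Throughout I use that the overconvergent germ is, after the plus construction, a \emph{sheaf on the small site of the base}, so that it is local in the base variable. First I would reduce to the case that $V$ is affine: choose an affine open cover $\{V_i\}\twoheadrightarrow V$ and set $U_i:=U\times_VV_i$. As the base change of a quasi-compact open immersion along the affine $V_i\to V$, each $U_i\hookrightarrow V_i$ is again a quasi-compact open immersion with $U_i$ quasi-compact. Since the presheaf $\mathrm{Sur}^{\mathrm{pre}}_{U/V}$ is stable for base change to opens and the associated-sheaf functor commutes with restriction to an open, the restriction of $\mathrm{Sur}_{U/V}$ along $V_i\hookrightarrow V$ is $\mathrm{Sur}_{U_i/V_i}$; hence $\{\mathrm{Sur}_{U_i/V_i}\to\mathrm{Sur}_{U/V}\}$ is a covering and it suffices to treat each $U_i/V_i$. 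We may therefore assume $V$ affine.

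Next, with $V$ affine I would pass to the canonical extension. By the construction preceding definition \ref{SEP_ELEMENTARY}, the blow-up $\tilde V=\mathrm{Bl}_{V\setminus U}\bigl(\mathrm{cl}(U/V)\bigr)\to V$ is $f\P$ (and $\P$ when $V$ is a scheme), so it is an overconvergent neighbourhood of $U/V$, and along it the inclusion $U\hookrightarrow\tilde V$ becomes an affine and scheme-theoretically dense open immersion. The decisive point is that passing to an overconvergent neighbourhood does not alter the germ: the comparison map (\ref{crunt}) is an isomorphism of presheaves, hence, after sheafification, identifies $\mathrm{Sur}_{U/\tilde V}$ with $\mathrm{Sur}_{U/V}$. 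Since $\tilde V\to V$ is projective over an affine base it is qcqs, so $\tilde V$ admits a finite affine open cover.

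Finally I would read off the elementary problems from a cover $\{\tilde V_j\}$ of $\tilde V$ by affine opens. Put $U_j:=U\times_{\tilde V}\tilde V_j$. Restricting the affine morphism $U\hookrightarrow\tilde V$ to the open $\tilde V_j$ keeps it affine, and density is preserved because the formally embedded closure is computed by formula (\ref{prous}), which is local on the target; thus $U_j\hookrightarrow\tilde V_j$ is affine and dense into the affine $\tilde V_j$, i.e.\ $U_j/\tilde V_j$ is elementary in the sense of definition \ref{SEP_ELEMENTARY}. Exactly as in the first round, locality of the germ on the base $\tilde V$ makes $\{\mathrm{Sur}_{U_j/\tilde V_j}\to\mathrm{Sur}_{U/\tilde V}\}$ a covering; composing it with the identification $\mathrm{Sur}_{U/\tilde V}\cong\mathrm{Sur}_{U/V}$ and with the covering of the first round yields the desired covering of $\mathrm{Sur}_{U/V}$ by elementary germs.

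The main obstacle is concentrated in the two locality claims for the germ rather than in any explicit calculation: that $\mathrm{Sur}_{U/V}$ is a sheaf on the base whose formation commutes with restriction to opens, and that (\ref{crunt}) permits replacing $V$ by the overconvergent neighbourhood $\tilde V$ without changing the germ. Note that only the presheaf-level statement (\ref{crunt}) is invoked, so the argument does not presuppose axiom (SC); this is what allows the lemma to be used in its verification. Granting these, the remaining content is the routine fact that affineness and scheme-theoretic density of an open immersion descend to affine opens of the target, the latter via (\ref{prous}) --- the one step genuinely requiring the $\F_1$ bookkeeping of formally embedded images (lemma \ref{AGNU}, proposition \ref{MORP_OPEN}).
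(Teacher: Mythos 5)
Your overall strategy --- localise the base to affines, pass to the canonical extension to make the inclusion affine and dense, then cover the extension by affines --- is exactly the construction the paper intends (it states this lemma without proof, immediately after building the canonical extension), and your first and third rounds are sound: restriction of the sheafified germ to members of an open cover of its base is a covering, and affineness and scheme-theoretic density of $U\hookrightarrow\tilde V$ do localise to affine opens of $\tilde V$ because the formally embedded image is computed locally on the target (lemma \ref{AGNU}).

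The gap is your ``decisive point''. The map (\ref{crunt}) is an isomorphism of \emph{presheaves}, but the two sides are sheafified over \emph{different} sites: $\mathrm{Sur}_{U/\tilde V}$ is the plus construction with respect to coverings of $\tilde V$, whereas $\mathrm{Sur}_{U/V}$ only sees coverings of $V$ together with $\P$-neighbourhoods of their members. Since $\tilde V\to V$ is a blow-up, coverings of $\tilde V$ are genuinely finer than pullbacks of coverings of $V$, so ``isomorphism of presheaves, hence isomorphism after sheafification'' is not a valid inference here; the paper flags precisely this (``the morphism $\mathrm{Sur}_{U/\tilde V}\rightarrow\mathrm{Sur}_{U/V}$ coming from (\ref{crunt}) may no longer be an isomorphism''), and by lemma \ref{SEP_BLARG} the identification you assert is \emph{equivalent} to axiom (SC). This makes your closing remark --- that only the presheaf-level statement is used and (SC) is not presupposed --- inconsistent with what your argument actually does: your third round produces a covering of $\mathrm{Sur}_{U/\tilde V}$ by gluing over a cover of $\tilde V$, and transporting that covering to $\mathrm{Sur}_{U/V}$ is exactly the content of (SC). Without it, the elementary germs cover $\mathrm{Sur}_{U/\tilde V}$ but need not cover $\mathrm{Sur}_{U/V}$.

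The repair is available and non-circular. The lemma is stated only for the classes $f\P$ in $\mathbf{FSch}$ and $\P$ in $\mathbf{Sch}$, and for these (SC) is a consequence of corollary \ref{MORP_PROJECTIVE_LOCAL}, a geometric statement about admissible blow-ups proved in \S\ref{MORP_REES} independently of the present lemma; your worry about circularity is misplaced, since this lemma enters the proof of lemma \ref{SEP_GOOD} only through axiom (P5), not through (SC). So replace your sheafification claim by an appeal to (SC) via lemma \ref{SEP_BLARG} to identify $\mathrm{Sur}_{U/\tilde V}$ with $\mathrm{Sur}_{U/V}$; with that substitution the rest of your argument goes through.
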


\subsubsection{Overconvergence for schemes and formal schemes}In cases of interest, the classes $\P,i/\P,f\P,f\!i/\P$ all define the same notion (def. \ref{SEP_DEF_PROPER}) of overconvergence.

\begin{lemma}\label{SEP_GOOD}The classes $\P$ and $i/\P$ in $\mathbf{Sch}$ and $f\P$ and $f\!i/\P$ in $\mathbf{FSch}$ obey the axioms \emph{(P1-5)} and  \emph{(SC)}.\end{lemma}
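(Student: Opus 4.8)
The plan is to dispatch the stability axioms by citation, to read (P4) and (P5) off the explicit constructions of \S\ref{SEP_FSCH}, and to isolate (SC) as the only real work, reducing it to corollary \ref{MORP_PROJECTIVE_LOCAL}.

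Axioms (P1) and (P2) --- stability under composition and base change --- are exactly proposition \ref{MORP_PROJ_STABILITY} for all four classes. For (P3) I would combine two facts from \S\ref{MORP_PROJ}: that the diagonal of a (formally) projective morphism is an affine embedding (\ref{MORP_PROJ_STABILITY}), and that every affine embedding is projective, being the projective bundle $\P(\iota_*\sh O_Z)$. Thus the diagonal of a $\P$- (resp. $f\P$-) morphism is projective, hence lies in the class. For $i/\P$ and $f\!i/\P$ one factors the morphism as an affine (integral) morphism followed by a projective bundle; the diagonal is then the composite of the embedding-diagonal of the affine part with the base change of the affine-embedding diagonal of the bundle, and composites and base changes of affine embeddings are affine embeddings (\ref{MORP_EMBEDDING}), hence projective. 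So (P3) holds throughout.

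For (P5) I would use, as the paper suggests, that it suffices to test the sheaf property on finite coverings of the quasi-compact $U$; the germ $\mathrm{Sur}_{U/V}$ is assembled from the embedded closure $\mathrm{cl}(U/V)$ (formula \ref{prous}) and the blow-up of $V\setminus U$, both of which are manifestly compatible with covering $U$, so descent for $U_\bullet\twoheadrightarrow U$ is formal. For (P4) I would first record its local form (P4$'$): given any overconvergent neighbourhood $\tilde V\to V$, the immersion $U\hookrightarrow\tilde V$ is, by \ref{MORP_OPEN}, open in its embedded closure $\mathrm{cl}(U/\tilde V)$, and the canonical-extension construction (def. \ref{SEP_ELEMENTARY}) blows up the complement to produce a finite-type blow-up $\tilde V'\to\tilde V$ --- projective, hence in each class --- along which $U$ is an affine dense open; composing with $\tilde V\to V$ and invoking (P1) supers $\tilde V$ as required. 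The full (P4) then follows from (P4$'$) together with (SC), the two being equivalent under (SC) as observed in \S\ref{SEP_NHOOD}.

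The crux is (SC), for which I would pass to its unravelled equivalent (SC$'$), whose hypotheses are precisely those of corollary \ref{MORP_PROJECTIVE_LOCAL}. Given $V\to S$ in $\P$, a quasi-compact open $U\hookrightarrow V$, an open cover $V_\bullet\twoheadrightarrow V$, and overconvergent neighbourhoods $\tilde V_\bullet\to V_\bullet$ of $U_\bullet/V_\bullet$, I would apply (P4$'$) locally to arrange that each $U_\bullet\hookrightarrow\tilde V_\bullet$ is an open immersion; after blowing up so that $V\setminus U$ is Cartier, each $\tilde V_\bullet\to V_\bullet$ is then a $U_\bullet$-admissible projective modification, hence an admissible blow-up by theorem \ref{MORP_PROJECTIVE=BLOWUP}. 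Corollary \ref{MORP_PROJECTIVE_LOCAL}, applied with the marking along the complement of $U$, produces --- after the refinement of $V_\bullet$ that accounts for the passage to a cover of $S$ --- a single admissible blow-up $\tilde{\tilde V}\to V$ restricting through $\tilde V_\bullet$. Being a finite-type blow-up it is projective and an isomorphism over $U$, so it is the sought global overconvergent neighbourhood, establishing (SC$'$) and hence (SC). The formal classes $f\P,f\!i/\P$ run identically, since \ref{MORP_PROJECTIVE_LOCAL} is already stated for formal schemes and finite-type admissible blow-ups are projective over both $\F_1$ and $\Z$; for the integral classes the projective part is glued as above and the residual integral direction by the universal property of relative normalisation (\S\ref{MORP_NORM}). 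The main obstacle is thus entirely concentrated in (SC): it is the one axiom requiring the genuine globalisation of local blow-ups, which is exactly the geometric input of \ref{MORP_PROJECTIVE_LOCAL}.
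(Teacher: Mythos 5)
Your treatment of (P1--3), (P4), and (SC) follows essentially the same route as the paper: (P1--3) by proposition \ref{MORP_PROJ_STABILITY} together with the fact that affine embeddings are projective, (P4) from the canonical-extension construction of definition \ref{SEP_ELEMENTARY} (which, note, is already a global argument, so your detour through (P4$'$) and (SC) is harmless but unnecessary), and (SC) by reduction to corollary \ref{MORP_PROJECTIVE_LOCAL} via theorem \ref{MORP_PROJECTIVE=BLOWUP}. Up to this point the proposal is fine.

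The genuine gap is (P5), and with it your closing claim that the difficulty is ``entirely concentrated in (SC)''. You assert that descent in the $U$-variable is formal because $\mathrm{Sur}_{U/V}$ is ``assembled from'' the embedded closure and a blow-up; but that description applies only to one particular overconvergent neighbourhood (the canonical extension), not to the pro-object $\mathrm{Sur}_{U/V}$, and the sheaf condition is a statement about \emph{all} of them. Concretely: given a finite cover $U_\bullet\twoheadrightarrow U$ and compatible morphisms $\mathrm{Sur}_{U_i/V}\rightarrow X$, each $U_i$ admits its own $\P$-neighbourhood $W_i\rightarrow V$ carrying an extension, and there is no formal reason these should be dominated by a single $\P$-neighbourhood of $U/V$. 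Over $\F_1$ the axiom \emph{is} trivial, but only because affine $\F_1$-schemes admit no non-trivial coverings, so after reducing to an elementary extension problem there is nothing to glue. Over $\Z$ the paper's argument is genuinely geometric: one forms the embedded closures $\tilde V_i=\mathrm{cl}(U_i/V\times X)$, shows each is classically proper over $V$, uses that closure commutes with finite unions to get a surjection $\coprod_i\tilde V_i\rightarrow\tilde V=\mathrm{cl}(U/V\times X)$, and invokes \cite[\textbf{II}.5.2.3.\emph{ii)}]{EGA} to conclude that $\tilde V$ is classically proper over $V$. Even then one is not done: to deduce $\mathrm{Sur}_{U/V}\rightarrow\tilde V$ one must know that a classically proper neighbourhood is an overconvergent neighbourhood, which is exactly the hard direction of theorem \ref{SEP_ME=EGA} (resting on the flattening result \cite[\href{http://stacks.math.columbia.edu/tag/081T}{081T}]{stacksproject}). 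So (P5) over $\Z$ is not a formality; it imports the full comparison with the classical theory of propriety, and any proof of the lemma must either reproduce that input or explicitly defer it, as the paper does with its footnote on circularity.
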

\begin{proof}Axioms (P1-3) are handled by proposition \ref{MORP_PROJ_STABILITY}, while (P4) follows from the construction of canonical solutions for formally projective morphisms (which are, in particular, quasi-separated.

The locality axiom (SC) is a consequence of corollary \ref{MORP_PROJECTIVE_LOCAL}.

For (P5), over $\F_1$, after reducing the question to an elementary extension problem, the covering condition is trivial and so there is nothing to check. For $\Z$, the result is much harder and relies on equating our approach to propriety with the classical one (thm. \ref{SEP_ME=EGA}). Assuming this, one may prove it as follows.\footnote{For the reader concerned about circular reasoning: we will not use axiom (P5) until the sequel \cite{part2}.} 

Let $U/V$ be an elementary extension problem, $U_\bullet\twoheadrightarrow U$ a covering, $\mathrm{Sur}_{U_\bullet/V}\rightarrow X$ a morphism. We may assume $X$ is quasi-separated. For each $U_i$, the extension $\tilde V_i=\mathrm{cl}(U_i/V\times X)$ is proper over $V$. Since closure commutes with finite unions, we have a surjective map
\[ \coprod_i\tilde V_i\rightarrow \tilde V \]
to the extension $\tilde V=\mathrm{cl}(U/V\times X)\rightarrow X$. By \cite[\textbf{II}.5.2.3.\emph{ii)}]{EGA}, $\tilde V$ is proper over $V$. Thus $\mathrm{Sur}_{U/V}\rightarrow X$.\end{proof}

\begin{remark}[Why formally projective?]\label{SEP_REMARKS}The conclusion of lemma \ref{SEP_GOOD} is false for the class $\P$ in $\mathbf{FSch}$; it is essential to allow \emph{formally} projective, rather than simply projective, modifications. For instance, the proof of axiom (P4) relies on passing to the embedded closure of an open immersion $U\hookrightarrow V$ - which in general fails to be representable by schemes over $V$.

More importantly, the ability to also take point-set-topological closures within $\P$ is a necessary condition for (SC). Indeed, this axiom requires that for sufficiently small $V$ and any open immersion $V^\prime\hookrightarrow V$ disjoint from $U$, we should be able to find a modification of $V$ whose pullback to $V^\prime$ factors through the overconvergent neighbourhood $\emptyset$ of $U\cap V^\prime/V^\prime$.\end{remark}

\begin{prop}\label{SEP_FIP}Let $f$ be a locally finite type morphism in $\Sh\mathbf{FSch}$. The following are equivalent:
\begin{enumerate}\item $f$ is $f\!i/\P$-overconvergent in $\Sh\mathbf{FSch}$;
\item $f$ is $f\P$-overconvergent in $\Sh\mathbf{FSch}$.\end{enumerate}
To establish overconvergence of $f$, it is enough to exhibit solutions to extension problems $U/V$ with $V$ a local formal scheme.

Suppose that $f$ in fact lies in the subcategory $\Sh\mathbf{Sch}$. Then the above are moreover equivalent to the following:
\begin{enumerate}\item[iii)] $f$ is $i/\P$-overconvergent in $\Sh\mathbf{Sch}$;
\item[iv)] $f$ is $\P$-overconvergent in $\Sh\mathbf{Sch}$.\end{enumerate}
To establish overconvergence of $f$, it is enough to exhibit solutions to extension problems $U/V$ with $V$ a local scheme.
\end{prop}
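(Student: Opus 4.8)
The plan is to separate the four classes into two orthogonal comparisons. First, over a fixed topos, I will show that enlarging the projective class to its integral/projective companion does not alter the notion of overconvergence; this gives \emph{i)} $\Leftrightarrow$ \emph{ii)} directly, and the identical argument carried out inside $\Sh\mathbf{Sch}$ gives \emph{iii)} $\Leftrightarrow$ \emph{iv)}. Second, I will compare the two topoi along the inclusion $\mathbf{Sch}\hookrightarrow\mathbf{FSch}$ to establish \emph{ii)} $\Leftrightarrow$ \emph{iv)} for a scheme morphism, whence all four are equivalent. Throughout, the hypothesis that $f$ be locally of finite type is what is consumed.

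For \emph{i)} $\Leftrightarrow$ \emph{ii)}: since $f\P\subseteq f\!i/\P$, there is a canonical map of overconvergent germs $\mathrm{Sur}^{f\!i/\P}_{U/V}\to\mathrm{Sur}^{f\P}_{U/V}$, and it suffices to show the induced map $\Hom_S(\mathrm{Sur}^{f\P}_{U/V},X)\to\Hom_S(\mathrm{Sur}^{f\!i/\P}_{U/V},X)$ is bijective for every $X$ locally of finite type over $S$ and every quasi-compact open immersion $U\hookrightarrow V$. By definition \ref{MORP_PROJ_DEF} and lemma \ref{MORP_FINITE}, every $f\!i/\P$-neighbourhood $\tilde V\to V$ is formally integral over a projective bundle, hence a cofiltered limit $\tilde V=\lim_i W_i$ of $f\P$-neighbourhoods $W_i\to V$ with affine transition maps (each $W_i\to V$ being finite over the bundle, so formally projective, and $U\to\tilde V\to W_i$ exhibiting $W_i$ as a neighbourhood). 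Because $X$ is locally of finite type over $S$, any $S$-morphism out of such a limit factors through a finite stage $W_i$, uniquely up to passing further down the system. This is exactly the bijectivity of $\colim_{f\P}\Hom_V(\tilde V,V\times_S X)\to\colim_{f\!i/\P}\Hom_V(\tilde V,V\times_S X)$: the recipe ``represent an $f\!i/\P$-extension by a map out of some $\tilde V$, then factor through a stage $W_i$'' converts $f\!i/\P$-extensions into $f\P$-extensions and back, preserving both existence and (via the uniqueness clause) uniqueness. Repeating this inside $\Sh\mathbf{Sch}$, where $i/\P$ meets the finite type morphisms in $\P$, yields \emph{iii)} $\Leftrightarrow$ \emph{iv)}.

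For the reduction to local $V$: by the lemma following definition \ref{SEP_ELEMENTARY} every extension problem is covered by elementary ones, so we may take $V$ affine. Over $\F_1$ an affine formal scheme is already local. Over $\Z$, locality of overconvergent neighbourhoods, that is axiom (SC) verified in lemma \ref{SEP_GOOD}, makes both existence and uniqueness of the extension local on $V$; thus it is enough to solve after restriction to each local scheme $\Spec\sh O_{V,v}$, a solution over which spreads out to an open neighbourhood of $v$ by the same finiteness formalism, after which one refines the resulting cover and glues by (SC). The scheme case \emph{ii)} $\Leftrightarrow$ \emph{iv)} now follows from the comparison principle of \S\ref{SEP_COMPARE}: the inclusion $\psi^*\colon\Sh\mathbf{Sch}\hookrightarrow\Sh\mathbf{FSch}$ is a fully faithful spatial geometric morphism sending projective to formally projective morphisms, with left-exact left inverse $\psi_*$. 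Having reduced to local $V$, a scheme extension problem has its canonical extension (def. \ref{SEP_ELEMENTARY}) again a scheme, since embedded images and blow-ups of schemes are schemes, so the germ computed in $\Sh\mathbf{Sch}$ agrees with that in $\Sh\mathbf{FSch}$; the residual quantification over non-discrete formal $V$ is absorbed by lemmas \ref{cf} and \ref{SEP_THELEMMA}, since maps from a formal germ into the discrete $X$ and $S$ factor through discrete data, so $\psi^*$ both preserves and detects overconvergence.

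I expect the main obstacle to be the finiteness statement underpinning the second paragraph over $\F_1$: that a morphism from an integral (cofiltered-limit) formal scheme into a locally finite type target factors through a finite stage, functorially and with the requisite uniqueness. This is the $\F_1$- (respectively $\Z$-) analogue of the finite-presentation limit formalism of \cite[IV.8]{EGA}, and it is precisely what makes the integral/projective and projective theories coincide. Everything else, including the comparison step, is bookkeeping with the germ formalism and the criteria already assembled in \S\ref{SEP_COMPARE}; but this limit fact is the genuine content, and care is needed to check that the affine transition maps of an integral morphism put us in its scope.
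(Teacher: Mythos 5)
Your first two paragraphs track the paper's own route. The equivalence of the projective and integral/projective theories is exactly the ``standard compactness argument'' the paper invokes and works out in \S\ref{SEP_FINITENESS}: as in your sketch, finite type suffices (rather than finite presentation) because the relevant cofiltered limits are filtered unions of finite(-type) subalgebras, so maps out of them into a finite type target factor through a finite stage. Your localisation step is likewise the spreading-out argument of proposition \ref{SEP_FP}. And your treatment of the direction ``formal overconvergence implies scheme overconvergence'' is essentially the paper's: for a scheme pair $U/V$, every formally projective neighbourhood $\tilde V$ is dominated by the embedded closure of the affinisation of $U$ in $\tilde V$, which is a scheme and projective over $V$; hence the germs computed in the two sites agree, and the pushforward preserves overconvergence (lemma \ref{cf} plus criterion \emph{i)} of lemma \ref{SEP_THELEMMA}).

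The gap is in the opposite direction, that $\P$-overconvergence in $\Sh\mathbf{Sch}$ implies $f\P$-overconvergence in $\Sh\mathbf{FSch}$, which you dismiss as bookkeeping ``since maps from a formal germ into the discrete $X$ and $S$ factor through discrete data.'' That principle is false as stated: a morphism from an affine formal scheme $\Spec A$ (with $A$ adic) to an affine scheme $\Spec B$ is just an abstract homomorphism $B\rightarrow A$ (continuity is automatic when $B$ is discrete), and it need not factor through any discrete quotient of $A$ --- the jet $\Spec\F_1[\![t]\!]\rightarrow\A^1_{\F_1}$, $t\mapsto t$, is the basic example, and extension problems with such genuinely formal $V$ are exactly the valuative-criterion-type problems that carry the content of the proposition. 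So nothing formal reduces an extension problem with formal $U/V$ to one pulled back from schemes. This is precisely where the paper has to work: given an elementary formal problem $U=(f\neq 0)\subset V$ over a morphism $X\rightarrow S$ of schemes, it constructs a scheme problem receiving it by the non-obvious choices $U^\prime:=\G_{m,X}$ (using the unit $f\in\sh O(U)^\times$ together with the map $U\rightarrow X$) and $\sh O(V^\prime):=\sh O(X)[f^{\pm 1}]\times_{\sh O(U)}\sh O(V)$, and then needs lemma \ref{blargunsfish}, applied to the discrete quotients of $\sh O(V)$, to verify that $U^\prime\hookrightarrow V^\prime$ is an open immersion of schemes, so that criterion \emph{iv)} of lemma \ref{SEP_THELEMMA} applies. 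Without this construction, or a substitute for it, your claim that the inclusion ``preserves overconvergence'' does not go through. Note also that your closing assessment inverts where the difficulty lies: the limit formalism is the routine part, and it is this comparison step that is the most delicate point of the paper's proof.
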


In light of proposition \ref{SEP_FIP}, the definitions below are unambiguous. The proof and discussion of this fact occupies the rest of this section (\ref{SEP_FSCH}). 

\begin{defns}\label{SEP_DEF_PROPER}A morphism of formal schemes is said to be \emph{overconvergent} (resp. \emph{proper}) if it is locally of finite type (resp. and quasi-compact) and satisfies the equivalent conditions \emph{i), ii)} of proposition \ref{SEP_FIP}. A morphism is \emph{locally separated} (resp. \emph{separated}) if its diagonal is overconvergent (resp. proper).

A morphism of schemes is overconvergent (resp. proper, locally separated, separated) if it is so when considered as a morphism of formal schemes; equivalently, if it satisfies the equivalent conditions \emph{iii), iv)} of \ref{SEP_FIP}.\end{defns}

\begin{proof}[Proof of \ref{SEP_FIP}.] The implications \emph{ii)}$\Rightarrow$\emph{i)} and \emph{iv)}$\Rightarrow$\emph{iii)} are the easy directions, by the inclusions $\P\subset i/\P$ and $f\P\subset f\!i/\P$. The reverse implications, as well fact that these properties may be checked on local objects, follow from a standard compactness argument; for more discussion, see \S\ref{SEP_FINITENESS} below.

Let us focus on the two possible definitions for schemes. In the language of \S\ref{SEP_COMPARE}, we want to show that the inclusion \[(\phi^*\Sh\mathbf{Sch},\P)\hookrightarrow(\Sh\mathbf{FSch},f\P)\] detects and preserves overconvergence.

To show \emph{i/ii)}$\Rightarrow$\emph{iii/iv)}, it will be enough to show that the pushforward $\phi_*:\Sh\mathbf{FSch}\rightarrow\Sh\mathbf{Sch}$ preserves overconvergence (cf. lemma \ref{cf}). For this, we will use the comparison criterion \emph{i)} of lemma \ref{SEP_THELEMMA}.

Let $U/V$ be an affine open immersion in $\mathbf{Sch}$. We want to know that the overconvergent germ $\mathrm{Sur}_{U/V}$ does not depend on whether we consider $U/V$ as objects of $\mathbf{Sch}$ or of $\mathbf{FSch}$. In other words, we must show that every $f\P$-overconvergent neighbourhood $\tilde V$ of $U/V$ can be dominated by a $\P$-overconvergent neighbourhood. It is enough to take the embedded closure of the affinisation of $U$ over $\tilde V$; this will always be a scheme and hence projective over $V$.

\begin{cor}\label{SEP_YONED}The Yoneda embedding $\mathbf{FSch}\hookrightarrow\Sh\mathbf{Sch}$ preserves overconvergence.\end{cor}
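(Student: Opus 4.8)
The plan is to read the Yoneda embedding $\mathbf{FSch}\hookrightarrow\Sh\mathbf{Sch}$ of the statement as the composite
\[ \mathbf{FSch}\hookrightarrow\Sh\mathbf{FSch}\xrightarrow{\phi_*}\Sh\mathbf{Sch} \]
recorded in the discussion preceding the definition of algebraisability, where $\phi$ is the spatial geometric morphism induced by the inclusion of discrete algebras, and then to check that each factor preserves overconvergence. Preservation of overconvergence is closed under composition — it demands only that $\P$ be preserved and that overconvergent morphisms be carried to overconvergent morphisms, and both requirements are transitive — so it is enough to handle the two factors in turn.

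First I would dispatch the Yoneda embedding $\mathbf{FSch}\hookrightarrow\Sh\mathbf{FSch}$, which preserves overconvergence tautologically: by Definition \ref{SEP_DEF_PROPER} a morphism of formal schemes is \emph{declared} overconvergent exactly when its image in $\Sh\mathbf{FSch}$ is $f\P$-overconvergent. There is consequently nothing to verify here beyond unwinding the definition, the functor obviously also sending $f\P$-morphisms to $f\P$-morphisms.

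The substantive input is the second factor, and for this I would simply cite the fact established inside the proof of Proposition \ref{SEP_FIP}, namely that the pushforward $\phi_*:\Sh\mathbf{FSch}\to\Sh\mathbf{Sch}$ preserves overconvergence. That verification goes through comparison criterion \emph{i)} of Lemma \ref{SEP_THELEMMA}: for an affine open immersion $U/V$ of schemes one dominates any $f\P$-overconvergent neighbourhood of $\phi^*U/\phi^*V$ by the embedded closure of the affinisation of $U$, which is a genuine scheme and hence $\P$-projective over $V$, so that $\phi^*\mathrm{Sur}_{U/V}\to\mathrm{Sur}_{\phi^*U/\phi^*V}$ is an isomorphism of pro-objects. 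Composing the two factors then shows that the image of an overconvergent morphism of formal schemes is $\P$-overconvergent in $\Sh\mathbf{Sch}$, which is the assertion. I expect no genuine obstacle: the corollary is a repackaging of material already proved for \ref{SEP_FIP}, and the only care needed is to confirm that the embedding in the statement really is $\phi_*$ precomposed with Yoneda.
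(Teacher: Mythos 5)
Your proposal is correct and follows the paper's own route: the corollary sits immediately after the paper's verification (inside the proof of Proposition \ref{SEP_FIP}, via criterion \emph{i)} of Lemma \ref{SEP_THELEMMA} and the embedded closure of the affinisation) that $\phi_*:\Sh\mathbf{FSch}\rightarrow\Sh\mathbf{Sch}$ preserves overconvergence, and is drawn exactly as you draw it, by factoring the embedding as $\mathbf{FSch}\hookrightarrow\Sh\mathbf{FSch}\xrightarrow{\phi_*}\Sh\mathbf{Sch}$ with the first factor tautological by Definition \ref{SEP_DEF_PROPER}. Your only addition is to spell out the (correct) transitivity of "preserves overconvergence," which the paper leaves implicit.
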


As for the converse statement, we must check that a $\P$-overconvergent scheme automatically has solutions to all extension problems with $U/V$ arbitrary \emph{formal} schemes. We will achieve this by showing that any such extension problem over a morphism $X\rightarrow S$ in $\Sh\mathbf{Sch}$ can be factored through an open immersion $U^\prime/V^\prime$ of schemes over $X/S$; this is criterion \emph{iv)} of lemma \ref{SEP_THELEMMA}.

\begin{lemma}\label{blargunsfish}Let $A\rightarrow A[f^{-1}]$ be a localisation, $B_f\rightarrow A[f^{-1}]$ a ring homomorphism, $\tilde f\in B_f$ a unit lifting $f$. Then $B:=B_f\times_{A[f^{-1}]}A\rightarrow B_f$ is a localisation at $\tilde f$, and $A[f^{-1}]\cong A\tens_BB_f$.\end{lemma}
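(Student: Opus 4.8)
The plan is to compute the fibre product $B=B_f\times_{A[f^{-1}]}A$ explicitly and then to reduce the second assertion to the first by the standard fact that localisation commutes with base change. In $\Alg_{\F_1}$ (exactly as over $\Z$) limits are computed on underlying pointed sets, so $B$ is the submonoid of pairs $(b,a)\in B_f\times A$ whose images in $A[f^{-1}]$ agree. Because $\tilde f$ lifts $f$, the image of $\tilde f$ in $A[f^{-1}]$ is the image of $f$; hence the pair $\phi:=(\tilde f,f)$ lies in $B$, it maps to the unit $\tilde f$ under the projection $B\to B_f$, and to $f$ under the projection $B\to A$. The first projection therefore factors as $B\to B[\phi^{-1}]\to B_f$, and the whole lemma reduces to showing this second map is an isomorphism: granting that, $B_f\cong B[\phi^{-1}]$ gives $A\tens_B B_f\cong A\tens_B B[\phi^{-1}]\cong A[f^{-1}]$, the localisation of $A$ at the image $f$ of $\phi$.

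For the isomorphism $B[\phi^{-1}]\tilde\rightarrow B_f$ I would argue directly from the description of localisation. For surjectivity, take $b\in B_f$ and write its image in $A[f^{-1}]$ as $a/f^n$; then $\tilde f^{\,n}b$ and $a$ have the same image in $A[f^{-1}]$, so $(\tilde f^{\,n}b,a)\in B$ and $b=(\tilde f^{\,n}b,a)\phi^{-n}$ in $B[\phi^{-1}]$. For injectivity, suppose $(b,a)\phi^{-n}$ and $(b',a')\phi^{-n'}$ have the same image in $B_f$, so that $b\tilde f^{\,n'}=b'\tilde f^{\,n}$ there; pushing this equality into $A[f^{-1}]$ and using that $(b,a),(b',a')\in B$ yields $af^{n'}/1=a'f^{n}/1$, and the defining relation of $A[f^{-1}]$ then produces a $k$ with $f^{\,k+n'}a=f^{\,k+n}a'$ in $A$. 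The same $k$ gives $\phi^{\,k+n'}(b,a)=\phi^{\,k+n}(b',a')$ in $B$ componentwise (the $B_f$-components matching after multiplying $b\tilde f^{\,n'}=b'\tilde f^{\,n}$ by $\tilde f^{\,k}$), so the two elements already agree in $B[\phi^{-1}]$.

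The only genuinely delicate point is that over $\F_1$ one cannot clear denominators by subtraction, so every step must be phrased through the multiplicative equivalence relation defining $A[f^{-1}]$ — equality $x/f^m=y/f^{m'}$ meaning $f^kf^{m'}x=f^kf^my$ for some $k$ — and through the fact that fibre products of $\F_1$-algebras are computed as monoids of matching pairs. With that dictionary in place the argument invokes nothing beyond the universal property of localisation, so it runs identically over $\Z$ and over $\F_1$; I expect no real obstacle beyond the bookkeeping of keeping the two projections $B\to B_f$ and $B\to A$ straight and tracking the element $\phi=(\tilde f,f)$ through both.
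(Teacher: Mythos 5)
Your proof is correct, and its skeleton matches the paper's: everything turns on the element $\phi=(\tilde f,f)\in B$, the first assertion becomes the isomorphism $B[\phi^{-1}]\tilde\rightarrow B_f$, and the tensor assertion is then deduced, exactly as in the paper, from commutation of localisation with base change. The difference lies in how that isomorphism is established. The paper's proof is purely categorical: localisation at an element is the filtered colimit of multiplication maps, and filtered colimits commute with the finite limit defining $B$ (both being computed on underlying pointed sets over $\F_1$, underlying sets over $\Z$), so
\[ B[\phi^{-1}]=\colim\bigl(B\xrightarrow{\phi}B\xrightarrow{\phi}\cdots\bigr)\cong\colim\bigl(B_f\xrightarrow{\tilde f}\cdots\bigr)\times_{\colim(A[f^{-1}]\xrightarrow{f}\cdots)}\colim\bigl(A\xrightarrow{f}\cdots\bigr)=B_f\times_{A[f^{-1}]}A[f^{-1}]=B_f, \]
using for the final identifications that $\tilde f$ and $f$ are already units in $B_f$ and $A[f^{-1}]$ respectively. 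Your surjectivity and injectivity checks are precisely this interchange unwound into elements. The categorical version buys brevity and makes uniformity over $\F_1$ and $\Z$ automatic, with no bookkeeping of representatives $a/f^n$ or of the equivalence relation on the localisation; your element-wise version buys self-containedness and isolates exactly the point you flagged — that equality in $A[f^{-1}]$ is witnessed multiplicatively by a power of $f$ — which is the only place where the absence of subtraction could conceivably have intervened.
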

\begin{proof}By commutativity of finite limits with filtered colimits, and of localisation with base change, respectively.\end{proof}

Let $X/S$ be a morphism in $\Sh\mathbf{Sch}$, $U/V$ an elementary extension problem for $X/S$ in $\mathbf{FSch}$. Suppose $U$ is defined by the non-vanishing of $f\in\sh O(V)$. We will construct an extension problem $U^\prime/V^\prime$ in $\mathbf{Sch}$ and a Cartesian square \[\xymatrix{ U \ar[r]\ar[d] & V \ar[d] \\ U^\prime \ar[r] & V^\prime }\] for which purpose we may replace $X$ and $S$ with affine open subsets.

Let $U^\prime:=\G_{m,X}$. The invertible function $f$ on $U$ defines a morphism $U\rightarrow U^\prime$. Define $V^\prime$ by the exactness of the pullback square
\[\xymatrix{ \sh O(V^\prime)\ar[r]\ar[d] & \sh O(X)[f^{\pm1}] \ar[d] \\ \sh O(V)\ar[r] & \sh O(U) }\]
By lemma \ref{blargunsfish} applied to the discrete quotients of $\sh O(V)$, $\sh O(V^\prime)\rightarrow\sh O(U^\prime)$ is a localisation. Therefore \emph{iii/iv)}$\Rightarrow$\emph{i/ii)}.
\end{proof}



\begin{remark}[Non-quasi-compact extension problems]For schemes, it is possible to reduce the definition of quasi-compactness (and therefore, by consideration of the diagonal, quasi-separatedness) to a certain non-quasi-compact extension problem.

Let $X=\bigcup_iU_i$ be a scheme written as a union of affine open subsets. Taking the product of $\sh O(U_i)$ in the category of discrete algebras, we obtain an open immersion
\[ \coprod_iU_i\hookrightarrow\Spec\prod_i\sh O(U_i). \]
A solution to the associated extension problem is a diagram \[\xymatrix{ & U^\prime\ar[d]\ar[dr]^f \\
\coprod_iU_i \ar@{^{(}->}[r]\ar@{^{(}->}[ur]  & \Spec\prod_i\sh O(U_i) & X}\] with the vertical arrow projective, and so in particular quasi-compact. Since the target is affine, we may assume that $U^\prime$ is covered by the pullbacks of finitely many $U_i$s along $f$. This finite list of affine sets covers each member of the cover $\{U_i\subseteq X\}$ and therefore $X$ itself.

The converse appears to require a good theory of quasi-affine morphisms, which is a little delicate in the non-quasi-compact case. Nonetheless, using \cite[\href{http://stacks.math.columbia.edu/tag/01P9}{01P9}]{stacksproject} it is possible to prove that the following are equivalent for a scheme $X/S$:
\begin{enumerate}
\item all extension problems $\iota:U\hookrightarrow V$ such that $\iota_*\sh O_U$ is $\sh O_V$-quasi-coherent have unique solutions;
\item $X\rightarrow S$ is $\P$-proper.
\end{enumerate}
Since as it stands the general criterion is a little clumsy, I omit it from the development.
\end{remark}

\subsubsection{Expansions}\label{Expanditures}
Let $U/V$ be an affine extension problem for formal schemes. We also suppose given a function $f\in\sh O(V)$ whose non-vanishing defines $U$. Let $Z\subseteq V$ be a finitely presented closed subscheme in which $Z_U:=Z\cap U$ is dense. We do not assume $U$ is dense in $V$. 

We define an operation called \emph{intermediate expansion} on the data $(U,V,Z)$, as follows: let $p:\tilde V\rightarrow V$ be the blow-up along $Z\cap(f=0)$, and write $\tilde U=\tilde V\setminus\left( p_*^{-1}(f=0)\right)$ where $p_*^{-1}$ denotes the operation of strict transform along the blow-up $p$ \cite[\href{http://stacks.math.columbia.edu/tag/080D}{080D}]{stacksproject}. Explicitly, if $T$ is the ideal defining $Z$, then
\[ \tilde U=\Spec\sh O_V\{T/f\} \]
and the ideal defining $p_*^{-1}Z\cap\tilde U$ is $T/f\trianglelefteq \sh O_V\{T/f\}$. We get a commutative diagram
\[\xymatrix{ \tilde U\ar[r] & \tilde V\ar[d] & p_*^{-1}Z\ar@{=}[d]\ar[l] \\ U\ar[u]\ar[r] & V & Z\ar[l] }\]
of formal schemes over $S$, with $U\hookrightarrow\tilde U$ an open immersion. Since the intersection of the discriminant locus with $Z$ is a Cartier divisor, the restriction $p_*^{-1}Z\rightarrow Z$ is an isomorphism. It follows that $Z_{\tilde U}:=Z\cap\tilde U$ remains dense in $Z$.

If $U$ is representable by schemes over some base formal scheme $S$, then so is the affine part $\tilde U$ of the intermediate expansion:

\begin{lemma}\label{SEP_EL_FINITE}Let $V\rightarrow S$, $U\hookrightarrow V$ an affine, dense open immersion whose complement is defined by an equation $(f=0)$. Suppose that $U$ is representable by schemes over $S$. Let $I_V\trianglelefteq\sh O_V$ be an ideal of definition. Then $\Spec\sh O_V\{I_V/f\}$ is representable by schemes over $S$.\end{lemma}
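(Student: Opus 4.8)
The plan is to reduce the statement to a set-theoretic condition on the formal spectrum and then verify it one prime at a time. Write $\tilde U=\Spec\sh O_V\{I_V/f\}$; as explained just above, this is the chart at $f$ of the admissible blow-up of $V$ along the finitely generated ideal $I_V+(f)$, so it is a formal scheme, adic over $V$, whose ideal of definition is the pullback $I_V\sh O_{\tilde U}$. Moreover $U$ is recovered as the non-vanishing locus of $f$, that is the open subscheme $\tilde U\setminus V(f)$, and by hypothesis $U\to S$ is representable by schemes.

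First I would record the criterion that, over $\F_1$, detects representability by schemes. Since all ideals of definition in sight are finitely generated, a finite type morphism $g:\tilde U\to S$ is representable by schemes precisely when the pulled-back ideal $I_S\sh O_{\tilde U}$ generates the topology of $\tilde U$, and this holds if and only if $I_S\sh O_{\tilde U}$ and $I_V\sh O_{\tilde U}$ have the same radical --- equivalently, $V(I_S\sh O_{\tilde U})=|\tilde U|$. The inclusion $\mathrm{rad}(I_S\sh O_{\tilde U})\subseteq\mathrm{rad}(I_V\sh O_{\tilde U})$ is automatic from continuity of $g$, so the whole content is to show that \emph{every} open prime $\mathfrak p$ of $\tilde U$ contains $I_S\sh O_{\tilde U}$.

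I would prove this by distinguishing whether $\mathfrak p$ contains $f$. If $f\notin\mathfrak p$ then $\mathfrak p$ lies in the open subscheme $U$; since $U\to S$ is representable by schemes, $I_S\sh O_U$ already cuts out all of $|U|$, so $I_S\sh O_{\tilde U}\subseteq\mathfrak p$. If $f\in\mathfrak p$, then $\mathfrak p$ lies in the exceptional locus: on the chart at $f$ the transform $(I_V+(f))\sh O_{\tilde U}$ is generated by $f$ alone, so $I_V\sh O_{\tilde U}\subseteq(f)\subseteq\mathfrak p$, whence the contraction $\mathfrak q:=\mathfrak p\cap\sh O_V$ contains $I_V+(f)$. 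Continuity of $V\to S$ forces $I_S\sh O_V\subseteq\mathrm{rad}(I_V)\subseteq\mathfrak q$, and pulling this inclusion back up along $\sh O_V\to\sh O_{\tilde U}$ gives $I_S\sh O_{\tilde U}\subseteq\mathfrak p$ again. In both cases $\mathfrak p\supseteq I_S\sh O_{\tilde U}$, so $V(I_S\sh O_{\tilde U})=|\tilde U|$ and $\tilde U\to S$ is representable by schemes.

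The step I expect to be the main obstacle --- and the one that genuinely relies on our being over $\F_1$ under the standing admissibility hypotheses --- is the opening reduction: that representability by schemes can be tested on radicals (supports). This needs the ideals of definition to be finitely generated, so that equality of radicals upgrades to mutual cofinality of the adic filtrations and hence to equivalence of topologies; and it needs $I_V\sh O_{\tilde U}$ to be the actual ideal of definition of $\tilde U$, which is exactly the assertion that admissible blow-up charts are adic over their base. Once this dictionary between adicity and supports is in place, the prime-by-prime verification above is routine, the two cases corresponding respectively to the hypothesis on $U$ and to the universal property of the blow-up.
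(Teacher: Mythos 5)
The opening reduction --- the step you yourself flag as the main obstacle --- is not merely the hard part: as stated it is false, and the rest of the argument inherits the damage. Equality of the \emph{ring-theoretic} radicals of $I_S\sh O_{\tilde U}$ and $I_V\sh O_{\tilde U}$ in the underlying discrete algebra would indeed imply adicity, by the finite-generation argument you sketch (and over $\F_1$ the radical is still the intersection of all primes of the discrete algebra). But this is not equivalent to the set-theoretic condition $V(I_S\sh O_{\tilde U})=|\tilde U|$: the points of a formal spectrum are only the \emph{open} primes, and radicals cannot be computed from open primes alone, since non-open primes are invisible there. Worse, the condition $V(I_S\sh O_{\tilde U})=|\tilde U|$ holds for \emph{every} morphism of formal schemes: an open prime $\mathfrak p$ contains $I_V\sh O_{\tilde U}$ because it is open, and continuity gives $I_S^m\sh O_{\tilde U}\subseteq I_V\sh O_{\tilde U}\subseteq\mathfrak p$ for some $m$, hence $I_S\sh O_{\tilde U}\subseteq\mathfrak p$ by primality. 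So your prime-by-prime verification establishes something automatic that carries no information about adicity; the hypotheses you invoke in the two cases are not actually doing any work. The formal disc $\Delta=\Spec\F_1[\![t]\!]\rightarrow\Spec\F_1$ is the basic counterexample: its unique open prime $(t)$ contains the pullback of the (zero) ideal of definition of $\F_1$, so your criterion is satisfied, yet the morphism is not representable by schemes --- in the discrete algebra $\F_1[t]$ one has $\mathrm{rad}(0)=(0)\neq(t)=\mathrm{rad}((t))$, and it is exactly the non-open prime $(0)$ that witnesses the failure.

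The approach can be repaired, but you must quantify over \emph{all} primes of the underlying discrete algebra of $\sh O_{\tilde U}$ and argue in the opposite logical direction: adicity amounts to $I_V\sh O_{\tilde U}\subseteq\mathrm{rad}(I_S\sh O_{\tilde U})$ (the reverse containment is the automatic one), i.e.\ every prime $\mathfrak p$ containing $I_S\sh O_{\tilde U}$, open or not, must contain $I_V\sh O_{\tilde U}$. Your dichotomy then does work: if $f\in\mathfrak p$, then $I_V\sh O_{\tilde U}=f\cdot(I_V/f)\sh O_{\tilde U}\subseteq(f)\subseteq\mathfrak p$, every element of $I_V$ being divisible by $f$ in the chart; if $f\notin\mathfrak p$, then $\mathfrak p$ survives in the localisation at $f$, which is the underlying algebra of $\sh O_U$, where representability of $U/S$ by schemes gives $I_V^N\sh O_U\subseteq I_S\sh O_U\subseteq\mathfrak p\sh O_U$, and contracting back gives $I_V\sh O_{\tilde U}\subseteq\mathfrak p$. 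This corrected argument would be a legitimate alternative to the paper's proof, which avoids primes entirely and verifies the Banach property by a direct computation: writing $I_{V/S}$ for the image of $I_V$ in $\sh O_V/I_S$, the hypothesis on $U$ produces $k$ with $f^kI_{V/S}$ nilpotent, and then $I_{V/S}^{k+1}=f^{k+1}(I_{V/S}/f)^{k+1}\subseteq f^{k+1}(I_{V/S}/f)=f^kI_{V/S}$ is nilpotent in $\sh O_V\{I_V/f\}/I_S$.
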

\begin{proof}The crux will be to show that $\sh O_V\{I_V/f\}$ is a Banach module over $\sh O_S$. Let $I_S$ be an ideal of definition for $\sh O_S$, and $I_{V/S}$ the image of $I_V$ in $\sh O_V/I_S$. We will show that $I_{V/S}$ is nilpotent in $\sh O_V\{I_V/f\}/I_S$ and hence that the latter is discrete.

Since by hypothesis $\sh O_V\{f^{-1}\}$ is Banach over $\sh O_S$, there is some $k\in\N$ for which $f^kI_{V/S}$ is nilpotent in $\sh O_V/I_S$. Thus \[ I_{V/S}^{k+1}=f^{k+1}(I_{V/S}/f)^{k+1}\subseteq f^{k+1}(I_{V/S}/f)=f^kI_{V/S} \]
is nilpotent in $\sh O_V\{I_V/f\}/I_S$.\end{proof}

By iterating intermediate expansions, there are some inductive constructions that are useful as technical tools in the sequel.

\begin{defn}[Expansions]\label{SEP_EXP_DEG}Write $(U_0,V_0,Z_0)=(U,V,Z)$. 

If we set $(U_{i+1},V_{i+1},Z_{i+1})=(\tilde U_i,\tilde V_i, Z_i\times_{V_i}\tilde V_i)$, then the formal $V$-scheme \[U^\mathrm{\'el}:=\colim_{i\rightarrow\infty}U_i\] is called the \emph{expanded degeneration} of $U\subseteq V$.

If instead we write $(U_{i+1},V_{i+1},Z_{i+1})=(U_0,\tilde V_i,\tilde Z_i)$, then instead we get a projective system \[\mathrm{Sur}_{U/V}^Z:= \lim_{i\rightarrow\infty}V_i\]
defined as a pro-object of $\mathbf{FSch}_S$. It is the largest quotient of $\mathrm{Sur}_{U/V}$ admitting a section over $Z$. Now writing $(U_{i+1},V_{i+1},Z_{i+1})=(U_0,\tilde U_i,\tilde Z_i)$, we get a pro-open immersion
\[\mathrm{sur}_{U/V}^Z:= \lim_{i\rightarrow\infty}U_i\hookrightarrow \mathrm{Sur}^Z_{U/V}. \]
 In fact, since the transition maps of this one are affine, $\mathrm{sur}_{U/V}^Z$ even has a limit as a formal scheme, affine and representable by schemes over $V$. It is usually not of finite type (or Noetherian), and so for our purposes it will be useful to anyway consider $\mathrm{sur}_{U/V}^Z$ as a pro-object.\end{defn}

\begin{eg}The universal cover of the Tate formal curve over a complete DVR $\sh O_K$ is an example of a $U^\mathrm{\'el}$, with initial data $V=\P^1_{\sh O_K},U=V\setminus\{0,\infty\},Z=\P^1_k$.\end{eg}

\begin{remark}Later, we will introduce certain topological objects parametrising toric formal schemes. The above constructions should be thought of in terms of the following analogies on the (cone over the) unit interval $[0,1]$:

\begin{center}\begin{tabular}{ l | l } $U/V$ & $[0,0]/[0,1]$\\ \hline 
$U^\mathrm{\'el}$ & $[0,1)$ \\ 
 $\mathrm{Sur}_{U/V}^Z$ & $[0,1]$ subdivided at points $2^{-k}, k\in\N$ \\
$\mathrm{sur}_{U/V}^Z$ & $\lim_{k\rightarrow\infty}[0,2^{-k}]$\end{tabular}\end{center}
This analogy can be made precise using rigid analytic geometry \cite{part2}.\end{remark}

\paragraph{Expanded degeneration} The purpose of the construction $U/V\mapsto U^\mathrm{\'el}/V$ is to replace a quasi-compact open immersion with an overconvergent morphism.

\begin{lemma}Let $U/V$ be an affine open immersion of formal schemes, $Z\hookrightarrow V$ a closed subscheme that set-theoretically contains $U$ (for example, one that contains an ideal of definition). Then the formally embedded closure $\mathrm{cl}(U/\tilde V)$ of $U$ in the intermediate expansion $\tilde V$ is contained in $\tilde U$.\end{lemma}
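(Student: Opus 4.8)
The plan is to reduce everything to an explicit computation on the charts of the blow-up $p:\tilde V\to V$, exploiting the monoidal nature of $\F_1$-geometry to pin down the strict transform of $Z$. Since both the intermediate expansion and the formally embedded closure are local on the target (the latter by Lemma \ref{AGNU} and formula (\ref{prous})), I would first assume $V=\Spec R$ affine, $U=(f\neq 0)$, and $Z=V(T)$ with $T\trianglelefteq R$ finitely generated (finite presentation of $Z$). The hypothesis that $Z$ set-theoretically contains $U$ says exactly that $Z\cap(f\neq 0)=U$ as subsets of $V$; consequently the strict transform $p_*^{-1}Z$, being the closure of $p^{-1}(Z\cap(f\neq 0))=p^{-1}(U)$, coincides topologically with the closure $\overline U$ of $U$ in $\tilde V$.

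The geometric heart is to show that this strict transform lies in the $f$-chart $\tilde U=\Spec\sh O_V\{T/f\}$. Here I would use that $p$ blows up the intersection $Z\cap(f=0)=V(T+(f))$, so the Rees generators may be taken to be the generators $t_1,\dots,t_r$ of $T$ together with $f$, and $\tilde V$ is covered by the corresponding $r+1$ affine charts. On the chart attached to a generator $g$, the total transform of $T$ is $g\cdot(T/g)$, so the ideal of $p_*^{-1}Z$ there is $T/g$. For $g=t_\alpha$ this ideal contains $t_\alpha/t_\alpha=1$, hence is the unit ideal, and $p_*^{-1}Z$ is empty on every $t_\alpha$-chart; only the $f$-chart $\tilde U$, where the ideal is the proper ideal $T/f$, contributes. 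Thus $p_*^{-1}Z\subseteq\tilde U$, and combined with the previous paragraph this gives $\overline U=p_*^{-1}Z\subseteq\tilde U$.

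It then remains to upgrade this set-theoretic containment to the scheme-theoretic statement $\mathrm{cl}(U/\tilde V)\subseteq\tilde U$. By formula (\ref{prous}) and the ensuing remark for $\F_1$, on any affine open $W\subseteq\tilde V$ the intersection $\mathrm{cl}(U/\tilde V)\cap W$ is the spectrum of the image of $\sh O(W)$ in $\sh O(U\cap W)$, which is a quotient of $\sh O(W)$; its topological image in $W$ is therefore $\overline{U\cap W}$. Gluing, the image of $\mathrm{cl}(U/\tilde V)\to\tilde V$ is exactly $\overline U\subseteq\tilde U$. A morphism whose set-theoretic image lands in an open subscheme factors (uniquely) through it, so $\mathrm{cl}(U/\tilde V)$ factors through $\tilde U$, as required. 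I would emphasise that this support argument is what lets us get away with only set-theoretic containment of $U$ in $Z$: since $U$ need not factor through $Z$ as a scheme, one cannot simply invoke minimality of the embedded closure against $p_*^{-1}Z$, but only the image/support ever enters.

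The main obstacle I anticipate is the chart computation of the second paragraph, i.e.\ making precise that blowing up the scheme-theoretic intersection $Z\cap(f=0)$ really confines the strict transform of $Z$ to the single chart $\tilde U$. Over $\F_1$ this is clean because $t_\alpha/t_\alpha=1$ forces the transformed ideal to be trivial on the $t_\alpha$-charts, but the one point worth double-checking is that the Rees generators of $T+(f)$ can genuinely be taken to be $\{t_\alpha\}\cup\{f\}$, so that there are no further charts to inspect beyond these.
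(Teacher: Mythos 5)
Your overall route is the paper's own: the paper disposes of this lemma in one line by citing ``the elementary fact that a blow-up of the intersection of two subschemes separates the strict transforms of those subschemes'', and your chart computation is exactly a proof of that fact for the pair $Z$, $(f=0)$ --- on the $t_\alpha$-chart one has $f=t_\alpha\cdot(f/t_\alpha)$, so $p^{-1}\bigl(Z\cap(f\neq0)\bigr)$ misses every $t_\alpha$-chart and its closure $p_*^{-1}Z$ lands in the $f$-chart $\tilde U$. The worry you flag at the end is unfounded: over $\F_1$ the ideal of $Z\cap(f=0)$ is $T\cup f\sh O_V$, its degree-one piece is generated over $\sh O_V$ by $t_1,\dots,t_r,f$, and a homogeneous prime of the Rees algebra containing all of these contains the whole irrelevant ideal, so the $r+1$ charts do cover. (Whether the strict-transform ideal on the $g$-chart is exactly $(T/g)$ or its $g$-saturation is immaterial, since both contain $t_\alpha/t_\alpha=1$ when $g=t_\alpha$.)

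The one step that is wrong as written is the support claim in your third paragraph: the image of $\mathrm{cl}(U/\tilde V)\rightarrow\tilde V$ is \emph{not} ``exactly $\overline U$'', and ``being a quotient of $\sh O(W)$'' does not imply this. Over $\F_1$ the embedded image is a quotient by a \emph{congruence}, not by an ideal; the induced map on spectra is injective, but its image need be neither closed nor all of $\overline{U\cap W}$ (compare Example \ref{MORP_A2}: supports of embedded subschemes need not be closed). Concretely, for $R=\F_1[x,y]/(x^2y=xy)$ and $f=x$, the image of $R\rightarrow R[x^{-1}]$ is $\F_1[x,y]/(xy=y)$, and the prime $(x)\trianglelefteq R$, which lies in $\overline{D(x)}$, is not in the image of the spectrum of that quotient. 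Fortunately your argument only ever uses the containment of the support of $\mathrm{cl}(U/\tilde V)$ in $\overline U$, and that containment is true, but it needs its own short proof: for instance, formula (\ref{prous}) is local on the target (proof of Lemma \ref{AGNU}), so restricting to the open $W'=\tilde V\setminus\overline U$ gives $\mathrm{cl}(U/\tilde V)\times_{\tilde V}W'=\mathrm{cl}(\emptyset/W')=\emptyset$; alternatively, if $fa$ is nilpotent in $\sh O(W)$ then the image of $a$ is nilpotent in $\mathrm{im}\bigl(\sh O(W)\rightarrow\sh O(U\cap W)\bigr)$, since $f$ becomes invertible in $\sh O(U\cap W)$, hence $a$ lies in every pullback prime. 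With that substitution --- keeping your correct final observation that a morphism whose set-theoretic image lies in an open subscheme factors through it --- the proof is complete and agrees with the paper's.
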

\begin{proof}This is a consequence of the elementary fact that a blow-up of the intersection of two subschemes separates the strict transforms of those subschemes.\end{proof}

\begin{prop}\label{EXPANDED_DEGENERATION}Let $U/V$ be an affine open immersion of formal schemes, $Z\hookrightarrow V$ a closed subscheme that set-theoretically contains $U$. Then $U^\mathrm{\'el}\rightarrow V$ is an overconvergent morphism.\end{prop}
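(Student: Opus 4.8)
The plan is to verify the extension property of Definition \ref{SEP_EXTENS_DEF} directly, exploiting the presentation $U^\mathrm{\'el}=\colim_{i}U_i$ as a filtered colimit along the open immersions $U_i\hookrightarrow U_{i+1}$. First I would reduce to a tractable class of test problems: by Proposition \ref{SEP_FIP} it suffices to solve extension problems $W/W'$ in which $W'$ is a \emph{local} --- in particular quasi-compact --- formal scheme, and I may further assume the problem elementary, so that $W\hookrightarrow W'$ is affine and dense. Since each intermediate expansion is a finite type blow-up followed by removal of a divisor, every $U_i\to V$ is locally of finite type, hence so is $U^\mathrm{\'el}\to V$, and the notion of overconvergence applies. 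The key preliminary observation is that, $W$ being quasi-compact and the $U_i$ forming an increasing open cover of $U^\mathrm{\'el}$, any morphism $W\to U^\mathrm{\'el}$ over $V$ factors through a single stage $U_i$.

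For \textbf{existence}, given such a factorisation $W\to U_i\hookrightarrow U_{i+1}\subseteq V_{i+1}$, I would form the formally embedded closure $\widetilde{W'}:=\mathrm{cl}\bigl(W/(W'\times_V V_{i+1})\bigr)$ of the graph of $W$. Because $V_{i+1}\to V$ is a composite of finite type admissible blow-ups and hence projective (Proposition \ref{MORP_PROJ_STABILITY}), the scheme $\widetilde{W'}$ is a formal embedding into the projective $W'$-scheme $W'\times_V V_{i+1}$, so $\widetilde{W'}\to W'$ is formally projective; blowing up its reduced complement of $W$ as in Definition \ref{SEP_ELEMENTARY} makes $W\hookrightarrow\widetilde{W'}$ an open immersion without disturbing formal projectivity, so that $\widetilde{W'}$ is an $f\P$-overconvergent neighbourhood of $W/W'$. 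The crucial point is that its second projection lands in $U_{i+1}$: by functoriality of the embedded closure the image of $\widetilde{W'}$ in $V_{i+1}$ lies in $\mathrm{cl}(W/V_{i+1})\subseteq\mathrm{cl}(U_i/V_{i+1})$, and the lemma immediately preceding this proposition identifies the latter as a subscheme of $U_{i+1}=\tilde U_i$. This produces the required extension $\mathrm{Sur}_{W/W'}\to\widetilde{W'}\to U_{i+1}\subseteq U^\mathrm{\'el}$.

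For \textbf{uniqueness}, I would compare two extensions on a common refinement, which by the canonical construction may be taken to be a closure in which $W$ is scheme-theoretically dense; both maps then factor through a common $U_i$ by quasi-compactness. Here I use that $U_i\to V$ is separated with diagonal an \emph{affine embedding}, being the composite of the affine morphism $U_i\to V_{i-1}$ and the projective morphism $V_{i-1}\to V$, whose diagonals are respectively a closed embedding and an affine embedding (Lemma \ref{MORP_PROJ_DIAG}, Proposition \ref{MORP_PROJ_STABILITY}), and embeddings compose. Consequently the equaliser of the two extensions is an embedded subscheme of $\widetilde{W'}$ through which the dense $W$ factors, hence all of $\widetilde{W'}$; the two extensions therefore coincide.

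The main obstacle, and the step deserving most care, is the containment $\mathrm{cl}(U_i/V_{i+1})\subseteq U_{i+1}$ used in the existence argument. To invoke the preceding lemma at stage $i$ one must know that $Z_i$ set-theoretically contains $U_i$, which I would establish by induction along the expansion: it holds for $(U_0,V_0,Z_0)=(U,V,Z)$ by hypothesis, and the blow-down $V_{i+1}\to V_i$ carries $U_{i+1}=\tilde U_i$ into $U_i\cup Z_i=Z_i$ while $Z_{i+1}$ is its preimage, whence $U_{i+1}\subseteq Z_{i+1}$ set-theoretically. A second delicate point is that formally embedded closures over $\F_1$ may be non-closed and need not commute naively with projection; I would control this using the explicit affine formula (\ref{prous}) for $\mathrm{cl}$ together with Proposition \ref{MORP_OPEN}, exactly as in the construction of the canonical extension.
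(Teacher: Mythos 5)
Your proposal is correct and follows essentially the same route as the paper's own (much terser) proof: reduce to an elementary extension problem, factor the test morphism through a finite stage $U_i$ by quasi-compactness, and use functoriality of formally embedded closures together with the preceding lemma to conclude $\mathrm{cl}(U_i/V_{i+1})\subseteq U_{i+1}$, so that the closure of the test datum in the pulled-back stage gives the required $f\P$-overconvergent neighbourhood. The points you treat explicitly --- the induction showing $U_i$ remains set-theoretically contained in $Z_i$ at every stage (needed to invoke the lemma), and the uniqueness argument via the embedded diagonal of $U_i\rightarrow V$ and scheme-theoretic density --- are exactly the details the paper leaves implicit, and your handling of them is sound.
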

\begin{proof}First note that by lemma \ref{SEP_EL_FINITE}, $U^\mathrm{\'el}\rightarrow V$ is locally of finite type. Let $U/V$ be an elementary extension problem. Then $U$ factors through some finite stage $U_i\subseteq V_i$ of the expanded degeneration. By functoriality of formally embedded closures, $V\times_{V_i}V_{i+1}$ then factors through $\mathrm{cl}(U_i/V_{i+1})\subseteq U_{i+1}$.\end{proof}

\paragraph{$Z$-rational overconvergent germ} The second and third constructions of definition \ref{SEP_EXP_DEG} provide certain canonical covers in the category of formal algebraic spaces.

\begin{lemma}\label{SEP_CLOSED_SUBSET_LEMMA}Suppose that $U$ is dense in $V$. The square
\[\xymatrix{ Z_U\ar[r]\ar[d] & Z\ar[d] \\ U\ar[r] & \lim\mathrm{sur}_{U/V}^Z }\]
is a pushout in the category of affine formal schemes.\end{lemma}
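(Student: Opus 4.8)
The plan is to trade the pushout statement for a pullback of coordinate algebras and then carry the latter out by induction on the intermediate expansions. Since the transition maps of $\mathrm{sur}^Z_{U/V}$ are affine, the limit $W:=\lim\mathrm{sur}^Z_{U/V}$ is an affine formal scheme, and a square of affine formal schemes is a pushout precisely when the dual square of pro-discrete coordinate algebras is a pullback (the admissibility of that algebra being deferred to the end). Writing $A=\sh O_V$, letting $T\trianglelefteq A$ be the ideal of $Z$ and $f\in A$ the function cutting out $U$, we have $\sh O_U=A\{f^{-1}\}$, $\sh O_Z=A/T$ and $\sh O_{Z_U}=(A/T)\{\bar f^{-1}\}$. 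It therefore suffices to establish the algebra isomorphism
\[ \sh O_W\;\cong\;A\{f^{-1}\}\times_{(A/T)\{\bar f^{-1}\}}(A/T), \]
under which the two projections are dual to the structure maps $U\to W$ and $Z\to W$.

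First I would identify the left-hand side. The hypothesis that $U$ is dense in $V$ gives, via the embedded-image formula (lemma \ref{AGNU}), an injection $A\hookrightarrow A\{f^{-1}\}$, so each Rees chart embeds as an honest $A$-subalgebra of $\sh O_U$. An induction along the recursion $(U_{i+1},V_{i+1},Z_{i+1})=(U_0,\tilde U_i,\tilde Z_i)$ then shows that the $i$-th stage has coordinate algebra $A\{T/f^i\}$, the $A$-subalgebra of $A\{f^{-1}\}$ generated by $\{t/f^i:t\in T\}$, with strict-transform ideal $T/f^i$; the inductive step is the elementary identity $A\{T/f^i\}\{(T/f^i)/f\}=A\{T/f^{i+1}\}$, valid since $t/f^i=f\cdot(t/f^{i+1})$. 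Passing to the colimit along these inclusions I would identify $\sh O_W=\colim_i A\{T/f^i\}$ with the $A$-subalgebra $B\subseteq A\{f^{-1}\}$ generated by $A$ together with the extended ideal $T\!\cdot\! A\{f^{-1}\}$, using that every element $t\cdot(a/f^i)$ of $T\!\cdot\! A\{f^{-1}\}$ already occurs in the stage $A\{T/f^i\}$.

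Next I would compute the fibre product. The standing hypothesis that $Z_U$ is dense in $Z$ provides a second injection $A/T\hookrightarrow(A/T)\{\bar f^{-1}\}$. Consequently, for a pair $(\alpha,\beta)$ in the fibre product the component $\beta\in A/T$ is uniquely determined by $\alpha$, and it exists exactly when the image $\bar\alpha\in(A/T)\{\bar f^{-1}\}$ lies in the image of $A/T$ — that is, exactly when $\alpha$ lies in $B$. Thus the fibre product is canonically $B$, and its two projections are the inclusion $B\hookrightarrow A\{f^{-1}\}$ and reduction modulo $T$, which match the structure maps of $W$ computed in the previous step. This yields the required isomorphism and hence the pushout.

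The step I expect to be the main obstacle is not the algebra but the topology. One must check that $B$, with the subspace topology induced from $\sh O_U$, is admissible (pro-discrete with an ideal of definition) and that this topology agrees with the pro-discrete limit topology on $\colim_i A\{T/f^i\}$, so that $\Spec B$ really is the formal scheme $W$ and not some non-complete variant; equivalently, that forming $\lim_i V_i$ commutes with forming the colimit of coordinate algebras. Because admissibility is not stable under arbitrary limits, this requires genuine care, and here it is underwritten by the facts recorded in \S\ref{Expanditures} — that $W$ is representable by schemes over $V$ and that $Z$ is finitely presented — with the ideal of definition of $A\{f^{-1}\}$ pulled back from $V$ furnishing, after restriction to $B$, the required ideal of definition.
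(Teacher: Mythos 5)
Your proof is correct and takes essentially the same route as the paper's: reduce to the dual fibre product of coordinate algebras and identify it with the colimit $\bigcup_k A\{T/f^k\}$ of Rees charts inside $A\{f^{-1}\}$, using density of $Z_U$ in $Z$ for uniqueness of $\beta$ and density of $U$ in $V$ to realize everything as subalgebras of $\sh O_U$. The one spot where you are terser than the paper is the equivalence ``$\bar\alpha$ lies in the image of $A/T$ iff $\alpha\in B$'': the paper proves this by exhibiting the decomposition $g/f^k=h_1+h_2/f^k$ with $h_1\in A$, $h_2\in T$, which is exactly what your description of $B$ as the $A$-algebra generated by $T\cdot A\{f^{-1}\}$ (together with $\ker\bigl(A\{f^{-1}\}\rightarrow (A/T)\{\bar f^{-1}\}\bigr)=T\cdot A\{f^{-1}\}$) supplies.
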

\begin{proof}Assume $V=\Spec A$ is affine. We are looking at morphisms \[\xymatrix{  & A[f^{-1}] \ar@{->>}[d] \\ A/I \ar@{^{(}->}[r] & A/I[f^{-1}]}\]and I'm claiming that the fibre product is just $\bigcup_{k\rightarrow\infty}A[I/f^k]\subseteq A[f^{-1}]$. Certainly this injects into the fibre product, so it will suffice to produce a section. If $g/f^k\in A[f^{-1}]$ has image in $A/I$, then it can be written in the form $h_1+h_2/f^k$ with $h_1\in A$ and $h_2\in I$.\end{proof}

\begin{prop}\label{SEP_CLOSED_SUBSET}Let $U/V$ be an elementary extension problem over $S$, and suppose that $V$ is local. The square
\[\xymatrix{ Z_U\ar[r]\ar[d] & Z\ar[d] \\ U\ar[r] & \mathrm{sur}_{U/V}^Z }\]
is a pushout over the category of formal algebraic spaces locally of finite type over $S$.\end{prop}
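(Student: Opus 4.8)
The plan is to bootstrap from the affine pushout of Lemma \ref{SEP_CLOSED_SUBSET_LEMMA}. Write $P:=\lim\mathrm{sur}_{U/V}^Z$; by that lemma $P=\Spec B$ is affine, with $B=\sh O(Z)\times_{\sh O(Z_U)}\sh O(U)$, and the square already has the pushout property when tested against affine formal schemes. Since the functor corepresented by a pushout involves only the affine sources $Z,Z_U,U$, the sole remaining task is to upgrade the universal property to test objects $W$ that are formal algebraic spaces locally of finite type over $S$. So I fix a compatible pair $g_Z\colon Z\to W$, $g_U\colon U\to W$ with $g_Z|_{Z_U}=g_U|_{Z_U}$ and seek a unique $g\colon P\to W$ restricting to the two. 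Note first that $U\hookrightarrow P$ is the principal open $(f\neq 0)$ and that its complement $C:=V(f)$ lies in $Z$ set-theoretically (any section of $B$ killed in $\sh O(Z)$ already lies in $fB$, because $B=\bigcup_k\sh O(V)[I/f^k]$); thus $|P|=|U|\cup|Z|$ is covered by an open piece and a closed piece meeting in $Z_U$.

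The first genuine step is to confine the $Z$-side to a single affine chart of $W$, and this is exactly where the hypothesis that $V$ is local enters. Then $Z=\Spec\sh O(V)/I$ is local, and the only open subscheme of a local scheme containing its closed point is the whole scheme; so, choosing an affine open $W_0\subseteq W$ through the image of the closed point, we get $g_Z^{-1}(W_0)=Z$, i.e.\ $g_Z$ factors through $W_0$. Setting $U_0:=g_U^{-1}(W_0)$, compatibility gives $g_U(Z_U)=g_Z(Z_U)\subseteq W_0$, whence $Z_U\subseteq U_0$.

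I would then build $g$ by patching $P$ out of the open $U$ — on which $g:=g_U$ — and a neighbourhood of the closed fibre $C$ mapping to the affine chart $W_0$. The candidate neighbourhood is $N:=U_0\cup Z\subseteq P$, which meets $U$ in $U_0$ and contains $C$; granting that it is open, the same fibre-product computation as in Lemma \ref{SEP_CLOSED_SUBSET_LEMMA} identifies $\sh O(N)=\sh O(Z)\times_{\sh O(Z_U)}\sh O(U_0)$, so that the compatible pair $(g_U|_{U_0},g_Z)$, by the universal property of the fibre product, determines a morphism $N\to W_0\hookrightarrow W$. Gluing this with $g_U$ along $U_0$ yields $g$, and uniqueness follows because $\sh O(P)\hookrightarrow\sh O(Z)\times\sh O(U)$ and $|P|=|U|\cup|Z|$, so any extension is pinned down by its restrictions to $U$ and $Z$.

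The hard part — and the only place the geometry really bites — is verifying that $N$ is honestly an \emph{open} subscheme of $P$, equivalently that $U\setminus U_0=g_U^{-1}(W\setminus W_0)$, which is closed in $U$, is already closed in $P$. The danger is that a point of $U$ on which $g_U$ strays out of $W_0$ could specialise to a point of $C$; that this cannot happen is a continuity statement resting on the density of $Z_U$ in $Z$ together with the fact that $W_0$ is an open neighbourhood of $g_Z(C)$ — near $C$ the dense locus $Z_U$ forces the values of $g_U$ back into $W_0$. Pinning this down rigorously, rather than the formal gluing that surrounds it, is where I expect the real work to lie; it amounts to the assertion that $P$ is the open–closed pinching of $U$ and $Z$ along $Z_U$ in the full category of formal algebraic spaces, a non-affine analogue of Ferrand's pushout theorem, for which locality of $V$ and density of $Z_U$ supply the needed finiteness and separatedness input.
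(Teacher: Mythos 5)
There is a genuine gap, and it sits exactly where you flagged it: the openness of $N=U_0\cup Z$ in $P$, equivalently the closedness of $U\setminus U_0$ in $P$, is never proved. You defer it to "a non-affine analogue of Ferrand's pushout theorem" resting on density and continuity, but no such theorem is available in this setting and the density heuristic you sketch is not an argument. As written, the construction of $g$ by gluing $g_U$ with a map on $N$ therefore does not go through. (Your uniqueness step also leans on the unproved set-theoretic decomposition $|P|=|U|\cup|Z|$, though that is a lesser issue.)

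What you are missing is the $\F_1$-specific fact that makes the whole patching apparatus unnecessary, and which is the paper's entire (one-line) proof: \emph{every} affine $\F_1$-scheme is local (\S 2.2), so $U$ is local too, not just $Z$. Moreover $Z_U=Z\cap U$ is a non-empty closed subscheme of the local scheme $U$ (non-empty since it is dense in $Z$), so it contains the unique closed point $u_0$ of $U$. Hence $g_U(u_0)=g_Z(u_0)\in W_0$, and $g_U^{-1}(W_0)$ is an open subscheme of the local scheme $U$ containing its closed point, which forces $g_U^{-1}(W_0)=U$. In your notation, $U_0=U$: the "danger" of $g_U$ straying out of $W_0$ cannot occur, and $U\setminus U_0=\emptyset$. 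Both legs of the compatible pair thus factor through the single affine chart $W_0$, and Lemma \ref{SEP_CLOSED_SUBSET_LEMMA} applied inside $W_0$ already produces the unique $P\to W_0\hookrightarrow W$ (uniqueness in $W$ follows the same way, since $P$ is affine, hence local, with closed point the image of that of $Z$). So your reduction of $Z$ to a chart was the right first move, but the proof should close by localising $U$ as well, not by a gluing argument whose key step is left unestablished.
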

\begin{proof}Indeed, the global statement follows from the fact that $Z$ is local, and so $U$ must factor through any affine open subset of $X$ containing the image of the closed point of $Z$.\end{proof}

\subsubsection{Finiteness}\label{SEP_FINITENESS} In this section we'll take a closer look at the statements in proposition \ref{SEP_FIP} that depend essentially on the finiteness of $f$.

\begin{prop}\label{SEP_FP}Let $f:X\rightarrow S$ be a morphism in $\Sh\mathbf{FSch}$ or $\Sh\mathbf{Sch}$, locally of finite type (resp. presentation). To establish overconvergence of $f$, it is enough to exhibit solutions to extension problems $U/V$ either
\begin{enumerate}\item for all $V$ of finite type (resp. presentation) over $S$; or
\item for all local $V$ essentially of finite type (resp. presentation) over $S$.\end{enumerate}Over $\F_1$, one can check with $V$ local and of finite type (resp. presentation) over $S$.\end{prop}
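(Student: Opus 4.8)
The plan is to prove the proposition by a standard limit (compactness) argument, reducing an arbitrary extension problem to one whose test space $V$ is (essentially) of finite type over $S$, and feeding in the finiteness of $f$ exactly at the point where one descends the defining data to a finite stage. Throughout I would work with the class $f\P$ (resp. $\P$ for schemes) and use the stability properties of Proposition \ref{SEP_STABILITY} without comment. First I would make the harmless reductions: since overconvergence is stable for descent (\ref{SEP_STABILITY}) it may be checked locally on $S$, so I assume $S$ affine; by Proposition \ref{SEP_FIP} it suffices to treat extension problems $U/V$ with $V$ local; and after passing to elementary extension problems (Definition \ref{SEP_ELEMENTARY}) I may take $V$ affine with $U\hookrightarrow V$ a quasi-compact, hence finitely presented, open immersion. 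The new content over \ref{SEP_FIP} is thus purely the improvement of the test space to (essentially) finite type.

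The heart of the argument is to present $\sh O(V)$ as the filtered union of its (essentially) finite type $\sh O(S)$-subalgebras, so that $V=\lim_\lambda V_\lambda$ with each $V_\lambda$ of finite type over $S$ and with transition maps that are \emph{injective} on sections. The finitely presented open immersion $U\hookrightarrow V$ then descends to $U_\lambda\hookrightarrow V_\lambda$ with $U=U_\lambda\times_{V_\lambda}V$. The key observation is that the structure morphism $g:U\to X$ also descends: reducing to an affine chart of $X$ (into which the quasi-compact $U$ maps), $g$ is recorded by finitely many coordinates together with the relations cutting out $X$, and because the transition maps are injective on functions any such relation that holds in $\sh O(U)$ already holds in $\sh O(U_\lambda)$ once those finitely many coordinates lie in $\sh O(V_\lambda)$. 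Hence $g$ factors as $U_\lambda\to X$ for $\lambda$ large, \emph{even when the ideal of $X$ is not finitely generated} — this is precisely why the finite type hypothesis on $f$ matches finite type test spaces. When $f$ is locally of finite presentation one may instead approximate $V$ by finitely presented $V_\lambda$ and descend $g$ by the usual limit comparison (the bijectivity underlying criterion \emph{iv)} of Lemma \ref{SEP_THELEMMA}), yielding finitely presented test spaces.

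With the problem descended, I would invoke the hypothesis at the finite stage to obtain the unique extension $\mathrm{Sur}_{U_\lambda/V_\lambda}\to X$, and then transport it back. Since overconvergent germs are stable under base change (Example \ref{SEP_BASE_CHANGE}, Lemma \ref{SEP_BLARG}) and $U=U_\lambda\times_{V_\lambda}V$, the pullback along $V\to V_\lambda$ of an overconvergent neighbourhood of $U_\lambda/V_\lambda$ is an overconvergent neighbourhood of $U/V$, producing the desired solution $\mathrm{Sur}_{U/V}\to X$ extending $g$. For uniqueness I would appeal to the principle of \S\ref{SEP_EXTENS} that uniqueness of extensions for $f$ is the same as existence of extensions for the diagonal $\Delta_f:X\to X\times_S X$; as $\Delta_f$ is an immersion it is again locally of finite type (resp. presentation), so the existence statement just established applies to it.

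For the local variant \emph{ii)} I would localise each $V_\lambda$ at the image of the closed point of the local scheme $V$, which is legitimate because \ref{SEP_FIP} already permits local test spaces; this yields test spaces that are local and essentially of finite type. The final clause over $\F_1$ is then a matter of book-keeping: every affine $\F_1$-scheme is automatically local, and a localisation of a finitely generated $\F_1$-algebra merely inverts finitely many generators and so remains finitely generated (cf. Corollary \ref{FIN_NOETHER}), so that \emph{local, essentially of finite type} and \emph{local, of finite type} coincide. The step I expect to be the main obstacle is the approximation in the adic regime of \S\ref{SEP_FSCH}: the subalgebras $\sh O(V_\lambda)$ must be chosen \emph{topologically} finite type and compatible with an ideal of definition (using admissibility, \S\ref{RIG_PROD}) so that the $V_\lambda$ are genuine formal schemes and the blow-ups and modifications building $\mathrm{Sur}$ stay within $f\P$; verifying that the descent and base-change steps survive the passage to the pro-discrete topology is the delicate part, whereas over ordinary schemes the argument is the familiar one.
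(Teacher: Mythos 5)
Your argument for criterion \emph{i)} is sound and is essentially the paper's own (the approximation lemma — filtered unions of finite type subalgebras, injectivity of transitions, descent of the morphism $U\to X$ to a finite stage — followed by criterion \emph{iv)} of lemma \ref{SEP_THELEMMA}). The problem is criterion \emph{ii)}, where your opening reduction "by Proposition \ref{SEP_FIP} it suffices to treat extension problems $U/V$ with $V$ local" is circular. The local-test-space claim of \ref{SEP_FIP} is not an available input: the paper's proof of \ref{SEP_FIP} explicitly defers exactly this point ("the fact that these properties may be checked on local objects, follow[s] from a standard compactness argument; for more discussion, see \S\ref{SEP_FINITENESS}") to the section whose main proposition you are now proving, and the paper's proof of \ref{SEP_FP} states in turn that its argument "will also fill in the details of the proof of proposition \ref{SEP_FIP}". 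Once this reduction is removed, your part \emph{ii)} consists only of approximating an arbitrary \emph{local} $V$ by local, essentially finite type $V'_\lambda$ and descending the problem to a finite stage. That is a correct argument, but it only proves (solutions for local essentially finite type test spaces) $\Rightarrow$ (solutions for all local test spaces); the implication (solutions for all local test spaces) $\Rightarrow$ (overconvergence) is left entirely unproven, and it is not formal — it is the hard half of the proposition.

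What is missing is the spreading-out-and-gluing argument, which is the actual content of the paper's proof of \emph{ii)}. There one takes an arbitrary test space $V$ (which one may assume of finite type over $S$ using part \emph{i)}), and for each point $p\in V$ writes the local scheme $V_p=\lim_{f(p)\neq 0}V_f$; the hypothesis yields a formally projective modification $\tilde V_p\rightarrow V_p$ together with an extension $\tilde V_p\rightarrow X$. One must then descend the \emph{solution}, not the problem: by lemma \ref{MORP_FINITE} the modification is a finite morphism, followed by a formal completion, followed by a projective morphism, and after replacement its finite and projective parts are finitely presented, so it admits a model $\tilde V_f\rightarrow V_f$ for a coinitial family of $f$; the morphism $\tilde V_p\rightarrow X$ is then descended to $\tilde V_f\rightarrow X$ chart by chart, using a finite affine atlas $\tilde V_f=\bigcup_{j}\tilde V^j_f$ and cocompactness of a presentation $\sh O_S\{\vec x\}\twoheadrightarrow\sh O_S(X^j)$ of each affine chart of $X$. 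Finally, varying $p$ produces an open covering of $V$ by sets $V_f$ each carrying a solution, and these glue to a single solution $\mathrm{Sur}_{U/V}\rightarrow X$ by the locality axiom (SC). Your cofiltered-limit technique runs in the opposite direction (descend the problem along $V\rightarrow V'_\lambda$, pull the solution back) and cannot substitute for this local-to-global step, since the finite type approximations of a non-local $V$ are not local. Relatedly, the "main obstacle" you flag at the end (adic bookkeeping in the approximation) is not where the real difficulty lies; it is this passage from local test spaces back to arbitrary ones.
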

\begin{proof}For part \emph{i)}, it is equivalent to check that for any $S$, the inclusions
\begin{align}\label{first} \Sh\mathbf{FSch}_S^\mathrm{lpf} \hookrightarrow &\Sh\mathbf{FSch}_S^\mathrm{ltf}\hookrightarrow \Sh\mathbf{FSch}_S \\ \label{second} \Sh\mathbf{Sch}_S^\mathrm{lpf} \hookrightarrow &\Sh\mathbf{Sch}_S^\mathrm{ltf}\hookrightarrow \Sh\mathbf{Sch}_S \end{align}
of the topoi of (formal) schemes locally of finite presentation, resp. type, over $S$ preserves overconvergence. If we consider the right-hand topoi to be endowed with classes $f\!i/\P,i/\P$, respectively, then this argument will also fill in the details of the proof of proposition \ref{SEP_FIP}. 

It is also worth mentioning that the topoi of finitely presented objects do not obviously satisfy (P4), as a canonical solution (def. \ref{SEP_ELEMENTARY}) need not be finitely presented. The r\^ole of these topoi is sufficiently auxiliary that this will not cause us any serious problems.

\begin{lemma}If $X/S$ is locally of finite type (resp. presentation), then for any quasi-compact extension problem $U/V$ there exists a finite type (resp presentation) extension problem $U^\prime/V^\prime$ over $S$ such that $U^\prime\rightarrow X$ factors $U\rightarrow X$.\end{lemma}
\begin{proof}I provide the argument for finite type. Let $U/V$ be an elementary extension problem, and write $\sh O_S(V)$ as a filtered union $\sh O_S(V_i)$ of $\sh O_S$-algebras of finite type over which $U$ is defined. Then $\sh O_S(U)=\bigcup_i\sh O_S(U_i)$ is also a filtered union.

Replacing $X$ with an affine subset through which $U$ factors, finiteness implies that \[\sh O_S(X)\rightarrow \sh O_S(U)\cong\bigcup_i\sh O_S(U_i)\] factors through $\sh O_S(U_i)$ for some $i$.\end{proof}

This gives us the first part of the result by part \emph{iv}) of lemma \ref{SEP_THELEMMA}.

The second part is essentially a repetition of the same finiteness argument. Suppose that $f$ has unique solutions to all extension problems for local objects, and let $U/V$ be an extension problem. For each $p\in V$, let $V_p=\lim_{f(p)\neq 0}V_f$ denote the local scheme of $V$ at $p$, $U_p:=U\times_VV_p$. By hypothesis, there exists a formally projective modification $\tilde V_p\rightarrow V_p$ such that $\tilde V_p\rightarrow X$ under $U$.

By definition and by lemma \ref{MORP_FINITE}, this modification can be written as a finite morphism, followed by a formal completion (along a finitely presented subscheme), followed by a projective morphism. By an affine embedding, it can be replaced with one whose finite and projective parts are finitely \emph{presented}. This replacement $\tilde V_p$ has a formally projective model over $V_f$ for a coinitial family of $V_f$. Moreover, one can take a standard affine atlas \[ \tilde V_f=\bigcup_{j=1}^n\tilde V_f^j, \quad  \tilde V_p^j\cong\lim_{f(p)\neq0}\tilde V^j_f\] indexed by a finite set independent of $f$. Write $U_f^j:=U\times_V\tilde V_f^j$.

Let $X^j\subseteq X$ be an affine open subset containing the image of $\tilde V^j_p$, and let $\sh O_S\{\vec{x}\}\twoheadrightarrow\sh O_S(X^j)$ be a presentation by finitely many generators. By cocompactness, there is a homomorphism $\sh O_S\{\vec{x}\}\rightarrow \sh O_S(\tilde V^j_f)$ making the square
\[\xymatrix{ \sh O_S\{\vec x\}\ar[d]\ar@{->>}[r] & \sh O_S(X^j)\ar[d] \\
 \sh O_S(\tilde V^j_f)\ar@{^{(}->}[r] & \sh O_S(U^j_f) }\]
commutative. It follows that $\sh O_S(X^j)\rightarrow \sh O_S(\tilde V^j_f)$. By varying $j$ from $1$ to $n$, we obtain an $f$ such that $\tilde V_p\rightarrow X$ extends to $\tilde V_f$.

Finally, by varying $p$ we obtain a covering of $V$ and hence a solution $\mathrm{Sur}_{U/V}\rightarrow X$.
\end{proof}

There is also the question of whether the inclusions \ref{first}, \ref{second} \emph{detect} overconvergence, and as such whether the theory of overconvergence can be formulated entirely in terms of the category of objects of finite type (resp. presentation) over $S$. For the inclusion 
\[ \Sh\mathbf{Sch}_S^\mathrm{ltf}\rightarrow \Sh\mathbf{Sch}_S, \]
the affirmative answer is a consequence of the fact that every $\P$ modification of a scheme $V/S$ of finite type remains of finite type.

However, as remarked above in \ref{SEP_REMARKS}, the inclusion
\[ \Sh\mathbf{FSch}_S^\mathrm{ltf}\rightarrow \Sh\mathbf{FSch}_S \]
certainly does not detect overconvergence: not every extension problem $U/V$ for an overconvergent morphism $X/S$ of formal schemes can be solved by a modification representable by schemes over $V$. One way to fix this would be to weaken the finiteness condition to \emph{formally} of finite type over $S$, which is satisfied by morphisms in $f\P$. We will not pursue this approach here.

\begin{remark}Let $\tilde V$ be any overconvergent neighbourhood of $U/V$. Applying the intermediate expansion (\S\ref{Expanditures}) to the data $U,\tilde V,Z$ with $Z\subseteq \tilde V$ the closed subscheme cut out by an ideal of definition does provide an overconvergent neighbourhood $\tilde U$ of $U/V$ which is affine and, by lemma \ref{SEP_EL_FINITE},  of finite type over $V$. Indeed, this blow-up separates $Z$ from $V\setminus U$, and so the formal completion of $\tilde U$ along the strict transform $p_*^{-1}Z$ is formally projective over $V$.

Extending this to the construction $U^\mathrm{\'el}$ of def. \ref{SEP_EXP_DEG}, one can even arrange that $U^\mathrm{\'el}\rightarrow V$ is an \emph{overconvergent morphism}, as can be seen, at least in the Noetherian case, from the valuative criterion (\ref{SEP_VAL}). I omit the details. This is analogous (and, as we shall see in \cite{part2}, directly related) to the ability to refine a neighbourhood to an \emph{open} neighbourhood in general topology. 

We may therefore replace the system $f\P$ with the set of overconvergent morphisms (lemma \ref{SEP_ENLARGE}), thereby defining - somewhat circularly - a reasonable theory of overconvergent neighbourhoods in the topos $\Sh\mathbf{FSch}_S^\mathrm{ltf}$.\end{remark}

\subsection{Reduction to the underlying space}

As the usual definitions of separated and proper morphisms from algebraic geometry lead us to expect (cf. 
\cite[\textbf{I}.5.5.1.vi, \textbf{II}.5.4.6]{EGA}), extensional properties of a scheme can be understood at the level of its underlying reduced scheme. 

\begin{prop}\label{SEP_REDUCE}A morphism $f:X\rightarrow S$ in $\mathbf{FSch}$ is overconvergent if and only if its reduction $X^\mathrm{red}\rightarrow S^\mathrm{red}$ is overconvergent. 

To establish overconvergence of $f$, it is enough to exhibit solutions to extension problems $U/V$ with $V$ a reduced scheme - which may be taken of finite type over $S$, or local and essentially of finite type over $S$.\end{prop}

\begin{proof}Let us denote by
\[  \mathrm{dR}_!:\mathbf{Sch}^\mathrm{red}\leftrightarrows\mathbf{FSch}: \mathrm{dR}^* \]
the inclusion and reduction functors betweeen the category of formal schemes (having locally an ideal of definition) and its full, coreflective subcategory $\mathbf{Sch}^\mathrm{red}$ of reduced schemes. They extend to an essential geometric morphism
\[ \mathrm{dR}:\Sh\mathbf{Sch}^\mathrm{red}\rightarrow\Sh\mathbf{FSch}.\footnote{So named because $\mathrm{dR}_*\mathrm{dR}^*X$ on a scheme $X$ is often called the `de Rham sheaf' and denoted $X^\mathrm{dR}$.} \]
The theorem has two statements. The first is that if $X\in \mathbf{FSch}_S$ is overconvergent, then its reduction $\mathrm{dR}_!\mathrm{dR}^*X$ is overconvergent. The second is that $X$ is overconvergent as soon as $\mathrm{dR}^*X$ is overconvergent. Both are subsumed, in the language of \S\ref{SEP_COMPARE}, by the lemma:

\begin{lemma}The functors \[\mathrm{dR}^*:\mathbf{FSch}\rightarrow\mathbf{Sch}^\mathrm{red},\quad \mathrm{dR}_!:\Sh\mathbf{Sch}^\mathrm{red}\rightarrow\Sh\mathbf{FSch} \] detect overconvergence. Their adjoints 
\[\mathrm{dR}_!:\mathbf{Sch}^\mathrm{red}\rightarrow\mathbf{FSch}, \quad \mathrm{dR}^*:\Sh\mathbf{FSch}\rightarrow\Sh\mathbf{Sch}^\mathrm{red}  \] preserve overconvergence.\end{lemma}
\begin{proof}In light of the fact that $\mathrm{dR}^*$ is a left inverse to $\mathrm{dR}_!$, it will suffice to prove the statements for the former; cf. lemma \ref{cf}.

Let us begin with the second statement. Let $U\hookrightarrow V$ be a quasi-compact open immersion in $\mathbf{Sch}^\mathrm{red}$ and take an overconvergent neighbourhood $\tilde V$ of $U/V$ in $\mathbf{FSch}$. The reduction $\tilde V^\mathrm{red}\rightarrow \tilde V$ is embedded and contains $U$, thus in particular an overconvergent neighbourhood of $U/V$. Thus \[\mathrm{Sur}_{\mathrm{dR}_!(U/V)}\cong\mathrm{dR}_!\mathrm{Sur}_{U/V},\] and $\mathrm{dR}^*$ preserves overconvergence.

It remains only to show that $\mathrm{dR}^*$ detects overconvergence. At this point, I should clarify that, more precisely, we are investigating
\[ \mathrm{dR}^*:(\mathbf{FSch},f\!i/\P)\rightarrow(\mathbf{Sch}^\mathrm{red},i/\P). \]
By criterion \emph{iii)} of lemma \ref{SEP_THELEMMA}, we have to show that for $X\rightarrow S$ and a quasi-compact open immersion $U\hookrightarrow V$ in $\mathbf{FSch}$, the square
\[\xymatrix{ \Hom_S(\mathrm{Sur}_{U/V},-)\ar[r]\ar[d] & \Hom_{S^\mathrm{red}}(\mathrm{Sur}_{(U/V)^\mathrm{red}},X^\mathrm{red})\ar[d]\ar@{=}[r] & \Hom_S(\mathrm{Sur}_{(U/V)^\mathrm{red}},X)\ar[d] \\
 \Hom_S(U,-)\ar[r] & \Hom_{S^\mathrm{red}}(U^\mathrm{red},X^\mathrm{red}) \ar@{=}[r] & \Hom_S(U^\mathrm{red},X) }\]
is Cartesian. This is handled by lemma \ref{SEP_NILPOTENTS}.\end{proof}

\begin{lemma}[Independence of nilpotents]\label{SEP_NILPOTENTS}If $U/V$ is quasi-compact and $V_0\rightarrow V$ is a nilpotent embedding of schemes, then \[\xymatrix{\Hom_S(\mathrm{Sur}_{U/V},X )\ar[r]\ar[d] &\Hom_S(\mathrm{Sur}_{U_0/V_0},X )\ar[d] \\ \Hom_S(U,X)\ar[r] & \Hom_S(U_0,X)}\] is Cartesian. That is, solutions of an extension problem $U\rightarrow X$ correspond to those of the restricted problem $U_0\rightarrow X$.\end{lemma}
\begin{proof}Apply lemma \ref{SEP_CLOSED_SUBSET_LEMMA} to the data $U,V,Z=V_0$ to see that \[\xymatrix{ U_0\ar[r]\ar[d] & V_0\ar[d] \\ U\ar[r] & \lim\mathrm{sur}_{U/V}^{V_0} }\] is a pushout in the category of algebraic spaces. The blow-up of the intersection of a nilpotent ideal with a Cartier divisor is finite; therefore $\mathrm{Sur}_{U/V}^Z=\mathrm{sur}_{U/V}^Z$ is pro-finite over $V$. Thus $\lim\mathrm{sur}_{U/V}^Z\rightarrow V$ is integral and so $\mathrm{Sur}_{U/V}\rightarrow\lim\mathrm{sur}_{U/V}^Z$ with respect to $i/\P$.\end{proof}

Finally, to see that overconvergence can still be checked using reduced \emph{and} finite type (or local and essentially finite type) test spaces, it is enough to observe that if a formal scheme is of finite type, resp. local, then so is its reduction.\end{proof}

\begin{cor}\label{SEP_YONEDA}The Yoneda embedding $\mathbf{FSch}\rightarrow\Sh\mathbf{Sch}$ detects overconvergence.\end{cor}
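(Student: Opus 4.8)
The plan is to deduce this from two facts already in hand: that the embedding \emph{preserves} overconvergence (Corollary \ref{SEP_YONED}), and that overconvergence of a morphism of formal schemes is detected by its reduction (Proposition \ref{SEP_REDUCE}). Write $F=\phi_*\circ y:\mathbf{FSch}\rightarrow\Sh\mathbf{Sch}$ for the embedding; it is left exact, being a composite of limit-preserving functors. Let $f:X\rightarrow S$ be locally of finite type with $Ff$ overconvergent; I must show that $f$ is overconvergent. By Proposition \ref{SEP_REDUCE} it suffices to prove that $f^\mathrm{red}:X^\mathrm{red}\rightarrow S^\mathrm{red}$ is overconvergent as a morphism of schemes. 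Crucially, since $X^\mathrm{red}$ and $S^\mathrm{red}$ are discrete, $F$ restricts on them to the ordinary Yoneda embedding $\mathbf{Sch}\hookrightarrow\Sh\mathbf{Sch}$; so by Definition \ref{SEP_DEF_PROPER} the assertion that $f^\mathrm{red}$ is overconvergent is literally the assertion that $Ff^\mathrm{red}$ is overconvergent in $\Sh\mathbf{Sch}$, which is what I will produce.

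First I note that each reduction map $r_X:X^\mathrm{red}\rightarrow X$ is a closed affine formal embedding whose structure sheaf is a cyclic, hence finite, $\sh O_X$-module; thus $r_X$ is finite, so it lies in $f\P$ and is therefore proper (in particular overconvergent) by the proposition that every $\P$-morphism is proper, while as a monomorphism it is separated (Proposition \ref{SEP_STABILITY}). Functoriality of reduction supplies a commuting square $r_S\circ f^\mathrm{red}=f\circ r_X$ in $\mathbf{FSch}$. Applying $F$, and using that it preserves overconvergence (Corollary \ref{SEP_YONED}) together with stability under composition (Proposition \ref{SEP_STABILITY}), the composite $Ff\circ Fr_X=F(r_S\circ f^\mathrm{red})=Fr_S\circ Ff^\mathrm{red}$ is overconvergent. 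Since $Fr_S$ is separated ($F$, being left exact and overconvergence-preserving, preserves separatedness), the cancellation law of Proposition \ref{SEP_STABILITY}(v) yields that $Ff^\mathrm{red}$ itself is overconvergent, which is the required conclusion.

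The argument is thus a purely formal cancellation once two inputs are secured, and the hard part is entirely upstream: verifying that the reduction maps are finite (so that $Fr_X$ may be pre-composed and $Fr_S$ cancelled), and, more importantly, that overconvergence is insensitive to the formal and nilpotent directions, so that passing to $X^\mathrm{red},S^\mathrm{red}$ loses nothing. This last point is precisely the content of Proposition \ref{SEP_REDUCE}, which itself rests on the independence-of-nilpotents Lemma \ref{SEP_NILPOTENTS}. The only bookkeeping to watch is the interplay of the distinguished classes $f\P$ upstairs and $\P$ downstairs, already reconciled by Proposition \ref{SEP_FIP}; no genuinely new geometric estimate is needed.
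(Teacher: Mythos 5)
Your proof is correct, but it is not quite the derivation the paper has in mind. The corollary is stated without proof immediately after Proposition \ref{SEP_REDUCE}, and the intended argument uses the \emph{second} assertion of that proposition: overconvergence of $f$ may be tested on extension problems $U/V$ with $V$ a reduced scheme, and since $U$, $V$ and the members of the germ $\mathrm{Sur}_{U/V}$ are then all schemes, the adjunction defining $\phi_*$ identifies such problems and their solutions for $X/S$ in $\Sh\mathbf{FSch}$ with those for $\phi_*X/\phi_*S$ in $\Sh\mathbf{Sch}$; overconvergence of $Ff$ then hands one exactly the required solutions. You instead use the \emph{first} assertion of \ref{SEP_REDUCE} (overconvergence of $f$ is equivalent to that of $f^{\mathrm{red}}$) and do the rest by the formal calculus: the reduction maps are finite monomorphisms, hence proper and separated, so composition together with Corollary \ref{SEP_YONED} makes $Fr_S\circ Ff^{\mathrm{red}}=Ff\circ Fr_X$ overconvergent, and cancellation (part \emph{v)} of Proposition \ref{SEP_STABILITY}) strips off $Fr_S$. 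What your route buys is that the only non-formal content --- comparing germs computed with respect to $\P$ in $\Sh\mathbf{Sch}$ and $f\P$ in $\Sh\mathbf{FSch}$ --- is entirely outsourced to \ref{SEP_YONED}, i.e.\ to the proof of Proposition \ref{SEP_FIP}; the cost is the two extra inputs (properness of the reduction maps, the cancellation law). Two small points to tidy. First, ``cyclic, hence finite'' needs the observation that the radical of an ideal of definition is open, so that $\sh O_X\twoheadrightarrow\sh O_{X^{\mathrm{red}}}$ is a \emph{topological} quotient onto a discrete algebra, as the paper's definition of finiteness requires. Second, for the cancellation step only \emph{local} separatedness of $Fr_S$ is needed, and the cleanest justification is that the left exact functor $F$ preserves monomorphisms and every monomorphism is separated (Proposition \ref{SEP_STABILITY}); your parenthetical claim that $F$ ``preserves separatedness'' is stronger than what has been established, since separatedness includes quasi-separatedness of the diagonal, which $F$ is not shown to preserve.
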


Beware that strictly speaking this Yoneda functor does not detect \emph{quasi-compactness}; fortunately, this will not usually cause confusion.

\subsection{Base change $\F_1\rightarrow \Z$}For our definition of properness of $\F_1$-schemes to say anything useful about ordinary algebraic geometry, we must have two things: first, that it is preserved by the base change functor to ordinary schemes over $\Z$, and second, that over $\Z$ our definitions are equivalent to the ones in \cite{EGA} that all know and love.

\begin{prop}\label{SEP_Z}The base change $p^*:\Sh\mathbf{Sch}_{\F_1}\rightarrow\Sh\mathbf{Sch}_\Z$ preserves overconvergence.\end{prop}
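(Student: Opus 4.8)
The plan is to recognise $p^*$ as the pullback $\phi^*$ of the spatial geometric morphism $\phi:\Sh\mathbf{Sch}_\Z\rightarrow\Sh\mathbf{Sch}_{\F_1}$ and to run the comparison machinery of \S\ref{SEP_COMPARE}, specifically criterion \emph{iv)} of lemma \ref{SEP_THELEMMA} (the criterion that was already the engine of proposition \ref{SEP_FIP}). The standing hypothesis of that criterion is that $\phi^*$ preserve the class $\P$ of projective morphisms; this holds because base change to $\Z$ commutes with $\Proj$ of a graded algebra and sends finite morphisms to finite morphisms, so it carries (finite morphisms over) projective bundles to the same, whence $p^*\P\subseteq\P$ by proposition \ref{MORP_PROJ_STABILITY}. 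Since base change is left exact on the affine sites, the formal apparatus applies. By proposition \ref{SEP_FP} it then suffices to solve extension problems $U/V$ over $\Z$ with $V$ of finite type over $S_\Z=\phi^*S$, and after the reduction of \S\ref{SEP_FSCH} we may take $U/V$ \emph{elementary}: $V$ affine and $U=(f\neq 0)$ dense for some $f\in\sh O(V)$.

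The geometric content reuses the $\G_m$-datum from the proof of \ref{SEP_FIP}. Given such $U/V$ and a map $U\rightarrow\phi^*X$ over $\phi^*S$, invertibility of $f$ on $U$ yields $U\rightarrow\G_{m,\phi^*X}$; since base change commutes with products and carries $\G_{m,\F_1}=\Spec\F_1[t^{\pm1}]$ to $\G_{m,\Z}$ and $\A^1_{\F_1}$ to $\A^1_\Z$, we have $\G_{m,\phi^*X}=\phi^*\G_{m,X}$. Thus the datum descends to a map toward $\phi^*$ of the \emph{$\F_1$}-elementary extension problem
\[ U':=\G_{m,X}\hookrightarrow \A^1_X=:V', \]
which lives over $X/S$ and for which overconvergence of $X/S$ over $\F_1$ supplies a unique solution $\mathrm{Sur}_{U'/V'}\rightarrow X$. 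Applying $\phi^*$ and reading off the induced solution $\mathrm{Sur}_{U/V}\rightarrow\phi^*X$ is then the goal, with uniqueness obtained by repeating the construction for the diagonal.

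The main obstacle I anticipate is that, unlike in \ref{SEP_FIP}, one cannot produce a single factorisation of \emph{pairs} $U/V\rightarrow\phi^*(U'/V')$: a general $\Z$-test space $V$ is not the base change of any $\F_1$-scheme, and we are given only $U\rightarrow\phi^*X$, not a map $V\rightarrow\phi^*X$, so there is no map $V\rightarrow\phi^*\A^1_X$ and the defining pullback-square trick for $V'$ used in \ref{SEP_FIP} is unavailable across $\F_1/\Z$. This is exactly why criterion \emph{iv)} is phrased as a \emph{colimit} over partial factorisations, and the real work is to verify that the natural map
\[ \colim_{U/V\rightarrow\phi^*(U'/V')}\Hom_S(\mathrm{Sur}_{U'/V'},-)\longrightarrow \Hom_{\phi^*S}(\mathrm{Sur}_{U/V},\phi^*(-)) \]
is an isomorphism. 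Concretely this reduces to showing that the canonical blow-up presentation of $\mathrm{Sur}_{U/V}$ over $\Z$ (def. \ref{SEP_ELEMENTARY}) is dominated, after $\phi^*$, by the base changes of the corresponding $\F_1$-germs, so that the indexing category of $\F_1$-extension problems is cofinal enough. I expect lemma \ref{blargunsfish} — applied as in \ref{SEP_FIP} to the discrete quotients cutting out the localisation structure around $f$ — to do the decisive step, identifying the base-changed $\F_1$-germ with the $\Z$-side of the square and thereby collapsing the colimit onto $\mathrm{Sur}_{U/V}$.
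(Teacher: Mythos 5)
Your framing (reduction to elementary test problems, $p^*$ preserving $\P$) is fine, and you correctly diagnose that the factorisation trick of proposition \ref{SEP_FIP} is unavailable across $\F_1/\Z$; but the salvage is where the proposal breaks, and it breaks because you have chosen the comparison criterion that needs the \emph{infeasible} direction of domination. Criterion \emph{iv)} requires the full isomorphism between $\Hom_{\phi^*S}(\mathrm{Sur}_{U/V},\phi^*(-))$ and the colimit, and in particular that every $\Z$-solution be induced, after refinement, by an $\F_1$-solution --- this is exactly your ``cofinality'' reduction, asking that the $\Z$-germ be dominated by base changes of $\F_1$-germs. But the natural comparison runs the other way: for a finitely generated monoid ideal $T\trianglelefteq\sh O(p_!V)$ with additive closure $\Z T\trianglelefteq\sh O(V)$, the surjection of Rees algebras $\bigoplus_i T^i\tens_{\F_1}\Z\twoheadrightarrow\bigoplus_i(\Z T)^i$ gives a map \emph{from} the $\Z$-blow-up $\mathrm{Bl}_{\Z T}V$ \emph{into} the pullback of the base-changed $\F_1$-blow-up; reversing it would require the pullback of $\Z T$ to be invertible on that base change, i.e.\ that the base-changed monoid Rees algebra be torsion-free --- the same failure as $\Proj\Sym I$ versus $\mathrm{Bl}_I$ --- which there is no reason to expect and no argument offered for. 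Lemma \ref{blargunsfish} cannot rescue this: it recognises localisations via fibre products \emph{inside a single category of rings}, and has no purchase on a comparison of modifications across the base change; it does not appear in the actual proof at all.

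The paper instead uses criterion \emph{i')} of lemma \ref{SEP_THELEMMA}, which is engineered so that only the \emph{feasible} direction and only the affine-level forgetful functor $p_!$ (left adjoint to $p^*$, defined because elementary test objects $U/V$ are affine) are needed: one defines $p_!\mathrm{Sur}_{U/V}$ formally as a pro-adjoint and must exhibit $p_!\mathrm{Sur}_{U/V}\rightarrow\mathrm{Sur}_{p_!U/p_!V}$ over $p_!V$, i.e.\ that every $\F_1$-overconvergent neighbourhood of $p_!U/p_!V$ is dominated by $p_!$ of a $\Z$-overconvergent neighbourhood of $U/V$. This is true and is the real content of the proposition: by theorem \ref{MORP_PROJECTIVE=BLOWUP} one may assume the $\F_1$-neighbourhood is the blow-up of $p_!V$ along a finitely generated ideal $T$, and then the $\Z$-blow-up of $V$ along the additive closure $\Z T$ dominates it, precisely via the Rees-algebra surjection above (here the surjection points the right way, because $T$ generates $\Z T$). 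This converts the unique $\F_1$-solution supplied by overconvergence of $X/S$ into a $\Z$-solution, which is all that \emph{preservation} requires; the two-sided comparison your criterion \emph{iv)} demands is what one would want for \emph{detection}, and is not established (or needed) anywhere in the paper. So the fix is structural: swap criterion \emph{iv)} for criterion \emph{i')}, and replace the $\G_m$/\ref{blargunsfish} factorisation by the additive-closure and Rees-algebra comparison.
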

\begin{proof}We will use criterion \emph{i')} of lemma \ref{SEP_THELEMMA}, which we note does not require the forgetful functor $p_!$ to preserve $\P$ or even to be defined on the whole of $\Sh\mathbf{Sch}_\Z$. Indeed, one can still define $p_!\mathrm{Sur}_{U/V}$ for affine $U/V$ as a left pro-adjoint
\[ \Hom(p_!\mathrm{Sur}_{U/V},-):=\Hom(\mathrm{Sur}_{U/V},p^*(-)) \]
to $p^*$, whence our objective is simply to find a section to the natural map
\[\Hom(\mathrm{Sur}_{U/V},p^*(-))\rightarrow \Hom(\mathrm{Sur}_{p_!U/p_!V},-).\]
 
Let $U/V$ be an elementary extension problem for schemes over $\Z$, and let $p_!U/p_!V$ be the open immersion of affine $\F_1$-schemes obtained by forgetting the additive structure. (The assumption that $U/V$ are affine is essential here for $p_!$ to be defined.) By theorem \ref{MORP_PROJECTIVE=BLOWUP}, after refinement, any overconvergent neighbourhood $\tilde V_{\F_1}$ of $p_!U/p_!V$ is a blow-up along some finitely generated ideal $T\trianglelefteq\sh O(p_!V)$.

Let $\Z T\trianglelefteq\sh O(V)$ denote the additive closure of $T$, $\tilde V_\Z$ the blow-up of $V$ along $\Z T$. Since $T$ generates $\Z T$, the morphism of Rees algebras $\bigoplus_iT^i\tens_{\F_1}\Z\rightarrow\bigoplus_i\Z T^i$ induces a morphism $p_!\tilde V_\Z\rightarrow\tilde V_{\F_1}$ over $p_!V$. 
\[\xymatrix{ &\tilde V_\Z\ar[d]^{\mathrm{Bl}_{\Z T}\phantom{bl}}="name" & & p_!\tilde V_\Z\ar[r] &\tilde V_{\F_1}\ar[d]^{\mathrm{Bl}_T}  \\ 
U\ar[r]\ar[ur] & V  && p_!U\ar[u]^{\phantom{blah}}="nume" \ar[r] & p_!V  \ar@{..>}^-{p_!} "name";"nume" }\]
Thus $p_!\mathrm{Sur}_{U/V}\rightarrow\mathrm{Sur}_{p_!U/p_!V}$ over $p_!V$.\end{proof}

It is not possible to repeat this argument directly for formal schemes due to the absence of the forgetful functor $p_!$ even for affine objects (as remarked at the end of \S\ref{RIG_PROD}). However, we can to deduce its conclusion from the fact that overconvergence depends only on the underlying reduced scheme.

\begin{cor}\label{SEP_FZ}The base change $p^*:\mathbf{FSch}_{\F_1}\rightarrow\mathbf{FSch}_\Z$ preserves overconvergence.\end{cor}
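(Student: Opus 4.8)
The plan is to bypass the missing forgetful functor $p_!$ by leaning entirely on Proposition \ref{SEP_REDUCE} (overconvergence is detected on reductions) together with the already-established scheme-theoretic case, Proposition \ref{SEP_Z}.

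First, given an overconvergent $f:X\rightarrow S$ in $\mathbf{FSch}_{\F_1}$, Proposition \ref{SEP_REDUCE} tells me that its reduction $f^\mathrm{red}:X^\mathrm{red}\rightarrow S^\mathrm{red}$ is an overconvergent morphism of reduced $\F_1$-schemes. As these are honest schemes, Proposition \ref{SEP_Z} applies, yielding that $p^*f^\mathrm{red}:p^*(X^\mathrm{red})\rightarrow p^*(S^\mathrm{red})$ is overconvergent in $\Sh\mathbf{Sch}_\Z$, hence in $\Sh\mathbf{FSch}_\Z$ by Proposition \ref{SEP_FIP}.

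Next I would transport this conclusion to $p^*f$ itself using Proposition \ref{SEP_REDUCE}, now over $\Z$: it is enough to show that $(p^*X)^\mathrm{red}\rightarrow(p^*S)^\mathrm{red}$ is overconvergent. For this I would establish the identity
\[ (p^*X)^\mathrm{red}=\bigl(p^*(X^\mathrm{red})\bigr)^\mathrm{red} \]
(and likewise for $S$). The reason is that $X^\mathrm{red}\hookrightarrow X$ is cut out by an ideal $I$ — the ideal of definition together with the nilpotents — whose base change $p^*I$ still consists of topologically nilpotent and nilpotent sections; thus $p^*(X^\mathrm{red})=p^*X/p^*I$ has the same reduction as $p^*X$. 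Phrased via the functors of Proposition \ref{SEP_REDUCE}, this is just the commutation of base change with the reduction functor $\mathrm{dR}^*$, up to a final application of $\mathrm{dR}^*$ to absorb any new nilpotents produced over $\Z$. Granting the identity, $(p^*X)^\mathrm{red}\rightarrow(p^*S)^\mathrm{red}$ is exactly the reduction of the overconvergent morphism $p^*f^\mathrm{red}$, so it is overconvergent by Proposition \ref{SEP_REDUCE}; one further application of the same proposition then upgrades this to overconvergence of $p^*f$, as required.

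The hard part will be the displayed identity, i.e. the assertion that the reduction of $-\tens_{\F_1}\Z$ depends only on the reduction of its source. This is precisely the compatibility that replaces the absent left adjoint $p_!$ in the formal setting; once it is in hand, the remainder of the argument is a purely formal chaining of the two cited propositions.
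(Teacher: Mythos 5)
Your proposal is correct, and every step checks out, but it is organised differently from the paper's own proof. The paper's proof of corollary \ref{SEP_FZ} chains \ref{SEP_YONED}, \ref{SEP_YONEDA}, and \ref{SEP_Z}: one embeds $\mathbf{FSch}_{\F_1}$ into $\Sh\mathbf{Sch}_{\F_1}$ by the Yoneda (ind-scheme) embedding, which preserves overconvergence (\ref{SEP_YONED}); applies the scheme-level base change result \ref{SEP_Z} there; and returns to $\mathbf{FSch}_\Z$ using the fact that the Yoneda embedding detects overconvergence (\ref{SEP_YONEDA}). The point of that packaging is that the compatibility it needs is definitional: the base change of formal schemes is by construction the ind-extension of base change of schemes, so it commutes with the Yoneda embedding on the nose, and nothing like your displayed identity ever arises. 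Your route replaces the Yoneda embedding by the reduction functor, using \ref{SEP_REDUCE} in both directions over both bases together with \ref{SEP_FIP}; this is close in spirit to the paper — indeed the paper motivates its proof with exactly your slogan, and \ref{SEP_YONEDA} is itself a corollary of \ref{SEP_REDUCE} — but it genuinely differs in that you must pay for the more concrete route with the comparison $(p^*X)^\mathrm{red}\cong(p^*(X^\mathrm{red}))^\mathrm{red}$, which the paper's route never needs.

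That comparison, which you call the hard part, is in fact easy, so your proof is complete. Locally, let $I\trianglelefteq\sh O_X$ be an ideal of definition, so that $X^\mathrm{red}$ is cut out by the radical $\sqrt{I}$, and recall that base change preserves ideals of definition (admissibility), so that $(p^*X)^\mathrm{red}$ is the quotient of $\sh O_{p^*X}$ by $\sqrt{I\sh O_{p^*X}}$. The kernel $K$ of $\sh O_{p^*X}\twoheadrightarrow\sh O_{p^*(X^\mathrm{red})}$ is the ideal generated by the image of $\sqrt{I}$: if $x^n\in I$ then $(ax)^n\in I\sh O_{p^*X}$ for any section $a$, so each generator, and hence (radical ideals being ideals) all of $K$, lies in $\sqrt{I\sh O_{p^*X}}$; conversely $I\sh O_{p^*X}\subseteq K$. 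Thus $\sqrt{K}=\sqrt{I\sh O_{p^*X}}$, so $(p^*(X^\mathrm{red}))^\mathrm{red}$ and $(p^*X)^\mathrm{red}$ are the same quotient of $\sh O_{p^*X}$, naturally in $X$, which is exactly what your argument requires.
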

\begin{proof}Follows from \ref{SEP_YONED}, \ref{SEP_YONEDA}, and \ref{SEP_Z}.\end{proof}

\begin{prop}\label{SEP_ME=EGA}A finite type, separated morphism in $\mathbf{FSch}_\Z$ is proper if and only if it is universally closed.\end{prop}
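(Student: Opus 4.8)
The plan is to use the comparison machinery of \S\ref{SEP_COMPARE} to strip away the formal and nilpotent structure, and then to match the author's overconvergence against the classical valuative criterion: the implication \emph{overconvergent $\Rightarrow$ universally closed} will be immediate from the triviality of blow-ups over valuation rings, while the converse is the substantive strong Chow lemma, obtained from flattening. To begin, by Corollary \ref{SEP_FZ} together with Proposition \ref{SEP_REDUCE}, overconvergence of $f:X\rightarrow S$ in $\mathbf{FSch}_\Z$ is equivalent to overconvergence of the induced morphism of underlying reduced schemes, and by Proposition \ref{SEP_FP} it may be tested on elementary extension problems $U/V$ (def. \ref{SEP_ELEMENTARY}) with $V$ a reduced scheme of finite type over $S$. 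Since universal closedness is a purely topological condition, it too is insensitive to these reductions. The claim thus reduces to the assertion that, for a finite type, separated morphism of ordinary $\Z$-schemes, the author's overconvergence coincides with universal closedness.

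For \emph{overconvergent $\Rightarrow$ universally closed} I would invoke the valuative criterion. Once $f$ is known to be overconvergent, the extension property applies to \emph{every} quasi-compact open immersion, so given a valuation ring $R$ with fraction field $K$ and a lifting problem over $S$, I take $V=\Spec R$ and $U=\Spec K$ (passing, if necessary, to the filtered limit over the quasi-compact opens of $V$ containing the generic point, or restricting to rank-one valuations). Every finitely generated ideal of $R$ is principal, so every admissible blow-up of $V$ with centre in $V\setminus U$ is an isomorphism; hence $\mathrm{Sur}_{U/V}\cong V$, and overconvergence produces the lift $V\rightarrow X$ directly. By \cite[\textbf{II}.7.3.8]{EGA} a finite type morphism satisfying the existence part of the valuative criterion is universally closed, the separatedness hypothesis supplying the uniqueness for the diagonal.

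The substance is the converse. Given an elementary extension problem $U/V$ — so $V$ is affine, $U=(f\neq 0)$ for some $f\in\sh O(V)$, and $V\setminus U$ is the Cartier divisor $(f=0)$ — together with a morphism $U\rightarrow X$ over $S$, I would form the schematic closure $\tilde V_0:=\mathrm{cl}(U/V\times_S X)$ of the graph inside $V\times_S X$ (lemma \ref{AGNU}); by Proposition \ref{MORP_OPEN} the inclusion $U\hookrightarrow\tilde V_0$ is a dense open immersion. As a closed subscheme of the base change $V\times_S X\rightarrow V$ of the finite type, separated, universally closed morphism $X\rightarrow S$, the projection $\tilde V_0\rightarrow V$ is itself finite type, separated and universally closed; its image is closed and contains the dense set $U$, so it is a proper surjection restricting to an isomorphism over $U$. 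It remains to dominate this proper, $U$-birational modification by a \emph{projective} one. In the relative, possibly non-Noetherian setting I would extract this from Raynaud--Gruson flattening: a blow-up of $V$ with centre supported on $(f=0)$ flattens the strict transform of $\tilde V_0$, and a flat, proper, finitely presented morphism that is an isomorphism over the schematically dense open $U$ is projective. Since $V$ is affine it is qcqs with ample structure sheaf and $V\setminus U$ is Cartier, so Theorem \ref{MORP_PROJECTIVE=BLOWUP} identifies the resulting projective, $U$-admissible modification $\tilde V\rightarrow V$ with an admissible blow-up, that is, with an overconvergent neighbourhood of $U/V$; the composite $\tilde V\rightarrow\tilde V_0\rightarrow X$ solves the extension problem, and uniqueness up to refinement follows because any two solutions agree on the dense $U$ and $X/S$ is separated. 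The main obstacle is exactly this flattening step: extracting, from universal closedness alone and without Noetherian hypotheses, a projective modification by Raynaud--Gruson techniques, which is the content of the strong Chow lemma recorded in Theorem \ref{SEP_CHOW}.
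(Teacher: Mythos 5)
Your converse direction (universally closed $\Rightarrow$ proper) is essentially the paper's own argument: reduce to reduced schemes via Proposition \ref{SEP_REDUCE}, form the canonical extension as the embedded closure of the graph of $U\rightarrow X$ inside $V\times_S X$, observe that it is of finite type, separated and universally closed over $V$, and then invoke flattening by blow-ups (the paper cites \cite[081T]{stacksproject}, whose Raynaud--Gruson proof you re-sketch) to dominate it by a $V\setminus U$-admissible blow-up. Two small inaccuracies: your attribution of the flattening input to Theorem \ref{SEP_CHOW} is off --- that theorem is a near-tautological reformulation of the definition of propriety, and the flattening lives precisely in the proof of the present proposition --- and over $\Z$ an elementary extension problem need not have principal complement ($U$ is merely affine and dense in $V$, not necessarily of the form $(f\neq0)$). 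Neither point damages the argument. Also, your invocation of Corollary \ref{SEP_FZ} in the reduction step is spurious; Proposition \ref{SEP_REDUCE} alone does that job.

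The forward direction (proper $\Rightarrow$ universally closed) is where you genuinely depart from the paper, and where there is a gap. The criterion of \cite[\textbf{II}.7.3.8]{EGA} characterises universal closedness using \emph{arbitrary} valuation rings, and for a valuation ring $R$ of rank greater than one the inclusion $\Spec K\hookrightarrow\Spec R$ is not a quasi-compact open immersion, so the extension-problem machinery does not apply to it. Neither of your proposed fixes closes this. The filtered-limit trick requires spreading the given map $\Spec K\rightarrow X$ out to some $\Spec R_f\rightarrow X$, which needs $X/S$ locally of finite \emph{presentation}; only finite type is assumed, and $S$ is arbitrary, so no Noetherian approximation is available. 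Restricting to rank-one valuation rings, the statement you would then need --- that the rank-one existence criterion implies universal closedness for finite type separated morphisms over an arbitrary base --- is not \cite[\textbf{II}.7.3.8]{EGA}; it is a substantially harder theorem, and the paper pointedly does not possess it over $\Z$: its valuative criterion (Cor. \ref{SEP_VAL}) is proved only over $\F_1$, via the fan classification, with the explicit remark that ``the methods manifestly do not apply over $\Z$''. (There is also a smaller slip even in the rank-one case: overconvergent neighbourhoods are arbitrary projective morphisms through which $U$ factors, not only admissible blow-ups, so to conclude $\mathrm{Sur}_{U/V}\cong V$ you must first pass to the embedded closure of $U$ in such a neighbourhood and apply Theorem \ref{MORP_PROJECTIVE=BLOWUP}; that one is fixable.) The paper's proof of this direction avoids valuations entirely and is much softer: propriety gives the strong Chow property, so $X$ receives a surjection from a scheme projective over $S$, and universal closedness descends along that surjection by \cite[\textbf{II}.5.2.3.\emph{ii)}]{EGA}. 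You should replace your valuative argument with this one, or be prepared to prove the rank-one criterion from scratch.
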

\begin{proof}The reductions of corollary \ref{SEP_REDUCE} allow us to assume that $X/S$ are (reduced) schemes. Suppose that $X/S$ is proper. The Chow property then implies that $X$ is dominated by a projective $S$-scheme, and is therefore universally closed by \cite[\textbf{II}.5.2.3.\emph{ii)}]{EGA}.

The difficulty lies in showing that if $X\rightarrow S$ is proper in the sense of \cite[\textbf{II}.5]{EGA}, then it is proper in the sense of definition \ref{SEP_DEF_PROPER}. Let $U/V$ be a quasi-compact extension problem, and let $\tilde V\rightarrow X$ be the canonical extension (def. \ref{SEP_ELEMENTARY}). Then $\tilde V\rightarrow V$ is also of finite type, separated, and universally closed. By \cite[\href{http://stacks.math.columbia.edu/tag/081T}{081T}]{stacksproject}, there is a $V\setminus U$-admissible blow-up of $V$ that dominates $\tilde V$. Therefore $X\rightarrow S$ is proper.\end{proof}

\subsection{Embeddings are proper}

Recall (def. \ref{MORP_IMMERSION}) that an open immersion followed by an embedding is called an \emph{immersion}. Every immersion is separated. Since open immersions can be understood, by definition, at the level of point-set topology, so too can the difference between immersions and embeddings. In particular:

\begin{lemma}\label{the difference}A surjective immersion is an embedding.\end{lemma}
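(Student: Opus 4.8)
The plan is to work entirely at the level of the underlying spaces, exploiting the fact stressed in the paragraph preceding the lemma that the difference between immersions and embeddings is point-set-topological. Write the given immersion as $j = g\circ f$, where $f\colon X\hookrightarrow Z$ is an open immersion and $g\colon Z\hookrightarrow Y$ is a (formal) embedding, as furnished by definition \ref{MORP_IMMERSION}. By definition \ref{MORP_EMBEDDING}, $g$ is a (quasi-compact) monomorphism; since the ambient topos has enough points, $g$ is injective on the underlying spaces. The whole argument then consists of propagating the surjectivity hypothesis on $j$ through this factorisation.

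First I would show $g$ is bijective on points. For any $y\in Y$, surjectivity of $j$ gives an $x\in X$ with $g(f(x)) = y$, so $y$ lies in the image of $g$; hence $g$ is surjective, and being monic it is a bijection on points. Next I would deduce that $f$ is surjective on points: given $z\in Z$, set $y=g(z)$ and pick $x\in X$ with $g(f(x)) = y$; injectivity of $g$ forces $f(x) = z$, so $z$ lies in the image of $f$. Thus $f$ is an open immersion which is surjective on the underlying space, and such a map is necessarily an isomorphism --- an open sub-(formal-)scheme that equals the whole space is the identity, and the structure sheaves then agree everywhere. Finally, $j = g\circ f$ is the composite of an isomorphism with a (formal) embedding, hence a (formal) embedding by the stability of embeddings under composition (the Stability proposition following \ref{MORP_OPEN}).

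The only delicate point --- and the reason the statement is not entirely formal --- is that over $\F_1$ an immersion need not be a \emph{topological} immersion (example \ref{MORP_A2}): a bijective immersion can fail to be a homeomorphism onto its image, and embeddings themselves need not be topological embeddings. I would therefore take care to rely only on two robust inputs: that a monomorphism is injective on points, and that a surjective open immersion is an isomorphism. In particular I would avoid any claim that $g$ or $j$ is a homeomorphism onto its image; the surjectivity hypothesis is used purely set-theoretically, to force $f$ to hit every point of $Z$, which is exactly what collapses the open immersion $f$ to an isomorphism. No finiteness, affineness, or normality hypotheses enter, so the same proof applies verbatim to formal immersions and, via the preservation results of the preceding subsections, survives base change to $\Z$. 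The main obstacle, accordingly, is simply bookkeeping care around these $\F_1$-pathologies rather than any substantial difficulty.
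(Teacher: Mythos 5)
Your proof is correct and takes essentially the same approach as the paper, which states this lemma without proof as an immediate consequence of the preceding remark that the difference between immersions and embeddings is purely point-set-topological: your factorisation $j=g\circ f$, together with the observation that surjectivity forces the open part $f$ to be an isomorphism, is exactly that remark made precise. The only loose step is justifying injectivity of $g$ on underlying points by appeal to ``enough points'' of the topos; the standard argument is that an embedding is a monomorphism, so its diagonal is an isomorphism, and for $z_1,z_2\in Z$ over the same $y\in Y$ the algebra $\kappa(z_1)\otimes_{\kappa(y)}\kappa(z_2)$ is a non-zero $\F_1$-algebra and hence has a prime, producing a point of $Z\times_Y Z$ that forces $z_1=z_2$ --- a cosmetic repair, not a gap.
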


\begin{lemma}\label{SEP_EMBED_PROPER}Let $X\hookrightarrow S$ be an immersion, $U\hookrightarrow V$ an affine open immersion of $S$-schemes. Then $\Hom_S(\mathrm{cl}(U/V),X)\tilde\rightarrow \Hom_S(\mathrm{Sur}_{U/V},X)$.\end{lemma}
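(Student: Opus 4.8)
The plan is to exploit that an immersion $X\hookrightarrow S$ is a monomorphism, so that for every $S$-object $Y$ the set $\Hom_S(Y,X)$ is a subsingleton: it is non-empty precisely when the structure morphism $Y\to S$ factors through $X$. Hence both sides of the asserted bijection have at most one element and the natural comparison map will be injective for free; the whole content is surjectivity. First I would produce the comparison map by noting that $\mathrm{cl}(U/V)$ is \emph{itself} an overconvergent neighbourhood of $U/V$. Since $U\hookrightarrow V$ is affine, lemma \ref{AGNU} makes $\mathrm{cl}(U/V)\to V$ an affine embedding, hence projective (an affine embedding is a projective bundle $\P(\iota_*\sh O)$), while proposition \ref{MORP_OPEN} exhibits $U\hookrightarrow\mathrm{cl}(U/V)$ as a dense open immersion. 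Thus $U\to\mathrm{cl}(U/V)\to V$ is a $\P$-overconvergent neighbourhood (def. \ref{SEP_SUR}), giving a projection $\mathrm{Sur}_{U/V}\to\mathrm{cl}(U/V)$ of pro-objects and, by precomposition, the map $\Hom_S(\mathrm{cl}(U/V),X)\to\Hom_S(\mathrm{Sur}_{U/V},X)$. So it remains to show that a morphism $\mathrm{Sur}_{U/V}\to X$ over $S$ forces one $\mathrm{cl}(U/V)\to X$ over $S$.

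Next I would decompose the immersion as $X\hookrightarrow\bar X\hookrightarrow S$ with $X\hookrightarrow\bar X$ open and $\bar X:=\mathrm{cl}(X/S)\hookrightarrow S$ an embedding, and first obtain a factorisation through the closed part. A map $\mathrm{Sur}_{U/V}\to X$ is represented by $\tilde V\to X$ for some overconvergent neighbourhood $\tilde V$, which for a schematic $U/V$ I may take projective over $V$ (the germ is insensitive to the formal-versus-schematic setting for schemes). Replacing $\tilde V$ by $\tilde V\times_V\mathrm{cl}(U/V)$ — still a neighbourhood, and projective over $\mathrm{cl}(U/V)$ by base change (prop. \ref{MORP_PROJ_STABILITY}) — I arrange a map $\tilde V\to\mathrm{cl}(U/V)$ compatible with the sections out of $U$. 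Restricting $\tilde V\to X$ along $U\to\tilde V$ gives $U\to X\to\bar X$, hence $U\to\bar X\times_S V$; as $\bar X\hookrightarrow S$ is an embedding, $\bar X\times_S V\hookrightarrow V$ is an embedded subscheme of $V$ through which $U$ factors, so the universal property of the embedded image $\mathrm{cl}(U/V)$ yields a map $\psi:\mathrm{cl}(U/V)\to\bar X\times_S V\to\bar X$ over $S$.

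Finally I would promote $\psi$ to a morphism into the open subscheme $X$. Put $W:=\psi^{-1}(X)\subseteq\mathrm{cl}(U/V)$, an open subscheme containing $U$. The two composites $\tilde V\to\bar X$ — one through $\psi$, one through $\tilde V\to X\hookrightarrow\bar X$ — are both morphisms over $S$ into the monomorphism $\bar X\hookrightarrow S$, so they coincide; since the second factors through $X$, the map $\tilde V\to\mathrm{cl}(U/V)$ lands in $W$. Because $\tilde V\to\mathrm{cl}(U/V)$ is projective it is universally closed, so its image is closed; as that image contains the topologically dense $U$, the map is surjective, forcing $W=\mathrm{cl}(U/V)$ and hence $\mathrm{cl}(U/V)\to X$ over $S$. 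The main obstacle I anticipate is precisely this closing step: I must know that $\tilde V\to\mathrm{cl}(U/V)$ is genuinely surjective (closed image together with density of $U$), and that topological surjectivity suffices to upgrade the open immersion $W\hookrightarrow\mathrm{cl}(U/V)$ to an equality — both resting on closedness of projective morphisms, invoked after reducing (via proposition \ref{SEP_REDUCE}) to reduced schemes.
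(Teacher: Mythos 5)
Your construction of the comparison map (exhibiting $\mathrm{cl}(U/V)$ itself as an overconvergent neighbourhood) and the production of $\psi:\mathrm{cl}(U/V)\to\bar X$ from the universal property of the embedded image are both sound, and up to that point your argument runs parallel to the paper's proof, which takes $V=\mathrm{cl}(U/V)$ from the outset and works with $X\times_SV\to V$ in place of your $\bar X\times_SV$. The genuine gap is the closing step: ``because $\tilde V\to\mathrm{cl}(U/V)$ is projective it is universally closed, so its image is closed.'' Over $\F_1$ this is false, and the failure is not a technicality --- it is precisely the phenomenon that \S\ref{SEP} is designed to circumvent. Any affine embedding is projective (it is the projective bundle $\P(\iota_*\sh O)$, cf.\ \S\ref{MORP_PROJ}), and affine embeddings over $\F_1$ need not have closed image: the embedded point $\Spec\F_1\hookrightarrow\A^1_{\F_1}$ dual to the surjection $\F_1[t]\twoheadrightarrow\F_1$, $t\mapsto 1$, is projective with image the (non-closed) generic point, cf.\ example \ref{MORP_A2}. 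Indeed the paper says flatly that over $\F_1$ proper morphisms are not universally closed (\S\ref{SEP_IMAGES}, and the remark following corollary \ref{SEP_CRITERIA}). Since this lemma exists precisely to make the separation theory run over $\F_1$ --- it drives proposition \ref{SEP_EMBEDDING} and corollary \ref{SEP_CRITERIA} --- an argument valid only over $\Z$ does not prove it. Note also that no weakened closedness property can rescue the step: the example above shows the image of a projective morphism over $\F_1$ need not even be stable under specialisation.

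What you actually need --- and what the paper's own proof invokes at exactly the same point, without a closedness detour --- is the statement that an overconvergent neighbourhood $\tilde V\to V$ of a scheme-theoretically \emph{dense} open $U$ is surjective on points. This is true, but for reasons that make essential use of the section over $U$ rather than of closed images: after replacing $\tilde V$ by the embedded closure of the section $U\to\tilde V$, the morphism $\tilde V\to V$ becomes projective and an isomorphism over $U$, hence (theorem \ref{MORP_PROJECTIVE=BLOWUP}, $V$ being affine) is dominated by a blow-up with centre disjoint from $U$; and at the level of the associated cones such a morphism is a subdivision (cf.\ proposition \ref{PFAN_MODIFICATION}), so it meets every face and therefore hits every prime. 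Once surjectivity is supplied, your deduction $W=\mathrm{cl}(U/V)$ goes through. The paper's finish is simply shorter: with $V=\mathrm{cl}(U/V)$, the base change $X\times_SV\to V$ is a surjective immersion, hence an embedding by lemma \ref{the difference}, hence all of $V$ by the universal property of the embedded closure --- avoiding your factorisation through $\bar X$ and the pullback of the open part altogether.
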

\begin{proof}Without loss of generality, assume $V=\mathrm{cl}(U/V)$. Let $\tilde V$ be an overconvergent neighbourhood of $U/V$, $\tilde V\rightarrow X$ an extension.

The base change $X\times_SV\rightarrow V$ is an immersion containing $U$. It is also surjective, since $\tilde V\rightarrow V$ is surjective. Therefore it is an isomorphism by lemma \ref{the difference}.\end{proof}

It follows, more or less tautologically:

\begin{prop}\label{SEP_EMBEDDING}An immersion is proper if and only if it is an embedding.

A formal immersion is $f\P$-proper if and only if it is a formal embedding.\end{prop}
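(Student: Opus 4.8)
The plan is to deduce both implications formally from Lemma \ref{SEP_EMBED_PROPER}, which identifies the overconvergent germ of an affine open immersion relative to an immersion $X\hookrightarrow S$. I will use throughout that an embedding (resp. formal embedding) is a quasi-compact monomorphism, hence automatically quasi-separated and qcqs, and that for a monomorphism $X\to S$ the functor $\Hom_S(-,X)$ takes values in subsingletons. Recall that proper means overconvergent and qcqs (def. \ref{SEP_DEF_PROPER}), so once overconvergence is established the qcqs part is free.

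For the ``if'' direction, let $X\hookrightarrow S$ be an embedding. To check overconvergence I would reduce, as in \S\ref{SEP_FSCH}, to elementary extension problems $U/V$ (def. \ref{SEP_ELEMENTARY}), so that $V$ is affine and $U\hookrightarrow V$ is affine and dense; then $\mathrm{cl}(U/V)=V$. By Lemma \ref{SEP_EMBED_PROPER}, extensions $\mathrm{Sur}_{U/V}\to X$ correspond bijectively to $S$-morphisms $V\to X$, and under this identification the composite $U\to\mathrm{Sur}_{U/V}\to V$ is just $U\hookrightarrow V$; so overconvergence near $U\to X$ amounts to the bijectivity of the restriction $\Hom_S(V,X)\to\Hom_S(U,X)$. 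Injectivity is automatic since $X\to S$ is monic. For surjectivity, given $U\to X$ I would form the embedding $X\times_S V\hookrightarrow V$ (embeddings are stable for base change) through which $U$ visibly factors, and invoke minimality of the embedded closure: as $U$ is dense, $V=\mathrm{cl}(U/V)\subseteq X\times_S V$, whence $X\times_S V=V$ and $V\to S$ factors through $X$. This produces the required unique extension, and with qcqs it gives propriety.

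For the converse, let $X\hookrightarrow S$ be a proper immersion and factor it as $X\hookrightarrow\bar X:=\mathrm{cl}(X/S)\hookrightarrow S$, where $i:X\hookrightarrow\bar X$ is a dense open immersion and $\bar X\hookrightarrow S$ is an embedding. Applying Lemma \ref{SEP_EMBED_PROPER} with $U=X$, $V=\bar X$ (so $\mathrm{cl}(X/\bar X)=\bar X$ by density) and then overconvergence of $X$, I obtain bijections $\Hom_S(\bar X,X)\cong\Hom_S(\mathrm{Sur}_{X/\bar X},X)\cong\Hom_S(X,X)$, the composite being $g\mapsto g\circ i$. The element corresponding to $\mathrm{id}_X$ is a retraction $r:\bar X\to X$ with $r\circ i=\mathrm{id}_X$; since $\bar X\to S$ is monic, $\Hom_S(\bar X,\bar X)$ is a subsingleton, so $i\circ r=\mathrm{id}_{\bar X}$ as well. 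Hence $i$ is an isomorphism, $X=\bar X$, and $X\hookrightarrow S$ is an embedding.

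The proposition is genuinely ``tautological'' once Lemma \ref{SEP_EMBED_PROPER} is in hand, so the only real care points are bookkeeping. First, making the reduction to affine (elementary) extension problems legitimate, so that the lemma, stated for affine open immersions, applies; for the converse this means covering $\bar X$ by affine opens and checking the retraction locally. Second, the formal variant, where $\P$ is replaced by $f\P$, embeddings by formal embeddings, and $\mathrm{cl}$ by the formally embedded closure. The proof of Lemma \ref{SEP_EMBED_PROPER} uses only Lemma \ref{the difference} (a surjective immersion is an embedding) together with base change of immersions, both of which are available in the formal setting, so the identical argument yields $f\P$-propriety $\Leftrightarrow$ formal embedding; I would state this as a remark rather than repeat the manipulation.
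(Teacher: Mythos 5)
Your forward direction (embedding $\Rightarrow$ proper) is correct and essentially the paper's argument: reduce to an elementary extension problem $U/V$, use stability of embeddings under base change to form $X\times_S V\hookrightarrow V$, and let minimality of the embedded closure plus density of $U$ force $V=\mathrm{cl}(U/V)\subseteq X\times_S V$, hence $X\times_S V=V$ and $V\to X$; uniqueness is free since $X\to S$ is monic. (The paper instead uses the affine diagonal to form $\mathrm{cl}(U/X)$ and stability of embeddings under composition to get $\mathrm{cl}(V/S)\subseteq X$; the two manipulations are interchangeable.)

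The converse, however, has a genuine gap over $\F_1$. You begin by factoring $X\hookrightarrow\bar X:=\mathrm{cl}(X/S)\hookrightarrow S$, but the formally embedded closure of a non-affine morphism is not known to exist in this setting: Lemma \ref{AGNU} constructs $\mathrm{cl}(-/S)$ only for affine morphisms (and for arbitrary formal schemes over $\Z$), while for a general quasi-compact morphism over $\F_1$ the paper obtains only a pro-adjoint, i.e.\ $\mathrm{cl}(X/S)$ exists merely as a pro-object of $\hat{\lie Z}(S)$ --- precisely because embeddings can fail to be affine (the ``second issue'' of \S\ref{MORP_EMBED}; cf.\ example \ref{MORP_A2} and question \ref{Q_AFF_EMBED}). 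Since your $X\to S$ is an arbitrary proper formal immersion, not assumed affine, the object $\bar X$ --- and with it the retraction argument and the application of Lemma \ref{SEP_EMBED_PROPER} to $X/\bar X$ (a lemma stated for \emph{affine} open immersions in any case) --- is not available. The paper's proof is designed exactly to sidestep this: it quantifies over affine formal immersions $U\hookrightarrow S$ factoring through $X$, takes $V=\mathrm{cl}(U/S)$ (which exists because $U\to S$ is affine), uses propriety to produce a projective modification of $V$ factoring through $X$, which is surjective by density of $U$, so that $V\subseteq X$ by Lemma \ref{the difference}; it then concludes by the criterion, recorded after Definition \ref{MORP_IMMERSION}, that a (formal) immersion is a (formal) embedding if and only if it contains the embedded closures of all affine (formal) immersions factoring through it. Your route works as written over $\Z$, where embedded closures always exist; to repair it over $\F_1$ you must replace the global closure $\bar X$ by this quantification over affine test objects.
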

\begin{proof}Let $X\hookrightarrow S$ be a formal embedding, and let $U/V$ be an elementary extension problem. Then in particular, $X/S$ has affine diagonal, and so $U\rightarrow X$ is affine. Since taking the formally embedded image is order-preserving, \[\mathrm{cl}(V/S)=\mathrm{cl}(U/S)=\mathrm{cl}(\mathrm{cl}(U/X)/S)\subseteq X.\] Thus $V\rightarrow X$ and $X$ is $f\P$-proper.

Conversely, let $X\hookrightarrow S$ be a formally proper formal immersion, and let $U\rightarrow S$ be an affine formal immersion factoring through $X$. Let $V=\mathrm{cl}(U/S)$ be the formally embedded closure; then $U$ is open in $V$. By the definition of propriety, there is a projective modification of $V$ that factors through $X$. Since $U/V$ is dense, such a modification is necessarily surjective. Therefore $V\subseteq X$.

The first statement follows immediately from the second and the definitions.\end{proof}

We will return to a discussion on this theme in \cite[\S4.5]{part2}.

\subsection{Alternative characterisations}\label{SEP_SEP}
In this section, we gather a few alternative characterisations of separated, proper, and overconvergent morphisms couched in more traditional terms.

\subsubsection{Diagonal criterion for separation}
\begin{cor}[of prop. \ref{SEP_EMBEDDING}]\label{SEP_CRITERIA}Let $X\rightarrow S$ be a morphism of formal schemes. The following are equivalent:
\begin{enumerate}\item $X/S$ is separated;
\item the diagonal $X\rightarrow X\times_SX$ is an embedding.
\end{enumerate}\end{cor}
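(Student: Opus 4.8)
The plan is to derive this corollary directly from the characterisation of proper immersions established in Proposition \ref{SEP_EMBEDDING}. By Definition \ref{SEP_DEF_PROPER}, $X/S$ is separated precisely when its diagonal morphism $\Delta:X\rightarrow X\times_SX$ is proper. So the entire content of the equivalence is to identify, for the specific morphism $\Delta$, the condition ``$\Delta$ proper'' with the condition ``$\Delta$ an embedding''.

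First I would observe that the diagonal of any morphism is always a monomorphism, and in fact an immersion: this is standard and should follow from the discussion of immersions in \S\ref{MORP_EMBED} together with the fact that $\Delta$ factors as a locally closed-type inclusion. Concretely, $\Delta$ is a section of the projection $X\times_S X\rightarrow X$, and sections of separated-enough maps are immersions; but since we have not yet established separation, the cleanest route is to note that $\Delta$ is quasi-compact-locally an open immersion onto its formally embedded image, which is exactly the definition of a (formal) immersion (Def. \ref{MORP_IMMERSION}). I would want to confirm that the diagonal is indeed an immersion in this general setting — this is the one point requiring a small argument rather than pure citation.

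Once $\Delta$ is known to be a formal immersion, Proposition \ref{SEP_EMBEDDING} applies verbatim: a formal immersion is $f\P$-proper if and only if it is a formal embedding. Since ``$X/S$ separated'' means by definition ``$\Delta$ proper'' (i.e.\ $f\P$-proper, overconvergent and qcqs), and since we are applying this to the immersion $\Delta$, the proposition converts the properness of $\Delta$ into the statement that $\Delta$ is a (formal) embedding. This yields directly the equivalence of i) and ii). I would phrase the proof as: ``The diagonal $X\rightarrow X\times_S X$ is always an immersion. By Proposition \ref{SEP_EMBEDDING}, it is proper if and only if it is an embedding. By Definition \ref{SEP_DEF_PROPER}, $X/S$ is separated if and only if its diagonal is proper, whence the claim.''

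The main obstacle I anticipate is purely bookkeeping: making sure the quasi-compactness/quasi-separatedness hypotheses line up, since ``proper'' in Def. \ref{SEP_DEF_PROPER} bundles overconvergence together with qcqs, whereas the statement of Proposition \ref{SEP_EMBEDDING} speaks of $f\P$-properness of an immersion. I would need to check that when $\Delta$ is an embedding it is automatically quasi-compact and quasi-separated (embeddings being affine-locally modelled, this should be immediate), so that ``$\Delta$ an embedding'' and ``$\Delta$ proper'' genuinely coincide with no residual finiteness gap. Given how much machinery precedes it, I expect the actual proof to be a single sentence citing Proposition \ref{SEP_EMBEDDING}, with the only genuine mathematical input being the already-known fact that a diagonal is an immersion.
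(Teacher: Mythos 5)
Your proposal is correct and follows exactly the paper's route: the paper's entire proof is the one-sentence observation that the Stacks Project argument (tag 01KJ) for ``the diagonal is always an immersion'' runs without modification over $\F_1$, after which Proposition \ref{SEP_EMBEDDING} and Definition \ref{SEP_DEF_PROPER} give the equivalence, just as you anticipated. Your bookkeeping worry resolves itself since embeddings are quasi-compact monomorphisms by Definition \ref{MORP_EMBEDDING}, hence automatically qcqs.
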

\begin{proof}Indeed, the proof of \cite[\href{http://stacks.math.columbia.edu/tag/01KJ}{01KJ}]{stacksproject} runs without modification in the $\F_1$ setting to establish that the diagonal is always an immersion.\end{proof}

\begin{remark}Of course, we cannot ask that the diagonal be closed: this fails even for the affine line over $\F_1$, cf. \cite[\S6.5.20]{Durov}. 

The diagonal of a separated morphism can also fail to be affine, as the following pathological example shows: Let $V=\Spec(\F_1\times\F_1)$, $U\subset V$ the complement of the closed point. Then $U\hookrightarrow V$ is projective, but not affine. Thus the scheme obtained by glueing two copies of $V$ along $U$ is separated with non-affine diagonal.\end{remark}

\subsubsection{Chow criterion for propriety}
The Chow lemma depends on the fact that $\mathbf{FSch}^\mathrm{ltf}_S$ has a site consisting of open subobjects of objects $\P$ over the base.

\begin{thm}[Strong Chow lemma]\label{SEP_CHOW}A finite type morphism $X\rightarrow S$ is proper if and only if it is quasi-separated and, for any $U\subseteq X$ quasi-projective over $S$, there locally on $S$ exists an extension \[\xymatrix{ & \tilde X\ar[d]_{\exists \P}\ar@/^/[ddr]^{\P} \\ U\ar@{^{(}->}[r]^{\forall\circ}\ar@/^/[ur]^\exists\ar@/_/[drr]_-{\text{quasi-}\P} & X\ar[dr] \\ && S}\] of the inclusion of $U$ in $X$ to a $\P$ morphism $\tilde X\rightarrow X$ such that $\tilde X$ is $\P$ over $S$, and moreover this property persists after any base change.\end{thm}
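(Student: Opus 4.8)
The statement is an equivalence, and I would prove the two implications separately. In both directions the first move is to cut the problem down to size using the finiteness reductions already in place: by \ref{SEP_FIP}, \ref{SEP_FP} and \ref{SEP_REDUCE} it suffices to treat \emph{elementary} extension problems $U_0\hookrightarrow V$ (def. \ref{SEP_ELEMENTARY}) with $V$ a local, reduced affine scheme essentially of finite type over $S$ and $U_0$ affine and dense. Throughout I work locally on $S$, so that $S$ may be assumed affine and quasi-compact.

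For the direction \emph{proper $\Rightarrow$ Chow}, which the introduction advertises as essentially immediate, I would start from a quasi-projective $U\subseteq X$ and choose, locally on $S$, a projective-over-$S$ completion $\bar U$, that is, the embedded closure of an open immersion $U\hookrightarrow\P_S(W)$. Reading $U\hookrightarrow\bar U$ as a quasi-compact open immersion and the inclusion $U\hookrightarrow X$ as the datum of the extension problem, overconvergence of $X/S$ produces a unique $\mathrm{Sur}_{U/\bar U}\to X$; by finiteness of $X/S$ and the compactness argument of \ref{SEP_FP} this factors through a single $\P$-overconvergent neighbourhood $\tilde X\to\bar U$ (def. \ref{SEP_SUR}). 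Since $\tilde X\to\bar U$ and $\bar U\to S$ are both $\P$, the composite $\tilde X\to S$ is $\P$; moreover $U\hookrightarrow\tilde X$ is a dense open immersion whose composite with $\tilde X\to X$ is the given inclusion, supplying the section over $U$. The one genuine point is to upgrade the extension $\tilde X\to X$ to a $\P$ morphism. Here I would use that $\tilde X/S$, being $\P$, is separated with affine diagonal (\ref{MORP_PROJ_STABILITY}), so the graph $\tilde X\to X\times_S\tilde X$ is an embedding; composing with the base change $X\times_S\tilde X\to X\times_S\P_S(W)=\P_X(W)$ of the finite map $\tilde X\to\P_S(W)$ exhibits $\tilde X\to\P_X(W)$ as a finite morphism after an embedding, hence as $\P$ (lemma \ref{MORP_FINITE}, def. \ref{MORP_PROJ_DEF}); post-composing with the projection $\P_X(W)\to X$, the base change of the projective $\P_S(W)\to S$, shows $\tilde X\to X$ is $\P$. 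Persistence under base change is then immediate from stability of properness (\ref{SEP_STABILITY}), since $X'/S'$ is again proper and the construction runs verbatim.

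For the converse \emph{qs $+$ Chow $\Rightarrow$ proper}, I would first establish separatedness and then existence of extensions. The affine opens cover $X$ and are each quasi-projective over $S$; applying the Chow property to them and assembling yields a surjective $\P$ morphism $\tilde X\twoheadrightarrow X$ with $\tilde X/S\in\P$, hence separated, whence $X/S$ is separated, its diagonal is an embedding (\ref{SEP_CRITERIA}, \ref{SEP_EMBEDDING}), and extensions are unique. For existence, given an elementary problem $U_0\hookrightarrow V$ with $g\colon U_0\to X$, I would cover $U_0$ by the preimages $g^{-1}(W_k)$ of finitely many affine opens $W_k\subseteq X$; each $W_k$ is quasi-projective, so Chow produces $\tilde X_k\to X$ with $\tilde X_k/S\in\P$ and a section over $W_k$, through which $g|_{g^{-1}(W_k)}$ lifts. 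As every $\P$ morphism is proper, the restricted problems solve uniquely into the overconvergent $\tilde X_k/S$ and, after composing with $\tilde X_k\to X$, give local solutions $\mathrm{Sur}_{g^{-1}(W_k)/V}\to X$; by the sheaf property of the overconvergent germ in the $U_0$-variable (axiom (P5), \ref{SEP_GOOD}) together with the uniqueness just proved, these glue to $\mathrm{Sur}_{U_0/V}\to X$. Combined with the finite type and quasi-separatedness hypotheses this is exactly properness (def. \ref{SEP_DEF_PROPER}). Over $\Z$ one may shortcut the last step by invoking \ref{SEP_ME=EGA}, deducing universal closedness by descent along the surjective projective cover.

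The main obstacle I anticipate is the interface between the abstract overconvergent germ and honest finite-type projective covers: extracting from the pro-object $\mathrm{Sur}_{U/\bar U}$ a single $\tilde X$ that is simultaneously $\P$ over $S$ (built into being an overconvergent neighbourhood of $U/\bar U$) and $\P$ over $X$ (via the graph-and-finiteness argument), since neither single construction delivers both projectivities at once. Over $\Z$ the upgrade is the familiar fact that a morphism from a projective $S$-scheme to a separated $X/S$ is projective, but over $\F_1$ one must route it through the composite-of-finite-and-embedding description of projectivity (def. \ref{MORP_PROJ_DEF}, lemma \ref{MORP_FINITE}) and verify that the graph into a separated $\P$-scheme is an embedding. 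The secondary subtlety, in the converse, is that existence and separatedness must be \emph{decoupled} — separatedness obtained a priori from a surjective Chow cover and then fed into the gluing via (P5) — because over $\F_1$ the classical route through universal closedness is unavailable.
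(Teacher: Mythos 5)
Your overall architecture is sound and far more explicit than the paper's own proof, which consists of a single sentence: the paper reads the Chow property as \emph{literally restating} that, for quasi-projective $U$ and a quasi-compact open immersion $U\hookrightarrow V$ into a $\P$-compactification, the canonical solution $X\times_S V\rightarrow V$ of \S\ref{SEP_EXTENS} can be dominated by a $\P$ morphism with a section over $U$, i.e.\ by an overconvergent neighbourhood mapping to $X$; given the reductions \ref{SEP_FP}, \ref{SEP_REDUCE}, \ref{SEP_ELEMENTARY} and the locality axiom (SC), the equivalence with propriety is then treated as essentially definitional. Your converse direction, which decouples separatedness (descent along the surjective Chow cover, via \ref{SEP_STABILITY}) from existence (gluing local solutions), is a genuinely different and more detailed route; it is workable, but your appeal to (P5) is exactly what the paper's footnote to lemma \ref{SEP_GOOD} defers to the sequel: over $\Z$, (P5) is proved via theorem \ref{SEP_ME=EGA}, which in turn invokes the \emph{forward} direction of the present theorem. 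Your ordering (forward direction proved without (P5), then the converse) is acyclic, so there is no actual circularity, but this dependency must be stated explicitly; over $\F_1$ the point is moot, since (P5) is trivial there.

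The genuine gap is in your forward direction, in the step upgrading $\tilde X\rightarrow X$ to a $\P$ morphism. The graph of $f:\tilde X\rightarrow X$ over $S$ is a base change of the diagonal of $X/S$ --- \emph{not} of $\tilde X/S$ --- so proposition \ref{MORP_PROJ_STABILITY} is not the relevant input; what makes the graph an embedding is separatedness of $X/S$ (a consequence of propriety) together with corollary \ref{SEP_CRITERIA}. Over $\Z$ this substitution repairs the argument, since the graph is then a closed immersion and a closed immersion followed by a projective morphism is projective. Over $\F_1$, however, the diagonal of a separated morphism --- and hence the graph --- can be a \emph{non-affine} embedding (see the remark following corollary \ref{SEP_CRITERIA}), and whether such an embedding composed with a $\P$ morphism is again $\P$ is precisely the paper's open question \ref{Q_PROJ_EMBED}; since formal finiteness is an affine notion, it is not clear that non-affine embeddings are absorbed even by the larger class $f\P$. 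So as written your upgrade step does not close over $\F_1$, and the paper's one-line proof offers no guidance on this point either. Two smaller slips in the same direction: $\tilde X\rightarrow\P_S(W)$ is projective, not finite (it is finite over a projective bundle composed with an embedding), although the detour through $\P_X(W)$ is unnecessary anyway because $X\times_S\tilde X\rightarrow X$ is already a base change of $\tilde X\rightarrow S\in\P$; and the factorisation of $\mathrm{Sur}_{U/\bar U}\rightarrow X$ through a single $\P$-neighbourhood, locally on $S$, is justified by the definition (\ref{pleen}) of the germ as a filtered colimit together with axiom (SC) --- applicable because $\bar U\rightarrow S$ is $\P$ --- rather than by the compactness argument of \ref{SEP_FP}.
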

\begin{proof}Indeed, this Chow property states that for any quasi-projective $U$ and quasi-compact open immersion $U\hookrightarrow V$, the canonical solution $X\times V\rightarrow V$ can be dominated by a $\P$ morphism with a section over $U$ and is therefore an overconvergent neighbourhood.\end{proof}



\subsubsection{Local criterion for overconvergence}Overconvergence is a bit more difficult to characterise in classical terms, since there is little literature on the subject.

\begin{prop}[Overconvergence is local propriety]\label{SEP_SUR_MODELS}Let $X\rightarrow S$ be a quasi-separated morphism of formal schemes. The following are equivalent:
\begin{enumerate}\item $f$ is overconvergent;
\item every proper formal $X$-scheme qcqs over $S$ is proper over $S$;
\item every formally embedded formal subscheme of $X$ qcqs over $S$ is proper over $S$.
\end{enumerate}\end{prop}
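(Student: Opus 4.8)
The plan is to run the cycle (i)$\Rightarrow$(ii)$\Rightarrow$(iii)$\Rightarrow$(i), with essentially all the work concentrated in the last implication. For (i)$\Rightarrow$(ii): if $W\rightarrow X$ is a proper formal $X$-scheme then it is in particular overconvergent, so composing with the overconvergent $X\rightarrow S$ and invoking stability under composition (prop.~\ref{SEP_STABILITY}) shows $W\rightarrow S$ is overconvergent; being also qcqs over $S$ by hypothesis, it is proper. For (ii)$\Rightarrow$(iii): a formally embedded subscheme $Z\hookrightarrow X$ is $f\P$-proper over $X$ by proposition~\ref{SEP_EMBEDDING}, hence is a proper formal $X$-scheme, so if it is qcqs over $S$ it is proper over $S$ by (ii). Both of these are immediate.

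The substance is (iii)$\Rightarrow$(i). First I would reduce the family of extension problems to be solved: by propositions~\ref{SEP_FIP} and~\ref{SEP_REDUCE} it suffices to treat elementary extension problems $U/V$ (with $V$ local, $U\hookrightarrow V$ affine and dense) equipped with a map $U\rightarrow X$ over $S$. The key construction is then the formally embedded closure $\bar Z:=\mathrm{cl}(U/X)$ of $U$ in $X$. Because $U$ is affine, hence local, the image of its closed point lies in some affine open $X_0\subseteq X$ and, since open sets are stable under generisation, the whole morphism $U\rightarrow X$ factors through $X_0$; the embedded image $\mathrm{cl}(U/X_0)$ then exists and is affine by lemma~\ref{AGNU}, and I take $\bar Z$ to be its embedded closure in $X$. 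Since $X/S$ is quasi-separated and $\bar Z\hookrightarrow X$ is a quasi-compact monomorphism through which the quasi-compact $U$ factors, $\bar Z$ is a formally embedded subscheme of $X$ that is qcqs over $S$.

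With $\bar Z$ in hand the remainder is formal. Hypothesis (iii) makes $\bar Z\rightarrow S$ proper, hence both overconvergent and separated. Overconvergence of $\bar Z/S$ solves the extension problem $U/V$ inside $\bar Z$, producing the required $\mathrm{Sur}_{U/V}\rightarrow\bar Z\hookrightarrow X$; equivalently, the canonical extension of definition~\ref{SEP_ELEMENTARY}, factored through the base change $V\times_S\bar Z\rightarrow V$ of the proper morphism $\bar Z/S$, becomes a proper (and therefore, as noted in \S\ref{SEP_EXTENS}, overconvergent) neighbourhood of $U/V$. For uniqueness I would argue that any two solutions agree on the dense open $U$ and, by functoriality of the embedded closure together with density (prop.~\ref{MORP_OPEN}), both factor through $\bar Z$; the uniqueness clause already built into overconvergence of $\bar Z/S$ then forces them to coincide.

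The main obstacle is precisely the construction of $\bar Z=\mathrm{cl}(U/X)$ as an honest qcqs formally embedded subscheme of $X$, rather than merely a pro-object: this is the delicacy flagged in \S\ref{MORP_EMBED}, where embedded closures of non-affine morphisms need not exist. What should rescue the argument is the locality of affine $\F_1$-schemes, which lets me replace $U\rightarrow X$ by a morphism into an affine open and so remain within the affine regime of lemma~\ref{AGNU} before passing to the closure in $X$. Verifying that this closure is genuinely embedded and quasi-compact over $S$ — not just pro-embedded — is the one point I expect to require real care, and it is where the quasi-separatedness hypothesis on $X/S$ must be used.
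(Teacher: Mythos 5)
Your overall strategy coincides with the paper's: the cycle of implications with all the content in \emph{iii)}$\Rightarrow$\emph{i)}, carried out by producing a formally embedded subscheme of $X$, qcqs over $S$, containing the image of $U$, and then solving the elementary extension problem inside it via hypothesis \emph{iii)}. The two easy implications are handled exactly as in the paper, and your uniqueness argument (both solutions factor through the embedded subscheme by density of $U$ and stability of formal embeddings under base change, after which uniqueness follows from overconvergence of that subscheme over $S$) is a correct supplement: the paper's own proof records only existence.

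The gap is in your construction of $\bar Z$. Your two-step recipe --- form $\mathrm{cl}(U/X_0)$ inside an affine open $X_0$ by lemma~\ref{AGNU}, then take ``its embedded closure in $X$'' --- ends with precisely the operation that \S\ref{MORP_EMBED} warns may not exist: embedded closures with non-affine target are in general only pro-objects, and at that point you have no \emph{affine morphism} in hand to which lemma~\ref{AGNU} could be applied. You flag this yourself, but flagging is not resolving; moreover your reduction to an affine open uses locality of affine $\F_1$-schemes and so breaks over $\Z$, whereas the proposition (like all of \S\ref{SEP}) is meant for both bases. The missing idea, which is what the quasi-separatedness hypothesis is really for, is the paper's: quasi-separatedness of $X/S$ makes $U\rightarrow X$ quasi-compact (graph argument), hence qcqs since $U$ is affine, so one may pass to the affinisation $\Spec_X f_*\sh O_U\rightarrow X$, which \emph{is} an affine morphism, and apply lemma~\ref{AGNU} to it; since the formula (\ref{prous}) defining the embedded image is local on the target, this yields $\mathrm{cl}(U/X)\hookrightarrow X$ in a single step as an honest formally embedded subscheme, affine over $X$ and qcqs over $S$, uniformly over $\F_1$ and $\Z$. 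With $Z$ produced this way --- no second closure, no appeal to locality --- the remainder of your argument goes through as written.
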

\begin{proof}The implications \emph{i)}$\Rightarrow$\emph{ii)}$\Rightarrow$\emph{iii)} being clear, suppose \emph{iii)}, and let $U/V$ be an elementary extension problem. Since $X$ is quasi-separated over $S$, $U\rightarrow X$ is quasi-compact. There is therefore a formally embedded subscheme $Z$ of $X$ qcqs over $S$ that contains the image of $U$. By hypothesis, it is proper and so $\mathrm{Sur}_{U/V}\rightarrow Z\hookrightarrow X$.\end{proof}

A morphism between locally integral over Noetherian formal schemes is always quasi-separated. Hence, this criterion for overconvergence applies in most cases one encounters in practice in formal geometry.

\subsubsection{Valuative criteria}
Here, we will establish a version of the \emph{valuative criteria} of \cite[\textbf{II}.7]{EGA} for Noetherian formal $\F_1$-schemes.

\begin{lemma}Let $V$ be an affine $\F_1$-scheme, $U=\Spec A\hookrightarrow V$ a dense affine open immersion. Let $U_{\F_1}=\Spec A/(A^\times=1)$, $V_{\F_1}$ the localisation. Then \[\xymatrix{U_{\F_1}\ar[r]\ar[d] & V_{\F_1}\ar[d] \\ U\ar[r] & V}\] is a pushout in the category of algebraic spaces.\end{lemma}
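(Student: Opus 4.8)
The plan is to treat this as the natural sibling of Lemma \ref{SEP_CLOSED_SUBSET_LEMMA} and Proposition \ref{SEP_CLOSED_SUBSET}, proving it in two stages: an explicit computation with pointed monoids on affine test objects, followed by an upgrade to a pushout in algebraic spaces that exploits the fact that the top-right corner is a \emph{local} affine $\F_1$-scheme.

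First I would fix notation. As $V$ is affine write $V=\Spec B$ and $U=\Spec A$ with $A=B[f^{-1}]$, where $(f=0)$ is the complement of the dense open $U$; by Proposition \ref{MORP_OPEN} and definition \ref{MORP_EMBEDDING}, scheme-theoretic density means precisely that $f$ is cancellable, so that $B\hookrightarrow A$. The two vertical maps of the square are the coefficient-field reductions $A\twoheadrightarrow A/(A^\times=1)$ and its analogue for $B$ (cf. Example \ref{RIG_FIELDS}), which are monoid surjections and hence formal embeddings; the horizontal maps are the localisation $B\to A$ and the induced map of reductions. All four corners are spectra of $\F_1$-algebras and so, by the fundamental locality of affine $\F_1$-schemes, are local. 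I must keep in mind that the naive fibre product of coordinate algebras is strictly \emph{larger} than $B$ — it acquires spurious unit-translates supported at the generic point — so the affine pushout is \emph{not} $V$, and the statement genuinely lives in the category of algebraic spaces.

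The universal property to check is that for any algebraic space $X$, a morphism $V\to X$ is the same datum as a pair of morphisms $h\colon U\to X$, $g\colon V_{\F_1}\to X$ agreeing on $U_{\F_1}$. Given such a compatible pair, I would argue exactly as in the proof of Proposition \ref{SEP_CLOSED_SUBSET}: since $V_{\F_1}$ is local, $g$ carries its closed point to some $x\in X$, and choosing an affine chart $X_0=\Spec C\ni x$ forces $g$ to factor through $X_0$; the agreement on $U_{\F_1}$ together with density of $U$ then confines $h$ to the same chart. This reduces everything to the affine case, where one must produce, from homomorphisms $C\to A$ and $C\to B/(B^\times=1)$ compatible over $A/(A^\times=1)$, a unique homomorphism $C\to B$ inducing both. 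Uniqueness is immediate from $B\hookrightarrow A$; for existence one writes each image as $c/f^{n}$ and uses that the factorisation through the \emph{local} object $V_{\F_1}$ certifies regularity at the boundary $(f=0)$, i.e. that no genuine pole in $f$ can occur, so the image already lies in $B$ and restricts to the prescribed class in $B/(B^\times=1)$.

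I expect the main obstacle to be precisely this confinement-to-a-chart step, which is the whole reason the result is false as an affine pushout and true only in algebraic spaces. The delicate point is verifying that $h$ and $g$ are genuinely forced into a common affine neighbourhood of the image of the closed point, thereby discarding the spurious unit-translates; once that is in hand, the residual monoid bookkeeping (controlling $A^\times$ as the units of the localisation $B[f^{-1}]$ and matching it against the class coming from $B/(B^\times=1)$) is routine. I would therefore lean entirely on the closed-point argument already used for Proposition \ref{SEP_CLOSED_SUBSET}, adapting it to the coefficient-field reduction in place of a closed-subscheme marking.
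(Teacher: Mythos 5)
Your proposal founders on a misidentification of the square, and under your reading the lemma is actually \emph{false} — so the gap is not where you expect it (the chart-confinement step) but in the affine step you dismiss as routine bookkeeping. The statement as printed is admittedly garbled, but the paper's proof and its intended application (stripping coefficient fields for the valuative criterion) fix the reading: writing $B=\sh O(V)$ and $A=B[f^{-1}]=\sh O(U)$, \emph{both} vertical maps are quotients by the \emph{same} group $B^\times$, i.e.\ $V_{\F_1}=\Spec(B/B^\times)$ and $U_{\F_1}=\Spec\bigl((B/B^\times)[f^{-1}]\bigr)=\Spec(A/B^\times)$ — this is why $U_{\F_1}$ is called ``the localisation'' (of $V_{\F_1}$). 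You instead take $U_{\F_1}=\Spec(A/A^\times)$, killing \emph{all} units of $\sh O(U)$, and with that square the conclusion fails even in algebraic spaces. Concretely, take $B=\F_1[t]$, $f=t$, so $A=\F_1[t^{\pm1}]$, $A/A^\times=\F_1$, $B/B^\times=B$, and let $X=\Spec\F_1[t^{\pm1}]$. The pair consisting of the identity $u:U\rightarrow X$ and the collapse $v:V\rightarrow X$ dual to $t\mapsto 1$ agrees on $U_{\F_1}=\Spec\F_1$ (both restrict to the kill-units map), yet no morphism $V\rightarrow X$ restricts to the identity on $U$, since any homomorphism $\F_1[t^{\pm1}]\rightarrow\F_1[t]$ must send the unit $t$ to $1$. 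This is exactly where your ``no genuine pole can occur'' claim breaks: compatibility over $A/(A^\times{=}1)$ is vacuous whenever $A$ has many units (here $A$ is an $\F_1$-field), so factoring through $V_{\F_1}$ certifies nothing about regularity along $(f=0)$.

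Your structural claim — that the fibre product of algebras is strictly larger than $B$, so the result is ``false as an affine pushout and true only in algebraic spaces'' — is moreover internally inconsistent. Affine schemes sit fully faithfully inside algebraic spaces, so if the square were a pushout in algebraic spaces with affine vertex $V$, then testing against affine $X=\Spec C$ gives a natural bijection $\Hom_{\Alg_{\F_1}}(C,B)\cong\Hom_{\Alg_{\F_1}}(C,\sh O(U)\times_{\sh O(U_{\F_1})}\sh O(V_{\F_1}))$, forcing the algebra square to be Cartesian by Yoneda. The paper's proof establishes precisely this Cartesianness, and under the correct reading it is a one-line orbit argument: $B\hookrightarrow A$ (injectivity is what density buys you) is a union of $B^\times$-orbits, so the preimage of $B/B^\times$ under $A\rightarrow A/B^\times$ is exactly $B$ — this is what the paper means by ``the inclusion $A\subseteq A[f^{-1}]$ is preserved by the group action of $A^\times$'' (in its notation $A=\sh O(V)$). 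Your second stage, confining a compatible pair to a common affine chart using locality of the corners as in proposition \ref{SEP_CLOSED_SUBSET}, is essentially the paper's one-sentence reduction (``the vertical maps are homeomorphisms'') and is sound; but the mathematical content lives entirely in the affine computation, which with your square cannot be repaired.
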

\begin{proof}Since the vertical maps are homeomorphisms, it will be enough to treat the affine case; that is, we must check that the square \[\xymatrix{A\ar[r]\ar[d] & A[f^{-1}]\ar[d] \\ A/A^\times\ar[r] & A/A^\times[f^{-1}]}\] is Cartesian. This follows from the fact that the inclusion $A\subseteq A[f^{-1}]$ is preserved by the group action of $A^\times$.
\end{proof}

\begin{thm}[Valuative criterion]\label{SEP_VAL}Let $S$ be a locally Noetherian formal $\F_1$-scheme, $f:X\rightarrow S$ locally of finite type and paracompact. Then $f$ is overconvergent if and only if every commuting square
\[\xymatrix{  \A^1_{\F_1}\setminus 0\ar[r]\ar[d] & \A^1_{\F_1}\ar[d] \\ X\ar[r] & S }\]
admits a unique lift $\A^1_{\F_1}\rightarrow X$.

If $S$ is more generally locally integral/Noetherian, then the same holds with $\A^1_{\F_1}$ replaced with $\Spec\F_1[z^\Q]$.\end{thm}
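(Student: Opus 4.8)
The plan is to prove both implications by reducing, through the finiteness and reduction machinery of \S\ref{SEP_FSCH}--\S\ref{SEP_FINITENESS}, to extension problems over a single explicit one-dimensional test space. First the easy direction. Both $\A^1_{\F_1}=\Spec\F_1[t]$ and $\Spec\F_1[z^\Q]$ are rank one valuation $\F_1$-algebras: any finitely generated ideal is generated by the element of least value and is therefore principal, so neither admits a non-trivial admissible blow-up. Since $\A^1_{\F_1}\setminus 0=\G_{m,\F_1}$ is a dense, quasi-compact open subset, it follows that $\mathrm{Sur}_{\A^1_{\F_1}\setminus 0/\A^1_{\F_1}}\cong\A^1_{\F_1}$, and likewise for $\F_1[z^\Q]$. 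Hence if $f$ is overconvergent, Definition \ref{SEP_DEF_PROPER} applied to exactly this extension problem furnishes the asserted unique lift. (The paracompactness hypothesis is what guarantees that $f$ is quasi-separated, so that the reductions invoked below are legitimate.)

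Conversely, suppose the valuative criterion holds. By Propositions \ref{SEP_REDUCE} and \ref{SEP_FP} it is enough to solve elementary extension problems $U\hookrightarrow V$ in which $V=\Spec B$ is a reduced local scheme essentially of finite type over $S$ and $U=\Spec B[f^{-1}]$ is the dense complement of a principal divisor $(f=0)$. Applying the pushout lemma immediately preceding the statement to the relevant local algebra, I may replace $B$ by $B/(B^\times=1)$ and so assume the coefficient field is trivial; the whole obstruction to extending $U\to X$ is then concentrated at the unique closed point $x_0$ of $V$.

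The idea is now to detect this obstruction along a single integral, rank one closed subscheme $Z\hookrightarrow V$, chosen to pass through $x_0$ and to have generic point in $U$, so that $Z_U:=Z\cap U$ is dense in $Z$; I may take $Z$ normal. When $S$ is locally Noetherian, $B$ is Noetherian (Corollary \ref{FIN_NOETHER}) and the value monoid of $Z$ is finitely generated of rank one, hence isomorphic to $\N$, so that $Z\cong\A^1_{\F_1}$ and $Z_U\cong\G_{m,\F_1}$; when $S$ is only locally integral/Noetherian (Lemma \ref{FIN_CRUX}) the value monoid may fail to be finitely generated but is still of rank one, and the correct test space is $\Spec\F_1[z^\Q]$. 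In either case the hypothesis applied to the square $\G_{m,\F_1}\to Z_U\to X$ over $\A^1_{\F_1}\to Z\to S$ (resp. the $\F_1[z^\Q]$ version) produces a unique $Z\to X$ extending $Z_U\to X$. Feeding the compatible pair $U\to X$, $Z\to X$ into the pushout presentation of the $Z$-rational overconvergent germ (Proposition \ref{SEP_CLOSED_SUBSET}, with $Z$ the subscheme of Definition \ref{SEP_EXP_DEG}) yields a map $\mathrm{sur}^Z_{U/V}\to X$; because $V$ and $Z$ are local this single datum determines a genuine solution $\mathrm{Sur}_{U/V}\to X$, which in the language of Proposition \ref{SEP_SUR_MODELS} exhibits the relevant formally embedded subscheme of $X$ as proper over $S$. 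Uniqueness of the lift is the same assertion for the diagonal $X\to X\times_S X$: the criterion forces it to be an embedding, hence proper by Proposition \ref{SEP_EMBEDDING}, i.e. $f$ is separated.

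I expect the main obstacle to be the construction of the test curve $Z$ carrying the prescribed rank one value monoid --- the $\F_1$-analogue of dominating a local ring by a valuation ring --- since this is exactly where the Noetherian, respectively integral/Noetherian, hypothesis is consumed and where the choice between $\A^1_{\F_1}$ and $\Spec\F_1[z^\Q]$ is forced. A secondary but genuine difficulty is the passage from the pointwise valuative extension to an honest morphism of formal schemes: this is precisely what the locality of $V$ together with the pushout presentation of the germ in Proposition \ref{SEP_CLOSED_SUBSET} is there to supply, and it is the step that most needs care.
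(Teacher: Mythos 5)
Your forward direction is essentially fine: $\F_1[t]$ and $\F_1[z^\Q]$ are valuation $\F_1$-algebras, every finitely generated ideal is principal and invertible, so admissible modifications of $\A^1_{\F_1}$ are trivial, the germ $\mathrm{Sur}_{\A^1_{\F_1}\setminus 0/\A^1_{\F_1}}$ is $\A^1_{\F_1}$ itself, and overconvergence hands you the unique lift. The converse, however, has a genuine gap, and it sits exactly where you wave your hands. Having produced the compatible pair $U\rightarrow X$, $Z\rightarrow X$ and hence, via the pushout of proposition \ref{SEP_CLOSED_SUBSET}, a morphism $\mathrm{sur}^Z_{U/V}\rightarrow X$, you assert that ``because $V$ and $Z$ are local this single datum determines a genuine solution $\mathrm{Sur}_{U/V}\rightarrow X$''. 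No such implication is available: by definition \ref{SEP_EXP_DEG} the arrows run $\mathrm{Sur}_{U/V}\twoheadrightarrow\mathrm{Sur}^Z_{U/V}\hookleftarrow\mathrm{sur}^Z_{U/V}$, i.e.\ $\mathrm{sur}^Z_{U/V}$ is a pro-\emph{open} subobject of a \emph{quotient} of the germ, so a morphism out of it induces nothing on $\mathrm{Sur}_{U/V}$. Concretely, the finite stages of $\mathrm{sur}^Z_{U/V}$ are open subsets $U_i$ of blow-ups of $V$ (shrinking neighbourhoods of the strict transform of your curve $Z$); these are not formally projective over $V$, so a map $U_i\rightarrow X$ is not a solution of the extension problem, and the limit itself is not of finite type, so without finite presentation hypotheses you cannot even descend to a finite stage. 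This is precisely the ``algorithmic construction based on proposition \ref{SEP_CLOSED_SUBSET}'' which the paper states was attempted and ``ultimately confounded''.

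There is also structural evidence that your converse cannot be repaired as written: it never uses paracompactness. But the aside following the theorem exhibits a locally finite type, non-paracompact morphism satisfying the unique-lifting property that is not overconvergent --- the expanded degeneration $U^\mathrm{\'el}$ of definition \ref{SEP_EXP_DEG} applied to the affine plane --- so any correct proof of the converse must consume that hypothesis somewhere; your reduction via proposition \ref{SEP_FP} to local test spaces makes no use of it, hence would prove a false statement. A secondary issue is that your test-curve construction presumes the local algebra is quasi-integral, whereas your reductions only deliver a reduced $V$; passing from reduced to quasi-integral is exactly the content of the decomposition machinery of \S\ref{INT}. The paper's actual proof takes an entirely different route: by corollary \ref{INT_DECOMP_DETECT} one reduces to quasi-integral $U$, $V$ and, replacing $X$ by the embedded closure of the image of $U$, to $X$ quasi-integral and Noetherian; the statement is then read off from the fan-theoretic criterion of proposition \ref{FAN_SUR}, that is, from the classification by cone complexes and the developing-map criteria of \S\ref{PFAN}, which is where the Noetherian and paracompactness hypotheses are actually spent (and which is why, as the paper notes, the argument has no analogue over $\Z$).
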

\begin{proof}Indeed, by corollary \ref{INT_DECOMP_DETECT}, we may assume that $U$ and $V$ are quasi-integral and, replacing $X$ with the closure of the image of $U$, that $X$ is quasi-integral and Noetherian. The result then follows from proposition \ref{FAN_SUR}.\end{proof}

I had initially hoped to prove this result directly by an algorithmic construction based on proposition \ref{SEP_CLOSED_SUBSET}, but my attempts were ultimately confounded, and so the proof given rests on the classification of $\F_1$-schemes and criterion for propriety found in \S\ref{FAN}. In particular, the methods manifestly do not apply over $\Z$. We do not use this criterion except in the proof of theorem \ref{SEP_GROTHENDIECK_OVERCONVERGENT}, which is independent of the rest of the paper.

\begin{remark}This does not imply that overconvergence can be detected by \emph{non-boundary} morphisms from $\A^1_{\F_1}$. Indeed, the fan in $\Q^2$ whose non-zero cones are every rational ray provides a counterexample to that statement.

The valuative criterion also fails without the paracompactness assumption: indeed, the construction $U^\mathrm{\'el}$ (def. \ref{SEP_EXP_DEG}) applied to the affine plane provides a counterexample.\end{remark}

\subsubsection{Overconvergence via images}\label{SEP_IMAGES}We have already seen (prop. \ref{SEP_ME=EGA}) that proper morphisms over $\Z$ can be characterised, as in \cite{EGA}, by the behaviour of closed subsets under images. Over $\F_1$, it is of course not true that proper morphisms are universally closed, and since embeddings are not characterised by their underlying sets, one must be a bit more careful when taking the image.

In this section I present a possible analogue to the approach of Grothendieck.

\begin{defn}Write $\hat{\lie Z}(X/S)$ for the poset of formally embedded subschemes of $X$ qcqs over $S$.
Let us call a morphism $f:X\rightarrow S$ \emph{grounded} if for every $Z\in\hat{\lie Z}(X/S)$, \[Z\rightarrow\mathrm{cl}(Z/S)\] is surjective on points. It is \emph{universally grounded} if it is grounded after any base change.\end{defn}

The intuition behind this definition is that if it were possible to define the formally \emph{immersed} image of morphisms, then by lemma \ref{the difference}, the difference between this and the embedded image would be detected by the underlying set. Indeed, one can show that grounding implies that the square
\[\xymatrix{\hat{\lie Z}(X/S) \ar[d]_{\mathrm{cl}(-/S)}\ar[r]&  \mathrm{Imm}(X/S)\ar[d]^{f_!}   \\\Pro\hat{\lie Z}(S)\ar[r] & \Pro\mathrm{Imm}(S) }\]
commutes, where $\mathrm{Imm}(X/S)$ denotes the poset of formally immersed subschemes of $X$ qcqs over $S$, and $f_!$ the left pro-adjoint to pullback.

With this definition, it is easy to establish an analogue of \cite[\textbf{II}.5.2.3.\emph{ii)}]{EGA} - which would be useful to have for our definition of propriety via extension problems:

\begin{lemma}\label{phlegm}Let $X\rightarrow Y\rightarrow S$ be a pair of morphisms such that
\begin{enumerate}\item $X\rightarrow S$ is universally grounded;
\item $X\rightarrow Y$ is \emph{strongly surjective}, meaning that for any embedded subscheme $Z\hookrightarrow Y$, \[ Z=\mathrm{cl}(Z\times_YX/Y).\] \end{enumerate} Then $Y\rightarrow S$ is universally grounded.\end{lemma}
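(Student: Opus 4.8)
The plan is to transcribe the proof of \cite[\textbf{II}.5.2.3.\emph{ii)}]{EGA} into the present language, under the dictionary: a closed subset becomes a formally embedded subscheme, the set-theoretic preimage $g^{-1}(T)$ becomes the fibre product $Z\times_Y X$, the image under a morphism becomes the formally embedded image $\mathrm{cl}(-/-)$, and the topological surjectivity of the classical argument is replaced on one side by groundedness of $X/S$ and on the other by strong surjectivity of $X\to Y$. Since universal groundedness of $X/S$ is by definition stable under base change, the conceptual content lies in proving the non-universal statement, that $Y\to S$ is grounded; the universal case should then follow by running the same argument over an arbitrary $S'\to S$, \emph{provided} strong surjectivity is itself stable under base change.

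First I would fix $W\in\hat{\lie Z}(Y/S)$ and set $W_X:=W\times_Y X$. Since embeddings are stable for base change (\S\ref{MORP_EMBED}) and $X\to Y$ is, in the cases of interest, qcqs, we have $W_X\in\hat{\lie Z}(X/S)$. The key intermediate step is a \emph{transitivity of the formally embedded image}:
\[ \mathrm{cl}\bigl(\mathrm{cl}(W_X/Y)\,/\,S\bigr)=\mathrm{cl}(W_X/S). \]
Writing $W':=\mathrm{cl}(W_X/Y)$, one inclusion is immediate: $W_X$ factors through $W'$ and hence through $\mathrm{cl}(W'/S)$, forcing $\mathrm{cl}(W_X/S)\subseteq\mathrm{cl}(W'/S)$ by initiality. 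For the reverse, the base change $\mathrm{cl}(W_X/S)\times_S Y$ is an embedded subscheme of $Y$ through which $W_X$ factors, by the universal property of the fibre product; hence $W'=\mathrm{cl}(W_X/Y)\subseteq\mathrm{cl}(W_X/S)\times_S Y$, and projecting to $S$ shows $W'$ factors through $\mathrm{cl}(W_X/S)$, whence $\mathrm{cl}(W'/S)\subseteq\mathrm{cl}(W_X/S)$. Now strong surjectivity gives $W'=\mathrm{cl}(W_X/Y)=W$ (for embedded $W$; a general $W\in\hat{\lie Z}$ is a filtered supremum of embedded ones by definition \ref{MORP_EMBEDDING}, and $\mathrm{cl}$ is continuous for these), so the transitivity identity becomes $\mathrm{cl}(W_X/S)=\mathrm{cl}(W/S)$.

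With this in hand the conclusion is formal. Groundedness of $X\to S$, applied to $W_X$, says that $W_X\to\mathrm{cl}(W_X/S)=\mathrm{cl}(W/S)$ is surjective on points. This map factors as $W_X\to W\to\mathrm{cl}(W/S)$, and a composite surjective on points forces its second factor $W\to\mathrm{cl}(W/S)$ to be surjective on points. Hence $Y\to S$ is grounded.

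The main obstacle is the universal statement, which needs strong surjectivity to persist after base change. For $S'\to S$ one has $Z'\times_{Y_{S'}}X_{S'}=Z'\times_Y X$, so what must be checked is that $\mathrm{cl}(Z'\times_Y X/Y_{S'})=Z'$ for every embedded $Z'\hookrightarrow Y_{S'}$ --- i.e. that the formally embedded image is compatible with the (generally non-flat) base change $Y_{S'}\to Y$. This is precisely the point where the classical proof, needing only surjectivity of a set map, was free. I would attack it through the explicit image formula (\ref{prous}): over $\F_1$ strong surjectivity amounts to requiring that $\sh O_Y(Z)\to\sh O_Y(Z)\tens_{\sh O_Y}\sh O_X$ be injective for every embedded $Z$, and I would try to show that this schematic dominance is in fact a \emph{universal} injectivity condition, stable under the base-change functor because, over monoids, the relevant tensor products are computed levelwise on discrete quotients and localisations preserve injections (compare the proposition that formal completion commutes with base change). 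Once strong surjectivity is re-expressed as such a base-change-stable condition, the grounded argument above, applied verbatim over each $S'$, yields universal groundedness of $Y\to S$.
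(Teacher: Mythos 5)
The paper states this lemma without proof --- it is asserted as ``easy to establish'' immediately after the definition of groundedness --- so there is no argument of the author's to compare yours against; it must stand on its own. Your proof of the non-universal statement (that $Y\rightarrow S$ is \emph{grounded}) is correct, and is surely the intended transcription of \cite[\textbf{II}.5.2.3.\emph{ii)}]{EGA}. The transitivity identity $\mathrm{cl}(\mathrm{cl}(W_X/Y)/S)=\mathrm{cl}(W_X/S)$ is proved correctly: one inclusion by initiality, the other by pulling $\mathrm{cl}(W_X/S)$ back along $Y\rightarrow S$ and using stability of (formal) embeddings under base change. Strong surjectivity then collapses $\mathrm{cl}(W_X/Y)$ to $W$, and surjectivity on points descends along the factorisation $W_X\rightarrow W\rightarrow\mathrm{cl}(W/S)$. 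The loose ends you flag in passing are genuinely minor: $W_X$ being qcqs over $S$ needs $X\rightarrow Y$ qcqs (harmless in the intended applications, where everything is of finite type), and the passage from embedded to formally embedded $W$ is fine because $\mathrm{cl}(-/Y)$, where it exists, is a left adjoint and so commutes with the filtered suprema that generate $\hat{\lie Z}(Y)$ from $\lie Z(Y)$ (def. \ref{MORP_EMBEDDING}).

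The genuine gap is exactly where you locate it, and your sketch does not close it. In the affine picture, hypothesis \emph{ii)} gives injectivity of $B\rightarrow B\tens_{\sh O_Y}\sh O_X$ only for quotients $B$ of $\sh O_Y$; groundedness of $Y\times_SS'\rightarrow S'$ requires the same for quotients of $\sh O_Y\tens_{\sh O_S}\sh O_{S'}$, and such quotients are \emph{not} base changes of quotients of $\sh O_Y$ --- they are merely cyclic $\sh O_Y\tens_{\sh O_S}\sh O_{S'}$-modules, hence non-cyclic $\sh O_Y$-modules. This is precisely the classical pitfall: over $\Z$, schematic dominance is not stable under base change (which is why EGA must treat ``universally schematically dominant'' as a separate, stronger notion); what \emph{is} stable is the purity-type condition that $M\rightarrow M\tens_{\sh O_Y}\sh O_X$ be injective for \emph{all} modules $M$, since a module over a base change is in particular a module over the original base. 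Strong surjectivity as defined is strictly weaker than that, and your proposed justification --- that over monoids the tensor products are ``computed levelwise'' so the injectivity should be universal --- is an assertion, not an argument; nothing in it engages with the fact that the class of test modules genuinely enlarges after base change. So either the lemma should be read with ``universally strongly surjective'' in hypothesis \emph{ii)} (in which case your first two steps already constitute a complete proof, and this is very likely what ``easy to establish'' presumes), or one must prove an actual base-change statement for strong surjectivity over $\F_1$ --- for instance by upgrading it to the purity condition --- and that is missing from your proposal.
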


\begin{eg}Note that surjectivity of $X/Y$ is clearly not enough for this lemma to run in the $\F_1$ case, as the example $\{1\}\rightarrow \A^1_{\F_1}\setminus\{0\}\rightarrow\A^1_{\F_1}$ demonstrates.\end{eg}

\begin{thm}\label{SEP_GROTHENDIECK_OVERCONVERGENT}Let $S$ be locally integral/Noetherian, $f:X\rightarrow S$ separated, of finite type, and universally grounded. Then $f$ is proper.\end{thm}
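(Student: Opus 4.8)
The plan is to reduce the statement to the valuative criterion of Theorem \ref{SEP_VAL} and then to feed the grounding hypothesis into its existence clause. Since $f$ is of finite type it is quasi-compact, and since it is separated its diagonal is proper, hence quasi-compact, so $f$ is quasi-separated; thus $f$ is already qcqs and by definition \ref{SEP_DEF_PROPER} it remains only to prove that $f$ is overconvergent.

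The difficulty is that Theorem \ref{SEP_VAL} requires paracompactness, which is not among our hypotheses (indeed the remark after \ref{SEP_VAL} shows it cannot simply be dropped). I would circumvent this using Proposition \ref{SEP_SUR_MODELS}: as $X/S$ is quasi-separated, $f$ is overconvergent as soon as every formally embedded subscheme $Z\hookrightarrow X$ that is qcqs over $S$ is proper over $S$. Such a $Z$ is quasi-compact over $S$, hence paracompact, and after the reductions of Corollary \ref{SEP_REDUCE} and \ref{INT_DECOMP_DETECT} we may take $Z$ and $S$ reduced, quasi-integral and Noetherian, so that $Z/S$ is genuinely of finite type. Moreover $Z/S$ inherits the hypotheses of the theorem: it is separated, because $Z\hookrightarrow X$ is an embedding, hence proper by Proposition \ref{SEP_EMBEDDING}, and composes with the separated $X/S$; and it is universally grounded, because a formally embedded subscheme of $Z$ qcqs over $S$ is again such a subscheme of $X$ with the same embedded closure in $S$ (embeddings compose, cf. \S\ref{MORP_EMBED}), grounding being preserved under base change by the \emph{universally} part of the hypothesis. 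Thus it suffices to verify the valuative lifting for $Z/S$.

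So fix a square
\[\xymatrix{ \Spec\F_1[z^\Q]\setminus 0 \ar[r]\ar[d] & Z \ar[d] \\ \Spec\F_1[z^\Q]\ar[r] & S }\]
and write $V:=\Spec\F_1[z^\Q]$; I must produce a unique lift $V\to Z$. Uniqueness is where separatedness does its work: two lifts $a,b$ agree on the scheme-theoretically dense open $V\setminus 0$, and by Corollary \ref{SEP_CRITERIA} the diagonal of $Z/S$ is an embedding, so $(a,b)^{-1}(\Delta)$ is an embedded subscheme of $V$ containing the dense $V\setminus 0$, hence equals $V$; that is, $a=b$. For existence, let $W:=\mathrm{cl}(V\setminus 0/Z)$ be the formally embedded closure of the image, a formally embedded subscheme of $Z$ (and so of $X$) qcqs over $S$. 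The closed point of $V$ maps to a point $\bar s\in S$ lying in $\mathrm{cl}(W/S)$, since $V\setminus 0$ is point-set dense in $V$ and its image in $S$ has $\bar s$ in its closure; grounding of $W$ then furnishes a point $w\in W\subseteq Z$ over $\bar s$.

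The main obstacle is to promote this point into an actual morphism $V\to Z$ extending the prescribed one, and this is exactly what the fan classification of \S\ref{FAN} is for: over $\F_1$ a morphism out of $\Spec\F_1[z^\Q]$ is a rank-one jet, combinatorially a ray in the topological realisation of the target's cone complex, and Proposition \ref{FAN_SUR} translates the lifting problem into the statement that a ray in $\Sigma_S(\R)$ issuing from a point of $\delta(\Sigma_W)$ lifts to a ray in $\Sigma_Z(\R)$. The point $w$ supplied by grounding provides precisely the required starting vertex, so \ref{FAN_SUR} produces the morphism $V\to Z$ and completes the verification of the criterion; by Theorem \ref{SEP_VAL}, $Z/S$ is overconvergent, hence proper, and Proposition \ref{SEP_SUR_MODELS} yields overconvergence, and therefore properness, of $f$. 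The delicate points to watch are that grounding genuinely survives the Noetherian reduction onto the embedded pieces, and that the combinatorial lift restricts to the given map on $V\setminus 0$ itself rather than merely agreeing on points; both hinge on having taken $W$ to be the scheme-theoretic, not merely set-theoretic, closure.
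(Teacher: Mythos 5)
Your skeleton is the same as the paper's (valuative criterion plus grounding), your uniqueness argument via the embedded diagonal is sound, and the verification that embedded subschemes inherit the hypotheses is fine; the detour through Proposition \ref{SEP_SUR_MODELS} is harmless but unnecessary, since finite type already gives quasi-compactness and hence the paracompactness that Theorem \ref{SEP_VAL} asks for. The genuine gap is in your existence step. Grounding hands you a Zariski point $w\in W$ lying over $\bar s$, but a point of $W$ over $\bar s$ is not a lift of the jet, and Proposition \ref{FAN_SUR} cannot bridge that gap: it is an \emph{equivalence} between overconvergence and a fan-support condition on cone complexes, so to invoke it you would need to have already verified one of its two sides for $Z/S$ — which is precisely what is being proven. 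Nothing in your write-up converts "some point of $Z$ lies over $\bar s$" into "the given rational ray in $\sigma_S$ lies in the image of $\Sigma_Z(\Q)\rightarrow\Sigma_S(\Q)$"; you even flag this as "the main obstacle" and then wave it away. (A secondary error: you justify $\bar s\in\mathrm{cl}(W/S)$ by point-set density, but over $\F_1$ embedded closures are not point-set closed — see example \ref{MORP_A2}; the correct reason is that $V\rightarrow S$ factors through $\mathrm{cl}(V/S)$, and $\mathrm{cl}(V/S)=\mathrm{cl}(W/S)$ because $\sh O(S)$ has the same image in $\F_1[t]$ as in $\F_1[t^{\pm1}]$.)

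What the paper does at exactly this juncture is an elementary monoid computation, and it is the heart of the proof. After replacing $X$ and $S$ by the embedded closures of $\A^1_{\F_1}\setminus\{0\}$ and $\A^1_{\F_1}$ (possible since every open of $\A^1_{\F_1}$ is affine, so those morphisms are affine), all four coordinate algebras inject into $\F_1[t^{\pm1}]$: thus $\sh O(X)\setminus 0$ is a submonoid of $\Z$ and $\sh O(S)\setminus 0$ a submonoid of $\N$, and grounding says $\Spec\sh O(X)\rightarrow\Spec\sh O(S)$ is surjective on points. If $\sh O(X)$ contained some $t^{-a}$ with $a>0$, then for any $t^b\in\sh O(S)$ with $b>0$ the element $t^{ab}=(t^b)^a=((t^{-a})^b)^{-1}$ would be a unit of $\sh O(X)$, so no prime of $\sh O(X)$ could contain $t^b$, and hence no prime could pull back to the maximal ideal of $\sh O(S)$ — contradicting surjectivity. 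Therefore $\sh O(X)\setminus 0\subseteq\N$, which \emph{is} the statement that $\A^1_{\F_1}\rightarrow X$ exists; no appeal to the fan classification is needed (indeed surjectivity on points is used globally, not just the existence of one point over $\bar s$, and it is converted into the lift by this unit/prime argument). Your proof needs this step, or some replacement for it, spelled out; as written the existence half of the criterion is not established.
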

\begin{proof}We apply the valuative criterion. For ease of notation, we treat the Noetherian case, though the general case follows from the same argument. Consider a commuting square
\[ \xymatrix{ \A_{\F_1}^1\setminus\{0\}\ar[r]\ar[d] & \A_{\F_1}^1\ar[d] \\ X\ar[r]&S } \]
and replace $X$ resp. $S$ with the embedded closures of $\A_{\F_1}^1\setminus\{0\}$ resp. $\A_{\F_1}^1$; this is possible because every open subset of $\A_{\F_1}^1$ is affine, and so the morphisms to $X$ and $S$ are also affine. By hypothesis, $X\rightarrow S$ is now surjective.

We therefore obtain a dual square \[ \xymatrix{  \sh O(S) \ar[r]\ar[d] & \sh O(X) \ar[d] \\ \F_1[t] \ar[r] & \F_1[t^{\pm1}]  } \] in which all arrows are injective. Thus $\sh O(X)\setminus 0$ is a submonoid of $\Z$ and $\sh O(S)\setminus 0$ of $\N$. By surjectivity of $f$, $\sh O(X)\setminus 0$ must in fact be contained in $\N$. Therefore $\A_{\F_1}^1\rightarrow X$.\end{proof}

\section{Integrality and normalisation}\label{INT}

In this section we discuss some absolute properties of $\F_1$-scheme that bring us towards the realm of toric geometry:
\begin{itemize}
\item quasi-integral;
\item integral;
\item normal.
\end{itemize}
The \emph{quasi-integral} condition is an intermediate notion that does not appear for ordinary rings; it exists because a monoid can fail to be cancellative without having zero-divisors. This is another manifestation of the fact that embeddings can fail to be closed. The geometric provenance of such monoids appears to be quite mysterious.

On the other hand, with \emph{non-boundary morphisms} (morphisms with no kernel) one can define an interesting subcategory of quasi-integral $\F_1$-schemes that enjoys stability under fibre products and localisations; no analogous construction can work over $\Z$, or with the more restrictive hypotheses of integrality or normality.

\subsection{Quasi-integral}\label{QUASI_INT}

\begin{defn}An $\F_1$-algebra $A$ is said to be \emph{quasi-integral} if the zero ideal is prime, or equivalently, if $A\setminus 0$ is a submonoid. A homomorphism of $\F_1$-algebras is said to be \emph{non-boundary} if its kernel is zero.

An $\F_1$-scheme is quasi-integral if the stalks of its structure sheaf are quasi-integral $\F_1$-algebras. A morphism $f:X\rightarrow Y$ of $\F_1$-schemes is non-boundary if $f^{-1}\sh O_Y\rightarrow\sh O_X$ has zero kernel. The category of quasi-integral schemes and non-boundary homomorphisms is denoted $\mathbf{Sch}_{\F_1}^\mathrm{qi/nb}$.\end{defn}

The subcategory $\mathrm{Alg}_{\F_1}^\mathrm{qi/nb}$ of $\mathrm{Alg}_{\F_1}$ consisting of quasi-integral algebras and non-boundary morphisms is exactly the image of the category of ordinary monoids under adjoining zero: the restriction of this functor
\[  \F_1[z^{-}]:\mathbf{Mon} \rightarrow \mathrm{Alg}^\mathrm{qi/nb}_{\F_1} \]
is an equivalence with inverse $A\mapsto A\setminus 0$. It follows that any colimit of quasi-integral $\F_1$-algebras along non-boundary homomorphisms is quasi-integral.

In particular, the inclusion of opposite categories is left exact, and so generates a subtopos $\Sh\mathbf{Sch}^\mathrm{qi/nb}_{\F_1}$ under colimits. The inclusion is the pullback functor for an essential geometric morphism
\[ \Sh\mathbf{Sch}_{\F_1}\rightarrow\Sh\mathbf{Sch}^\mathrm{qi/nb}_{\F_1} \]
whose pullback preserves open immersions. The topos on the right is nothing more than the presheaf topos on the opposite category to $\mathrm{Alg}_{\F_1}^\mathrm{qi/nb}$. The associated embedding of the category of locally representable objects into $\mathbf{Sch}_{\F_1}$ identifies it with $\mathbf{Sch}^\mathrm{qi/nb}_{\F_1}$, since being quasi-integral or non-boundary is a local property.

\begin{remark}\label{INT_EMPTYSET}Note that an affine scheme is quasi-integral if and only if it is the spectrum of a quasi-integral $\F_1$-algebra, \emph{except for the empty set}, which is locally representable, but not representable, in $\mathrm{Alg}_{\F_1}^\mathrm{qi/nb}$.\end{remark}

\begin{lemma}A scheme is quasi-integral if and only if every open subset is non-boundary.\end{lemma}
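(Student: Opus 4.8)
The plan is to reduce the whole statement to the affine, in fact principal-affine, case, where it collapses to a transparent fact about localisations of monoids, and then to globalise by locality.

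First I would record that both properties are local: quasi-integrality of $X$ is imposed stalk by stalk, and (as the paper notes just above) being non-boundary is a local condition. Hence it is enough to treat an affine scheme $X=\Spec A$ and to test open subsets on the basis of principal opens $D(f)=\Spec A[f^{-1}]$. Since $\Spec A$ is local with closed point $A\setminus A^\times$, whose stalk is $A$ itself, and since quasi-integrality of $A$ propagates to all of its localisations, $X$ is quasi-integral precisely when $A$ is. Moreover, for an open $U=\bigcup_i D(f_i)$ the kernel of the restriction $\sh O_X(X)\to\sh O_X(U)$ equals $\bigcap_i\ker\bigl(A\to A[f_i^{-1}]\bigr)$; so if every principal open is non-boundary then so is every open, and testing ``every open'' against ``every principal open'' gives the same condition. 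Thus the statement reduces to the claim that $A$ is quasi-integral if and only if, for each $f\in A\setminus\{0\}$, the localisation homomorphism $A\to A[f^{-1}]$ has trivial kernel.

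For the forward implication I would compute the kernel outright: $\ker\bigl(A\to A[f^{-1}]\bigr)=\{a\in A:\ f^n a=0\text{ for some }n\}$. If $A$ is quasi-integral then $A\setminus 0$ is a submonoid, so $f\neq 0$ forces $f^n\neq 0$ for all $n$, and then $f^n a=0$ with $a\neq 0$ cannot occur; the kernel is $\{0\}$, so every (nonempty) principal open, and hence every open, is non-boundary. For the converse I would argue by exhibiting a zero-divisor relation: suppose $ab=0$ with $a\neq 0$. Testing the principal open $D(a)$, the hypothesis says $A\to A[a^{-1}]$ has trivial kernel; but $a\cdot b=0$ puts $b$ in that kernel, so $b=0$. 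This is exactly the assertion that $A\setminus 0$ is multiplicatively closed, i.e. that the zero ideal is prime and $A$ is quasi-integral.

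The one genuinely delicate point — and the main obstacle — is the bookkeeping around the empty open, that is, the localisation at $f=0$, and more subtly at any non-zero nilpotent $f$, for which $A[f^{-1}]=0$ as well. The map $A\to 0$ has all of $A$ as its kernel, so the case $f=0$ must be excluded from the test in the forward direction; yet it is precisely the non-triviality of such a kernel at a \emph{non-zero} $f$ that detects the failure of quasi-integrality in the converse (a non-zero nilpotent is a zero-divisor invisible to any nonempty open). This is the reason for the special status of the empty set flagged in remark \ref{INT_EMPTYSET}: one must understand ``open subset is non-boundary'' as a statement about the non-boundary open \emph{immersions}, indexed by the inverted element, rather than about bare subsets. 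Once this convention is fixed, the two directions above close the argument.
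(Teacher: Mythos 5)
Your proof is correct and takes essentially the same route as the paper's: reduce to the affine case, compute the kernel of a localisation $A\rightarrow A[f^{-1}]$ (you give the saturation $\{a: f^na=0 \text{ for some } n\}$ where the paper loosely says ``annihilator of $f$'' --- these vanish simultaneously), note that quasi-integrality is exactly the vanishing of these kernels for all $f\neq 0$, and dispose of $f=0$ by the convention that $\emptyset$ is an object of $\mathbf{Sch}_{\F_1}^\mathrm{qi/nb}$ (remark \ref{INT_EMPTYSET}). Your closing paragraph on the empty-open/nilpotent bookkeeping --- that the statement must be read as indexed by the inverted element rather than by bare subsets --- is precisely the content of the paper's final sentence, spelled out more explicitly, and your explicit converse (a zero-divisor relation $ab=0$, $a\neq 0$ forces $b$ into the kernel of $A\rightarrow A[a^{-1}]$) is the obvious completion of the same argument that the paper leaves implicit.
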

\begin{proof}It suffices to check the affine case. The kernel of a localisation $A\rightarrow A[f^{-1}]$ is the annihilator of $f$. Quasi-integrality of $A$ means that this is zero whenever $f$ is non-zero. If $f=0$, then this is dual to the inclusion of $\emptyset\in\mathbf{Sch}_{\F_1}^\mathrm{qi/nb}$.\end{proof}

In other words, the small site of a quasi-integral scheme does not depend on whether we consider it as an object of $\Sh\mathbf{Sch}_{\F_1}^\mathrm{qi/nb}$ or of $\Sh\mathbf{Sch}_{\F_1}$.

\paragraph{Detecting overconvergence}

\begin{lemma}\label{INT_PROJ}\begin{enumerate}\item The total space of a blow-up of a quasi-integral scheme is quasi-integral, and the projection is non-boundary.
\item Let $U/V$ be an open immersion of quasi-integral schemes. The relative normalisation $\nu_{V\setminus U}V$ is quasi-integral, and the projection is non-boundary.\end{enumerate}\end{lemma}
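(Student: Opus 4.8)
The plan is to reduce both statements to affine charts and then to one elementary principle: \emph{any sub-$\F_1$-algebra of a quasi-integral $\F_1$-algebra is again quasi-integral}, since if $B\subseteq C$ with $C\setminus 0$ a submonoid, then for $b_1,b_2\in B\setminus 0$ we have $b_1b_2\in B$ and $b_1b_2\neq 0$, so $B\setminus 0$ is a submonoid. The companion observation is that if $A\to B$ factors as $A\to B\hookrightarrow C$ through an inclusion of $\F_1$-algebras, then $\Ker(A\to B)\subseteq\Ker(A\to C)$; hence to prove the non-boundary property it suffices to exhibit \emph{one} quasi-integral overalgebra $C\supseteq B$ into which $A$ maps with zero kernel. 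Since being quasi-integral and being non-boundary are both stalk-local conditions, I may assume throughout that the base is affine, $X=\Spec A$ (resp.\ $V=\Spec A$) with $A$ quasi-integral, and treat the affine charts of each construction.

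For (i), work locally with $T\trianglelefteq A$ the ideal being blown up; we may assume $T\neq 0,A$. The total space $\Proj R_T$ is covered by the principal charts $\Spec A\{T/s\}$ as $s$ runs over the nonzero generators of $T$ (the charts for $s=0$ being empty). Using the description $A\{T/s\}=\colim[A\xrightarrow{s}T^k\to\cdots]$, the assignment $[t]_n\mapsto t/s^n$ identifies $A\{T/s\}$ with the sub-$\F_1$-algebra of the localisation $A[s^{-1}]$ generated by $A$ and the fractions $t/s$, $t\in T$. This is injective for a formal reason: equality in the colimit means agreement after multiplication by a power of $s$, which is exactly the defining relation of $A[s^{-1}]$ — so no cancellation is needed. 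Now $A[s^{-1}]$ is quasi-integral, being the localisation of a quasi-integral algebra at a nonzero element, and $A\to A[s^{-1}]$ is non-boundary since its kernel is the annihilator of $s$, which vanishes as $A$ has no zero divisors and $s\neq 0$. By the principle above, each chart $A\{T/s\}$ is quasi-integral and $A\to A\{T/s\}$ is non-boundary; gluing over charts and passing to stalks gives (i).

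For (ii), I may assume $U$ nonempty, so $U$ contains the generic point $\eta$ of the irreducible scheme $V=\Spec A$, whose stalk $K:=A[(A\setminus 0)^{-1}]$ is an $\F_1$-field (every nonzero element is invertible), in particular quasi-integral. Restriction to $\eta$ gives a map $\sh O_V(U)\to K$ with zero kernel: covering $U$ by principal opens $D(f_i)$ with $f_i\neq 0$, a section vanishing at $\eta$ has each component $x_i\in A[f_i^{-1}]$ annihilated by a nonzero element, hence $x_i=0$ by quasi-integrality of $A[f_i^{-1}]$, hence the section is $0$. Consequently $\sh O_V(U)$ is itself quasi-integral: if $xy=0$ then $xy$ vanishes at $\eta$, so $x$ or $y$ does (as $K$ is quasi-integral), and by the zero-kernel statement $x=0$ or $y=0$. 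The relative normalisation is $\Spec B$, where $B$ is the integral closure of the image of $A$ inside $\sh O_V(U)$; being a sub-$\F_1$-algebra of the quasi-integral algebra $\sh O_V(U)$, it is quasi-integral. Finally $\Ker(A\to B)=\Ker(A\to\sh O_V(U))$, which is zero because every open subset of a quasi-integral scheme is non-boundary (the earlier characterisation). Localising and gluing yields (ii).

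The only genuine subtlety — and the point to watch — is that quasi-integral does \emph{not} entail cancellative, so the structure maps to $K$, and even the localisations $A\to A[f^{-1}]$, may fail to be injective: distinct nonzero elements can become identified. I therefore phrase every step in terms of \emph{no zero divisors} and \emph{zero kernel}, never injectivity. This is precisely what the definitions of quasi-integral and non-boundary demand, and it is what allows the same two-line principle to cover both the blow-up charts and the integral closure.
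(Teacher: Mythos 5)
The paper states Lemma \ref{INT_PROJ} without any proof, so there is no argument of the author's to compare against; judged on its own terms, your proof is correct and supplies what the paper leaves implicit. Your two reductions are the right ones: localising to affine charts, then invoking the principle that a sub-$\F_1$-algebra of a quasi-integral algebra is quasi-integral, together with the observation that a zero kernel may be certified after composing with an inclusion into a quasi-integral overalgebra. In part (i) the only delicate point is indeed the injectivity of $A\{T/s\}\rightarrow A[s^{-1}]$ in the absence of cancellativity, and your justification is sound: the equivalence relation defining the colimit $\colim[A\xrightarrow{s}T\xrightarrow{s}T^{2}\rightarrow\cdots]$ (agreement after multiplication by a power of $s$) coincides with equality in the localisation, so the comparison map is injective for formal reasons; quasi-integrality and the zero-kernel property then pass from $A[s^{-1}]$ to the chart, and from the chart to its stalks. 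In part (ii) the detour through the generic stalk $K$ is genuinely needed — $\sh O_V(U)$ sits inside a product of localisations, which need not be quasi-integral — and your zero-kernel argument for $\sh O_V(U)\rightarrow K$ is correct.

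Two remarks to tighten the write-up. First, you construe $\nu_{V\setminus U}V$ as the integral closure of the image of $\sh O_V$ in $j_*\sh O_U$; the notation of \S\ref{MORP_NORM} also admits the reading via the localisation $\sh O_V[S_{V\setminus U}^{-1}]$ at the multiplicative set of sections vanishing only on $V\setminus U$. The discrepancy is harmless: for $U\neq\emptyset$ that multiplicative set does not contain $0$, so the second reading follows from your principle even more directly, with no sheaf-theoretic digression; but you should say which construction you are proving the statement for. Second, your closing appeals to ``passing to stalks'' hide a dependence: a zero kernel on charts promotes to a zero kernel on stalk maps only because the chart algebras are quasi-integral (an element killed by something lying outside a prime is killed by a nonzero element). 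Since you establish quasi-integrality of the charts before passing to stalks, the order of your argument is correct, but this one-line justification should be made explicit.
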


It follows that for $U\hookrightarrow V$ in $\mathbf{Sch}_{\F_1}^\mathrm{qi/nb}$, the overconvergent germ $\mathrm{Sur}_{U/V}$, computed in $\Sh\mathbf{Sch}_{\F_1}$, is also a pro-object of $\Sh\mathbf{Sch}_{\F_1}^\mathrm{qi/nb}$.

\begin{prop}The pullback $\Sh\mathbf{Sch}_{\F_1}^\mathrm{qi/nb}\hookrightarrow \Sh\mathbf{Sch}_{\F_1}$ preserves overconvergence.\end{prop}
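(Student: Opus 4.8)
The plan is to apply the comparison criterion \emph{iv)} of Lemma \ref{SEP_THELEMMA} to the essential geometric morphism $\phi\colon\Sh\mathbf{Sch}_{\F_1}\to\Sh\mathbf{Sch}^\mathrm{qi/nb}_{\F_1}$ whose pullback $\phi^*$ is the inclusion; this pullback visibly preserves the class $\P$, so only the preservation of overconvergence is at issue. Since $\phi^*$ is fully faithful, a morphism $X\to S$ of quasi-integral schemes is unaltered on passage to $\Sh\mathbf{Sch}_{\F_1}$, and the task is to show that if $X/S$ solves all extension problems in the quasi-integral topos then it solves all extension problems in the ambient one. As usual it suffices to treat elementary extension problems (def. \ref{SEP_ELEMENTARY}): $V=\Spec A$ affine, $U=\Spec A[f^{-1}]\hookrightarrow V$ an affine \emph{dense} open immersion cut out by the non-vanishing of $f$, together with a map $U\to X$ into the quasi-integral target.

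Following the second half of the proof of Proposition \ref{SEP_FIP}, I would factor this problem through a quasi-integral one. The invertible function $f$ together with the map $U\to X$ defines a morphism $U\to\G_{m,X}=:U'$, and, exactly as in \ref{SEP_FIP}, I set $V'$ to be $\Spec$ of the fibre product $\sh O(X)[f^{\pm1}]\times_{\sh O(U)}\sh O(V)$, obtaining a Cartesian square that exhibits $U/V\to\phi^*(U'/V')$. Two facts make $U'/V'$ a genuine extension problem in $\mathbf{Sch}^\mathrm{qi/nb}_{\F_1}$: by lemma \ref{blargunsfish} the map $\sh O(V')\to\sh O(U')$ is a localisation, so $U'\hookrightarrow V'$ is open; and $V'$ is quasi-integral, because a vanishing product in the fibre product forces a vanishing product in the quasi-integral $\sh O(U')=\sh O(X)[f^{\pm1}]$, whence one coordinate vanishes there and, by \emph{density} of $U$ in $V$ (injectivity of $A\to A[f^{-1}]$), vanishes in $\sh O(V)$ as well.

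Once the factoring problem is quasi-integral, the germ observation recorded above closes the argument: by Lemma \ref{INT_PROJ} the blow-ups and relative normalisations out of which the overconvergent neighbourhoods of $U'/V'$ are assembled stay within the quasi-integral category with non-boundary projections, so $\mathrm{Sur}_{U'/V'}$ is a pro-object of $\Sh\mathbf{Sch}^\mathrm{qi/nb}_{\F_1}$ and is computed identically in either topos. Overconvergence of $X/S$ in the quasi-integral topos then provides a unique solution $\mathrm{Sur}_{U'/V'}\to X$, which pulls back along $\mathrm{Sur}_{U/V}\to\phi^*\mathrm{Sur}_{U'/V'}$ to solve the original problem, uniqueness transferring by the same factorisation. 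I expect the only genuinely delicate point to be the verification that $V'$ is quasi-integral — that the fibre-product test space stays inside $\mathbf{Sch}^\mathrm{qi/nb}_{\F_1}$ — which is exactly where the density hypothesis built into an elementary extension problem is indispensable; the rest is the bookkeeping with Cartesian squares already rehearsed in example \ref{SEP_BASE_CHANGE} and Proposition \ref{SEP_FIP}.
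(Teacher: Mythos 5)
Your argument is correct, and while it shares the paper's skeleton --- reduce to an elementary extension problem, produce a factoring quasi-integral problem, and conclude via criterion \emph{iv)} of lemma \ref{SEP_THELEMMA} together with lemma \ref{INT_PROJ} for the germs --- the factoring problem itself is built differently, and the difference is substantive. The paper takes \emph{images}: it sets $\sh O(U_0)$ to be the subalgebra of $\sh O(U)$ generated by the image of $\sh O(X)$ and $f^{\pm1}$, and $\sh O(V_0)=\sh O(U_0)\cap\sh O(V)$; in other words, $U_0/V_0$ is the embedded image (in the sense of \S\ref{MORP_EMBED}) of $U/V$ inside your $U'/V'$. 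You instead keep the free model $U'=\G_{m,X}$, $\sh O(V')=\sh O(X)[f^{\pm1}]\times_{\sh O(U)}\sh O(V)$, recycled from the proof of proposition \ref{SEP_FIP}; both proofs then quote lemma \ref{blargunsfish} for the open immersion. Your version is the more robust one: quasi-integrality of $\sh O(X)[f^{\pm1}]$ is automatic, and your check for $V'$ uses only density of $U$ in $V$, so the construction works verbatim when the test map $U\to X$ is a \emph{boundary} morphism --- which extension problems in the ambient topos are allowed to be. In that case the image of $\sh O(X)$ in $\sh O(U)$, and hence the paper's $U_0$, can fail to be quasi-integral: take $\sh O(X)=\F_1[x,y]$ mapping to $\sh O(U)=\F_1[x,y,t^{\pm1}]/(xy)$ by $x\mapsto x$, $y\mapsto y$ (here $V=\Spec\F_1[x,y,t]/(xy)$ and $f=t$); the image contains the non-zero elements $x,y$ with $xy=0$. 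So the paper's justification that its subalgebras are quasi-integral ``because $f$ is a non-zero-divisor and $\sh O(X)$ is quasi-integral'' tacitly requires the test map to be non-boundary, whereas your route needs no such hypothesis. What the paper's construction buys in exchange is only economy: the embedded image is the minimal factoring problem, sitting inside $\sh O(U)$.

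One point you leave implicit and should state is that the square genuinely lives in $\Sh\mathbf{Sch}^\mathrm{qi/nb}_{\F_1}$, i.e.\ that the structure maps are non-boundary: for $U'\to X$ this is the injectivity of $\sh O(X)\to\sh O(X)[f^{\pm1}]$, and for $V'\to S$ it holds because the kernel of $\sh O(S)\to\sh O(V')$ is contained in that of $\sh O(S)\to\sh O(U')$, which vanishes since $X\to S$ is non-boundary and $\sh O(X)$ injects into its Laurent extension. These are one-line checks, on a par with what the paper itself glosses, and with them your proof is complete.
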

\begin{proof}Let $U/V$ be an extension problem for a non-boundary morphism $X\rightarrow S$ of quasi-integral schemes. We will produce an extension problem $U_0/V_0$ in $\mathbf{Sch}^\mathrm{qi/nb}_{\F_1}$ factoring $U/V$ and such that $U\cong U_0\times_{V_0}V$. This will get us criterion \emph{iv)} of lemma \ref{SEP_COMPARE}.

Suppose that $V$ is affine and that $U\hookrightarrow V$ is affine and dense, and replace $X$ with a quasi-integral affine scheme through which the morphism from $U$ factors. Then we are in the situation of a commuting square
\[\xymatrix{ \sh O_S(V)[f^{-1}] & \sh O_S(V)\ar@{_{(}->}[l] \\ \sh O_S(X)\ar[u] & \sh O_S\ar[l]\ar[u] }\]
with the $\F_1$-algebras in the bottom row integral and $f\in\sh O_S(V)$ a cancellable element. 

Let us define $\sh O_S(U_0)$ to be the subalgebra of $\sh O_S(U)$ generated by $\sh O_S(X)$ and $f^{\pm 1}$, and $\sh O_S(V_0)=\sh O_S(U_0)\cap\sh O_S(V)$. These subalgebras are quasi-integral because $f$ is a non-zero-divisor and $\sh O_S(X)$ is quasi-integral. The homomorphisms between them are injective, and so the dual square
\[\xymatrix{ U_0\ar[r]\ar[d] & V_0\ar[d] \\ X\ar[r] & S}\]
now lives in $\mathbf{Sch}_{\F_1}^\mathrm{qi/nb}$. By lemma \ref{blargunsfish}, $U_0\hookrightarrow V_0$ is an open immersion.
\end{proof}

\subsection{Irreducible components}

If $X$ is any $\F_1$-scheme and $I\trianglelefteq\sh O_X$ an ideal sheaf, then the closed subscheme cut out by $I$ is quasi-integral if and only if $I$ is prime. This defines a one-to-one, inclusion-reversing correspondence
\[ \{\text{quasi-integral closed subschemes of $X$}\} \quad \leftrightarrow\quad \{\text{primes of }\sh O_X\}. \]
Now suppose the underlying topological space of $X$ is Noetherian and let $X=\bigcup_{i=1}^kX_i$ be its decomposition into irreducible components. Each of these components is the closure of a unique minimal prime $\lie p_i\trianglelefteq \sh O_X$. Equipped with their reduced scheme structure they are quasi-integral closed subschemes. We will always consider the irreducible components of $X$ - when they exist - to carry this scheme structure.

\paragraph{Categorical decomposition into irreducible components}

Suppose that $\{T_i\trianglelefteq A\}_{i\in J}$ is a finite family of ideals in an algebra $A$ (over $\F_1$ or $\Z$). Let's write
\[ T_I:=\sum_{i\in I}T_I \]
for a subset $I\subset J$, and $T=\bigcap_{i\in J}T_j$. The family $A/T_I$ is then filtered.

Over $\Z$, it is possible to show that
\[ A/T\tilde\rightarrow \lim_IA/T_I \]
in the category of rings. In other words, a finite union of closed subschemes of a fixed scheme is actually a \emph{colimit} in $\mathbf{Sch}_\Z$. In particular, a scheme over $\Z$ whose underyling topological space is Noetherian has a \emph{categorical} decomposition into irreducible components. 

Unfortunately, for schemes relative to the category of monoids this cannot be true.

\begin{eg}[More stupid limits of monoids]Let $X$ be the spectrum of $\F_1[x,y]/xy$. The irreducible components of this thing are the axes, intersecting in the origin, so we are led to consider the square
\[\xymatrix{ \F_1[x,y]/xy\ar[r]^{x=0}\ar[d]_{y=0} & \F_1[y]\ar[d] \\ \F_1[x]\ar[r] & \F_1}\]
Unfortunately, this square is \emph{not} Cartesian - in fact, the pullback monoid has infinitely many transcendents.

Taking the limit instead in the category of monads improves matters slightly: we get an additional relation $w=x+y$. The prime spectrum of this monad has the same underlying space as $X$, although there is still no morphism to it from $X$. There is therefore some hope that certain schemes relative to the category of monads (or blueprints, or sesquiads) will admit a categorical decomposition into irreducible components.\end{eg}

In any case, we are able to get a weaker result in the present framework which is sufficient for understanding overconvergence:

\begin{prop}\label{INT_DECOMP}Let $U\hookrightarrow V$ be an affine, dense open immersion. Let $\{V_i\subseteq V\}_{i\in J}$ be a finite family of closed subschemes with defining ideals $T_i\trianglelefteq \sh O_V$, and suppose that $U\cap V_i$ remains scheme-theoretically dense in $V_i$ for each $i$. 

Let $\tilde V$ be the subscheme cut out by $\bigcap_{i\in J}T_i$. Then \[\xymatrix{\colim_I(U\cap V_I) \ar[r]\ar[d] &\colim_IV_I \ar[d] \\  U\cap \tilde V\ar[r] & \tilde V}\] is a pushout in the category of algebraic spaces.\end{prop}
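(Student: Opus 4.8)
The plan is to verify the universal property of the pushout directly, reducing it in two stages to a single identity between $\F_1$-algebras. Throughout I write $T_I=\sum_{i\in I}T_i$ (so $V_I=\bigcap_{i\in I}V_i$) and $T=\bigcap_{i\in J}T_i$. First I would reduce to the case that $V=\Spec A$ is affine. Scheme-theoretic density, the formation of the $V_I$ and of $\tilde V$, and all the colimits in sight commute with restriction along an open immersion $V_\alpha\hookrightarrow V$ (closure commutes with open restriction, and colimits in a topos are universal); so an affine open cover $\{V_\alpha\}$ of $V$ induces the analogous data over each $\tilde V\cap V_\alpha$, the hypotheses persist ($U\cap V_\alpha$ is again affine and dense, and $U\cap V_i\cap V_\alpha$ dense in $V_i\cap V_\alpha$), and a solution produced affine-locally glues by its own uniqueness. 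After this I may assume $V=\Spec A$ and write $U=\Spec A[f^{-1}]$ for a cancellable $f\in A$, the density hypotheses amounting to the cancellability of $f$ modulo each $T_i$ and hence modulo $T$.

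Next I would reduce the test object to an affine one. Every affine $\F_1$-scheme is local, so $\tilde V$, each $V_I$, and each $U\cap V_I$ has a unique closed point, and the closed immersions $V_I\hookrightarrow\tilde V$ all identify the closed point $\ast$ of $\tilde V$. Given a cocone into an algebraic space $X$ — that is, a compatible family $\psi_I\colon V_I\to X$ together with $\chi\colon U\cap\tilde V\to X$ agreeing with the $\psi_I$ on $U\cap V_I$ — the points $\psi_I(\ast)$ all coincide with a single $x_0\in X$; choosing an affine open $X_0\ni x_0$ and using that the only open of a local scheme containing its closed point is the whole space, one checks that each $\psi_I$, and then $\chi$ (whose closed point maps into some $U\cap V_i$), factors through $X_0$. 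I expect this to be the \emph{main obstacle}: because the pieces $V_I$ are \emph{closed}, not open, in $\tilde V$, this is not a Zariski descent, and it is precisely the locality of affine $\F_1$-schemes that makes it go through. The analogous uniqueness argument reduces the whole universal property to the case $X=\Spec B$.

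For $X=\Spec B$ the cocone data translate, via $\Hom(\Spec C,\Spec B)=\Hom_{\Alg}(B,C)$ and the fact that $\Hom(B,-)$ carries the relevant colimits to limits, into the single assertion that the natural map
\[ A/T \longrightarrow \Bigl(\lim_I A/T_I\Bigr)\times_{\lim_I (A/T_I)[f^{-1}]}(A/T)[f^{-1}] \]
is a bijection, and proving this is the whole content. Injectivity is immediate, since $A/T\hookrightarrow\prod_i A/T_i$ already (two elements with equal image in every $A/T_i$ must both lie in every $T_i$, hence in $T$). For surjectivity — where density enters — take an element $((a_i),g)$ of the right-hand side and write $g=b/f^k$ with $b\in A/T$. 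Cancellability of $f$ modulo each $T_i$ turns the matching condition into the honest relations $f^k a_i=\bar b_i$ in $A/T_i$. If $b=0$ then each $a_i=0$ and $a=0$ works; otherwise $b\notin T_{i_0}$ for some $i_0$, so lifting $a_{i_0}$ to $\hat a\in A$ gives $f^k\hat a\equiv b$ modulo $T_{i_0}$ between two classes that are both nonzero, and since over a monoid a nonzero class of $A/T_{i_0}$ is a singleton this forces the genuine equation $f^k\hat a=b$ in $A$. Then $a:=\hat a\bmod T$ satisfies $f^k a=b$, hence maps to $g$, and cancelling $f^k$ modulo each $T_i$ shows it maps to $(a_i)$.

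The conceptual point the proof isolates is that, although $A/T\neq\lim_I A/T_I$ in general — the failure being exactly the phenomenon exhibited by the node $\F_1[x,y]/(xy)$ — inverting the dense cancellable function $f$ repairs the discrepancy, because any fraction that is nonzero is automatically integral on whichever irreducible component does not kill it. This is what makes the pushout statement true even though the naive categorical decomposition into irreducible components fails, and it is all that is needed for the applications to overconvergence.
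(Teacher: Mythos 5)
Your proof is correct and follows essentially the same route as the paper: reduce to affine $V$, use locality of affine $\F_1$-schemes together with the cofiltered poset of strata $V_I$ to factor any cocone through a single affine open of $X$, and then verify the resulting algebra identity, which is exactly the content of the paper's Lemma \ref{INT_DECOMP_AFF}. Your affine computation (injectivity from $A/T\hookrightarrow\prod_i A/T_i$, then surjectivity via cancellability of $f$ modulo each $T_i$ and the fact that non-zero classes of $A/T_{i_0}$ are singletons) is the same argument as the paper's ``set-theoretic section'' proof, merely phrased with $\lim_I A/T_I$ in place of $\prod_i A/T_i$ and with the final verification written out explicitly.
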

\begin{proof}Let $X$ be an algebraic space, and take a morphism to $X$ from the pushout. After replacing $V$ with an affine cover, we will need to produce an affine open subset of $X$ through which $U\cap \tilde V$ and all the $V_i$ factor. We may then reduce to the affine case. That the resulting diagram is a pushout in the category of affine schemes is the content of lemma \ref{INT_DECOMP_AFF}.

Since affine $\F_1$-schemes are local, $V$ has a unique closed point, contained in each $V_i$. It will be enough to show that the closed points of all the $V_i$ determine the same point of $X$. But this follows from the fact that the strata $V_I$ form a cofiltered poset.\end{proof}

\begin{lemma}\label{INT_DECOMP_AFF}Let $A\hookrightarrow A[f^{-1}]$ be a localisation at a cancellable element $f$. Suppose that $f\not\in\bigvee_{i\in J}T_i$. Then \[\xymatrix{ A/\bigcap_{i\in J}T_i \ar[r]\ar[d] &A[f^{-1}]/\bigcap_{i\in J}T_i \ar[d] \\ \prod_iA/T_i \ar[r] & \prod_iA[f^{-1}]/T_i }\] is a pullback.\end{lemma}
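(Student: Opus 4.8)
The plan is to read the square as a diagram of $\F_1$-algebras and verify the pullback property on underlying pointed sets; since $\F_1$-algebras are the commutative monoids in pointed sets, any finite limit (in particular a fibre product) is computed on underlying pointed sets, so it suffices to show that the canonical comparison map
\[ \theta:\ A/T\longrightarrow P:=\textstyle\bigl(\prod_i A/T_i\bigr)\times_{\prod_i A[f^{-1}]/T_i}\bigl(A[f^{-1}]/T\bigr),\qquad T:=\textstyle\bigcap_{i\in J}T_i, \]
sending the class of $x\in A$ to the tuple of its images, is a bijection. Injectivity I would dispatch first, as it uses neither $f$ nor the localisation: if $x,y\in A$ satisfy $\theta(\bar x)=\theta(\bar y)$, then equality in each $A/T_i$ means that for every $i$ either $x=y$ in $A$ or $x,y\in T_i$; should $x\neq y$, the second alternative holds for \emph{every} $i$, whence $x,y\in\bigcap_i T_i=T$ and $\bar x=\bar y=0$ in $A/T$.

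The substance is surjectivity, and here I would lean on the scheme-theoretic density of $U\cap V_i=\Spec(A/T_i)[f^{-1}]$ in $V_i=\Spec A/T_i$ that is hypothesised in proposition \ref{INT_DECOMP} — namely that $f$ is a non-zero-divisor modulo each $T_i$, so that $A/T_i\hookrightarrow (A/T_i)[f^{-1}]=A[f^{-1}]/T_i$. Given a compatible datum $\bigl((\bar a_i)_i,\bar b\bigr)\in P$, I choose a representative $b=g/f^{n}$ with $g\in A$. Compatibility says $a_i/1=g/f^{n}$ in $A[f^{-1}]/T_i$ for each $i$; because $f$ is a non-zero-divisor modulo $T_i$, this relation clears uniquely to the honest congruence $a_i f^{n}\equiv g\pmod{T_i}$.

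The lift is then produced by a dichotomy. If $g\in T$, then $\bar b=g/f^{n}=0$ in $A[f^{-1}]/T$, while $a_i f^{n}\equiv g\equiv 0\pmod{T_i}$ combined with the non-zero-divisor property forces $a_i\in T_i$, i.e. $\bar a_i=0$ for all $i$; so $x=0$ lifts the datum. Otherwise $g\notin T_{i_0}$ for some $i_0$, and then $a_{i_0}f^{n}\equiv g\pmod{T_{i_0}}$ cannot have both sides in $T_{i_0}$, hence is an equality $a_{i_0}f^{n}=g$ in $A$. Thus $g=hf^{n}$ with $h:=a_{i_0}\in A$, giving $h/1=g/f^{n}=\bar b$ in $A[f^{-1}]/T$; and for each $i$ the relation $h f^{n}=g\equiv a_i f^{n}\pmod{T_i}$ cancels, once more by the non-zero-divisor property, to $\bar h=\bar a_i$ in $A/T_i$. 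Either way $\theta(\bar h)=\bigl((\bar a_i)_i,\bar b\bigr)$, as required.

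The step I expect to be the main obstacle is precisely this denominator-clearing together with the demand that a \emph{single} $h$ match all components simultaneously, and the key subtlety is that cancellability of $f$ in $A$ alone is not enough. If $f$ were merely outside every $T_i$ but a zero-divisor modulo some $T_{i_0}$ — equivalently, if $U\cap V_{i_0}$ failed to be dense in $V_{i_0}$ — then an $f$-power-torsion class (for instance the class of $x$ in $A/(xf)$, which vanishes after inverting $f$) assembles into an element of $P$ admitting no integral lift, so the density hypothesis of \ref{INT_DECOMP} is genuinely indispensable and should be invoked rather than the bare condition $f\notin\bigvee_i T_i$, which only guarantees that $U$ meets each $V_i$. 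Finiteness of $J$ enters only as a safety net for the clearing step: keeping the raw localisation relations $a_i f^{\,n+k_i}\equiv g f^{k_i}\pmod{T_i}$ and setting $K=\max_{i}k_i$ produces a uniform congruence $a_i f^{\,n+K}\equiv g f^{K}\pmod{T_i}$ to which the same dichotomy applies.
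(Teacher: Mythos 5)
Your proof is correct, and at its core it is the paper's own argument: both rest on the Rees-quotient rigidity of quotients over $\F_1$ (a nonzero class in $A/T_i$ or $A[f^{-1}]/T_i$ is an honest element, so a compatible tuple with one nonzero component pins down a single element of $A$). Your case (b) is exactly the paper's step "via the set-theoretic section $(A/T_i)\setminus 0\rightarrow (A/\bigcap_i T_i)\setminus T_i$", and your case (a) is the zero case the paper leaves implicit.

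The one point where you genuinely depart from the paper — and improve on it — is the hypothesis, and you are right about it. The paper's proof opens with "the assumptions ensure that all the arrows are injective", but under the lemma's literal hypothesis ($f$ cancellable and $f\notin\bigvee_{i\in J}T_i$) this is false, and so is the lemma itself. Your parenthetical counterexample is too weak as stated — with a single ideal the square has identity vertical arrows and is trivially a pullback, so one needs at least two ideals to exhibit a failure — but the concern is correct. Take $A=\F_1[x_1,x_2,y]$, $f=y$, $T_1=(x_2)$, $T_2=(x_1y,\,x_2y)$; then $f\notin T_1\cup T_2$, $f$ is cancellable, and $T_1\cap T_2=(x_2y)$. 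The tuple
\[ \bigl((\bar x_1,\bar x_2),\,\overline{x_1/1}\bigr)\in\bigl(A/T_1\times A/T_2\bigr)\times_{\prod_i A[f^{-1}]/T_i}A[f^{-1}]/(T_1\cap T_2) \]
is compatible: the $T_1$-component is trivial, and in $A[f^{-1}]/T_2$ both $x_1/1$ and $x_2/1$ vanish since $x_1y,x_2y\in T_2$. Yet a lift $b\in A$ would have to satisfy $b=x_1$ (from $\bar b=\bar x_1\neq 0$ in the Rees quotient $A/T_1$) and $b=x_2$ (from $\bar b=\bar x_2\neq 0$ in $A/T_2$), a contradiction. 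What both your argument and the paper's actually use is injectivity of each $A/T_i\rightarrow A[f^{-1}]/T_i$, i.e. $f$-saturation of $T_i$ ($af^m\in T_i\Rightarrow a\in T_i$), which is precisely the scheme-theoretic density of $U\cap V_i$ in $V_i$ assumed in proposition \ref{INT_DECOMP} where the lemma is applied — the bare condition $f\notin\bigvee_i T_i$ only says $U\cap V_i\neq\emptyset$. So your decision to invoke the density hypothesis is the right reading, and the lemma's statement should be amended accordingly.
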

\begin{proof}The assumptions ensure that all the arrows are injective, so we are investigating an intersection of sets. Let $g\in A[f^{-1}]/\bigcap_iT_i$ be non-zero. Then $g$ is also non-zero in some $A[f^{-1}]/T_i$. If its image there is contained in $A/T_i$, then via the set-theoretic section
\[ (A/T_i)\setminus 0\rightarrow (A/\cap_iT_i)\setminus T_i \]
it is also contained in $A/\bigcap_{i\in J}T_i$.\end{proof}

\begin{cor}Let $X$ be a reduced integral/Noetherian $\F_1$-scheme, $U\subseteq X$ a dense open subset. Then $X$ is the colimit in the category of schemes of $U$ and its quasi-integral components.\end{cor}
\begin{proof}By the proposition, it is enough to show that the intersection of the minimal primes of $\sh O_X$ is its nilradical, which follows from the usual argument in commutative algebra.\end{proof}

\begin{cor}\label{INT_DECOMP_DETECT}Let $S$ be locally integral-over-Noetherian, $X\rightarrow S$ locally of finite type. Then $X/S$ is overconvergent if and only if every extension problem with quasi-integral test spaces has a unique solution.\end{cor}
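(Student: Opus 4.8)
The forward direction is immediate: quasi-integral test spaces form a subclass of all test spaces, so any overconvergent $X/S$ restricts to give unique solutions to the extension problems named in the statement. All of the content is in the converse, so suppose every extension problem whose test space is quasi-integral admits a unique solution, and let $U/V$ be an arbitrary quasi-compact extension problem for $X/S$. First I would apply the reduction machinery already in place: by proposition \ref{SEP_REDUCE} it suffices to treat reduced test spaces, and by proposition \ref{SEP_FP} I may take $V$ local and essentially of finite type over $S$. Since $S$ is locally integral/Noetherian, such a $V$ is integral/Noetherian, hence a reduced integral/Noetherian $\F_1$-scheme with Noetherian underlying space; passing through the canonical extension (def. \ref{SEP_ELEMENTARY}) I may also assume $U\hookrightarrow V$ is dense.

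Now I would invoke the decomposition of $V$ into its quasi-integral components. By the corollary following lemma \ref{INT_DECOMP_AFF}, $V$ is the colimit, in the category of algebraic spaces, of $U$ together with the strata $V_I$ assembled from the irreducible components $V_i$ of $V$; this is exactly the pushout of proposition \ref{INT_DECOMP} with $\tilde V=V$, the intersection of the minimal primes being the nilradical, which vanishes. Each component $V_i$ is quasi-integral, and since the dense open $U$ contains the generic point of every component, $U\cap V_i$ remains dense in $V_i$, so that the restricted data $U\cap V_i/V_i$ is a \emph{quasi-integral} extension problem for $X/S$. The given morphism $U\rightarrow X$ restricts to $U\cap V_i\rightarrow X$, which by hypothesis extends uniquely to a solution $\mathrm{Sur}_{U\cap V_i/V_i}\rightarrow X$.

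The crux is then to assemble these component-wise solutions into a single solution over $\mathrm{Sur}_{U/V}$. The plan is to show that the overconvergent germ itself decomposes compatibly, that is, that $\mathrm{Sur}_{U/V}$ is the colimit, as a pro-object of algebraic spaces, of $U$ and the $\mathrm{Sur}_{U\cap V_I/V_I}$, mirroring the pushout for $V$. Granting this, a map $\mathrm{Sur}_{U/V}\rightarrow X$ is precisely a family of maps $\mathrm{Sur}_{U\cap V_I/V_I}\rightarrow X$ compatible with $U\rightarrow X$; the component solutions supply such a family, and the uniqueness on each $V_i$ forces agreement over the overlapping strata $V_I$. Hence the solution exists and is unique, establishing overconvergence.

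I expect the compatibility of the germ with the decomposition to be the main obstacle. Concretely, one must verify that the blow-ups constituting the overconvergent neighbourhoods of $U/V$ respect the component decomposition---that blowing up a centre in $V\setminus U$ restricts on each $V_i$ to a blow-up of a centre in $V_i\setminus U$, with the strict transforms of the components remaining dense---so that proposition \ref{INT_DECOMP} may be applied at each finite stage and then passed to the pro-limit defining $\mathrm{Sur}$. This is effectively an interchange of the cofiltered limit defining the germ with the finite colimit over strata; it is the density hypotheses, preserved stagewise, that make proposition \ref{INT_DECOMP} applicable throughout and thereby license the interchange.
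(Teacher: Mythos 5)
Your skeleton matches the paper's proof: reduce via propositions \ref{SEP_REDUCE} and \ref{SEP_FP} to a reduced elementary extension problem, decompose $V$ into irreducible quasi-integral components $V_i$ with $U\cap V_i$ dense, obtain component solutions from the hypothesis, and glue using proposition \ref{INT_DECOMP}. But the step you correctly flag as ``the main obstacle'' is exactly where your sketch has a genuine gap, and the verification you propose points in the wrong direction. You propose to check that a blow-up of $V$ with centre in $V\setminus U$ \emph{restricts} on each $V_i$ to a blow-up with dense strict transforms. That restriction statement only yields the easy comparison: a solution on $\mathrm{Sur}_{U/V}$ induces compatible solutions on the components. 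What the converse actually requires is the opposite domination: the hypothesis hands you solutions defined on \emph{arbitrary} overconvergent neighbourhoods $\tilde V_i\rightarrow V_i$ of the components, and you must produce a \emph{single} admissible modification of $V$ whose restriction to each $V_i$ refines every $\tilde V_i$. Without this cofinality, your claimed identification of $\mathrm{Sur}_{U/V}$ with a colimit of the $\mathrm{Sur}_{U_I/V_I}$ --- the ``interchange of the cofiltered limit with the finite colimit'' --- is unjustified, and the component solutions never assemble.

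The paper supplies precisely this missing construction. First, theorem \ref{MORP_PROJECTIVE=BLOWUP} is invoked so that the component solutions may be assumed to live on finite type blow-ups of $\coprod_iV_i$ along honest ideals $T_i\trianglelefteq\sh O_{V_i}$; this representation by ideals is what makes the next move possible. The ideals are then regarded as ideals on $V$ via the closed embeddings $V_i\hookrightarrow V$, and one blows up $V$ along the product $T=\prod_iT_i$, obtaining a $(V\setminus U)$-admissible blow-up $\tilde V$ whose pullback to each component dominates the given $\tilde V_i$. Lemma \ref{INT_DECOMP} is applied not to $V$ but to this blow-up $\tilde V$, producing a unique map to $X$ from the union of the closed subschemes $\tilde V_i\times_V\tilde V$; finally, since that union is finite its inclusion into $\tilde V$ is a closed embedding, hence projective over $V$, so the union is itself an overconvergent neighbourhood of $U/V$ and the map descends to $\mathrm{Sur}_{U/V}\rightarrow X$. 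Your proposal contains neither the appeal to theorem \ref{MORP_PROJECTIVE=BLOWUP} nor the product-ideal blow-up, and these are the two ingredients that close the gap.
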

\begin{proof}Let $U/V$ be an elementary extension problem \ref{SEP_ELEMENTARY}. By propositions \ref{SEP_REDUCE} and \ref{SEP_FP}, we may assume that $V$ is a reduced scheme of finite type over $S$. Then by lemma \ref{FIN_CRUX} the underlying topological spaces of $S,U,V$, and $X$ are (locally in the latter case) Noetherian. 

Let us write $V=\bigcup_iV_i$ as a union of irreducible quasi-integral closed subschemes.Write $U_i:=U\times_VV_i$, and let $\mathrm{Sur}_{\coprod_i(U_i/V_i)}\rightarrow X$. By theorem \ref{MORP_PROJECTIVE=BLOWUP}, we may assume that the morphism is defined on a finite type blow-up $\coprod_i\tilde V_i\rightarrow \coprod_iV_i$ along some ideals $T_i\trianglelefteq\sh O_{V_i}$. 

Considering these as ideals on $V$ via the closed embeddings $V_i\hookrightarrow V$, let $T=\prod_iT_i$ and let $\tilde V$ be the ($V\setminus U$-admissible) blow-up. By lemma \ref{INT_DECOMP}, we obtain a unique morphism to $X$ from the union of the closed subschemes $\tilde V_i\times_V\tilde V$ of $\tilde V$, extending the given $\tilde V_i\rightarrow X$. Since the union is finite, the inclusion of the resulting subscheme is a closed embedding, hence in particular projective. This therefore determines a unique map $\mathrm{Sur}_{U/V}\rightarrow X$ descending from $\mathrm{Sur}_{\coprod_i(U_i/V_i)}\rightarrow X$.\end{proof}

\subsection{Integrality and normalisation}

Let $A$ be quasi-integral. The \emph{field of fractions} of $A$ is the localisation
\[ K_A:=\F_1[z^{(A\setminus0)\tens\Z}]. \]
The fraction fields glue together to yield a sheaf $K$ of $\F_1$-fields on $\mathbf{Sch}^\mathrm{aff/qi/nb}$.

If $X$ is a connected, quasi-integral $\F_1$-scheme, then non-empty open subsets are topologically dense, and so the restriction $K_X$ of $K$ to (the small site of) $X$ is constant. In this case, we will confuse $K_X$ with its algebra of sections, which is called the \emph{function field} of $X$. Its spectrum is the \emph{generic point} of $X$. The inclusion of the generic point is an affine morphism.

This allows us to define absolute versions of the notions introduced for pairs in \S\ref{MORP_NORM}.

\begin{defn}[Integrality]\label{INT_RUGGET}A quasi-integral $\F_1$-algebra $A$ is \emph{integral} if every non-zero principal divisor is Cartier; that is, if $A\setminus0$ is a cancellative monoid. 

An integral $\F_1$-algebra $A$ is \emph{normal} if $(K_A;A)$ is relatively normal (def. \ref{MORP_ADMIT_INT}); that is, if $A\setminus0$ is saturated in $(A\setminus 0)\tens\Z$.

A quasi-integral $\F_1$-scheme $X$ is said to be \emph{integral}, resp. \emph{normal}, if the stalks of $\sh O_X$ are integral, resp. normal, $\F_1$-algebras.\end{defn}

One associates to any quasi-integral $\F_1$-scheme an \emph{underlying integral subscheme}, which for connected $X$ is calculated as the affine embedding
\[ X^\mathrm{i}\cong\mathrm{cl}(\Spec K_X/X)\hookrightarrow X. \]
In particular, a quasi-integral scheme is integral if and only if every inhabited open subset is scheme-theoretically dense.

Similarly, an integral scheme may be replaced with its \emph{normalisation} $\nu X\rightarrow X$, which on connected components is the same as the relative normalisation of the pair $(\Spec K_X,X)$. Note that the normalisation and the inclusion of the underlying integral subscheme are \emph{ homeomorphisms} in the $\F_1$-setting. They are integral/projective.

These constructions yield right adjoints to the inclusions
\[ \mathbf{Sch}_{\F_1}^\mathrm{n/nb}\hookrightarrow\mathbf{Sch}_{\F_1}^\mathrm{i/nb}\hookrightarrow\mathbf{Sch}_{\F_1}^\mathrm{qi/nb} \]
of the full subcategories of $\mathbf{Sch}_{\F_1}^\mathrm{qi/nb}$ whose objects are normal, respectively integral schemes.

\begin{remark}The categories of integral and normal $\F_1$-schemes are not closed in $\mathbf{Sch}_{\F_1}^\mathrm{qi/nb}$ under fibre products, as the co-Cartesian square
\[\xymatrix{\F_1[x,y]\ar[r]^{y/x=w}\ar[d]_{y=x} & \F_1[x,w]\ar[d] \\ \F_1[x]\ar[r] & \F_1[x,w]/(xw=w)}\]
easily demonstrates. (This example is an affine patch of the pullback of the diagonal of $\A^2$ along the blow-up at $0$).\end{remark}

\begin{lemma}Passage to the underlying integral scheme $\mathbf{Sch}_{\F_1}^\mathrm{qi/nb}\rightarrow\mathbf{Sch}_{\F_1}^\mathrm{i/nb}$ detects and preserves overconvergence.\end{lemma}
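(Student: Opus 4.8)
The plan is to treat $(-)^{\mathrm i}$ exactly as the reduction functor $\mathrm{dR}^*$ was treated in proposition \ref{SEP_REDUCE}. Passage to the underlying integral subscheme is right adjoint, and a right inverse, to the fully faithful inclusion $\iota:\mathbf{Sch}_{\F_1}^{\mathrm i/nb}\hookrightarrow\mathbf{Sch}_{\F_1}^{\mathrm{qi/nb}}$; so it will suffice to prove directly that $(-)^{\mathrm i}$ both preserves and detects overconvergence, the corresponding statements for $\iota$ then following from lemma \ref{cf}. That $(-)^{\mathrm i}$ preserves the class $\P$ of integral/projective morphisms is the content of lemma \ref{INT_PROJ}: blow-ups and relative normalisations of quasi-integral schemes remain quasi-integral with non-boundary projection, and their integral subschemes are again integral/projective.

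The engine of both directions is a germ comparison. First I would show that for a dense affine open immersion $U\hookrightarrow V$ of integral schemes the overconvergent germ $\mathrm{Sur}_{U/V}$ is insensitive to whether it is computed in $\mathbf{Sch}^{\mathrm{qi/nb}}$ or in $\mathbf{Sch}^{\mathrm i/nb}$. Indeed, given a quasi-integral overconvergent neighbourhood $\tilde V\to V$, lemma \ref{INT_PROJ} keeps $\tilde V$ quasi-integral and the projection non-boundary; density of $U$ forces the section $U\to\tilde V$ to meet the generic point, so $U$ lands in the underlying integral subscheme $\tilde V^{\mathrm i}$, and $\tilde V^{\mathrm i}\to V$ is an integral/projective overconvergent neighbourhood, cofinal among them. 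For preservation I then apply criterion \emph{i)} of lemma \ref{SEP_THELEMMA}: an elementary extension problem for $X^{\mathrm i}/S^{\mathrm i}$ is, via the embeddings $X^{\mathrm i}\hookrightarrow X$ and $S^{\mathrm i}\hookrightarrow S$, an extension problem for the overconvergent $X/S$; its unique solution $\mathrm{Sur}_{U/V}\to X$ has pro-integral source in which $U$ is dense, so its embedded image coincides with $\mathrm{cl}(U/X)$, which is contained in the embedded subscheme $X^{\mathrm i}$. Hence the solution factors through $X^{\mathrm i}$ and, by the germ comparison, solves the original problem; uniqueness is inherited.

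For detection I would use criterion \emph{iii)} of lemma \ref{SEP_THELEMMA}, reducing to the statement that solving an extension problem $U\to X$ over a quasi-integral $V$ is the same as solving the restricted problem $U^{\mathrm i}\to X$ over $V^{\mathrm i}$ --- the analogue, for the non-cancellative thickening, of the independence-of-nilpotents lemma \ref{SEP_NILPOTENTS}. Here lies the main obstacle: unlike a nilpotent embedding, the inclusion $V^{\mathrm i}\hookrightarrow V$ is a non-closed embedding governed by a congruence rather than an ideal (cf. \S\ref{MORP_EMBED}), so it cannot be resolved by blowing up and the proof of \ref{SEP_NILPOTENTS} does not transcribe verbatim. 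I expect to circumvent this by exploiting that $V^{\mathrm i}\hookrightarrow V$ is a homeomorphism: since $U\hookrightarrow V$ is dense and $U^{\mathrm i}=U\cap V^{\mathrm i}$ remains dense in $V^{\mathrm i}$, the pushout descriptions of propositions \ref{INT_DECOMP} and \ref{SEP_CLOSED_SUBSET} (applied with $Z=V^{\mathrm i}$) let me glue the given morphism $U\to X$ with the solution over the integral neighbourhoods $\tilde V^{\mathrm i}$ along $U^{\mathrm i}$, producing a morphism out of $\mathrm{sur}^{V^{\mathrm i}}_{U/V}$ and hence a solution $\mathrm{Sur}_{U/V}\to X$. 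This is precisely the verification that the comparison square is Cartesian. Combining the two directions yields that $(-)^{\mathrm i}$ detects and preserves overconvergence.
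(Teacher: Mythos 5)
Your preservation half is correct and is essentially the paper's own argument: the germ comparison (any quasi-integral overconvergent neighbourhood of an integral problem $U/V$ is refined by its underlying integral subscheme, which is still of class $i/\P$ over $V$), fed into the adjunction via criterion \emph{i)} of lemma \ref{SEP_THELEMMA}, is exactly how the paper disposes of preservation, and your factorisation of the solution through $X^{\mathrm i}$ is the same adjunction argument spelled out.

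The detection half has a genuine gap, and it sits precisely at the point you flag and then hope away. Propositions \ref{INT_DECOMP} and \ref{SEP_CLOSED_SUBSET} cannot be ``applied with $Z=V^{\mathrm i}$'': both presuppose that $Z$ is a closed subscheme cut out by an ideal. The pro-object $\mathrm{sur}^{Z}_{U/V}$ of definition \ref{SEP_EXP_DEG} is manufactured by iterated blow-ups along $Z\cap(f=0)$, so its very definition requires the ideal $T$ defining $Z$ as input; for $Z=V^{\mathrm i}$ there is no such ideal (the inclusion $V^{\mathrm i}\hookrightarrow V$ is a bijective embedding given by a congruence, so the only ideal vanishing on it is zero), and ``$\mathrm{sur}^{V^{\mathrm i}}_{U/V}$'' is simply not defined. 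Having correctly observed that the congruence cannot be resolved by blowing up, you then invoke machinery that consists of nothing but blow-ups. Moreover, even granted some glued object receiving $U$ and $V^{\mathrm i}$, your ``hence a solution $\mathrm{Sur}_{U/V}\rightarrow X$'' needs that object to be dominated by $\mathrm{Sur}_{U/V}$, i.e.\ to be (pro-)$i/\P$ over $V$; in lemma \ref{SEP_NILPOTENTS} this was supplied by the finiteness of blow-ups of nilpotent ideals along Cartier divisors, and your proposal offers no substitute.

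What the paper does instead is a direct construction that bypasses the expansion machinery of \S\ref{Expanditures} entirely: for an affine, dense problem $U/V$ it defines $\tilde V$ by the algebra fibre product $\sh O(\tilde V)=\sh O(U)\times_{\sh O(U^{\mathrm i})}\sh O(V^{\mathrm i})$, checks that the dual square is a pushout in $\mathbf{Sch}_{\F_1}$ --- this is where the homeomorphism $V^{\mathrm i}\rightarrow\tilde V$ is used, as an ingredient of a hands-on verification rather than via \ref{SEP_CLOSED_SUBSET} --- and then checks that $\tilde V\rightarrow V$ is integral/projective, so that $\tilde V$ is itself an overconvergent neighbourhood of $U/V$. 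These two facts together yield the Cartesian comparison square you are aiming at (and, with $\nu$ in place of $(-)^{\mathrm i}$, the companion lemma on normalisation). So your structural instinct --- imitate \ref{SEP_NILPOTENTS} --- is right, but the imitation must replace the blow-up construction by this fibre-product construction rather than re-use it.
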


\begin{lemma}Normalisation $\mathbf{Sch}_{\F_1}^\mathrm{i/nb}\rightarrow\mathbf{Sch}_{\F_1}^\mathrm{n/nb}$ detects and preserves $i/\P$-overconvergence.\end{lemma}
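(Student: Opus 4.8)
The plan is to realise normalisation as the pullback of an essential spatial geometric morphism and then feed the resulting data into the comparison criteria of lemma \ref{SEP_THELEMMA}. First I would record the formal setup: the inclusion $\iota:\mathbf{Sch}_{\F_1}^\mathrm{n/nb}\hookrightarrow\mathbf{Sch}_{\F_1}^\mathrm{i/nb}$ and its right adjoint $\nu$ (normalisation) satisfy $\nu\iota\cong 1$, with $\nu$ left exact (being a right adjoint), a homeomorphism on underlying spaces, and compatible with localisation. Hence $\nu$ preserves open immersions and the (trivial) coverings of affine objects, so by definition \ref{TOPOS_DEF} it extends to an essential spatial geometric morphism $\phi:\Sh\mathbf{Sch}_{\F_1}^\mathrm{n/nb}\to\Sh\mathbf{Sch}_{\F_1}^\mathrm{i/nb}$ with $\phi^*=\nu$ and $\phi_!=\iota$. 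It is $\nu$, and not $\iota$, that plays the r\^ole of pullback here, precisely because $\iota$ fails to be left exact (normal schemes are not closed under fibre product). I would also check the standing hypothesis that $\phi^*=\nu$ preserves $\P=i/\P$: the normalisation of an integral/projective morphism is integral/projective by the stability properties of proposition \ref{MORP_PROJ_STABILITY}.

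The geometric heart of the matter is that $\nu$ commutes with the formation of overconvergent germs, i.e.
\[ \nu\,\mathrm{Sur}_{U/V}\;\cong\;\mathrm{Sur}_{\nu U/\nu V} \]
for an elementary extension problem $U/V$ of integral schemes. This rests on lemma \ref{INT_PROJ}: normalising an integral/projective overconvergent neighbourhood $\tilde V$ of $U/V$ yields a normal scheme $\nu\tilde V$ whose projection to $V$ remains non-boundary and integral/projective, and since $\nu$ is a homeomorphism the dense open $U$ pulls back to a dense open immersion $\nu U\hookrightarrow\nu\tilde V$. Thus the normalisations of the integral neighbourhoods are cofinal among the normal ones, and the two pro-objects coincide. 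When $U/V$ is already normal this specialises to $\nu\,\mathrm{Sur}_{U/V}\cong\mathrm{Sur}_{U/V}^{\,\mathrm{n/nb}}$, the identification I need for preservation.

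With these in hand, preservation follows from criterion i') of lemma \ref{SEP_THELEMMA} applied to $\phi_!=\iota$: the required domination $\iota\,\mathrm{Sur}_{U/V}^{\,\mathrm{n/nb}}\to\mathrm{Sur}_{\iota U/\iota V}\to\iota V$ in $\Sh\mathbf{Sch}_{\F_1}^\mathrm{i/nb}$ is exactly the counit $\iota\nu\to 1$ composed with the germ identity above, so $\phi^*=\nu$ preserves $i/\P$-overconvergence. For detection I would use criterion ii'): it suffices that $\Hom_S(\mathrm{Sur}_{U/V},-)$ surject onto the relevant fibre product, that is, that every compatible pair consisting of a map $U\to X$ over $S$ and an extension $\mathrm{Sur}_{\nu U/\nu V}=\nu\,\mathrm{Sur}_{U/V}\to\nu X$ over $\nu S$ descends to a map $\mathrm{Sur}_{U/V}\to X$ over $S$. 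After the reductions of propositions \ref{SEP_REDUCE} and \ref{SEP_FP} and corollary \ref{INT_DECOMP_DETECT}, which let me assume $V$ is a reduced, integral, finite type test space, this becomes an affine statement.

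The hard part will be exactly this last descent. Concretely it reduces to the monoid-theoretic fact that for the non-boundary inclusion of an integral $\F_1$-algebra into its saturation $\sh O_{\mathrm{Sur}_{U/V}}\hookrightarrow\sh O_{\mathrm{Sur}_{\nu U/\nu V}}$, a homomorphism out of $\sh O_X$ lands in the subalgebra of regular functions precisely when it does so after restricting to the dense open $U$, where normalisation is an isomorphism. I would extract this regularity criterion from the pushout/pullback descriptions of lemma \ref{SEP_CLOSED_SUBSET_LEMMA} and proposition \ref{INT_DECOMP}, using that $\nu\,\mathrm{Sur}_{U/V}\to\mathrm{Sur}_{U/V}$ is surjective and an isomorphism over $U$. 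Everything else is the same bookkeeping as in the preceding lemma on passage to the underlying integral subscheme, and uniqueness of the descended map follows from the injectivity of the structure sheaf into its normalisation together with density of $U$.
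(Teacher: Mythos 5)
Your topos-theoretic setup (taking $\phi^*=\nu$ rather than $\iota$, since $\iota$ is not left exact) and your preservation argument are essentially the paper's own: when $U/V$ are already normal, every integral/projective overconvergent neighbourhood is dominated by its normalisation, so the germ agrees in both categories, and criterion \emph{i')} of lemma \ref{SEP_THELEMMA} together with the adjunction $\iota\dashv\nu$ finishes. The problem is the detection half, which is where all the content of the lemma lives, and there your proposal has a genuine gap.

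The surjectivity you need for criterion \emph{ii')} is exactly your ``regularity criterion'': every section of $\sh O_{\mathrm{Sur}_{\nu U/\nu V}}$ whose restriction to $\nu U$ lies in $\sh O_U$ must already lie in $\sh O_{\mathrm{Sur}_{U/V}}$. But the tools you cite cannot produce this. Lemma \ref{SEP_CLOSED_SUBSET_LEMMA} and proposition \ref{INT_DECOMP} concern gluing along \emph{closed} subschemes and along irreducible components; neither says anything about normalisation. Worse, your supporting claim that $\nu\,\mathrm{Sur}_{U/V}\rightarrow\mathrm{Sur}_{U/V}$ is an isomorphism over $U$ is false: over $U$ this map is the normalisation $\nu U\rightarrow U$ of a merely integral scheme, and it is precisely the failure of this map to be an isomorphism that the fibre product over $\sh O(\nu U)$ has to repair --- if it \emph{were} an isomorphism, your criterion would degenerate to the assertion $\sh O_{\mathrm{Sur}_{U/V}}=\sh O_{\mathrm{Sur}_{\nu U/\nu V}}$, which is exactly what is not true. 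Relatedly, the nontrivial direction of your germ identity $\nu\,\mathrm{Sur}_{U/V}\cong\mathrm{Sur}_{\nu U/\nu V}$ for merely integral $U/V$ (that every normal neighbourhood of $\nu U/\nu V$ is dominated by the normalisation of an integral neighbourhood of $U/V$) is \emph{not} a consequence of lemma \ref{INT_PROJ}; it is equivalent to the descent statement you are trying to prove, so your framework quietly assumes its own conclusion.

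What is missing is the paper's gluing construction. For an affine, dense problem $U/V$, define $\tilde V$ by
\[ \sh O(\tilde V)\;=\;\sh O(\nu V)\times_{\sh O(\nu U)}\sh O(U). \]
Because normalisation over $\F_1$ is a homeomorphism, the dual square exhibits $\tilde V$ as the pushout $U\sqcup_{\nu U}\nu V$ in $\mathbf{Sch}_{\F_1}$, so a compatible pair consisting of $U\rightarrow X$ and a solution on the normal side assembles into a morphism $\tilde V\rightarrow X$. The crux, which your proposal never touches, is that $\tilde V\rightarrow V$ lies in the class $i/\P$: it is affine, and $\sh O(\tilde V)$ is an intermediate subalgebra of the integral extension $\sh O(V)\subseteq\sh O(\nu V)$ (over $\F_1$, every element of $\sh O(\nu V)$ even has a power in $\sh O(V)$), hence is integral over $\sh O(V)$. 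This is the step that certifies $\tilde V$ as an actual member of the germ $\mathrm{Sur}_{U/V}$, so that the glued map really is a solution; and it is also the reason the lemma is stated for $i/\P$- rather than $\P$-overconvergence, since $\tilde V\rightarrow V$ is integral but essentially never of finite type. Without exhibiting such a member of the germ, the inclusion $\sh O_{\mathrm{Sur}_{U/V}}\supseteq\sh O_{\mathrm{Sur}_{\nu U/\nu V}}\times_{\sh O(\nu U)}\sh O(U)$ that your ``monoid-theoretic fact'' asserts has no proof.
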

\begin{proof}We prove both lemmas at once. The preservation statements follow from the fact that normalisation is of class $i/\P$, and so the overconvergent germ $\mathrm{Sur}_{U/V}$ does not depend on in which category it is computed.

Now let $U/V$ be an affine, dense extension problem, and let $\nu U$, resp $\nu V$, be the normalisation of $U$, resp. $V$. Define $\tilde V$ via the pullback square
\[\xymatrix{ \sh O(\tilde V)\ar[r]\ar[d] & \sh O(U)\ar[d] \\ \sh O(\nu V)\ar[r] & \sh O(\nu U). }\]
The dual square is a pushout in $\mathbf{Sch}_{\F_1}$ because $\nu V\rightarrow \tilde V$ is a homeomorphism. Moreover, $\tilde V\rightarrow V$ is integral/projective. This shows criterion \emph{iv)} of lemma \ref{SEP_THELEMMA}.\end{proof}

\section{From schemes to fans}\label{FAN}

The starting point of the classification theorem is the observation, first codified, to the best of my knowledge, in \cite{Deitmartoric}, that integral $\F_1$-schemes are essentially the same as toric varieties: they can be packaged in terms of a \emph{fan} in a rational vector space.

Most of the statements in this section are well-known basic properties of toric geometry. I restate them here with an eye towards generalisation to the formal and rigid analytic cases.
In this and the next section, we state relative properties of morphisms as an absolute property of an $\F_1$-scheme $X$ if it holds for the structural morphism $X\rightarrow\Spec\F_1$.


The main thrust will be to explain the following statement:

\begin{thm}\label{FAN_THM}The construction
\[ X \mapsto (K_X^\times,\Sigma_X) \]
sets up an equivalence between the category of normal, connected, separated $\F_1$-schemes with enough jets, with non-boundary morphisms, and the category of pairs consisting of an Abelian group and a fan in its $\Q$-dual. Moreover, we have the following equivalences:
\begin{enumerate}\item $X$ is quasi-compact if and only if $\Sigma_X$ is finite;
\item $X$ is locally integral/Noetherian if and only if the cones of $\Sigma_X$ are rational polyhedral;
\item $X$ is locally Noetherian if and only if in addition to {ii)}, $\Sigma_X$ is spanned by its $\Z$-points;
\item $X$ is proper if and only if in addition to {iii)}, $\Sigma_X(\R)=N(\R)$.\end{enumerate}
The integral closed subschemes of $X$ are in natural, inclusion-reversing correspondence with the cones of $\Sigma_X$, with the fan data of a closed subscheme with cone $\sigma$ given by $(K_X^\times/\sigma,\Sigma_X/\sigma)$.\end{thm}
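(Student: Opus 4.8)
The plan is to run the six-step programme of §\ref{STRUCT} (assignment, open sets, determination, points, realisation, gluing), reducing the global assertion to an affine dictionary together with the locally-representable-sheaf machinery of §\ref{TOPOS}. First I would settle the affine case. A normal quasi-integral affine $\F_1$-scheme is $\Spec A$ with $A\setminus 0$ a saturated cancellative submonoid of its groupification $M:=(A\setminus0)\tens\Z=K_A^\times$ (Def. \ref{INT_RUGGET}); dualising, $A\setminus 0=\sigma^\vee\cap M$ for the cone $\sigma=\{n\in N(\Q):\langle m,n\rangle\geq0\ \forall m\in A\setminus0\}$ in $N(\Q)=\Hom(M,\Q)$. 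Saturation is precisely the condition making $A\setminus 0$ recoverable from $\sigma$, so $\Spec A\mapsto(M,\sigma)$ is a bijection on objects (the determination step). I would then check that localisations $A\to A[f^{-1}]$ — that is, open immersions of affine objects — correspond exactly to passage to faces $\tau\preceq\sigma$, yielding an isomorphism of the open-subobject lattice $\sh U_{/\Spec A}$ with the face poset of $\sigma$.

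With the affine dictionary in hand, globalisation is handled by the abstract nonsense of §\ref{TOPOS}: I must verify the three gluing conditions of §\ref{STRUCT}, namely that $\Sigma$ is flat, preserves open immersions, and preserves coverings. Preservation of open immersions is immediate from the face description; since affine $\F_1$-schemes have no nontrivial coverings the covering condition is vacuous; and flatness (left-exactness along the non-boundary morphisms, under which fibre products of quasi-integral objects stay quasi-integral, cf. §\ref{QUASI_INT}) is a direct monoid computation. This produces for each object a locally-representable cone complex $\Sigma_X$ with its developing map $\delta:\Sigma_X\to N_X$, and here the three hypotheses enter: connectedness fixes a single $N_X=N$; \emph{separation} forces $\delta$ to be an immersion (the criterion recalled in §\ref{STRUCT}), so the cones embed compatibly into $N(\Q)$ and $\Sigma_X$ is an honest \emph{fan}; and \emph{enough jets} makes the functor of points — valued in valuation $\F_1$-algebras, i.e. the $H$-points of the fan — conservative, so $X$ is reconstructed from $(K_X^\times,\Sigma_X)$. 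Full faithfulness then reduces to the observation that a non-boundary morphism $f$ induces an \emph{injective} $f^*:K_Y^\times\to K_X^\times$, hence dually a linear map $N_X\to N_Y$ carrying each cone of $\Sigma_X$ into a cone of $\Sigma_Y$, and that every such map of fans integrates uniquely to a morphism (again via enough jets); essential surjectivity is the inverse gluing construction applied to an arbitrary fan.

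The four characterisations are unwindings of finiteness and topology. For \emph{i)}, a scheme is quasi-compact iff it is covered by finitely many affine charts iff the fan has finitely many maximal cones. For \emph{ii)} and \emph{iii)} I would invoke Corollary \ref{FIN_NOETHER} and Lemma \ref{FIN_CRUX}: $A$ is integral-over-Noetherian iff $A\setminus0$ is finitely generated iff $\sigma$ is rational polyhedral, and $A$ is Noetherian iff of finite type over its coefficient field, which combinatorially means $\sigma$ is spanned by its $\Z$-points. For \emph{iv)} I would use §\ref{SEP}: properness is overconvergence plus quasi-compactness, and by the valuative criterion (Thm. \ref{SEP_VAL}) together with the criterion of §\ref{STRUCT} that overconvergence amounts to $\delta$ being a homeomorphism, this is equivalent to the support $\Sigma_X(\R)$ filling all of $N(\R)$.

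Finally, for the closed-subscheme correspondence I would start from the bijection of §\ref{INT} between quasi-integral closed subschemes of $X$ and primes of $\sh O_X$, restricted to the reduced (integral) ones. Locally on $\Spec A$ with $A\setminus0=\sigma^\vee\cap M$, the primes of $A$ are inclusion-reversingly indexed by the faces of $\sigma$, and these local bijections glue through the fan structure to a global inclusion-reversing correspondence between integral closed subschemes and cones $\tau\in\Sigma_X$ (with $X$ itself $\leftrightarrow\{0\}$). For the fan data of $V(\tau)$, the coordinate monoid on a chart is the face $(\sigma^\vee\cap M)\cap\tau^\perp$, whose groupification is the subgroup $\tau^\perp\cap K_X^\times$ that the author abbreviates $K_X^\times/\tau$, while the cones of its fan are the images in $N(\Q)/\langle\tau\rangle$ of the cones of $\Sigma_X$ containing $\tau$ — the star $\Sigma_X/\tau$ — and I would check the glued object is again normal and the identification natural. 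I expect the main obstacle to be this gluing-plus-full-faithfulness step, specifically pinning down the rôle of \emph{enough jets} so that morphisms, not just objects, are rigidly determined by the combinatorial data while keeping separation doing only the work of turning the cone complex into a fan; the properness characterisation \emph{iv)} is the other delicate point, since it imports the valuative and overconvergence machinery of §\ref{SEP}.
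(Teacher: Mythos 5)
Your architecture (affine dictionary, then gluing via locally representable sheaves, then combinatorial criteria) matches the paper's, but two steps as written are genuinely broken. First, the affine determination step: you claim that saturation alone makes $A\setminus 0$ recoverable as $\sigma^\vee\cap M$, and you relocate ``enough jets'' to a later role (conservativity of points, rigidity of morphisms). In fact the counit of the adjunction $\mathbf{Sch}_{\F_1}^\mathrm{aff/qi/nb}\leftrightarrows\mathbf{Cone}^N$ is an isomorphism only under \emph{two} independent conditions (\S6.3, lemma \ref{FAN_AFFINE}): saturation (normality) \emph{and} reflexivity of the $\Q_{\geq0}$-span of $A\setminus0$ in $K_A^\times\tens\Q$ --- the latter being precisely ``enough jets''. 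A saturated monoid need not equal the lattice points of its double polar: take $A=\F_1[\![t^{-H}]\!]$ with $H$ totally ordered of rank $\geq 2$; this is normal, yet $\sigma_A$ is a single ray and $\sigma_A^\circ\cap K_A^\times$ strictly contains $A\setminus 0$ (the paper notes its reflexivisation is a localisation at $t$). So your object-level bijection fails unless enough jets enters already in the affine dictionary; it cannot be deferred to the morphism level.

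Second, part \emph{iv)} is circular as proposed. You invoke the valuative criterion (Thm.\ \ref{SEP_VAL}), but in this paper that theorem is \emph{deduced from} proposition \ref{FAN_SUR}, i.e.\ from the properness criterion that completes theorem \ref{FAN_THM}; the author flags this explicitly (``the logic here is somewhat inverted'') and records that a direct proof of the valuative criterion eluded him. Similarly, for separation you cite ``the criterion recalled in \S\ref{STRUCT}'', but that is the introduction's summary of the result being proven, not an available lemma. The missing content --- which your proposal never engages --- is the convex-geometric characterisation of overconvergent neighbourhoods (prop.\ \ref{PFAN_MODIFICATION}): blow-ups/$\P$-modifications of a cone are refinements induced by functions of the form $\max\{0,\log f_1,\ldots,\log f_k\}$, and an overconvergent neighbourhood of a face is exactly one whose topological realisation is a neighbourhood of that face. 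This single lemma is what powers both the separation criterion (thm.\ \ref{PFAN_SEP}: separated $\Leftrightarrow$ $\delta$ injective on stars, hence fan) and the overconvergence/properness criterion (thm.\ \ref{PFAN_PROP}, specialised in \ref{FAN_SUR}); without proving it, neither direction of your \emph{iv)}, nor ``separated $\Rightarrow$ $\delta$ injective'', has an argument. (A smaller error: ``integral-over-Noetherian iff $A\setminus0$ finitely generated'' is false --- e.g.\ $\F_1[z^{\Q_{\geq0}}]$ is integral over $\F_1[z^{\N}]$ but not finitely generated; rational polyhedrality of $\sigma_A$ corresponds to integrality over a Noetherian algebra, while finite generation over the coefficient field is the Noetherian case \emph{iii)}.)
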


The notation will be defined shortly, though its meaning should be clear to students of toric geometry (see \cite{Fultor}).
A version of this, missing the separation hypothesis, appeared as \cite[thm. 4.1]{Deitmartoric}.

\subsection{Diagonalisable group action}\label{FAN_DIAG}

The category of monoids, and hence $\mathrm{Alg}_{\F_1}^\mathrm{qi/nb}$, is semiadditive, that is, finite products are finite coproducts. It follows that any quasi-integral $\F_1$-algebra $A$ is naturally a \emph{bialgebra}, with comultiplication given by the diagonal
\[ A\rightarrow A\tens_{\F_1}A,\quad f\mapsto f\tens f \]
and counit by the homomorphism $A\rightarrow \F_1$ that sends all non-zero elements to $1$. Non-boundary homomorphisms are automatically bialgebra homomorphisms.
If $A=K_A$ is an $\F_1$-field - so $A\setminus 0$ is an Abelian group - it is even a Hopf algebra. 

Thus any affine, quasi-integral $\F_1$-scheme $X$ carries the structure of a \emph{monoid scheme} over $\F_1$, with unit the unique $\F_1$-point sitting over the generic point of $X$, and this structure is automatically compatible with non-boundary morphisms. In particular,
\[\G_X:=\Hom(K_X^\times,\G_m)=\Spec K_X\] carries the structure of a \emph{group $\F_1$-scheme}. An $\F_1$-field is determined by its \emph{character group}
\[ K_A^\times=\Hom(\Spec K_A,\G_m)=K_A\setminus 0 \]
where of course write $\G_m$ for the group scheme $\Spec\F_1[t^{\pm1}]$. 

More generally, $\G_X$ is defined for any connected, quasi-integral $\F_1$-scheme $X$ and acts naturally, so for any non-boundary homomorphism $X\rightarrow Y$ the square
\[\xymatrix{ \G_X\times X\ar[r]\ar[d] &  X\ar[d] \\ \G_Y\times Y\ar[r] & Y }\]
commutes. We refer to $K_X^\times$ also as the \emph{character group of $X$}. 

We will also see that for irreducible closed subscheme $Z\hookrightarrow X$, there is a natural quotient homomorphism $\G_X\rightarrow \G_Z$ such that the diagram
\[\xymatrix{ \G_X\times Z\ar[r]\ar[d] & \G_Z\times Z\ar[r] &   Z\ar[d] \\ \G_X\times X\ar[rr] && X }\]
commutes.

Of course, this action is preserved - and becomes visible at the level of underlying topological spaces - after base change to $\Z$. In other words, any scheme with a quasi-integral model over $\F_1$ inherits an action by an equivariantly embedded diagonalisable group. The classification theorem in this section can therefore be thought of as a classification of ordinary schemes with this extra structure.

In particular, if $X$ is a normal, separated, and of finite type, and $K_X^\times$ is torsion free, then its base change is a \emph{toric variety} in the sense of \cite{Fultor}, and any non-boundary homomorphism yields a \emph{toric morphism} of the base changes.

\subsection{Embedded cones}\label{FAN_CONE}

Suppose again that $X=\Spec A$ is affine. Then $A\setminus0$ is a generating cone in the character group $K^\times_A$. Its \emph{polar} \[\sigma_A=\{v:K_A^\times\rightarrow\Q|v(A\setminus 0)\leq 0\}\] is a strongly convex cone in the rational vector space $N_A(\Q):=\Hom(K^\times_A,\Q)$. This sets up a functor
\[ \Sigma:\mathbf{Sch}_{\F_1}^\mathrm{aff/qi/nb} \rightarrow \mathbf{Cone}^N,\quad \Spec A \mapsto (K_A^\times,\sigma_A) \]
from the category of affine quasi-integral $\F_1$-schemes with non-boundary morphisms to the category $\mathbf{Cone}^N$ of pairs $(\sigma,K^\times)$ consisting of an Abelian group $K^\times$ and a strongly convex, reflexive cone $\sigma$ in its rational dual $N(\Q)$. 

More generally, we treat $N$ as a functor of Abelian groups, writing $N(H)=\Hom(K^\times,H)$ for a group $H$. When $H$ is totally ordered, the points of $N(H)$ can be thought of as \emph{valuations} on $K$. Each non-zero rational function $f\in K^\times$ induces a linear function $N(H)\rightarrow H$, denoted $\log f$, and this correspondence is injective as long as $K^\times$ is torsion-free.

 In  toric parlance, $N_A(\Z)$ is the group of cocharacters, or 1-parameter subgroups, of $\G_A$.

\paragraph{Open subsets}The map of cones associated to a localisation $A\rightarrow A[f^{-1}]$ is the inclusion of the face of $\sigma_A$ where $\log f=0$. 

Let us define a \emph{face} of an object $(\sigma,K^\times)$ of $\mathbf{Cone}^N$ to be a pair $(\tau,K^\times)$, where $\tau$ is a subcone of $\sigma$  defined by a collection of functions $f\in K^\times$ such that $\log f\leq 0$ on $\sigma$. Such a sub-cone is automatically strongly convex and reflexive. We call a face \emph{principal} if it can be cut out by a single function $f$. Of course, in the case $\sigma=\sigma_A$, then any element of $A$ cuts out a principal face of $\sigma_A$.

Then $\Sigma$ induces a bijection between the set of open immersions into $\Spec A$ and the set of principal faces of $\sigma_A$.

\paragraph{Closed subsets}Let $\lie p\trianglelefteq A$ be a prime ideal. Then $A/\lie p$ is in canonical bijection with $A\setminus\lie p\subseteq A\setminus 0$, and so $K_{A/\lie p}^\times\subseteq K_A^\times$. We obtain surjective maps
\[ N_A\twoheadrightarrow N_{A/\lie p}, \quad \sigma_A\twoheadrightarrow\sigma_{A/\lie p} \]
with kernel the subspace of $N_A$ of valuations \emph{centred at $\lie p$}, that is, that restrict to zero on $A\setminus\lie p$. The intersection of this kernel with $\sigma_A$ is the intersection of the faces cut out by the functions $\log f$ for $f\in\lie p$.

This correspondence is natural in $\lie p$, and hence puts the set of irreducible closed subsets of $\Spec A$ in inclusion-reversing correspondence with the set of linear quotients of $(K_A^\times,\sigma_A)$ by faces of $\sigma_A$.

Since any morphism can be written as a non-boundary morphism followed by a closed embedding, embedded cones can be used to describe all morphisms between affine quasi-integral $\F_1$-schemes.

\paragraph{Points}Let $H$ be an Abelian group. In order to make sense of the set $\sigma(H)\subseteq N(H)$ of $H$-rational points of the cone $\sigma$, we need a partial order on $H$. Thus $\sigma$ can be thought of as a functor of partially ordered groups (in the sequel, \emph{pogroups}). For concreteness, the reader may like to focus on the case that $H$ is totally ordered and Archimedean, {i.e.} of rank one. 

The free $\F_1$-algebra on $H$, when equipped with the $t$-adic topology, is called an $\F_1$-\emph{valuation field} and denoted $\F_1(\!(t^{-H})\!)$. Its \emph{ring of integers}, comprised of negative elements of $H$, or positive powers of $t$, is denoted $\F_1[\![t^{-H}]\!]$. The underlying discrete $\F_1$-algebra is what we would have called $\F_1[z^{H_{\leq 0}}]$ in the notation of \S\ref{FSCH}.

One usually thinks of the spectrum $\Delta$ of $\F_1[\![t]\!]=\F_1[\![t^\Z]\!]$ as a \emph{formal disc}; we will generalise this to arbitrary $H$ and write
\[ \Delta_H:=\Spec\F_1[\![t^{-H}]\!] \]
for the formal disc and punctured disc with exponent group $H$. Morphisms from $\Delta$ (resp. $\Delta_H$) into a formal scheme are called \emph{jets} (resp. \emph{$H$-jets}). While we are considering only schemes, the topology of $\F_1[\![t^{-H}]\!]$ plays no r\^ole.

Geometrically, the set $\sigma_A(H)$ of valuations from $K_A$ into $H$, non-positive on $A$, correspond to non-boundary $H$-jets $\Delta_H\rightarrow \Spec A$. The point sets are a left exact functor.

\paragraph{Topological realisation}
The vector space $N(\R)$ carries a natural weak topology coming from the order topology on $\R$, and this is inherited by $\sigma(\R)$, making it a contractible $\R_{\geq0}^\times$-space. It also inherits an $H$-affine structure - in particular, a subset of $H$-rational points - for any additive subgroup $H\subseteq\R$.

The topological realisation is a left exact functor, and $\sigma(\R)$ is a CW complex if $\sigma$ is finite-dimensional.


\subsection{Local finiteness conditions}

The correspondence $\mathbf{Sch}_{\F_1}^\mathrm{aff/qi/nb} \rightarrow \mathbf{Cone}^N$ has a fully faithful left adjoint
\[ (K^\times,\sigma)\mapsto \Spec \F_1[\sigma^\circ\cap K^\times]. \]
The counit of this adjunction is an isomorphism - that is, a quasi-integral $\F_1$-algebra $A$ is determined by its cone $\sigma_A$ - under the following independent conditions:
\begin{enumerate}
\item $A\setminus0$ is a saturated submonoid of $K_A^\times$;
\item the $\Q_{\geq0}$-span of $A\setminus0$ in $K_A^\times\tens\Q$ is reflexive.
\end{enumerate}
The first condition by definition \ref{INT_RUGGET} says that $\Spec A$ is \emph{normal}. The second has the following geometric meaning: let $D=(f_+/f_-)$ be a principal divisor; then if the pullback of $D$ along any $\Q$-jet $\Delta_\Q\rightarrow \Spec A$ is effective, then $D$ is effective. In other words, $\Q$-jets are enough to detect poles of rational functions.

\begin{defn}Let us say that a quasi-integral scheme $X$ has \emph{enough jets} if a locally principal divisor with effective pullback to any $\Q$-jet is effective.\end{defn}

The left adjoint to $\Sigma$ associates to an affine quasi-integral scheme $X$ a universal non-boundary morphism \[ X^\mathrm{ej}\rightarrow X \]
from a normal $\F_1$-scheme with enough jets. This morphism is integral if and only if $X$ already had enough jets, in which case it is the normalisation.

\begin{lemma}\label{FAN_AFFINE}The adjunction $\mathbf{Sch}_{\F_1}^\mathrm{aff/qi/nb} \leftrightarrows \mathbf{Cone}^N$ restricts to an equivalence on the full subcategory of $\mathbf{Sch}_{\F_1}^\mathrm{aff/qi/nb}$ whose objects are normal with enough jets.\end{lemma}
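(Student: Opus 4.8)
The plan is to argue purely formally from the adjunction, since the genuine geometric content — determining exactly when the counit is an isomorphism — has already been extracted in the discussion preceding the lemma. Write $L\colon\mathbf{Cone}^N\to\mathbf{Sch}_{\F_1}^\mathrm{aff/qi/nb}$ for the left adjoint $(K^\times,\sigma)\mapsto\Spec\F_1[\sigma^\circ\cap K^\times]$ and $\Sigma$ for the right adjoint. Because $L$ is fully faithful, the unit $\eta\colon\mathrm{id}_{\mathbf{Cone}^N}\Rightarrow\Sigma L$ is a natural isomorphism; this is the standard characterisation of full faithfulness of a left adjoint. It therefore remains only to understand the counit $\eps\colon L\Sigma\Rightarrow\mathrm{id}$ and to invoke the general fact that an adjunction restricts to an equivalence between the source and the full subcategory of the target on which the counit is invertible.

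First I would record that for quasi-integral $A$ the component $\eps_{\Spec A}\colon\Spec\F_1[\sigma_A^\circ\cap K_A^\times]\to\Spec A$ is dual to the inclusion of monoids $A\setminus0\hookrightarrow\sigma_A^\circ\cap K_A^\times$, and hence is an isomorphism precisely when $A\setminus0$ is both saturated in $K_A^\times$ and equal to the lattice points of its own $\Q_{\geq0}$-span. These are exactly the two conditions isolated before the lemma, namely normality (def. \ref{INT_RUGGET}) together with having enough jets. Thus the full subcategory $\{X:\eps_X\text{ iso}\}$ is precisely the category of normal affine quasi-integral $\F_1$-schemes with enough jets.

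Next I would observe that the essential image of $L$ automatically lands in this subcategory: for any $(K^\times,\sigma)$ the monoid $\sigma^\circ\cap K^\times$ is saturated (if $m^n$ lies in $\sigma^\circ$ with $n\geq1$ then $\log m\in\sigma^\circ$, as $\sigma^\circ$ is a cone) and its $\Q_{\geq0}$-span is the reflexive cone $\sigma^\circ$, so $L(K^\times,\sigma)$ is normal with enough jets. Combined with the triangle identity $\eps_{LC}\circ L\eta_C=\mathrm{id}_{LC}$ and the invertibility of $\eta$, this shows $\eps_{LC}$ is an isomorphism for every $C$, so the counit-locus coincides with the essential image of $L$. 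The standard idempotent-adjunction argument then applies verbatim: $L$ is fully faithful, its essential image is exactly the category of normal objects with enough jets, and $\Sigma$ restricted to that image furnishes a quasi-inverse, with the structural natural isomorphisms supplied by $\eta$ and $\eps$. This is precisely the asserted equivalence.

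The only place any genuine $\F_1$-geometry enters is the identification of the counit-locus with normality-plus-enough-jets, which rests on the polar correspondence $A\setminus0\leftrightarrow\sigma_A$ and the description of $\sigma_A^\circ\cap K_A^\times$ as the saturation-and-reflexive-hull of $A\setminus0$; all of this is already supplied by the preceding discussion, so I do not anticipate a serious obstacle — the remaining work is formal category theory. A minor caveat worth flagging is the empty scheme (cf. remark \ref{INT_EMPTYSET}), which is locally but not representable in $\mathrm{Alg}_{\F_1}^\mathrm{qi/nb}$; since every object of $\mathbf{Cone}^N$ has inhabited spectrum and every affine quasi-integral scheme here is local and nonempty, it plays no role in this affine equivalence.
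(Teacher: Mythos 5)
Your formal skeleton --- unit invertible because the left adjoint $L$ is fully faithful, the triangle identity forcing $\eps_{LC}$ to be invertible for every cone $C$, hence an equivalence between $\mathbf{Cone}^N$ and the locus where the counit is invertible --- is correct, and it is the same (implicit) argument the paper relies on, since the lemma carries no separate proof there. The gap is in the one step that has actual mathematical content: your description of that locus. You claim $\eps_{\Spec A}$ is invertible precisely when $A\setminus0$ is saturated in $K_A^\times$ \emph{and} equal to the lattice points of its own $\Q_{\geq0}$-span. But those two clauses are the \emph{same} condition: if $f\in K_A^\times$ maps into the $\Q_{\geq0}$-span of $A\setminus0$ in $K_A^\times\tens\Q$, then clearing denominators gives $f^n\in A\setminus0$ for some $n\geq 1$, so the lattice points of the span are exactly the saturation. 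What you have written therefore asserts that normality alone suffices, and that is false. Counit-invertibility says $A\setminus0=\sigma_A^\circ\cap K_A^\times$, and $\sigma_A^\circ$ is the \emph{double polar} of the span $C$ of $A\setminus 0$; saturation only gives $A\setminus0=C\cap K_A^\times$, while $C^{\circ\circ}$ may be strictly larger than $C$. Closing that second gap is the paper's second condition --- reflexivity of $C$ --- and it is precisely what ``enough jets'' means. Concretely: for $H=\Z^2$ ordered lexicographically, $A=\F_1[z^{H_{\leq0}}]$ is saturated and equals the lattice points of its span, yet $\sigma_A$ is the single ray dual to the first coordinate, $\sigma_A^\circ$ is the half-plane $\{a\leq 0\}$, and $L\Sigma A$ is a proper localisation of $A$ --- this is the paper's own example of a valuation ring of rank $>1$ without enough jets. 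Read literally, your argument proves an equivalence on \emph{all} normal affine quasi-integral schemes, which is false.

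The rest of your proposal survives, and is in fact sounder than your justification of it: the lattice points of $\sigma^\circ$ really do span it (every element of $K^\times\tens\Q$ lying in $\sigma^\circ$ is a positive rational multiple of a lattice point of $\sigma^\circ$), $\sigma^\circ=\sigma^{\circ\circ\circ}$ is automatically reflexive, and your triangle-identity argument independently places the image of $L$ inside the counit locus. So the repair is localised: redo the identification of the counit locus, using normality to make $A\setminus0\subseteq C\cap K_A^\times$ an equality and reflexivity of $C$ (equivalently, the jet-theoretic statement that $\Q$-jets detect effectivity of principal divisors) to make $C\cap K_A^\times\subseteq C^{\circ\circ}\cap K_A^\times$ one.
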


\begin{eg}In the case that $\langle\sigma_A\rangle$ is finite-dimensional, $A$ has enough jets if and only if $(A\setminus 0)\tens\Q\subset K_A^\times\tens\Q$ is closed in the order topology of $\Q$. For instance, any valuation ring $\F_1[\![t^{-H}]\!]$ with $H$ totally ordered of rank greater than one does not have enough jets. Let $-1\in H$ generate a minimal convex subgroup. Then the reflexivisation of $\F_1[\![t^{-H}]\!]$ is a localisation at $t$.

If, on the other hand, the topological boundary of $\sigma_A(\R)$ has no non-zero rational points, then $A$ automatically has enough jets. Since in this case, the only face is zero, the underlying space of $\Spec A$ is the two-point Sierpinski space: any non-zero function $f$ induces a homeomorphism $\Spec A\rightarrow \A^1_{\F_1}$.\end{eg}

\paragraph{Relative finiteness}Let us call an object $(\sigma,K^\times)$ of $\mathbf{Cone}^N$ \emph{rational polyhedral} if its span $\langle\sigma\rangle(\Q)$ in $N(\Q)$ is finite-dimensional and $\sigma\subseteq\langle\sigma\rangle$ is rational polyhedral in the sense of \cite[\S1.1]{Fultor}. More generally, we say that a morphism $f:\sigma_1\rightarrow\sigma_2$ of cones is rational polyhedral if there exists a rational polyhedral cone $\sigma_2^\prime\subseteq N_1$ such that $\sigma_1=\sigma_2^\prime\cap f^{-1}\sigma_2$.

\begin{lemma}\label{FAN_FINITE}Let $A\rightarrow B$ be a non-boundary homomorphism of quasi-integral $\F_1$-algebras.
\begin{enumerate}\item If $B$ is integral over a finite type $A$-algebra, then the kernel $\mathrm{ker}$ of $N_B(\Q)\rightarrow N_A(\Q)$ is finite-dimensional and $\sigma_B\rightarrow\sigma_A$ is rational polyhedral.
\item If $A\rightarrow B$ is of finite type, then \emph{i)} and $\mathrm{ker}(\Z)\tens\Q\tilde\rightarrow \ker(\Q)$.\end{enumerate}
If $B$ has enough jets, then the converses to these statements hold.\end{lemma}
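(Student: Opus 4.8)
The plan is to work entirely through the functor $\Sigma$ of \S\ref{FAN_CONE}. A non-boundary homomorphism $A\rightarrow B$ restricts to a monoid homomorphism $P_A:=A\setminus 0\rightarrow P_B:=B\setminus 0$, inducing $\alpha:K_A^\times\rightarrow K_B^\times$ on group completions and, dually, $f=\alpha^\vee:N_B\rightarrow N_A$. First I would record two elementary translations. On the one hand, $\ker f=\Hom(K_B^\times/\alpha(K_A^\times),\Q)$, so $\ker f$ is finite-dimensional iff $K_B^\times/\alpha(K_A^\times)$ has finite rank; moreover $\ker(\Z)=\Hom(K_B^\times/\alpha(K_A^\times),\Z)$, the natural map $\ker(\Z)\tens\Q\rightarrow\ker(\Q)$ is always injective, and it is an isomorphism exactly when the free quotient of $K_B^\times/\alpha(K_A^\times)$ is finitely generated. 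On the other hand, for $v\in N_B(\Q)$ one has $v\in\sigma_B$ iff $v(P_B)\le 0$, while $f^{-1}\sigma_A=\{v:v(\alpha(P_A))\le 0\}$; hence the assertion that $\sigma_B\rightarrow\sigma_A$ is rational polyhedral is precisely the assertion that $\sigma_B$ is cut out of $f^{-1}\sigma_A$ by finitely many further rational inequalities.

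For the forward direction of i), suppose $B$ is integral over a finite type $A$-algebra $C$ generated over $A$ by $c_1,\ldots,c_m$. Then $K_C^\times/\alpha(K_A^\times)$ is finitely generated, and integrality forces $K_B^\times/K_C^\times$ to be torsion: a monic relation $b^n=c_jb^j$ gives $b^{n-j}\in K_C^\times$ after group completion. Thus $K_B^\times/\alpha(K_A^\times)$ has finite rank and $\ker f$ is finite-dimensional. Applying a valuation $v$ with $v(\alpha(P_A))\le 0$ and $v(c_i)\le 0$ to the same relation yields $(n-j)v(b)=v(c_j)\le 0$, so $v\le 0$ on all of $P_B$; the converse inequalities being obvious, I obtain $\sigma_B=f^{-1}\sigma_A\cap\bigcap_{i=1}^m\{v:v(c_i)\le 0\}$, which exhibits $f$ as rational polyhedral with $\sigma'=\bigcap_i\{v(c_i)\le 0\}$ cut out by the finitely many functions $c_i$. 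For ii) I would simply take $C=B$, so that $K_B^\times/\alpha(K_A^\times)$ is finitely generated; this gives i) and, by the first translation above, the isomorphism $\ker(\Z)\tens\Q\tilde\rightarrow\ker(\Q)$.

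For the converses I would use the hypothesis that $B$ has enough jets, i.e. that $\Q_{\ge 0}P_B$ is reflexive, so that $\Q_{\ge 0}P_B=\sigma_B^\vee$ and $P_B$ is recovered from $\sigma_B$ up to integral extension (lemma \ref{FAN_AFFINE}). Given $\sigma_B=f^{-1}\sigma_A\cap\bigcap_{j=1}^N\{v:v(b_j)\le 0\}$, dualising and invoking reflexivity gives $\Q_{\ge 0}P_B=\sigma_B^\vee=\overline{\Q_{\ge 0}(\alpha(P_A)\cup\{b_1,\ldots,b_N\})}$, where the $b_j$ may be taken inside $P_B$. Each $b\in P_B$ then lies in this cone, so a suitable power $b^n$ is a monoid product of elements of $\alpha(P_A)$ and the $b_j$; that is, $B$ is integral over the finite type algebra $C=A[b_1,\ldots,b_N]$, which is the converse of i). If in addition $\ker(\Z)\tens\Q\tilde\rightarrow\ker(\Q)$ is an isomorphism, then $K_B^\times/\alpha(K_A^\times)$ is, up to torsion, finitely generated and free, so a relative Gordan's lemma applied in the finite-rank lattice $\ker f$ upgrades integrality to genuine monoid generation of $P_B$ over $\alpha(P_A)$, giving finite type and the converse of ii).

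The main obstacle is this last, converse, step: passing from the rational identity $\Q_{\ge 0}P_B=\overline{\Q_{\ge 0}\langle\alpha(P_A),b_j\rangle}$ to a statement about the monoid $P_B$ itself. Two points require care. One must select the defining functions $b_j$ within $P_B$ rather than merely in $K_B^\times$, and one must control the closure in the duality: the cone $\Q_{\ge 0}\langle\alpha(P_A),b_j\rangle$ mixes the finite set $\{b_j\}$ with the a priori infinitely generated image $\alpha(P_A)$, and it is precisely the reflexivity built into the "enough jets" hypothesis that guarantees every $b\in P_B$ already lies in this cone (and hence has a power in the monoid generated by $\alpha(P_A)$ and the $b_j$). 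Isolating genuine, rather than integral, finite generation in the converse of ii) is where the integral kernel hypothesis is essential, supplying the finitely generated free lattice to which Gordan's lemma is applied.
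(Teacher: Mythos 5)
The paper states this lemma with no proof at all (it is asserted bare and then used in the following lemma), so there is no argument of the author's to compare yours against; I can only assess your proposal on its merits. Your forward directions are correct: the rank bound on $K_B^\times/\alpha(K_A^\times)$ coming from the monic relations, the identity $\sigma_B=f^{-1}\sigma_A\cap\bigcap_i\{v:v(c_i)\le 0\}$, and the reduction of ii) to finite generation of $K_B^\times/\alpha(K_A^\times)$ are all sound. Your decision to read ``rational polyhedral morphism'' as ``cut out inside $f^{-1}\sigma_A$ by finitely many further rational inequalities'' is also the only reading under which the lemma can be true: the paper's literal definition demands a cone with finite-dimensional span, which already makes the \emph{forward} direction fail for $\F_1[x_1,x_2,\dots]\rightarrow\F_1[x_1,x_2,\dots][b]$.

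The converse half, however, has a genuine gap, at exactly the point you flag and then talk your way past. Enough jets gives $\Q_{\ge0}P_B=\sigma_B^\vee=D^{\vee\vee}$, the \emph{double polar} of $D:=\Q_{\ge 0}\bigl(\alpha(P_A)\cup\{b_1,\dots,b_N\}\bigr)$; your argument needs $\Q_{\ge0}P_B=D$ itself, and reflexivity of $\Q_{\ge0}P_B$ says nothing about whether $D$ is reflexive, so the assertion that reflexivity ``guarantees every $b\in P_B$ already lies in this cone'' is circular. Worse, the gap cannot be filled, because the converse as stated is false. Let $L=\bigoplus_{n\ge 0}\Z e_n$, let $P_A=\{x\in L: x_0\ge 1\}\cup\{0\}$ and $P_B=\{x\in L:x_0\ge 0\}$, and let $A=\F_1[z^{P_A}]\rightarrow B=\F_1[z^{P_B}]$ be induced by the inclusion. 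Both monoids generate $L$, so $N_B(\Q)\rightarrow N_A(\Q)$ is an isomorphism: the kernel is $0$, hence finite-dimensional, and $\ker(\Z)\tens\Q\cong\ker(\Q)$ trivially. A direct computation (using $e_0\pm me_n\in P_A$ for all $m$) gives $\sigma_A=\sigma_B=\Q_{\le 0}\cdot(x\mapsto x_0)$, a rational ray, so $\sigma_B\rightarrow\sigma_A$ is rational polyhedral under any reading; and $B$ is normal with enough jets. Yet $B$ is integral over no finite type $A$-algebra: given $c_1,\dots,c_M\in P_B$, choose $N$ beyond every index occurring in their supports; in any relation $ne_N=a+\sum_k n_kc_k$ the $0$-coordinate forces $a=0$ and $n_k(c_k)_0=0$ for all $k$, and the surviving $c_k$ do not involve $e_N$. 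In this example $D=\{x:x_0>0\}\cup\{0\}$ while $D^{\vee\vee}=\{x:x_0\ge 0\}$, and $e_1$ lies in the double polar but not in $D$ --- precisely the step your proof elides. (There is a second, independent, failure mode: for merely \emph{quasi-}integral $B$ your final step converts an identity in $K_B^\times$ into an identity in $B$, which needs cancellativity; $B=\F_1[x_1,x_2,\dots,y]/(x_iy=y)$ over $A=\F_1$ satisfies all the hypotheses and defeats the converse for that reason alone, with no closure issue at all.)

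What is true, and is all the paper actually uses (in the next lemma the base is the coefficient field $\F_1[A^\times]$), is the converse when $B$ is integral and $\alpha(K_A^\times)$ spans a linear subspace $U\subseteq K_B^\times\tens\Q$: then $D=U+F$ is the preimage of the finitely generated cone $F$ under the projection to the finite-dimensional space $(K_B^\times\tens\Q)/U$ (finite-dimensionality is exactly the kernel hypothesis), hence $D$ is reflexive, and your dualisation argument runs verbatim, with your ``relative Gordan'' step then being the honest classical Gordan lemma. So your proof is repairable only after strengthening the hypotheses in this way; as written, both your argument and the lemma itself break.
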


\begin{eg}Note that $X\rightarrow S$ finitely presented does not imply even that $\nu X\rightarrow\nu S$ is of finite type. Take, for example, $K^\times=\Q^2$ with basis $x,y$, and $\sigma_A$ the positive orthant. The $\F_1$-scheme $X=\Spec A$ is normal; however, the blow-up of $X$ at the finitely generated ideal $T=(x,y)$ is not, and its normalisation is of infinite type. This can be seen from the fact that the convex hull of $T\setminus0$ in $A\setminus 0$ is not finitely generated as an $A\setminus 0$-module.\end{eg}

\begin{lemma}Let $A$ be a quasi-integral $\F_1$-algebra.
\begin{enumerate}\item If $A$ is integral-over-Noetherian, then $\sigma_A$ is rational polyhedral. 
\item If $A$ is Noetherian, then \emph{i)} and $\langle\sigma_A\rangle(\Z)\tens\Q\tilde\rightarrow \langle\sigma_A\rangle(\Q)$.\end{enumerate}
If $A$ has enough jets, then the converses to these statements hold.\end{lemma}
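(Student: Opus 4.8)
The plan is to deduce everything from the relative statement, Lemma \ref{FAN_FINITE}, applied to the non-boundary homomorphism $k\to A$, where $k=\F_1[A^\times]$ is the coefficient field of $A$ (examples \ref{RIG_FIELDS}). The subtlety that forces this choice of base is that \emph{Noetherian} is strictly weaker than \emph{finite type over $\F_1$}: by corollary \ref{FIN_NOETHER} it means finite type over the coefficient field. So I cannot simply specialise Lemma \ref{FAN_FINITE} to the base $\F_1$; taking the base to be $k$ instead exactly repairs the hypothesis mismatch. Since $k$ is an $\F_1$-field, $k\setminus 0=k^\times$ is a group, so $\sigma_k=0$, and the relative conclusions collapse to statements about the linear subspace $\ker:=\ker\bigl(N_A(\Q)\to N_k(\Q)\bigr)$ and about the (trivial) morphism $\sigma_A\to 0$.

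The technical heart is a dictionary identifying the \emph{absolute} invariants appearing in the statement with the \emph{kernel} invariants produced by Lemma \ref{FAN_FINITE}, namely the identity $\langle\sigma_A\rangle(\Q)=\ker$, valid for every quasi-integral $A$. To see this, set $\bar M:=(A\setminus 0)/A^\times$; this monoid is sharp (a relation $fg\in A^\times$ forces $f\in A^\times$), and $\ker=\Hom(\bar M^{\mathrm{gp}},\Q)$, while $\sigma_A=C^\vee$ for the cone $C:=\Q_{\geq 0}\bar M\subseteq \bar M^{\mathrm{gp}}\tens\Q$. The span of a dual cone is the annihilator of the lineality space, so $\langle\sigma_A\rangle(\Q)=\ker$ precisely when $C$ has trivial $\Q$-lineality. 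This holds because if $x,-x\in C$ then, clearing a common denominator $n$, both $nx$ and $-nx$ lie in $\bar M$, making $nx$ a unit of the sharp monoid $\bar M$, whence $x=0$. (I should stress that this uses working over $\Q$: over $\R$ a sharp monoid can acquire irrational lineality, which is why the dictionary is clean here but \emph{reflexivity}/enough jets is what controls the real realisation.) In particular $\sigma_A$ is rational polyhedral if and only if $\ker$ is finite-dimensional together with the polyhedrality of $\sigma_A$ inside it, and since $\langle\sigma_A\rangle(\Q)=\ker$ one also has $\langle\sigma_A\rangle(\Z)=\ker(\Z)$.

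With this in hand both forward implications are immediate. For \emph{i)}, if $A$ is integral over a Noetherian $A_0$, then $A_0$ is finite type over $\F_1[A_0^\times]\subseteq k$, so $A$ is integral over the finite type $k$-algebra $A_0\cdot k$; Lemma \ref{FAN_FINITE}\emph{i)} gives $\ker$ finite-dimensional and $\sigma_A\to 0$ rational polyhedral, i.e. $\sigma_A=\sigma'\cap\ker$ with $\sigma'$ rational polyhedral (using $\sigma_A\subseteq\ker$, as every $v\in\sigma_A$ kills units), whence $\sigma_A$ is rational polyhedral. For \emph{ii)}, if $A$ is Noetherian then $k\to A$ is finite type (corollary \ref{FIN_NOETHER}), and Lemma \ref{FAN_FINITE}\emph{ii)} adds $\ker(\Z)\tens\Q\widetilde\rightarrow\ker(\Q)$; translating through $\langle\sigma_A\rangle(\Q)=\ker$ and $\langle\sigma_A\rangle(\Z)=\ker(\Z)$ yields the asserted $\langle\sigma_A\rangle(\Z)\tens\Q\widetilde\rightarrow\langle\sigma_A\rangle(\Q)$. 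For the converses I feed the dictionary back into the converse half of Lemma \ref{FAN_FINITE} (legitimate, since $A$ has enough jets): $\sigma_A$ rational polyhedral supplies $\ker$ finite-dimensional and $\sigma_A\to 0$ rational polyhedral, and the $\Z$-condition supplies $\ker(\Z)\tens\Q\widetilde\rightarrow\ker(\Q)$; the relative converses then give that $A$ is integral over a finite type $k$-algebra (resp. finite type over $k$). Since $k$ is a field it is Noetherian, so by the Hilbert basis theorem (proposition \ref{FIN_HILB}) a finite type $k$-algebra is Noetherian; hence $A$ is integral-over-Noetherian (resp. Noetherian, again by corollary \ref{FIN_NOETHER}).

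I expect the main obstacle to be exactly the hypothesis-matching at the base, together with verifying the span identity $\langle\sigma_A\rangle(\Q)=\ker$ rather than merely $\langle\sigma_A\rangle\subseteq\ker$; the remaining ingredient, that a rational subspace meets a full lattice in a full-rank lattice (used to pass the $\Z$-condition between $\ker$ and $\langle\sigma_A\rangle$ in \emph{ii)}), is standard linear algebra and should require no more than a sentence.
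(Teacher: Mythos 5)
Your overall strategy is exactly the paper's: its entire proof reads ``By corollary \ref{FIN_NOETHER}, we are reduced to checking finiteness of the homomorphism $\F_1[A^\times]\rightarrow A$, which follows from lemma \ref{FAN_FINITE}'' --- i.e.\ reduce to the coefficient field and quote the relative lemma, just as you propose. The difference is that you try to make the translation between the relative conclusions (about $\ker$) and the absolute ones (about $\sigma_A$) explicit, and your translation contains a genuine error. The claimed dictionary $\langle\sigma_A\rangle(\Q)=\ker$ is false for general quasi-integral $A$, and the flaw sits in the sentence ``both $nx$ and $-nx$ lie in $\bar M$, making $nx$ a unit of the sharp monoid $\bar M$''. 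What lie in $C$ are elements of the \emph{image} of $\bar M$ in $\bar M^{\mathrm{gp}}\tens\Q$; since $A$ is only quasi-integral, $A\setminus 0$ (hence $\bar M$) need not be cancellative, the map $\bar M\rightarrow\bar M^{\mathrm{gp}}$ need not be injective, and an element of the image whose negative also lies in the image need not come from a unit of $\bar M$. Concretely, take $A=\F_1[x,y,z]/(xyz=z)$: this is quasi-integral and Noetherian, $A^\times=\{1\}$ so $\bar M=A\setminus 0$ is sharp, but in $K_A^\times\cong\Z^2$ (where $y=x^{-1}$) the image of $A\setminus0$ is $\Z\times\N$; hence $\sigma_A$ is the ray $\{0\}\times\Q_{\leq0}$ and $\langle\sigma_A\rangle(\Q)$ is one-dimensional, while $\ker=N_A(\Q)=\Q^2$. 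Sharpness gives trivial lineality only once $\bar M\hookrightarrow\bar M^{\mathrm{gp}}$, i.e.\ when $A$ is \emph{integral}, which is strictly stronger than the hypothesis of the lemma.

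This is not cosmetic, because part ii) and both converses in your write-up run through the dictionary essentially. In the converse direction you deduce ``$\ker$ finite-dimensional'' --- the input that the converse of lemma \ref{FAN_FINITE} actually requires --- from ``$\sigma_A$ rational polyhedral''; that implication is false at this level of generality, as one sees by enlarging the example to $\F_1[x_i,y_i,z : i\in\N]/(x_iy_iz=z)$, whose cone is still a single rational ray but whose $\ker$ is infinite-dimensional (and which is not integral-over-Noetherian). To repair the argument you must either restrict the dictionary to integral $A$ (where your lineality argument is correct), or derive it from the ``enough jets'' hypothesis, which is available precisely in the converse directions and which, read so that effectivity of $f_+/f_-$ means divisibility in $A$, excludes the examples above (there $1/(xy)$ pulls back effectively along every jet but $xy$ is not invertible). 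In the forward directions the dictionary should be avoided altogether: your part i) already needs only the true inclusion $\sigma_A\subseteq\ker$, and for part ii) one should argue directly that the span of a rational polyhedral $\sigma_A$, sitting inside a $\ker$ that is spanned by its lattice points and being cut out there by finitely many equations coming from $K_A^\times$, is itself spanned by its lattice points.
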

\begin{proof}By lemma \ref{FIN_NOETHER}, we are reduced to checking finiteness of the homomorphism $\F_1[A^\times]\rightarrow A$, which follows from lemma \ref{FAN_FINITE}.\end{proof}

\subsection{Glueing}\label{FAN_GLUE}

Since $\Sigma$ has a left adjoint, it is, in particular, left exact. It is therefore the pullback functor of an essential geometric morphism
\[\phi:\PSh\mathbf{Cone}^N\rightarrow\Sh\mathbf{Sch}_{\F_1}^\mathrm{qi/nb}\]
between the presheaf topoi. We equip $\mathbf{Cone}^N$ with `open immersions' the class of face inclusions of finite codimension. The category of \emph{embedded cone complexes} - that is, locally representable presheaves on $\mathbf{Cone}^N$ - is denoted $\mathrm{C}\mathbf{Cone}^N$.

As we have already noted, $\Sigma$ preserves open immersions, and so its extension preserves the subcategories of locally representable objects
\[ \Sigma:\mathbf{Sch}_{\F_1}^\mathrm{qi/nb}\rightarrow \mathrm{C}\mathbf{Cone}^N. \]
We thus associate to any quasi-integral scheme $X/\F_1$ a collection of cones glued together along faces in a way that respects their embedding into $N$.

\begin{prop}The pullback $\Sigma$ for the geometric morphism
\[ \phi:\PSh\mathbf{Cone}^N\rightarrow\Sh\mathbf{Sch}_{\F_1}^\mathrm{qi/nb}  \]
preserves open immersions and induces bijections on open subobject lattices. 

It has a fully faithful left adjoint with image the full subcategory generated under colimits by the normal schemes with enough non-boundary jets.\end{prop}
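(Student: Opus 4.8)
The plan is to deduce the whole statement from the affine results already in hand --- lemma \ref{FAN_AFFINE} together with the \textbf{Open subsets} computation of \S\ref{FAN_CONE} --- by feeding them into the glueing formalism of definition \ref{TOPOS_DEF}. No further geometric input is needed; the work is entirely in promoting an affine correspondence to a statement about the presheaf topoi.

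First I would check the three hypotheses of definition \ref{TOPOS_DEF} for $\Sigma$ viewed as a functor between the sites $\mathbf{Sch}_{\F_1}^\mathrm{aff/qi/nb}$ and $\mathbf{Cone}^N$. \emph{Flatness}: $\Sigma$ is a right adjoint (lemma \ref{FAN_AFFINE}), hence preserves every finite limit that exists in $\mathbf{Cone}^N$, the sole obstruction to strict left-exactness being the absence of an object representing the empty scheme (remark \ref{INT_EMPTYSET}) --- exactly the slack flatness is designed to absorb. \emph{Open immersions}: the \textbf{Open subsets} paragraph sends a localisation $A\to A[f^{-1}]$ to the principal face $\{\log f=0\}$ and exhibits a bijection between open immersions into $\Spec A$ and faces of $\sigma_A$; since faces of finite codimension generate the open immersions of $\mathbf{Cone}^N$, this is precisely preservation of open immersions. \emph{Coverings}: these are trivial on both sides, an affine $\F_1$-scheme being local and a cone having a top face, so the third axiom is vacuous. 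Definition \ref{TOPOS_DEF} then promotes $\Sigma$ to the essential spatial geometric morphism $\phi$ with $\phi^*=\Sigma$, and lemma \ref{TOPOS_FUNCT} guarantees that $\phi^*$ carries locally representable objects to locally representable objects and restricts on each $\sh U_{/X}$ to the dual of a continuous map of locales.

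For the first assertion I would globalise the affine bijection. On each affine chart the induced map of open-subobject lattices is a bijection by the preceding step; since $X$ is glued from such charts along open immersions and every $\sh U_{/-}$ is a complete lattice (axiom ii of definition \ref{TOPOS_DEF}), these chartwise bijections patch to an isomorphism $\sh U_{/\Sigma X}\to\sh U_{/X}$, i.e. the locale map of lemma \ref{TOPOS_FUNCT} is an isomorphism. For the second assertion I would identify the left adjoint $\phi_!$ supplied by essentiality with the colimit-preserving (left Kan) extension of the affine left adjoint $L\colon(K^\times,\sigma)\mapsto\Spec\F_1[\sigma^\circ\cap K^\times]$. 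Full faithfulness of $\phi_!$ is tested on the dense subcategory of representables, where $\phi^*\phi_!(K^\times,\sigma)=\Sigma L(K^\times,\sigma)\cong(K^\times,\sigma)$ is the affine unit, an isomorphism since $L$ is fully faithful (the statement preceding lemma \ref{FAN_AFFINE}); as both $\phi_!$ and $\phi^*\phi_!$ preserve colimits and representables are dense, the unit $\mathrm{id}\Rightarrow\phi^*\phi_!$ is a natural isomorphism. Its essential image, being closed under the colimits $\phi_!$ preserves and generated by the image of the representables --- which $L$ sends to affine normal schemes with enough jets --- is then exactly the full subcategory of $\Sh\mathbf{Sch}_{\F_1}^\mathrm{qi/nb}$ generated under colimits by the normal schemes with enough non-boundary jets.

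I expect the only genuine delicacy to be the flatness bookkeeping of the first step: keeping track of which finite limits actually exist in $\mathbf{Cone}^N$ and confirming that the missing-empty-cone defect is harmless here --- in sharp contrast to the formal case, where this same point becomes the crux of theorem \ref{PFAN_ALG}. The remaining subtlety is purely in the last sentence, namely verifying that the colimit-closure of the \emph{affine} normal-with-enough-jets schemes recovers all such schemes (they are themselves glued from affine ones along open immersions, so the two colimit-closures coincide) and neither more nor less.
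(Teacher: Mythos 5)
Your proposal is correct and takes essentially the same approach as the paper, which states this proposition without a separate proof precisely because the surrounding text already assembles your ingredients: flatness of $\Sigma$ via the affine left adjoint of lemma \ref{FAN_AFFINE}, preservation of open immersions via the face correspondence of \S\ref{FAN_CONE}, trivial coverings on both sides, and the glueing formalism of definition \ref{TOPOS_DEF} and lemma \ref{TOPOS_FUNCT}. Your only slip is directional and harmless: being a right adjoint, $\Sigma$ preserves limits that exist in its \emph{source} category rather than ``in $\mathbf{Cone}^N$'' (equivalently, its left Kan extension to presheaves is precomposition with the fully faithful affine left adjoint, hence left exact), which is exactly what flatness requires.
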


\begin{cor}The cone complex functor \[ \Sigma:\mathbf{Sch}_{\F_1}^\mathrm{qi/nb}\rightarrow \mathrm{C}\mathbf{Cone}^N\] induces, for every $X\in\mathbf{Sch}_{\F_1}^\mathrm{qi/nb}$, a homeomorphism $\Sh(\Sigma_X)\tilde\rightarrow\Sh(X)$.

The left adjoint to $\Sigma$ is fully faithful with image the category $\mathbf{Sch}_{\F_1}^\mathrm{ej/n/nb}$ of normal schemes with enough jets. In particular, $\Sigma$ restricts to an equivalence on this subcategory.\end{cor}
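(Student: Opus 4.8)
The plan is to deduce both assertions from the preceding proposition together with the topos-theoretic formalism of \S\ref{TOPOS}, without reopening the affine analysis. Write $\phi_!\dashv\phi^*=\Sigma$ for the essential adjunction attached to $\phi$, and recall the two inputs supplied by the proposition: $\phi^*=\Sigma$ preserves open immersions and induces \emph{bijections} on open subobject lattices, and $\phi_!$ is fully faithful with image the colimit-closure of the normal schemes with enough non-boundary jets. The strategy is (1) get the homeomorphism from the lattice bijection, (2) show $\phi_!$ restricts to locally representable objects, and (3) pin down its essential image chart-by-chart.

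First I would establish the homeomorphism. Apply Lemma \ref{TOPOS_FUNCT} to the spatial geometric morphism $\phi$: for each locally representable $X$, the object $\Sigma_X=\phi^{-1}X$ is locally representable (an embedded cone complex), and the restriction $\phi^{-1}|\sh U_{/X}$ is dual to a continuous map of locales $\Sh(\Sigma_X)\rightarrow\Sh(X)$. Since the proposition guarantees that $\phi^{-1}|\sh U_{/X}\colon\sh U_{/X}\rightarrow\sh U_{/\Sigma_X}$ is a bijection of complete lattices, the dual map is an isomorphism of locales; as we work throughout with topoi admitting enough points (def. \ref{TOPOS_DEF}), this is exactly the asserted homeomorphism $\Sh(\Sigma_X)\tilde\rightarrow\Sh(X)$.

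Next I would verify that $\phi_!$ carries $\mathrm{C}\mathbf{Cone}^N$ into $\mathbf{Sch}_{\F_1}^\mathrm{qi/nb}$. Being a left adjoint, $\phi_!$ preserves colimits; on a representable cone $(K^\times,\sigma)$ it is $\Spec\F_1[\sigma^\circ\cap K^\times]$, and it sends a principal face inclusion to the corresponding localisation, hence preserves affine, and so all, open immersions. A locally representable cone complex is a colimit of representable cones glued along face inclusions, so its image is a colimit of affine $\F_1$-schemes glued along open immersions, i.e. a locally representable $\F_1$-scheme. The restriction of the fully faithful $\phi_!$ remains fully faithful, so the unit $\mathrm{id}\rightarrow\Sigma\phi_!$ is an isomorphism. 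It then remains to identify its essential image: for a fully faithful left adjoint this consists of those $X$ for which the counit $\phi_!\Sigma_X\rightarrow X$ is an isomorphism. Using the homeomorphism of the first step to pass to an affine cover, this counit restricts on a chart $U=\Spec A$ to the affine counit $U^\mathrm{ej}\rightarrow U$, which by Lemma \ref{FAN_AFFINE} is an isomorphism precisely when $A$ is normal with enough jets. Since normality (def. \ref{INT_RUGGET}) and the enough-jets condition are both local, the counit is an isomorphism exactly when $X\in\mathbf{Sch}_{\F_1}^\mathrm{ej/n/nb}$; thus $\phi_!$ and $\Sigma$ restrict to mutually inverse equivalences between $\mathrm{C}\mathbf{Cone}^N$ and $\mathbf{Sch}_{\F_1}^\mathrm{ej/n/nb}$.

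I expect the main obstacle to be the passage between the global, topos-level image statement of the proposition and the statement about locally representable objects in the corollary: one must show that $\phi_!$ genuinely lands in schemes (not merely in the ambient topos) and that the counit may be computed locally, since it is only on affine charts that Lemma \ref{FAN_AFFINE} is available. Both points rest on the compatibility of $\Sigma$ and $\phi_!$ with restriction to open subobjects, which is precisely what the lattice bijection of the first step provides; once that compatibility is in hand the remaining verifications are the routine locality of the enough-jets condition and the standard characterisation of the image of a fully faithful adjoint.
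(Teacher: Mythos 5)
Your proposal is correct and takes essentially the same route the paper intends: the corollary is stated as an immediate consequence of the preceding proposition, and your unpacking --- Lemma \ref{TOPOS_FUNCT} together with the bijection on open subobject lattices for the homeomorphism, then the counit criterion for the essential image of the fully faithful left adjoint, verified on affine charts via Lemma \ref{FAN_AFFINE} and the locality of normality and the enough-jets condition --- is exactly the intended ``abstract nonsense''. The one point you gloss (that a colimit of affines along open immersions with suitable local flatness is again locally representable) is elided at the same level by the paper itself, so there is no substantive gap.
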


\begin{cor}The specialisation order on the topological space underlying a quasi-integral scheme $X$ with enough jets is the inclusion order on cones of $\Sigma_X$.\end{cor}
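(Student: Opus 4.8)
The plan is to reduce the statement to the affine case and then read both orders off the dictionary between prime ideals and faces established in \S\ref{FAN_CONE}. Recall that a point $x$ of $X$ is a specialisation of a point $x'$ exactly when $x\in\overline{\{x'\}}$, i.e. $\overline{\{x\}}\subseteq\overline{\{x'\}}$; I would order the points by declaring $x'\preceq x$ in that case, so that generic points are minimal. First I would note that opens in a scheme are stable under generisation, so that whenever $x\in\overline{\{x'\}}$ any affine open containing $x$ also contains $x'$ (the set $U\cap\overline{\{x'\}}$ is a nonempty open of the irreducible space $\overline{\{x'\}}$, hence contains its generic point $x'$). Consequently any $\preceq$-comparable pair of points already lies in a common affine chart; since $\Sigma$ is computed chart by chart and the gluing of $\Sigma_X$ respects the embeddings into $N$, it suffices to treat $X=\Spec A$, where $\Sigma_X$ is the single embedded cone $\sigma_A$.

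On $\Spec A$ the points are the primes $\lie p\trianglelefteq A$, with closure $\overline{\{\lie p\}}=\{\lie q:\lie q\supseteq\lie p\}$, so that $\lie p\preceq\lie q$ if and only if $\lie p\subseteq\lie q$. By the discussion of closed subsets in \S\ref{FAN_CONE}, the assignment $\lie p\mapsto\tau_{\lie p}$, where $\tau_{\lie p}=\{v\in\sigma_A: v|_{A\setminus\lie p}=0\}$ is the face of valuations centred at $\lie p$, is a natural bijection between the primes of $A$ and the faces of $\sigma_A$, that is, the cones of $\Sigma_X$. So the task is to check that this bijection carries $\subseteq$ on primes to $\subseteq$ on cones. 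One inclusion is immediate: if $\lie p\subseteq\lie q$ then $A\setminus\lie q\subseteq A\setminus\lie p$, so any valuation vanishing on $A\setminus\lie p$ vanishes on $A\setminus\lie q$, whence $\tau_{\lie p}\subseteq\tau_{\lie q}$.

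The converse is the one substantive point, and it is exactly where the hypothesis of enough jets is used: I would recover $\lie p$ from its face via the identity $A\setminus\lie p=\{f\in A:\log f\equiv 0\text{ on }\tau_{\lie p}\}$. The inclusion $\subseteq$ here is the very definition of $\tau_{\lie p}$; for $\supseteq$ one needs that for every $f\in\lie p$ there is a \emph{rational} valuation $v\in\tau_{\lie p}$ with $\log f(v)<0$, which is precisely the assertion that $\Q$-jets detect poles, i.e. that $A$ has enough jets (cf. lemma \ref{FAN_AFFINE}). Granting this, $\tau_{\lie p}\subseteq\tau_{\lie q}$ forces $A\setminus\lie q\subseteq A\setminus\lie p$, hence $\lie p\subseteq\lie q$, so $\lie p\mapsto\tau_{\lie p}$ is an isomorphism of posets. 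Combined with the first paragraph this yields $\lie p\preceq\lie q\iff\lie p\subseteq\lie q\iff\tau_{\lie p}\subseteq\tau_{\lie q}$, identifying the specialisation order with cone inclusion. I expect the reverse implication $\tau_{\lie p}\subseteq\tau_{\lie q}\Rightarrow\lie p\subseteq\lie q$ to be the main obstacle, since it is false without enough jets; the remaining steps are the generisation argument for the reduction to a chart and a careful tracking of the two order conventions. Alternatively, one could bypass the explicit recovery by invoking the homeomorphism $\Sh(\Sigma_X)\tilde\rightarrow\Sh(X)$ of the preceding corollary---the specialisation order being a homeomorphism invariant---and then performing the same affine face computation to identify the specialisation order of the cone complex with inclusion.
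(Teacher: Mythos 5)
Your overall strategy is the right one, and it is essentially the only one available: the paper states this corollary without proof, leaving it to follow from the glueing proposition and the affine dictionary of \S\ref{FAN_CONE}, which is exactly what your first two paragraphs reconstruct (reduction to a common affine chart via generisation-stability of opens, then the order-preserving map $\lie p\mapsto\tau_{\lie p}$). You have also correctly isolated the crux: cone inclusion must \emph{reflect} prime inclusion, i.e.\ $\lie p$ must be recoverable from $\tau_{\lie p}$. The gap is in how you dispose of that crux. The statement you need --- for every nonzero $f\in\lie p$ there is a rational $v\in\tau_{\lie p}$ with $\log f(v)<0$ --- is \emph{not} ``precisely'' the enough-jets hypothesis. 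Enough jets, as defined and as used in lemma \ref{FAN_AFFINE}, says that the $\Q$-jets of $\Spec A$ detect poles, equivalently that the $\Q_{\geq0}$-span $C$ of $A\setminus 0$ is reflexive. What you need is the analogous property for jets \emph{centred at} $\lie p$, i.e.\ jets landing in $\tau_{\lie p}$; since $\tau_{\lie p}=\sigma_{A_{\lie p}}$ and the span of $A_{\lie p}\setminus 0$ is $C+\mathrm{span}(A\setminus\lie p)$, this is the enough-jets condition for the \emph{localisation} $A_{\lie p}$, and no argument is given to pass from one to the other.

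That passage genuinely fails in the stated generality, because reflexivity is not stable under adding the span of a face. Take $C\subset\Q^3$ the cone over a ``stadium'' (convex hull of two unit discs centred at $(\pm1,0)$) and $A=\F_1[C\cap\Z^3]$: this $A$ is normal, integral, and its span $C$ is reflexive, yet the extreme ray through the corner point $(1,1,-1)$ is a non-exposed face, whose only supporting functional $(0,1,1)$ also annihilates the entire flat two-dimensional face over the top segment. Hence the corner prime and the strictly smaller prime of the flat face determine the \emph{same} cone $\Q_{\geq0}(0,1,1)$, and order-reflection (indeed injectivity) fails. (This example also strains the paper's own glueing proposition, so the imprecision is partly inherited from the source; but a proof must confront it.) The repair is to read the hypothesis stalk-locally --- consistent with how the paper defines quasi-integral, integral and normal via stalks --- so that every $A_{\lie p}$ has enough jets, or to impose local polyhedrality, under which reflexivity of $C+\mathrm{span}(A\setminus\lie p)$ is automatic. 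Granting that, the crux closes by a short unit argument you could make explicit: if $\log f\equiv0$ on $\sigma_B$ for $B=A_{\lie p}$, then $-\log f$ lies in the double polar of the span of $B\setminus0$, hence (reflexivity) in that span itself, so clearing denominators gives $f^ng=1$ with $g\in B\setminus 0$, $n\geq1$; thus $f$ is a unit of $B$ and $f\notin\lie p$. Note finally that your alternative ending via the homeomorphism $\Sh(\Sigma_X)\tilde\rightarrow\Sh(X)$ does not evade the issue: identifying the points of that locale with cones requires exactly the same affine computation.
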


\paragraph{Developing map}
By taking colimits, a cone complex $\Sigma$ can be realised as a functor of pogroups
\[ \Sigma_X(H)=\Hom^\mathrm{nb}(\Delta_H,X) \]
and, in the case $H=\R$, as an $\R_{\geq0}^\times$-equivariant topological space. By definition, $\Sigma(\R)$ is strongly topologised with respect to cone inclusions $\sigma(\R)\hookrightarrow\Sigma(\R)$. The $\R_{\geq0}^\times$-action contracts  $\Sigma_X(\R)$ onto a discrete set, which may be identified with $\pi_0(X)$.

If $\Sigma$ is a \emph{finite} complex - that is, if it is qcqs as an object of $\Sh\mathbf{Cone}^N$ - cone inclusions are actually topological immersions. It follows that on qcqs objects of $\mathrm C\mathbf{Poly}^N$, topological realisation is left exact.
If the cones of $\Sigma$ are finite-dimensional, $\Sigma(\R)$ is a CW complex. Since every cone complex is a filtered colimit of finite subcomplexes and filtered colimits of CW complexes are exact, topological realisation is left exact on all complexes with finite-dimensional cones.

When $\Sigma\in\mathrm{C}\mathbf{Cone}^N$ is connected, $N$ is constant, and we get a \emph{developing map} $\delta:\Sigma\rightarrow N$ that is linear and injective on each cone. If the developing map is globally injective, $\Sigma$ is nothing more than a \emph{fan} in $N$.

\subsection{Criteria for separation and propriety}

Through its embedding into $\Sh\mathbf{Sch}_{\F_1}$, $\PSh\mathbf{Cone}^N$ inherits notions of overconvergence, separation, and propriety of morphisms. These correspond, as in \S\ref{SEP}, to the class $\P$ of morphisms that give rise under that embedding to integral-over-proper morphisms of schemes.

Suppose that $\Sigma$ is connected with cocharacter group $N$. We have seen that the rational (resp. integral) points of $N$ correspond to non-boundary $\tilde\A^1_{\F_1}\setminus0$-points (resp, $\A^1_{\F_1}\setminus0$-points) of the associated scheme, while rational (resp. integral) points of $\Sigma$ correspond to $\tilde\A^1_{\F_1}$-points (resp. $\A^1_{\F_1}$-points). To every point of $N$, then, we associate an extension problem which, since $\A^1$ cannot be modified, has a solution if and only if the point lifts to $\Sigma$. 

In other words, the developing map $\Sigma\rightarrow N$ induces an injection (resp. bijection) on $\Z$ or $\Q$-points whenever $\Sigma$ is separated (resp. overconvergent). More generally, an overconvergent morphism $\tilde\Sigma\rightarrow\Sigma$ induces Cartesian squares
\[\xymatrix{\Sigma_X(\Z)\ar[r]\ar[d] & N_X(\Z) \ar[d]  & \Sigma_X(\Q)\ar[r]\ar[d] & N_X(\Q) \ar[d] \\
 \Sigma_S(\Z)\ar[r] & N_S(\Z) & \Sigma_S(\Q)\ar[r] & N_S(\Q)}\]
of sets, and a separated morphism at least injects into the fibre product.

Any point $\tilde\A^1_{\F_1}\rightarrow X$ factors through some quasi-integral closed subscheme. In the spirit of \cite[\S2.4]{Fultor}, it is not unreasonable to expect converse statements. However, since the valuative criterion \ref{SEP_VAL} presented in this paper actually depends on the following theorem, the logic here is somewhat inverted.

\begin{prop}\label{FAN_SUR}Let $f:X\rightarrow S$ be a non-boundary morphism of finite type between quasi-integral, locally integral/Noetherian $\F_1$-schemes. The following are equivalent:
\begin{enumerate}\item $f$ is separated;
\item $f$ is locally separated;
\item for each cone $\sigma$ of $\Sigma_S$ and connected component $\Sigma_0$ of $\sigma\times_{\Sigma_S}\Sigma_X$, $\Sigma_0(\Q)$ is a fan in $N_0(\Q)$;\end{enumerate}
Suppose further that either $f$ is finitely presented or that $N_X(\Q)$ is locally finite-dimensional.\footnote{The $N_X(\Q)$ finite-dimensional case does not follow from the valuative criterion, but one can obtain it directly from some simple convex geometry arguments.} The following are equivalent:
\begin{enumerate} \item the restriction of $f$ to each connected component of $X$ is proper; \item $f$ is overconvergent;
\item for each cone $\sigma$ of $\Sigma_S$ and connected component $\Sigma_0$ of $\sigma\times_{\Sigma_S}\Sigma_X$, $\Sigma_0(\R)$ is a finite fan with support $f^{-1}\sigma(\R)$.\end{enumerate}\end{prop}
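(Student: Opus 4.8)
The plan is to deduce both equivalences from the extension-problem definitions of Definition \ref{SEP_EXTENS_DEF}, transported through the cone-complex functor $\Sigma$ via the combinatorial dictionary of \S\ref{FAN_CONE}--\S\ref{FAN_GLUE}. The organising principle, already recorded in the preamble, is that a non-boundary $\A^1_{\F_1}$- or $\Spec\F_1[z^\Q]$-jet into $X$ is the same datum as a lift to $\Sigma_X$ of the corresponding point of $N_X$, and that because $\A^1_{\F_1}$ and $\Spec\F_1[z^\Q]$ admit no non-trivial modifications, solving the associated extension problem \emph{is} the problem of lifting a point of $N$ to $\Sigma$. First I would carry out the reductions: invoking \ref{SEP_REDUCE}, \ref{INT_DECOMP_DETECT}, \ref{SEP_FP} and the normalisation lemmas of \S\ref{INT}, any extension problem may be replaced by an elementary one $U/V$ in which $V$ is affine, local, normal, quasi-integral, Noetherian with enough jets, and $U\hookrightarrow V$ dense. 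By Lemma \ref{FAN_AFFINE} such a $V$ is precisely a rational polyhedral cone $\sigma_V$; the structural map $V\to S$ selects a single cone $\sigma$ of $\Sigma_S$ with a cone map $\sigma_V\to\sigma$, and a morphism $U\to X$ lands, by connectedness of the affine $U$, in a single component $\Sigma_0$ of $\sigma\times_{\Sigma_S}\Sigma_X$. One must take care to avoid circularity: \ref{SEP_VAL} depends on the present proposition, but none of the cited reductions do.

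The easy directions come next. In both lists $(i)\Rightarrow(ii)$ is immediate, since a proper morphism is by definition overconvergent and qcqs. For $(ii)\Rightarrow(iii)$, local separation (resp. overconvergence) says exactly that the developing map $\delta\colon\Sigma_0\to N_0$ is injective (resp. bijective) on $\Q$- (resp. $\R$-) points, that is, that $\Sigma_0(\Q)$ is a fan (resp. $\Sigma_0(\R)$ is a fan with support $f^{-1}\sigma(\R)$); this is the preamble translation of the definitions. The finiteness asserted in the second list is where the extra hypothesis enters: under \emph{finitely presented} or \emph{$N_X(\Q)$ locally finite-dimensional}, the support $f^{-1}\sigma(\R)$ is a single finite-dimensional rational polyhedral cone, hence conically compact, so any fan filling it is finite by the quasi-compactness criterion ($X$ quasi-compact $\Leftrightarrow$ $\Sigma_X\setminus 0$ conically compact). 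This is precisely the mechanism by which overconvergence of a connected component is forced to be properness.

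The substance is the hard direction $(iii)\Rightarrow(ii)/(i)$, which I would prove by \emph{fan pullback}. In the reduced extension problem, $\sigma_V$ maps linearly into $N_0(\R)$, landing inside $f^{-1}\sigma(\R)$. When $(iii)$ holds, $\Sigma_0(\R)$ is a (finite) fan whose realisation is exactly this region, so its developing map is a homeomorphism onto $f^{-1}\sigma(\R)$. Pulling the fan $\Sigma_0$ back along $\sigma_V\to N_0$ produces a subdivision $\tilde\sigma_V$ of $\sigma_V$, each cone of which maps into a single cone of $\Sigma_0$; this is a map of cone complexes $\tilde\sigma_V\to\Sigma_0$, i.e. the desired extension $\tilde V\to X$. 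Finiteness of the fan makes the subdivision finite, so that $\tilde V\to V$ is a finite-type blow-up and hence $\P$, giving a genuine overconvergent neighbourhood rather than a merely formal one. Injectivity of $\delta$ delivers uniqueness of the lift, hence local separation; surjectivity (full support) delivers existence, hence overconvergence. Upgrading local separation to separation uses that $f$ is quasi-separated under the standing hypotheses, and finiteness upgrades overconvergence to properness on each connected component.

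The main obstacle is this fan-pullback step: showing that the point-level condition $(iii)$ genuinely produces a finite-type modification realising the extension. Concretely the work is twofold: first, that \emph{enough jets} and reflexivity allow one to recover the cone map $\tilde\sigma_V\to\Sigma_0$ from its effect on rational points; and second, that pulling a finite rational fan back along the linear map $\sigma_V\to N_0$ yields a finite rational subdivision, so the resulting blow-up is of finite type and the neighbourhood is projective. Controlling this finiteness, and with it the passage from overconvergence to properness on connected components, is exactly where the supplementary finiteness hypothesis is indispensable.
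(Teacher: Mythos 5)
Your overall architecture is sound, and it is worth noting that it is genuinely different in organisation from the paper's: the paper proves this proposition by a one-line citation of Theorems \ref{PFAN_SEP} and \ref{PFAN_PROP}, so that the schemes case is subsumed by the punctured/formal theory, whereas you prove it directly. The convex-geometry core of your hard direction (fan pullback along $\varphi:\sigma_V\rightarrow N_0$, then refining the resulting subdivision to one induced by functions, i.e.\ a blow-up) is exactly the content of Proposition \ref{PFAN_MODIFICATION}, specialised to unpunctured complexes where the developing map is globally defined on components; your reductions are available and non-circular as you say, and the $i/\P$-versus-$\P$ issue (pulled-back subdivisions give normalisations of blow-ups) is repaired by \ref{SEP_FIP} and the normalisation lemmas of \S\ref{INT}, which you cite.

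There is, however, a genuine gap in your treatment of the direction ``overconvergent $\Rightarrow$ (iii)'' in the second list. Your finiteness mechanism is false: conical compactness of the support $f^{-1}\sigma(\R)$ does not force a fan filling it to be finite. Take $N=\Q^2$ and the fan subdividing the cone $\{0\le y\le x\}$ by the rays $y=x/n$ for $n\ge1$, together with the ray $y=0$: every cone is rational polyhedral, the support is the whole (conically compact) cone, yet there are infinitely many cones and the corresponding scheme is not quasi-compact. The quasi-compactness criterion you invoke concerns the topological realisation $\Sigma_X(\R)$, which for an infinite complex carries the colimit topology and is \emph{not} homeomorphic to the support: in this example $(\Sigma_X(\R)\setminus0)/\R^\times_{>0}$ is $(0,1]$ together with an isolated point, which is not compact. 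In the setting of the proposition finiteness is in fact automatic, but for a different reason: $f$ is of finite type, hence quasi-compact, so $f^{-1}U_\sigma$ is covered by finitely many affines and $\sigma\times_{\Sigma_S}\Sigma_X$ has finitely many cones. Finiteness owes nothing to overconvergence or to the supplementary hypothesis.

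Relatedly, you have misplaced where the supplementary hypothesis (finitely presented, or $N_X(\Q)$ locally finite-dimensional) actually enters. Overconvergence does not ``say exactly'' that the developing map is bijective on $\R$-points: under the standing hypotheses the admissible test spaces are integral/Noetherian, so valuation-type test spaces exist only with value group inside $\Q$ (e.g.\ $\Spec\F_1[z^{\Q_{\leq0}}]$, which is integral over $\F_1[t]$, while $\Spec\F_1[\![t^{-\R}]\!]$ is integral over no Noetherian ring). Hence overconvergence directly yields lifting of $\Q$-points of $f^{-1}\sigma$ only. To obtain the support statement of (iii) one must argue that the image — a \emph{finite} union of rational polyhedral cones containing all rational points of $f^{-1}\sigma(\R)$ — is closed, and that rational points are dense in $f^{-1}\sigma(\R)$; this density is precisely where finite-dimensionality is indispensable (compare the paper's footnote about ``simple convex geometry arguments''). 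As written, your proof asserts the $\R$-statement as definitional and spends the hypothesis on a finiteness claim that is both false as stated and unnecessary.
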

\begin{proof}These are special cases of theorems \ref{PFAN_SEP} and \ref{PFAN_PROP}.\end{proof}

This finishes the proof of theorem \ref{FAN_THM}.

\section{From formal schemes to punctured fans}\label{PFAN}

The straightforward nature of linear topologies on $\F_1$-algebras means that the local picture for integral formal $\F_1$-schemes is largely the same as for algebraic $\F_1$-schemes. However, the global structure is not so trivial as before, and to characterise interesting classes of formal schemes we will be forced to work at the level of cone complexes.

\subsection{Diagonalisable groupoid action}

\begin{defn}Let $\mathbf P$ be one of the properties quasi-integral, integral, normal. We say that an $\F_1$ formal scheme $X$ is $\mathbf P$ if ${}^?X$ is $\mathbf P$. A morphism $X\rightarrow Y$ of formal schemes is non-boundary if ${}^?X\rightarrow{}^?Y$ is non-boundary.

We notate the categories of formal schemes with these properties with the same superscripts as in the algebraic case.\end{defn}

The diagonalisable group action discussed in \S\ref{FAN_DIAG} persists for affine, quasi-integral formal schemes, but with no generic point there is no chance of globalising in general. 

By patching together the objects $\G_U\times U$ for affine open $U\subseteq X$, we obtain instead an action of a "local system of algebraic groups", or more precisely an $\F_1$-formal \emph{groupoid} 
\[\xymatrix{ \G_X^2/X\ar[r]^\mu & \G_X/X\ar@(ur,ul)[]_\iota \ar@<3pt>[r]^-\pi\ar@<-3pt>[r]_-\sigma & X\ar[l] }\]
whose structural morphisms are representable by $\F_1$-schemes. Here of course \[\G_X/X=\Hom_X(K_X^\times,\G_{m,X}),\] so the `character group' of this groupoid is the local system $K_X^\times$.

This structure is functorial for non-boundary morphisms, and a similar story to that of \S\ref{FAN_DIAG} also holds for quasi-integral formal subschemes of the boundary.

\subsection{Punctured cones}\label{PFAN_CONE}

Let $(\Spec A,Z)\in {}^Z\mathbf{Sch}_{\F_1}$ be an affine quasi-integral scheme marked along a single subscheme. In \S\ref{FAN_CONE}, we associated to $A$ a strongly convex, reflexive cone $\sigma_A\in\mathbf{Cone}^N$ in the rational dual $N_A$ of $K_A^\times$.

The formal completion of $A$ depends only on the complement $\Spec A\setminus Z$, which, as discussed in \S\ref{FAN_CONE}, corresponds to a union of faces $\zeta$ of $\sigma_A$. The faces $\zeta$ are cut out by the equations $\log f_\zeta=0$ where $f\neq0$ is in the ideal of $Z$. In the setting of lemma \ref{FAN_AFFINE} - that is, when $A$ is normal and has enough jets - $\hat A$ is determined by the pair $(\sigma,\zeta)$. These are the combinatorial data we can associate to $\F_1$-formal schemes.

\begin{defn}An \emph{punctured embedded cone} is a reflexive, strongly convex cone $\sigma\in\mathbf{Cone}^N$ together with a specified collection of proper principal faces $\zeta\subset\sigma$, the \emph{punctures}. If the collection is finite, we say that $\sigma\setminus\zeta$ has \emph{finite puncturing}. A morphism $\sigma_1\rightarrow\sigma_2$ of punctured cones must restrict to a map $\sigma_1\setminus\zeta_1\rightarrow\sigma_2\setminus\zeta_2$. The category of punctured embedded cones is denoted $\mathbf{Cone}_*^N$. In the sequel, I will usually omit the word `embedded', and also the notation $\zeta$, where this cannot cause confusion.

A \emph{face} of a punctured cone is a face of the underlying cone (defined by an element of $K^\times$), not entirely contained in $\zeta$, with the induced puncturing.\end{defn}

Note that the category $\mathbf{Cone}_*^N$ is not finitely complete: faces of a punctured cone can have empty intersection, and the empty set is not a punctured cone.

We have associated a punctured cone to any affine quasi-integral marked scheme \emph{not entirely contained in $Z$}; that is, any marked scheme in the essential image of the algebraisation functor
\[?:\mathbf{FSch}_{\F_1}^\mathrm{aff/qi/nb}\rightarrow {}^Z\mathbf{Sch}^\mathrm{aff/i/nb}_{\F_1}, \]
by precomposition with which we obtain a \emph{punctured cone functor}
\[ \Sigma:\mathbf{FSch}_{\F_1}^\mathrm{aff/qi/nb}\rightarrow\mathbf{Cone}^N_*, \quad \lim_{n\rightarrow\infty}A/T^n \mapsto \left(\sigma_A,\zeta_A\right). \]

In the other direction, by considering a punctured cone as a cone and applying the left adjoint construction, we obtain an affine, normal scheme, and the punctures mark it along a closed subscheme. By composing with formal completion, we obtain a fully faithful left adjoint to $\Sigma$. Its essential image has objects the normal formal schemes with enough jets.

\begin{lemma}\label{PFAN_AFFINE}The adjunction $\mathbf{FSch}_{\F_1}^\mathrm{aff/qi/nb} \leftrightarrows \mathbf{Cone}_*^N$ restricts to an equivalence on the full subcategory of $\mathbf{FSch}_{\F_1}^\mathrm{aff/qi/nb}$ whose objects are normal with enough jets.\end{lemma}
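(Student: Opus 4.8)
The plan is to reduce the formal statement to its algebraic counterpart, Lemma~\ref{FAN_AFFINE}, exploiting the fact that over $\F_1$ formal completion is a purely topological operation. Concretely, I would factor the punctured cone functor as
\[ \mathbf{FSch}_{\F_1}^\mathrm{aff/qi/nb} \xrightarrow{\,?\,} {}^Z\mathbf{Sch}_{\F_1}^\mathrm{aff/i/nb} \xrightarrow{\,\Sigma_{\mathrm{mark}}\,} \mathbf{Cone}_*^N, \]
where $?$ is the algebraisation functor of \S\ref{FSCH_COMPLETION} and $\Sigma_{\mathrm{mark}}$ sends a marked affine scheme $(\Spec A,Z)$ to $(\sigma_A,\zeta)$ as in \S\ref{PFAN_CONE}. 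The left adjoint $L$ to $\Sigma$ is correspondingly the composite of the left adjoint of Lemma~\ref{FAN_AFFINE} (which produces $\Spec\F_1[\sigma^\circ\cap K^\times]$, marked along the subscheme cut out by the punctures) with formal completion. It then suffices to show that each factor restricts to an equivalence on the relevant subcategories, and to compose.

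First I would treat $?$. Since a formal scheme is by definition the spectrum of an adic, admissible algebra $A$, Lemma~\ref{RIG_PRO} gives $A\cong\lim_n A/T^n$ for an ideal of definition $T$; hence the unit of the adjunction $?\dashv\widehat{(\,\cdot\,)}$ is an isomorphism and $?$ is fully faithful. As $?$ is moreover right inverse to formal completion, its essential image consists precisely of the marked affine schemes arising as algebraisations, i.e. marked along the support of a finitely presented subscheme, with formal completion serving as quasi-inverse. The crucial $\F_1$-specific input is that over $\F_1$ completion leaves the underlying monoid untouched and only refines the topology, so that no data beyond the marking $Z$ is gained or lost; since normality and the enough-jets condition for formal schemes are defined through ${}^?X$, this step restricts cleanly to an equivalence between normal formal schemes with enough jets and normal marked affine schemes with enough jets.

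Second, I would handle $\Sigma_{\mathrm{mark}}$. On underlying (unmarked) schemes this is exactly the equivalence of Lemma~\ref{FAN_AFFINE}, so it remains only to match the markings and the morphisms. The correspondence of \S\ref{FAN_CONE} between prime ideals and faces, together with the description of $Z$ through the principal faces on which the functions $\log f$ (for $f$ in the ideal of $Z$) vanish, yields a bijection between finitely presented closed marks and finite collections of principal punctures; and the condition $(f^{-1}Z_2)^{\mathrm{red}}\subseteq Z_1$ defining morphisms of marked schemes translates, under the covariance of $\Sigma$, into the requirement that a morphism of punctured cones carry $\sigma_1\setminus\zeta_1$ into $\sigma_2\setminus\zeta_2$. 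This gives the equivalence on the normal, enough-jets subcategory.

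I expect the main obstacle to lie in this last bookkeeping: verifying that the puncture--subscheme dictionary is natural, i.e. compatible with composition of non-boundary morphisms and with the direction of the inclusion $(f^{-1}Z_2)^\mathrm{red}\subseteq Z_1$, and confirming that the operations ``take punctures / mark the subscheme'' commute both with the algebraic adjunction of Lemma~\ref{FAN_AFFINE} and with formal completion, so that the two factored equivalences compose to the asserted one. Once naturality of the markings is secured the conclusion is formal: $L$ is fully faithful with essential image the normal formal schemes with enough jets, hence an equivalence onto that subcategory whose quasi-inverse is the restriction of $\Sigma$.
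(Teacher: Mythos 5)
Your factorization of $\Sigma$ through the algebraisation functor $?$ and the marked-cone construction, with the left adjoint built as the algebraic left adjoint of Lemma~\ref{FAN_AFFINE} followed by formal completion, is exactly how the paper justifies this lemma; the overall route is the same. However, two of your steps are not right as written, and both sit at the points you yourself identify as the crux.

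First, you quote the wrong morphism condition. The algebraisation functor lands in ${}^Z\mathbf{Sch}$ (\S\ref{FSCH_COMPLETION}), whose morphisms satisfy $Z_1\subseteq (f^{-1}Z_2)^\mathrm{red}$, not the condition $(f^{-1}Z_2)^\mathrm{red}\subseteq Z_1$ of the category ${}_Z\mathbf{FSch}$ of \S\ref{FSCH_MARKING} that you cite. The distinction matters: a point of $\sigma_A\setminus\zeta_A$ is a non-boundary jet whose centre lies \emph{in} the marked locus $Z$, so the requirement that $\sigma_1\setminus\zeta_1$ map into $\sigma_2\setminus\zeta_2$ translates precisely into ``jets centred in $Z_1$ push forward to jets centred in $Z_2$'', i.e.\ $Z_1\subseteq (f^{-1}Z_2)^\mathrm{red}$. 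With the inclusion you wrote, the dictionary would instead assert that punctured points map to punctured points, and the verification you defer would fail.

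Second, your assertion that over $\F_1$ ``completion leaves the underlying monoid untouched and only refines the topology, so that no data beyond the marking $Z$ is gained or lost'' is exactly the claim that needs proof, and it is false without a hypothesis. Over $\F_1$ the completion along a finitely generated ideal $T$ is the Rees quotient $\hat A\cong A/\bigcap_{n}T^n$, so it can collapse elements. For the round trip $\mathbf{Cone}_*^N\rightarrow\mathbf{FSch}_{\F_1}^\mathrm{aff/qi/nb}\rightarrow\mathbf{Cone}_*^N$ to be the identity (equivalently, for the normal marked model of a punctured cone to lie in the essential image of $?$), you must show $\bigcap_n T^n=0$ for the ideal $T$ cutting out the marking. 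This is where the requirement that punctures be \emph{proper} principal faces enters: no non-zero $f\in T$ has $\log f$ identically zero on $\sigma_A(\Q)$, so the Krull intersection lemma (Lemma~\ref{KRULL}) gives injectivity of $A\rightarrow\hat A$. Without this step, full faithfulness of the left adjoint and essential surjectivity of $\Sigma$ on the normal, enough-jets subcategory are not established.
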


\begin{lemma}[Krull intersection]\label{KRULL}Let $A$ be a quasi-integral $\F_1$-algebra, $T\trianglelefteq A$ a finitely generated ideal. One of the following are true:
\begin{enumerate}\item $A\rightarrow\hat A$ is injective;
\item $\log f:\sigma_A(\Q)\rightarrow\Q$ is identically zero for some $0\neq f\in T$.\end{enumerate}\end{lemma}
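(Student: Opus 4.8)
The plan is to prove the contrapositive of the implication $\neg(\mathrm{ii})\Rightarrow(\mathrm{i})$: assuming that no nonzero $f\in T$ has $\log f$ identically zero on $\sigma_A(\Q)$, I will show that $A\to\hat A$ is injective. First I would reduce (i) to a Krull-type intersection statement. Since $\hat A=\lim_n A/T^n$ and the quotient by an ideal simply collapses that ideal onto the basepoint, an element $h\in A$ dies in $\hat A$ exactly when $h\in T^n$ for every $n$; thus the kernel of $A\to\hat A$ is $\bigcap_n T^n$, and injectivity is equivalent to $\bigcap_n T^n=\{0\}$. It therefore suffices to show that no nonzero $h$ lies in all of the $T^n$.

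Next I would record the shape of the nonzero elements of $T^n$. Writing $T=Ag_1\cup\cdots\cup Ag_k$ with each $g_i\neq 0$ (generators equal to $0$, or the degenerate case $T=\{0\}$, make (i) trivial), and using that $A$ is quasi-integral — so that $A\setminus 0$ is a submonoid and a product of nonzero elements is nonzero — every nonzero element of $T^n$ has the form $m\,g_{i_1}\cdots g_{i_n}$ with $m\in A\setminus 0$ and the $g_{i_j}$ among the generators. This is the only place quasi-integrality enters, and it is what makes the subsequent multiplicative bookkeeping possible.

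The heart of the argument is the construction of a single valuation that is strictly negative on all generators at once. Under the hypothesis $\neg(\mathrm{ii})$, for each generator $g_i$ there is a rational point $v_i\in\sigma_A(\Q)$ with $v_i(g_i)<0$ (it is automatically $\le 0$, being non-positive on $A\setminus 0$, and not identically zero by assumption). Since $\sigma_A$ is a cone closed under addition and each $v_i$ is non-positive on $A\setminus 0$, the sum $v:=v_1+\cdots+v_k$ again lies in $\sigma_A(\Q)$, and $v(g_i)\le v_i(g_i)<0$ for every $i$; hence there is $\delta>0$ with $v(g_i)\le-\delta$ for all $i$.

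Finally I would run the quantitative contradiction. If some nonzero $h$ lay in $\bigcap_n T^n$, then for each $n$ we could write $h=m_n g_{i_1}\cdots g_{i_n}$ as above, and applying the additive functional $v$ gives
\[ v(h)=v(m_n)+\sum_{j=1}^n v(g_{i_j})\le 0-n\delta=-n\delta, \]
using $v(m_n)\le 0$. But $v(h)$ is a fixed rational number, while $-n\delta\to-\infty$; this is absurd. Therefore $\bigcap_n T^n=\{0\}$ and (i) holds. I expect the main obstacle to be precisely the step of producing a \emph{uniform} valuation — passing from one valuation per generator to a single $v$ that is simultaneously strictly negative on all generators — which is handled cleanly by summing within the additively closed cone $\sigma_A(\Q)$; everything else is formal once the description of $T^n$ afforded by quasi-integrality is in hand.
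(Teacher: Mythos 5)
Your proof is correct, and it rests on the same engine as the paper's: pair a putative nonzero element $h$ of $\bigcap_{n}T^n$ against a point of $\sigma_A(\Q)$ and force the contradiction $v(h)\le -n\delta$ for all $n$. Where you diverge is in the treatment of several generators. The paper first reduces to a principal ideal $T=(t)$, invoking the claim that completion along $T$ can be written as a finite sequence of completions at the individual generators, and then argues that a nonzero element of $\bigcap_n(t^n)$, viewed as the additive function $\log h$ on $\sigma_A(\Q)$, is bounded above by $n\log t$ for every $n$, which forces $\log t\equiv 0$. You skip the reduction altogether: using that $\sigma_A(\Q)$ is closed under addition, you sum one witness valuation per generator to produce a single $v\in\sigma_A(\Q)$ that is simultaneously strictly negative on all the $g_i$, and then estimate on words of length $n$ in the generators. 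Your route buys self-containedness: the iterated-completion reduction is the one non-elementary step in the paper's proof (one must check both that the completions compose and that non-injectivity of the composite is witnessed at a single stage, where the ambient algebra is no longer $A$ itself), and your convexity trick makes it unnecessary. The paper's route buys brevity, given that reduction; note that both arguments in fact deliver the sharper conclusion that the witness $f$ in alternative \emph{ii)} can be taken to be one of the generators.

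Two small points to tighten. First, you pass from ``the kernel of $A\to\hat A$ is $\bigcap_n T^n$'' to ``injectivity is equivalent to $\bigcap_n T^n=\{0\}$''. Over $\F_1$ a homomorphism with zero kernel need not be injective --- this is exactly the paper's non-boundary versus embedding distinction --- so you should add the one-line observation that a quotient by an ideal identifies two distinct elements only when both lie in the ideal, whence $a\neq b$ have equal image in $\lim_n A/T^n$ only if both lie in every $T^n$. Second, quasi-integrality is not what lets you write a nonzero element of $T^n$ as $m\,g_{i_1}\cdots g_{i_n}$ with all factors nonzero (that is automatic, since $0$ is absorbing); it is what you need for $A\setminus 0$ to be a monoid, so that $K_A^\times$, $\sigma_A$, and the additivity $v(h)=v(m)+\sum_{j}v(g_{i_j})$ make sense at all. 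With these glosses your argument is complete.
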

\begin{proof}Since formal completion along $T$ can be written as a finite sequence of formal completions at the generators of $T$, we may assume $T=(t)$ is principal. A non-zero element of $\bigcap_{n\in\N}(t^n)$ is, as an additive function $\sigma_A(\Q)\rightarrow\Q$, bounded above by all multiples of $\log t$. Such exists if and only if $\log t$ is identically zero.\end{proof}

\begin{remark}If $A$ is integral and, say, Noetherian (so $\langle\sigma_A\rangle(\Q)$ is finite-dimensional), then this result implies that a formal completion $A\rightarrow\hat A$ is either bijective or \emph{zero}. If $A$ is only quasi-integral, it may have non-invertible elements that are identically zero on $\sigma_A(\Q)$.\end{remark}

\paragraph{Open immersions}The stupid feature of monoidal geometry that makes this construction possible is the following:

\begin{lemma}The restriction of the functor $?$ of forgetting the topology to the category of \emph{integral} $\F_1$-algebras preserves localisations.\end{lemma}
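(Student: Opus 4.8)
The plan is to compute the underlying discrete algebra of a completed localisation explicitly and to recognise it as an ordinary localisation. Recall that a localisation in $\mathbf{FSch}^{\mathrm{aff}}_{\F_1}$ is a map $A\rightarrow A\{f^{-1}\}=\lim_J A/J[f^{-1}]$, with $J$ ranging over the open ideals of $A$. First I would observe that each factor is itself an ordinary localisation: writing $B:=A^?[f^{-1}]$ for the discrete localisation and $J^e\trianglelefteq B$ for the ideal generated by the image of $J$, the standard identity $(A/J)[f^{-1}]\cong A^?[f^{-1}]/J^e$ gives $A/J[f^{-1}]\cong B/J^e$. Using the (finitely generated, by admissibility) ideal of definition $I_0$, whose powers are cofinal among open ideals, together with $(I_0^m)^e=(I_0B)^m$, this exhibits $A\{f^{-1}\}$ as the completion $\hat B$ of $B$ along $I_0B$. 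The whole problem thus reduces to understanding the underlying discrete algebra of $\hat B$.

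Here the integrality hypothesis enters, through the fact that $B$, being a localisation of the integral algebra $A^?$, is again integral, so $B\setminus 0$ is a cancellative submonoid of $K_B^\times$. By Lemma \ref{RIG_PRO} a Hausdorff linearly topologised module is already complete, so the underlying set of $\hat B=\lim_m B/(I_0B)^m$ is the maximal Hausdorff quotient $B/\bigcap_m (I_0B)^m$; hence $(\hat B)^?=B=A^?[f^{-1}]$ precisely when the $I_0B$-adic topology on $B$ is separated. To decide separatedness I would invoke the Krull intersection lemma \ref{KRULL} applied to the integral algebra $B$ and the finitely generated ideal $T=I_0B$: either $B\rightarrow\hat B$ is injective — in which case the topology is separated and $(\hat B)^?=A^?[f^{-1}]$ — or there is a nonzero $g\in I_0B$ with $\log g$ identically zero on $\sigma_B(\Q)$, in which case (cf. the remark following \ref{KRULL}) the completion collapses to the zero algebra.

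Finally I would identify this dichotomy with (non-)topological nilpotence: $\hat B=A\{f^{-1}\}$ vanishes exactly when $\Spec A\{f^{-1}\}$ is empty, that is, when $f$ is topologically nilpotent and the principal face it cuts out is a puncture. In that degenerate case $?$ sends the localisation $A\rightarrow A\{f^{-1}\}=0$ to $A^?\rightarrow 0=A^?[0^{-1}]$, which is again a localisation (at $0$); in the complementary case it sends it to the honest localisation $A^?\rightarrow A^?[f^{-1}]$. Either way $?$ carries localisations to localisations, which is the assertion. The crux — and the point at which the ``stupid feature'' of monoidal geometry does the work — is the single step that completed localisation introduces no new elements, i.e. $\bigcap_m (I_0B)^m=0$; this is exactly what fails over $\Z$, where completion genuinely enlarges the algebra, and holds over $\F_1$ only because cancellativity of $B\setminus 0$ lets Lemmas \ref{RIG_PRO} and \ref{KRULL} force the adic topology on the integral algebra $B$ to be either separated or trivial, with no intermediate possibility.
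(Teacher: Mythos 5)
Your reduction is the same as the paper's: rewrite $A\{f^{-1}\}$ as the $I_0B$-adic completion of $B=A^?[f^{-1}]$, use lemma \ref{RIG_PRO} to identify that completion with the maximal Hausdorff quotient $B/\bigcap_m(I_0B)^m$, so that everything hinges on separatedness of the $I_0B$-adic filtration on $B$, and then invoke the Krull lemma \ref{KRULL}. Up to that point your argument is correct and matches the paper's skeleton.

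The gap is in how you dispose of the second alternative of lemma \ref{KRULL}. You claim that if some nonzero $g\in I_0B$ has $\log g$ identically zero on $\sigma_B(\Q)$, then the completion ``collapses to the zero algebra'', citing the remark following \ref{KRULL}. That remark is explicitly conditional on the algebra being integral \emph{and Noetherian} (so that $\langle\sigma_B\rangle(\Q)$ is finite-dimensional, whence the cone is polyhedral and an element with $\log\equiv 0$ is forced to be a unit); here $B$ is merely integral, and the lemma you are proving carries no finiteness hypothesis whatsoever. Without it the implication is false. Take $H=\Z^2$ with the lexicographic order, let $B=\F_1[z^{H_{\geq0}}]$ be the underlying discrete algebra of the corresponding rank-two valuation ring (the paper's own standard example of an integral algebra without enough jets), and let $T=(z^{(0,1)})\trianglelefteq B$. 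Every valuation non-positive on $H_{\geq0}$ must kill $(0,1)$, so $\log z^{(0,1)}\equiv0$ on $\sigma_B(\Q)$ and case \emph{ii)} of \ref{KRULL} holds; yet $\bigcap_nT^nB=\{z^{(a,b)}:a\geq1\}\cup\{0\}$, so the completion is $\F_1[z^{(0,b)}:b\geq0]\cong\F_1[s]$ --- a \emph{non-zero, non-injective} quotient of $B$. Your dichotomy (separated, or zero) omits exactly this third possibility, and it is the dangerous one: if it occurred for $B=A^?[f^{-1}]$, then $A\{f^{-1}\}^?$ would be a proper non-zero quotient of $A^?[f^{-1}]$, hence not a localisation of $A^?$ at all, since every non-zero localisation of an integral algebra is injective. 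Your closing identification ``collapse $\Leftrightarrow$ $f$ topologically nilpotent'' is correct as far as it goes, but it does not help, because the problematic middle case is neither collapse nor separatedness.

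What is actually needed --- and what the paper's proof spends its entire effort on --- is to rule out case \emph{ii)} altogether for the specific algebra $B=A^?[f^{-1}]$, using how $B$ arises. Any $g\in TB$ has the form $tb$ with $t\in T$, $b\in B$, and on the face $\sigma_B=\{\log f=0\}$ of $\sigma_A$ both $\log t$ and $\log b$ are $\leq0$; so $\log g\equiv0$ there would force $\log t\equiv0$ on that face for some $t\in T$. The paper argues that integrality of $A$ together with $f\notin T$ (non-vanishing of the completed localisation) precludes this vanishing, and only then does Krull deliver separatedness. That non-vanishing statement is the real content of the lemma, and nothing in your proof supplies it; your appeal to the Noetherian remark cannot be repaired in the stated generality, because, as the example above shows, the pathology it is meant to exclude genuinely occurs for integral non-Noetherian algebras. (Under an added Noetherian hypothesis your argument does go through, since then a localisation of $A$ is again Noetherian and the remark applies; but the lemma, and the glueing results \ref{PFAN_IMMERS}--\ref{PFAN_EQUIVALENCE} built on it, are stated without one.)
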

\begin{proof}Let $A$ be complete with respect to a finitely generated ideal $T$, and let $A\rightarrow A\{f^{-1}\}$ be a non-zero completed localisation. Then $f\not\in T$ and so, since $A$ is integral, for every $t\in T$, $\log f\neq\log t$. It follows that $\log t$ is also non-zero on the face $\log f=0$.

By the Krull intersection theorem (lemma \ref{KRULL}), the $T$-adic filtration of $A^?[f^{-1}]$ is separated. Therefore $A^?[f^{-1}]=A\{f^{-1}\}^?$.\end{proof}

This statement is false for quasi-integral algebras. However, since normalisation is a homeomorphism, it still follows that $\Sigma$ exchanges localisations with face inclusions:

\begin{lemma}\label{PFAN_IMMERS}The punctured cone functor $\Sigma$ preserves open immersions. For each $A\in\Alg_{\F_1}^\mathrm{qi/nb}$, $\Sigma$ identifies the poset of non-zero localisations of $A$ with that of faces of $\sigma_A\setminus\zeta_A$.\end{lemma}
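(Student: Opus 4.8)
The plan is to prove the two assertions of Lemma \ref{PFAN_IMMERS} together, reducing the quasi-integral case to the integral case via the normalisation homeomorphism, and then invoking the preceding lemma on preservation of localisations by the forgetful functor $?$.

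First I would reduce to the affine situation, which is already the setting of the statement: we fix $A\in\Alg_{\F_1}^\mathrm{qi/nb}$ with its linear topology given by completion along a finitely generated ideal $T$, and we must analyse the completed localisations $A\rightarrow A\{f^{-1}\}$. The key is that $\Sigma$ is, by definition, the composite of the algebraisation functor $?:\mathbf{FSch}_{\F_1}^\mathrm{aff/qi/nb}\rightarrow{}^Z\mathbf{Sch}^\mathrm{aff/i/nb}_{\F_1}$ with the cone functor of \S\ref{FAN_CONE}, and the latter was already shown in \S\ref{FAN_CONE} to exchange localisations of \emph{discrete} algebras with principal face inclusions. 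Thus the content of the present lemma is entirely about showing that the topology does not interfere: precisely, that passing to the completed localisation $A\{f^{-1}\}$ computes the same cone datum as passing to the ordinary localisation $A^?[f^{-1}]$ of the underlying discrete algebra and re-marking.

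The main step is therefore to identify, for $0\neq f\in A$, the completed localisation $A\{f^{-1}\}$ with the formal completion of the discrete localisation $A^?[f^{-1}]$ along the induced topology. For \emph{integral} $A$ this is exactly the preceding (unnumbered) lemma: there one shows $f\notin T$ forces $\log t$ to remain non-zero on the face $\{\log f=0\}$, whence the Krull intersection theorem (lemma \ref{KRULL}) guarantees the $T$-adic filtration on $A^?[f^{-1}]$ is separated, giving $A^?[f^{-1}]=A\{f^{-1}\}^?$. For quasi-integral $A$ the earlier lemma explicitly fails, so I would route around it: since normalisation $\nu$ is a \emph{homeomorphism} (as recorded in \S\ref{MORP_NORM} and used throughout \S\ref{FAN}), and since the cone $\sigma_A$ and its faces depend only on $K_A^\times$ and the submonoid $A\setminus 0$ — data unchanged up to saturation by passing to $\nu A$ — the poset of non-zero localisations of $A$ is in order-preserving bijection with that of $\nu A$, and likewise the punctured cone $\sigma_A\setminus\zeta_A$ agrees with $\sigma_{\nu A}\setminus\zeta_{\nu A}$. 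Applying the integral case to $\nu A$ then transports back.

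The hard part will be the bookkeeping of the punctures: I must verify that a localisation $A\rightarrow A\{f^{-1}\}$ is \emph{non-zero} precisely when the corresponding face $\{\log f=0\}$ of $\sigma_A$ is not contained in the punctured locus $\zeta_A$, so that the bijection is genuinely between non-zero localisations and faces of $\sigma_A\setminus\zeta_A$ rather than of $\sigma_A$. Concretely, $A\{f^{-1}\}=0$ exactly when $f$ lies in every open ideal, i.e. when $f$ is topologically nilpotent, which by lemma \ref{KRULL} happens iff $\log f$ vanishes on all of $\sigma_A(\Q)$ or $f$ meets the ideal of definition $T$ cut out by the punctures; matching this up with the combinatorial condition that the face $\{\log f=0\}$ avoids $\zeta_A$ is the one place requiring care. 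Once this correspondence is pinned down, preservation of open immersions by $\Sigma$ follows formally, since open immersions into an affine object are generated by the principal localisations and $\Sigma$ carries these bijectively to principal face inclusions of $\sigma_A\setminus\zeta_A$, which are exactly the open immersions in $\mathbf{Cone}_*^N$.
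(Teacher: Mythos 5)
Your proposal is correct and follows essentially the same route as the paper: the paper's entire proof of this lemma consists of the preceding unnumbered lemma (that forgetting the topology preserves localisations of \emph{integral} algebras, via the Krull intersection lemma \ref{KRULL}) together with the one-line observation that, normalisation being a homeomorphism, the conclusion transports to quasi-integral algebras --- exactly your two main steps. Your extra bookkeeping of the punctures (note the small conflation of ``$f$ lies in every open ideal'' with ``$f$ is topologically nilpotent''; the latter is the correct condition, detected by applying lemma \ref{KRULL} to $A[f^{-1}]$) only makes explicit what the paper leaves implicit, and does not change the argument.
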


\paragraph{Points}Let us define the $H$-points of a punctured cone $\sigma\setminus\zeta$ to be $\sigma(H)\setminus\zeta(H)$, so that $\sigma\setminus\zeta(H)$ remains the set of non-boundary morphisms $\Delta_H=\Spec\F_1[\![t^{-H}]\!]\rightarrow X_\sigma$. It is important to distinguish in the notation between $\sigma\setminus\zeta(H)$ and $\sigma(H)$, and so we will never omit $\zeta$ when talking about points.

\paragraph{Topological realisation}When $H=\R$, the set of points $\sigma\setminus\zeta(\R)$ is a $\R_{>0}^\times$-invariant open subset of a cone in a real vector space. 
The $\R^\times_{>0}$-action is free as soon as $\zeta\neq\emptyset$. If $A$ is integral-over-Noetherian, then $\sigma_A\setminus\zeta_A(\R)$ is a finite-dimensional CW complex.

\paragraph{Relative variant}In the case $A$ is defined over a valuation ring $k[\![t^{-H}]\!]$ for some pogroup $H$ and coefficient $\F_1$-field $k$, we can instead use the punctured cone inside the fibre product
\[\xymatrix{ N_X(H^\prime) \ar[r]\ar[d] & \Hom(K_X^\times,H^\prime)\ar[d] \\ H^\prime\ar[r] & \Hom(H,H^\prime)}\] which is defined as a functor of pogroups over $H$. When $H$ is not a subgroup of $\Q$, the usual cone associated to $A$ will be infinite-dimensional, and so this variant is likely to be more sensible. We use the same notation.

When $H\subseteq \R$, we can also define the topological realisation. If $A$ is integral over a finite type $k[\![t^{-H}]\!]$-algebra, then the variant $\sigma_A\setminus\zeta_A(\R)$ is a finite-dimensional CW complex.

\subsection{Glueing}\label{PFAN_GLUE}
The globalisation of punctured cone functor proceeds much as in the schemes case \S\ref{FAN_GLUE}.

\begin{lemma}\label{PFAN_FLAT}$\Sigma$ is a flat functor.\end{lemma}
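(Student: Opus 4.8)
The plan is to deduce flatness formally from the fully faithful left adjoint $L\dashv\Sigma$ constructed just before lemma \ref{PFAN_AFFINE}, whose unit $\tau\to\Sigma L\tau$ is an isomorphism. By Diaconescu's theorem (or directly: finite limits in the presheaf topos $\PSh\mathbf{Cone}_*^N$ are computed pointwise, and the evaluation functors at representables jointly preserve and reflect them, so they commute with left Kan extension), flatness of $\Sigma$ reduces to showing that for every punctured cone $\tau\in\mathbf{Cone}_*^N$ the $\mathbf{Set}$-valued functor
\[ \Hom_{\mathbf{Cone}_*^N}(\tau,\Sigma-):\mathbf{FSch}_{\F_1}^{\mathrm{aff/qi/nb}}\rightarrow\mathbf{Set} \]
is flat. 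For then the left Kan extension of $\Sigma$ along Yoneda preserves finite limits, which is precisely the assertion to be proved.

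First I would invoke the adjunction to recognise this functor. Since $L\dashv\Sigma$, there is a natural isomorphism $\Hom_{\mathbf{Cone}_*^N}(\tau,\Sigma X)\cong\Hom_{\mathbf{FSch}}(L\tau,X)$, so the functor in question is corepresented by the formal scheme $L\tau$. The object $L\tau$ always exists and is nonempty: it is the formal completion of $\Spec\F_1[\sigma^\circ\cap K^\times]$ along the marking cut out by the punctures of $\tau$. Hence this identification is unconditional, requiring no finiteness or nondegeneracy hypothesis on $\tau$.

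Next I would observe that a corepresentable functor is automatically flat. By the co-Yoneda lemma its left Kan extension along Yoneda is the evaluation functor $\mathrm{ev}_{L\tau}:\PSh\mathbf{FSch}_{\F_1}^{\mathrm{aff/qi/nb}}\rightarrow\mathbf{Set}$, and evaluation preserves all limits because they are computed pointwise in a presheaf topos. Equivalently, the category of elements of $\Hom_{\mathbf{FSch}}(L\tau,-)$ is the coslice $L\tau\downarrow\mathbf{FSch}_{\F_1}^{\mathrm{aff/qi/nb}}$, which has the initial object $\mathrm{id}_{L\tau}$ and is therefore cofiltered. Either way $\Hom_{\mathbf{Cone}_*^N}(\tau,\Sigma-)$ is flat, and assembling over all $\tau$ yields flatness of $\Sigma$.

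The point worth emphasising — and the only place any care is warranted — is that this is the correct substitute for left-exactness, which cannot be asserted on the nose: $\mathbf{Cone}_*^N$ has no initial object and the faces of a punctured cone may intersect emptily, so $\Sigma$ has no finite limits to preserve literally. The argument sidesteps this by probing only with the (always nonempty) representable cones $\tau$, whose images $L\tau$ furnish the corepresenting objects, so the missing finite limits never enter the computation. I expect no serious obstacle here, in contrast to the flatness needed for theorem \ref{PFAN_ALG}; the one technical thing to get right is the variance, so that the category of elements emerges as a coslice with an \emph{initial} object rather than as its opposite.
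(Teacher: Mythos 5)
Your proof is correct, but it follows a genuinely different route from the paper's. The paper's proof has two parts: first, a check that a finite diagram in $\mathbf{FSch}_{\F_1}^{\mathrm{aff/qi/nb}}$ whose image has a limit in $\mathbf{Cone}_*^N$ itself has a limit upstairs --- this is where geometry enters, since a representable fibre product $\sigma_X\times_{\sigma_Z}\sigma_Y$ of punctured cones has a $\Q$-point, so the fibre product $X\times_ZY$ of affine formal schemes is inhabited and hence remains in the quasi-integral non-boundary category (cf.\ aside \ref{INT_EMPTYSET}); second, the remark that $\Sigma$ preserves those limits because it has a left adjoint. You dispense with the first part entirely: $\Hom_{\mathbf{Cone}_*^N}(\tau,\Sigma-)\cong\Hom(L\tau,-)$ is corepresentable for every punctured cone $\tau$, hence flat, and pointwise flatness assembles into flatness of $\Sigma$ --- equivalently, the left Kan extension of $\Sigma$ along the Yoneda embedding is restriction along $L$, which preserves all limits since limits of presheaves are computed pointwise. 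This is a legitimate and in fact sharper observation: the mere existence of the left adjoint on all of $\mathbf{Cone}_*^N$ (the construction preceding lemma \ref{PFAN_AFFINE}) already forces flatness, so the paper's $\Q$-point verification is logically redundant for this lemma, the geometric content having been absorbed into the construction of $L$. What the paper's route buys is the explicit fact, of independent interest, that the quasi-integral non-boundary category possesses the relevant fibre products, and it is the template one must actually follow when no adjoint is available --- which is exactly the situation in theorem \ref{PFAN_ALG}, the case the paper flags as delicate. One cosmetic slip in your closing paragraph: the relevant deficiency of $\mathbf{Cone}_*^N$ is not the missing initial object (a colimit, irrelevant to left exactness) but, as you also say, the missing empty limits, i.e.\ faces that intersect emptily.
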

\begin{proof}Since we are dealing with presheaf categories, we need to show two things: first, that a finite diagram in $\mathbf{FSch}_{\F_1}^\mathrm{qi/nb}$ that has a limit in $\mathbf{Cone}_*^N$ has a limit in $\mathbf{FSch}_{\F_1}^\mathrm{qi/nb}$, and second, that $\Sigma$ is left exact. The latter follows from the fact that $\Sigma$ has a left adjoint.

A fibre product $\sigma_X\times_{\sigma_Z}\sigma_Y$ is representable in $\mathbf{Cone}^N_*$ if and only if the images in $\sigma_Z$ of $\sigma_X\setminus\zeta_X$ and $\sigma_Y\setminus\zeta_Y$ have non-empty intersection. In particular, it has a $\Q$-point. The fibre product $X\times_ZY$ in $\mathbf{FSch}^\mathrm{aff}_{\F_1}$ is therefore non-empty and hence an object of $\mathbf{FSch}_{\F_1}^\mathrm{aff/qi/nb}$ (cf. the aside \ref{INT_EMPTYSET}).\end{proof}

As before, let us define the `open immersions' in $\mathbf{Cone}_*^N$ to be the principal face inclusions. From lemmas \ref{PFAN_AFFINE}, \ref{PFAN_IMMERS}, and \ref{PFAN_FLAT}, in reverse order, it follows:

\begin{prop}The punctured cone functor $\Sigma$ extends to the pullback along a geometric morphism
\[\PSh\mathbf{Cone}_*^N\rightarrow\Sh\mathbf{FSch}_{\F_1}^\mathrm{qi/nb}  \]
which preserves open immersions and induces bijections on open subobject lattices. 

It has a fully faithful left adjoint with image the full subcategory generated under colimits by the normal formal schemes with enough non-boundary jets.\end{prop}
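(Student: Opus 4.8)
The plan is to assemble this proposition from the three lemmas cited immediately before it, in the reverse order in which they are listed, using the abstract glueing machinery of definition \ref{TOPOS_DEF} and lemma \ref{TOPOS_FUNCT}. The statement has three assertions: that $\Sigma$ extends to the pullback of a geometric morphism of presheaf topoi, that this pullback preserves open immersions and induces bijections on open subobject lattices, and that it has a fully faithful left adjoint whose image is the subcategory generated under colimits by normal formal schemes with enough non-boundary jets. First I would observe that by lemma \ref{PFAN_FLAT}, the functor $\Sigma:\mathbf{FSch}_{\F_1}^\mathrm{aff/qi/nb}\rightarrow\mathbf{Cone}_*^N$ is flat, and hence its left Kan extension along the Yoneda embedding $\PSh\mathbf{FSch}_{\F_1}^\mathrm{aff/qi/nb}\rightarrow\PSh\mathbf{Cone}_*^N$ is the inverse image of a geometric morphism of presheaf topoi; this is the standard fact that flat functors on a site correspond to colimit-preserving, left-exact-up-to-the-missing-initial-object functors, exactly the caveat flagged in footnote for axiom vi\emph{a}). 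Since $\Sh\mathbf{FSch}_{\F_1}^\mathrm{qi/nb}$ is, by the discussion of \S\ref{RIG_FSCH} and \S\ref{QUASI_INT}, the presheaf topos on the affine site (there being no nontrivial coverings over $\F_1$), the extended functor has precisely the stated source and target.

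Next I would invoke lemma \ref{PFAN_IMMERS}, which already records that $\Sigma$ preserves open immersions and identifies the poset of nonzero localisations of $A$ with the poset of faces of $\sigma_A\setminus\zeta_A$. Since the `open immersions' in $\mathbf{Cone}_*^N$ are declared to be exactly the principal face inclusions, this identification is precisely the statement that $\Sigma$ induces a bijection on open subobject lattices at the level of representable objects; globalising to locally representable objects is then formal, using clause ii) of definition \ref{TOPOS_DEF} (that $\sh U_{/X}$ is a complete lattice, i.e.\ openness is local) together with lemma \ref{TOPOS_FUNCT}, which guarantees that the pullback of a locally representable object is locally representable and that $\Sigma$ restricted to open subobject lattices is dual to a continuous map of locales. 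Because the map on open lattices is a bijection on each representable chart, it is an isomorphism of locales globally, whence the induced map $\Sh(\Sigma_X)\rightarrow\Sh(X)$ is a homeomorphism, exactly as in the corollary following the scheme-theoretic proposition in \S\ref{FAN_GLUE}.

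For the adjoint I would use lemma \ref{PFAN_AFFINE}: the affine adjunction $\mathbf{FSch}_{\F_1}^\mathrm{aff/qi/nb}\leftrightarrows\mathbf{Cone}_*^N$ restricts to an equivalence on the subcategory of normal affine formal schemes with enough jets. A left adjoint between presheaf topoi is obtained by left Kan extending the affine left adjoint; since the affine left adjoint $(\sigma,\zeta)\mapsto$ (the formal completion of $\Spec\F_1[\sigma^\circ\cap K^\times]$ along the marked subscheme) is fully faithful onto the normal-with-enough-jets objects and preserves the declared open immersions (principal face inclusions correspond to localisations), the Kan extension is again fully faithful, and its essential image is the full subcategory of $\Sh\mathbf{FSch}_{\F_1}^\mathrm{qi/nb}$ generated under colimits by these affine objects. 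This is the content of the third assertion. The formal verification that Kan extension preserves full faithfulness here rests on the counit of the affine adjunction being an isomorphism on the relevant subcategory, which is precisely lemma \ref{PFAN_AFFINE}.

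The main obstacle, and the only point requiring genuine care rather than abstract nonsense, is the flatness/left-exactness bookkeeping: because $\mathbf{Cone}_*^N$ lacks an object representing the empty set (as emphasised after the definition of punctured cones) and is not finitely complete, one cannot simply cite `$\Sigma$ is left exact, hence the inverse image of a geometric morphism'. Instead I would lean on lemma \ref{PFAN_FLAT}, whose proof already handles the delicate case where a fibre product $\sigma_X\times_{\sigma_Z}\sigma_Y$ is representable exactly when the relevant images meet in a $\Q$-point, guaranteeing the corresponding fibre product of formal schemes is nonempty and quasi-integral. With flatness in hand, the passage to presheaf topoi is the standard correspondence between flat functors and geometric morphisms (cf.\ the remarks in \S\ref{TOPOS}), and the remaining steps are the routine globalisation afforded by lemma \ref{TOPOS_FUNCT}.
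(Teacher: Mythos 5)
Your proposal is correct and is essentially identical to the paper's own proof, which consists of nothing more than the sentence ``From lemmas \ref{PFAN_AFFINE}, \ref{PFAN_IMMERS}, and \ref{PFAN_FLAT}, in reverse order, it follows'' --- exactly the three lemmas you assemble, in exactly that order, with your text supplying the standard categorical details (flat functor $\Rightarrow$ inverse image, globalisation via lemma \ref{TOPOS_FUNCT}, colimit extension of the affine adjunction) that the paper leaves implicit. One cosmetic slip: full faithfulness of the extended \emph{left} adjoint is the statement that the \emph{unit} $\mathrm{id}\rightarrow\Sigma L$ is invertible (which holds on representables by lemma \ref{PFAN_AFFINE} and extends by colimit-preservation of both functors), not the counit, though the lemma you cite does carry both facts.
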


We therefore obtain an adjunction
\[ \Sigma:\mathbf{FSch}_{\F_1}^\mathrm{qi/nb}\leftrightarrows\mathrm{C}\mathbf{Cone}^N_* \]
between the categories of quasi-integral formal schemes and non-boundary morphisms and that of \emph{punctured cone complexes}, that is, locally representable presheaves on $\mathbf{Cone}^N_*$.

\begin{cor}\label{PFAN_EQUIVALENCE}The cone complex functor and its left adjoint restrict to an equivalence
\[ \Sigma:\mathbf{FSch}_{\F_1}^\mathrm{ej/n/nb}\widetilde\rightarrow \mathrm{C}\mathbf{Cone}_*^N\] 
on the category of normal formal $\F_1$-schemes with enough jets.\end{cor}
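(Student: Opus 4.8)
The statement to prove is Corollary~\ref{PFAN_EQUIVALENCE}: that the adjunction $\Sigma\colon\mathbf{FSch}_{\F_1}^\mathrm{qi/nb}\leftrightarrows\mathrm{C}\mathbf{Cone}_*^N$ restricts to an equivalence on the subcategory $\mathbf{FSch}_{\F_1}^\mathrm{ej/n/nb}$ of normal formal schemes with enough jets. The plan is to deduce this purely formally from the affine case (Lemma~\ref{PFAN_AFFINE}) together with the glueing proposition immediately preceding the corollary. Since both $\Sigma$ and its left adjoint are pullback/pushforward along the geometric morphism $\PSh\mathbf{Cone}_*^N\to\Sh\mathbf{FSch}_{\F_1}^\mathrm{qi/nb}$, and since the proposition already tells us that $\Sigma$ preserves open immersions, induces bijections on open-subobject lattices, and has a fully faithful left adjoint whose essential image is generated under colimits by the normal formal schemes with enough jets, the corollary is essentially a bookkeeping exercise in the theory of locally representable sheaves (cf. \S\ref{TOPOS}).

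\textbf{Key steps.} First I would record that the left adjoint $L$ to $\Sigma$ is fully faithful: this is exactly the statement that the unit of the affine adjunction is an isomorphism on normal objects with enough jets (Lemma~\ref{PFAN_AFFINE}), glued up using the fact (from the proposition) that $\Sigma$ induces bijections on open-subobject lattices, so fullness and faithfulness are local properties that may be checked on affine charts. Second, I would identify the essential image of $L$. The proposition describes this image as the subcategory generated \emph{under colimits} by the affine normal formal schemes with enough jets; I must upgrade this to the claim that the image is precisely $\mathbf{FSch}_{\F_1}^\mathrm{ej/n/nb}$, i.e.\ that an arbitrary normal formal scheme with enough jets, being glued from such affine pieces along open immersions, lies in the colimit-closure and conversely that every object of the colimit closure which happens to be (locally representable, hence) a formal scheme is normal with enough jets. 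Here the key inputs are that normality and the enough-jets condition are \emph{local} (both are defined stalk-locally in Definition~\ref{INT_RUGGET} and its formal analogue), and that open immersions of such objects correspond under $\Sigma$ to face inclusions by Lemma~\ref{PFAN_IMMERS}. Third, with $L$ fully faithful onto $\mathbf{FSch}_{\F_1}^\mathrm{ej/n/nb}$ and $\Sigma$ its left inverse on that subcategory (the counit $\Sigma L\cong\mathrm{id}$ being an iso because $L$ is fully faithful and $\Sigma\dashv L$... more precisely, $L$ fully faithful gives $\Sigma L\cong\mathrm{id}$ on $\mathrm C\mathbf{Cone}_*^N$ directly), I would conclude that $\Sigma$ restricted to $\mathbf{FSch}_{\F_1}^\mathrm{ej/n/nb}$ is a quasi-inverse to $L$, yielding the equivalence.

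\textbf{Main obstacle.} The delicate point is \emph{globalising} the affine equivalence, i.e.\ step two: verifying that the colimit-closure description of the essential image really coincides with the subcategory $\mathbf{FSch}_{\F_1}^\mathrm{ej/n/nb}$ rather than something larger (a general algebraic space built from such charts) or smaller. This is where one must use that formal schemes are exactly the \emph{locally representable} objects, and that the glueing data on the cone side (faces of punctured cones, via Lemma~\ref{PFAN_IMMERS}) match the glueing data on the scheme side along completed localisations. In the classical scheme case this was handled by the corresponding corollary to the proposition in \S\ref{FAN_GLUE}; here the only genuinely new ingredients are the punctures, which behave well because puncturing is determined by principal faces and formal completion depends only on the complement $\Spec A\setminus Z$. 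Concretely, one checks that a gluing of normal-with-enough-jets affine formal charts along completed localisations is again normal with enough jets (locality of both conditions) and is locally representable, hence a formal scheme; conversely any object of $\mathbf{FSch}_{\F_1}^\mathrm{ej/n/nb}$ admits such a chart cover by definition. Once the local-to-global dictionary is confirmed, the equivalence follows formally, and the argument is a direct transcription of the scheme-theoretic case with the punctured-cone machinery substituted throughout.
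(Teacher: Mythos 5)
Your proposal is correct and takes essentially the same route as the paper: the corollary is read off from the glueing proposition together with the affine equivalence (Lemma \ref{PFAN_AFFINE}), with locality of normality and the enough-jets condition plus Lemma \ref{PFAN_IMMERS} supplying the dictionary between affine charts glued along completed localisations and cones glued along face inclusions, all inside the locally-representable-sheaf framework of \S\ref{TOPOS}. (Your one slip — writing $\Sigma\dashv L$ where the adjunction runs $L\dashv\Sigma$ — you correct yourself immediately, and it does not affect the argument.)
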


\begin{cor}\label{PFAN_CONES_POINTS}The specialisation order on the topological space underlying a quasi-integral formal scheme $X$ with enough jets is the inclusion order on cones of $\Sigma_X$.\end{cor}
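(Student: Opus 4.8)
The plan is to deduce this from two facts already in hand: that the specialisation order on a sober space is an invariant of its frame of open sets, and that $\Sigma$ matches the opens of $X$ with the faces of $\Sigma_X$. I would therefore never argue directly with points, but instead transport everything through the lattice of opens.

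First I would recall that for a sober space — equivalently, for a locale with enough points, which holds throughout by our standing assumption (Deligne's theorem) — the specialisation preorder is recovered from the lattice of opens alone: $x$ lies in the closure of $y$ exactly when every open containing $x$ contains $y$. Consequently any isomorphism of frames of opens induces an isomorphism of the associated specialisation posets. By the proposition preceding Corollary~\ref{PFAN_EQUIVALENCE}, $\Sigma$ induces bijections on open-subobject lattices $\sh U_{/X}\to\sh U_{/\Sigma_X}$; since both sides have enough points this is a homeomorphism $\Sh(X)\widetilde\rightarrow\Sh(\Sigma_X)$ (the formal analogue of the first corollary of \S\ref{FAN_GLUE}), and in particular an isomorphism of specialisation posets. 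This step is purely formal.

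It then remains to identify the specialisation poset of $\Sh(\Sigma_X)$ with the cones of $\Sigma_X$ ordered by inclusion. Being a statement about opens and their points it is local on $\Sigma_X$ and compatible with the face gluing, so I would treat a single affine punctured cone $\sigma_A\setminus\zeta_A$ attached to $X=\Spec A$ with $A$ normal and with enough jets. Here Lemma~\ref{PFAN_IMMERS} identifies the non-zero localisations of $A$ — the basic opens of the formal scheme — with the faces of $\sigma_A\setminus\zeta_A$, a localisation $A\to A\{f^{-1}\}$ corresponding to the face $\{\log f=0\}$. Dually, using the prime-to-face dictionary of \S\ref{FAN_CONE} as recalled in \S\ref{PFAN_CONE}, the open primes of $A$ — the points of $X$ — correspond to the faces of $\sigma_A$ surviving the puncturing, a prime $\lie p$ going to $\sigma_A\cap\ker(N_A\to N_{A/\lie p})$. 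Tracing the closure relation, $y$ lies in $\overline{\{x\}}$ iff $\lie p_x\subseteq\lie p_y$ iff the face attached to $x$ is contained in that attached to $y$; with the convention making closed points maximal this is precisely inclusion of cones, and it is just the inclusion-reversing correspondence of Theorem~\ref{FAN_THM} between closed subschemes and cones, read off on points.

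The only non-bookkeeping point — and the main thing to verify — is the compatibility of this affine correspondence with the puncturing together with the fixing of the order's direction: one must confirm that the points of $\hat A_Z$, namely the open primes (those containing the defining ideal of $Z$), are exactly the faces of $\sigma_A$ not contained in $\zeta_A$, and that the induced specialisation order among them agrees with cone inclusion rather than its opposite. Once the convention is fixed and this affine identification is checked against Lemma~\ref{PFAN_IMMERS}, the global statement follows from the same locality and gluing already used for $\Sigma$, so I anticipate no genuinely new difficulty beyond transporting the inclusion-reversing affine dictionary through the frame isomorphism.
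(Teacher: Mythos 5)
Your proposal is correct and follows essentially the same route as the paper, which states this corollary without separate proof as an immediate consequence of the preceding proposition: the bijection on open-subobject lattices identifies the (sober) underlying spaces, and the affine prime-to-face dictionary, transported through the puncturing, identifies points with cones and specialisation with inclusion. The one verification you flag --- that the open primes correspond exactly, and order-preservingly, to the faces of $\sigma_A$ not contained in $\zeta_A$ --- is real content and is precisely where the \emph{enough jets} hypothesis enters (the prime-to-face map fails to be injective, e.g., for $\F_1[\![t^{-H}]\!]$ with $H$ of rank two), but your treatment of it is at the same level of detail that the paper itself leaves implicit.
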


\paragraph{Developing map}Just as in the unpunctured case, a punctured cone complex can be realised as a functor of pogroups
\[ \Sigma_X(H)=\Hom^\mathrm{nb}(\hat\Delta_H,X) \]
and, by applying it to the case $H=\R$, as an $\R_{>0}^\times$-equivariant topological space. If $X$ is locally integral-over-Noetherian, $\Sigma_X(\R)$ is a CW complex. Unlike the scheme case, however, it is not necessarily a $0$-type.

The topological realisation commutes with fibre products of
\begin{enumerate}\item finite complexes, and
\item complexes with finite-dimensional cones.\end{enumerate}

The cocharacter groups assemble to form a local system $N_X$ on $\Sigma_X$. It can also be thought of as a conically invariant local system on $\Sigma_X(\R)$. The monodromy of this local system is the obstruction to the developing map $\Sigma_X\rightarrow N_X$ being globally defined. In particular, for each choice of cone $\sigma\subset\Sigma_X$ one gets a canonical developing map
\[ \delta:\widetilde\Sigma_X\rightarrow N_{X,\sigma} \]
from the cover $\widetilde\Sigma_X$ of $\Sigma_X$ that trivialises $N_X$ on the component of $\sigma$.

The same holds true for the relative situation $\widetilde\Sigma_{X/H}\rightarrow N_{X/H,\sigma}$.

\subsection{Overconvergent neighbourhoods}\label{PFAN_SUR}
As in the scheme case, the embedding of $\PSh\mathbf{Cone}_*^N$ into $\Sh\mathbf{FSch}_{\F_1}^{n/nb}$ equips it with a class of morphisms $\P$ that become formally proper under that embedding. The crux of the study of extension problems will be to identify what birational $\P$ morphisms look like at the level of topological realisations.

\paragraph{Refinements}A morphism $\Sigma_X\rightarrow\Sigma_Y$ from a finite punctured cone complex to a punctured cone is called a \emph{refinement} if it induces a bijection on points and on $K^\times$. Since taking points commutes with limits, this notion is stable for base change. It therefore makes sense to talk about refinements of arbitrary punctured cone complexes.

One way to induce a refinement of a cone $\sigma\subseteq N$ is by choosing a finite collection of linear functions $\{f_i\}_{i=1}^k\subseteq K^\times$. The functions may as well be assumed non-positive on $\sigma$. Thus we reach the following fact, true also in the algebrac case $\mathrm{C}\mathbf{Cone}^N$:

\begin{lemma} \label{PFAN_REFINEMENT}A refinement induced by a function is in class $\P$.\end{lemma}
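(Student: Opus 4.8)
The plan is to realise the refinement, after transporting it through the equivalence of corollary~\ref{PFAN_EQUIVALENCE}, as a (formally) integral/projective morphism of normal formal schemes, and hence as a member of $\P$. By definition the target of a refinement is a single punctured cone $\Sigma_Y=(\sigma,\zeta)$; let $Y$ be the corresponding normal affine formal scheme with enough jets (lemma~\ref{PFAN_AFFINE}), with coordinate algebra the completion of $A=\F_1[\sigma^\circ\cap K^\times]$. The functions $f_1,\dots,f_k\in K^\times$ inducing the refinement are non-positive on $\sigma$, i.e. $-\log f_i\in\sigma^\circ$, so each $f_i$ lies in $A\setminus 0$; set $T=(f_1,\dots,f_k)\trianglelefteq A$, a finitely generated ideal.

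First I would form the blow-up $\mathrm{Bl}_TY=\Proj R_T\to Y$ of \S\ref{MORP_REES}. Because $T$ is finitely generated this is a finite-type blow-up, hence projective, being an affinely embedded subscheme of $\P(T)$. The key computation is to match its cone complex with the given refinement: the standard chart $\Spec A\{T/f_j\}$ has coordinate monoid generated over $A\setminus 0$ by the quotients $f_i/f_j$, so by lemma~\ref{PFAN_IMMERS} its cone is the subcone $\tau_j=\{v\in\sigma:\log f_j(v)\ge\log f_i(v)\ \forall i\}$. As $j$ varies, these $\tau_j$ cover $\sigma$ and glue along the faces $\{\log f_i=\log f_j\}$, so that $\Sigma_{\mathrm{Bl}_TY}$ is exactly the subdivision of $(\sigma,\zeta)$ cut out by the $f_i$, namely the function-induced refinement.

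Finally, the scheme $\mathrm{Bl}_TY$ need not be normal, since the chart monoids above need not be saturated, so I would pass to its normalisation $\nu\,\mathrm{Bl}_TY\to\mathrm{Bl}_TY$ (\S\ref{MORP_NORM}), which is integral. Normalisation does not disturb the associated cone complex, as the polar cone of a monoid coincides with that of its saturation, so $\Sigma_{\nu\,\mathrm{Bl}_TY}=\Sigma_{\mathrm{Bl}_TY}$ remains the refinement. Thus $\nu\,\mathrm{Bl}_TY$ is normal with enough jets and, by corollary~\ref{PFAN_EQUIVALENCE}, represents $\Sigma_X$; the refinement therefore transports to the composite
\[\nu\,\mathrm{Bl}_TY\longrightarrow\mathrm{Bl}_TY\longrightarrow Y\]
of an integral morphism with a projective one, which is (formally) integral/projective and hence formally proper. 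By proposition~\ref{SEP_FIP} the resulting notion of overconvergence agrees with that for the projective class, so in either reading the morphism lies in $\P$.

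The main obstacle is the chart computation of the second paragraph together with the bookkeeping of the puncturings $\zeta$ under completion (one must check, via the Krull intersection lemma~\ref{KRULL}, that the relevant $f_i$ survive and that the punctures of the $\tau_j$ pull back correctly). The possible non-normality and infinite type of the blow-up are harmless precisely because $\P$ permits integral — not merely finite — morphisms, so no finiteness of the normalisation is needed.
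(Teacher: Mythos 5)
Your proof is correct and is precisely the argument the paper intends: the lemma is stated without proof, but the surrounding text (and the identification, used implicitly throughout \S\ref{PFAN_SUR}, of the subdivision induced by $\bigvee_i\log f_i$ with the charts $\Spec A\{T/f_j\}$ of the blow-up along $T=(f_1,\dots,f_k)$) shows the intended route is exactly yours: realise the refinement as blow-up followed by normalisation, hence an integral/projective morphism, which lies in $\P$. Your closing observation — that possible non-normality and infinite type of the normalised blow-up are harmless because $\P$ admits integral rather than merely finite morphisms — is also the reason the paper's class $\P$ is built from integral/projective rather than projective morphisms, so no gap remains.
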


Note that the inclusion of any (relatively) polyhedral subcone $\sigma_X\subseteq\sigma_Y$ can be extended to a refinement: if $\sigma_X$ is defined by equations $\log f_i\leq 0$, then the function you should take is $F_X=0\vee\bigvee_{i=1}^k\log f_i$.

We can also define a scaling invariant function $\partial F/\partial r$, where $\partial/\partial r$ generates the $\R_{>0}^\times$-action. Conical compactness of $N_X\setminus0$ ensures that this function is bounded above.

\paragraph{Puncturing along a subcomplex}Let us call an `open subset' $\Sigma_U$ of a punctured cone complex $\Sigma_X$ a \emph{subcomplex}. In other words, a subcomplex is a morphism $\Sigma_U\hookrightarrow\Sigma_X$ whose restriction to each cone of $\Sigma_X$ is the inclusion of a union of faces, with the induced puncturing.

It makes sense to further puncture $\Sigma_X$ along a subcomplex $\Sigma_U$. At the level of cones $\sigma\setminus\zeta\subseteq\Sigma_X$ it is defined by enlarging the puncturing of $\sigma$ to $(\Sigma_U\cap\sigma)\cup\zeta$. There is a natural morphism $\Sigma_X\setminus\Sigma_U\rightarrow\Sigma_X$.

Equivalently, $\Sigma_U\subseteq\Sigma_X$ gives rise to an open immersion of formal schemes $U\subseteq X$, and $\Sigma_X\setminus\Sigma_U$ is simply the complex obtained by applying $\Sigma$ to the formal completion of $X$ along the complement of $U$. In particular:

\begin{lemma}\label{PFAN_PUNCT}The puncturing of a complex along a subcomplex is of class $\P$.\end{lemma}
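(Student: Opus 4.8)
The plan is to transport the statement across the dictionary $\Sigma$ and to recognise the puncturing as a \emph{formal embedding} of formal schemes, for which $f\P$-propriety was already established in Proposition \ref{SEP_EMBEDDING}. Recall from the discussion preceding the statement that a subcomplex $\Sigma_U\subseteq\Sigma_X$ corresponds to an open immersion $U\subseteq X$, and that $\Sigma_X\setminus\Sigma_U$ is obtained by applying $\Sigma$ to the formal completion $\hat X_{X\setminus U}\to X$ along the complement of $U$. Since the class $\P$ on $\PSh\mathbf{Cone}_*^N$ is by definition the pre-image of the formally proper morphisms under the embedding into $\Sh\mathbf{FSch}_{\F_1}^{n/nb}$, it suffices to prove that this completion morphism is $f\P$-proper.

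Because formal embeddings, and hence $f\P$-proper morphisms, are stable for descent and may be checked locally on the target, I would first reduce to the case $X=\Spec A$ affine with $Z:=X\setminus U$ a closed subscheme, so that $\hat X_Z=\Spec\hat A_Z$ is again affine. It then remains to verify that $\Spec\hat A_Z\to\Spec A$ is an affine formal embedding in the sense of Definition \ref{MORP_EMBEDDING}. Affineness is immediate, and over $\F_1$ the embedding condition reduces to the surjectivity of the structure map $A\to\hat A_Z$.

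This surjectivity is the genuinely $\F_1$-specific heart of the argument, and the step I expect to be the main obstacle to state cleanly. By the construction of \S\ref{FSCH_COMPLETION}, $\hat A_Z$ is the \emph{Hausdorff quotient} of $A$ for the linear topology whose open ideals are those cosupported on $Z$; equivalently, by the pro-discreteness Lemma \ref{RIG_PRO}, the Hausdorff linearly topologised algebra $\hat A_Z$ coincides with the limit $\lim_{Z'\subseteq Z}\sh O(Z')$ of the honest quotients of $A$ supported on $Z$. In either description $A\to\hat A_Z$ is a quotient map, hence surjective --- in sharp contrast to the situation over $\Z$, where completion is not a quotient. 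The Krull intersection Lemma \ref{KRULL} identifies the kernel in the quasi-integral case, where it may be nonzero, but does not affect surjectivity.

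With surjectivity in hand, $\hat X_Z\to X$ is an affine formal embedding; being in particular a formal immersion, Proposition \ref{SEP_EMBEDDING} shows it is $f\P$-proper. Transporting back along $\Sigma$ --- noting that the completion is again normal with enough jets, so its combinatorial data is an honest punctured cone (\S\ref{PFAN_CONE}) and the equivalence of Corollary \ref{PFAN_EQUIVALENCE} applies --- this would exhibit the puncturing $\Sigma_X\setminus\Sigma_U\to\Sigma_X$ as a morphism of class $\P$, as required.
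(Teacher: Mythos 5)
Your proof is correct and takes essentially the same route as the paper: the paper deduces the lemma immediately (``In particular'') from the identification, stated in the paragraph just before it, of the puncturing with the formal completion of $X$ along the complement of $U$, the point being that over $\F_1$ the completion map is a Hausdorff quotient, hence an affine formal embedding, hence formally proper. Your write-up simply makes explicit the steps the paper leaves implicit --- the surjectivity of $A\rightarrow\hat A_Z$ and the appeal to Proposition \ref{SEP_EMBEDDING} --- so there is nothing to correct.
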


Note that at the level of topological realisations, a subcomplex whose inclusion is quasi-compact is always \emph{closed}. In particular, this applies to finite subcomplexes of quasi-separated complexes.

\begin{prop}[Overconvergent n'hoods are open n'hoods]\label{PFAN_MODIFICATION}Let $\tilde\sigma_V\in\mathrm{C}\mathbf{Cone}_*^N$ be a connected, finite complex, $\sigma_V\in\mathbf{Cone}_*^N$ a cone, and $f:\tilde\sigma_V\rightarrow\sigma_V$ a polyhedral morphism fixing a common cell $\tau\subseteq\sigma_V,\tilde\sigma_V$. The following are equivalent:
\begin{enumerate}\item $\tilde\sigma_V\setminus\tilde\zeta_V(\R)\rightarrow\sigma_V\setminus\zeta_V(\R)$ is the inclusion of a neighbourhood of $\tau\setminus\zeta(\R)$; \item $f$ is an overconvergent neighbourhood of $\tau$.\end{enumerate}\end{prop}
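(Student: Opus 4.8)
The plan is to identify the class-$\P$ polyhedral morphisms fixing $\tau$ with composites of the two basic operations already certified to lie in $\P$, namely \emph{refinements} (lemma \ref{PFAN_REFINEMENT}) and \emph{puncturings along a subcomplex} (lemma \ref{PFAN_PUNCT}), and then to read off both implications from the behaviour of topological realisation on these generators. The structural input I would use throughout is that on finite complexes topological realisation is left exact (\S\ref{PFAN_GLUE}): it sends a refinement, which by definition induces a bijection on $\R$-points, to a homeomorphism onto its image, and it sends a puncturing along a quasi-compact subcomplex to the inclusion of the complement of a \emph{closed} subset, the subcomplex being closed in the realisation as noted after lemma \ref{PFAN_PUNCT}.

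For \emph{(1)}$\Rightarrow$\emph{(2)} I would argue by explicit construction. Since $\tilde\sigma_V$ is finite and $f$ is polyhedral, the image $W:=f\bigl(\tilde\sigma_V\setminus\tilde\zeta_V\bigr)(\R)\subseteq\sigma_V\setminus\zeta_V(\R)$ is a polyhedral region cut out by finitely many inequalities $\log f_i\le 0$ with $f_i\in K^\times$. I would then form the refinement of $\sigma_V$ induced by the function $F=0\vee\bigvee_{i=1}^k\log f_i$ (the remark following lemma \ref{PFAN_REFINEMENT}); this morphism is in $\P$ and realises $\tilde\sigma_V$ as a subcomplex $\Sigma'\to\sigma_V$ still containing $\tau$. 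Puncturing $\Sigma'$ along the subcomplex supported on the frontier $\Sigma'\setminus\tilde\sigma_V$ is again $\P$ by lemma \ref{PFAN_PUNCT}, and reproduces precisely the prescribed puncturing $\tilde\zeta_V$. The composite of these two $\P$-morphisms is $f$, hence $f$ is $\P$ by axiom (P1); and because $f$ fixes the common cell $\tau$ we obtain the factorisation $\tau\hookrightarrow\tilde\sigma_V\to\sigma_V$ demanded by definition \ref{SEP_SUR}, so $f$ is an overconvergent neighbourhood of $\tau$.

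For \emph{(2)}$\Rightarrow$\emph{(1)} I would transport the analysis of elementary extension problems of \S\ref{SEP_FSCH} across the equivalence of corollary \ref{PFAN_EQUIVALENCE}. There, every overconvergent neighbourhood of a dense open immersion is dominated by a \emph{standard} one obtained as a blow-up followed by a formal completion; on cone complexes these correspond exactly to a refinement followed by a puncturing. Realising this standard model, the refinement becomes a homeomorphism onto its image and the puncturing an open inclusion, so $f(\R)$ is an open inclusion of realisations; since $f$ fixes $\tau$ and the factorisation keeps $\tau\setminus\zeta(\R)$ inside the un-punctured locus, the image $W$ is an \emph{open neighbourhood} of $\tau\setminus\zeta(\R)$, which is condition \emph{(1)}.

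The main obstacle I anticipate is the bookkeeping in the second direction: one must verify that a general overconvergent neighbourhood, not a priori presented as a refinement-then-puncturing, is genuinely comparable (up to refinement) to the standard model, and that this comparison is faithfully reflected by realisation — in particular that the puncturing data $\tilde\zeta_V$ matches the closed frontier removed by realisation. Finiteness of $\tilde\sigma_V$ together with $\sigma_V$ being a single cone, and the fact that finite subcomplexes of quasi-separated complexes are closed in realisation, are exactly what tame the non-locality of $\P$ here; I would lean on these to pin down the correspondence without having to re-derive the germ $\mathrm{Sur}_{\tau/\sigma_V}$ by hand.
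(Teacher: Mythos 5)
Your treatment of (1)$\Rightarrow$(2) --- the direction the paper also identifies as the substantive one --- breaks down at two specific steps. First, the image $W$ of $\tilde\sigma_V\setminus\tilde\zeta_V(\R)$ is a finite \emph{union} of cones and is in general not convex, so it is not ``cut out by finitely many inequalities $\log f_i\le 0$''; consequently the single function $F=0\vee\bigvee_{i}\log f_i$ you propose does not capture $W$, and even when $W$ happens to be convex the subdivision induced by one such function has no reason to contain the cones of $\tilde\sigma_V$ among its cells, so the claim that your refinement ``realises $\tilde\sigma_V$ as a subcomplex'' is unjustified. Second, and more fundamentally, your conclusion that $f$ itself decomposes as (function-induced refinement)$\circ$(puncturing), hence lies in $\P$ by (P1), cannot be right in general: lemma \ref{PFAN_REFINEMENT} only places \emph{function-induced} (i.e.\ projective, ``regular'') refinements in the generating class, and a non-regular subdivision of $\sigma_V$ satisfies (1) while admitting no such decomposition --- this is the cone-complex avatar of proper non-projective toric birational morphisms. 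What condition (2) actually requires, and what the paper proves, is weaker: one needs a $\P$-morphism \emph{dominating} $f$ compatibly with the section over $\tau$, i.e.\ $\tilde\sigma_V$ sits between $\mathrm{Sur}_{\tau/\sigma_V}$ and $\sigma_V$. The paper gets this by a cone-by-cone construction: for each cone $\sigma\subseteq\tilde\sigma_V$ cut out by $\{f_i\}$ it builds a function $F_\sigma$, with a case distinction according to whether the defining inequalities hold on the interior of $\tau$ (if not, one must first dominate $\bigvee_i\log f_i$ by a positive linear combination $F$ and use $F_\sigma=\max\{0,\log f_i,F\}$), then takes the join $\bigvee_\sigma F_\sigma$ to obtain a finite refinement of $\sigma_V$ containing $\tau$ as a cell and containing a \emph{refinement of} $\tilde\sigma_V$ (not $\tilde\sigma_V$ itself) as a subcomplex, and finally punctures away the cones whose interiors miss $\tilde\sigma_V$, which hypothesis (1) guarantees are disjoint from $\tau\setminus\zeta$. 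Your single global function cannot substitute for this.

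Your (2)$\Rightarrow$(1) also has a gap at the injectivity claim. If a standard model $g=f\circ h$ dominates $f$ and $g(\R)$ is injective, that gives injectivity of $h(\R)$, not of $f(\R)$; ``so $f(\R)$ is an open inclusion'' is a non sequitur. Domination only yields that the image of $f(\R)$ contains the image of $g(\R)$, hence is a neighbourhood of $\tau\setminus\zeta(\R)$. The paper's easy direction instead reads off both injectivity and openness from the structure of the $\P$-morphism itself --- a formally projective morphism is by definition a formal completion (puncturing, a closed-complement inclusion on realisations) composed with an integral-over-projective birational morphism (refinement, a bijection on points) --- and this structural argument is what your detour through dominating models loses.
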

\begin{proof}The implication \emph{ii)}$\Rightarrow$\emph{i)} is the easy direction: it follows from lemmas \ref{PFAN_REFINEMENT} and \ref{PFAN_PUNCT} and the fact that a formally projective morphism is, by definition, a composite of a formal completion and an integral-over-projective morphism.

We focus on the converse. Since $\sigma_V$ and $\tilde\sigma_V$ share a face, they also have the same character group. The polyhedral morphism $f$ is therefore obtained by base change from a morphism between rational polyhedral cones in finite-dimensional vector spaces. Since passing to the topological realisation of finite complexes commutes with fibre products, it is safe to work with such a model; we may thus assume that $N_V=N_\tau$ is finite-dimensional and that $\tilde\sigma_V$ and $\sigma_V$ are rational polyhedral. It is now an exercise in elementary convex geometry.

We begin the exercise by forgetting about the punctures. This does not introduce complications because $\sigma_V$ is a cone and, by assumption, $\tilde\sigma_V\setminus\tilde\zeta_V\rightarrow\sigma_V\setminus\zeta_V$ is injective. Let $\sigma\subseteq\tilde\sigma_V$ be a cone defined by the functions $\{f_i\}_{i=1}^k$. Let $p\in N_\tau(\R)$ be in the interior of $\tau$. If $\log f_i(p)\leq 0$ for all $i$, then $p\in\sigma(\R)$. Since $\sigma$ must intersect $\tau$ in a face, in this case $\tau\subseteq\sigma$, and 
\[  F_\sigma:=\max\{ 0,\log f_i\}_{i=1}^k\]
defines a subdivision of $\sigma_V$ containing both $\sigma$ and $\tau$ as cells. 

Otherwise, $\bigvee_{i=1}^k\log f_i$ is strictly positive on the interior of $\tau$. It is therefore dominated by some positive linear combination $F$ of the $\log f_i$. Then the function
\[  F_\sigma:=\max\{ 0,\log f_i,F\}_{i=1}^k\]
does the job.

Repeating this choice for each cone of $\tilde\sigma_V$, we obtain a finite list of functions $F_\sigma$ whose join $\bigvee_\sigma F_\sigma$ defines a finite refinement of $\sigma_V$ that contains $\tau$ and contains a refinement of $\tilde\sigma_V$ as a subcomplex.

Now let us reintroduce the punctures. Since $\tilde\sigma_V\setminus\tilde\zeta_V(\R)$ is a neighbourhood of $\tau\setminus\zeta(\R)$, any cone of $\sigma_U$ whose interior does not meet $\tilde\sigma_V$ is disjoint from $\tau\setminus\zeta$. Let $\zeta_X$ be the union of all such cones. Then $\sigma_X\setminus\zeta_X\rightarrow\sigma_V\setminus\zeta_V$ is a morphism of class $\P$ that supers $\tilde\sigma_V$ and has a section over $\tau$.\end{proof}

\begin{cor}[of proof]Let $f:X\rightarrow S$ be a non-boundary morphism of finite type between quasi-integral formal $\F_1$-schemes. If $\Sigma_X\rightarrow\Sigma_S$ is a refinement, then $f$ is proper birational.\end{cor}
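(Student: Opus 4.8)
The plan is to treat the two adjectives separately, reducing the substantive one to the local statement already established in Proposition \ref{PFAN_MODIFICATION}. The word \emph{birational} is immediate: by definition a refinement induces an isomorphism on the character group $K^\times$, and under the dictionary of \S\ref{FAN_DIAG} this is precisely an isomorphism of function-field data, so $f$ is birational. It remains to prove that $f$ is proper, i.e.\ (def.\ \ref{SEP_DEF_PROPER}) overconvergent, quasi-compact, and quasi-separated; recall $f$ is already assumed of finite type.

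All three properties are local on $S$ — overconvergence is detected by elementary extension problems, while quasi-compactness and quasi-separation localise — and, crucially, being a refinement is by definition a condition checked cone-by-cone on $\Sigma_S$. I would therefore fix a cone $\sigma\subseteq\Sigma_S$ and study the finite punctured cone complex $\Sigma_0:=\sigma\times_{\Sigma_S}\Sigma_X$ with its polyhedral map to $\sigma$. The refinement hypothesis says this map is a bijection on points at every pogroup, so in particular $\Sigma_0(\R)\to\sigma(\R)$ is a bijection. Quasi-compactness of $f$ follows since $\Sigma_0$ is finite, whence its associated formal scheme is quasi-compact over the affine chart of $S$ dual to $\sigma$. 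Quasi-separation follows from separation: distinct cones of (each connected component of) $\Sigma_0$ have disjoint interiors, so by bijectivity on points their images under the developing map $\Sigma_0\to N_\sigma$ are disjoint; hence the developing map is injective, a local immersion, and $\Sigma_0$ is a genuine fan subdividing $\sigma$, which is exactly the separation criterion (cf.\ \ref{PFAN_SEPARATED}, \ref{FAN_SUR}).

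For overconvergence I would invoke the hard implication \emph{i)}$\Rightarrow$\emph{ii)} of Proposition \ref{PFAN_MODIFICATION}. Fix a cell $\tau$ of $\sigma$. Since $\Sigma_0(\R)\to\sigma(\R)$ is a bijection, the realization $\Sigma_0\setminus\zeta(\R)$ is a fortiori a neighbourhood of $\tau\setminus\zeta(\R)$ — indeed it exhausts $f^{-1}\sigma(\R)$ — so hypothesis \emph{i)} of \ref{PFAN_MODIFICATION} is satisfied, and the proposition yields that $\Sigma_0\to\sigma$ is an overconvergent neighbourhood of $\tau$. As $\tau$ and $\sigma$ range over all cells and cones, every elementary extension problem for $f$ is solved by the corresponding cone-complex modification, so $f$ is overconvergent. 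Together with the previous paragraph this gives $f$ proper.

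The main obstacle is not any fresh convex-geometric input — that was dispatched inside the proof of \ref{PFAN_MODIFICATION} — but the bookkeeping of the reduction: I must confirm that global overconvergence of $f$ really is equivalent to the family of cell-wise overconvergent-neighbourhood statements, so that the local conclusions assemble correctly. Concretely I would check that an arbitrary elementary extension problem $U/V$ for $f$ factors, after passing to points, through the data $(\tau\subseteq\sigma)$ of a single cell in a single cone of $\Sigma_S$, thereby landing in the scope of \ref{PFAN_MODIFICATION}; the compatibility of the resulting solutions under face inclusions is then automatic, since $\Sigma_0\to\sigma$ is already a morphism of complexes.
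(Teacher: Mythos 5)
Your treatment of the easy adjectives is fine: birationality from the isomorphism on $K^\times$ and bijectivity on points, quasi-compactness from the finiteness built into the definition of a refinement, and quasi-separation from the global injectivity of the developing map (which indeed makes each $\Sigma_0=\sigma\times_{\Sigma_S}\Sigma_X$ a fan subdividing $\sigma$, hence separated). The identification of Proposition \ref{PFAN_MODIFICATION} as the engine is also correct. The problem is the step you yourself flagged as "bookkeeping": the claim that an arbitrary elementary extension problem for $f$ "factors through the data $(\tau\subseteq\sigma)$ of a single cell in a single cone of $\Sigma_S$" is false, and the cell-wise statements you extract from \ref{PFAN_MODIFICATION} do not assemble into overconvergence. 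An elementary extension problem consists of an \emph{arbitrary} test cone $\sigma_V$ mapping to $\Sigma_S$ together with a face $\sigma_U\subseteq\sigma_V$ mapping to $\Sigma_X$; neither $\sigma_V$ nor $\sigma_U$ is a cone or cell of $\Sigma_S$, and $\sigma_V\rightarrow\sigma$ can be any map of cones (non-injective, higher-dimensional, etc.). Knowing that for each cell $\tau\subseteq\sigma$ there is a $\P$-morphism $W\rightarrow\sigma$ factoring through $\Sigma_0$ with a section over $\tau$ does not solve such a problem: base-changing $W$ along $\sigma_V\rightarrow\sigma$ does give a $\P$-morphism over $\sigma_V$, but there is no reason the \emph{given} map $\sigma_U\rightarrow\Sigma_0$ lifts along $W\rightarrow\Sigma_0$, so you cannot produce a solution restricting to the prescribed data on $\sigma_U$.

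The repair — and this is what the "corollary of proof" actually rests on — is to apply \ref{PFAN_MODIFICATION} after base-changing the refinement to the \emph{test} cone rather than working over the cones of $\Sigma_S$. Given an elementary extension problem $\sigma_U/\sigma_V$, first factor $\sigma_V\rightarrow\Sigma_S$ through a single cone $\sigma$ of $\Sigma_S$ (possible because affine objects are local, so maps from representables factor through members of the covering). Refinements are stable under base change (\S\ref{PFAN_SUR}), so $\Sigma^\prime:=\sigma_V\times_\sigma\Sigma_0\rightarrow\sigma_V$ is again a finite refinement, now of the test cone itself, and the given lift $\sigma_U\rightarrow\Sigma_X$ induces $\sigma_U\rightarrow\Sigma^\prime$ over $\sigma_V$. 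This lift lands in a single cone $\sigma^\prime_0$ of $\Sigma^\prime$, and since $\sigma^\prime_0\rightarrow\sigma_V$ is injective on points, $\sigma_U$ is cut out in $\sigma^\prime_0$ by the same functions that cut it out of $\sigma_V$; hence $\sigma_U$ is a \emph{common cell} of $\sigma_V$ and $\Sigma^\prime$, exactly the setting of \ref{PFAN_MODIFICATION} with $\tau=\sigma_U$. Hypothesis \emph{i)} there holds trivially, since a refinement is a bijection on $\R$-points, a fortiori a neighbourhood inclusion; the proposition then yields a class-$\P$ morphism over $\sigma_V$ factoring through $\Sigma^\prime$ with a section over $\sigma_U$, i.e.\ a solution $\mathrm{Sur}_{\sigma_U/\sigma_V}\rightarrow\Sigma^\prime\rightarrow\Sigma_X$, and uniqueness follows from the injectivity on points you already established. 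With this substitution your argument closes; without it, the passage from the cell-wise neighbourhood statements to overconvergence of $f$ is a genuine gap, not bookkeeping.
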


\subsection{Criteria for overconvergence}\label{PFAN_CRITERIUM}

Suppose that we are given a morphism $\Sigma_X\rightarrow\Sigma_S$ of punctured cone complexes, whose extensional properties we wish to investigate. By lemma \ref{SEP_ELEMENTARY}, we are checking extension problems
\[\xymatrix{ \sigma_U\ar[r]\ar[d] & \sigma_V\ar[d] \\ \Sigma_X\ar[r] & \Sigma_S }\]
with $\sigma_U\subset\sigma_V$ a face inclusion of punctured cones.

The new feature of cone complexes that we will use to understand this extension problem is the developing map $\Sigma_X\stackrel{\delta}{\rightarrow} N_X$. Since this is not everywhere defined, we must first reduce the question to studying a region of $\Sigma_X$ `close' to $\sigma_U$.

\begin{defn}Two cones of a punctured cone complex $\Sigma\in\mathbf{C}\mathbf{Cone}_*^N$ are said to be \emph{contiguous} if they intersect along a principal face. The \emph{big star} of a cone $\sigma\subset\Sigma$ is the complex $\sigma^\star$ obtained by puncturing $\Sigma$ along all cones discontiguous with $\sigma$. The \emph{small star} is the subcomplex $\sigma^*\subseteq\sigma^\star$ of cones containing $\sigma$ as a principal face.

A punctured cone complex is said to be \emph{locally finite} if the star of any cone is finite and has finite puncturing.\end{defn}

The developing map of a star-shaped complex
\[ \sigma^\star\stackrel{\delta}{\rightarrow} N_{\sigma^\star}=N_{X,\sigma} \]
is globally defined. Note that if $\Sigma$ is a connected cone complex without punctures, then all cones are pairwise contiguous, and so the star of any cone of $\Sigma$ is equal to $\Sigma$ itself.

\begin{lemma}Suppose that $\Sigma$ has finite puncture type.\begin{enumerate}\item The big star $\sigma^\star(\R)$ is a neighbourhood of $\sigma(\R)$ in $\Sigma(\R)$. \item The small star $\sigma^*(\R)$ is a neighbourhood of $\sigma(\R)$ minus its proper faces in $\Sigma(\R)$.\end{enumerate}\end{lemma}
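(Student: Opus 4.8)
The plan is to prove both statements directly in the topological realisation $\Sigma(\R)$, using that it carries the weak topology with respect to its cones and that, by the finite puncture type hypothesis together with the finite-dimensionality of the cones (see \S\ref{PFAN_GLUE}), cone inclusions are topological immersions. Consequently $\Sigma(\R)$ behaves like a CW complex: for any two cones $\rho,\tau$ one has $\rho(\R)\cap\tau(\R)=(\rho\cap\tau)(\R)$, and a subset is closed iff it meets every $\tau(\R)$ in a closed set. The only general fact about cones I would also record is that, over $\Q$, every face of a strongly convex rational polyhedral cone is principal (its defining functionals lie in a face of the dual cone whose relative interior contains rational points), so that discontiguity can arise only from punctures.

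First I would handle the bookkeeping underlying i). Since each cone $\tau$ of $\Sigma$ is rational polyhedral it has finitely many faces, so for any subcomplex $\Sigma_U$ the set $\Sigma_U(\R)\cap\tau(\R)$ is a finite union of faces of $\tau$, hence closed; therefore $\Sigma_U(\R)$ is closed in the weak topology. Applying this to the subcomplex $\Sigma_D$ of cones discontiguous with $\sigma$, whose puncturing yields $\sigma^\star$ (lemma \ref{PFAN_PUNCT}), gives $\sigma^\star(\R)=\Sigma(\R)\setminus\Sigma_D(\R)$, which is open. As $\sigma^\star$ still contains $\sigma$ with its induced puncturing, this open set is a neighbourhood of the realisation of $\sigma$ that survives in it, which is the content of i).

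For ii), fix a point $x$ in the relative interior $\sigma^\circ(\R)$ of $\sigma$, that is, $\sigma(\R)$ with its proper faces removed. Any cone $\tau$ whose realisation contains $x$ meets $\sigma$ in the common face $\sigma\cap\tau$; since this face then contains an interior point of $\sigma$ it must equal $\sigma$, so $\sigma$ is a face of $\tau$. This face is not punctured in $\tau$ (otherwise $x$ would be deleted) and is principal by the remark above; hence $\sigma$ is a non-punctured principal face of $\tau$, i.e. $\tau\in\sigma^*$. Now let $C$ be the union of the realisations of all cones of $\Sigma$ not containing $x$; the face-counting argument shows $C$ is closed and $x\notin C$, so $\Sigma(\R)\setminus C$ is an open neighbourhood of $x$ whose points lie only in cones through $x$, and by the previous sentence these all lie in $\sigma^*$. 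Thus $\Sigma(\R)\setminus C\subseteq\sigma^*(\R)$, and since $x$ was arbitrary, $\sigma^*(\R)$ is a neighbourhood of $\sigma^\circ(\R)=\sigma(\R)$ minus its proper faces.

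The main obstacle I expect is justifying the CW-type identity $\rho(\R)\cap\tau(\R)=(\rho\cap\tau)(\R)$ and, with it, the assertion that a cone meeting an interior point of $\sigma$ contains $\sigma$ as a face: the developing map $\delta\colon\sigma^\star(\R)\to N_{X,\sigma}(\R)$ need not be globally injective, so one cannot simply read off intersections inside a fixed vector space. Working inside the big star, where $\delta$ is globally defined, together with the immersion property guaranteed by the finite puncture type hypothesis (\S\ref{PFAN_GLUE}), is what makes these identifications legitimate and reduces the geometric core to the standard convex-geometric statement that the cones containing a face fill out a neighbourhood of its relative interior.
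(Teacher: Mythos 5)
The paper's own proof of this lemma is a single sentence: since $\Sigma(\R)$ is strongly topologised with respect to the (punctured) cone inclusions, both statements may be checked on every cone. Your proof starts from the same observation but then executes a genuinely more explicit global argument: you show the union of the discontiguous cones (resp.\ of the cones missing a fixed relative-interior point $x$) is closed because its trace on each cone is a finite union of faces, and you reduce part ii) to the convex-geometric fact that a face containing a relative-interior point is the whole cone. In the rational polyhedral setting this fills in precisely what the paper leaves implicit, and is correct there, modulo one bookkeeping point: the weak topology is taken with respect to the \emph{punctured} realisations $\tau(\R)\setminus\zeta_\tau(\R)$, so your ``finite union of faces, hence closed'' should read ``finite union of punctured faces, hence closed in each punctured cone'' --- punctured faces are closed only relatively to the punctured realisations, which is what finite puncture type guarantees and is all that is needed.

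The genuine gap is one of scope. Your argument leans essentially on two facts that hold only for rational polyhedral cones: that each cone has \emph{finitely many} faces, and that \emph{every} face is principal. Neither is a hypothesis of the lemma, which assumes only finite puncture type; the complexes of \S\ref{PFAN} come from arbitrary quasi-integral formal schemes with enough jets, whose cones are polyhedral only under Noetherian-type hypotheses (lemma \ref{FAN_FINITE}), and your appeal to \S\ref{PFAN_GLUE} for finite-dimensionality likewise invokes the locally integral-over-Noetherian case. Concretely, two steps fail without polyhedrality. First, the face-counting closedness argument: the trace on a cone $\tau$ of the union of discontiguous cones is a union of punctured faces of $\tau$, and when $\tau$ has infinitely many faces (e.g.\ a three-dimensional cone with infinitely many rational extreme rays) an infinite union of faces need not be closed. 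Second, the assertion that ``discontiguity can arise only from punctures'': for non-polyhedral cones there exist non-principal faces (the vertex of an infinite-dimensional orthant is cut out by no single character), so two cones can be discontiguous while sharing unpunctured points, and then even the containment $\sigma(\R)\subseteq\sigma^\star(\R)$ in part i) requires a separate argument. As written, your proof therefore establishes the lemma for locally Noetherian (rational polyhedral) complexes, but not in the generality in which it is stated and later used (e.g.\ in theorem \ref{PFAN_SEP}, which is asserted for arbitrary objects of $\mathrm{C}\mathbf{Cone}_*^N$).
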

\begin{proof}Indeed, since $\Sigma(\R)$ is strongly topologised with respect to cone inclusions, it is enough to check these statements on every cone.\end{proof}

Geometrically, the open star of $\sigma_U\subseteq\Sigma_X$ is obtained by formally completing $X$ along the closure of $U$. In the language of \S\ref{MORP_EMBED}, it is the \emph{formally embedded closure} $\mathrm{cl}(U/X)$ of $U$.

\begin{lemma}\label{riblet}$\sigma^\star\rightarrow\Sigma_X$ is an overconvergent neighbourhood of $\sigma/\Sigma_X$.\end{lemma}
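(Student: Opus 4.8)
The plan is to read the statement off directly from the definition of the big star as a \emph{puncturing}, invoking the two facts already in place: that puncturing a complex along a subcomplex lies in the class $\P$ (Lemma \ref{PFAN_PUNCT}), and that a factorisation $U\to\tilde U\to V$ with $\tilde U\to V$ in $\P$ is exactly what def. \ref{SEP_SUR} calls a $\P$-overconvergent neighbourhood. Concretely, let $U\subseteq X$ be the quasi-compact open immersion corresponding to the chart $\sigma\subseteq\Sigma_X$. By definition $\sigma^\star$ is obtained by puncturing $\Sigma_X$ along the subcomplex of cones discontiguous with $\sigma$; equivalently, as recorded just above the statement, $\sigma^\star=\mathrm{cl}(U/X)$ is the cone complex of the formal completion of $X$ along the complement of $U$. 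Since the big star is \emph{defined} as a puncturing, Lemma \ref{PFAN_PUNCT} applies verbatim and gives at once that $\sigma^\star\to\Sigma_X$ is of class $\P$ (and in particular qcqs).

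First I would verify the factorisation $\sigma\to\sigma^\star\to\Sigma_X$. A cone is cut out inside itself by a unit, hence is a principal face of itself, so $\sigma$ is contiguous with $\sigma$ and is therefore \emph{not} among the discontiguous cones punctured away in forming $\sigma^\star$. Consequently the chart $\sigma\to\Sigma_X$ survives the puncturing and factors through $\sigma^\star$. On topological realisations this factorisation is the assertion $\sigma(\R)\subseteq\sigma^\star(\R)$, which is precisely the content of the preceding lemma (the big star is a neighbourhood of $\sigma(\R)$); geometrically it is nothing but the natural dense open immersion $U\hookrightarrow\mathrm{cl}(U/X)$ of \S\ref{MORP_EMBED}.

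Combining the two steps, the factorisation $\sigma\to\sigma^\star\to\Sigma_X$ has $\sigma\hookrightarrow\Sigma_X$ a quasi-compact open immersion (an affine chart) and $\sigma^\star\to\Sigma_X$ in $\P$, which is exactly a $\P$-overconvergent neighbourhood of $\sigma/\Sigma_X$ in the sense of def. \ref{SEP_SUR}. No estimate or explicit convex-geometry computation is required; the whole argument is a matter of matching the definition of the big star to the definition of an overconvergent neighbourhood through Lemma \ref{PFAN_PUNCT}.

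The one place that demands care — and the main, if modest, obstacle — is the puncture bookkeeping in the factorisation: one must confirm that passing to $\sigma^\star$ only enlarges the puncturing of $\sigma$ along faces meeting $\sigma$ away from its interior, so that the open immersion attached to $\sigma$ genuinely lands in $\sigma^\star$ and is not partly erased. This is exactly guaranteed by the identification $\sigma^\star=\mathrm{cl}(U/X)$ together with the fact that $U$ is scheme-theoretically dense in its formally embedded closure (cf. \S\ref{MORP_EMBED}); alternatively, the inclusion $\sigma(\R)\subseteq\sigma^\star(\R)$ supplied by the preceding lemma settles it directly. Once this compatibility is checked, the proof is complete.
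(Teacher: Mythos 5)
Your proof is correct and is essentially the paper's own argument: the paper states Lemma \ref{riblet} without a separate proof, relying exactly on the identification of $\sigma^\star$ with the puncturing of $\Sigma_X$ along the cones discontiguous with $\sigma$ (geometrically, the formally embedded closure $\mathrm{cl}(U/X)$) together with Lemma \ref{PFAN_PUNCT} and definition \ref{SEP_SUR}. Your explicit check that $\sigma$ is contiguous with itself, so that the chart survives the puncturing and supplies the factorisation $\sigma\to\sigma^\star\to\Sigma_X$, is the only detail the paper leaves unsaid.
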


In other words, to check overconvergence of a morphism $\Sigma_X\rightarrow\Sigma_S$ near $\sigma\subseteq\Sigma_X$, it is necessary and sufficient that its restriction to $\sigma^\star$ be overconvergent.

\paragraph{Separated}Suppose that we have an extension problem $\sigma_U/\sigma_V$ and that $\sigma_U$ lands in a cone $\sigma_0\subseteq\Sigma_X$. By lemma \ref{riblet}, we may, without loss of generality, replace $\Sigma_X$ with the star of $\sigma_0$. We obtain a linear map $\varphi:N_U\rightarrow N_{X,0}=N_{\sigma_0}$ and, since $N_U=N_V$, a commuting diagram
\[\xymatrix{ \sigma_U\ar[r]\ar[d] & \sigma_V\ar[r] & N_U\ar[d]^\varphi \\
 \sigma_0\ar[r] & \sigma_0^\star\ar[r]^-\delta & N_{X,0} }\]
By the characterisation \ref{PFAN_MODIFICATION} of overconvergent neighbourhoods, solutions $\mathrm{Sur}_{U/V}\rightarrow\sigma_0^\star$ correspond to lifts of $\varphi:\sigma_V(\R)\rightarrow N_{X,0}(\R)$ along $\delta$ in a neighbourhood of $\sigma_U(\R)$. Uniqueness of solutions is therefore related to local injectivity of the developing map.

\begin{thm}\label{PFAN_SEP}Let $\Sigma_X\in\mathrm{C}\mathbf{Cone}_*^N$. The following are equivalent:
\begin{enumerate}\item $\Sigma_X$ is locally separated;
\item $\Sigma_X$ is separated and any contiguous pair of cones intersect in a face;
\item the restriction of the developing map to the union of any contiguous pair of cones is injective;
\item the restriction of the developing map to the small star of any cone is injective.\end{enumerate}\end{thm}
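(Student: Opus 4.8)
<br>

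The plan is to prove the equivalence of the four conditions by exploiting the fact, established throughout this section, that separation is an \emph{extensional} property detected by the developing map, together with the characterisation of overconvergent neighbourhoods in Proposition \ref{PFAN_MODIFICATION}. First I would recall the setup from the preceding discussion: separation of $\Sigma_X$ means its diagonal is overconvergent, so by Definitions \ref{SEP_EXTENS_DEF} I am asking whether extension problems $\sigma_U/\sigma_V$ admit \emph{unique} solutions, and the analysis just before the theorem reduces this—via Lemma \ref{riblet}, which lets me replace $\Sigma_X$ by the big star $\sigma_0^\star$—to a question about lifting the linear map $\varphi:\sigma_V(\R)\to N_{X,0}(\R)$ along the developing map $\delta$ in a neighbourhood of $\sigma_U(\R)$. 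The governing principle is that \emph{uniqueness} of such lifts is precisely \emph{local injectivity} of $\delta$.

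The natural logical skeleton is a cycle. The implication \emph{iv)}$\Rightarrow$\emph{iii)} is immediate since a contiguous pair of cones sits inside the small star of their common face. For \emph{iii)}$\Rightarrow$\emph{i)}: I would take an elementary extension problem $\sigma_U/\sigma_V$ with $\sigma_U$ landing in a cone $\sigma_0$, pass to $\sigma_0^\star$ by Lemma \ref{riblet}, and observe that two competing solutions correspond to two lifts of $\varphi$ agreeing near $\sigma_U(\R)$; because any cone of $\sigma_0^\star$ meeting the image is contiguous to $\sigma_0$, injectivity of $\delta$ on each such contiguous union forces the two lifts to coincide, giving uniqueness and hence local separation. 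Conversely, for \emph{i)}$\Rightarrow$\emph{iv)} (or \emph{i)}$\Rightarrow$\emph{iii)}), I would argue contrapositively: a failure of injectivity of $\delta$ on the small star of some cone $\sigma$ produces two distinct cones developing onto overlapping regions of $N_{X,\sigma}(\R)$, and using Proposition \ref{PFAN_MODIFICATION} I can build an extension problem near $\sigma$ admitting two genuinely different solutions, contradicting local separation. The equivalence with \emph{ii)} then follows by combining local separation with quasi-separatedness: condition \emph{ii)} simply unwinds ``separated'' (diagonal \emph{proper}, i.e.\ locally separated and qcqs) into the global injectivity of $\delta$ plus the face-intersection condition, which is exactly what injectivity on contiguous pairs \emph{iii)} encodes once one adds quasi-compactness of the diagonal.

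The main obstacle I anticipate is the careful bookkeeping in the reduction to contiguous pairs and small stars, specifically verifying that the ``neighbourhood of $\sigma_U(\R)$'' in which lifts must agree is genuinely controlled by the \emph{small} star rather than the big star. The subtlety is that the big star $\sigma^\star$ is a neighbourhood of all of $\sigma(\R)$, whereas the small star $\sigma^*$ is only a neighbourhood of $\sigma(\R)$ minus its proper faces; so when $\sigma_U$ is a proper face of $\sigma_0$ I must be sure that the relevant lifting ambiguity is detected already on the small star. I expect this to be handled by stratifying the extension problem according to which face $\sigma_U$ maps into and applying the developing-map analysis face by face, using that $\Sigma_X(\R)$ is strongly topologised with respect to cone inclusions (as in the proof of the neighbourhood lemma preceding Lemma \ref{riblet}), so that injectivity can be checked one cone at a time. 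A secondary point requiring care is the passage between the purely combinatorial injectivity statements \emph{iii)}, \emph{iv)} and the topological lifting statement, which relies on Proposition \ref{PFAN_MODIFICATION} applied to a finite polyhedral model; invoking that proposition requires first cutting down to a finite-dimensional rational-polyhedral situation, which is legitimate because topological realisation commutes with the relevant fibre products on finite complexes and on complexes with finite-dimensional cones.
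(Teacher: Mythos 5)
Your overall route is the same as the paper's: reduce to an elementary extension problem, pass to the big star $\sigma_0^\star$ via Lemma \ref{riblet} so that the developing map is globally defined there, and use Proposition \ref{PFAN_MODIFICATION} to translate uniqueness of solutions up to overconvergent refinement into agreement of lifts of $\varphi$ along $\delta$ near $\sigma_U(\R)$. However, two of your steps fail as written. In \emph{iii)}$\Rightarrow$\emph{i)} you justify uniqueness by noting that every cone of $\sigma_0^\star$ is contiguous to $\sigma_0$, and you invoke injectivity of $\delta$ on those unions. That is the wrong contiguity. The two competing solutions $s_1,s_2$ factor through cones $\sigma_1,\sigma_2\subseteq\sigma_0^\star$ respectively, and at a point $p\in\sigma_V$ with $\varphi(p)\notin\delta(\sigma_0)$, injectivity of $\delta$ on $\sigma_0\cup\sigma_1$ and on $\sigma_0\cup\sigma_2$ gives no relation whatsoever between $s_1(p)\in\sigma_1$ and $s_2(p)\in\sigma_2$, since neither point meets $\sigma_0$. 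What the argument needs --- and what the paper's proof uses --- is that $\sigma_1$ and $\sigma_2$ are contiguous \emph{with each other}: both contain the image of $\sigma_U$, and since the cones of a complex are glued along principal faces, they share a common principal face containing that image; condition \emph{iii)} applied to the pair $(\sigma_1,\sigma_2)$ then identifies the two lifts at every point, not just near $\sigma_0$.

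Your treatment of \emph{ii)} is also flawed, in two ways. First, you claim that ``separated'' unwinds into \emph{global} injectivity of $\delta$. This is false for punctured complexes --- it is exactly the statement whose failure in formal geometry motivates this theorem: Theorem \ref{PFAN_SEPARATED} yields only a \emph{local} immersion, and a separated complex can have globally non-injective $\delta$ (take cones in $\Q^2$ punctured at the origin, each sharing a ray with the next, winding around the origin more than once: every contiguous pair develops injectively, the complex is separated, but $\delta$ overlaps itself). Second, since \emph{ii)} is one of the four equivalent conditions, quasi-compactness of the diagonal must be \emph{deduced} from \emph{iii)}, not ``added'' as you do. The paper derives it by observing that injectivity of $\delta$ on $\sigma_1\cup\sigma_2$ forces $\delta(\sigma_1)\cap\delta(\sigma_2)=\delta(\sigma_1\cap\sigma_2)$, which is convex and hence a single face, so contiguous cones intersect in at most one face and $\Sigma_X$ is quasi-separated; combined with \emph{i)} this yields \emph{ii)}. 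As written, your argument proves at most that \emph{iii)} together with quasi-separatedness implies \emph{ii)}, which does not close the cycle of equivalences.
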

\begin{proof}The equivalence \emph{iii)}$\Leftrightarrow$\emph{iv)} and the implication \emph{ii)}$\Rightarrow$\emph{i)} are straightforward. 

Suppose \emph{iii)}, and let $\sigma_V\rightrightarrows\sigma_0^\star$ be a pair of solutions. Since the homomorphism of character groups is predetermined, it suffices to check on points. 

We may assume $\sigma_V$ is a single cone, in which case both maps factor through the union of two cones $\sigma_1,\sigma_2\subseteq\sigma_0^\star$ contiguous along a face containing the image of $\sigma_U$. Both solutions must lift along the developing map, which by \emph{iii)} is injective. Therefore they are equal.

Criterion \emph{iii)} also implies that contiguous cones must intersect in at most one face. In particular, $\Sigma_X$ is quasi-separated, and \emph{ii)} follows.

It remains to prove \emph{i)}$\Rightarrow$\emph{iii)}. Write
\[ \sigma_U:=\sigma_1\cap\sigma_2\subseteq\Sigma_X,\quad \sigma_V:=\delta(\sigma_1)\cap\delta(\sigma_2)\subseteq N_U \]
and consider the extension problem $\sigma_U/\sigma_V$. By proposition \ref{PFAN_MODIFICATION}, local separatedness implies that the two sections $\sigma_V(\R)\rightarrow\sigma_i(\R)$ must be equal in a neighbourhood of $\sigma_U(\R)$. Therefore either $\sigma_1=\sigma_2$ or $\sigma_V=\sigma_U$. It follows that the restriction of $\delta$ to $\sigma_1\cup\sigma_2$ is injective.\end{proof}

Intuitively, separatedness of $\Sigma_X$ means that the developing map cannot `double back' when passing through from one cone into a neighbouring cone.

Suppose that $\Sigma_X$ is separated and locally finite-dimensional. For any point $p$ of $\Sigma_X(\R)$, there is a unique minimal cone $\sigma$ containing $p$. The small star $\sigma^*(\R)$ of $\sigma$ is then a neighbourhood of $p$ in $\Sigma_X(\R)$. By proposition \ref{PFAN_SEP}, the restriction of $\delta$ to $\sigma^*(\R)$ is injective. In other words, the developing map is a \emph{local immersion}.

\begin{cor}Suppose that every cone of $\Sigma_X$ is finite-dimensional. Then $\Sigma_X$ is separated if and only if the developing map is a local immersion.

If $\Sigma_X$ is also locally finite, then in fact every cone has a neighbourhood in $\Sigma_X(\R)$ on which the developing map is an immersion.\end{cor}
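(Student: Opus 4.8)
The plan is to read the corollary off theorem \ref{PFAN_SEP}, the only real work being to match its combinatorial injectivity conditions with the topological notion of a local immersion; finite-dimensionality of the cones is precisely what turns the small star of a cone into an honest neighbourhood. The forward implication is essentially the content of the paragraph preceding the statement: if $\Sigma_X$ is separated it is in particular locally separated, so condition iv) of theorem \ref{PFAN_SEP} says $\delta$ is injective on the small star of every cone. Given $p\in\Sigma_X(\R)$, finite-dimensionality provides a unique minimal cone $\sigma$ with $p$ in its relative interior, $\sigma^*(\R)$ is then a neighbourhood of $p$, and $\delta$ is injective there; hence $\delta$ is a local immersion.

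For the converse I would verify condition iii) of theorem \ref{PFAN_SEP}, namely that $\delta$ is injective on $\sigma_1\cup\sigma_2$ for any contiguous pair meeting along a principal face $\tau$. Both cones lie in the star $\tau^\star$, which carries a globally defined developing map into $N_{X,\tau}$ whose restrictions $L_i$ to $\sigma_i$ are linear, injective, and agree with $\delta$ on $\tau$; thus any failure of injectivity is a coincidence $\delta(x_1)=\delta(x_2)$ with $x_i\in\sigma_i\setminus\tau$. Pick $q$ in the relative interior of $\tau$ and, using that $\delta$ is a local immersion, a neighbourhood $U$ of $q$ on which $\delta$ is injective. Since $q\in\tau\subseteq\sigma_i$ the segments $[q,x_i]$ lie in $\sigma_i$, and linearity gives $\delta(q+\epsilon(x_i-q))=\delta(q)+\epsilon(\delta(x_i)-\delta(q))$; a coincidence at $x_1,x_2$ therefore propagates to a coincidence between the two points $q+\epsilon(x_i-q)$, which for small $\epsilon$ both lie in $U$. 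Injectivity on $U$ forces them equal, hence into $\sigma_1\cap\sigma_2=\tau$, which, as $q$ is interior to $\tau$, is possible only if $x_1,x_2\in\tau$; injectivity of $\delta$ on the single cone $\tau$ then gives $x_1=x_2$. So $\delta|_{\sigma_1\cup\sigma_2}$ is injective and $\Sigma_X$ is separated. This is the same elementary convex geometry as in the hard half of proposition \ref{PFAN_MODIFICATION}.

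For the strengthening, assume in addition that $\Sigma_X$ is locally finite. Then for each cone $\sigma$ the big star $\sigma^\star$ is finite with finite puncturing, so by the lemma preceding the corollary $\sigma^\star(\R)$ is a neighbourhood of the whole of $\sigma(\R)$. On a finite complex the realisation carries the subspace topology and cone inclusions are topological immersions, so the injectivity of $\delta$ on contiguous pairs established above assembles into the statement that $\delta$ restricts to a topological immersion on the neighbourhood $\sigma^\star(\R)$ of $\sigma(\R)$; the finiteness is exactly what is needed to pass from local injectivity to a genuine immersion.

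I expect the crux to be the convex-geometric propagation in the converse: all the information available is injectivity on an \emph{a priori} tiny neighbourhood of a single interior point of $\tau$, and it must be spread across the full, unbounded union $\sigma_1\cup\sigma_2$ using linearity on each cone together with the $\R^\times_{>0}$-conical structure. A secondary subtlety, which is what forces the local finiteness hypothesis in the second part, is that a local immersion need not restrict to a topological immersion on any fixed neighbourhood of an entire cone unless that neighbourhood is a finite subcomplex.
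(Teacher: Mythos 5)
Your treatment of the first statement is correct and matches the paper's route: the forward direction is exactly the discussion preceding the corollary (separatedness gives injectivity on small stars via theorem \ref{PFAN_SEP}, and finite-dimensionality makes the small star of the minimal cone containing a point a neighbourhood of that point), while your converse makes explicit a propagation argument that the paper leaves implicit: shrink a coincidence $\delta(x_1)=\delta(x_2)$ along the segments $[q,x_i]\subseteq\sigma_i$ into an injective neighbourhood of an interior point $q$ of the common principal face $\tau$, then use the face property of $\tau$ (a segment meeting a face at an interior parameter has both endpoints in it) to force $x_1,x_2\in\tau$. That is a legitimate filling-in of the paper's appeal to condition \emph{iii)}.

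The second statement is where you have a genuine gap. You claim that injectivity on contiguous pairs ``assembles'' into injectivity of $\delta$ on the big star realisation $\sigma^\star(\R)$. This is false: the cones of $\sigma^\star$ are each contiguous with $\sigma$, but cones attached along \emph{different} faces of $\sigma$ need not be contiguous with \emph{each other}, and their $\delta$-images can overlap far away from $\sigma$. Concretely, take $N=\R^2$ and let $\sigma$ be the first quadrant punctured at the origin, with edges $\tau_1=\langle e_1\rangle$ and $\tau_2=\langle e_2\rangle$. Glue on two further punctured cones: $\sigma_1$ attached along $\tau_1\setminus 0$ and sweeping clockwise from $e_1$ through an angle of, say, $160$ degrees, and $\sigma_2$ attached along $\tau_2\setminus 0$ sweeping counterclockwise from $e_2$ through $160$ degrees. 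In the complex $\sigma_1\cap\sigma_2=\emptyset$ (the origin is punctured), so they are discontiguous; the only contiguous pairs are $(\sigma,\sigma_i)$, on each of whose unions $\delta$ is injective, so by theorem \ref{PFAN_SEP} the complex is separated, and it is locally finite. Yet $\delta(\sigma_1)$ and $\delta(\sigma_2)$ overlap in an open region of the third quadrant, so $\delta$ is \emph{not} injective on $\sigma^\star(\R)$. The corollary's conclusion still holds, but the injective neighbourhood of $\sigma(\R)$ must be strictly smaller than the big star: only thin flaps of $\sigma_1$ and $\sigma_2$ near $\tau_1$ and $\tau_2$ may be included. This is precisely why the paper calls the second statement ``a priori a bit stronger'' and gives it a separate proof: it constructs $U=r^{-1}T\cup\sigma\setminus\zeta(\R)$ by pulling back, along a deformation retraction $r$ of a polyhedral cone $C$ onto $\sigma\setminus\zeta(\R)$, only the union $T$ of faces along which other cones are attached, so that flaps attached along distinct faces land in disjoint regions of $N_\sigma(\R)$ and injectivity follows from the bijection on $\pi_0$. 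Your appeal to ``local injectivity plus finiteness'' cannot be repaired without some construction of this kind.
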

\begin{proof}The last statement is, \emph{a priori}, a bit stronger than what we discussed. Since we won't use that result, I only provide a sketch of the proof. 

Suppose $\Sigma_X$ is locally finite and separated, and let $\sigma$ be a cone. We will construct a set $U\subseteq N_\sigma(\R)$ whose preimage in $\sigma^\star(\R)$ is a connected neighbourhood of $\sigma\setminus\zeta(\R)$ and such that $\delta$ induces a bijection
\[ \pi_0(\sigma^\star\setminus\sigma)\rightarrow\pi_0(U\setminus\sigma(\R)) \]
and is therefore injective for topological reasons. First, by finiteness of the punctures there exists a (not necessarily strongly) convex polyhedral cone $C\subseteq N_\sigma(\R)$ containing $\sigma$ and such that $\partial C\cap\sigma=\zeta$. Choose a deformation retract $r$ of the interior of $C$ onto $\sigma\setminus\zeta(\R)$. 

Let $T$ denote the union of all punctured faces of $\sigma$ contiguous with another cone. The connected components of $T$ are indexed by $\pi_0(\sigma^\star\setminus\sigma)$. Setting $U=r^{-1}T\cup\sigma\setminus\zeta(\R)$, we have $U\cap\sigma=T$ and so $r:U\setminus\sigma\rightarrow T$ is a weak equivalence.\end{proof}

\begin{cor}\label{PFAN_SEPARATED}Let $f:X\rightarrow S$ be a non-boundary morphism of quasi-integral formal schemes with enough jets. The following are equivalent:
\begin{enumerate}\item $f$ is locally separated;
\item $f$ is separated and has affine diagonal;
\item for each cone $\sigma_S$ of $\Sigma_S$ and $\sigma_X$ of $\Sigma_X$, the restriction of the developing map to $f^{-1}\sigma_S\cap\sigma_X^*$ is injective;\end{enumerate}
and in the case that $\Sigma_X(\R)$ (or $\Sigma_{X/H}(\R)$) is locally finite dimensional, we may add
\begin{enumerate}\item[iv)] over each cone of $\Sigma_S$, the developing map is a local immersion. \end{enumerate}\end{cor}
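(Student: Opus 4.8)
The plan is to deduce this relative statement from the absolute separation criterion, Theorem \ref{PFAN_SEP}, by localising on the base and replacing the developing map with its relative variant of \S\ref{PFAN_GLUE}, $\delta:\widetilde\Sigma_{X/S}\to N_{X,\sigma}$. Since separatedness and local separatedness are local on the target and stable for base change (prop. \ref{SEP_STABILITY}), and since the cones $\sigma_S$ of $\Sigma_S$ are exactly the affine charts covering $S$, it suffices to treat each restriction $f^{-1}\sigma_S\to\sigma_S$ separately. The key reduction is that an elementary extension problem $\sigma_U/\sigma_V$ for the diagonal $\Delta_f$, being posed over $\Sigma_S$, has both competing sections $\sigma_V\rightrightarrows\Sigma_X$ factoring through a single $f^{-1}\sigma_S$; so the entire analysis takes place inside $f^{-1}\sigma_S$, and the base cone is carried along passively.

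First I would treat i) $\Leftrightarrow$ iii). Local separatedness of $f$ is by definition overconvergence of $\Delta_f$. Following the reduction of \S\ref{PFAN_CRITERIUM}, any such extension problem with $\sigma_U$ landing in a cone $\sigma_X$ may, by lemma \ref{riblet}, be checked on the big star, after which the two sections factor through the small star $\sigma_X^*$; intersecting with the relevant base cone isolates the region $f^{-1}\sigma_S\cap\sigma_X^*$. By the characterisation of overconvergent neighbourhoods (prop. \ref{PFAN_MODIFICATION}), each section lifts $\delta$ in a neighbourhood of $\sigma_U(\R)$, and the two solutions agree after modification precisely when $\delta$ is injective on $f^{-1}\sigma_S\cap\sigma_X^*$. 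This is the verbatim relative analogue of the chain iii) $\Leftrightarrow$ iv) $\Leftrightarrow$ i) of Theorem \ref{PFAN_SEP}, and I would simply rerun that argument over each $\sigma_S$.

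Next, i) $\Leftrightarrow$ ii). By the diagonal criterion (cor. \ref{SEP_CRITERIA}), $f$ is separated iff $\Delta_f$ is an embedding, and "affine diagonal" is the extra condition distinguishing a proper (separated) diagonal from a merely overconvergent (locally separated) one. Translating through the dictionary of lemma \ref{PFAN_IMMERS}, which identifies localisations, hence affine open immersions, with principal face inclusions, the assertion that $\Delta_f$ is affine becomes the statement that any two contiguous cones of $f^{-1}\sigma_S$ meet in a single face — precisely clause ii) of Theorem \ref{PFAN_SEP}. Thus condition ii) repackages iii) together with quasi-separatedness, namely affineness and hence quasi-compactness of the diagonal, and the equivalence follows from Theorem \ref{PFAN_SEP} i) $\Leftrightarrow$ ii) applied over each $\sigma_S$.

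Finally, clause iv) under local finite-dimensionality is the relative form of the corollary to Theorem \ref{PFAN_SEP}: when every cone is finite-dimensional, injectivity of $\delta$ on each small star is equivalent to $\delta$ being a local immersion. Applying that corollary to $f^{-1}\sigma_S$ with the relative developing map over $\sigma_S$ yields iv). The hard part will be the reduction to the relative developing map itself: one must verify that restricting to $f^{-1}\sigma_S$ and then passing to topological realisations commutes with the fibre products used to form $N_{X,\sigma}$, which the exactness statements of \S\ref{PFAN_GLUE} guarantee only for finite complexes or complexes with finite-dimensional cones. This is exactly why the finite-dimensionality hypothesis is needed for clause iv), and why in iii) the condition must be phrased cone-by-cone, so that only finite data enters each injectivity check.
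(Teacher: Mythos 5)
Your proposal is correct and follows exactly the route the paper intends: the paper states Corollary \ref{PFAN_SEPARATED} without a written proof, as an immediate consequence of Theorem \ref{PFAN_SEP} and its local-immersion corollary, transported through the cone-complex dictionary (lemma \ref{PFAN_IMMERS}, corollary \ref{PFAN_EQUIVALENCE}) after localising over the cones of $\Sigma_S$ via the stability properties of prop. \ref{SEP_STABILITY}. Your reconstruction of the clause-by-clause translation (affine diagonal $\leftrightarrow$ single-face intersections, small stars intersected with $f^{-1}\sigma_S$, finite-dimensionality for the local-immersion formulation) is precisely this intended deduction.
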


\paragraph{Overconvergent}We return to the situation of an arbitrary morphism $f:\Sigma_X\rightarrow\Sigma_S$ of cone complexes and extension problem $\sigma_U/\sigma_V$. We are trying to complete the square
\[\xymatrix{\sigma_V\ar[r]\ar@{-->}[d] & N_U\ar[d]^\varphi \\ \sigma_0^\star\ar[r] & N_{X,0}}\]
for a cone $\sigma_0\subseteq\Sigma_X$ containing the image of $\sigma_U$. Evidently, any solution must factor through the star of $\sigma_U$ in the `fibre product' 
\[ \tilde\sigma_V\subseteq\sigma_V\times_{N_{X,0}}\sigma_0^\star:=\colim_{\sigma\subseteq\sigma_0^\star}(\varphi^{-1}\sigma\cap\sigma_V). \]
In other words, the extension problem has a solution if and only if $\tilde\sigma_V\rightarrow\sigma_V$ is an overconvergent neighbourhood of $\sigma_U$.

Applying this to the case $\sigma_U=\sigma_0$, we find:

\begin{lemma}Let $f:\Sigma_X\rightarrow\Sigma_S$ be overconvergent, and suppose that for every cone $\sigma_S$ of $\Sigma_S$ and $\sigma_X$ of $\Sigma_X$, $f^{-1}\sigma_S\cap\sigma_X^\star$ is finite-dimensional. Then $f$ is locally finite.\end{lemma}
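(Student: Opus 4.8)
The plan is to fix, once and for all, a cone $\sigma_S\subseteq\Sigma_S$ and a cone $\sigma_0:=\sigma_X\subseteq\Sigma_X$, and to prove that the relative star $W:=f^{-1}\sigma_S\cap\sigma_0^\star$ is a finite complex with finite puncturing. Since the argument will be uniform in the pair $(\sigma_S,\sigma_X)$, this is exactly the assertion that $f$ is locally finite. The hypothesis already supplies that $W$ is finite-dimensional, so the entire content is to upgrade finite-dimensionality to finiteness, and the only extra input available is overconvergence of $f$. As a first step I would record that on the star $\sigma_0^\star$ the developing map $\delta:\sigma_0^\star\rightarrow N_{X,0}$ is globally defined, and that its restriction to $W$ lands, by finite-dimensionality, in a finite-dimensional real subspace $N^\prime(\R)\subseteq N_{X,0}(\R)$.

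Next I would extract two consequences of overconvergence. First, an overconvergent morphism is locally separated (def. \ref{SEP_EXTENS_DEF}), so theorem \ref{PFAN_SEP} applies and the restriction of $\delta$ to the small star of $\sigma_0$ is injective. Passing to the quotient $\bar N:=N_{X,0}/\langle\sigma_0\rangle$ and the induced map $\bar\delta$, this presents the link complex $\mathrm{Lk}(\sigma_0,W)$ as a genuine fan inside the finite-dimensional space $\bar N(\R)$ (restricted to the directions lying over $\sigma_S$). Second, I would use the extension-problem reformulation immediately preceding the lemma, applied to the case $\sigma_U=\sigma_0$: overconvergence of $f$ says every such extension problem is solvable, and combined with lemma \ref{riblet} and the neighbourhood criterion of proposition \ref{PFAN_MODIFICATION} this forces the developed image $\bar\delta(\mathrm{Lk}(\sigma_0,W)(\R))$ to be an \emph{open} neighbourhood of the apex $0$ in the relevant fibre of $\bar N(\R)$. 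To avoid circularity I would apply proposition \ref{PFAN_MODIFICATION} only to the finite subcomplexes witnessing individual solutions, since being a neighbourhood is a pointwise condition.

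The conclusion is then elementary convex geometry. A fan in a finite-dimensional space whose support is an open neighbourhood of $0$ is automatically complete: for any direction $v$, the point $tv$ lies in the neighbourhood for small $t>0$, hence in some cone, and by conicality so does $v$; thus the support is the whole space. Intersecting the maximal cones with a sphere yields a polyhedral subdivision of that compact sphere in which cells meet along faces, and such a subdivision can have no accumulation of cells (an accumulation point would have to lie in the relative interior of some cell, contradicting disjointness of interiors). Compactness therefore forces finitely many cells, hence finitely many cones in the link, hence $W$ is finite. Finite puncturing then comes for free, since each of the finitely many cones of $W$ is finite-dimensional and so has only finitely many faces.

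The main obstacle is precisely this convex-geometry finiteness step, together with its careful interface with overconvergence. The two delicate points are: confirming that the injectivity furnished by local separatedness genuinely realises the link as an honest fan, so that the accumulation argument on the sphere is legitimate; and handling the punctures, since the neighbourhood produced is a neighbourhood of $\sigma_0\setminus\zeta$ rather than of all of $\sigma_0$, so that one must discard the cones whose interiors avoid $\sigma_0\setminus\zeta$ before counting. This is exactly the bookkeeping already carried out in the second half of the proof of proposition \ref{PFAN_MODIFICATION}, whose argument I would mirror.
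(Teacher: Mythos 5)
Your argument collapses at the final convex-geometry step. The claim that a fan in a finite-dimensional space whose support is an open neighbourhood of $0$ (equivalently, by your conicality remark, all of the space) must be \emph{finite} is false. Concretely, in $\R^2$ take the two-dimensional cones $\tau_n=\langle(n,1),(n+1,1)\rangle$ for $n\geq 0$ together with all their faces, the ray $\langle(1,0)\rangle$, and the lower half-plane subdivided into the quadrants $\langle(0,-1),(1,0)\rangle$ and $\langle(-1,0),(0,-1)\rangle$. These are rational polyhedral cones which pairwise intersect along common faces, so they form an honest fan; their union is all of $\R^2$; and there are infinitely many of them. Your sphere argument fails exactly on this example: the cells $\tau_n\cap S^1$ accumulate at the point $(1,0)$, which is itself a $0$-cell of the subdivision. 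An accumulation point need only lie in \emph{some} cell, not in the relative interior of a top-dimensional one, and since the accumulating cells never meet that $0$-cell, there is no contradiction with disjointness of interiors. So "developed image is a neighbourhood of the apex" genuinely does not imply finiteness, and this implication is the entire content of your last paragraph.

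The finiteness in the lemma does not come from openness of the developed image but from the quasi-compactness built into overconvergence, which your proof has discarded. A solution to the extension problem $\sigma_0/\sigma_V$ is an overconvergent neighbourhood, i.e.\ is of class $\P$, hence qcqs, hence a \emph{finite} complex; this is what the paper uses. For every strongly convex cone $\sigma_V\subseteq N_{X,0}$ containing $\sigma_0$ as a face and lying over $\sigma_S$, the fibre product $\tilde\sigma_V=\sigma_V\times_{N_{X,0}}\sigma_0^\star$ must be such a neighbourhood, so $\sigma_V$ meets only finitely many cones of the star; the finite-dimensionality hypothesis then lets one choose finitely many such test cones meeting every cone of the star, and finiteness of the star follows. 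Note that the fan above is consistent with this: it is \emph{not} overconvergent, precisely because the extension problem with $\sigma_U=\langle(1,0)\rangle$ and $\sigma_V$ a two-dimensional cone on the accumulating side admits no finite solution --- yet its support is everything, which is why an openness-only argument cannot detect the failure. Your steps up to that point (injectivity on the small star via local separatedness, and the neighbourhood property of the developed image, which is essentially how theorem \ref{PFAN_PROP} is proved afterwards) are fine; but to repair the proof you must invoke, for each test cone, the finiteness of the individual witnessing subcomplex --- the fact you mention only in passing to avoid circularity --- at which point you have reproduced the paper's argument rather than found an alternative to it.
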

\begin{proof}In this case every strongly convex cone in $N_U$ containing $\sigma_U$ as a face must intersect $\sigma_U^\star$ in finitely many cones. If $\delta(\sigma_U^\star)$ is contained in a finite-dimensional subspace, then this is enough to conclude that it is finite.\end{proof}

Let us call the finiteness condition of the lemma \emph{locally finite-dimensional}. If $f$ is locally finite, then it is equivalent to the condition that every cone of $\Sigma_X$ be finite-dimensional. We henceforth assume both of these conditions.\footnote{In fact, for certain questions the locally finite case can be reduced to the finite-dimensional case by picking a model of the star; however, for want of applications, we do not ask these.}

\begin{thm}\label{PFAN_PROP}Suppose that $\Sigma_X(\R)$ (or $\Sigma_{X/H}(\R))$ is locally finite-dimensional, and let $f:\Sigma_X\rightarrow\Sigma_S$ be a locally polyhedral morphism. The following are equivalent:
\begin{enumerate}\item $f$ is overconvergent;
\item $f$ is locally finite and the developing map is a local homeomorphism.\end{enumerate}
In this case, $N_{X/S}$ is everywhere spanned by cones of $\Sigma_X$ and in particular, finite-dimensional.\end{thm}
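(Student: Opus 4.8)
The plan is to mirror the proof of theorem \ref{PFAN_SEP}, upgrading ``local immersion'' to ``local homeomorphism'' and reading off surjectivity from the full strength of overconvergence. First I would reduce, via lemma \ref{riblet}, to extension problems $\sigma_U/\sigma_V$ situated near a cone $\sigma_0\subseteq\Sigma_X$ carrying the image of $\sigma_U$, replacing $\Sigma_X$ by the big star $\sigma_0^\star$, on which the developing map $\delta:\sigma_0^\star\to N_{X,0}$ (in the relative situation, onto $N_{X/S,\sigma_0}$) is globally defined. As recorded just before the statement, such a problem is solved, uniquely up to refinement, exactly when the fibre product $\tilde\sigma_V=\sigma_V\times_{N_{X,0}}\sigma_0^\star$ is an overconvergent neighbourhood of $\sigma_U$; by proposition \ref{PFAN_MODIFICATION} this is the topological assertion that $\tilde\sigma_V(\R)\to\sigma_V(\R)$ be a neighbourhood of $\sigma_U(\R)$. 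The standing local finite-dimensionality hypothesis lets topological realisation commute with this fibre product, so the whole question reduces to lifting the linear map $\varphi:\sigma_V(\R)\to N_{X,0}(\R)$ along $\delta$ in a neighbourhood of $\sigma_U(\R)$.

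For the implication ii)$\Rightarrow$i) I would argue directly. If $\delta$ is a local homeomorphism it is in particular open, so $\delta(\sigma_0^\star(\R))$ is an open neighbourhood of $\delta(\sigma_0(\R))=\varphi(\sigma_U(\R))$; hence a neighbourhood of $\sigma_U(\R)$ in $\sigma_V(\R)$ is carried by $\varphi$ into this open image, and local injectivity of $\delta$ furnishes a unique continuous lift there. This exhibits the required overconvergent neighbourhood, proving overconvergence.

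For the converse, local finiteness is exactly the conclusion of the lemma immediately preceding the theorem, granted the standing hypothesis. Since every overconvergent morphism is locally separated (def. \ref{SEP_EXTENS_DEF}), theorem \ref{PFAN_SEP} already yields that $\delta$ is a local immersion, hence locally injective. The remaining point, local surjectivity, is the one I expect to be the main obstacle. The strategy is to probe the image of $\delta$ with extension problems that overconvergence is obliged to solve: for a target $q\in N_{X,0}(\R)$ near $\delta(\sigma_0(\R))$, take $\sigma_U=\sigma_0$ and a small polyhedral cone $\sigma_V$ in $N_U=N_{X,0}$ with $\varphi$ the identity and $q\in\sigma_V(\R)$; a solution lifts $q$ into $\sigma_0^\star(\R)$, whence $q\in\delta(\sigma_0^\star(\R))$. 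Letting $q$ vary shows the image contains a full neighbourhood of $\delta(\sigma_0(\R))$, which together with local injectivity is precisely the local homeomorphism property. The delicate points are arranging the test cones $\sigma_V$ to be polyhedral and compatible with the (finitely many, by local finiteness) punctures, and verifying that the resulting lifts glue continuously.

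Finally, the spanning claim falls out at once: a local homeomorphism is open, so $\delta(\sigma_0^\star(\R))$ is open in $N_{X,0}(\R)$, and an open subset of a real vector space spans it linearly. Thus the cones of $\Sigma_X$ span $N_{X/S}$ over each cone of $\Sigma_S$; and since local finiteness makes $\sigma_0^\star$ a finite complex of finite-dimensional cones, $N_{X/S}$ is finite-dimensional. The relative variant $\widetilde\Sigma_{X/H}\to N_{X/H,\sigma}$ is handled by the same argument verbatim.
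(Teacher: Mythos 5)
Your overall architecture is the paper's: reduction to the big star via lemma \ref{riblet}, translation between overconvergent neighbourhoods and topological neighbourhoods via proposition \ref{PFAN_MODIFICATION}, local finiteness from the lemma immediately preceding the theorem, local injectivity from theorem \ref{PFAN_SEP}, and local surjectivity by probing with extension problems that overconvergence is forced to solve. The direction i)$\Rightarrow$ii) is essentially sound, with one imprecision you should fix: your probe cones $\sigma_V$ must come with a morphism to $\Sigma_S$, so the targets $q$ are constrained to lie in $f^{-1}\sigma_S$, and what you obtain is that $\delta(\sigma_0^\star(\R))$ is a neighbourhood of $\delta(\sigma_0(\R))$ \emph{inside} $f^{-1}\sigma_S(\R)$, not in $N_{X,0}(\R)$ (the identity $\Sigma_S\rightarrow\Sigma_S$ already shows the image cannot be open in $N_X$ in general). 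This is consistent with the intended fibrewise reading of the statement, but your phrase "a full neighbourhood of $\delta(\sigma_0(\R))$" elides it.

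The genuine gap is in ii)$\Rightarrow$i), at the sentence "local injectivity of $\delta$ furnishes a unique continuous lift there." Local injectivity gives \emph{uniqueness} of lifts up to shrinking (the locus where two lifts agree is open), but it does not give \emph{existence}: a local homeomorphism need not admit a section over a prescribed subset of its image, and nothing in hypothesis ii) alone prevents $\delta$ from failing to be injective on every neighbourhood of $\sigma_0(\R)$ — the fibre product $\sigma_V\times_{N_{X,0}}\sigma_0^\star$ may have several sheets over points arbitrarily close to $\sigma_U(\R)$, and a continuous, coherent choice among them is exactly what has to be constructed. Moreover, even granting a merely continuous lift, an overconvergent neighbourhood is a morphism of punctured cone complexes, so the lift must be polyhedral; a purely point-set argument (say, extending sections of a local homeomorphism over a closed subset of a paracompact space) does not deliver this. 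The paper closes both holes at once: under local finiteness, $\delta$ is injective on a \emph{polyhedral} open neighbourhood $W$ of $\sigma_0$ in $\sigma_0^\star$ — this is the second assertion of the corollary following theorem \ref{PFAN_SEP}, or alternatively follows from the standard fact that a locally injective map which is injective on a compact set (here the link of $\sigma_0$, compact by local finiteness and polyhedrality, propagated conically by equivariance) is injective on a neighbourhood of it. On $W$ the developing map is then an open immersion, the lift is simply $(\delta|_W)^{-1}\circ\varphi$, automatically polyhedral, and its domain $\varphi^{-1}(\delta(W))$ — equivalently the pullback of $W$ to your fibre product $\tilde\sigma_V$ — is the required overconvergent neighbourhood of $\sigma_U$ in $\sigma_V$ by proposition \ref{PFAN_MODIFICATION}. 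Your argument becomes correct once the quoted sentence is replaced by this step.
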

\begin{proof}Let $\sigma_U$ be a cone of $\Sigma_X$. Overconvergence says that for any cone $\sigma_V$ in $f^{-1}\sigma_S$ containing $\sigma_U$ as a principal face, $\sigma_V\cap\sigma_U^\star$ is a neighbourhood of $\sigma_U(\R)$. Thus in any finite-dimensional subspace $H$ of $N_X(\R)$, $\sigma_U^\star(\R)$ is a neighbourhood of $\sigma_U(\R)$ in $H\cap f^{-1}\sigma_S$. The finiteness of $\sigma_U^\star$ implies therefore that $f^{-1}\sigma_S$ is finite-dimensional.

Assume $\Sigma_S=\sigma_S$ consists of a single cone. Let $\sigma_0$ be a cone of $\Sigma_X$. By local finiteness, we may replace $\sigma_0^\star\rightarrow\sigma_S$ with a polyhedral model. We will prove that this is overconvergent near $\sigma$.

Let $\sigma_U/\sigma_V$ be an extension problem, and let $\tilde\sigma_V$ be the `canonical solution'. By hypothesis, there is a polyhedral open neighbourhood of $\sigma_0$ in $\sigma_0^\star$ the restriction to which of $\delta$ is an open immersion. The inclusion of this neighbourhood is overconvergent at $\sigma_0$. Therefore its pullback to $\tilde\sigma_V$ is an overconvergent neighbourhood of $\sigma_U$ in $\sigma_V$.\end{proof}

\begin{cor}\label{PFAN_PROPER}Let $f:X\rightarrow S$ be a non-boundary morphism locally of finite type between quasi-integral formal schemes with enough jets. Suppose that $S$ is locally integral-over-Noetherian (resp. integral over finite type over a valuation ring with value group $H\subseteq\R$). 

The following are equivalent:
\begin{enumerate}\item $f$ is overconvergent;
\item $f$ is paracompact and over each cone of $\Sigma_S$, the developing map of $\Sigma_X$ (resp. $\Sigma_{X/H}$) is a local homeomorphism.\end{enumerate}\end{cor}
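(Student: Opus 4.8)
The plan is to deduce the corollary from the cone-complex statement Theorem~\ref{PFAN_PROP} by means of the equivalence $\Sigma$ together with the comparison principles of \S\ref{SEP_COMPARE}. First I would reduce to the normal case: passing to the normalisation $\nu X\to X$ (resp. $\nu S\to S$) leaves the punctured cone complex $\Sigma_X$ and its developing map unchanged, is of class $i/\P$, and both detects and preserves overconvergence, exactly as in the analogues for schemes proved in \S\ref{INT}. We may therefore assume $X$ and $S$ are normal with enough jets, so that by Corollary~\ref{PFAN_EQUIVALENCE} the functor $\Sigma$ is an equivalence onto $\mathrm{C}\mathbf{Cone}_*^N$ carrying formally projective morphisms to the class $\P$ used to define overconvergence of cone complexes (\S\ref{PFAN_SUR}). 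Since overconvergence is formulated purely in terms of $\P$ and open immersions via extension problems, $f$ is overconvergent if and only if $\Sigma_X\to\Sigma_S$ is.

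Next I would verify the hypotheses of Theorem~\ref{PFAN_PROP}. Because $S$ is locally integral-over-Noetherian and $f$ is locally of finite type, $X$ is locally integral-over-Noetherian as well; hence the cones of $\Sigma_S$ and $\Sigma_X$ are rational polyhedral, so that $\Sigma_X(\R)$ is locally finite-dimensional (in the relative case one argues identically with $\Sigma_{X/H}$, the relative finiteness statements of Lemma~\ref{FAN_FINITE} guaranteeing finite-dimensionality over $H$). Moreover, since $f$ is non-boundary and of finite type, Lemma~\ref{FAN_FINITE} shows that $\Sigma_X\to\Sigma_S$ is locally polyhedral. Theorem~\ref{PFAN_PROP} then applies and tells us that $f$ is overconvergent if and only if $\Sigma_X\to\Sigma_S$ is locally finite and its developing map is a local homeomorphism over each cone of $\Sigma_S$. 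The second condition is literally condition ii) of the corollary, so all that remains is to match \emph{locally finite} with \emph{paracompact}.

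This last identification is the step I expect to be the main obstacle, as it is the only genuinely new content. In one direction, if $\Sigma_X$ is locally finite with finite-dimensional cones, then $\Sigma_X(\R)$ is a locally compact, locally finite CW complex and the induced map to $\Sigma_S(\R)$ is paracompact, giving paracompactness of $f$. In the other direction I would argue by contradiction using the developing map: if some star $\sigma^\star$ were infinite, or had infinite puncturing, then infinitely many cones would share the face $\sigma$, and since $\delta:\sigma^\star\to N_{X,\sigma}$ is by hypothesis a local homeomorphism into a finite-dimensional space, their images would accumulate along $\delta(\sigma)$; thus every neighbourhood of a point of $\sigma\setminus\zeta(\R)$ would meet infinitely many cells, so $\Sigma_X(\R)$ would fail to be locally compact there and $f$ would fail to be paracompact over $\Sigma_S$. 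The prototype of such failure is the non-paracompact example $U^\mathrm{\'el}$ applied to the affine plane noted in the remark after Theorem~\ref{SEP_VAL}. Hence, under the local-homeomorphism hypothesis, paracompactness is equivalent to local finiteness, the two versions of condition ii) coincide, and the proof is complete; the relative case with $\Sigma_{X/H}$ is handled verbatim.
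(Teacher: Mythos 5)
Your overall route is the one the paper intends: the paper prints no proof of Corollary \ref{PFAN_PROPER} at all, and the implicit argument is exactly yours — normalise (which changes neither $\Sigma_X$ nor the developing map, normalisation being integral/projective and a homeomorphism), pass through the dictionary of Corollary \ref{PFAN_EQUIVALENCE}, apply Theorem \ref{PFAN_PROP}, and identify paracompactness of $f$ with local finiteness of $\Sigma_X\rightarrow\Sigma_S$. One small imprecision: \emph{locally finite-dimensional} in the paper's sense is a condition on $f^{-1}\sigma_S\cap\sigma_X^\star$, not on individual cones, so "cones are rational polyhedral, hence $\Sigma_X(\R)$ is locally finite-dimensional" is not the right deduction as written; you need to combine rational polyhedrality of the cones of $\Sigma_S$ (from $S$ locally integral-over-Noetherian) with finite-dimensionality of $\ker(N_X\rightarrow N_S)$ from Lemma \ref{FAN_FINITE} — an ingredient you do cite, just not wired up correctly.

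The genuine gap is the sentence "Since overconvergence is formulated purely in terms of $\P$ and open immersions via extension problems, $f$ is overconvergent if and only if $\Sigma_X\rightarrow\Sigma_S$ is." This does not follow from the equivalence of categories: overconvergence of $f$ in $\Sh\mathbf{FSch}$ quantifies over \emph{arbitrary} test spaces $U\hookrightarrow V$, whereas overconvergence in $\PSh\mathbf{Cone}_*^N$ quantifies only over punctured cones, so an equivalence between the subcategories of normal objects with enough jets identifies the latter notion with "overconvergence tested on normal test spaces with enough jets", not with overconvergence itself. Closing this gap is precisely the content of the test-space reduction chain: Propositions \ref{SEP_REDUCE} and \ref{SEP_FP} (reduced, finite type test spaces), then Corollary \ref{INT_DECOMP_DETECT} (quasi-integral test spaces — this is where the hypothesis that $S$ be locally integral-over-Noetherian does a second, independent job beyond finite-dimensionality), then the normalisation/enough-jets lemmas of \S\ref{INT}. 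This is how the paper itself argues in the parallel situations, e.g.\ the proof of Theorem \ref{SEP_VAL} reduces via \ref{INT_DECOMP_DETECT} before invoking Proposition \ref{FAN_SUR}. Your final step equating paracompactness with local finiteness (given the local-homeomorphism hypothesis) is as solid as it can be, since the paper never defines paracompactness for morphisms of formal schemes nor proves the equivalence; your local-compactness argument — an infinite star destroys local compactness of the weak topology, while a local homeomorphism into a finite-dimensional space would force it — is a reasonable way to fill that hole.
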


In this case, the developing maps equip $\Sigma_X(\R)$ with the structure of a $\R^\times_{>0}$-equivariant smooth manifold - in fact, an \emph{affine manifold} - whose boundary is vertical over $\Sigma_S(\R)$. In particular, the fibres of $f$ are manifolds without boundary. If $f$ is proper, then the fibres are compact.

\begin{cor}The fibres of an overconvergent (resp. proper) morphism $\Sigma_X(\R)\rightarrow\Sigma_S(\R)$ are affine manifolds (resp. conically compact affine manifolds) without boundary in a unique way such that the relative developing map $f^{-1}(p)\stackrel{\delta}{\rightarrow}N_{X/S}(\R)$ is a local affine diffeomorphism.\end{cor}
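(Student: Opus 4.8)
The plan is to use the relative developing map directly as a source of affine charts and to read off the manifold structure fibrewise. Fix $p\in\Sigma_S(\R)$ and write $F=f^{-1}(p)\subseteq\Sigma_X(\R)$ for the fibre. By Theorem \ref{PFAN_PROP} (through Corollary \ref{PFAN_PROPER}), overconvergence of $f$ is equivalent to $f$ being locally finite together with the statement that, over each cone of $\Sigma_S$, the relative developing map is a local homeomorphism; moreover $N_{X/S}$ is then everywhere spanned by cones of $\Sigma_X$ and hence finite-dimensional. The first step is to restrict this data to $F$: for each minimal cone $\sigma\subseteq\Sigma_X$ meeting $F$, the small star $\sigma^*$ gives a neighbourhood $U_\sigma:=\sigma^*(\R)\cap F$ of the corresponding point, and the relative developing map yields $\delta_\sigma:U_\sigma\to N_{X/S}(\R)$. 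Since an overconvergent morphism is in particular locally separated (def. \ref{SEP_EXTENS_DEF}), Theorem \ref{PFAN_SEP} shows the restriction of $\delta$ to a small star is injective; combined with the local-homeomorphism half of Theorem \ref{PFAN_PROP} this makes each $\delta_\sigma$ a homeomorphism onto an open subset of the vector space $N_{X/S}(\R)$. That the image is open in the \emph{full} vector space, rather than in a half-space, is exactly what the spanning of $N_{X/S}$ by cones provides, and it is what forces $F$ to be a manifold \emph{without} boundary: the boundary of $\Sigma_X(\R)$ is vertical over $\Sigma_S(\R)$ and so contributes nothing to a single fibre.

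Next I would assemble the family $\{(U_\sigma,\delta_\sigma)\}$ into an affine atlas. These charts cover $F$, because every point lies in a unique minimal cone whose small star realises a neighbourhood of it (using the star-neighbourhood lemma preceding Theorem \ref{PFAN_SEP}, together with the local finiteness and finite-dimensionality hypotheses). On an overlap $U_\sigma\cap U_{\sigma'}$ the two charts are two trivialisations of the local system $N_{X/S}$, so the transition $\delta_{\sigma'}\circ\delta_\sigma^{-1}$ is the corresponding monodromy isomorphism of $N_{X/S}(\R)$, which is \emph{linear}, hence affine. The resulting atlas therefore has affine transition maps and equips $F$ with an affine manifold structure; because the transitions are linear and commute with scaling, this structure is \emph{radiant} and $\R^\times_{>0}$-equivariant. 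By construction the relative developing map is, in these charts, the identity on open sets of $N_{X/S}(\R)$, so $\delta$ is a local affine diffeomorphism, and the underlying smooth structure is the one carried by the atlas.

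For uniqueness I would observe that any affine structure on $F$ making $f^{-1}(p)\xrightarrow{\delta}N_{X/S}(\R)$ a local affine diffeomorphism must, on each $U_\sigma$, have its affine charts agree with $\delta_\sigma$ up to an affine automorphism of $N_{X/S}(\R)$; hence it induces precisely the atlas constructed above. As an affine structure is determined by any covering family of mutually compatible charts, the two structures coincide. Finally, in the proper case $f$ is overconvergent and qcqs, so each fibre is compact up to the $\R^\times_{>0}$-action, i.e. conically compact; this is the fibrewise form of the conical-compactness statement for $\Sigma_X(\R)\setminus 0$, compactness of the fibres of a proper $f$ being already recorded after Corollary \ref{PFAN_PROPER}.

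The main obstacle I expect lies not in the atlas bookkeeping but in pinning down, uniformly over $F$, that the relative developing map is a local homeomorphism onto an \emph{open} subset of $N_{X/S}(\R)$: this is where the full force of the overconvergence criterion of Theorem \ref{PFAN_PROP}—local finiteness, local finite-dimensionality, and the spanning of $N_{X/S}$ by cones—has to be invoked, and where one must verify that passing from the star $\sigma^\star$ to the single fibre preserves openness of the image. Once that is secured, the affine-structure and uniqueness claims follow formally from the monodromy description of $N_{X/S}$.
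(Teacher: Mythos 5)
Your proof is correct and follows essentially the same route as the paper, which states this corollary without a separate proof, treating it as immediate from Theorem \ref{PFAN_PROP}/Corollary \ref{PFAN_PROPER} together with the preceding remark that the developing maps equip $\Sigma_X(\R)$ with an affine manifold structure whose boundary is vertical over $\Sigma_S(\R)$, the fibres being compact when $f$ is proper. Your chart construction from small stars, the monodromy description of the transition maps, and the formal uniqueness argument are precisely the details that remark leaves implicit.
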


\subsection{Meromorphic functions as obstruction to algebraisation}\label{PFAN_MERO}

The meromorphic function sheaf of any connected, quasi-integral $\F_1$-scheme is constant. A necessary condition for algebraisability of a connected, quasi-integral formal $\F_1$-scheme $X$ is therefore that $K_X^\times$ be constant. 

If this is the case, then $N_X$ is defined as a rational vector space, and we get a global developing map
\[ \delta:\Sigma_X\rightarrow N_X. \]
We can use $\delta$ to `put back in' the intersections of open sets in an algebraisation of $X$ that became empty upon formal completion. Informally, the prescription for two punctured cones $\sigma_i\setminus\zeta_i$ in $\Sigma_X$ is:
\[  \bigcap_{i=1}^k\sigma_i:=\left\{\begin{array}{cl} \{0\} & \text{if }\bigcap_{i=1}^k(\sigma_i\setminus\zeta_i)=\emptyset \\ \delta\left(\bigcap_{i=1}^k\sigma_i\right) & \text{otherwise}\end{array}\right. \]
Note that unless $X$ has affine diagonal, $\bigcap_{i=1}^k\sigma_i$ might consist of several faces. This defines an atlas for an object ${}^?\Sigma_X$ of $\PSh\mathbf{Cone}^N$ whose transition maps are principal face inclusions. Moreover, it comes equipped with a compatible collection of marked cones $\zeta$ that, when punctured, retrieve $\Sigma_X$.

To see that it is locally representable, we can apply the following elementary lemma:

\begin{lemma}\label{TOPOS_ATLAS}Let $\Sh\mathbf C$ be a spatial theory, $I$ a cofiltered poset, and $V:I\rightarrow\mathbf C$ a functor such that for every $f\in I$, $Vf$ is an open immersion. Suppose that $V$ is \emph{locally flat}, meaning that for each $i\in I$, $V:I_{/i}\rightarrow \mathbf C_{/V_i}$ is flat. Then $V$ is an atlas for a locally representable sheaf; that is, $\colim V$ is locally representable and $V$ is an open covering.\end{lemma}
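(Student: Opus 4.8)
The plan is to realise the cocone $\iota_i\colon V_i\to X:=\colim_I V$ as an open covering by representables and then read off local representability from the frame structure of the spatial theory. First I would note that, by the defining universal property of the colimit, the family $\{\iota_i\colon V_i\to X\}_{i\in I}$ is jointly epimorphic, so that $\coprod_i V_i\twoheadrightarrow X$; hence once each $\iota_i$ is shown to be an open immersion, $V$ is automatically an open covering in the sense of \S\ref{TOPOS}. Since being an open immersion is stable for base change and local on the target (def. \ref{TOPOS_DEF}, axioms i and ii), and the $\iota_j$ already cover $X$, it suffices to prove that each projection $V_i\times_X V_j\to V_j$ lies in $\sh U$.

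The heart of the matter is therefore the computation of the overlaps $V_i\times_X V_j$. Here I would invoke universality of colimits in the topos $\Sh\mathbf C$: applying it along $\iota_j$ and then along $\iota_i$ presents $V_i\times_X V_j$ as $\colim_{k}\,(V_i\times_X V_k\times_X V_j)$. For indices $k$ with $k\leq i$ and $k\leq j$ the factorisation of $V_k\to X$ through the monomorphisms $V_i\hookrightarrow X$ and $V_j\hookrightarrow X$ collapses the $k$-th term to $V_k$ itself, and the content of local flatness is exactly that the remaining terms reorganise so that the canonical comparison $\colim_{k\leq i,\,k\leq j}V_k\to V_i\times_X V_j$ is an isomorphism (the index poset of common lower bounds being cofiltered, hence nonempty, because $I$ is cofiltered). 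Granting this identification, the projection to $V_j$ exhibits $V_i\times_X V_j$ as the join $\bigvee_{k\leq i,\,j}V_k$ of open subobjects of $V_j$, which is again open because $\sh U_{/V_j}$ is a complete lattice; so $\iota_i$ is an open immersion and $V$ is an open covering.

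With $V$ established as an open covering by representables, local representability is formal. Given any open immersion $W\hookrightarrow X$, I would set $W_i:=W\times_X V_i\in\sh U_{/V_i}$; since the site is spatial and $V_i\in\mathbf C$ is representable, $W_i$ is generated by objects of $\mathbf C$. As $\{V_i\}$ covers, $\bigvee_i V_i=X$, and distributivity in the frame $\sh U_{/X}$ (localic, so a frame) yields $W=W\wedge\bigvee_i V_i=\bigvee_i W_i$. Thus every open immersion into $X$ is a join of objects of $\mathbf C$, i.e. $\sh U_{/X}$ is generated by $\mathbf C$, and $X=\colim V$ is locally representable.

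I expect the isomorphism $\colim_{k\leq i,\,j}V_k\cong V_i\times_X V_j$ to be the only genuine obstacle; the rest is bookkeeping with the axioms of def. \ref{TOPOS_DEF}. The delicate point is that $\colim_I$ runs over a \emph{co}filtered poset, so it does not commute with finite limits for free — this is precisely the failure that the local flatness hypothesis is designed to repair. The cleanest route is probably to verify, slice by slice, that local flatness makes the category of elements of $V\colon I\to\Sh\mathbf C$ cofiltered, so that $V$ is a flat functor into the topos and the overlap computation above goes through; I would carry out that verification directly from the definition of flatness on each $V\colon I_{/i}\to\mathbf C_{/V_i}$.
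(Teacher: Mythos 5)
Your reduction of everything to the openness of the overlap maps, and your closing frame-distributivity argument for local representability, are fine as far as they go; but the proof has a genuine gap exactly where you say you "expect the only genuine obstacle" to be. The isomorphism $\colim_{k\leq i,j}V_k\cong V_i\times_X V_j$ is not one more verification to be slotted in afterwards — it \emph{is} the lemma — and your sketch of it is circular: when you collapse the terms $V_i\times_X V_k\times_X V_j$ to $V_k$ for $k\leq i,j$, you invoke "the monomorphisms $V_i\hookrightarrow X$", but the cocone maps being monomorphisms (let alone open immersions) is precisely what you are in the middle of proving. Without it, a cofiltered colimit of open immersions in a topos can a priori identify more than the index poset prescribes, and nothing in your universality computation rules that out.

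The repair you propose at the end also fails as stated. Flatness of $V\colon I\rightarrow\Sh\mathbf C$ in the usual sense — left exactness of the induced $\PSh I\rightarrow\Sh\mathbf C$ — would force $\colim_I V$ to be the \emph{terminal} object of $\Sh\mathbf C$, which it is not; what you actually need is flatness of $V$ regarded as a functor into $\Sh\mathbf C_{/X}$, and that is just a restatement of the overlap formula, so it cannot be taken as the starting point. Moreover the hypothesis is only slice-wise ($V\colon I_{/i}\rightarrow\mathbf C_{/V_i}$ flat), and factorisations through the $V_n$ are in general available only locally, so "the category of elements is cofiltered" has no direct external meaning here. This is why the paper's proof takes a different route: lemma \ref{TOPOS_FUNCT} converts slice-wise flatness into geometric morphisms $\Sh\mathbf C_{V_i}\rightarrow\Sh I_i$ whose pullbacks send monomorphisms to open immersions, so that colimits over subsets of $I_{/i}$ are automatically open subobjects of $V_i$; then $\colim V$ is assembled by induction over finite subsets $J\subseteq I$ via pushouts along open immersions — which in a topos are also pullbacks, so no spurious identifications can arise — and finally as a filtered colimit of open immersions of locally representable objects. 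If you want to salvage your direct approach, you must prove that $R=\coprod_{i,j}\colim_{k\leq i,j}V_k$ defines an equivalence relation on $\coprod_i V_i$ (transitivity is where local flatness enters, applied inside a common $V_j$ to the intersection $V_k\times_{V_j}V_m$ of two of its opens), identify $\colim V$ with the quotient by $R$, and only then read off the overlaps and their openness; none of this is in your write-up.
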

\begin{proof}By proposition \ref{TOPOS_FUNCT}, the condition on the slice sets ensures that for any $i\in I$ we get a geometric morphism $\Sh\mathbf C_{V_i}\rightarrow\Sh I_i$ whose pullback takes monomorphisms to open immersions. It follows that for any subset $K$ of $I_{/i}$, the restriction $V_{/K}$ of $V$ to the slice set $I_{/K}$ is an open subset of $V_i$.

Since $\Sh\mathbf C$ is a topos, a pushout of monomorphisms is also a pullback. In particular, a pushout of two locally representable sheaves along an open subset is locally representable.

By induction on the number of elements, $V_{/J}$ is locally representable for any finite $J\subseteq I$, and an inclusion $J\subseteq K$ induces an open immersion $V_{/J}\hookrightarrow V_{/K}$. The result now follows from the fact that inductive systems of open immersions are locally representable.\end{proof}

It follows that ${}^?\Sigma_X$ is a cone complex that can be punctured to retrieve $\Sigma_X$. If $X$ is normal with enough jets, this of course yields an algebraisation of $X$ itself. 

We make these arguments precise in the proof of the following statement, which does not, in fact, depend on the classification theorem:

\begin{thm}\label{PFAN_ALG}Let $X$ be a connected, integral formal scheme. The following are equivalent:
\begin{enumerate}\item $X$ is algebraisable;
\item $K^\times_X$ is a constant sheaf.\end{enumerate}
There exists an algebraisation functor
\[ ?:\mathbf{FSch}_{\F_1}^\mathrm{alg/i/nb}\rightarrow{}^Z\mathbf{Sch}^\mathrm{i/nb}_{\F_1}, \quad X\mapsto {}^?X \]
on the category of algebraisable integral formal schemes such that ${}^?X$ has affine diagonal over $\F_1$ whenever $X$ does.\footnote{Note that the corresponding statement for formal algebraic spaces is completely trivial.}
\end{thm}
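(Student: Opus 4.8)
The plan is to prove the equivalence i)$\Leftrightarrow$ii) by constructing, from the combinatorial data of $\Sigma_X$ and a constant character sheaf $K_X^\times$, an algebraisation as a cone complex ${}^?\Sigma_X$, and then transporting back along the equivalence of Corollary \ref{PFAN_EQUIVALENCE}. One direction is immediate: the meromorphic function sheaf $K^\times$ of any connected quasi-integral $\F_1$-scheme is locally constant (as already observed, using that nonempty open subsets are topologically dense), and algebraisation $\hat Y_Z\cong X$ forces $K_X^\times$ to agree with $K_Y^\times$, which is a genuinely constant sheaf on the connected scheme $Y$. So ii) is necessary, and the work is in the converse together with the functoriality and affine-diagonal claims.

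\textbf{Construction for normal schemes with enough jets.} First I would handle the case $X\in\mathbf{FSch}_{\F_1}^\mathrm{ej/n/nb}$, where $\Sigma$ is an equivalence. Assuming $K_X^\times$ constant, $N_X$ is a well-defined rational vector space and the developing map $\delta:\Sigma_X\to N_X$ is globally defined. I would then use $\delta$ to reconstitute the intersections of charts destroyed by formal completion, via the prescription displayed before the theorem:
\[ \bigcap_{i=1}^k\sigma_i:=\begin{cases} \{0\} & \text{if }\bigcap_{i=1}^k(\sigma_i\setminus\zeta_i)=\emptyset, \\ \delta\left(\bigcap_{i=1}^k\sigma_i\right) & \text{otherwise.}\end{cases} \]
This assembles a diagram of punctured cones with principal face inclusions as transition maps, together with compatible markings $\zeta$ that recover $\Sigma_X$ upon puncturing. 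The key lemma to invoke is \ref{TOPOS_ATLAS}: I must verify that the resulting functor $V:I\to\mathbf{Cone}^N$ from the indexing poset of charts is \emph{locally flat}, i.e. that each slice $I_{/i}\to\mathbf{Cone}^N_{/V_i}$ is flat. Granting this, \ref{TOPOS_ATLAS} yields that $\colim V={}^?\Sigma_X$ is locally representable — an honest (unpunctured) cone complex — and that the $V_i$ form an open cover. Applying the equivalence $\Sigma$ of the algebraic case (Corollary under \ref{FAN_THM}) then produces a normal $\F_1$-scheme $Y={}^?X$ with enough jets whose formal completion along the boundary marked by $\zeta$ is $X$; this is the desired algebraisation.

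\textbf{Reduction of the general case and functoriality.} For general connected integral $X$ (not necessarily normal or with enough jets), I would pass through the left-adjoint construction of \S\ref{PFAN_CONE}: $X$ has a canonical non-boundary map $X^\mathrm{ej}\to X$ from a normal scheme with enough jets sharing the same $\Sigma$ and $K^\times$. Since the construction $X\mapsto{}^?X$ is dictated by $\Sigma_X$ and $K_X^\times$ — which are insensitive to normalisation and the enough-jets modification, these being homeomorphisms inducing isomorphisms on character groups — the algebraisation descends. Functoriality of $?:\mathbf{FSch}_{\F_1}^\mathrm{alg/i/nb}\to{}^Z\mathbf{Sch}^\mathrm{i/nb}_{\F_1}$ follows because the whole construction is natural in the atlas: a non-boundary morphism induces a map of developing data compatible with the gluing prescription, and one checks it respects the principal-face transition maps. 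Finally the affine-diagonal claim is read off combinatorially: $X$ has affine diagonal exactly when any contiguous pair of cones meets in a single face (the separatedness/affineness dictionary of Theorem \ref{PFAN_SEP}), and under that hypothesis each $\bigcap_{i}\sigma_i$ in the prescription is a single face, so the diagonal of ${}^?X$ is an affine embedding.

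\textbf{Main obstacle.} I expect the crux to be verifying local flatness of the atlas $V$ so that Lemma \ref{TOPOS_ATLAS} applies — equivalently, checking that the reconstructed intersections $\bigcap_i\sigma_i$ are \emph{consistent} across triple (and higher) overlaps, so that the cocycle data genuinely glues. The delicacy is precisely the point flagged in the footnote to the section overview: flatness is the one item in the glueing programme that "presents a serious difficulty" here, because the developing map can fold several faces of $\Sigma_X$ onto the same image, and one must confirm that the case distinction in the prescription (empty intersection $\mapsto\{0\}$ versus $\delta$-image) is stable under the face-lattice operations defining a flat functor. Constancy of $K_X^\times$ is what makes $\delta$ global and hence makes this check even formulable; the remaining labour is the convex-geometric verification that the glued object is a legitimate cone complex rather than merely a diagram.
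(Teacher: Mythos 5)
Your forward direction is fine, and your construction in the normal/enough-jets case follows the informal sketch the paper itself gives before the theorem. But the proposal has a genuine gap precisely where you invoke "descent", and a second one at what you yourself call the crux.

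The theorem is about arbitrary connected \emph{integral} formal schemes, while the equivalence of corollary \ref{PFAN_EQUIVALENCE} you want to transport along only covers $\mathbf{FSch}_{\F_1}^\mathrm{ej/n/nb}$, i.e. normal formal schemes with enough jets. Your fix --- pass to $X^\mathrm{ej}\rightarrow X$ and claim the algebraisation "descends" because $\Sigma_X$ and $K_X^\times$ are insensitive to that modification --- is backwards: that insensitivity is exactly why your construction cannot see $X$. The data $(\Sigma_X, K_X^\times,\delta)$ determine only $X^\mathrm{ej}$, so the cone complex ${}^?\Sigma_X$ you build algebraises $X^\mathrm{ej}$, and nothing in the proposal recovers the actual structure sheaf $\sh O_X\subseteq\sh O_{X^\mathrm{ej}}$ over the new intersections that algebraisation creates; producing $Y$ with $\hat Y\cong X$ rather than $\hat Y\cong X^\mathrm{ej}$ is a further, unaddressed problem (and deriving it from the special case is essentially equivalent to the theorem itself). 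This is why the paper stresses that its proof "does not, in fact, depend on the classification theorem": it works not with cones but with the forgetful functor $?$ applied to the genuine affine charts $\sh U^\mathrm{aff}_{/X}$, so integrality --- not normality --- is all it needs, and the object produced is ${}^?X$ itself, not an algebraisation of a normalisation.

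Second, you flag local flatness of the atlas as the main obstacle and then leave it as "remaining labour"; but that verification is the actual content of the paper's proof, and its resolution is not a routine convexity check. In the paper: $?$ is flat on inhabited affine opens except that fibre products realised by $\emptyset$ are missing; Step II repairs this by enlarging $\sh U^\mathrm{aff/in}_{/X}$ to the poset of non-empty finite families closed under fibre products and taking the left Kan extension $\{U_i\}_{i\in I}\mapsto\lim_iU_i$, which restores flatness but destroys preservation of open immersions; Step III --- and this is where constancy of $K_X^\times$ is really used, not merely to globalise $\delta$ --- observes that the generic point $\G_X=\Spec K_X$ maps compatibly to everything, and replaces the Kan extension by the embedded closure of $\G_X$, computed as
\[ \{U_i\}_{i\in I}\mapsto \Spec\left(\prod_{i\in I}\sh O(U_i)\subseteq K_X\right), \]
which preserves both fibre products and open immersions, so that lemma \ref{TOPOS_ATLAS} applies and ${}^?X$ is a scheme. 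Your prescription for $\bigcap_i\sigma_i$ via $\delta$ is the shadow of this argument in the normal case, but until you (or a cone-complex analogue of Steps II--III) actually establish local flatness, the proposal is not yet a proof even there.
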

\begin{proof}The restriction of the forgetful functor to $\mathbf{FSch}_{\F_1}^\mathrm{aff/i/nb}$ is left exact; it follows that the restrictions of
\[ ?:\sh U^\mathrm{aff}_{/X}\rightarrow\mathbf{Sch}^\mathrm{aff}_{\F_1},  \]
to slice sets are flat, \emph{except} possibly for fibre products realised by $\emptyset\not\in\mathbf{FSch}_{\F_1}^\mathrm{aff/i/nb}$. We will need to tweak $\sh U^\mathrm{aff}_{/X}$ to correct for this. The tweaks may be separated into steps.

\paragraph{Step I}
Write $\sh U^\mathrm{aff/in}_{/X}\subset\sh U^\mathrm{aff}_{/X}$ for the subcategory of inhabited affine subsets of $X$. The restriction of $?$ to $\sh U^\mathrm{aff/in}_{/X}$ preserves fibre products and take all arrows to open immersions, but we will need to introduce new limits in order to get flatness.

\paragraph{Step II}The universal way to extend $?$ to a locally flat functor is to enlarge $\sh U^\mathrm{aff/in}_{/X}$ to the poset $\widetilde{\sh U}^\mathrm{aff}_{/X}$ of non-empty finite subsets of $\sh U^\mathrm{aff/in}_{/X}$ closed under fibre products. This poset has all fibre products, and the left Kan extension
\[ \mathrm{LKE} : \widetilde{\sh U}^\mathrm{aff}_{/X}\rightarrow \mathbf{Sch}_{\F_1}^\mathrm{aff},\quad \{U_i\}_{i\in I}\mapsto \lim_{i\in I}U_i \]
preserves these. Therefore $\mathrm{LKE}$ is locally flat. However, the new morphisms introduced between disjoint families $\{U_i\}$ are not open immersions.


\paragraph{Step III}
Since $K_X^\times$ is a constant sheaf, we have a map
\[ \G_X=\Spec K_X\rightarrow {}^?X \]
that factors through $\mathrm{LKE}$. Since $\G_X$ is a single point, $\G_X\rightarrow\mathrm{LKE}$ is an affine morphism. We will replace $\mathrm{LKE}$ with the embedded closure of $\G_X$, which is calculated as
\[ \{U_i\}_{i=1}^k\mapsto \Spec\left(\prod_{i=\in I}\sh O(U_i)\subseteq K_X\right). \]
The resulting functor $\widetilde{\sh U}^\mathrm{aff}_{/X}\rightarrow \mathbf{Sch}_{\F_1}^\mathrm{aff}$ preserves fibre products and open immersions. Therefore by lemma \ref{TOPOS_ATLAS}, it is an atlas for ${}^?X$, and ${}^?X$ is a scheme.\end{proof}

Let $X$ be an integral formal scheme, and let $x\in X(\F_1)$. There is an action
\[ \pi_1(X,x)\rightarrow\Aut(K_{X,x}^\times) \]
which obstructs algebraisation. Since $\Sigma_X(\R)$ has the same weak homotopy type as $X$, this can also be thought of as an action of $\pi_1(\Sigma_X(\R),\sigma_x)$ with $\sigma_x$ the cone corresponding under corollary \ref{PFAN_CONES_POINTS} to the point $x$. The kernel of the action corresponds to a covering space of $X$ on which $K_X^\times$ is trivialised. In particular, 

\begin{cor}\label{PFAN_ALG0}Every integral formal scheme admits an algebraisable covering space.\end{cor}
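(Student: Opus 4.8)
The plan is to produce the covering promised by the discussion preceding the statement as the one that trivialises the monodromy of the locally constant sheaf $K_X^\times$, and then to quote Theorem \ref{PFAN_ALG}. Since a covering of $X$ can be assembled component by component, and the algebraisation of a disjoint union is the disjoint union of the algebraisations of its (integral) components, I would first reduce to the case that $X$ is connected and fix a point $x$ of its underlying space. By Theorem \ref{PFAN_ALG}, $K_X^\times$ is locally constant; on the connected space $X$ this is the same datum as its monodromy representation
\[ \rho\colon \pi_1(X,x)\to \Aut(K_{X,x}^\times), \]
where, as in the discussion, $\pi_1(X,x)$ is computed through the weak homotopy equivalence $X\simeq\Sigma_X(\R)$ as $\pi_1(\Sigma_X(\R),\sigma_x)$.

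Next I would set $N:=\ker\rho\trianglelefteq\pi_1(X,x)$ and take $p\colon\widetilde X\to X$ to be the connected covering classified by $N$ under the Galois correspondence --- equivalently, the espace étalé of the locally constant sheaf of sets associated to the $\pi_1(X,x)$-set $\pi_1(X,x)/N$. By construction $p_*\pi_1(\widetilde X,\tilde x)=N$, so the pulled-back local system $p^*K_X^\times$ has monodromy $\rho\circ p_*$, which is trivial; hence $K_{\widetilde X}^\times=p^*K_X^\times$ is a \emph{constant} sheaf.

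It then remains to verify that $\widetilde X$ is itself a connected integral formal scheme. As $p$ is a local homeomorphism, each point of $\widetilde X$ has an open neighbourhood carried isomorphically by $p$ onto an open subset of $X$; transporting the sheaf $\sh O_X$ of adic $\F_1$-algebras along these local isomorphisms makes $\widetilde X$ a formal scheme locally isomorphic to $X$. Since quasi-integrality, integrality and the non-boundary property are conditions on the stalks of the structure sheaf, and $p$ induces isomorphisms on those stalks, $\widetilde X$ is integral; it is connected by the choice of $N$. Applying Theorem \ref{PFAN_ALG} to $\widetilde X$ --- connected, integral, with $K_{\widetilde X}^\times$ constant --- yields the algebraisation, completing the argument.

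The hard part will be making covering-space theory legitimate in this setting. For $X$ locally integral-over-Noetherian the realisation $\Sigma_X(\R)$ is a CW complex, so $X$ is weakly equivalent to a locally nice space and the classical Galois correspondence applies verbatim. In general I would avoid invoking $\pi_1$ directly and argue purely with sheaves: present $\widetilde X$ as (a component of) the espace étalé over $X$ of the locally constant sheaf of sets trivialising the $\Aut$-monodromy of $K_X^\times$, the genuine subtlety being to confirm that this espace étalé is a local homeomorphism in the topos underlying $X$ --- so that it inherits a formal-scheme structure from $\sh O_X$ --- and that the pullback of $K_X^\times$ to it is indeed constant.
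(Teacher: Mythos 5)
Your proposal is correct and takes essentially the same route as the paper: the paper likewise passes to the covering space corresponding to the kernel of the monodromy action $\pi_1(X,x)\rightarrow\Aut(K_{X,x}^\times)$ (viewed, via the weak equivalence with $\Sigma_X(\R)$, as an action of $\pi_1(\Sigma_X(\R),\sigma_x)$), on which $K_X^\times$ becomes constant, and then invokes Theorem \ref{PFAN_ALG}. Your additional verifications --- that the covering inherits the formal-scheme structure along the local homeomorphism and that integrality is a stalk-local condition --- merely make explicit what the paper leaves implicit.
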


\begin{remark}Separatedness of a formal scheme by no means implies that its algebraisation will be separated. Indeed, any covering space of an algebraisable integral formal scheme is algebraisable, but the algebraisation will be separated if and only if the covering is trivial.\end{remark}

\bibliographystyle{alpha}
\bibliography{poly}

\end{document}